\let\mathscr\mathcal
\pgfplotsset{compat=1.14}
\newcommand*{\Z}{\mathbb{Z}}
\def\C{\mathbb{C}}
\newcommand*{\N}{\mathbb{N}}
\newcommand*{\R}{\mathbb{R}}
\newcommand{\one}{\mathbf{1}}
\def\<{\left\langle}
\def\>{\right\rangle}
\newcommand*{\dif}{\mathrm{d}}
\newcommand*{\DMO}[1]{\expandafter\DeclareMathOperator\csname #1\endcsname {#1}}
\newcommand{\ds}{\mathscr{d}} % dimension of space
\newcommand{\DI}{\mathcal{I}}
\newcommand{\scale}{s}
\newcommand{\Scales}{\mathbf{S}}
\newcommand{\dm}{n} % degree of multilinearity
\newcommand{\degp}{d} % degree of polynomials
\newcommand{\V}{\mathbb{V}}
\def\C{\mathbb{C}}
\newcommand{\size}{\mathbf{size}}
\newcommand{\parent}{\mathop{\mathrm{par}}}
\newcommand{\Leaves}{\mathcal{L}}
\newcommand{\W}{\mathbb{W}}
\newcommand{\h}{\mathrm{h}}
\newcommand{\p}{\mathrm{p}}
\newcommand{\eset}{B}
\newcommand{\wplus}{\oplus}% Walsh addition
\newcommand{\wtimes}{\circledast}% Walsh multiplication - no symbol needed
\DeclarePairedDelimiter\abs{\lvert}{\rvert}
\DeclarePairedDelimiter\norm{\lVert}{\rVert}
\providecommand\given{}
\newcommand\SetSymbol[1][]{%
\nonscript\:#1\vert
\allowbreak
\nonscript\:
\mathopen{}}
\DeclarePairedDelimiterX\Set[1]\{\}{%
\renewcommand\given{\SetSymbol[\delimsize]}
#1
}
\DeclarePairedDelimiter\meas{\lvert}{\rvert}
\newcommand{\pvint}{\mathrm{p.v.}\int}
\newcommand{\calI}{\mathcal{I}}
\newcommand{\calD}{\mathcal{D}}
\newcommand{\calL}{\mathcal{L}}
\newcommand{\calQ}{\mathcal{Q}}
\newcommand{\Rmin}{\underline{R}}
\newcommand{\Rmax}{\overline{R}}
\renewcommand{\top}{\mathrm{top}}
\newcommand{\calE}{\mathcal{E}}
\newcommand{\calF}{\mathcal{F}}
\newcommand{\calS}{\mathcal{S}}
\newcommand{\calJ}{\mathcal{J}}
\newcommand{\ch}{\operatorname{ch}}
\newcommand{\smin}{\underline{\sigma}}
\newcommand{\smax}{\overline{\sigma}}
\newcommand{\sumin}{\scale_{\min}}
\newcommand{\sumax}{\scale_{\max}}
\newcommand{\bd}{\operatorname{bd}}
\newcommand{\CZK}{K}
\newcommand{\calN}{\mathcal{N}}
\DeclareMathOperator{\mdens}{\overline{dens}}
\def\PZdefchar#1{\expandafter\def\csname T#1\endcsname{\mathfrak{#1}}}
\def\PZdefloop#1{\ifx#1\PZdefloop\else\PZdefchar#1\expandafter\PZdefloop\fi}
\DeclarePairedDelimiterX\innerp[2]{\langle}{\rangle}{#1,#2}
\newcommand{\calP}{\mathcal{P}}
\newcommand{\calR}{\mathcal{R}}
\newcommand{\calG}{\mathcal{G}}
\newcommand{\calC}{\mathcal{C}}
\newcommand{\calH}{\mathcal{H}}
\newcommand{\bfE}{\mathbf{E}}
\newcommand{\Lip}{\mathrm{Lip}}
\newcommand{\Aem}{\mathcal{A}}
\newcommand{\Dem}{\mathcal{D}}
\newcommand{\Dini}{\mathrm{Dini}}
\numberwithin{equation}{section}
\newtheorem{theorem}[equation]{Theorem}
\newtheorem*{theorem*}{Theorem}
\newtheorem{lemma}[equation]{Lemma}
\newtheorem{proposition}[equation]{Proposition}
\newtheorem{corollary}[equation]{Corollary}
\newtheorem{claim}[equation]{Claim}
\newtheorem{conjecture}[equation]{Conjecture}
\theoremstyle{definition}
\newtheorem{definition}[equation]{Definition}
\theoremstyle{remark}
\newtheorem{remark}[equation]{Remark}
\def\CZ#{Calder\'{o}n--Zygmund}
\def\CZO#{Calder\'{o}n--Zygmund operator}
\begin{document}
\frontmatter
\setkomafont{subject}{\large}
\setkomafont{subtitle}{\large}
\setkomafont{date}{\large}
\subject{Habilitationsschrift}
\title{Modulation invariant operators}
\subtitle{\vspace{1cm}
  zur Erlangung der venia legendi\\
  \vspace{0.5cm}
  im Fach Mathematik\\
  \vspace{1cm}
  eingereicht bei der\\
  \vspace{0.5cm}
  Mathematisch-Naturwissenschaftlichen Fakult\"at\\
  \vspace{0.5cm}
  der Rheinischen Friedrich-Wilhelms-Universit\"at Bonn\\
  \vspace{1cm}
  von}
\author{Pavel Zorin-Kranich}
\date{Bonn, M\"arz 2018}

\maketitle
\tableofcontents
\mainmatter
\chapter{Introduction}
\label{chap:intro}
The body of this cumulative thesis consists of five logically independent chapters.
They are all motivated by the same circle of problems in time-frequency analysis concerning modulation invariant operators.
\begin{description}
\item[Chapter~\ref{chap:cancellation-simplex}] is concerned with simplex \CZ{} forms.
These are singular variants of Brascamp--Lieb forms and enjoy the widest class of modulation invariances.
$L^{p}$ estimates for these forms would imply most of the other results that we are going to discuss, but seem to be out of reach of current techniques.
We obtain a small gain over the trivial bounds for these forms coming from H\"older's inequality.
This chapter is based on \cite{MR3685286}.
\item[Chapter~\ref{chap:two-general}] deals with a dyadic model of the triangular Hilbert form (the triangle here corresponds to a $3$-simplex in the previous chapter).
We obtain $L^{p}$ estimates assuming that one of the input functions has a special form.
Despite this restriction our result turns out to imply corresponding estimates for dyadic models of the Carleson operator and the bilinear Hilbert transform.
This chapter is joint work with Vjekoslav Kova\v{c} and Christoph Thiele \cite{arxiv:1506.00861}.
\item[Chapter~\ref{chap:poly-car}] deals with the polynomial Carleson operator.
Its boundedness has been conjectured in an article by Elias Stein ans Stephen Wainger and proved in dimension $1$ by Victor Lie using the argument for the Carleson operator due to Charles Fefferman.
We combine the ideas of these authors with a new discretization of the parameter space for this problem to obtain estimates on $L^{p}(\R^{\ds})$ for $\ds\geq 1$ and every $1< p<\infty$.
This chapter has previously appeared as \cite{arxiv:1711.03524}.
\item[Chapter~\ref{chap:bi-Lipschitz}] deals with the interaction of bi-Lipschitz transformations with Littlewood--Paley theory.
We obtain a paraproduct-type estimate that has implications for singular Radon transforms along variable curves.
This chapter is joint work with Shaoming Guo, Francesco di Plinio, and Christoph Thiele \cite{MR3841536}.
\item[Chapter~\ref{chap:single-scale}] deals with a directional square function associated to convolution with a bump function along a Lipschitz vector field.
Surprisingly, this square function is bounded on $L^{p}(\R^{2})$, $2<p<\infty$, although the original operator need not be bounded on these spaces.
This is also joint work with Shaoming Guo, Francesco di Plinio, and Christoph Thiele \cite{MR3841536}.
\end{description}
In Chapter~\ref{chap:intro} we discuss the historical background and motivation for the results listed above.

%\clearpage
\section{Maximally modulated singular integrals}
\subsection{Pointwise convergence of Fourier series and integrals}
Let $f \in L^{p}(\R/\Z)$ and consider the partial Fourier sums
\[
\tilde S_{N}f(x) = \sum_{\abs{n}\leq N} \hat{f}_{n} e^{2\pi i n x}.
\]
The question whether the partial Fourier sums $\tilde S_{N}f$ converge pointwise almost everywhere to $f$ as $N\to\infty$ has been initially raised by Luzin \cite{zbMATH02605027} (in the case $p=2$). % \cite[p.\ 183]{48.1368.01}
Soon afterwards, Kolmogorov found an $L^{1}$ function for which this is not the case \cite{zbMATH02598226}, casting some doubt on the conjectured convergence, until it has been proved for $p=2$ by Carleson over 40 years later \cite{MR0199631}.
Long before Carleson's work it has been known that pointwise a.e.\ convergence of partial Fourier series of $L^{2}$ functions is equivalent to the associated maximal operator $\tilde S_{*}f = \sup_{N} \abs{\tilde S_{N} f}$ having weak type $(2,2)$ \cite[Vol.\ 2, Theorem XIII.1.22]{MR0107776}.
A more general version of this result that applies for $1\leq p\leq 2$ became known as Stein's maximal principle \cite{zbMATH03168076}.
This has been taken up by Hunt \cite{MR0238019}, who has substantiated Carleson's claim that his convergence result can be extended to $L^{p}$ functions, $1<p<\infty$ (with endpoints near $p=1$ and $p=\infty$; we will not consider these endpoint issues).
Hunt brought to this subject the view point of the Calder\'on--Zygmund school that emphasizes the mapping properties of the operator $\tilde S_{*}$.

From today's point of view it is more natural to consider the corresponding question on the real line.
Let
\[
S_{N}f(x) = \int_{\abs{\xi}\leq N} \hat{f}(\xi) e(\xi x) \dif\xi
\]
denote the partial Fourier integral, where
\[
e(t) = e^{2\pi i t}
\]
is the standard character on $\R$.
Then $S_{N}$ is given by convolution with the kernel $D_{N}(t) = \sin(2\pi N t)/(\pi t)$, and the pointwise convergence $S_{N}f\to f$ as $N\to\infty$ is easy to show for Schwartz functions $f$.
The question again reduces to (weak or strong) type $(p,p)$ of the maximal operator $S_{*}f = \sup_{N} \abs{S_{N}f}$.
For $1<p<\infty$ this turns out to be equivalent to (weak or strong, respectively) type $(p,p)$ of the corresponding operator $\tilde S_{*}$ acting on functions on $\R/\Z$, see e.g.\ \cite{MR583403}, based on the ideas from \cite{zbMATH03272605}.
Using this fact one can also deduce that $S_{*}$ has weak type $(p,p)$, $1<p\leq 2$, from pointwise a.e.\ convergence $S_{N}f\to f$ for $f\in L^{p}(\R)$ by first showing $\tilde S_{N}f\to f$ for $f\in L^{p}(\R/\Z)$ (comparing the Dirichlet kernels on the torus and on the line), using Stein's maximal principle, and transferring the resulting maximal inequality back to $\R$.

\subsection{The Carleson operator and generalizations}
The maximal Fourier integral operator $S_{*}$ is pointwise bounded by the \emph{Carleson operator}
\begin{equation}
\label{eq:Car-op}
Cf(x) := \sup_{\xi\in\R} \abs[\big]{\pvint_{\R} \frac{e(\xi y) f(y)}{x-y} \dif y}.
\end{equation}
It has been introduced in \cite{MR0199631} and $L^{p}$ estimates have been established in \cite{MR0238019} following Carleson's approach.
Alternative approaches to estimating this operator are due to Fefferman \cite{MR0340926} and Lacey and Thiele \cite{MR1783613}; both latter approaches are used in this thesis.
The Carleson operator is the prototypical \emph{modulation invariant} operator in the sense that $Cf=C(f(\cdot) e(\xi \cdot))$ for any modulation by a linear phase $\xi\cdot$.

A natural question is whether a multidimensional analog of Carleson's theorem on pointwise convergence of Fourier series/integrals holds.
In the multidimensional setting there are several natural choices of summation schemes.
A surprising result of Fefferman \cite{MR0296602} tells that the ball multiplier operators
\[
\widehat{S_{N}f}(\xi) = \widehat{f}(\xi) \one_{\abs{\xi}\leq N}
\]
are not bounded on any $L^{p}(\R^{\ds})$ space unless $\ds=1$ or $p=2$.
By Stein's maximal principle the convergence $\tilde S_{N}f\to f$ therefore cannot hold pointwise almost everywhere for all $f\in L^{p}(\R^{\ds}/\Z^{\ds})$, $1<p<2$, since otherwise the associated maximal operator $\tilde S_{*}$ would have weak type $(p,p)$.
By interpolation this would imply that $\tilde S_{N}$ is bounded on $L^{q}(\R^{\ds}/\Z^{\ds})$ for $p<q<2$, and by transference \cite{zbMATH03272605} it would follow that $S_{N}$ is bounded on $L^{q}(\R^{\ds})$, a contradiction.
The corresponding problem in $L^{2}(\R^{\ds})$ is still open.

When partial Fourier integrals are taken over polygonal regions, some positive and negative results are either easy or are direct consequences of Carleson's theorem, see \cite{MR0279529,MR0435724}.
The first genuinely multidimensional extension of Carleson's theorem is due to Sj\"olin \cite{MR0336222}, who replaced the Hilbert kernel $1/t$ in \eqref{eq:Car-op} by a (sufficiently smooth) multidimensional \CZ{} kernel.
Specifically, let $k : \R^{\ds} \setminus\Set{0} \to \C$ be a function that is homogeneous of degree $-\ds$, that is, $k(\lambda x) = \lambda^{-\ds} k(x)$, has integral $0$ on the unit sphere, and satisfies the smoothness condition
\begin{equation}
\label{eq:Sjolin-smooth-cond}
\abs{k^{(\alpha)}(x)} \lesssim \abs{x}^{-\ds-\abs{\alpha}},
\quad
0\leq \abs{\alpha}\leq \ds+1.
\end{equation}
Here $\alpha\in\N^{\ds}$ is a multiindex, $k^{(\alpha)}$ denotes the $\alpha$-th partial derivative, and $\abs{\alpha} = \sum_{j} \alpha_{j}$.
Then the operator
\begin{equation}
\label{eq:Car-Sj-op}
C_{k}f(x) := \sup_{\xi\in\R^{\ds}} \abs[\big]{\pvint_{\R^{\ds}} k(x-y) e(\xi \cdot y) f(y) \dif y}
\end{equation}
is bounded on $L^{p}(\R^{\ds})$, $1<p<\infty$.
%pointwise almost everywhere behavior of Fourier series near $L^{1}$: divergence \cite{MR0240539}, \cite{MR0223815}, \cite{MR652607} \cite{MR1724074} \cite{MR1753494} \cite{MR2275651} \cite{MR3545867} 
%convergence \cite{MR0241885} \cite{MR818168} \cite{MR1019791} \cite{MR1407066} \cite{MR1875141} \cite{MR3148602} \cite{MR3646873}
%Review  \cite{MR2091007}

\subsection{Non-translation invariant kernels}
A (non-translation invariant,) $\tau$-H\"older continuous \CZ{} kernel on $\R^{\ds}$ is a function $\CZK:\Set{(x,y) \in \R^{\ds}\times\R^{\ds} \given x\neq y} \to \C$ such that
\begin{equation}
\label{eq:K-size}
\abs{\CZK(x,y)} \lesssim \abs{x-y}^{-\ds},
\end{equation}
\begin{equation}
\label{eq:K-reg}
\abs{\CZK(x,y)-\CZK(x',y)}+\abs{\CZK(y,x)-\CZK(y,x')} \lesssim \frac{\abs{x-x'}^{\tau}}{\abs{x-y}^{\ds+\tau}},
\quad
\abs{x-x'} \leq \frac12 \abs{x-y}.
\end{equation}
There are several equivalent ways to formalize the notion of an associated \CZ{} operator and its ($L^{2}$-)boundedness.
Perhaps the easiest condition to state is that the truncated operators
\begin{equation}
\label{eq:truncated-CZO}
T_{\Rmin}^{\Rmax}f(x) := \int_{\Rmin<\abs{x-y}<\Rmax} \CZK(x,y) f(y) \dif y
\end{equation}
are uniformly bounded on $L^{2}(\R^{\ds})$.
In this case the principal value integral
\[
Tf(x) = \lim_{\Rmin\to 0, \Rmax\to \infty} T_{\Rmin}^{\Rmax}f(x)
\]
exists almost everywhere and defines a bounded operator on $L^{2}(\R^{\ds})$.

General conditions for $L^{2}$-boundedness of an operator associated to a \CZ{} kernel are given by the $T(1)$ theorem \cite{MR763911} and its generalizations such as the $T(b)$ theorem \cite{MR850408}.
A classical example of a non-translation invariant \CZ{} kernel arises from the Cauchy integral on a Lipschitz curve.
Other examples are associated to pseudodifferential operators, see e.g.\ \cite[p.\ 294]{MR1085488} or the English translation \cite[p.\ 80]{MR1456993}.
It has been shown in \cite{MR2654142} that maximal modulations by linear phases of certain pseudodifferential operators to which the $T(1)$ theorem applies define bounded operators on $L^{p}(\R^{\ds})$, $1<p<\infty$.

\subsection{Polynomial modulations}
One of the first results in a series of papers of Ricci and Stein on singular integrals and singular Radon transforms on nilpotent Lie groups \cite{MR822187,MR890662,MR937632,MR1021141} was the boundedness on $L^{p}(\R^{\ds})$ of the polynomially modulated singular integral operator
\begin{equation}
\label{eq:RS-polymod-CZ-op}
Tf(x) = \int_{\R^{\ds}} e(P(x,y)) K(x,y) f(y) \dif y,
\end{equation}
where $P$ is a polynomial in $2\ds$ variables, with a bound that depends only on the degree of $P$ but not on its coefficients.
Motivated by this result, Sj\"olin's multidimensional Carleson theorem, and another result of Stein in the case $\ds=1$, $d=2$ \cite{MR1364908}, Stein and Wainger \cite{MR1879821} have asked whether (at least in in the translation-invariant case) the more general operator
\begin{equation}
\label{eq:poly-CS}
C_{k,d} f(x) = \sup_{P : \deg P \leq d} \abs[\Big]{\int_{\R^{\ds}} e(P(y)) k(y) f(x-y) \dif y},
\end{equation}
the supremum being taken over all real polynomials in $\ds$ variables of degree $\leq d$, could be bounded on $L^{p}(\R^{\ds})$.
They succeeded in establishing this with a supremum over polynomials without linear terms.
Their result is in some sense orthogonal to the Carleson--Sj\"olin result because forbidding linear terms in the polynomial $P$ eliminates all modulation invariances from \eqref{eq:poly-CS}.

The weak type $(2,2)$ estimate for the operator \eqref{eq:poly-CS} with an unrestricted supremum over all polynomials of a given degree $\leq d$ has been obtained by Lie, initially in the case $d=2$ \cite{MR2545246}, and subsequently for general $d$ \cite{arxiv:0805.1580}, in the one-dimensional case $\ds=1$ for the Hilbert kernel $k(t)=1/t$.
These articles followed Fefferman's approach to Carleson's theorem in \cite{MR0340926}.
Lie has subsequently refined \cite{arXiv:1105.4504} this approach in such a way that it yields the strong type $(2,2)$ estimate without an appeal to Marcinkiewicz type interpolation.
This is a remarkable development in view of possible applications to directional singular integrals on which we will comment later.

We show that the operator \eqref{eq:poly-CS}, and even its maximally truncated and non-translation invariant version, is bounded on $L^{p}(\R^{\ds})$ for $1< p<\infty$.

\begin{theorem}[{\cite{arxiv:1711.03524}}]
\label{thm:poly-car}
Let $\ds\geq 1$ and let $\CZK$ be a $\tau$-H\"older continuous Calder\'on--Zygmund kernel on $\R^{\ds}$.
We define the associated maximally polynomially modulated, maximally truncated singular integral operators by
\begin{equation}
\label{eq:Car-op-Kd}
C_{K,d} f(x):=\sup_{Q\in\calQ_{d}} \sup_{0< \Rmin \leq \Rmax < \infty}
\abs[\Big]{\int_{\Rmin \leq \abs{x-y} \leq \Rmax} \CZK(x,y) e(Q(y)) f(y) \dif y},
\end{equation}
where $\calQ_{d}$ denotes the class of all polynomials in $\ds$ variables with real coefficients and degree at most $d\in\N$.

Suppose that the truncated integral operators \eqref{eq:truncated-CZO} associated to $\CZK$ are bounded on $L^{2}(\R^{\ds})$ uniformly in $0<\Rmin<\Rmax<\infty$.
Then the operator \eqref{eq:Car-op-Kd} is bounded on $L^{p}(\R^{\ds})$ for every $1< p<\infty$ and every $d\in\N$.
\end{theorem}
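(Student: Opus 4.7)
My plan is to prove Theorem~\ref{thm:poly-car} by time-frequency analysis, combining Fefferman's and Lie's treatment of polynomial phases with standard techniques for non-translation-invariant \CZ{} kernels. First, I would linearize the two suprema in \eqref{eq:Car-op-Kd} by a measurable selection $x\mapsto(Q_x,\Rmin(x),\Rmax(x))$, reducing the theorem to a uniform bound on the linearized operator
\[
Tf(x) := \int_{\Rmin(x)\leq\abs{x-y}\leq\Rmax(x)} \CZK(x,y)\, e(Q_x(y))\, f(y) \dif y.
\]
For $Q_x\equiv 0$ the hypothesis combined with a standard \CZ{} decomposition already yields all $L^{p}$, $1<p<\infty$; the substantive task is to introduce the maximal polynomial phase without losing control. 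I would aim to work in the outer-$L^{p}$ framework of Do--Thiele, which yields the full range $1<p<\infty$ in one step and avoids a separate weak-type endpoint.

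Second, I would set up a phase-space discretization. For a dyadic cube $I\subset\Rs$ of sidelength $2^{\scale}$ and a monomial $y^{\alpha}$ with $\abs{\alpha}\leq d$, the coefficient of $y^{\alpha}$ is naturally discretized on a grid of spacing $2^{-\scale\abs{\alpha}}$: a change of that magnitude only shifts $e(Q(y))$ by $O(1)$ on $I$. This produces \emph{tiles} $P=(I_P,Q_P)$ with associated wave packets $\phi_P$ (bumps adapted to $I_P$ modulated by $e(Q_P)$), and I would rewrite $Tf$ as a sum $\sum_{P}a_P(x)\langle f,\phi_P\rangle$ in which $a_P$ encodes the size and smoothness of $\CZK$.

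Third comes the core of the argument. Tiles are organized into \emph{trees}: a tree $T$ with top $P_T$ collects those $P$ with $I_P\subset I_{P_T}$ whose polynomial $Q_P$ is close to $Q_{P_T}$ on $I_P$ at the appropriate scale. One proves a \emph{single-tree estimate} bounding the tree's $L^{p}$ contribution by $\size(T)\meas{I_{P_T}}^{1/p}$, and a \emph{counting lemma} bounding $\sum_{T:\size(T)\sim 2^{-k}}\meas{I_{P_T}}$ in terms of $\norm{f}_{p}$. A greedy selection produces trees of geometrically decreasing size, and H\"older summation over $k$ yields the $L^{p}$ estimate. I expect the main obstacle to be the choice of tile, tree, and size adapted uniformly to all $d$ and $\ds$: the polynomial space $\calQ_{d}$ does not form a group interacting cleanly with spatial dilations, and the non-translation-invariant kernel cannot be folded into a single wave packet, so the tile combinatorics is substantially more delicate than in the $\ds=1$, Hilbert-kernel case treated by Lie.

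Finally, the non-translation invariance is handled in the spirit of the $T(1)$/Christ--Journ\'e paraproduct decomposition: \eqref{eq:K-size} and \eqref{eq:K-reg} allow splitting the kernel into a translation-invariant-like piece after wave-packet localization plus a paraproduct remainder controlled by the $L^{2}$ hypothesis and standard BMO bounds. These modifications enter the tree estimate but leave the counting step structurally unchanged, so the assembly over trees goes through as in the translation-invariant setting.
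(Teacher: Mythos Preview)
Your plan follows the Lacey--Thiele wave-packet template, but for polynomial modulations of degree $d\geq 2$ the step ``rewrite $Tf=\sum_P a_P(x)\langle f,\phi_P\rangle$'' hides a genuine obstruction. In the linear case the wave packets $\phi_P=e(Q_P)\chi_{I_P}$ at a fixed spatial cube $I$ are (almost) orthogonal, and this Bessel/energy inequality is what drives both the size--counting machinery and the Do--Thiele embeddings. For degree $d\geq 2$ the polynomial uncertainty region at scale $I$ has dimension $\dim\calQ_d\geq 2$, while van~der~Corput gives only $|\langle\phi_P,\phi_{P'}\rangle|\lesssim|I|\,\|Q_P-Q_{P'}\|_I^{-1/d}$; a Schur test then sums $\sum_{\lambda\in 2^{\N}}\lambda^{\dim\calQ_d-1/d}$, which diverges. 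There is no inequality $\sum_P|\langle f,\phi_P\rangle|^2\lesssim\|f\|_2^2$, hence no energy embedding, and the counting lemma you sketch has nothing to count against. The paper itself flags (near Conjecture~\ref{conj:var-poly-Car}) that transporting the Lacey--Thiele argument to polynomial phases is open.

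What the paper actually does is Fefferman--Lie, with no wave-packet expansion of $f$ at all. The tile operator is $T_{\Tp}f(x)=\one_{E(\Tp)}(x)\int e(Q_x(x)-Q_x(y))\CZK_{\scale(\Tp)}(x,y)f(y)\,\dif y$, carrying the linearized $Q_x$ and the full kernel; for each $x$ only one tile per scale contributes. The organizing quantity is \emph{density} $\mdens_k(\Tp)$ (relative measure of $E(\Tp)$), not a size built from $\langle f,\phi_P\rangle$. A spatial stopping time (Lemma~\ref{lem:spatial-decomposition}) together with Fefferman forest selection (Proposition~\ref{prop:fef-forest}) partitions the tiles into $2^{Cn}$-separated forests and antichains of density $\lesssim 2^{-n}$. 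The single-tree estimate (Lemma~\ref{lem:tree}) compares $T_{\TT}$ directly to the non-tangentially maximally truncated \CZO{} via Cotlar's inequality---this is where the $L^2$ hypothesis on $\CZK$ and the H\"older regularity enter, with no paraproduct decomposition needed---and separated trees are decoupled by a van~der~Corput bound on $T_{\TT_1}^{*}T_{\TT_2}$ (Lemma~\ref{lem:sep-trees}), assembled by Cotlar--Stein. The output is a family of \emph{localized} $L^2$ estimates (Theorem~\ref{thm:loc}); the full range $1<p<\infty$ then comes from the Bateman--Thiele extrapolation Theorem~\ref{thm:BT-VV}, not from outer $L^p$.
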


Theorem~\ref{thm:poly-car} extends the previously mentioned results.
The extension to H\"older regular kernels is new even in the case $\ds=d=1$.

\section{Multilinear Hilbert transforms}
The multilinear Hilbert transforms have been originally invented as a tool to handle the Cauchy integral on Lipschitz curves.
One of the ways to write the latter operator is
\[
Cf(x) = \pvint_{\R} \frac{f(y)}{x-y+i(A(x)-A(y))} \dif y,
\]
where $A : \R \to \R$ is a Lipschitz function.
Assuming $\norm{A}_{\Lip} < 1$ and disregarding other convergence issues this can be written as
\begin{align*}
Cf(x)
&=
\int_{\R} \big( 1 + i \frac{A(x)-A(x-t)}{t} \big)^{-1} \frac{f(x-t)}{t} \dif t\\
&=
\int_{\R} \sum_{k=0}^{\infty} \big( - i \frac{A(x)-A(x-t)}{t} \big)^{k} \frac{f(x-t)}{t} \dif t\\
&=
\sum_{k=0}^{\infty} (-i)^{k} C_{k}(f,A),
\end{align*}
where
\[
C_{k}(f,A)(x) = \int_{\R} \big(\frac{A(x)-A(x-t)}{t} \big)^{k} \frac{f(x-t)}{t} \dif t
\]
is the \emph{$k$-th Calder\'on commutator}.
Writing $a=A'$ and representing the difference of $A$'s as an integral of $a$ we obtain
\begin{align*}
C_{k}(f,A)(x)
&=
\int_{\R} \big(\int_{0}^{1} a(x-\beta t) \dif \beta \big)^{k} \frac{f(x-t)}{t} \dif t\\
&=
\int_{[0,1]^{k}} \tilde\Lambda_{1,\beta_{1},\dotsc,\beta_{k}}(f,a,\dotsc,a)(x) \dif \beta_{1} \dotsm \dif \beta_{k},
\end{align*}
where
\begin{equation}
\label{eq:multilinearHT-op}
\tilde\Lambda_{\beta_{0},\dotsc,\beta_{k}}(f_{0},\dotsc,f_{k})(x)
=
\int_{\R} \prod_{j=0}^{k} f_{j}(x-\beta_{j}t) \frac{\dif t}{t}
\end{equation}
is the \emph{$(k+1)$-linear Hilbert transform}.
Estimates for the Calder\'on commutators would therefore follow from uniform (in $\beta_{0},\dots,\beta_{k}$) estimates for $\tilde\Lambda$ as a multilinear operator $L^{p} \times L^{\infty} \times \dotsb \times L^{\infty} \to L^{p}$.
For the purpose of the following discussion we will consider the dual form of the $\dm$-linear Hilbert transform:
\begin{equation}
\label{eq:multilinearHT}
\Lambda_{\beta_{0},\dotsc,\beta_{\dm}}(f_{0},\dotsc,f_{\dm})
:=
\int_{\R} \pvint_{\R} \prod_{i=0}^{\dm} f_i(x-\beta_{i}t) \frac{\dif t}{t} \dif x.
\end{equation}
It can be conjectured that this form is bounded on $L^{p_{0}}\times\dotsb\times L^{p_{\dm}}$ with $\dm<p_{i}<\infty$, say\footnote{This is a conservative choice because this is the easiest case when $\dm=2$}, provided that the necessary scaling condition
\begin{equation}
\label{eq:Lp-range}
\sum_{i=0}^{\dm} \frac{1}{p_{i}} = 1.
\end{equation}
holds.
The evidence pointing in the direction of this conjecture is quite circumstantial.
First, it has been known for a long time that Calder\'on commutators and the Cauchy integral on Lipschitz curves are in fact $L^{p}$ bounded operators \cite{MR0466568,MR672839,MR763911}, so that nowadays the above calculations should be seen as corroborating the conjecture about the multilinear Hilbert transform rather than a serious path to the former results.

Before stating the known results for $\dm=2$ let us recall the way to generalize $L^{p}$ estimates to negative exponents $p$.
An $(\dm+1)$-linear form $\Lambda$ is said to have \emph{generalized restricted type $(p_{0},\dotsc,p_{\dm})$} if for every tuple of measurable sets $E_{0},\dotsc,E_{\dm}$ and $\epsilon>0$ there exist subsets $E_{j}'\subset E_{j}$ that are \emph{major} in the sense that $\abs{E_{j}'} > (1-\epsilon) \abs{E_{j}}$ such that for any functions $f_{j}$ with $\abs{f_{j}} \leq \one_{E_{j}'}$ we have
\begin{equation}
\label{eq:gen-rest-type}
\abs{\Lambda(f_{0},\dotsc,f_{\dm})} \lesssim_{\epsilon} \prod_{j=0}^{\dm} \abs{E_{j}}^{1/p_{j}}.
\end{equation}
It is clear that the set of tuples $(1/p_{j})_{j}$ such that $\Lambda$ has generalized restricted type $(p_{j})$ is convex.
For the multilinear Hilbert transforms it is contained in the hyperplane described by \eqref{eq:Lp-range}, and \eqref{eq:gen-rest-type} interpolates to $L^{p}$ estimates at the relative interior points inside this hyperplane.

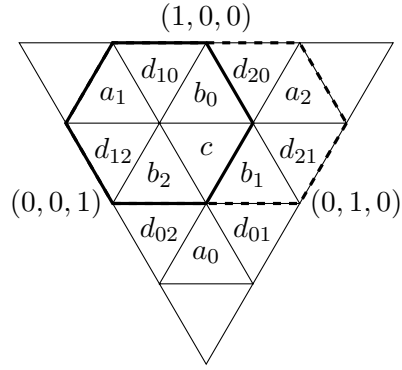
\begin{figure}
\begin{center}
\begin{tikzpicture}[x=70pt,y=70pt,cm={sin(30),-cos(30), -sin(30) ,-cos(30) ,(0,0)}]
% Coordinates: x=\alpha_{1}, y=\alpha_{2}
\draw
(-1,1) -- (1,1) -- (1,-1) -- cycle
(1,-0.5) -- (0.5,-0.5) -- (0.5,1) -- (1,0.5) -- (-0.5,0.5) -- (-0.5,1) -- cycle
(1,0) node[right] {$(0,1,0)$} -- (0,0) node[above] {$(1,0,0)$} -- (0,1) node[left] {$(0,0,1)$} -- cycle
(0.33,0.33) node{$c$}
(0.16,0.16) node{$b_{0}$}
(0.16,0.66) node{$b_{2}$}
(0.66,0.16) node{$b_{1}$}
(0.33,0.83) node{$d_{02}$}
(-0.16,0.83) node{$d_{12}$}
(0.83,0.33) node{$d_{01}$}
(0.83,-0.16) node{$d_{21}$}
(0.33,-0.16) node{$d_{20}$}
(-0.16,0.33) node{$d_{10}$}
(0.66,0.66) node{$a_{0}$}
(0.66,-0.33) node{$a_{2}$}
(-0.33,0.66) node{$a_{1}$}
;
\draw[very thick] (0.5,0) -- (0.5,0.5) -- (0,1) -- (-0.5,1) -- (-0.5,0.5) -- (0,0) -- cycle;
\draw[very thick,dashed] (1,-0.5) -- (1,0) -- (0,1) -- (-0.5,1) -- (-0.5,0.5) -- (0.5,-0.5) -- cycle;
\end{tikzpicture}
\end{center}
\caption{Ranges of exponents satisfying the scaling condition \eqref{eq:Lp-range} in coordinates $(1/p_{0},1/p_{1},1/p_{2})$.}
\label{fig:exponents}
\end{figure}

For $\dm=2$ the picture of the $L^{p}$ estimates for \eqref{eq:multilinearHT}, known in this case as the bilinear Hilbert transform, seems to be nearing completion.
Figure~\ref{fig:exponents} shows the isometric projection of the set of tuples $(1/p_{0},1/p_{1},1/p_{2})$ satisfying \eqref{eq:Lp-range}.

We begin with the non-uniform estimates with all $\beta_{j}$ distinct.
Estimates in the local $L^{2}$ range (triangle $c$) have been first obtained by Lacey and Thiele in \cite{MR1491450}.
These estimates have been later extended to the outermost triangles $a_{j}$ in \cite{MR1689336}.
The Hilbert kernel has been replaced by more general symbols in \cite{MR1845098}.
It is currently not known whether these estimates can be further extended to the non-labeled triangles.

The range of exponents for which uniform estimates are possible is necessarily smaller.
Before discussing this issue let us notice that the form \eqref{eq:multilinearHT} does not change if we add the same number to all $\beta_{j}$'s, so we may assume without loss of generality $\beta_{0}=0$.
Moreover, it also does not change if we multiply all $\beta_{j}$ by the same non-zero real number, so we assume $\beta_{2}=1$.
Now, $\Lambda_{0,1,1}(f_{0},f_{1},f_{2})=\Lambda_{0,1}(f_{0},f_{1}f_{2})$.
In other words, in this degenerate case $\Lambda$ collapses to the bilinear form that is dual to the usual Hilbert transform.
Since in this degenerate case there is no $L^{p}$ estimate with $p_{0}=\infty$, the estimates on $L^{\infty} \times L^{p} \times L^{p'}$ for $\Lambda_{0,\beta_{1},1}$ cannot be uniform as $\beta_{1}\to 1$.
In Figure~\ref{fig:exponents} this corresponds to excluding $d_{02} \cup a_{0} \cup d_{01}$.

Since all other degeneracies of the bilinear Hilbert transform are equivalent up to a permutation of indices, we restrict ourselves to the case $\beta_{1}\to 1$.
The first uniform estimate in this regime has been proved in \cite{MR1933076}, it was a weak type estimate at the common vertex of $c$ and $a_{1}$.
Estimates in the interior of the triangle $c$ have been obtained in \cite{MR2113017} and in the interior of the triangle $a_{1}$ in \cite{MR2320411}.
In the regime $\beta_{1}\to 1$ the set of tuples of exponents for which uniform estimates hold is mirror symmetric across the vertical axis in Figure~\ref{fig:exponents}, so uniform estimates are also available in the triangle $a_{2}$, and hence in the convex hull of $a_{1}\cup a_{2}\cup c$.

For a dyadic model (explained in more detail in Section~\ref{sec:intro:dyadic}) uniform estimates in the triangle $d_{12}$ have been obtained by Oberlin and Thiele \cite{MR2997005}.
Hence for this dyadic model the question which of the known non-uniform estimates have uniform refinements is settled.
It is conjectured that the same estimates (that is, for all tuples of exponents inside the bold hexagon in Figure~\ref{fig:exponents}) should also hold in the real case.

For $\dm\geq 3$ not much is known.
On the negative side, for $\dm=3$ the estimate \eqref{eq:gen-rest-type} fails for the trilinear Hilbert transform if $p_{1}=p_{2}=p_{3}<1+\log_{6}2/(1+\log_{6}2)$ \cite{MR2449537}.
Consider now the multilinear form
\begin{equation}
\label{eq:multiplier-form}
\Lambda(f_{0},\dotsc,f_{\dm}) = \int \delta(\xi_{0}+\dotsb+\xi_{\dm}) m(\xi_{0},\dotsc,\xi_{\dm}) \widehat{f_{0}}(\xi_{0}) \dotsm \widehat{f_{\dm}}(\xi_{\dm}) \dif\xi_{0} \dotsm \dif\xi_{\dm},
\end{equation}
where $m$ is a function on the hyperplane $\Set{\xi_{0}+\dotsb+\xi_{\dm} = 0}$ that satisfies
\begin{equation}
\label{eq:Gamma-multiplier}
\abs{\partial^{\gamma} m(\xi)} \lesssim \dist(\xi,\Gamma)^{-\abs{\gamma}}
\end{equation}
for some linear subspace $\Gamma$.
Bounds for this form have been proved in \cite{MR1845098} for $\dm=2$ and $\dim\Gamma=1$ and in \cite{MR1887641} for $\dm\geq 2$ and $0 \leq \dim\Gamma < (\dm+1)/2$.
The bilinear Hilbert transforms can be recovered in the case when $\Gamma$ is in general position and the multiplier $m$ is translation invariant in the direction of $\Gamma$.
On the other hand, for $\dm\geq 4$ and generic subspaces $\Gamma$ with $\dim\Gamma=\dm-1$ there exist multipliers $m$ with \eqref{eq:Gamma-multiplier} such that the form \eqref{eq:multiplier-form} fails to be bounded on any $L^{p}$ spaces \cite{MR3294566}.
The multipliers constructed in \cite{MR3294566} are not translation invariant in the direction of $\Gamma$, so this result does not contradict the conjecture on the multilinear Hilbert transforms \eqref{eq:multilinearHT}.

Let $\xi_{0},\dotsc,\xi_{\dm}$ be real numbers such that $\sum_{j} \xi_{j} = 0$ and $\sum_{j} \beta_{j}\xi_{j} = 0$.
Then we have the modulation symmetry
\[
\Lambda_{\beta_{0},\dots,\beta_{\dm}}(f_{0},\dotsc,f_{\dm})
=
\Lambda_{\beta_{0},\dots,\beta_{\dm}}(f_{0}e(\xi_{0}\cdot),\dotsc,f_{\dm}e(\xi_{\dm}\cdot)),
\]
and this symmetry is non-trivial (in the sense that not all $\xi_{j}$ vanish) if $\dm\geq 2$.
% Muscalu \cite{MR3231215,MR3255002,MR3293439}
In the case $\dm=2$ this shows that the bilinear Hilbert transform has the same modulation symmetry as the Carleson operator.
In fact, the method of proof used by Lacey and Thiele for the bilinear Hilbert transform turned out to extend \cite{MR1783613} to the Carleson operator \eqref{eq:Car-op} as a hybrid between methods of Carleson and of Fefferman.
Sj\"olin's multidimensional extension of the Carleson--Hunt theorem has also been reproved using the Lacey--Thiele approach in \cite{MR2007237} (weak type $(2,2)$) and \cite{MR2031458} ($1<p<\infty$).

The connection between these two objects, the Carleson operator on the one hand, and the bilinear Hilbert transform on the other hand, has been deepened in \cite{MR2597511}, where a certain common extension of these results has been obtained, and in \cite{MR3312633}, where the authors have introduced an abstraction of the Lacey--Thiele argument that has been subsequently applied as a black box to the Carleson operator \cite{arXiv:1610.07657}.
One of our results is a common extension of the uniform estimates for a dyadic model of the bilinear Hilbert transform and a dyadic model of the Carleson operator.
This extension is a special case of an open problem whose origin in ergodic theory we will explain next.

%Muscalu \cite{MR2343381}, 
%generalized bilinear maximal function \cite{MR2403711}
%generalized BHT \cite{MR1887641}, uniform \cite{MR1945289,MR1979774}
%fractional rank (?) \cite{MR2873854} \cite{MR2652256}
%biest \cite{MR2127984,MR2127985,MR3329857}.

\section{Ergodic theory}
The interest in multilinear operators in ergodic theory stems from Furstenberg's ergodic proof of Szemer\'edi's theorem on arithmetic progressions in sets of positive upper density.
\begin{theorem}[{\cite{MR0369312}}]
Let $A\subset\N$ have \emph{positive upper density}
\[
\overline{d}(A) := \limsup_{N\to\infty} \abs{A \cap [1,N]}/N > 0.
\]
Then $A$ contains arithmetic progressions of arbitrary length $\dm$, that is, subsets of the form $\Set{x+t,\dotsc,x+\dm t}$, where $t>0$.
\end{theorem}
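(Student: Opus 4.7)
The plan is to follow Furstenberg's ergodic-theoretic strategy \cite{MR0369312}, which proceeds in two stages: a transference (the \emph{correspondence principle}) that converts the combinatorial statement into a dynamical one, followed by a \emph{multiple recurrence theorem} proved by a structural induction on $\dm$.

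\textbf{Correspondence principle.} Given $A\subset\N$ with $\overline{d}(A)>0$, I would choose a subsequence $N_{k}\to\infty$ realising the $\limsup$ in the definition of upper density, let $T$ be the shift on $X=\Set{0,1}^{\Z}$, and let $\tilde{A}=\Set{\omega\in X\given \omega_{0}=1}$ be the time-zero cylinder. A weak-$*$ cluster point $\mu$ of the empirical measures $\frac{1}{N_{k}}\sum_{n=1}^{N_{k}}\delta_{T^{n}\one_{A}}$ is a shift-invariant Borel probability with $\mu(\tilde{A})\geq\overline{d}(A)$ and
\[
\overline{d}\bigl(A\cap(A-t)\cap\dotsb\cap(A-(\dm-1)t)\bigr) \geq \mu\bigl(\tilde{A}\cap T^{-t}\tilde{A}\cap\dotsb\cap T^{-(\dm-1)t}\tilde{A}\bigr).
\]
It therefore suffices to prove the multiple recurrence theorem: for every invertible measure-preserving system $(X,\mathcal{B},\mu,T)$ and every $B\in\mathcal{B}$ with $\mu(B)>0$,
\[
\liminf_{N\to\infty}\frac{1}{N}\sum_{n=1}^{N}\mu\bigl(B\cap T^{-n}B\cap\dotsb\cap T^{-(\dm-1)n}B\bigr) > 0.
\]
Any point in a positive-measure intersection immediately supplies a $\dm$-term progression $\Set{x+n,\dotsc,x+\dm n}\subset A$ after relabelling.

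\textbf{Structural induction.} I would prove the multiple recurrence theorem by induction on $\dm$. The base case $\dm=2$ reduces via the mean ergodic theorem and Cauchy--Schwarz to $\int_{B} P\one_{B}\geq \mu(B)^{2}>0$, where $P$ denotes the projection onto the $T$-invariant $\sigma$-algebra. For the inductive step I would invoke Furstenberg's structure theorem, which presents any ergodic system as a (possibly transfinite) tower of \emph{compact} extensions of the trivial factor, capped by a \emph{weakly mixing} extension. It then suffices to show that the SZ-property (positivity of the $\liminf$ above) is preserved under both kinds of extensions. Weakly mixing extensions are handled by a van der Corput inequality that trades one factor in the $\dm$-fold product for orthogonality and reduces to the $(\dm-1)$-fold inductive hypothesis. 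Compact extensions are handled by a \emph{relative} Szemer\'edi argument: one decomposes $\one_{B}$ over the smaller factor $\mathcal{Y}$ into an almost periodic conditional fibre-system, and then exploits the SZ-property of $\mathcal{Y}$ on a positive-density set of return times $n$ for which the fibres of $B$ are close to their translates. A Zorn's lemma argument finally forces the maximal SZ-subfactor to exhaust all of $\mathcal{B}$.

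\textbf{Main obstacle.} The technical heart is the compact-extensions step, which absorbs essentially all of the combinatorial content of Szemer\'edi's theorem. One must construct a relative Szemer\'edi machine that localises fibrewise while preserving enough of the almost periodicity of $\mathbf{E}(\one_{B}\mid\mathcal{Y})$ for progression-type inequalities on $\mathcal{Y}$ to lift coherently to $X$; getting the fibre comparisons and the density lower bounds simultaneously is delicate. The weakly mixing step is cleaner but also demands care so that the van der Corput reduction leaves the resulting $(\dm-1)$-fold average with a positive lower density rather than merely a positive $\limsup$.
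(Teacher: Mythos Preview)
Your outline is a faithful sketch of Furstenberg's ergodic proof of Szemer\'edi's theorem, and there is no substantive gap in the strategy you describe. However, the paper does not prove this statement at all: it is quoted as historical background with a citation to Szemer\'edi's original article, and Furstenberg's reformulation is then stated separately as a second cited theorem. There is therefore no ``paper's own proof'' to compare your proposal against; the theorem serves only to motivate the later discussion of multiple ergodic averages.
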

Furstenberg reformulated this result in terms of measure-preserving dynamical systems and found a new proof.
\begin{definition}
A \emph{measure-preserving dynamical system (mps)} $(X,\mu,T)$ consists of a standard probability space $(X,\mu)$ and a measure-preserving transformation $T:X\to X$, that is, a measurable map such that $\mu(T^{-1}(A))=\mu(A)$ for every measurable subset $A\subset X$.
\end{definition}
\begin{theorem}[{\cite{MR0498471}}]
Let $(X,\mu,T)$ be an mps and $A\subset X$ a measurable subset with $\mu(A)>0$.
Then for every $\dm\geq 1$ we have
\begin{equation}
\label{eq:Fu-recurrence}
\liminf_{N\to\infty} \int_{X} \frac1N \sum_{t=1}^{N} \one_{A}(T^{t}x) \dotsm \one_{A}(T^{\dm t}x) \dif \mu(x) > 0.
\end{equation}
\end{theorem}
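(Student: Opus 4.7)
The plan is to follow Furstenberg's structure-theoretic approach. Call an mps $(X,\mu,T)$ \emph{SZ} if \eqref{eq:Fu-recurrence} holds for every measurable set of positive measure. A standard application of the ergodic decomposition reduces matters to the ergodic case: the set of ergodic components in which $A$ has positive conditional measure itself has positive mass, and Fatou's lemma lets one descend the global liminf bound from the componentwise bounds. From now on I assume $(X,\mu,T)$ is ergodic.

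The core tool is Furstenberg's structure theorem, which exhibits every ergodic mps as a (possibly transfinite) inverse limit of a tower of factors, starting from the one-point system, in which each successor extension is either compact (isometric) or weakly mixing relative to its factor. It therefore suffices to show that SZ holds for the trivial system, is preserved by inverse limits, and is preserved by both compact and weakly mixing extensions. The trivial case is immediate, and the inverse limit case follows by approximating $\one_{A}$ in $L^{2}$ by a function measurable with respect to a single factor in the tower and absorbing the error into the averages. For a weakly mixing extension I would study the multilinear averages
\[
A_{N}(f_{1},\dotsc,f_{\dm})(x) := \frac{1}{N}\sum_{t=1}^{N} \prod_{j=1}^{\dm} f_{j}(T^{jt}x),
\]
and, by iterating the van der Corput lemma in $\dm$, show that their $L^{2}$-limit is unchanged when each $f_{j}$ is replaced by its conditional expectation $\bfE(f_{j}\mid\calF)$ onto the smaller factor $\calF$; this transfers positivity of the liminf from $\calF$ to the extension.

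The main obstacle will be the compact extensions step, which requires a relative version of van der Waerden's theorem. In a compact extension the conditional disintegration over the factor supports an almost-periodic action on the fibres, so the orbit $t\mapsto(T^{t}f,T^{2t}f,\dotsc,T^{\dm t}f)$, viewed in an appropriate $L^{2}$ Hilbert bundle over the factor, is uniformly almost periodic. Combining this with the SZ property already established for the factor and applying a pigeonhole/van der Waerden argument relativized to the bundle, one locates arithmetic progressions of common return times along which all of $T^{t}f,\dotsc,T^{\dm t}f$ are simultaneously close to $f$ on a large conditional-measure set; this produces a positive-density set of $t$'s on which the integrand in \eqref{eq:Fu-recurrence} is uniformly bounded below on a set of positive $\mu$-measure, yielding SZ for the extension. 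Stringing the four cases together through the transfinite tower furnishes SZ for $(X,\mu,T)$ and hence the theorem.
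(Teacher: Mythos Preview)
The paper does not prove this theorem; it is quoted from the literature as background (the citation \cite{MR0498471} is Furstenberg's original paper) and no argument for it appears anywhere in the text. Your outline is a faithful high-level sketch of Furstenberg's own structure-theoretic proof---ergodic decomposition, transfinite tower of compact and relatively weakly mixing extensions, van der Corput iteration for the weakly mixing step, and the relative van der Waerden argument for the compact step---so there is nothing in the paper to compare it against.
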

Since then it has been of interest to know whether the integrand converges and in which sense.
More generally, replacing the characteristic functions by general functions, one asks whether
\begin{equation}
\label{eq:Fu-averages}
\lim_{N\to\infty} \frac1N \sum_{t=1}^{N} f_{1}(T^{t}x) \dotsm f_{\dm}(T^{\dm t}x)
\end{equation}
exists in some sense (usually $L^{p}$ or pointwise almost everywhere).

\subsection{Norm convergence}
The question of norm convergence (say, in $L^{2}$ if all functions $f_{1},\dots,f_{\dm}$ are bounded) has been solved via a fine structural analysis of measure-preserving systems.
A \emph{factor} of the measure-preserving system $(X,\mu,T)$ is a measure-preserving system $(Y,\nu,S)$ together with an equivariant measure-preserving map $\pi : X \to Y$, that is, with a commuting diagram
\begin{center}
\begin{tikzcd}
X \arrow{r}{T} \arrow{d}{\pi} & X \arrow{d}{\pi}\\
Y \arrow{r}{S} & Y
\end{tikzcd}
\end{center}
A factor is called \emph{characteristic} for the averages \eqref{eq:Fu-averages} if the limit \eqref{eq:Fu-averages} is $0$ whenever one of the functions $f_{j}$ is orthogonal to $L^{2}(Y)$ (identified with a subspace of $L^{2}(X)$).
In particular, for the purpose of studying convergence of \eqref{eq:Fu-averages} we may replace $(X,\mu,T)$ by a characteristic factor.

Assume that the measure-preserving system $(X,\mu,T)$ is \emph{ergodic}, that is, all measurable $T$-invariant subsets of $X$ have measure either $0$ or $1$ (a general measure-preserving system can be represented as a direct integral of erdogic systems).
For $\dm=1$ it is a classical fact and one of the possible formulations of von Neumann's mean ergodic theorem that the minimal characteristic factor for \eqref{eq:Fu-averages} is the \emph{invariant factor}, which for ergodic measure preserving systems consists of one point with the identity transformation.
For $\dm=2$ it is an almost equally classical fact that the minimal characteristic factor for \eqref{eq:Fu-averages} is the \emph{Kronecker factor} that is also the maximal factor $(Y,\nu,S)$ such that $Y$ is a compact commutative group, $\nu$ is the Haar measure, and $Sy=y+s$ for some $s\in Y$.
% Here a modulation invariance becomes apparent: if $\chi$ is a character on $Y$, then
% \[
% \frac1N \sum_{t=1}^{N} (f_{1}\chi^{2})(S^{t}y) (f_{2}\overline{\chi})(S^{2 t}y)
% =
% \chi(y) \frac1N \sum_{t=1}^{N} f_{1}(S^{t}y) f_{2}(S^{2 t}y).
% \]

A \emph{$d$-step nilsystem} is a compact quotient $G/\Gamma$ of a $d$-step nilpotent Lie group $G$ by a discrete subgroup $\Gamma$ with the Haar measure and a measure-preserving map of the form $S(g\Gamma) = g_{0}g\Gamma$ with some $g_{0}\in G$.
The appropriate extension of the result about the Kronecker factor to the case $\dm\geq 3$ tells that there is a characteristic factor for \eqref{eq:Fu-averages} that is an inverse limit of $(\dm-1)$-step nilsystems.
This has been proved for $\dm=3$ in \cite{MR788966,MR939438,MR1827115} and for general $\dm$ in \cite{MR2150389,MR2257397}.
Here an inverse limit is taken in the usual categorical sense as the minimal (up to isomorphy) measure-preserving system $(Y,\nu,S)$ in the commutative diagram
\[
Y \to \dotsb \to G_{n}/\Gamma_{n} \to \dotsb \to G_{0}/\Gamma_{0},
\]
where each $G_{n}/\Gamma_{n}$ is a $(\dm-1)$-step nilsystem and arrows are factor maps (that can be assumed to be continuous \cite{MR2600993}).
The analysis of the averages \eqref{eq:Fu-averages} on nilsystems and their inverse limits is then relatively easy \cite{MR2122919}.

The construction of characteristic nilfactors in \cite{MR2150389} uses a version of \emph{uniformity \mbox{(semi-)}\-norms} introduced in Gowers's effective proof of Szemer\'edi's theorem \cite{MR1631259,MR1844079}.
Green and Tao became interested in transferring this construction to the integers in connection with their result that the primes contain arbitrarily long arithmetic progressions \cite{MR2415379} (the motivating special case of Erd\H{o}s's conjecture on arithmetic progressions).
Together with Ziegler they have proved the \emph{inverse theorem for Gowers uniformity norms} \cite{MR2950773} that tells that every sequence whose Gowers uniformity norm is bounded from below correlates with a \emph{nilsequence}, that is, a sequence of the form $F(g^{n}\Gamma)$, where $(G/\Gamma,g)$ is a nilsystem and $F$ is a Lipschitz function on $G/\Gamma$ (with quantitative control on some structural constants of $G,\Gamma$).

\subsection{Pointwise convergence}
Birkhoff's pointwise ergodic theorem tells that in the case $\dm=1$ the averages \eqref{eq:Fu-averages} converge pointwise almost everywhere for every $f_{1}\in L^{1}(X,\mu)$.
The easy way to see this uses the decomposition $L^{2}(X) = \operatorname{Fix} T \oplus \overline{(I-T)(L^{\infty}(X))}$ for the unitary operator $Tf=f\circ T$ (this decomposition follows e.g.\ from the spectral theorem).
Pointwise convergence of the averages \eqref{eq:Fu-averages} is easy to show for functions in the spaces $\operatorname{Fix} T$ and $(I-T)(L^{\infty}(X))$, and one can pass to the closure using boundedness of the Hardy--Littlewood maximal operator.

For $\dm>1$ convergence on a dense subclass is not easy to prove and is only known in the bilinear case $\dm=2$ (due to Bourgain \cite{MR1037434}).
A distinctive feature of Bourgain's approach is that convergence has to be quantified in order for real analysis methods to be applicable (since only local estimates can be transferred from the real line to general mps by the Calder\'on transference principle \cite{MR0227354}).
The quantitative device used by Bourgain were so-called oscillation inequalities introduced in \cite{MR950982}.
A conceptually clearer approach to Bourgain's result using the framework of Lacey and Thiele has been later given by Demeter \cite{MR2417419} (for truncations of the Hilbert kernel).

More recently a different device for quantifying convergence entered service.
Let $(a_{T})_{T}$ be a sequence of complex numbers indexed by a totally ordered set and $0<r<\infty$.
The \emph{$r$-variation seminorm} of $(a)$ is defined by
\[
V^{r}(a_{T}|T) := \sup_{T_{0}<T_{1}<\dotsb < T_{J}} \big( \sum_{j} \abs{a_{T_{j+1}}-a_{T_{j}}}^{r} \big)^{1/r}.
\]
This family of $r$-variation seminorms is monotonically decreasing in $r$.
In the limiting case $r=\infty$ we obtain the $\ell^{\infty}$ norm modulo addition of constants.
\begin{theorem}[{\cite{MR3623404}}]
Let $\CZK:\R\to\C$ be a function such that $\abs{\hat{\CZK}(\xi)} \lesssim \min (1,\abs{\xi}^{-1})$ and $\abs{\partial^{n} \hat{\CZK}(\xi)} \lesssim \min(\abs{\xi}^{-n+1},\abs{\xi}^{-n-1})$ for $n\geq 1$.
Let $1<p_{1},p_{2}\leq\infty$ and $0 < 1/q = 1/p_{1} + 1/p_{2} < 3/2$.
Then for every $r>2$ we have
\[
\norm{ V^{r}( \int f_{1}(x+y) f_{2}(x-y) t^{-1} \CZK(t^{-1}y) \dif y | t) }_{L^{q}_{x}}
\lesssim
\norm{f_{1}}_{p_{1}} \norm{f_{2}}_{p_{2}}.
\]
\end{theorem}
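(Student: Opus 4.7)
\emph{Proof proposal.} The plan is to reduce the bilinear variational estimate to the two qualitatively different regimes that appear in the Fourier support of the kernel $\CZK$, namely the low-frequency paraproduct regime and the high-frequency modulation invariant regime, and to treat each regime by combining Lépingle-type variation inequalities with time-frequency analysis.

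First, I would use the hypothesis $\abs{\hat\CZK(\xi)} \lesssim \min(1,\abs{\xi}^{-1})$ together with the derivative bounds to split $\hat\CZK = \hat\CZK_{0} + \hat\CZK_{1}$ smoothly, with $\hat\CZK_{0}$ compactly supported and $\hat\CZK_{1}$ a Mikhlin-type symbol vanishing near the origin. After rewriting $\int f_{1}(x+y)f_{2}(x-y)\, t^{-1}\CZK(t^{-1}y)\dif y$ in frequency, the scale-$t$ bilinear operator becomes a bilinear multiplier with symbol $\hat\CZK(t(\xi_{1}-\xi_{2})/2)$. The part from $\hat\CZK_{0}$ contributes a smooth bilinear average at the frequency scale $t^{-1}$, while $\hat\CZK_{1}$ carries the singular bilinear Hilbert transform behavior concentrated away from the diagonal $\xi_{1}=\xi_{2}$.

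For the paraproduct piece coming from $\hat\CZK_{0}$, after a further Littlewood--Paley split of $f_{1}$ and $f_{2}$ I would organize the resulting trilinear cascade into ``high-high-low'' and ``high-low'' contributions. The $r$-variation in $t$ reduces by a long/short decomposition: long variations over $t\in 2^{\Z}$ are bounded by a Lépingle inequality of Jones--Seeger--Wright type applied to dyadic averages in the low-frequency input, combined with Hölder's inequality, while short variations within dyadic annuli in $t$ are controlled by a bilinear Rademacher--Menshov square function. The target $L^{q}$ bound then follows from the Fefferman--Stein vector-valued maximal inequality, and the restriction $r>2$ enters precisely to make the jump-to-variation passage summable.

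The delicate piece is the one coming from $\hat\CZK_{1}$, whose symbol resembles a bilinear Hilbert transform symbol at each dyadic frequency shell. I would decompose it into a sum over scales of bilinear operators with bumps adapted to strips $\abs{\xi_{1}-\xi_{2}}\,t \sim 2^{k}$, and for each run the Lacey--Thiele time-frequency tile decomposition, enlarged so that each tile also carries the scale parameter $t$. The central estimate to prove is a \emph{variational single-tree estimate}: the $r$-variation in $t$, restricted to wave packets in one tree, is controlled by $(\text{tree size})\cdot(\text{tree energy})$ with only logarithmic losses. Granted such a bound, the tree-selection and size/energy iteration of the standard BHT argument gives generalized restricted weak-type bounds in the range $1/q = 1/p_{1}+1/p_{2}<3/2$, and interpolation using the argument sketched after \eqref{eq:gen-rest-type} produces the strong bounds claimed.

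The main obstacle is precisely this variational single-tree estimate. The ordinary $L^{p}$ single-tree bound is an almost-orthogonality computation for wave packets of comparable frequency, but its variational refinement requires a bilinear Lépingle-type inequality for partial sums of the wave packet expansions indexed by the scale $t$. I would attempt this by interpolating between a trivial $\ell^{\infty}$ bound and an $\ell^{2}$ jump inequality extracted from a Bessel-type estimate for wave packets across scales, and then invoking the standard jump-to-$r$-variation inequality valid for $r>2$; the key technical point will be to keep track of the tile-normalizations so that the resulting variational constant depends only on the size and energy and not on the number of scales in the tree.
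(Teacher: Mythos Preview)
This theorem is not proved in the paper: it is stated in the introductory chapter as a citation from \cite{MR3623404} and used only to motivate the discussion of pointwise convergence of bilinear ergodic averages. There is no ``paper's own proof'' to compare your proposal against.

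That said, your outline is broadly consistent with the strategy in the cited work. The decomposition into a paraproduct regime (handled by long/short variation splitting and L\'epingle-type jump inequalities) and a modulation-invariant regime (handled by a variational refinement of the Lacey--Thiele tree machinery) is the correct architecture. The point you flag as the main obstacle---the variational single-tree estimate---is indeed the technical heart of the argument in \cite{MR3623404} and its predecessors, and your description of it as interpolation between an $\ell^{\infty}$ bound and a jump/Bessel estimate is accurate at the level of a sketch. What your proposal does not make explicit, and what requires real work in the actual proof, is the organization of the tile decomposition so that the variation parameter $t$ interacts correctly with the tree structure: one needs the scales within a tree to be linearly ordered in a way compatible with the truncation parameter, and the energy/size stopping-time arguments must be run so that the variational constants do not accumulate across the forest decomposition. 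Your last paragraph gestures at this but does not resolve it; in the cited paper this is handled by a careful adaptation of the phase-plane model operators and a transfer of the jump inequality through the tree projection.
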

One can approximate the characteristic function of the interval $[0,1]$ by smooth functions $K$ as in this theorem (losing control on the exponent $r$ in the process), replace continuous averages by discrete averages, and transfer to measure-preserving systems to recover Bourgain's result.

\subsubsection{Wiener--Wintner theorems}
Applying Birkhoff's pointwise ergodic theorem to the product space $X\times \R/\Z$ with the product measure, the transformation $S(x,y) = (Tx, y+\alpha)$, and the function $g(x,y)=f(x)e(y)$ one sees that also the modulated averages
\begin{equation}
\label{eq:WW-ave}
\frac1N \sum_{n=1}^{N} e(n\alpha) f(T^{n}x)
\end{equation}
converge for almost every $x$ as $N\to\infty$.
The Wiener--Wintner theorem tells that there is a full measure subset $X'\subset X$ such that for $x\in X'$ the averages \eqref{eq:WW-ave} converge for every $\alpha\in\R/\Z$.
This result is relatively easy to prove, partly because the associated maximal operator is dominated by the usual Hardy--Littlewood maximal operator.
The corresponding result for singular integrals is that the truncated singular integrals
\begin{equation}
\label{eq:WW-sing}
\sum_{n\neq 0, \abs{n}\leq N} \frac1n e(n\alpha) f(T^{n}x)
\end{equation}
converge for almost every $x$ as $N\to\infty$.
This result is necessarily more subtle because the associated maximal operator is the Carleson operator.
Due to the lack of a natural dense subset of $L^{\infty}(X)$ on which convergence in \eqref{eq:WW-sing} would be easy to show, the estimate for the Carleson operator has to be refined in order to establish convergence.
An oscillation inequality for the Carleson operator has been established in \cite{MR2430729}.
A more precise variation norm estimate follows from the main result of \cite{MR2881301} (as explained in Appendix D of that article).

The linear phase $n\alpha$ in the Wiener--Wintner theorem can be replaced by a polynomial \cite{MR1257033,MR2246591}.
Is this also the case for its singular version?
More specifically, it seems reasonable to propose the following problem.
\begin{conjecture}
\label{conj:var-poly-Car}
Let $r<\infty$ be sufficiently large.
Then for every Schwartz function $\phi$ we have
\[
\norm[\big]{ \sup_{Q\in\calQ_{d}} V^{r}( \int e(Q(y)) f(y) \frac{\phi(2^{k}(x-y)) \dif y}{x-y} | k\in\Z)  }_{2,\infty}
\lesssim
\norm{f}_{2},
\]
where the supremum is taken over all polynomials of degree at most $d$ and for simplicity we consider only smooth dyadic truncations.
\end{conjecture}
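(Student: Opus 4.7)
The plan is to adapt the tile-based approach to the polynomial Carleson operator, developed to prove Theorem~\ref{thm:poly-car}, so that it yields variation-norm control in place of a maximal-function bound, in the same spirit as the variational refinement of the $L^{2}$ Carleson theorem in \cite{MR2881301}. As a first reduction, I would linearize the outer supremum by choosing a measurable selector $x\mapsto Q_{x}\in\calQ_{d}$; it then suffices to prove the weak $L^{2}$ bound for
\[
V^{r}(T_{k}f(x) \mid k\in\Z),\qquad
T_{k}f(x) := \int_{\R} e(Q_{x}(y))\,f(y)\,\frac{\phi(2^{k}(x-y))}{x-y}\,\dif y,
\]
uniformly in the measurable selector $Q_{x}$.

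I would then split the variation seminorm into long and short parts. The short variation is controlled by an $\ell^{r}$ sum of consecutive differences $T_{k}f(x)-T_{k+1}f(x)$, whose kernels collapse to a single smooth bump at scale $2^{-k}$; the corresponding modulated single-scale operator is essentially a Littlewood--Paley piece adapted to the top-degree monomial of $Q_{x}$. Embedding these single-scale operators into an $\ell^{2}$-valued version of Theorem~\ref{thm:poly-car} yields the associated square function bound, and Rademacher--Menshov converts the $\ell^{2}$ estimate into $V^{r}$ control for any $r>2$. The long variation would be attacked by a jump inequality of L\'epingle type: it reduces to a weak $L^{2}$ bound for the maximal $\lambda$-jump counting function of $(T_{k}f)_{k}$, which is in turn dominated by the maximal polynomial Carleson operator from Theorem~\ref{thm:poly-car}.

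The hard part will be maintaining uniformity in the selector $Q_{x}$ in the long-variation estimate. The tree/forest selection from Chapter~\ref{chap:poly-car} organizes tiles according to a linearizing polynomial top and produces BMO-type forest control via size and energy lemmas. To promote this machinery to a jump estimate one must replace the maximal function inside these lemmas by a jump counting function in $k$, and re-run the selection algorithm so that each tree collects only tiles whose scales participate in a common jump. I expect that the proof of the size lemma, which is the technical heart of Chapter~\ref{chap:poly-car}, refines to this setting with essentially the same $T(1)$-type argument, at the cost of a logarithmic loss in the sparseness of the admissible scales; a final Rademacher--Menshov step then produces $V^{r}$ control for $r$ sufficiently large. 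The principal risk is that the nonlinear discretization of the polynomial parameter space interacts badly with the telescoping structure of $V^{r}$, forcing one to track the selector $Q_{x}$ through the entire selection algorithm rather than only at the final linearization step.
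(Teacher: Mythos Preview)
The statement you are attempting to prove is stated in the paper as a \emph{conjecture}, not a theorem; the paper offers no proof and explicitly flags it as open. Immediately after stating Conjecture~\ref{conj:var-poly-Car} the paper remarks that the two known special cases --- $d=1$ (variational Carleson, \cite{MR2881301}) and $r=\infty$ (polynomial Carleson, \cite{arXiv:1105.4504} and Theorem~\ref{thm:poly-car}) --- are proved by \emph{different} methods (Lacey--Thiele versus Fefferman/Lie), and that reconciling them is precisely the obstacle. So there is no ``paper's own proof'' to compare against; your proposal is an outline for attacking an open problem.

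On the substance of your outline, there is a genuine gap in the long-variation step. You write that the $\lambda$-jump counting function of $(T_{k}f)_{k}$ ``is in turn dominated by the maximal polynomial Carleson operator from Theorem~\ref{thm:poly-car}''. This is false in general: a maximal function bound controls $\sup_{k}\abs{T_{k}f}$, not $\lambda\sqrt{N_{\lambda}}$. The L\'epingle-type inequalities you invoke require either a martingale structure or an averaging structure with positive kernels; neither is present here, since the kernels $\phi(2^{k}\cdot)/(\cdot)$ are signed and the modulation $e(Q_{x}(y))$ destroys any monotonicity. In the known $d=1$ case \cite{MR2881301} the jump estimate is obtained not by reduction to a maximal operator but by re-running the entire Lacey--Thiele tree selection with a jump-adapted energy, and this is exactly what does not transfer readily to the Fefferman-style tile structure of Chapter~\ref{chap:poly-car}.

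Your short-variation sketch is also too optimistic. The difference $T_{k}f-T_{k+1}f$ has a kernel localized at a single scale, but it is still modulated by the full polynomial $e(Q_{x}(y))$, not merely its top-degree monomial; for $d\geq 2$ the lower-order coefficients vary with $x$ and there is no single frequency to which a Littlewood--Paley piece can be adapted. The phrase ``$\ell^{2}$-valued version of Theorem~\ref{thm:poly-car}'' hides the real work: Theorem~\ref{thm:poly-car} is a supremum bound, and upgrading it to a square function in the truncation parameter is essentially equivalent to the conjecture itself (since $V^{r}\lesssim \ell^{2}$ for $r\geq 2$ on the short scales).
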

As outlined before, this is known in the cases $d=1$ \cite{MR2881301} and $r=\infty$ \cite{arXiv:1105.4504}.
However, these two extensions of Carleson's theorem are proved using different methods: the first one is based on the Lacey--Thiele approach and the second on Fefferman's approach.
It would be useful to further improve our understanding of the relation between these approaches.

More generally, the polynomial phases $e(Q(\alpha))$ in the Wiener--Wintner theorem can be replaced by \emph{nilsequences} \cite{MR2544760,arxiv:1208.3977}.
This again suggests possible extensions of Carleson's theorem with maximal modulation by nilsequences.

\subsubsection{Return times}
Yet another refinement of the Wiener--Wintner theorem is the return times theorem.
\begin{theorem}
\label{thm:RTT}
Let $(X,\mu,T)$ be an mps and $f\in L^{\infty}(X)$.
Then there exists a full measure set $X'\subset X$ such that for every $x\in X$, for every other mps $(Y,\nu,S)$ and $g\in L^{\infty}(Y)$, for almost every $y\in Y$ the limit
\[
\lim_{N\to\infty} \frac1N \sum_{n=1}^{N} f(T^{n}x) g(S^{n}y)
\]
exists.
\end{theorem}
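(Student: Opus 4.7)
The plan is to adapt the Bourgain--Furstenberg--Katznelson--Ornstein proof of the return times theorem: decompose $f$ along the Kronecker factor of $(X,\mu,T)$ and treat the two pieces separately. Write $L^{2}(X) = \mathcal{K} \oplus \mathcal{K}^{\perp}$, where $\mathcal{K}$ is the closed linear span of the eigenfunctions of the Koopman operator $U_{T}h = h\circ T$, and decompose $f = f_{1}+f_{2}$ accordingly. By an ergodic decomposition I may reduce to the case where $(X,\mu,T)$ is ergodic.

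For the Kronecker piece, approximate $f_{1}$ in $L^{2}(X)$ by a finite sum $\sum_{k} c_{k}\phi_{k}$ of normalized eigenfunctions with eigenvalues $e(\alpha_{k})$. Outside a null set in $X$ one has $\phi_{k}(T^{n}x) = e(n\alpha_{k})\phi_{k}(x)$ for all $n$ and all $k$ in our countable catalogue of eigenfunctions, and the usual Hardy--Littlewood maximal estimate passes from a dense class to the $L^{2}$-closure. The bilinear averages then reduce to linear combinations of ordinary Wiener--Wintner averages $N^{-1}\sum_{n=1}^{N} e(n\alpha_{k})g(S^{n}y)$ against the target system. Crucially, the classical Wiener--Wintner theorem produces a full-measure subset of $Y$ on which convergence holds \emph{uniformly} in $\alpha\in\R/\Z$, and this choice is made only on the $(Y,S,g)$ side; thus the exceptional set $X'$ for this piece only has to absorb the countably many null sets on which the eigenfunction identities $\phi_{k}(T^{n}x)=e(n\alpha_{k})\phi_{k}(x)$ fail.

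The heart of the matter is the orthogonal piece: I must show that for almost every $x\in X$ the sequence $(f_{2}(T^{n}x))_{n}$ is a \emph{universally null weight}, meaning that for every mps $(Y,\nu,S)$ and every $g\in L^{\infty}(Y)$,
\[
\frac{1}{N}\sum_{n=1}^{N} f_{2}(T^{n}x)\,g(S^{n}y) \to 0 \quad\text{for a.e.\ } y.
\]
The principal tool is a van der Corput lemma in the $n$-variable: the square modulus of the average is dominated by $H^{-1}\sum_{h=1}^{H}$ of bilinear averages of the shifted product $f_{2}(T^{n+h}x)\,\overline{f_{2}(T^{n}x)}\cdot g(S^{n+h}y)\,\overline{g(S^{n}y)}$. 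Applying Birkhoff's pointwise ergodic theorem on the product system $(X\times Y,\,\mu\times\nu,\,T\times S)$ to the bounded function $(f_{2}\otimes g)\cdot \overline{(f_{2}\otimes g)}\circ (T^{h}\times S^{h})$ lets me replace the $n$-average, for a.e.\ $(x,y)$, by its spatial integral, whose $x$-marginal is the correlation $\int f_{2}\cdot\overline{f_{2}\circ T^{h}}\,\dif\mu$. Since $f_{2}\perp\mathcal{K}$, the spectral measure of $f_{2}$ has no atoms, so the Ces\`aro means of these correlations in $h$ vanish; letting $H\to\infty$ after $N\to\infty$ closes the argument.

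The main obstacle is the \emph{universality} of the exceptional set: the null set outside which the conclusion fails must be chosen independently of the triple $(Y,\nu,S,g)$, and one cannot directly union over an uncountable family. The standard resolution is a separability/transference argument: one reduces to a single universal model, e.g.\ by constructing a ``generic'' target system via a Rokhlin tower or by realizing arbitrary $(Y,S,g)$ inside a countably generated joining with $(X,T)$, and then applies the Calder\'on transference principle to move back. Equivalently, one identifies the class of universally null bounded sequences as a shift-invariant, norm-closed subspace stable under pointwise bounded limits, and verifies that $(f_{2}(T^{n}\cdot))$ lies in this class for a.e.\ $x$ using only one sufficiently rich auxiliary system. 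Combining this universal null statement with the Wiener--Wintner analysis of the Kronecker piece and taking the intersection of the two full-measure sets produces the desired $X'\subset X$.
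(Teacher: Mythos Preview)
The paper does not prove this theorem; it is stated as background in the introduction, with attribution to Bourgain and to the short argument of Bourgain--Furstenberg--Katznelson--Ornstein (BFKO). So there is no in-paper proof to compare against. You are, appropriately, sketching the BFKO strategy, and the Kronecker-factor decomposition together with the Wiener--Wintner treatment of $f_1$ is correct in outline.

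The genuine gap is in the $f_2$ piece. Your van der Corput step invokes Birkhoff on the product $X\times Y$ and asserts that the limit is the full spatial integral; this already presumes $T\times S$ is ergodic, which can fail even when $T$ and $S$ are individually ergodic (precisely when they share a nontrivial eigenvalue), and in the non-ergodic case Birkhoff only yields the conditional expectation on the $(T\times S)$-invariant $\sigma$-algebra, which does not factor as $\gamma_h\cdot\delta_h$. More importantly, any appeal to Birkhoff on $X\times Y$ produces an exceptional null set that depends on $(Y,S,g,h)$, so the resulting good set of $x$ also depends on $(Y,S,g)$ --- exactly the dependence you must eliminate. You correctly flag this as the main obstacle, but the proposed fixes (a ``separability/transference'' reduction, or ``realizing arbitrary $(Y,S,g)$ inside a countably generated joining'') are not proofs; this step \emph{is} the content of the BFKO paper and cannot be dispatched in a sentence. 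The actual BFKO mechanism first uses Birkhoff on $X$ alone, for the countable family $f_2\cdot\overline{f_2\circ T^h}$, to isolate a full-measure set of $x$ at which the sequence $a_n=f_2(T^n x)$ has the correct empirical autocorrelations, and then proves a purely combinatorial lemma to the effect that any bounded sequence with those autocorrelation properties is a null weight for \emph{every} system $(Y,S,g)$. The proof of that lemma is a nontrivial recurrence/tower argument on the $Y$ side and is precisely what makes the exceptional set in $X$ universal; your sketch has not supplied it.
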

This result is originally due to Bourgain \cite{MR939436}, but the full original proof seems to have remained unpublished following the discovery of the short argument in \cite{MR1557098}.
To see that Theorem~\ref{thm:RTT} contains the Wiener--Wintner theorem it again suffices to consider $Y=\R/\Z$ and $S(y)=y+\alpha$.
Similarly to the Wiener--Wintner theorem, more refined results have been obtained using time-frequency analysis \cite{MR2420509,MR2653686,MR2994676}, in particular a singular integral version.

There are also multilinear versions of the return times theorem \cite{MR1489899,arxiv:1210.5202,arxiv:1506.05748}.
It would be interesting to treat some of these extensions analytically, although it seems unlikely that currently available techniques can be applied.

\subsubsection{Bilinear maximal function}
Let us now consider the real variable version of \eqref{eq:Fu-averages}.
Replacing the probability space $(X,\mu)$ by $\R$, the transformation $T$ by translation, and the integer parameter $t$ by a real parameter we arrive at the multilinear averages
\[
\frac{1}{T} \int_{0}^{T} f_{1}(x-t) \dotsm f_{\dm}(x-\dm t) \dif t.
\]
It is also natural to consider general coefficients in place of $1,\dotsc,\dm$.
A natural object related to these averages is the maximal operator
\begin{equation}
\label{eq:multilinearMO}
\sup_{T>0} \abs[\Big]{ \frac{1}{T} \int_{0}^{T} \prod_{j=1}^{\dm} f_{j}(x-\beta_{j} t) \dif t }.
\end{equation}
This differs from \eqref{eq:multilinearHT-op} in that the Hilbert kernel has been replaced by a maximal average.
In the case $\dm=1$ the operator \eqref{eq:multilinearMO} is the usual Hardy--Littlewood maximal operator, and in particular it is bounded on $L^{p}$, $1<p<\infty$.
By positivity it follows that for general $\dm$ the operator \eqref{eq:multilinearMO} is bounded on the spaces
\[
L^{p} \times L^{\infty} \times \dotsb \times L^{\infty} \to L^{p},
\quad
1<p\leq\infty,
\]
and similarly for any permutation of the spaces on the left-hand side.
By multilinear interpolation it follows that it is bounded on the spaces
\[
L^{p_{1}} \dotsb \times L^{p_{\dm}} \to L^{p},
\quad
1<p_{j}\leq\infty,
\quad
\frac{1}{p} = \sum_{j} \frac{1}{p_{j}},
\]
as long as $p>1$.
By the Calder\'on transference principle \cite{MR0227354} this result can be transferred to measure-preserving dynamical systems.

In the case $\dm=2$ this result has been improved by Lacey \cite{MR1745019} using the methods developed for the bilinear Hilbert transform.
Specifically, Lacey has shown that the restriction on $p$ can be relaxed to $p>2/3$ (this improvement propagates to higher values of $\dm$ by positivity and interpolation).

\subsection{Commuting transformations}

In order to extend Szemer\'edi's theorem on arithmetic progressions to subsets of $\N^{\dm}$, Furstenberg and Katznelson \cite{MR531279} have generalized \eqref{eq:Fu-recurrence} by showing that if $(X,\mu)$ is a standard probability space, $T_{1},\dotsc,T_{k}$ are \emph{commuting} measure-preserving transformations, and $A\subset X$ is a measurable subset with $\mu(A)>0$, then
\begin{equation}
\label{eq:FK-recurrence}
\liminf_{N\to\infty} \int_{X} \frac1N \sum_{t=1}^{N} \one_{A}(T_{1}^{t}x) \dotsm \one_{A}(T_{\dm}^{t}x) \dif x > 0.
\end{equation}
Furstenberg's previous result \eqref{eq:Fu-recurrence} is the special case $T_{j}=T^{j}$.

The study of convergence properties of the corresponding multiple ergodic averages
\begin{equation}
\label{eq:FK-averages}
\frac1N \sum_{t=1}^{N} f_{1}(T_{1}^{t}x) \dotsm f_{\dm}(T_{\dm}^{t}x)
\end{equation}
has so far proceeded by softer methods than those initially developed in the special case \eqref{eq:Fu-averages}, some work of Austin \cite{MR3358029} on certain polynomial version of this problem notwithstanding.
Norm convergence of the averages \eqref{eq:FK-averages} has been proved by Tao \cite{MR2408398}, and the most general version of this result is due to Walsh \cite{MR2912715} (see also \cite{arxiv:1111.7292}).
Walsh's argument relies on a Hilbert space version of the Szemer\'edi regularity lemma due to Gowers \cite{MR2669681} that will also be used in Chapter~\ref{chap:cancellation-simplex}.

The real variable version of the averages \eqref{eq:FK-averages} is
\[
\frac{1}{T} \int_{0}^{T} f_{1}(x-e_{1}t) \dotsm f_{\dm}(x-e_{\dm} t) \dif t,
\]
where the functions $f_{1},\dotsc,f_{\dm}$ are defined on $\R^{\dm}$ and $e_{1},\dotsc,e_{\dm}\in\R^{\dm}$ are the standard unit vectors.

This connection with ergodic theory has motivated the study of certain modulation invariant operators in several dimensions.
The first of them \cite{MR2597511} is the analogue of the bilinear Hilbert transform with the Hilbert kernel replaced by a two-dimensional \CZ{} kernel
\[
\int_{\R^{2}} f_{1}(x-A_{1}t) f_{2}(x-A_{2}t) \CZK(t) \dif t,
\]
where $x\in\R^{2}$, analogously to Sj\"olin's extension of the Carleson--Hunt theorem.
The pair of matrices $A_{1},A_{2}$ can now happen to be degenerate in several ways, not all of which allow a reduction to a linear operator.
One of these cases led to the introduction of a new type of paraproduct, called the \emph{twisted paraproduct}.
Estimates for the twisted paraproduct have been obtained in \cite{MR2990138}.
An interesting recent development using the ideas from the latter paper is the optimal quantitative version \cite{arxiv:1603.00631} of the $L^{2}$ norm convergence of the averages \eqref{eq:FK-averages} with $\dm=2$ for $f_{1},f_{2}\in L^{4}$.

\section{Simplex \CZ{} forms}
\label{sec:intro:simplex}
The most optimistic conjecture regarding multidimensional modulation invariant operators that seems to be consistent with the negative results on \eqref{eq:multiplier-form} is the following.
\begin{conjecture}
\label{conj:simplex}
Let $\V=\R^{\ds}$ and consider the $(\dm+1)$-linear form defined on functions of $\dm$ variables
\begin{equation}
\label{eq:simplex-form}
\Lambda_{K}(F_{0},\dotsc,F_{\dm})
:=
\int_{\V^{\dm+1}} \prod_{i=0}^{\dm}F_{i}(x_{(i)}) K(\sum_{i=0}^{\dm} x_{i}) \dif x,
\end{equation}
where $x_{(i)} = (x_{0},\dotsc,x_{i-1},x_{i+1},\dotsc,x_{\dm})$ denotes the omission of the $i$-th coordinate.
Suppose that $\CZK$ is a (sufficiently smooth) \CZ{} kernel.
Then
\[
\abs{ \Lambda_{K}(F_{0},\dotsc,F_{\dm}) }
\lesssim
\prod_{i=0}^{\dm} \norm{F_{i}}_{p_{i}},
\quad
\sum_{i=0}^{\dm} \frac1{p_{i}}=1,
\quad
\dm < p_{i} < \infty.
\]
\end{conjecture}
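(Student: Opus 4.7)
Since the conjecture generalizes several known open problems---most notably the triangular Hilbert transform, which is the case $\dm=2,\ \ds=1$---any plan here is necessarily aspirational. The plan is to attack the conjecture via the time--frequency machinery developed for the bilinear Hilbert transform, adapted to the larger modulation symmetry group of the simplex form. As a first step, one reduces the $L^{p_{0}}\times\dotsb\times L^{p_{\dm}}$ bound to the generalized restricted type inequality \eqref{eq:gen-rest-type} on the open subset of the hyperplane $\sum_{i} 1/p_{i}=1$ cut out by the constraints $\dm<p_{i}<\infty$. After this reduction one may assume $F_{i}=\one_{E_{i}'}$ for suitable major subsets $E_{i}'\subset E_{i}$ of arbitrary measurable sets, which is the form of the data best adapted to the combinatorial selection algorithms that follow.

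Next, I would decompose the kernel $\CZK = \sum_{k\in\Z} \CZK_{k}$ into annular pieces, each $\CZK_{k}$ essentially localized at frequency $\abs{\xi}\sim 2^{k}$. Using a Fourier expansion on the frequency annulus, each contribution to $\Lambda_{\CZK}(F_{0},\dotsc,F_{\dm})$ is rewritten as a superposition of products of smoothly localized wave packets of the $F_{i}$, indexed by \emph{multi-tiles} encoding a spatial box in $\V^{\dm}$, a scale $2^{-k}$, and a frequency tuple in $\V^{\dm+1}$ modulo the modulation subspace $\Set{(\xi_{0},\dotsc,\xi_{\dm}) \given \sum_{i}\xi_{i}=0}$ that leaves the form invariant.

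The heart of the argument would then be a decomposition of the full collection of multi-tiles into \emph{trees} with a prescribed top tile, together with a tree estimate bounding a tree's contribution by a product of size and density quantities in the spirit of Lacey--Thiele. Summation over trees would be carried out via the usual size/density selection combinatorics and would yield the restricted type bounds after a multilinear interpolation. The essential analytic input is an $(\dm+1)$-linear orthogonality statement for wave packets within a single tree, controlling their joint pairing against $F_{0},\dotsc,F_{\dm}$.

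The hard part will be precisely this multilinear tree estimate. For $\dm=1$ the required input is classical $L^{2}$-almost-orthogonality of wave packets, and the resulting machinery essentially recovers the Carleson--Sj\"olin operator \eqref{eq:Car-Sj-op}. For $\dm\geq 2$ no comparable input is known: one would need a quantitative inverse theorem for a Gowers-type $U^{\dm+1}$ uniformity seminorm on $\V^{\dm}$, with losses mild enough to survive the telescoping over scales inherent in singular integrals, and no such theorem is presently available even for $\dm=2$. A realistic partial result, presumably the one established in Chapter~\ref{chap:cancellation-simplex}, is to substitute Gowers's Hilbert-space regularity lemma \cite{MR2669681} for this missing inverse theorem and thereby obtain a small quantitative gain over the trivial H\"older bound $\prod_{i}\norm{F_{i}}_{\dm+1}$ at the symmetric central exponent $p_{0}=\dotsb=p_{\dm}=\dm+1$, without reaching the full conjectured range.
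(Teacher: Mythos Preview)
This statement is a \emph{conjecture} in the paper, not a theorem; the paper gives no proof of it and explicitly presents it as open, noting that already the case $\dm=2$, $\ds=1$ is the triangular Hilbert transform. You correctly recognize this and frame your proposal as aspirational rather than as an actual proof. In that sense there is nothing to compare against: the paper proves only the partial result Theorem~\ref{thm:tiny-gain} (the $o(\abs{\Scales})$ gain for truncated forms) and the special dyadic case Theorem~\ref{thm:2general}.

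Your closing paragraph accurately anticipates the paper's partial approach: Chapter~\ref{chap:cancellation-simplex} does use Gowers's Hilbert-space regularity lemma in place of an inverse theorem, splitting each $F_{\dm}$ into a structured piece (a short sum of dual functions), a uniform piece (small in the dual-function norm), and an $L^{2}$ error, then combining this with the inductive hypothesis on $(\dm-1)$-linear forms and a tree selection argument. The one point where your sketch diverges from what the paper actually does is the wave-packet / multi-tile decomposition: the paper's argument for Theorem~\ref{thm:tiny-gain} is purely spatial (dyadic cubes in $\V^{\dm+1}$, no frequency tiles), and the modulation invariance is exploited only implicitly through the dual-function structure. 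A genuine time--frequency decomposition of the type you outline is used in Chapter~\ref{chap:two-general} for the dyadic $\dm=2$ case, but only under the structural hypotheses \eqref{eq:F0-diagonal} or \eqref{eq:F0-fiberwise-character} on $F_{0}$, which are precisely what makes adapted time--frequency projections available. Absent such structure, the ``$(\dm+1)$-linear orthogonality statement for wave packets within a single tree'' that you flag as the hard part is indeed the obstruction, and neither you nor the paper resolves it.
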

We call \eqref{eq:simplex-form} a \emph{simplex \CZ{} form}.
The eponymous simplex is spanned by the set $\{0,\dotsc,\dm\}$.
Each function $F_{i}$ is associated to a side of the simplex and accepts the variables whose indices span that side.
% entangled? following the terminology in \cite{MR3275738}.
If the \CZ{} kernel $\CZK$ is replaced by the Dirac delta distribution, then the multilinear form \eqref{eq:simplex-form} can be interpreted as a Brascamp--Lieb form of the type studied in \cite{MR2377493}.
More general singular multilinear forms of similar flavor suggest themselves, but we concentrate on \eqref{eq:simplex-form}.

The form \eqref{eq:simplex-form} (with $\dm\geq 2$) has an even wider family of symmetries than seen before: any functions $F_{i},F_{i'}$ can be multiplied by an arbitrary function of modulus $1$ (and its complex conjugate, respectively) that depends on the $\dm-1$ variables shared by $F_{i},F_{i'}$.
Forms with such modulation invariance are called \emph{entangled}.
Already in the case $\ds=1$, $\dm=2$ Conjecture~\ref{conj:simplex} would unify some of the central, known or hypothetical, results in time-frequency analysis:
\begin{enumerate}
\item Estimates for the Carleson maximal operator.
\item Uniform estimates for the one-dimensional bilinear Hilbert transform in \cite{MR2113017}.
\item Uniform estimates for the two-dimensional version of the bilinear Hilbert transform studied in \cite{MR2597511}, at least for odd homogeneous kernels.
\end{enumerate}
In the case of higher degree of multilinearity $\dm$, in addition to obvious consequences for the multilinear Hilbert transforms \eqref{eq:multilinearHT}, Conjecture~\ref{conj:simplex} also contains estimates for the polynomial Carleson operator (translation invariant case of Theorem~\ref{thm:poly-car}).

A reason for cautious optimism regarding Conjecture~\ref{conj:simplex} is that a better estimate than that coming from taking absolute value inside the integral is available.
Let $k$ be a \CZ{} kernel on $\V$ that defines an $L^{2}(\R^{\ds})$ bounded operator and satisfies the smoothness condition \eqref{eq:Sjolin-smooth-cond}.
Consider the truncated kernels
\[
\psi_{\Scales} = \sum_{\scale\in \Scales} \psi_{\scale},
\quad\text{where}\quad
\psi_{\scale}(x)=\phi(2^{-\scale}\abs{x})k(x),
\]
$\Scales\subset\Z$ is an interval, and $\phi$ is an even, smooth function supported on $\pm[1,4]$ such that $\sum_{\scale\in\Z}\phi(2^{-\scale}t)=1$ for all $t\neq 0$.
We call the form $\Lambda_{\Scales} := \Lambda_{\psi_{\Scales}}$ a \emph{truncated simplex \CZ{} form}.

Since $\norm{\psi_{\scale}}_{1}=O(1)$ and by H\"older's inequality the estimate
\begin{equation}
\label{eq:trivial-estimate}
\abs{\Lambda_{\Scales}(F_{0},\dotsc,F_{\dm})}
\lesssim_{\dm} \abs{\Scales} \prod_{i=0}^{\dm} \norm{ F_{i} }_{p_{i}}
\end{equation}
is immediate for any Hölder tuple of exponents $1\leq p_{i}\leq\infty$.
In Chapter~\ref{chap:cancellation-simplex}, based on \cite{MR3685286}, the following qualitative improvement over this bound is proved.
\begin{theorem}
\label{thm:tiny-gain}
Let $\dm\geq 1$.
Then for any $1<p_{i}<\infty$ with $\sum_{i=0}^{\dm} p_{i}^{-1}=1$ we have
\[
\abs{\Lambda_{\Scales}(F_{0},\dotsc,F_{\dm})}
\leq
o_{\dm,p_{0},\dotsc,p_{\dm}}(\abs{\Scales}) \prod_{i=0}^{\dm} \norm{ F_{i} }_{p_{i}}.
\]
\end{theorem}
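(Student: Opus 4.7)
The plan is to combine an entangled Gowers-type uniformity seminorm with a Hilbert space regularity decomposition in the style of \cite{MR2669681}, extracting a qualitative gain over \eqref{eq:trivial-estimate} by exploiting the mean-zero and localization properties of the single-scale kernels $\psi_{\scale}$.

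First I would reduce to a convenient exponent. Complex interpolation between \eqref{eq:trivial-estimate} at one Hölder tuple and a target bound of the form $\epsilon \abs{\Scales} \prod_{i} \norm{F_{i}}_{q_{i}}$, to be established at a specific Hölder tuple $(q_{i})$ for every $\epsilon>0$, yields $C^{1-\theta}\epsilon^{\theta}\abs{\Scales} = o(\abs{\Scales})$ at every interior intermediate tuple. It therefore suffices to produce the gain at one symmetric tuple, e.g.\ $p_{i} = \dm+1$, and by a standard argument it is enough to treat restricted generalized type inputs $\abs{F_{i}} \leq \one_{E_{i}}$ with $\abs{E_{i}}$ comparable.

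Next I would introduce a box seminorm $\norm{\cdot}_{\square}$ on functions of $\dm$ variables, defined by the $2^{\dm}$-fold entangled integral that doubles each of the $\dm$ coordinates (with the usual conjugation pattern over $\{0,1\}^{\dm}$). Applying the Cauchy--Schwarz inequality $\dm$ times to $\Lambda_{\Scales}(F_{0},\dotsc,F_{\dm})$, at each step duplicating one coordinate of $F_{0}$ while the remaining factors are absorbed in $L^{\infty}$, produces a Gowers--Cauchy--Schwarz inequality of the form
\[
\abs{\Lambda_{\Scales}(F_{0},\dotsc,F_{\dm})}^{2^{\dm}}
\lesssim
\abs{\Scales}^{2^{\dm}}\, \norm{F_{0}}_{\square}^{2^{\dm}} \prod_{i\geq 1}\norm{F_{i}}_{\infty}^{2^{\dm}-1}.
\]
The factor $\abs{\Scales}^{2^{\dm}}$ is the trivial per-scale contribution; the content of this inequality is that smallness of $\norm{F_{0}}_{\square}$ converts into a strictly better than $O(\abs{\Scales})$ bound.

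I would then apply Gowers's Hilbert space regularity lemma \cite{MR2669681}, adapted to the above seminorm, to decompose $F_{0} = F_{0}^{\mathrm{str}} + F_{0}^{\mathrm{unif}} + F_{0}^{\mathrm{err}}$, where $F_{0}^{\mathrm{str}}$ is a bounded linear combination of at most $M$ ``simple'' tensor-product-type functions depending only on proper subsets of the $\dm$ variables, $\norm{F_{0}^{\mathrm{unif}}}_{\square}$ is smaller than any prescribed function of $M$, and $\norm{F_{0}^{\mathrm{err}}}_{2}$ is arbitrarily small. The uniform contribution is $o(\abs{\Scales})$ by the Gowers--Cauchy--Schwarz inequality; the error contribution is controlled via the trivial per-scale $L^{2}$ bound (uniform in $\scale$ by the assumed CZ $L^{2}$-boundedness of $k$) combined with the restricted-type interpolation $\norm{F_{0}^{\mathrm{err}}}_{p_{0}} \lesssim \norm{F_{0}^{\mathrm{err}}}_{2}^{2/p_{0}}$.

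The principal obstacle is the structured piece: one must show that for any simple tensor-product input of bounded complexity depending on a proper subset of the $\dm$ coordinates, the form $\Lambda_{\Scales}$ is in fact $O_{M}(1)$ rather than $O(\abs{\Scales})$, uniformly in $\Scales$. Heuristically, integrating out the missing coordinates against $\psi_{\scale}(\sum_{i} x_{i})$ produces a lower-dimensional integrand with a mean-zero singular factor, so that standard single-scale CZ cancellation (coming from the size and regularity bounds on $k$ together with vanishing spherical average) localizes the contribution of scale $\scale$ to a bounded window around the natural scale of the structured factors. Turning this heuristic into a quantitative per-scale cancellation estimate compatible with the entangled simplex geometry and the structured class produced by the regularity lemma is the technical heart of the argument. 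Once established, the three contributions combine into a bound of the form $C_{M}+\epsilon\abs{\Scales}$ for arbitrary $\epsilon>0$, which is $o(\abs{\Scales})$ as $\abs{\Scales}\to\infty$, as required.
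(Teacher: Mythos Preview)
Your proposal has a genuine gap in the treatment of the structured piece. You claim that when $F_{0}$ is a bounded-complexity tensor-product function depending on a proper subset of the $\dm$ coordinates, one can show $\Lambda_{\Scales}$ is $O_{M}(1)$ uniformly in $\Scales$. This is too strong: if it were true, the same argument applied inductively would give $\abs{\Lambda_{\Scales}}=O(1)$ for \emph{all} bounded inputs, i.e.\ it would resolve Conjecture~\ref{conj:simplex}. Your heuristic (``integrating out the missing coordinates\ldots localizes the contribution of scale $\scale$ to a bounded window'') does not work, because the variables that the structured $F_{0}$ omits are still entangled through the remaining functions $F_{1},\dotsc,F_{\dm}$ and the kernel $\psi_{\scale}(\sum_{i}x_{i})$; nothing decouples.

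The paper's proof handles the structured term by \emph{induction on $\dm$}: when $F_{\dm}$ is a dual function $\prod_{i=0}^{\dm-1} f_{i}$ with each $f_{i}$ independent of $x_{i}$, one absorbs $f_{i}$ into $F_{i}$ and the integrand in the $x_{\dm}$-slice becomes an $(\dm-1)$-simplex form, to which the inductive hypothesis gives $o(\abs{\Scales})$ (not $O(1)$). This induction is the missing idea in your outline. Two further ingredients you would need: (i) the regularity lemma must be applied \emph{locally} on a fixed dyadic top cube (a ``tree''), since the dual-function class $\Sigma$ consists of bounded functions on a bounded box; and (ii) a separate tree-selection and counting argument is then required to sum the local tree estimates and control the tiles with small coefficients. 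Finally, the Gowers--Cauchy--Schwarz inequality you state for $\Lambda_{\Scales}$ with a box seminorm is not established and is not how the uniform term is handled: the paper instead expands $\psi_{\scale}$ on the Fourier side so that the character $e(\xi_{\scale}\sum x_{i})$ factors and can be absorbed into the $F_{i}$'s, reducing directly to a pairing with a dual function.
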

The corresponding result for the multilinear Hilbert transform has been proved by Tao \cite{MR3484017} using the inverse theorem for Gowers uniformity norms.
Our proof is similar but uses Gowers's Hilbert space regularity lemma, consistently with ergodic theoretical motivation.
Our result has been later improved in \cite{arxiv:1608.00156}, where $o(\abs{\Scales})$ is replaced by $O(\abs{\Scales}^{1-\epsilon})$ with an explicit $\epsilon=\epsilon(p_{0},\dotsc,p_{\dm})$, at least in the case $\ds=1$.

Another reason for optimism regarding Conjecture~\ref{conj:simplex} is that it holds in a dyadic model for $\dm=2$ in a particular case when one of the functions $F_0,F_1,F_2$ takes a special form, see Chapter~\ref{chap:two-general}, based on \cite{arxiv:1506.00861}.
This case still turns out to be general enough to imply (dyadic versions of) both the $L^{p}$ bounds for the Carleson operator and uniform bounds for the bilinear Hilbert transform.
In this sense our result has stronger one-dimensional consequences than the dyadic version of the argument from \cite{MR2597511} which appears in \cite{MR3334208}: the latter does not contain the uniform bounds for the bilinear Hilbert transform.
In the remaining part of Section~\ref{sec:intro:simplex} we explain the connections between Conjecture~\ref{conj:simplex} and the previously introduced objects in more detail.

\subsection{Uniform estimates}
We begin with the connection to uniform estimates made in \cite{arxiv:1506.00861} and \cite{MR3685286}.
Consider the family of multilinear forms
\begin{equation}
\label{eq:simplex-def-beta}
\Lambda_{\beta_{0},\dotsc,\beta_{\dm}}(F_{0},\dotsc,F_{\dm})
:=
\int_{\V^{\dm}} \int_{\V} \prod_{i=0}^{\dm}F_i( x-\beta_{i}t) K(t)\dif t \dif  x,
\end{equation}
where $\beta_{i}\in L(\V)^{\dm}$ are in general position.
The main observation is that
\begin{equation}
\label{eq:simplex-beta-hom}
\norm{\Lambda_{K}}_{L^{p_{0}} \times \dotsb \times L^{p_{\dm}}}
=
\abs{\det B}^{1/p_{0}+\dotsb+1/p_{\dm}-1}
\norm{\Lambda_{\beta_{0},\dotsc,\beta_{\dm}}}_{L^{p_{0}} \times \dotsm \times L^{p_{\dm}}},
\end{equation}
where
\[
B :=
\begin{pmatrix}
\Id_{\V} & \dots & \Id_{\V}\\
\beta_{0} & \dots & \beta_{\dm}
\end{pmatrix}.
\]
In particular, the norm of \eqref{eq:simplex-def-beta} does not depend on the $\beta_{i}$'s for H\"older tuples of exponents \eqref{eq:Lp-range}.

\begin{proof}[Proof of \eqref{eq:simplex-beta-hom}]
Consider the change of variables
\[
\begin{pmatrix}
t\\  u
\end{pmatrix}
=
B  x,
\quad
\quad
t\in\V,
 u\in\V^{\dm},
 x\in\V^{\dm+1}.
\]
If $\pi_i:\V^{\dm+1}\to\V^{\dm}$ denotes omission of the $i$-th coordinate, then for arbitrary functions $F_0,\dotsc,F_\dm$ we have
\begin{align*}
\Lambda_{K}(F_{0},\dotsc,F_{\dm})
&=
\int_{\V^{\dm+1}} \prod_{i=0}^{\dm}F_{i}(\pi_i( x)) K(\sum_{j=0}^{\dm} x_{j}) \dif  x\\
&=
\abs{\det B}^{-1}
\int_{\V\times\V^{\dm}}
\prod_{i=0}^{\dm} F_{i}(\pi_i B^{-1}(t, u))
K(t) \dif (t, u)\\
&=
\abs{\det B}^{-1}
\int_{\V\times\V^{\dm}}
\prod_{i=0}^{\dm} \tilde F_{i}( u-{\beta_{i}}t)
K(t) \dif (t, u)\\
&=
\abs{\det B}^{-1} \Lambda_{\beta_{0},\dotsc,\beta_{\dm}}(\tilde F_{0}, \dotsc, \tilde F_{\dm}),
\end{align*}
where
\begin{equation}\label{eq:fn-substitution}
\tilde F_{i}( u) := F_{i}(\pi_i B^{-1}(0, u)).
\end{equation}
Here we have used the fact that
\[
\pi_i B^{-1}
\begin{pmatrix}
t \\ \beta_{i}t
\end{pmatrix}
=
0.
\]
It remains to observe that
\[
\norm{\tilde F_{i}}_{p_{i}} = \abs{\det B}^{1/p_{i}} \norm{F_{i}}_{p_{i}}.
\]
Indeed, the inverse of the map $ u \mapsto \pi_{i}B^{-1}(0, u)$ is given by the operator matrix $(\beta_{j}-\beta_{i})_{j\neq i}$.
By multilinearity and antisymmetry of the determinant we have
\begin{align*}
\MoveEqLeft
\abs*{\det \begin{pmatrix}
\beta_{0}-\beta_{i} & \dots & \beta_{i-1}-\beta_{i} & \beta_{i+1}-\beta_{i} & \dots & \beta_{\dm}-\beta_{i}
\end{pmatrix}}\\
&=
\abs*{\det \begin{pmatrix}
0 & \dots & 0 & \Id_{\V} & 0 & \dots & 0\\
\beta_{0}-\beta_{i} & \dots & \beta_{i-1}-\beta_{i} & \beta_{i} & \beta_{i+1}-\beta_{i} & \dots & \beta_{\dm}-\beta_{i}
\end{pmatrix}}\\
&=
\abs*{\det \begin{pmatrix}
\Id_{\V} & \dots & \Id_{\V} & \Id_{\V} & \Id_{\V} & \dots & \Id_{\V}\\
\beta_{0} & \dots & \beta_{i-1} & \beta_{i} & \beta_{i+1} & \dots & \beta_{\dm}
\end{pmatrix}}\\
&=
\abs{\det B}.
\qedhere
\end{align*}
\end{proof}

Next we will see how to encode the multilinear Hilbert transform \eqref{eq:multilinearHT} in \eqref{eq:simplex-def-beta}.
We consider the case $\ds=1$, so that $\beta_{i} \in \R^{\dm}$.
The first component of $\beta_{i}$ will be given by the corresponding number $\beta_{i}$ from \eqref{eq:multilinearHT}, and the remaining components can be chosen freely to ensure that the matrix $B$ is invertible.
For simplicity let also $K$ be a truncated \CZ{} kernel and $f_{i}$ compactly supported smooth functions in order to ensure that all integrals converge absolutely.
Let then
\begin{equation}
\label{eq:simplex-tensor-function}
F_{i,L}(x_{1},\dots,x_{m}) := f_{i}(x_{1}) \prod_{j=2}^{\dm} D_{L}^{p_{i}}\phi(x_{j}),
\end{equation}
where $\phi$ is a smooth positive function with compact support and $D_{L}^{p}\phi(x) = L^{-1/p}\phi(x/L)$ for a large number $L$.
Then $\norm{F_{i,L}}_{p_{i}} \sim \norm{f_{i}}_{p_{i}}$ uniformly in $L$ and
\begin{multline*}
\Lambda_{(\beta_{j})_{j}}(F_{0,L},\dots,F_{\dm,L})
=
\int_{\R^{\dm}} \int_{\R} \prod_{i=0}^{\dm}F_i( x-\beta_{i}t) K(t)\dif t \dif  x\\
=
\int_{\R^{\dm}} \int_{\R} \prod_{i=0}^{\dm} \Big( f_i(x_{1}-\beta_{i}t) \prod_{j=2}^{\dm} D_{L}^{p_{i}}\phi(x_{j}-\beta_{i,j}t) \Big) K(t)\dif t \dif  x\\
=
\int_{\R\times\R} \Big(\prod_{i=0}^{\dm} f_i(x_{1}-\beta_{i}t) \Big) \cdot \prod_{j=2}^{\dm} \underbrace{\Big( L^{-1} \int_{\R} \prod_{i=0}^{\dm} \phi((x-\beta_{i,j}t)/L) \dif x \Big)}_{\to C \text{ as } L\to\infty} K(t)\dif t \dif x_{1}.
\end{multline*}
Assuming Conjecture~\ref{conj:simplex}, the left-hand side is bounded uniformly in $\beta$ and $L$, and it follows that also \eqref{eq:simplex-form} is bounded uniformly in $\beta$.

\subsection{Maximally modulated operators}
Recall that a \emph{multiindex} is a vector $\gamma\in\N^{\ds}$ and $\abs{\gamma} = \sum_{j}\gamma_{j}$.
Let $\Gamma = \Set{\gamma\in\N^{\ds} \given \abs{\gamma}\leq\degp}$ and let $N_{\gamma} : \V \to \R$, $\gamma\in\Gamma$, be measurable linearizing functions.
We will see that an appropriate choice of the functions $F_{i}$ allows us to encode in \eqref{eq:simplex-def-beta} (with $\dm$ replaced by $\dm+\degp$) the maximally $\Gamma$-polynomially modulated $\dm$-linear entangled \CZO{}
\begin{equation}
\label{eq:max-mod-MHT}
C_{N}(f_{0},\dotsc,f_{\dm-1})(x)
=
\operatorname{p.v.} \int_{\V} e(\sum_{\gamma\in\Gamma} N_{\gamma}(x)t^{\gamma}) \prod_{j=0}^{\dm-1} f_{j}(x-A_{j}t) \CZK(t) \dif t,
\end{equation}
where $A_{0},\dotsc,A_{\dm-1} \in L(\V)$ are generic linear maps.

The encoding is made possible by the following algebraic observation.
\begin{lemma}
\label{lem:entangle-monomial}
Let $t,x_{0},\dotsc,x_{\degp}$ denote $\ds$-vectors of formal variables and let $\gamma\in\N^{\ds}$ be a multiindex with $\abs{\gamma}\leq \degp$.
Then there exist polynomials with integer coefficients $p_{0,\gamma},\dotsc,p_{\degp,\gamma}$ such that
\begin{equation}
\label{eq:entangle-monomial}
t^{\gamma} = p_{0,\gamma}(x_{0},\dotsc,x_{\degp}) + p_{1,\gamma}(x_{0},x_{1}+t,x_{2}\dotsc,x_{\degp}) + \dotsb + p_{\degp,\gamma}(x_{0},\dotsc,x_{\degp-1},x_{\degp}+t).
\end{equation}
\end{lemma}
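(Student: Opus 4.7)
The plan is to prove by strong induction on $k = |\gamma|$ the following slightly strengthened statement: for any $0 \leq k \leq \degp$ and any choice of $k$ distinct indices $j_1, \ldots, j_k \in \{1, \ldots, \degp\}$, there exist integer-coefficient polynomials $p_0$ and $p_{j_1}, \ldots, p_{j_k}$ such that
\[
t^\gamma = p_0(x_0, \ldots, x_\degp) + \sum_{l=1}^{k} p_{j_l}(x_0, \ldots, x_{j_l - 1}, x_{j_l} + t, x_{j_l + 1}, \ldots, x_\degp).
\]
The lemma then follows, for instance, by taking $j_l = l$ and setting the remaining $p_{i,\gamma}$ to zero. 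The base case $k=0$ is immediate with $p_0 = 1$.

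For the inductive step, I would pick any single slot $j_1$ from the allotment and apply the multinomial expansion
\[
(x_{j_1} + t)^\gamma - x_{j_1}^\gamma = t^\gamma + \sum_{0 < b < \gamma} \binom{\gamma}{b} x_{j_1}^{\gamma - b} t^b,
\]
which yields
\[
t^\gamma = (x_{j_1} + t)^\gamma - x_{j_1}^\gamma - \sum_{0 < b < \gamma} \binom{\gamma}{b} x_{j_1}^{\gamma - b} t^b.
\]
The first summand $(x_{j_1}+t)^\gamma$ already has the shape required for the slot $j_1$, and $-x_{j_1}^\gamma$ contributes to $p_0$. For each remaining term one has $|b| < |\gamma| = k$, so the inductive hypothesis applied with the $k-1$ still-unused slots $j_2, \ldots, j_k$ (of which any $|b|$ suffice) decomposes $t^b$ in the required form. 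Multiplication by the scalar factor $x_{j_1}^{\gamma - b}$ then preserves the decomposition: the piece of $t^b$ associated to $p_0$ remains a polynomial in $x_0, \ldots, x_\degp$, and a piece attached to slot $j_l$ with $l \geq 2$ remains a polynomial in $(x_0, \ldots, x_{j_l - 1}, x_{j_l} + t, x_{j_l + 1}, \ldots, x_\degp)$, because $x_{j_1}$ (with $j_1 \neq j_l$) is one of the unshifted variables of that subring.

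The subtle point, and the reason the naive attempt to iterate the identity $t_m = (x_{j_1}+t)_m - (x_{j_1})_m$ fails for $|\gamma| \geq 2$, is that the subrings $p_i(x_0, \ldots, x_{i-1}, x_i+t, x_{i+1}, \ldots, x_\degp)$ for different $i$ are not closed under multiplication by one another's characteristic monomials; in particular $x_i$ itself does not lie in the subring corresponding to slot $i$. The resolution is to spend one \emph{fresh} slot $j_1$ at each inductive stage, so that the intermediate multiplier $x_{j_1}^{\gamma - b}$ sits harmlessly inside each subring used to decompose the lower-order monomials $t^b$. The hypothesis $|\gamma| \leq \degp$ provides exactly the supply of fresh slots needed to complete the recursion.
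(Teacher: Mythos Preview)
Your proof is correct and follows the same idea as the paper's: spend one fresh slot per level of the recursion, binomial-expand $(x_{j}+t)^{\gamma}$ to isolate $t^{\gamma}$, and absorb each lower-order $t^{b}$ into the remaining slots using the inductive hypothesis, the key point being that the multiplier $x_{j}^{\gamma-b}$ lies in every subring except the one just spent. The paper phrases the induction on $\degp$ (adding the slot with the largest index at each step) rather than as strong induction on $|\gamma|$ with a flexible slot allotment, but the mechanism is identical; your explicit bookkeeping of which slots remain available arguably makes the argument cleaner and avoids a minor indexing slip in the paper's write-up.
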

\begin{proof}
By induction on $\degp$.
In the case $\degp=0$ we have $t^{(0,\dotsc,0)}=1=:p_{0,(0,\dotsc,0)}$.
Suppose that the conclusion is known for some $\degp\in\N$ and consider a multiindex $\gamma$ with $\abs{\gamma}=\degp+1$.
Then by the binomial formula
\begin{align*}
(x_{\degp}+t)^{\gamma}
&=
t^{\gamma} + \sum_{\gamma'<\gamma} \binom{\gamma}{\gamma'} t^{\gamma'} x_{\degp}^{\gamma-\gamma'}\\
&=
t^{\gamma} + \sum_{\gamma'<\gamma} \binom{\gamma}{\gamma'} x_{\degp}^{\gamma-\gamma'} \sum_{j=0}^{\degp} p_{j,\gamma'}(\dots),
\end{align*}
where each $p_{j,\gamma'}$ takes the arguments indicated in \eqref{eq:entangle-monomial}.
Rearranging we obtain the claim.
\end{proof}

Let
\begin{enumerate}
\item $\beta_{0}=e_{0} \otimes A_{0}$,
\item $\beta_{j}=e_{0} \otimes A_{j}+\epsilon e_{j} \otimes \Id_{\V}$ for $j=1,\dotsc,\dm-1$,
\item $\beta_{\dm}=0$,
\item $\beta_{j}=e_{j-1}\otimes \Id_{\V}$ for $j=\dm+1,\dotsc,\dm+\degp$.
\end{enumerate}
Then with
\[
F_{j}(x_1,\ldots,x_\dm) = f_{j}(x_{0}),
\quad j=0,\dotsc,\dm-1,
\]
and
\[
F_j(x_0,\dotsc,x_{\dm-1},y_{1},\dotsc,y_{\degp})
=
g_j(x_{0}) \prod_{\gamma : j \leq \abs{\gamma} \leq \degp} e(N_{\gamma}(x_{0})p_{j-n,\gamma}(x_0,y_{1},\dotsc,y_{\abs{\gamma}}))
\]
for $j=\dm,\dotsc,\dm+\degp$ the form \eqref{eq:simplex-def-beta} formally becomes
\[
\int g_{\dm} \dotsm g_{\dm+\degp} C_{N_{\Gamma}}(f_{0},\dotsc,f_{\dm-1}).
\]
To be precise we should use cut-off functions as in \eqref{eq:simplex-tensor-function}.

In the case $\ds=1$, $\degp=0$, $\dm=2$, the operator \eqref{eq:max-mod-MHT} is a bilinear Hilbert transform.
The case $\ds=2$, $\degp=0$, $\dm=2$ has been considered in \cite{MR2597511}.
The case $\ds=1$, $\degp=0$, $\dm>2$ corresponds to the multilinear Hilbert transform, this is the case considered in \cite{MR3484017}.
The case $\degp>0$, $\dm=1$ is the polynomial Carleson operator from Theorem~\ref{thm:poly-car}.

\subsection{Two-dimensional analog of the bilinear Hilbert transform}
The trilinear forms introduced in \cite{MR2597511} can be written as
\begin{equation}
\label{eq:DT-2D-BHT}
\Lambda^{K}_{B_{0},B_{1},B_{2}}(F_{0},F_{1},F_{2})
:=
\int_{\R^{2}} \mathrm{p.v.}\int_{\R^2} \prod_{i=0}^{2} F_i\big(\vec{x}-B_{i}\vec{t}\big) K(\vec{t}) \dif\vec{t} \dif\vec{x},
\end{equation}
where $B_0,B_1,B_2$ are now $2\times 2$ real matrices (interpreted as linear operators on $\R^2$) and $K$ is a two-dimensional Calder\'on--Zygmund kernel.
If $K$ is odd and homogeneous of degree $-2$, then it takes the form
\[
K(r v) = \frac{\Omega(v)}{r^2},\ \ \Omega(-v)=-\Omega(v),\ \text{for}\ 0<r<\infty,\ v\in\mathbb{S}^{1}.
\]
Observe that
\begin{align*}
& \mathrm{p.v.}\int_{\R^2} \prod_{i=0}^{2} F_i\big(\vec{x}-B_{i}\vec{t}\big) K(\vec{t}) \dif\vec{t}\\
& = \mathrm{p.v.}\int_{\mathbb{S}^{1}} \int_{0}^{\infty} \prod_{i=0}^{2} F_i\big(\vec{x}-B_{i}(r v)\big) \frac{\Omega(v)}{r^2} r \dif r \dif v\\
& = \int_{\mathbb{S}^{1'}} \Omega(v) \,\mathrm{p.v.}\int_{\R} \prod_{i=0}^{2} F_i\big(\vec{x}-r B_{i} v\big) \frac{\dif r}{r} \dif v,
\end{align*}
where $\mathbb{S}^{1'}$ is the upper half of the unit circle, so that
\[
\Lambda^{K}_{B_{0},B_{1},B_{2}} = \int_{\mathbb{S}^{1'}} \Omega(v) \,\Lambda_{B_{0} v, B_{1} v, B_{2} v} \dif v,
\]
i.e.\ $\Lambda^{K}_{B_{0},B_{1},B_{2}}$ is a superposition of the forms \eqref{eq:simplex-def-beta}.
Consequently, $L^p$ estimates for all cases of the matrices studied in \cite{MR2597511} and the remaining case from \cite{MR2990138} would follow from Conjecture~\ref{conj:simplex} with $\ds=1$, $\dm=2$, $\CZK(t)=1/t$, even uniformly over all choices of $B_0,B_1,B_2$.

As the author has learned from Micha\l{} Warchalski, the opposite implication also holds.
Let $\tilde K$ be a symmetric truncation of the Hilbert kernel and
\[
K(t,s)
=
\begin{cases}
\tilde K(t) \abs{t}^{-1} \phi(s/t), & t\neq 0,\\
0, & t=0, s\neq 0,
\end{cases}
\]
where $\phi$ is a smooth positive compactly supported function.
Then $K$ is a truncated odd homogeneous \CZ{} kernel on $\R^{2}$.
Moreover, as the second columns of the matrices $B_{i}$ converge to $0$, the form \eqref{eq:DT-2D-BHT} converges to a constant times the form \eqref{eq:simplex-def-beta} with $\dm=2$, $\ds=1$, kernel $\tilde K$, and $\vec\beta_{i}$ being the first column of $B_{i}$, at least if the functions $F_{i}$ are smooth and compactly supported.

\section{Dyadic models}
\label{sec:intro:dyadic}
Many of the problems discussed so far have been also studied in a discrete setting.
In this setting the real line is replaced by the (Walsh) field $\W=\mathbb{F}_{2}((1/t))$ of one-sidedly infinite power series with coefficients in the two-element field $\mathbb{F}_{2}$.
The field $\W$ is traditionally identified with $[0,\infty)$ via the map $\sum_{k}a_{k}t^{k}\mapsto \sum_{k}a_{k}2^{-k}$, where $\mathbb{F}_{2}$ is identified with $\{0,1\}$.
This map is one-to-one on a conull set, and we normalize the Haar measure on $\W$ in such a way that this map becomes measure-preserving.
Under this identification the addition $\wplus$ and the multiplication $\wtimes$ on $W$ correspond to addition and multiplication of binary numbers without carrying over digits.

Similarly to the situation in $\R$, the locally compact commutative group $\W$ can be identified with its Pontryagin dual by associating to $w\in\W$ the character $x\mapsto e(w\wtimes x)$, where $e$ is the standard character $e(\sum_{k} a_{k} t^{k}) = (-1)^{a_{0}}$.
The difference from the real case is that wave packets with compact support both in space and in frequency are available on $\W$.
Indeed, under the identification with $[0,\infty)$ one can use the Haar wavelets.

This provides a rigorous framework for ignoring tails that allows to develop combinatorial ideas in a simplified setting.
While dyadic models have been initially used mostly for expository purposes, many of the more recent results have been first developed in a dyadic setting as a step towards the desired result in the real case.
We list some problems in and around time-frequency analysis for which dyadic models have been considered in the literature.
\begin{enumerate}
\item Carleson operator \cite{MR0217510,MR2692998,MR1695038,MR3334208}
\item Maximally truncated bilinear Hilbert transform \cite{MR1846092}
\item Uniform estimates for the bilinear Hilbert transform \cite{MR1924689,MR2997005}
\item Bi-Carleson operator \cite{MR2127984}
\item Twisted paraproduct \cite{MR2990138}
\item Entangled $T(1)$ theorem \cite{MR3275738}
\item Pointwise convergence of bilinear ergodic averages \cite{MR3224563}
\item Norm convergence of bilinear ergodic averages \cite{MR3480347}
\item Return times theorem \cite{MR2490160,MR3090139}
\item Multilinear $T(b)$ theorem \cite{MR3653076}
\item Special case of the triangular Hilbert transform \cite{arxiv:1506.00861}
\end{enumerate}
Let us now state the latter result (that is proved in Chapter~\ref{chap:two-general})

Under the identification of $\W$ with $[0,\infty)$ the sets
\[
A_{k} = [0,2^{k}),
\quad k\in\Z
\]
become additive subgroups and their cosets are simply \emph{dyadic intervals} of length $2^k$, the collection of which will be denoted by $\mathbf{I}_{k}$.
Some dyadic intervals (typically denoted by Latin letters, such as $I$) will be interpreted as time intervals and they will always be subsets of the unit interval $[0,1)$.
Other dyadic intervals will be interpreted as frequency intervals (typically denoted by Greek letters, such as $\omega$) and they will have integer endpoints.
For a dyadic interval $I$ we write $I^1$ for its left half and $I^{-1}$ for its right half.
The unique dyadic parent of $I$ will be denoted $\parent I$.
When we mention a \emph{dyadic square} we will always mean a dyadic square contained in $[0,1)^2$.

We work with real-valued functions, which is no restriction since all systemic functions under consideration, most notably the Haar functions, are real valued.
Let us then reserve the letter $i$ to denote an index $i\in \{0,1,2\}$.
It is convenient to regard $i$ as an element of $\Z/3\Z$ and interpret $i+1$ and $i-1$ correspondingly.
We shall also consider the set $\DI_{k}$ of all triples $\vec I = (I_0,I_1,I_2)$ of dyadic intervals contained in $[0,1)$ such that
\[
\abs{I_0}=\abs{I_1}=\abs{I_2}=2^{k}, \quad 0\in I_0\wplus I_1 \wplus I_2
\]
and the set $\DI = \cup_{k\leq 0} \DI_{k}$.
We write
\[
(I_0,I_1,I_2)\subset (J_0,J_1,J_2)
\]
if $I_{i}\subset J_{i}$ for $i=0,1,2$.

Any function $F$ on the unit square shall be interpreted as the integral operator
\[
(F\varphi)(x):=\int_0^1 F(x,y)\varphi(y)\, \dif y
\]
on $L^2([0,1))$, denoted by the same letter.
For any dyadic interval $I$ we normalize the \emph{Haar function} $\h_{I}$ in $L^\infty$, so that $\h_{I} = \sum_{j\in\{\pm 1\}} j 1_{I^{j}}$.
We shall also write $\h_I$ for the spatial multiplier operator acting on $L^2([0,1))$ and defined by
\[
(\h_I \varphi)(x):=\h_I(x)\varphi(x) .
\]

The \emph{dyadic triangular Hilbert transform} can be written as
\begin{equation}
\label{eq:THT-def}
\Lambda^{\epsilon}(F_0,F_1,F_2)
:=
\sum_{\vec I \in \DI} \epsilon_{\vec I}
\abs{I_i}^{-1}\tr(\h_{I_i} F_{i-1}  \h_{I_{i+1}} F_i \h_{I_{i-1}} F_{i+1}),
\end{equation}
where $(\epsilon_{\vec I})_{\vec I\in\DI}$ is an arbitrary sequence of scalars bounded in magnitude by $1$ and $i\in\{0,1,2\}$ is a fixed index.
The expression does not depend on the specific choice of $i$ by cyclicity of the trace.
An explicit integral representation of \eqref{eq:THT-def} is
\begin{align}
& \Lambda^{\epsilon}(F_0,F_1,F_2) \label{eq:THT-def2} \\
& = \sum_{\vec I \in \DI} \epsilon_{\vec I}\, \abs{I_0}^{-1}
\iiint_{\W^{3}} \h_{I_1}(x)F_0(x,y)\h_{I_2}(y)F_1(y,z)\h_{I_0}(z)F_2(z,x) \dif x \dif y \dif z. \nonumber
\end{align}
We note that \eqref{eq:THT-def2} is a perfect Calder\'on--Zygmund kernel analogue of \eqref{eq:simplex-form} in the case $\dm=2$, i.e.
\[
\sum_{\vec I \in \DI} \epsilon_{\vec I} \abs{I_0}^{-1} \h_{I_1}(x)\h_{I_2}(y)\h_{I_0}(z)
\]
replaces $1/(x+y+z)$.
It is necessary to insert the coefficients $\epsilon_{\vec I}$, as otherwise the above kernel would telescope to the Dirac mass $\delta_0$ evaluated at $x+y+z$, and the form would become the integral of a pointwise product of $F_{0},F_{1},F_{2}$, which is bounded by H\"older's inequality.
Informally speaking, the Walsh model cannot distinguish between $\mathrm{p.v.}\frac{1}{t}$ and $\delta_0(t)$, so it becomes faithful only after breaking the form into scales.
We obtain the following strong type estimates.
\begin{theorem}
\label{thm:2general}
Let $F_{0},F_{1},F_{2} : \W\to\R$ be functions supported on $A_{0}^{2}$.
Suppose that either
\begin{equation}
\label{eq:F0-diagonal}
F_{0}(x_{1},x_{2})=f\big(x_{2}\wplus (a\wtimes x_{1})\big)
\text{ for all }x_{1},x_{2}\in A_{0}
\end{equation}
holds with some $a\in \W\setminus A_{0}$ and some measurable $f:\W\to\R$ or
\begin{equation}
\label{eq:F0-fiberwise-character}
F_{0}(x_{1},x_{2})=f(x_{2}) e(N_{x_{2}} \wtimes x_{1})
\text{ for all }x_{1},x_{2}\in A_{0}
\end{equation}
holds with some measurable $N : \W\to\W$ and $f:\W\to\R$.
Then
\begin{equation}
\label{eq:lp-estimate}
\abs{\Lambda^{\epsilon}(F_0,F_1,F_2)}
\lesssim
\norm{F_{0}}_{p_{0}} \norm{F_{1}}_{p_{1}} \norm{F_{2}}_{p_{2}}
\end{equation}
for any $1<p_{2}<\infty$ and $2<p_{0},p_{1}<\infty$ with \eqref{eq:Lp-range}.
The implicit constant does not depend on $a$, $N$, or the scalars $\abs{\epsilon_{\vec I}}\leq 1$ with $\epsilon_{\vec I}=0$ whenever some $I_{i} \not\subseteq A_{0}$.
In case \eqref{eq:F0-fiberwise-character} we can relax the restriction on $p_{0}$ to $1<p_{0}<\infty$.
In case \eqref{eq:F0-diagonal}, $a\in A_{1}\setminus A_{0}$, we can relax the restrictions on both $p_{0}$ and $p_{1}$ to $1<p_{0},p_{1}<\infty$.
\end{theorem}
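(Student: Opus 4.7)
The plan is to set up \eqref{eq:THT-def} as a sum over tri-tiles in the Walsh phase plane and then run the standard time--frequency machinery of trees, sizes, and generalized restricted type interpolation. The structural hypothesis on $F_{0}$ enters only in the main tree estimate, where it forces the wave-packet coefficients of $F_{0}$ to live on a low-dimensional locus compatible with the three-scale geometry of tri-tiles.

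First, I would expand the trace in \eqref{eq:THT-def2} into Walsh wave packets: each factor $F_{i}$ decomposes along pairs of children of $I_{i}$, and combining the three factors writes $\Lambda^{\epsilon}$ as a sum over tri-tiles of a product $\innerp{F_{0}}{w_{p_{0}}}\innerp{F_{1}}{w_{p_{1}}}\innerp{F_{2}}{w_{p_{2}}}$ of Walsh coefficients against wave packets of area one at scale $2^{k}$. The admissibility condition $0\in I_{0}\wplus I_{1}\wplus I_{2}$ forces the three frequency intervals to have equal length and summing tops, which is exactly the geometry of the dyadic bilinear Hilbert transform as in \cite{MR1924689,MR2997005}.

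Second, I would prove \eqref{eq:lp-estimate} by generalized restricted type. Fix measurable $E_{0},E_{1},E_{2}$ and $\lambda>0$, and remove an exceptional set of controlled measure outside of which appropriate maximal operators are $\lesssim\lambda$: the Hardy--Littlewood maximal function for $F_{1},F_{2}$; additionally the Walsh Carleson operator \cite{MR1695038} in case \eqref{eq:F0-fiberwise-character}; and a shifted dyadic maximal function associated to the slope $a$ in case \eqref{eq:F0-diagonal}. On the complement, pass to majorants $\abs{f_{i}}\leq\one_{E_{i}'}$ and organize the tri-tiles into trees (collections with a common frequency top). For each $i$ define $\size_{i}$ as a localized $\ell^{2}$ mass of wave-packet coefficients of $F_{i}$ over a tree; a John--Nirenberg-type argument yields the size lemma $\sum_{T}\abs{I_{T}}\lesssim\size_{i}^{-2}\norm{F_{i}}_{2}^{2}$, and the main tree estimate should read
\[
\abs*{\sum_{p\in T}\prod_{i}\innerp{F_{i}}{w_{p_{i}}}}
\lesssim
\abs{I_{T}}\prod_{i=0}^{2}\size_{i}(T).
\]
Standard tree selection together with optimization over the size exponents then deliver \eqref{eq:lp-estimate} throughout the claimed range.

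The main obstacle is the single-tree estimate for the $F_{0}$-factor; for an unstructured $F_{0}$ no useful bound is available, which is precisely why the full dyadic triangular Hilbert transform is open. Under \eqref{eq:F0-fiberwise-character}, the inner product of $F_{0}$ against a tree wave packet in the variable $x_{1}$ becomes, after substitution, a Carleson-type quantity in the linearizing frequency $N_{x_{2}}$; pointwise domination by the Walsh Carleson maximal operator then bounds $\size_{0}(T)$ by $\norm{f}_{L^{2}(I_{T})}\abs{I_{T}}^{-1/2}$ on the good set and yields the improved range $1<p_{0}<\infty$. Under \eqref{eq:F0-diagonal} with $a\notin A_{0}$, the coefficient $\innerp{F_{0}}{w_{p_{0}}\otimes w_{p_{1}}}$ collapses to a single Walsh coefficient of $f$ on a one-dimensional wave packet whose scale and frequency are determined by $a$ and the tri-tile; $a\notin A_{0}$ ensures that the resulting assignment from tri-tiles to one-dimensional tiles is injective, and a one-dimensional Bessel inequality gives the tree bound. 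When in addition $a\in A_{1}\setminus A_{0}$, swapping the roles of $x_{1}$ and $x_{2}$ exhibits $F_{1}$ in the same diagonal form, so the argument applied to $F_{0}$ applies equally to $F_{1}$ and yields the symmetric range $1<p_{0},p_{1}<\infty$. Generalized restricted type interpolation then concludes the proof.
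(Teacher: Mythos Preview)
Your outline has the right overall shape (tile decomposition, tree/size machinery, restricted type), but the concrete implementation diverges from the paper in a way that leaves a genuine gap at the tree estimate. You propose to write $\Lambda^{\epsilon}$ as a sum over tri-tiles of a \emph{product of three scalar wave-packet coefficients} $\innerp{F_{i}}{w_{p_{i}}}$ and then invoke a standard bilinear-Hilbert-transform tree bound. For the triangular form this is not known to work: the functions $F_{i}$ live on three different coordinate planes, and no such scalar tri-tile model with a proven tree estimate exists (this is exactly why the full dyadic triangular Hilbert transform is open). The paper instead uses a \emph{$1\frac12$-dimensional} decomposition: a single Walsh identity is inserted between the two copies of $\one_{I_{1}^{j}}$, producing bitiles $P=I_{0}\times I_{2}\times\omega_{1}$ with only \emph{one} frequency interval, and $\Lambda_{P}$ remains an operator-valued trace in the other variables. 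The single-tree estimate then reduces, after modulation and a change of variables, to the tree operator of Kova\v{c}'s twisted paraproduct \cite{MR2990138}, not to a product of three sizes.

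Relatedly, the structural hypothesis on $F_{0}$ is used differently from what you describe. It does not enter as a pointwise Carleson-type domination of $\size_{0}$; rather it is used to \emph{construct} time-frequency projections $\Pi^{(1)}$ for the \emph{general} function $F_{1}$ that are adapted to $F_{0}$ (Sections~\ref{sec:1dimfct}--\ref{sec:fiberwisechar}). With these abstract projections in hand, the tree estimate and tree selection go through uniformly for both cases, giving the local $L^{2}$ range. Extension beyond local $L^{2}$ is not obtained by removing an exceptional set controlled by the Walsh Carleson operator, but by a fiberwise multi-frequency Calder\'on--Zygmund decomposition of $F_{2}$ (and, in the special case $a\in A_{1}\setminus A_{0}$, of $F_{0}$). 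Finally, your symmetry claim at the end is not right: $F_{1}$ is arbitrary and does not acquire diagonal form. The extra range for $a\in A_{1}\setminus A_{0}$ comes from the symmetry of the \emph{form} in indices $0$ and $2$, together with the fact that in this case the directional maximal function governing the exceptional set for $F_{0}$ coincides with the two-dimensional one, so the good function produced by the multi-frequency CZ decomposition still admits adapted projections.
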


The cases \eqref{eq:F0-diagonal} and \eqref{eq:F0-fiberwise-character} are treated in a unified way and cover all types of functions used to recover algebraically defined dyadic models for the Carleson operator and uniform estimates for the bilinear Hilbert transform, see Section~\ref{appendix:dyadic}.
However, note that already one type, namely the case \eqref{eq:F0-diagonal}, $a\in A_{1}\setminus A_{0}$, suffices to recover the bounds for both these operators.
In particular we recover the full range of uniform estimates for the dyadic bilinear Hilbert transform.
%are known (triangles $a_{1},a_{2}$ have been treated in \cite{MR2320411} and triangles $d_{12},d_{21}$ in \cite{MR2997005}).
%This range seems to be the best possible, because for $\alpha_{\pm 1}\leq -1/2$ there are indications that even non-uniform bounds for the bilinear Hilbert transform fail, whereas for $\alpha_{0}\leq 0$ the bounds fail in the limiting case of the $1$-linear Hilbert transform.

\section{Weighted and vector-valued estimates}
\label{sec:weighted}
A \emph{weight} $w$ is a non-negative measurable function on $\R^{\ds}$.
We will write $w(A) = \int_{A} w$ for measurable subsets $A \subset \R^{\ds}$.

Vector-valued and weighted estimates for the Hardy--Littlewood maximal operator $M$ on $\R^{\ds}$ have been introduced by Fefferman and Stein \cite{MR0284802}.
Their weighted estimate reads
\[
w(\Set{ x \given Mf(x) > \lambda }) \lesssim \lambda^{-1} \int \abs{f} Mw,
\]
and it has been used to prove the vector-valued inequalities
\[
\norm{ (\sum_{k} \abs{Mf_{k}}^{q} )^{1/q} }_{L^{p}}
\lesssim
\norm{ (\sum_{k} \abs{f_{k}}^{q} )^{1/q} }_{L^{p}},
\quad 1<p,q<\infty.
\]

The weights $w$ for which the Hardy--Littlewood maximal function is bounded on the weighted space $L^{p}(w)$ for a given $1<p<\infty$ have been characterized by Muckenhoupt \cite{MR0293384} as those for which the characteristic
\[
[w]_{A_{p}} = \sup_{B \text{ ball}} \abs{B}^{-p} w(B) (w^{1-p'}(B))^{p-1}
\]
is finite.
The class of such weights is called the Muckenhoupt $A_{p}$ class.

The most stunning feature of $A_{p}$ weights is the Rubio de Francia extrapolation theorem \cite{MR745140}.
In its quantitative form \cite{MR2754896} it tells that if for a pair of functions $f,g$ the estimate
\begin{equation}
\label{eq:Ap-weghted-bound}
\norm{g}_{L^{p}(w)} \leq C_{p}([w]_{A_{p}}) \norm{f}_{L^{p}(w)}
\end{equation}
holds for some $p=p_{0}$, $1<p_{0}<\infty$, and all weights $w$, then the same estimate holds for all $1<p<\infty$ with a function $C_{p}$ that depends only on $C_{p_{0}}$.

Substituting $g=Tf$ with any operator $T$ it follows that $A_{p_{0}}$-weighted $L^{p_{0}}(w)$ boundedness of $T$ for some $1<p_{0}<\infty$ implies $A_{p}$-weighted $L^{p}(w)$ boundedness of $T$ for all $1<p<\infty$.
More in general, suppose that a sequence of operators $T_{k}$ satisfies
\[
\norm{T_{k} f}_{L^{q}(w)} \leq C_{q}([w]_{A_{q}}) \norm{f}_{L^{q}(w)}
\]
uniformly in $k$ for some $1<q<\infty$.
Then for every $1<p<\infty$ we have the vector-valued inequality
\[
\norm{ (\sum_{k} \abs{T_{k} f_{k}}^{q} )^{1/q} }_{L^{p}(w)}
\leq C_{p}([w]_{A_{p}})
\norm{ (\sum_{k} \abs{f_{k}}^{q} )^{1/q} }_{L^{p}(w)}.
\]
In order to see this observe that the functions inside $L^{p}$ norms satisfy \eqref{eq:Ap-weghted-bound} with $p$ replaced by $q$.

A very effective way to summarize localization properties of operators has been introduced by Lerner.
Let $\calD$ denote a dyadic grid (e.g.\ the collection of all standard dyadic cubes).
A subcollection $\calS\subset\calD$ is called \emph{sparse} if it satisfies the Carleson condition $\sum_{Q\in\calS : Q \subset Q_{0}} \abs{Q} \lesssim \abs{Q_{0}}$.
The corresponding \emph{sparse operator} (with exponent $p$) is given by
\[
A_{\calS,p} f(x) = \sum_{Q\in\calS} \one_{Q}(x) \big( \abs{Q}^{-1} \int_{Q} \abs{f}^{p} \big)^{1/p}.
\]
\CZ{} operators can be dominated by sparse operators in the sense that for every \CZ{} operator $T$ on $\R^{\ds}$ and every function $f$ there exist $3^{\ds}$ sparse collections $\calS_{i}$ such that $\abs{Tf} \lesssim \sum_{i} A_{\calS_{i},1}f$.
This result emerged in a long sequence of articles by many authors, but the final argument is very short and can be found in \cite{MR3484688} along with historical references.
In the last few years similar results have been obtained in numerous other situations most of which are not directly relevant to us.

The effectiveness of sparse domination comes from the fact that $A_{p}$ weighted estimates are easy to show for sparse operators.
In addition to being effective sparse domination is also efficient in the sense that it recovers all known optimal weighted estimates for \CZ{} operators even near the difficult weak $(1,1)$ endpoint, see \cite{MR2480568,MR3455749,arxiv:1710.07700}.

Weighted estimates for the Carleson operator have been initially proved using unweighted estimates as a black box \cite{MR0338655,MR2115460,MR3291794,arxiv:1410.6085,arxiv:1611.03808}.
Let us recall the short argument in \cite[Theorem 4.3.2]{beltran-phdthesis}.
For a sublinear operator $T$ define the non-tangentially maximally truncated operator
\[
N_{T}f(x) := \sup_{Q \ni x} \norm{T(f\chi_{\R^{\ds}\setminus 3Q})}_{L^{\infty}(Q)},
\]
where the supremum is taken over all dyadic cubes.
If $T$ is a maximally modulated \CZ{} operator, then following the proof of Cotlar's inequality and using only kernel estimates one can show
\[
N_{T}f(x) \lesssim Mf(x) + M(Tf)(x) + \norm{T}_{p\to p} M_{p}f(x),
\]
where $M$ is the Hardy--Littlewood maximal function and $M_{p}f = (M(\abs{f}^{p}))^{1/p}$.
In particular, it follows that for the Carleson operator \eqref{eq:Car-op} the operator $N_{C}$ has weak type $(p,p)$ for any $1<p<\infty$.
A general stopping type argument \cite{MR3484688} then allows to find $3$ sparse collections $\calS_{i}$ (in different dyadic grids) such that
\[
Cf \lesssim \sum_{i} A_{\calS_{i},p} f
\]
pointwise almost everywhere.
Applying this reasoning to the polynomial Carleson operator \eqref{eq:poly-CS} we obtain the following result.
\begin{corollary}
\label{cor:poly-car-sparse}
Under the hypotheses of Theorem~\ref{thm:poly-car} for every function $f\in L^{1}(\R^{\ds})$ and $p>1$ there exist sparse collections $\calS_{i}$, $i=1,\dotsc,3^{\ds}$ such that
\[
C_{K,d}f \lesssim_{p} \sum_{i} A_{\calS_{i},p} f.
\]
\end{corollary}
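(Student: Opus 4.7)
The plan is to combine the $L^p$ bounds from Theorem~\ref{thm:poly-car} with the abstract sparse domination machinery of~\cite{MR3484688}, exactly along the lines sketched in Section~\ref{sec:weighted} immediately before the statement. Introduce the non-tangentially maximally truncated operator
\[
N_{C_{K,d}} f(x) := \sup_{Q \ni x} \norm{C_{K,d}(f \chi_{\R^{\ds} \setminus 3Q})}_{L^\infty(Q)},
\]
where the supremum is over dyadic cubes $Q$ containing $x$. Once $N_{C_{K,d}}$ is shown to be of weak type $(p,p)$ for every $1<p<\infty$, the stopping-time construction of~\cite{MR3484688} immediately produces, in each of the $3^{\ds}$ standard shifted dyadic grids, a sparse subcollection $\calS_i$ such that $C_{K,d} f \lesssim \sum_{i=1}^{3^{\ds}} A_{\calS_i, p} f$ pointwise almost everywhere.

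The weak $(p,p)$ bound for $N_{C_{K,d}}$ will follow from the Cotlar-type pointwise inequality
\[
N_{C_{K,d}} f(x) \lesssim M f(x) + M(C_{K,d} f)(x) + \norm{C_{K,d}}_{L^p \to L^p}\, M_p f(x),
\]
combined with the $L^p$ boundedness of $M$ and of $C_{K,d}$ (the latter supplied by Theorem~\ref{thm:poly-car}). To establish the Cotlar inequality, fix $x \in Q$ and an auxiliary point $x' \in Q$. Comparing $C_{K,d}(f\chi_{\R^{\ds}\setminus 3Q})(x)$ with the truncated polynomial-Carleson expression at $x'$ with inner truncation radius $\sim \diam Q$ generates two kernel errors: a \emph{truncation error} controlled by the size bound~\eqref{eq:K-size} integrated over the annulus where the two domains of integration disagree, and a \emph{regularity error} controlled by the Hölder estimate~\eqref{eq:K-reg} integrated over $\R^{\ds}\setminus 3Q$. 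Both pieces are dominated pointwise by $Mf(x)$, uniformly in the polynomial $Q\in\calQ_d$ and in the truncation radii, because the modulating phase $e(Q(y))$ depends on $y$ alone and therefore leaves the kernel estimates unaffected. A standard selection of $x'$—namely on the subset of $Q$ of measure greater than $|Q|/2$ where $C_{K,d}f(x')$ does not exceed a fixed multiple of either $M(C_{K,d}f)(x)$ or $\norm{C_{K,d}}_{p\to p} M_p f(x)$, both available via Chebyshev's inequality together with Theorem~\ref{thm:poly-car}—supplies the remaining two summands on the right-hand side.

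The one point requiring care is that the outer supremum $\sup_{Q\in\calQ_d}$ in the definition~\eqref{eq:Car-op-Kd} must be handled consistently throughout the Cotlar comparison: kernel-level estimates have to be carried out at the \emph{same} polynomial phase before pulling the supremum outside. Because the phase is purely a function of $y$, this adaptation from the classical linear case is routine rather than conceptual, so this is the only mildly delicate step in the argument. After the Cotlar inequality and hence the weak $(p,p)$ bound for $N_{C_{K,d}}$ have been secured, the sparse domination in the $3^{\ds}$ shifted dyadic grids follows by a direct quotation of the stopping-time construction of~\cite{MR3484688}.
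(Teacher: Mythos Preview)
The proposal is correct and follows exactly the same approach as the paper: the Cotlar-type pointwise inequality for $N_{C_{K,d}}$ (using only the kernel bounds \eqref{eq:K-size}, \eqref{eq:K-reg} and the $L^p$ boundedness from Theorem~\ref{thm:poly-car}) yields weak type $(p,p)$ for $N_{C_{K,d}}$, and the stopping-time argument of \cite{MR3484688} then furnishes the $3^{\ds}$ sparse collections. Your observation that the polynomial phase $e(Q(y))$ depends only on $y$ and therefore does not interfere with the kernel estimates is exactly the point that makes the classical Cotlar argument go through unchanged.
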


For the variational Carleson operator (with variation norm taken in the modulation parameter $\xi$, as treated in \cite{MR2881301}) it does not seem possible to use this method.
In this case a proof of weighted estimates not using unweighted bounds as a black box appeared in \cite{MR2997585}.
A conceptually clearer (in the author's opinion) approach to weighted estimates for modulation invariant operators has been developed in \cite{MR3873113,arXiv:1610.07657,MR3829751} using the language and machinery of outer measure spaces introduced in \cite{MR3312633}.
This approach seems to be a logical continuation of the localization procedure introduced in \cite[Proposition 4]{MR1689336} as a tool for proving $L^{p}$ estimates with $p<2$.
A different treatment of localized estimates for the bilinear Hilbert transform and related operators has been given by Benea and Muscalu \cite{MR3599522,MR3661402,arxiv:1707.05484}.

For the Carleson operator vector-valued estimates can be deduced as a consequence of the weighted estimates.
%However, the author is not aware of any applications of these estimates.
A hypothetical vector-valued estimate for certain related operators seems to be a promising tool to tackle a problem of independent interest concerning directional operators.
This will be discussed in the next section.

\section{Directional operators along Lipschitz vector fields}
Let $v : \R^{2}\to \mathbb{S}^{1}$ be a vector field.
Under which conditions on $v$ and $p$ can we expect that for every $f\in L^{p}(\R^{2})$ the directional analogue of Lebesgue's differentiation theorem
\[
f(x) = \lim_{R\to 0} \frac{1}{R} \int_{r=0}^{R} f(x+v(x)r) \dif r
\]
holds pointwise almost everywhere?
A related question is whether the local maximal function
\[
M_{v}f(x) = \sup_{0<R<1} \frac{1}{R} \int_{r=0}^{R} \abs{f(x+v(x)r)} \dif r
\]
is bounded on $L^{p}(\R^{2})$ for any $p<\infty$.

Some structural or regularity hypothesis on the vector field is necessary.
To see this consider a Perron tree (an arrangement of triangles used to construct Kakeya sets, see e.g.\ \cite[Section X.1]{MR1232192} for the details).
The measure of this tree can be made arbitrarily small, while the continuations of the unit length segments contained in it (with length $1.5$, say) cover a set with measure bounded below by a constant.
The vector fields sketched in the picture below then witness the unboundedness of $M_{v}$ on $L^{p}(\R^{2})$, $p<\infty$, for general vector fields $v$.
\begin{center}
\begin{tikzpicture}
\foreach \i in {-2,...,2}
{
  \draw[yslant=(0.2*\i),yshift={0.15cm*\i}] (0,0) -- (-1,0.1) -- (-1,-0.1) -- cycle;
  \draw[yslant=(0.2*\i),yshift={0.15cm*\i},->] (1,0) -- (0.5,0);
}
\end{tikzpicture}
\end{center}

The vector fields in the Perron tree example can be chosen to be H\"older continuous with any exponent strictly less than $1$ but not Lipschitz with an arbitrarily small Lipschitz constant.
This motivates Zygmund's conjecture dating back to the 1920's that $M_{v}$ is bounded (say on $L^{2}(\R^{2})$) provided that $\norm{v}_{\Lip}$ is sufficiently small.
%One-variable vector fields: \cite{MR1757580} TODO: \cite{MR1726701}
A general result in this direction is due to Bourgain, who found a condition ensuring boundedness of $v$ that is satisfied by real analytic vector fields \cite{MR1009171}, so there is a huge gap between the known and the conjectured regularity conditions.

If the vector field takes only finitely many values, then a logarithmic bound in terms of the number of values is available for $2\leq p<\infty$ \cite{MR1681088}.

A related question of Stein is whether the directional singular integral
\[
H_{v}f(x) = \pvint_{\abs{r}\leq 1} f(x+v(x)r) \frac{\dif r}{r}
\]
is bounded on any $L^{p}(\R^{2})$ under similar assumptions on the vector field $v$.
In the real analytic case this has been shown by Stein and Street \cite{MR2784671}.

We will only consider directional singular integrals.
Suppose now that $v : \R^{2} \to \mathbb{S}^{1}$ is $1/100$-Lipschitz.
Covering the circle by small closed arcs, considering for each arc the set of points on which $v$ points into the direction of that arc, and extending $v$ from that set to $\R^{2}$ as a Lipschitz function with values in the arc, we may assume that $v$ itself takes values in a small arc.
Assuming that this small arc is close to the horizontal direction, representing $v(x,y)$ as a multiple of $(1,u(x,y))$, and multiplying the singular integral by a bounded factor at each point we reduce this way to the operator
\begin{equation}
\label{eq:Hu}
H_u f(x,y):= \pvint  f(x+r,y+u(x,y)r) k(r) \dif r,
\end{equation}
where $k$ is a one-dimensional \CZ{} kernel supported on $[-1,1]$.
Notice that the single scale operator
\[
A_u f(x,y):= \int_{-1}^{+1}  \abs{f(x+r,y+u(x,y)r)} \dif r
\]
is bounded on $L^{p}(\R^{2})$ for any $p$ because the map $(x,y)\mapsto (x+r,y+u(x,y)r)$ is bi-Lipschitz for any $-1<r<1$, so we can adjust the truncation in \eqref{eq:Hu}.

For measurable functions $u(x,y)=u(x)$ that depend only on the first variable, an $L^{2}$ estimate for \eqref{eq:Hu} is equivalent to an $L^{2}$ estimate for the maximally modulated operator associated to the kernel $k$ (this observation is attributed to Coifman and El-Kohen in \cite{MR1757580}).
Indeed, formally taking the Fourier transform in the second variable inside the integral we obtain
\begin{equation}
\label{eq:Hu-Car-Fourier}
\calF_{2} H_{u}f(x,\eta) = \pvint_{\R} \calF_{2} f(x+r,\eta) e(\eta u(x) r) k(r) \dif r.
\end{equation}
By Plancherel's identity we need an $L^{2}(\R)$ estimate for each fixed $\eta$, and a uniform estimate in all measurable $u$'s is equivalent to Carleson's theorem.

For general functions $u$ the operator $H_{u}$ is no longer diagonalized by the Fourier transform in the vertical direction.
However, it turns out that it is almost diagonalized assuming that $u$ has some regularity in the second variable.
For $u\in C^{1+\epsilon}$ this has been observed by Lacey and Li \cite{MR2654385}.
Jointly with Guo, di Plinio, and Thiele \cite{MR3841536} we have extended this observation to the endpoint case of Lipschitz functions $u$.
To this end we use the following one-dimensional result.

\begin{theorem}
\label{thm:Lip-LP-diag}
Let $A : \R\to\R$ be a Lipschitz function with $\norm{A}_{\Lip} \leq 1/100$ and consider the change of variable $T_{A}f(x):=f(x+A(x))$.

Let $\psi$ be a Schwartz function on $\R$ such that $\widehat{\psi}$ identically equals $1$ on $\pm [99/100,103/100]$ and vanishes outside $\pm [98/100,104/100]$.
Let $\Psi$ be another Schwartz function on $\R$ such that $\widehat{\Psi}$ is supported on $\pm[1,101/100]$.
Let $P_{t}f:=\psi_{t}*f$ be the Littlewood--Paley operators associated to $\psi$, where $\psi_t(x)=t^{-1}\psi(t^{-1}x)$.
Then
\[
\norm[\Big]{ \sum_{t \in 2^{\Z}} \abs{(1-P_{t})T_{A} (\Psi_{t} * f)} }_{p}
\lesssim_{p,\psi,\Psi}
\norm{A}_{\Lip} \norm{f}_{p},
\quad
1<p<\infty.
\]
\end{theorem}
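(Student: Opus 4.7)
The plan is first to convert the operator into a commutator. Since $\widehat{\Psi}$ is supported where $\widehat{\psi}\equiv 1$, the Fourier multiplier $(1-\widehat{\psi}(t\xi))\widehat{\Psi}(t\xi)$ vanishes identically, so $(1-P_t)(\Psi_t\ast g)\equiv 0$ for every $g$. Applying this with $g=T_A f$ and subtracting yields
\[
(1-P_t)T_A(\Psi_t\ast f)=(1-P_t)\bigl[T_A,\,\Psi_t\ast\cdot\bigr]f,
\]
which makes structural the expected gain of $\|A\|_{\Lip}$: the commutator vanishes when $A\equiv 0$ and measures the failure of $T_A$ to commute with the scale-$t$ smoothing.

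The second step is a bilinear expansion. Writing out the commutator and applying the fundamental theorem of calculus yields
\[
[T_A,\Psi_t\ast\cdot]f(x)=\int \Psi_t(z)\,(A(x)-A(x-z))\int_0^1 f'\bigl(x-z+A(x-z)+\sigma(A(x)-A(x-z))\bigr)\,d\sigma\,dz,
\]
and the difference quotient $(A(x)-A(x-z))/z$ is bounded pointwise by $\|A\|_{\Lip}$, while $z\Psi_t(z)$ is a scale-$t$ Schwartz kernel. Thus the expression becomes a bilinear paraproduct pairing a smoothed derivative of $A$ against a derivative of a Littlewood--Paley piece of $f$.

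The third step is the paraproduct estimate. Decompose $A$ via a Littlewood--Paley decomposition $A=\sum_u Q_u A$, with $\|Q_u A\|_{L^\infty}\lesssim u\|A\|_{\Lip}$, and split the bilinear form according to the relative size of $u$ and $t$. For pieces with $u \gtrsim t$ (low frequency in $A$), the bilinear output has Fourier support essentially inside the plateau $[99/100,103/100]/t$ where $\widehat{\psi}\equiv 1$, so $(1-P_t)$ annihilates it up to an error produced by the non-Fourier-local character of $T_{\sigma A}$; this error is handled by iterating the Taylor expansion of $T_{\sigma A}$, each iteration producing an extra factor of $\|A\|_{\Lip}$. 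For pieces with $u \lesssim t$ (high frequency in $A$), use the $L^\infty$ bound on $Q_u A$ together with the pointwise domination $|T_{\sigma A}h|\lesssim Mh$ by the Hardy--Littlewood maximal function to reduce to a standard paraproduct sum, whose $L^p$ boundedness follows from the Fefferman--Stein vector-valued inequality combined with Littlewood--Paley theory.

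The main obstacle is exactly the non-Fourier-local nature of $T_A$: the frequency cancellation under $(1-P_t)$ must be justified quantitatively rather than symbolically, since the change of variables $T_A$ spreads the frequency support of $\Psi_t\ast f$. The iterative Taylor expansion of $T_{\sigma A}$ resolves this by trading each round of error for a factor of $\|A\|_{\Lip}$, so that only the leading linear-in-$A$ term requires delicate paraproduct analysis while all higher-order contributions are controlled trivially.
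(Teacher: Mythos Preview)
Your commutator rewriting in step~1 is correct and useful, and the FTC expansion in step~2 is fine once you observe that $\Psi_t\ast f=\Psi_t\ast\tilde P_t f$ for a slightly fattened projection $\tilde P_t$, so that $f'$ may be replaced by $(\tilde P_t f)'$ and the factor $t$ from $z\Psi_t(z)$ cancels the $t^{-1}$ from the derivative. The genuine gap is in step~3, and it is exactly the point you flag as ``the main obstacle'': the theorem asks for an $\ell^1$ sum $\sum_{t\in 2^{\Z}}|\,\cdot\,|$ inside the $L^p$ norm, not a square function. Your pointwise bound $|(A(x)-A(x-z))/z|\leq\|A\|_{\Lip}$, combined with $|T_{\sigma A}h|\lesssim Mh$, delivers at best $\|A\|_{\Lip}\sum_t M\tilde M(\tilde P_t f)$, and an $\ell^1$ sum of maximalized Littlewood--Paley pieces is \emph{not} bounded on $L^p$. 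The frequency splitting of $A$ into $u\gtrsim t$ and $u\lesssim t$ does not rescue this: the high-frequency part of $a=A'$ at scale $t$ has $L^\infty$ norm comparable to $\|a\|_\infty$ for every $t$, so no extra decay in $t$ is produced; and for the low-frequency part, the statement that ``$(1-P_t)$ annihilates it up to an error'' is precisely what has to be proved, and iterating the Taylor expansion of $T_{\sigma A}$ reproduces the same frequency-spreading error at each step rather than shrinking it.

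The paper resolves this by linearizing the phase: it replaces $x+A(x)-z$ in $\Psi_t(x+A(x)-z)$ by $(1+\alpha(b(z),t))(x-b(z))$, where $\alpha(\cdot,t)$ is an averaged slope of $A$ at scale $t$ and $b$ is the inverse of $x\mapsto x+A(x)$. The linearized wave packet has Fourier support inside the plateau of $\widehat{\psi_t}$, so $(1-P_t)$ annihilates it \emph{exactly}. The pointwise error is controlled by the Jones beta number $\beta(b(z),t)$, which measures the deviation of $A$ from its local affine approximation. The crucial input you are missing is that $\beta$ is a Carleson measure: $\|\beta\|_{L^\infty(S^2)}\lesssim\|A\|_{\Lip}$ (this is Corollary~\ref{cor:jones-beta}, going back to Jones and Dorronsoro). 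That $S^2$ bound is what converts the $\ell^1$ sum into a bounded trilinear pairing $\|\Dem f\|_{L^p(S^2)}\,\|\beta\|_{L^\infty(S^2)}\,\|\Aem g\|_{L^{p'}(S^\infty)}$ via the outer H\"older inequality. A ``standard paraproduct'' does not encode this second-order Carleson information, and without it the $\ell^1$ summation cannot close.
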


If the sum over $t$ inside the $L^{p}$ norm is replaced by an $\ell^{2}$ norm, then the estimate follows from Littlewood--Paley theory and the Fefferman--Stein maximal inequality.
The main point is that we obtain additional cancellation due to the factors $(1-P_{t})$.
This cancellation is perfect if the map $A$ is linear in the sense that the left-hand side of the desired estimate vanishes in that case.
In order to locally compare $A$ to linear functions we use a version of Jones beta numbers \cite{MR1013815} (this idea goes back to Dorronsoro \cite{MR796440}).

If the Lipschitz norm of $A$ is too large, then in general $T_A$ fails to be a bijection, and the estimate of Theorem~\ref{thm:Lip-LP-diag} breaks down.

Let us denote by $*_{2}$ the convolution of a one-variable function with a two-variable function in the second variable :
\[
\Psi *_{2} f(x,y) = \int_{\R} \Psi(z) f(x,y-z).
\]
Applying Theorem~\ref{thm:Lip-LP-diag} in the second variable for each fixed $x,r$ we obtain the following result on the directional Hilbert transform in the plane.
\begin{corollary}
\label{cor:LP-diag}
Assume that $u(x,\cdot)$ has Lipschitz constant $\leq 1/100$ for almost every $x\in\R$.
With notation as in Theorem~\ref{thm:Lip-LP-diag}, we have
\[
\norm[\Big]{ \sum_{t \in 2^{\Z}} \abs{(1-P_{t})H_{u} (\Psi_{t}*_{2} f)} }_{p}
\lesssim_{p,\phi,\Psi}
\norm{f}_{p},
\quad
1<p<\infty,
\]
where $P_t$ acts in the second variable.
\end{corollary}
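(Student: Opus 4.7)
The plan is to reduce the two–dimensional statement to the one–dimensional Theorem~\ref{thm:Lip-LP-diag} applied fibrewise in the vertical variable, exploiting the fact that the Littlewood--Paley projections $P_{t}$ and the convolution $\Psi_{t}*_{2}$ act only in the second coordinate, so the only genuinely two-dimensional feature is the change of variables $y \mapsto y + u(x,y)r$ built into $H_{u}$.

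First I would freeze the outer variables $x$ and $r$ and introduce the one–variable map
\[
A_{x,r}(y) := u(x,y)\,r,
\]
which has Lipschitz norm $\abs{r}\cdot \norm{u(x,\cdot)}_{\Lip} \leq \abs{r}/100 \leq 1/100$ because $k$ is supported on $[-1,1]$. With the notation $T_{A}g(y)=g(y+A(y))$ of Theorem~\ref{thm:Lip-LP-diag}, one rewrites
\[
H_{u}(\Psi_{t}*_{2}f)(x,y) = \pvint_{\R} T_{A_{x,r}}\bigl((\Psi_{t}*_{2}f)(x+r,\cdot)\bigr)(y)\,k(r)\,\dif r.
\]
Since $(1-P_{t})$ acts in $y$ alone, it commutes with the integral in $r$, and the triangle inequality in the sum over $t$ yields the pointwise bound
\[
\sum_{t\in 2^{\Z}} \bigl\lvert (1-P_{t})H_{u}(\Psi_{t}*_{2}f)(x,y)\bigr\rvert
\leq \int_{\R} \Bigl( \sum_{t\in 2^{\Z}} \bigl\lvert (1-P_{t})T_{A_{x,r}}(\Psi_{t}*f(x+r,\cdot))(y) \bigr\rvert \Bigr) \abs{k(r)}\,\dif r.
\]

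Next I would take the $L^{p}_{y}$ norm and invoke Minkowski's integral inequality, followed by Theorem~\ref{thm:Lip-LP-diag} applied to $A = A_{x,r}$:
\[
\Bigl\lVert \sum_{t} \bigl\lvert (1-P_{t})H_{u}(\Psi_{t}*_{2}f)(x,\cdot) \bigr\rvert \Bigr\rVert_{L^{p}_{y}}
\lesssim \int_{\R} \norm{A_{x,r}}_{\Lip}\, \norm{f(x+r,\cdot)}_{L^{p}_{y}}\, \abs{k(r)}\,\dif r
\lesssim \int_{\R} \abs{r}\,\abs{k(r)}\, \norm{f(x+r,\cdot)}_{L^{p}_{y}}\,\dif r.
\]
The crucial point is that the $\abs{r}$ gained from the Lipschitz constant of $A_{x,r}$ exactly cancels the Calder\'on--Zygmund singularity $\abs{k(r)}\lesssim \abs{r}^{-1}$; combined with the support condition on $k$, the factor $\abs{r}\abs{k(r)}\one_{[-1,1]}(r)$ is in $L^{1}(\R)$.

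Finally, I would take the $L^{p}_{x}$ norm of the preceding bound and apply Minkowski once more, using translation invariance of the Lebesgue norm in $x$, to obtain
\[
\Bigl\lVert \sum_{t} \bigl\lvert (1-P_{t})H_{u}(\Psi_{t}*_{2}f) \bigr\rvert \Bigr\rVert_{L^{p}(\R^{2})}
\lesssim \int_{\R} \abs{r}\abs{k(r)}\,\dif r \cdot \norm{f}_{p}
\lesssim \norm{f}_{p},
\]
which is the claimed estimate. The only genuine obstacle is the verification that $A_{x,r}$ really falls under the hypothesis of Theorem~\ref{thm:Lip-LP-diag} uniformly in $(x,r)$, and the handling of the $r=0$ singularity of $k$; both are resolved by the Lipschitz bound $\leq 1/100$ on $u(x,\cdot)$ and the extra factor of $\abs{r}$ produced by Theorem~\ref{thm:Lip-LP-diag}. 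The rest is a routine double application of Minkowski.
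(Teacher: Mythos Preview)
Your argument is correct and is essentially the same as the paper's proof: freeze $x$ and $r$, apply Theorem~\ref{thm:Lip-LP-diag} in the $y$-variable to the Lipschitz map $A_{x,r}(y)=r\,u(x,y)$, and use the factor $\norm{A_{x,r}}_{\Lip}\lesssim\abs{r}$ to cancel the singularity of $k$, followed by Minkowski and translation invariance in $x$. The only cosmetic difference is the order of operations---the paper takes the full $L^{p}(\R^{2})$ norm first and then invokes Minkowski in $r$, while you take $L^{p}_{y}$ first and $L^{p}_{x}$ at the end---but by Fubini these are equivalent.
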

\begin{proof}
By Minkowski's integral inequality we obtain
\begin{align*}
\MoveEqLeft
\norm[\Big]{ \sum_{t \in 2^{\Z}} \abs{(1-P_{t})T_{u} (\Psi_{t}*_{2} f)} }_{L^{p}(\R^{2})}\\
&\leq
\int_{r=-1}^{1} \Big(\int_{\R} \norm[\Big]{ \sum_{t \in 2^{\Z}} \abs{(1-P_{t})\big(\Psi_{t} * f(x+r,\cdot+r u(, \cdot))\big)} }_{L^{p}(\R)}^{p} \dif x \Big)^{1/p} \frac{\dif r}{\abs{r}}\\
\intertext{By Theorem~\ref{thm:Lip-LP-diag} this is bounded by}
&\lesssim
\int_{r=-1}^{1} \Big(\int_{\R} \norm{ru(x,\cdot)}_{\Lip}^{p} \norm{ f(x+r,\cdot) }_{L^{p}(\R)}^{p} \dif x \Big)^{1/p} \frac{\dif r}{\abs{r}}\\
&\lesssim
\int_{r=-1}^{1} \Big(\int_{\R} \norm{ f(x+r,\cdot) }_{L^{p}(\R)}^{p} \dif x \Big)^{1/p} \dif r \lesssim
\int_{r=-1}^{1} \norm{ f }_{L^{p}(\R^{2})} \dif r \lesssim
\norm{ f }_{L^{p}(\R^{2})}.
\qedhere
\end{align*}
\end{proof}

The importance of Corollary~\ref{cor:LP-diag} is that it reduces bounds for $H_u$ to bounds for a square function.
Indeed,
\begin{align*}
\norm[\Big]{ H_{u} (\sum_{t \in 2^{\Z}} \Psi_{t}*_{2} f) }_{p}
&\leq
\norm[\Big]{ \sum_{t \in 2^{\Z}} P_{t} H_{u} (\Psi_{t}*_{2} f) }_{p}
+
\norm[\Big]{ \sum_{t \in 2^{\Z}} \abs{(1-P_{t})H_{u} (\Psi_{t}*_{2} f)} }_{p}\\
&\lesssim
\norm[\Big]{ \big( \sum_{t \in 2^{\Z}} \abs{ H_{u} (\Psi_{t}*_{2} f) }^{2} \big)^{1/2} }_{p}
+
\norm{f}_{p}
\end{align*}
for any $1<p<\infty$ by Corollary~\ref{cor:LP-diag} and Littewood--Paley square function inequality.
Since $f$ can be written as an average of functions of the form $\sum_{t \in 2^{\Z}} \Psi_{t}*_{2} f$ (with varying $\Psi$), $L^{p}$ bounds for the operator $H_{u}$ reduce to bounds for the square function
\begin{equation}
\label{eq:LP-diag-square-fct}
\big( \sum_{t \in 2^{\Z}} \abs{ H_{u} (\Psi_{t}*_{2} f) }^{2} \big)^{1/2}.
\end{equation}
The $L^2$ part of the following corollary is then immediate.

\begin{corollary}
\label{cor:LL-single-band}
Let $u:\R^{2}\to\R$ be a measurable function such that $u(x,\cdot)$ has Lipschitz constant $\leq 1/100$ for almost every $x\in\R$.
Assume further with notation as in Theorem \ref{thm:Lip-LP-diag} that
\begin{equation}\label{passumption}
\sup_{0<t<t_{0}}\norm{H_{u} (\Psi_{t} *_2 f)}_{p_{0}}
\lesssim
\norm{f}_{p_{0}}
\end{equation}
for some $1<p_{0}\leq 2$ and $t_{0}>0$.
If $p_{0}=2$, then
\begin{equation}\label{2conclusion}
\norm{H_{u} f}_2\lesssim \norm{f}_2.
\end{equation}
If $1<p_{0}<2$, then
\begin{equation}\label{pconclusion}
\norm{H_{u} f}_p\lesssim \norm{f}_p,
\quad 1 + \frac{1}{3-p_{0}} < p < \infty.
\end{equation}
\end{corollary}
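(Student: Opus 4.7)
The starting point is the reduction carried out in the paragraphs immediately preceding the statement: combining Corollary~\ref{cor:LP-diag} with the Littlewood--Paley square function estimate shows that, for each admissible $p$, it suffices to prove $\norm{Sf}_p \lesssim \norm{f}_p$, where
\[
Sf := \Big( \sum_{t \in 2^{\Z}} \abs{H_u(\Psi_t *_{2} f)}^2 \Big)^{1/2}.
\]

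For \eqref{2conclusion}, fix an auxiliary Schwartz function $\widetilde\Psi$ whose Fourier transform equals $1$ on $\operatorname{supp}\widehat\Psi$ and is supported in a mild annular enlargement; then $\Psi_t *_2 f = \Psi_t *_2 (\widetilde\Psi_t *_2 f)$, so \eqref{passumption} with $p_0 = 2$, applied to $g := \widetilde\Psi_t *_2 f$, yields, uniformly in $0 < t < t_0$,
\[
\norm{H_u(\Psi_t *_2 f)}_2 \lesssim \norm{\widetilde\Psi_t *_2 f}_2 .
\]
The scales $t \geq t_0$ contribute only boundedly and are absorbed into the $\norm{f}_2$ error in the reduction. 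Squaring, summing in $t$, and invoking Plancherel in the second variable together with the annular Fourier support of $\widetilde\Psi$,
\[
\norm{Sf}_2^2 \lesssim \sum_t \norm{\widetilde\Psi_t *_2 f}_2^2 \lesssim \norm{f}_2^2 ,
\]
which proves \eqref{2conclusion}.

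For \eqref{pconclusion}, the plan is to regard $T : f \mapsto \big(H_u(\Psi_t *_2 f)\big)_{t \in 2^\Z}$ as a vector-valued operator and to interpolate between two vector-valued endpoint estimates. On the one hand, the hypothesis \eqref{passumption} directly supplies an $L^{p_0} \to L^{p_0}(\ell^\infty)$ bound. On the other hand, I would produce an $\ell^2$-valued single-scale estimate $L^{q_0} \to L^{q_0}(\ell^{2})$ at some $q_0 > p_0$: the $\widetilde\Psi$-substitution of the previous paragraph still applies, while the underlying scalar $L^{q_0}$ bound on $H_u(\Psi_t *_2 \cdot)$ is obtained by interpolating \eqref{passumption} against the $L^q$-boundedness (valid for every $1<q<\infty$) of the positive single-scale averaging operator $A_u$ introduced in the paper, exploiting the Fourier localization of $\Psi_t *_2$ in the second variable. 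A bilinear real interpolation between these two vector-valued endpoints then yields an $L^p \to L^p(\ell^2)$ bound on $T$, i.e.\ $\norm{Sf}_p \lesssim \norm{f}_p$, and the arithmetic of the interpolation produces exactly the range $p > 1 + 1/(3-p_0)$.

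The main obstacle is the construction of the $L^{q_0}(\ell^2)$ endpoint: from only a scalar single-scale $L^{p_0}$ hypothesis one must obtain an $\ell^2$-valued estimate at a sub-$L^2$ exponent, and the naive interpolation between $L^{p_0}(\ell^\infty)$ and $L^2(\ell^2)$ reaches $\ell^2$ only at $p = 2$. This is where the Lipschitz regularity of $u$ in the second variable and the Littlewood--Paley diagonalization of Corollary~\ref{cor:LP-diag} are essential, and tracking the geometry of the resulting interpolation is what produces the specific threshold $1 + 1/(3-p_0)$.
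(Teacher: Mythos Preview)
Your treatment of the $p_{0}=2$ case is correct and is essentially the paper's one-line argument: by Fubini, $\norm{Sf}_{2}^{2}=\sum_{t}\norm{H_{u}(\Psi_{t}*_{2}f)}_{2}^{2}$, and each summand is controlled by $\norm{\widetilde\Psi_{t}*_{2}f}_{2}^{2}$ via the hypothesis, after which Littlewood--Paley orthogonality in the vertical variable finishes the job.

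For $1<p_{0}<2$, however, your proposal is not a proof. You yourself name the obstruction: from the $L^{p_{0}}(\ell^{\infty})$ endpoint (the hypothesis) and the $L^{2}(\ell^{2})$ endpoint (the case just handled), complex or real interpolation only reaches $\ell^{2}$ at $p=2$. Your attempt to manufacture an $L^{q_{0}}(\ell^{2})$ endpoint by ``interpolating \eqref{passumption} against the $L^{q}$-boundedness of $A_{u}$'' does not make sense as written---$A_{u}$ and $H_{u}$ are different operators and one cannot interpolate bounds between them---and even a correct scalar $L^{q_{0}}$ bound on $H_{u}(\Psi_{t}*_{2}\cdot)$ for a single $t$ would not sum in $\ell^{2}$ at an exponent below $2$. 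The final paragraph of your proposal concedes this but does not supply the missing argument, so the claimed threshold $1+1/(3-p_{0})$ is unsupported.

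The paper's route is genuinely different. It first localizes to a vertical strip (so that all relevant parallelograms have bounded length), then invokes the Bateman--Thiele extrapolation principle, stated as Theorem~\ref{thm:BT-VV}: to get an $\ell^{2}$-valued $L^{p}$ bound it suffices to prove, for every pair of sets $H,G$, a scalar $L^{2}$ bound on $\one_{G'}T_{k}\one_{H'}$ with operator norm $\lesssim(\abs{G}/\abs{H})^{1/2-1/p}$ for suitable major subsets $H'\subset H$, $G'\subset G$. These localized $L^{2}$ estimates are in turn obtained by interpolating \emph{localized restricted-type} estimates
\[
\int (T_{k}(\one_{H'}\one_{F}))\one_{G'}\one_{E}
\lesssim
\abs{E}^{1/2}\abs{F}^{1/2}(\abs{G}/\abs{H})^{\alpha}(\abs{E}/\abs{F})^{\beta}
\]
at three vertices of a triangle in the $(\alpha,\beta)$-plane: the hypothesis \eqref{passumption} supplies $(\alpha,\beta)=(0,\tfrac12-\tfrac1{p_{0}})$; a C\'ordoba--Fefferman type covering argument (Estimates 16, 17, 22 of \cite{MR3148061}) gives points approaching $(\tfrac12,0)$; and the Lacey--Li covering argument (Estimates 16, 17, 21 of \cite{MR3148061}, with the Lipschitz covering lemma of Section~\ref{sec:LL}) gives points approaching $(-\tfrac14,\tfrac14)$. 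Intersecting the resulting convex region with the line $\beta=0$ produces exactly the range $\alpha\in(\tfrac12-\tfrac1p)$ for $1+\tfrac1{3-p_{0}}<p<\infty$. None of this machinery---the extrapolation theorem, the localized estimates, or the two covering lemmas---appears in your sketch, and it is precisely what bridges the gap you identified.
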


The estimate \eqref{passumption} has been proved for all $2<p_{0}<\infty$ (including a weak type $(2,2)$ endpoint) by Lacey and Li \cite{MR2219012,MR2654385}.
This result comes very close to containing an estimate for the Carleson operator (that would follow from a strong type $(2,2)$ estimate for $f \mapsto H_{u}(\Psi_{t}*_{2}f)$ with $u$ being a measurable function of the first variable by the argument in \eqref{eq:Hu-Car-Fourier}), and in fact their proof is based on the Lacey--Thiele argument for the Carleson operator.

This time the underlying modulation invariance is relatively hidden.
The class of operators $H_{u}$ is invariant under shearings leaving vertical lines invariant.
The adjoint linear transformations, acting on the Fourier space, are also shearings, but now leaving the horizontal lines invariant.
Hence for each fixed vertical frequency $\eta$ the family of symmetries are precisely modulations in the horizontal direction.
This modulation invariance connects the directional singular integrals to the other problems considered in this thesis.
However, one has to consider bands of frequencies $\eta$ (the supports of $\widehat{\Psi_{t}}$), which complicates the picture slightly.

The difficulty in extending the estimate \eqref{passumption} to $p_{0}\leq 2$ lies in the maximal estimates required in the Lacey--Thiele approach to the Carleson operator in this range.
One such possible estimate has been proved by Lacey and Li \cite{MR2219012}, unfortunately only with exponent $2$.
They conjectured \cite[Conjecture 1.14]{MR2654385} that it is possible to lower the exponent in that estimate below $2$, and this would also allow to push $p_{0}$ in \eqref{passumption} below $2$.
In particular, assuming \cite[Conjecture 1.14]{MR2654385} for a Lipschitz function $u$ we obtain $L^{p}$ estimates on $H_{u}$ for all $2\leq p < \infty$, giving a positive answer to \cite[Conjecture 1.21]{MR2654385}.
Lacey and Li have shown that their conjectured maximal estimate holds for analytic vector fields, and more generally for the class of vector fields previously considered by Bourgain \cite{MR1009171}.
The author does not have a strong opinion on whether it can be proved in full generality, but it might be possible to sidestep this difficulty at least for $p_{0}=2$ using V.~Lie's refinement of Fefferman's argument that directly proves $L^{2}$ estimates for the Carleson operator.

The required maximal estimate is known in a few specific situations.
In the case that $u$ depends only on the first variable a suitable maximal estimate has been proved by Bateman \cite{MR2457435,MR3055689}.
In this case \eqref{passumption} has been proved for all $1<p_{0}<\infty$ in \cite{MR3090145}.
The conclusion \eqref{pconclusion} of Corollary~\ref{cor:LL-single-band} in this case has been proved by Bateman and Thiele \cite{MR3148061}.
Similar results have been obtained under the hypothesis that $u$ is constant on Lipschitz curves that are close to being vertical by Guo \cite{MR3393679,MR3592519}.
The proof of Corollary~\ref{cor:LL-single-band} follows the arguments in these articles in a simplified form, in particular we use \eqref{passumption} as a black box, whereas in \cite{MR3148061} elements of the proof of this estimate for one-parameter vector fields have been used.

\subsection{Radon transforms}
The subject of singular Radon transforms is vast, see e.g.\ \cite{MR1726701}, and we will only comment on a few recent works that are most directly related to our topic.
Consider the curved analog of the operator \eqref{eq:Hu} given by
\begin{equation}
\label{eq:Stein-directional-op:curved}
H^{(\alpha)}_{u} f (x) := \int_{-1}^{1} f(x+r,y+u(x,y)r^{\alpha}) \frac{\dif r}{r},
\end{equation}
where $r^\alpha$ may be interpreted either as $\abs{r}^\alpha$ or $\operatorname{sgn}(r)\abs{r}^\alpha$.
If the function $u$ is Lipschitz in the second variable with a sufficiently small Lipschitz norm, then as in the case $\alpha=1$ Theorem~\ref{thm:Lip-LP-diag} reduces $L^{p}$ estimates for $H^{(\alpha)}_{u}$ to $L^{p}$ estimates for the square function
\[
\big( \sum_{t \in 2^{\Z}} \abs{ H_{u}^{(\alpha)} (\Psi_{t}*_{2} f) }^{2} \big)^{1/2}.
\]
Unlike in the case of straight lines, estimates for this square function have been essentially obtained in \cite{MR3669936} (see \cite{MR3841536} for some additional details).

Another kind of modulation invariant singular Radon transform has been introduced in \cite{arxiv:1505.03882}.
Let $\CZK$ be a sufficiently nice \CZ{} kernel and $\calQ$ be a linear space of polynomials in $\ds$ variables.
Consider the maximally modulated operator
\begin{equation}
\label{eq:max-mod-sing-Radon}
\sup_{Q\in\calQ} \abs[\Big]{ \int_{\R^{\ds}} f(x-y,t-\abs{y}^{2}) e(Q(y)) \CZK(y) \dif y },
\end{equation}
Substituting various spaces $\calQ$ and functions of the form $f(x,t) = \tilde f(x) \phi(t)$ one can encode in \eqref{eq:max-mod-sing-Radon} various maximally modulated integrals.
For instance, with $\calQ$ being the space of all poynomials of a given degree we obtain the polynomial Carleson operator from Theorem~\ref{thm:poly-car}.

The authors of \cite{arxiv:1505.03882} considered $\ds\geq 2$ and $\calQ = \operatorname{span}(p_{2},\dotsc,p_{d})$, where each $p_{j}$ is a real polynomial in $\ds$ variables homogeneous of degree $j$ and $p_{2}(y) \not\equiv C \abs{y}^{2}$.
The exclusion of linear terms avoids modulation invariance in the $x$ variable, while the exclusion of multiples of $\abs{y}^{2}$ avoids modulation invariance in the $t$ variable.
In this setting \cite{arxiv:1505.03882} laboriously combines the $TT^{*}$ method of \cite{MR1879821} with the smoothing methods for singular Radon transforms that go back to \cite{MR508453}.

In the case $\ds=1$ a few partial results on \eqref{eq:max-mod-sing-Radon} have been obtained in \cite{MR3708001} (also replacing the parabola $(y,\abs{y}^{2})$ by other monomial curves $(y,\abs{y}^{m})$, although we focus on $m=2$) with one-dimensional spaces $\calQ = \Set{ c y^{d} \given c \in \R }$.
Similarly to \cite{MR3090145,MR3148061} the linearizing functions for the supremum in \eqref{eq:max-mod-sing-Radon} are allowed to depend only on one variable.
In the case that the linearizing function depends on $x$, $L^{p}$ estimates have been proved for $d\geq 3$, while if the linearizing function depends on $t$ estimates have been proved for $d\geq 2$.
The latter case contains a modulation invariant case $d=2$ (the invariance is under modulations of $f$ by linear phases in the direction $t$), and in this case the Carleson theorem has been used as a black box.

\section{Notation}
The characteristic function of a set $I$ is denoted by $\one_{I}$.

The letter $C$ denotes an unspecified positive constant that can change from line to line.
The constant $C$ typically does not depend on functions $f,g$ but may depend on \CZ{} kernels, exponents $p$, and so on.
We write $C_{\beta}$ if we want to emphasize the dependence on a specific parameter $\beta$.
We write $A\lesssim B$ if $A\leq CB$ and $A\lesssim_{\beta}B$ if $A\leq C_{\beta}B$.

%\section*{Acknowledgments}
%The author benefited from discussions with Terence Tao and Vjekoslav Kova\v{c} while working on \cite{MR3685286}.
%Christoph Thiele's questions helped to streamline the argument in \cite{MR3685286,arxiv:1711.03524}.

%%% Local Variables: 
%%% mode: latex
%%% TeX-master: "habil"
%%% ispell-local-dictionary: "american"
%%% End: 

\chapter{Cancellation for simplex CZ forms}
\label{chap:cancellation-simplex}
In this chapter we prove Theorem~\ref{thm:tiny-gain} that is restated below for convenience.
\begin{theorem*}
Let $\dm\geq 1$.
Then for any $1<p_{i}<\infty$ with $\sum_{i=0}^{\dm} p_{i}^{-1}=1$ we have
\[
\abs{\Lambda_{\Scales}(F_{0},\dotsc,F_{\dm})}
\leq
o_{\dm,p_{0},\dotsc,p_{\dm}}(\abs{\Scales}) \prod_{i=0}^{\dm} \norm{ F_{i} }_{p_{i}}.
\]
\end{theorem*}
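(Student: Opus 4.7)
The plan is to argue by contradiction using Gowers's Hilbert space regularity lemma in place of the inverse Gowers theorem used in Tao's corresponding argument for the multilinear Hilbert transform. Suppose the theorem fails, so there exist $\delta>0$, scale intervals $\Scales_k$ with $\abs{\Scales_k}\to\infty$, and tuples $(F_0^{(k)},\dotsc,F_\dm^{(k)})$ with $\prod_i \norm{F_i^{(k)}}_{p_i} \leq 1$ for which $\abs{\Lambda_{\Scales_k}(F_0^{(k)},\dotsc,F_\dm^{(k)})}\geq \delta\abs{\Scales_k}$. By multilinear interpolation between the trivial bound \eqref{eq:trivial-estimate} at various H\"older tuples and the desired inequality at an interior point, it suffices to treat the balanced case $p_i=\dm+1$, so each $F_i^{(k)}$ is bounded in $L^{\dm+1}$.

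Next I would perform a Cauchy--Schwarz iteration (the standard ``Gowers'' unfolding) to reduce the $(\dm{+}1)$-linear form to a multilinear expression involving only one function, say $F_0$. After $\dm$ applications of Cauchy--Schwarz, one obtains
\[
\frac{\abs{\Lambda_{\Scales}(F_{0},\dotsc,F_{\dm})}^{2^{\dm}}}{\abs{\Scales}^{2^{\dm}}}
\lesssim
\Xi_{\Scales}(F_{0}) \prod_{i=1}^{\dm}\norm{F_{i}}_{\dm+1}^{2^{\dm}},
\]
where $\Xi_{\Scales}(F_0)$ is a ``box-type'' multiscale gram expression in $F_0$ against itself (integrated against several copies of the truncated kernel $\psi_{\Scales}$). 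Thus if $\Xi_{\Scales_k}(F_0^{(k)})\to 0$ we already contradict the assumption, so after passing to a subsequence we may assume $\Xi_{\Scales_k}(F_0^{(k)}) \geq \delta'$.

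The third step is to apply Gowers's Hilbert space regularity lemma \cite{MR2669681} to decompose $F_0^{(k)}=F_0^{\mathrm{str}}+F_0^{\mathrm{err}}+F_0^{\mathrm{unif}}$, where $F_0^{\mathrm{unif}}$ is small in the dual seminorm to $\Xi_{\Scales_k}$, $F_0^{\mathrm{err}}$ is small in $L^{2}$, and $F_0^{\mathrm{str}}$ is a bounded measurable function of a bounded (depending on $\delta'$) number of ``dual'' functions of the form $\Xi_{\Scales_k}(\cdot,g_2,\dotsc,g_{2^{\dm}})$ against unit vectors $g_j$. The $F_0^{\mathrm{unif}}$ contribution is absorbed by the Cauchy--Schwarz bound of the previous step; the $F_0^{\mathrm{err}}$ contribution is controlled by interpolation between the trivial $L^{\dm+1}$ bound and an $L^{2}$ estimate arising from the assumed single-scale boundedness of the kernel. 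Iterating the same regularity argument through $F_1,\dotsc,F_\dm$ in turn reduces the problem to proving the $o(\abs{\Scales})$ bound for tuples $(F_0^{\mathrm{str}},\dotsc,F_\dm^{\mathrm{str}})$ of bounded structural complexity.

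The main obstacle, and the heart of the argument, is the final step: showing that on any family of fully structured tuples of bounded complexity, the form satisfies $\Lambda_{\Scales}(\vec F^{\mathrm{str}})=o(\abs{\Scales})$ uniformly. Structured here does not mean nilsequence (as we bypass the inverse theorem) but only ``bounded continuous function of boundedly many dual functions,'' which constitutes a precompact class in an appropriate weak topology. The scale cancellation exploits two features of $\psi_s$: at scales much finer than the characteristic scale of the structured functions, the mean-zero condition together with a Taylor expansion yields decay; at scales much coarser, the dual functions have averaged out and the form is small by a telescoping/martingale argument. Combining these two regimes via a compactness argument on the space of structured tuples, only a uniformly bounded set of ``bad'' scales $s$ can contribute non-negligibly to any fixed $\vec F^{\mathrm{str}}$, giving the desired $o(\abs{\Scales})$ gain and contradicting the assumption.
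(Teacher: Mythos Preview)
Your overall architecture (regularity lemma in place of the inverse theorem, split into uniform/error/structured) is the right idea, but two of your steps hide the entire difficulty.

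First, the ``structured case'' at the end is not an afterthought to be dispatched by a soft compactness argument; it is the whole theorem. The dual functions produced by the regularity lemma after your Cauchy--Schwarz unfolding are built out of several copies of $\psi_{\Scales}$ and have no a priori smoothness or scale-localisation that would make your fine-scale Taylor expansion or coarse-scale averaging claims true. In Tao's argument this is exactly why the inverse theorem is invoked: it upgrades ``large dual-norm'' to ``correlates with a nilsequence,'' and nilsequences do have the equidistribution properties needed. You have explicitly renounced that tool, so you must replace it with something. The paper's replacement is \emph{induction on $\dm$}: the regularity lemma is applied with ``dual functions'' taken to be products $\prod_{i=0}^{\dm-1} f_i(x|_{\{0,\dots,\dm-1\}\setminus\{i\}})$, so that a structured $F_\dm$, once substituted back into $\Lambda_\Scales$, collapses the form to an $(\dm-1)$-linear simplex form (for each fixed $x_\dm$), and the inductive hypothesis gives $o(|\Scales|)$ directly. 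Your proposal has no analogue of this reduction.

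Second, your regularity decomposition is applied globally in $L^{\dm+1}(\R^{\ds\dm})$, but the regularity lemma requires a Hilbert space and a seminorm whose unit ball is reasonably rich; on all of $\R^{\ds\dm}$ there is no useful $L^2$ framework for functions that are merely in $L^{\dm+1}$. The paper first passes (by interpolation with the trivial bound) to indicator functions with $p_i>\dm$, then decomposes $\Lambda_\Scales$ into dyadic pieces $\Lambda_I$ indexed by cubes $I\in\DI_\Scales$, and applies the regularity lemma \emph{locally} on each top cube $J$ with $H=L^2(\prod_{i<\dm} 10J_i)$. A separate tree-selection argument (using Loomis--Whitney and maximal functions) controls how many top cubes are needed and reconciles the local $o(|\Scales'|)$ gains with the global bound. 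None of this localisation and recombination machinery appears in your sketch, and without it the error-term bound ``$F_0^{\mathrm{err}}$ small in $L^2$'' is not even meaningful.
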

The proof is by induction on $\dm$.
The case $\dm=1$ follows from the standard theory of truncated Calder\'on--Zygmund operators, see e.g.\ \cite[\textsection I.7]{MR1232192}.
In the inductive step we assume that the theorem holds with $\dm>1$ replaced by $\dm-1$.
Multilinear interpolation with the trivial estimate \eqref{eq:trivial-estimate} shows that it suffices to consider $p_{0},\dotsc,p_{\dm}>\dm$ and indicator functions $F_{i}=1_{E_{i}}$.
We make these assumptions throughout Section~\ref{sec:tree}, which contains a single tree estimate, and Section~\ref{sec:selection}, which describes a tree selection algorithm.

\section{The regularity lemma}
\label{sec:regularity}
The material in this section is almost identical to Gowers's original exposition in \cite{MR2669681}.
The only difference from the finite-dimensional case is that it turns out convenient to work with \emph{extended seminorms}, that is, functions $\norm{\cdot}$ on a vector space $H$ taking values in the extended positive reals $[0,+\infty]$ that are subadditive, homogeneous, and map $0$ to $0$ (this observation has peen previously used to further streamline \cite{arxiv:1111.7292} Walsh's proof of the multilinear mean ergodic theorem \cite{MR2912715}).
The reason is that the atomic seminorms $\norm{\cdot}_{\Sigma}$, defined below, are typically extended.
\begin{lemma}
\label{lem:Sigma-ext-seminorm}
Let $H$ be a Hilbert space and $\Sigma\subset H$.
Then the formula
\[
\norm{f}_{\Sigma} := \inf\Big\{ \sum_{t}\abs{\lambda_{t}} : f=\sum_{t}\lambda_{t}\sigma_{t}, \sigma_{t} \in \Sigma \Big\},
\]
where sums are finite (possibly empty), and the infimum of an empty set is by convention $+\infty$, defines an extended seminorm on $H$ whose dual extended seminorm is given by
\[
\norm{f}_{\Sigma}^{*} := \sup_{\phi\in H: \norm{\phi}_{\Sigma}\leq 1} \abs{\<f,\phi\>} = \sup_{\sigma\in\Sigma}\abs{\<f,\sigma\>}.
\]
\end{lemma}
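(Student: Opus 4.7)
The proof splits naturally into two independent parts: verifying that $\norm{\cdot}_{\Sigma}$ satisfies the axioms of an extended seminorm, and identifying its dual. The plan is to handle each part by direct unwinding of the definitions.

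For the first part, subadditivity follows by concatenating any two representations $f=\sum_{t}\lambda_{t}\sigma_{t}$ and $g=\sum_{s}\mu_{s}\tau_{s}$ into a representation of $f+g$ of total cost $\sum_{t}\abs{\lambda_{t}}+\sum_{s}\abs{\mu_{s}}$, and then taking the infimum over both representations; if either infimum is $+\infty$ the inequality is trivial. Absolute homogeneity with a nonzero scalar is immediate by rescaling the coefficients in any representation; for the zero scalar, $\norm{0}_{\Sigma}\leq 0$ follows from the empty-sum representation $0 = \sum_{t \in \emptyset} \lambda_{t}\sigma_{t}$, which also gives $\norm{0}_{\Sigma}=0$. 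The convention $\inf\emptyset = +\infty$ ensures that the formula is well-defined on all of $H$, not merely on the linear span of $\Sigma$.

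For the second part I would prove the identity $\sup_{\norm{\phi}_{\Sigma}\leq 1}\abs{\<f,\phi\>}=\sup_{\sigma\in\Sigma}\abs{\<f,\sigma\>}$ by two inequalities. The direction $\geq$ is immediate because every $\sigma\in\Sigma$ admits the trivial one-term representation $\sigma=1\cdot\sigma$ and therefore satisfies $\norm{\sigma}_{\Sigma}\leq 1$. For the reverse direction, fix $\phi\in H$ with $\norm{\phi}_{\Sigma}\leq 1$ (so $\norm{\phi}_{\Sigma}$ is in particular finite) and, for $\epsilon>0$, choose a representation $\phi=\sum_{t}\lambda_{t}\sigma_{t}$ with $\sum_{t}\abs{\lambda_{t}}\leq 1+\epsilon$. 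Expanding by sesquilinearity of the inner product and applying the triangle inequality yields
\[
\abs{\<f,\phi\>}\leq \sum_{t}\abs{\lambda_{t}}\abs{\<f,\sigma_{t}\>}\leq (1+\epsilon)\sup_{\sigma\in\Sigma}\abs{\<f,\sigma\>}.
\]
Letting $\epsilon\to 0$ and then taking the supremum over admissible $\phi$ completes the argument.

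No step is a genuine obstacle: the entire argument is a routine consequence of the definitions. The only subtlety to keep an eye on is bookkeeping around the extended-valued convention, so that vectors with $\norm{f}_{\Sigma}=+\infty$ and the empty-sum representation of $0$ are treated consistently throughout.
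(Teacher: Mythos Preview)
Your proof is correct. The paper actually states this lemma without proof, treating it as an elementary observation; your argument correctly supplies the routine verification that the paper omits.
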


Gowers's Hilbert space regularity lemma reads as follows.
\begin{theorem}
\label{thm:structure}
Let $\delta>0$ and $\eta \colon\R_+\to\R_+$ be any function.
Let $H$ be a Hilbert space with norm $\norm{\cdot}_{H}$ and let $\norm{\cdot}$ be an arbitrary further extended seminorm on $H$.
Then for every $f\in H$ with $\norm{f}_{H} \leq 1$ there exists $C=O_{\delta,\eta}(1)$ and a decomposition
\begin{equation}
\label{eq:decomposition}
f = \sigma + u + v
\end{equation}
such that
\begin{equation}
\norm{\sigma} < C, \quad
\norm{u}^* < \eta(C), \quad\text{and}\quad
\norm{v}_{H} < \delta.
\end{equation}
\end{theorem}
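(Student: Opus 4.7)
The plan is to prove Theorem~\ref{thm:structure} by an energy-increment iteration, in the spirit of the Hilbert-space proof of Szemer\'edi's regularity lemma. A suitable constant $C=C(\eta,\delta)$ will emerge from the argument. Initialize $\sigma_{0}:=0$ and $v_{0}:=f$, and maintain the running decomposition $f=\sigma_{n}+v_{n}$. At stage $n$ we check two stopping conditions: if $\norm{v_{n}}_{H}<\delta$, output $(\sigma,u,v):=(\sigma_{n},0,v_{n})$; if $\norm{v_{n}}^{*}<\eta(C)$, output $(\sigma,u,v):=(\sigma_{n},v_{n},0)$. Otherwise the definition of the dual extended seminorm (Lemma~\ref{lem:Sigma-ext-seminorm}) furnishes a $\phi_{n}\in H$ with $\norm{\phi_{n}}\leq 1$ and $\<v_{n},\phi_{n}\>\geq \eta(C)/2$; set $\sigma_{n+1}:=\sigma_{n}+c\phi_{n}$ and $v_{n+1}:=v_{n}-c\phi_{n}$ with a step size $c>0$ proportional to $\eta(C)$.

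The central computation is the energy identity
\[
\norm{v_{n+1}}_{H}^{2}=\norm{v_{n}}_{H}^{2}-2c\<v_{n},\phi_{n}\>+c^{2}\norm{\phi_{n}}_{H}^{2}.
\]
The finiteness of $\norm{\cdot}^{*}$ on all of $H$ forces, via the uniform boundedness principle applied to the unit $\norm{\cdot}$-ball, an inequality $\norm{\phi}_{H}\lesssim\norm{\phi}$, so $\norm{\phi_{n}}_{H}$ is bounded by an absolute constant. Choosing $c$ small enough relative to $\eta(C)$ then ensures $\norm{v_{n+1}}_{H}^{2}\leq\norm{v_{n}}_{H}^{2}-c\,\eta(C)/2$, a per-step Hilbert-energy decrement of order $\eta(C)^{2}$. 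Telescoping and using $\norm{v_{0}}_{H}=\norm{f}_{H}\leq 1$ caps the total number of iterations at $N\lesssim\eta(C)^{-2}$; summing the increments gives $\norm{\sigma}\leq \sum_{n<N} c\norm{\phi_{n}}\lesssim Nc\lesssim 1/\eta(C)$.

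The remaining issue is to calibrate $C$ so that the bound $\norm{\sigma}\lesssim 1/\eta(C)$ is itself at most $C$, i.e.\ so that $C\eta(C)$ is bounded below by an absolute constant. A finite such $C=C(\eta,\delta)$ may be extracted from $\eta$ alone, e.g.\ by first replacing $\eta$ by its monotone decreasing envelope $\tilde\eta(x):=\inf_{y\leq x}\eta(y)$ and then choosing $C$ to be any sufficiently large value of the parameter. This fixed-point calibration is the only bookkeeping step of any subtlety and is the main conceptual obstacle in the proof: the energy-increment iteration naturally yields the tradeoff $\norm{\sigma}\cdot\norm{u}^{*}\lesssim 1$, and the theorem is precisely the assertion that this tradeoff can be reconciled with the prescribed function $\eta$. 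All other steps are routine.
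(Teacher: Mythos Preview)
Your argument has two genuine gaps.

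\textbf{The uniform boundedness step is unjustified.} You assert that $\norm{\cdot}^{*}$ is finite on all of $H$ and then invoke uniform boundedness to get $\norm{\phi}_{H}\lesssim\norm{\phi}$. Neither claim holds for an arbitrary extended seminorm: take $H=\ell^{2}$ and $\norm{\cdot}=\norm{\cdot}_{\Sigma}$ with $\Sigma=\{n e_{n}:n\geq 1\}$; then the unit $\norm{\cdot}$-ball is unbounded in $H$ and $\norm{\cdot}^{*}$ is infinite on most of $H$. Without a uniform bound on $\norm{\phi_{n}}_{H}$ the per-step energy decrement $\langle v_{n},\phi_{n}\rangle^{2}/\norm{\phi_{n}}_{H}^{2}$ can be arbitrarily small, and you have no argument that the iteration terminates in finitely many steps. (It is true that $\norm{\sigma_{n}}\leq 2/\eta(C)$ holds uniformly in $n$ by the telescoping you indicate, but this does not rescue termination.)

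\textbf{The calibration step fails for rapidly decreasing $\eta$.} Even granting the bound $\norm{\sigma}\leq 2/\eta(C)$, you need a single $C$ with $C\eta(C)>2$. For $\eta(C)=e^{-C}$ one has $\sup_{C>0}C\eta(C)=1/e<2$, so no such $C$ exists; replacing $\eta$ by its decreasing envelope changes nothing here. The tradeoff $\norm{\sigma}\cdot\norm{u}^{*}\lesssim 1$ that a fixed-threshold energy increment produces is genuinely too weak: the theorem does not follow from it for arbitrary $\eta$.

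The paper resolves both issues simultaneously by abandoning the single-threshold iteration. It fixes $r\sim\delta^{-2}$, defines a decreasing sequence $C_{r}=1$, $C_{i-1}=\max\{C_{i},2/\eta(C_{i})\}$, and argues by contradiction: if the decomposition fails at every $C_{i}$, then Hahn--Banach separation of $f$ from the convex set $C_{i}V_{1}+\eta(C_{i})V_{2}+\delta V_{3}$ produces vectors $\phi_{1},\dots,\phi_{r}$ with $\langle f,\phi_{i}\rangle\geq 1$, $\norm{\phi_{i}}_{H}\leq\delta^{-1}$, and (this is where the recursion on the $C_{i}$ is used) $\abs{\langle\phi_{i},\phi_{j}\rangle}\leq 1/2$ for $i\neq j$; these almost-orthogonality bounds force $r\leq 2\delta^{-2}$, a contradiction. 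The sequence of thresholds is exactly what allows $\eta$ to be arbitrary, and no bound of the form $\norm{\phi}_{H}\lesssim\norm{\phi}$ is ever needed.
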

The proof uses the following separation lemma.
\begin{lemma}
\label{lem:sep}
Let $V_{i}$, $i=1,\dotsc,k$, be convex subsets of a Hilbert space $H$, at least one of which is open, and each of which contains $0$.
Let $V:=c_{1}V_{1}+\dotsb+c_{k}V_{k}$ with $c_{i}>0$ and take $f\not\in V$.
Then there exists a vector $\phi\in H$ such that $\<f,\phi\> \geq 1$ and $\Re\<v,\phi\> < c_{i}^{-1}$ for every $v\in V_{i}$ and every $i$.
\end{lemma}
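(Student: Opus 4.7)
The plan is to apply the geometric Hahn--Banach theorem to separate $f$ from the convex set $V$, and then transfer the resulting bound to each individual summand $c_i V_i$ by exploiting $0 \in V_j$ for $j \neq i$.

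First I would verify that $V = c_1 V_1 + \dotsb + c_k V_k$ is an open convex subset of $H$ containing $0$ but not $f$. Convexity and the inclusion $0\in V$ follow from the corresponding properties of the $V_i$'s combined with positivity of the $c_i$; openness follows because in a topological vector space the sum of any set with an open set is open, being a union of translates of the open summand.

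Next I would invoke geometric Hahn--Banach to obtain a continuous real-linear functional $\ell$ on $H$ and $\alpha \in \R$ with $\ell(v) < \alpha \leq \ell(f)$ for every $v\in V$. Since $0 \in V$ we have $\alpha > 0$, so after rescaling we may take $\alpha = 1$. The Riesz representation theorem (applied to $H$ viewed as a real Hilbert space under $\Re\<\cdot,\cdot\>$) then yields $\phi \in H$ with $\ell(\cdot) = \Re\<\cdot,\phi\>$, giving
\[
\Re\<v,\phi\> < 1 \leq \Re\<f,\phi\> \quad \text{for every } v \in V.
\]
This is the claimed separation in real-part form, which is consistent with the conclusion $\<f,\phi\>\geq 1$ in view of the definition of $\norm{\cdot}_\Sigma^*$ in Lemma~\ref{lem:Sigma-ext-seminorm} via $\abs{\<f,\phi\>}$.

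Finally, for any $i$ and any $v \in V_i$, the fact that $0 \in V_j$ for $j \neq i$ shows that $c_i v = c_1 \cdot 0 + \dotsb + c_i v + \dotsb + c_k \cdot 0$ belongs to $V$, and the separation yields $\Re\<c_i v,\phi\> < 1$, i.e., $\Re\<v,\phi\> < c_i^{-1}$. The only nontrivial ingredient is the openness of $V$, which is essential for the strict inequality in the separation; this is precisely where the hypothesis that at least one $V_i$ be open enters. Beyond this the lemma is a routine application of Hahn--Banach, so I do not anticipate a serious obstacle.
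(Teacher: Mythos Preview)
Your proof is correct and follows essentially the same route as the paper: show $V$ is open and convex, separate $f$ from $V$ by Hahn--Banach, and then recover the bound on each $V_i$ via $c_i v = 0 + \dotsb + c_i v + \dotsb + 0 \in V$. The paper compresses the last step into ``the claim follows,'' and your observation about $\Re\<f,\phi\>$ versus $\<f,\phi\>$ is a harmless technicality consistent with the paper's implicit use of real parts in the application.
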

\begin{proof}
By the assumption the set $V$ is open, convex and does not contain $f$.
By the Hahn--Banach theorem there exists a $\phi\in H$ such that $\<f,\phi\> \geq 1$ and $\Re\<v,\phi\> < 1$ for every $v\in V$.
The claim follows.
\end{proof}
There is also a constructive version of Lemma~\ref{lem:sep} with an $\epsilon$ loss, in the sense that the conclusion changes to $\Re\<v,\phi\> < (1+\epsilon)c_{i}^{-1}$ (this version still suffices for our purpose).
Indeed, since $V\ni 0$ is open and $f\not\in V$, we have $f\not\in(1+\epsilon)^{-1}\overline{V}$.
Let $g\in(1+\epsilon)^{-1}\overline{V}$ be the element that minimizes the distance from $f$ (such $g$ is unique).
One can then take $\phi = (f-g)/\<f-g,f\>$.

\begin{proof}[Proof of Theorem~\ref{thm:structure}]
Let $r$ be chosen later (depending only on $\delta$) and define
\begin{equation}
\label{eq:C}
C_{r} = 1,
\qquad
C_{i-1} = \max \Big\{ C_{i}, \frac{2}{\eta(C_{i})} \Big\}.
\end{equation}
Let $V_{1},V_{2},V_{3}$ be the open unit balls of $\norm{\cdot}$, $\norm{\cdot}^{*}$, and $\norm{\cdot}_{H}$, respectively.
Suppose that the conclusion fails, then for every $i \in \{1,\dotsc,r\}$ we have
\[
f \not\in C_{i}V_{1} + \eta(C_{i})V_{2} + \delta V_{3}.
\]
Since $V_{3}$ is open in $H$, Lemma~\ref{lem:sep} applies, and we obtain vectors $\phi_{i} \in H$ such that
\[
\<\phi_{i},f\> \geq 1,
\quad \norm{\phi_{i}}^{*} \leq (C_{i})^{-1},
\quad \norm{\phi_{i}}^{**} \leq \eta(C_{i})^{-1},
\quad \norm{\phi_{i}} \leq \delta^{-1}.
\]
For every pair $i<j$ by \eqref{eq:C} we have
\[
\abs{\<\phi_{i},\phi_{j}\>}
\leq \norm{\phi_{i}}^{*} \norm{\phi_{j}}^{**}\\
\leq (C_{i})^{-1} \eta(C_{j})^{-1}
\leq (C_{j-1})^{-1} \eta(C_{j})^{-1}
\leq \frac12,
\]
so that
\[
r^{2} \leq \<\phi_{1}+\dotsb+\phi_{r},f\>^{2}
\leq \norm{\phi_{1}+\dotsb+\phi_{r}}^{2}
\leq r \delta^{-2} + \frac{r^{2}-r}{2},
\]
which is a contradiction if $r \geq 2 \delta^{-2}$.
\end{proof}

\section{The tree estimate}
\label{sec:tree}
For each $\scale\in\Z$ let $\DI_{\scale}$ be the collection of the dyadic cubes $I\subset\V^{\dm+1}$ of the form
\[
I=I_{0}\times\dotsm\times I_{\dm}=2^{\scale}(m_{0},\dotsc,m_{\dm}) + [0,2^{\scale}]^{\ds \times \dm},
\qquad
\sum_{i}m_{i}=0,
\quad
m_{i}\in\Z^{\ds}.
\]
The \emph{scale} of a dyadic cube $I\in\DI_{\scale}$ is defined as $s(I):=\scale$.
Let also $\DI_{\Scales} := \cup_{\scale\in \Scales} \DI_{\scale}$ and $\DI:=\DI_{\Z}$.
This gives the splitting
\begin{equation}
\label{eq:tile-decomposition}
\Lambda_{\Scales}(F_{0},\dotsc,F_{\dm})
=
\sum_{I\in\DI_{\Scales}} \Lambda_{I}(F_{0},\dotsc,F_{\dm}),
\end{equation}
where for each $I\in\DI_{\scale}$ we have set
\[
\Lambda_{I}(F_{0},\dotsc,F_{\dm})
:=
\int_{\V^{\dm+1}} \prod_{i=0}^{\dm}F_{i}(x_{(i)}) \psi_{\scale}(\sum x) \prod_{i=0}^{\dm-1} 1_{I_{i}}(x_{i}) \dif x.
\]
Contrary to what could be expected, our argument would not benefit from using smoother versions of the cutoffs $1_{I_{i}}$.
However, this appears to be a limitation rather than a strength of our approach.

We write elements of $\V^{\dm+1}$ as $x=(x',x_{\dm}) \in \V^{\dm}\times\V$ and dyadic cubes $I\in\DI$ as $I'\times I_{\dm}$, where $I'$ is a dyadic cube in $\V^{\dm}$ and $I_{\dm}$ is a dyadic cube in $\V$.
A \emph{tree} with top $J\in\DI$ is a collection of boxes $I\in\DI$ such that $I'\subset J'$.
In this section we obtain a gain over the trivial bound (coming from Fubini's theorem) for the restriction of the sum \eqref{eq:tile-decomposition} to a tree.
\begin{proposition}
\label{prop:single-tree}
For every $\delta>0$ there exists $S_{\delta,\dm}\in\N$ such that
for any functions $F_{0},\dotsc,F_{\dm}:\V^{\dm}\to [0,1]$ and for every dyadic cube $J\in\DI$ there exists an interval of scales $\Scales'\subset\Z$ with $\abs{\Scales'} \leq S_{\delta,\dm}$ and $\max \Scales'=s(J)$ such that
\[
\abs[\big]{ \sum_{\scale\in \Scales'} \sum_{I\in\DI_{\scale}: I'\subset J'} \Lambda_{I}(F_{0},\dotsc,F_{\dm}) }
\lesssim
\delta 2^{\dm s(J) \ds} \abs{\Scales'}.
\]
\end{proposition}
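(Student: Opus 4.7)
The plan is to apply Gowers's Hilbert space regularity lemma (Theorem~\ref{thm:structure}) to each of the functions $F_0, \dotsc, F_\dm$ and to reduce the resulting structured pieces to lower-degree simplex forms controlled by the inductive hypothesis. After translating and rescaling by a factor of $2^{-s(J)}$, we may normalise so that $s(J) = 0$ and $\abs{J'} = 1$; the task then reduces to exhibiting an interval $\Scales' = \{-S+1, \dotsc, 0\}$ of size $S = S_{\delta,\dm}$ (to be chosen) on which the restricted tree sum has size at most $C \delta S$. For each $i$, I would apply Theorem~\ref{thm:structure} to $F_i \in L^2(J_{(i)})$ with respect to a Gowers-type box seminorm $\norm{\cdot}_{\Sigma_i}$ whose atoms depend on strictly fewer than $\dm$ of the $\dm$ coordinates of $J_{(i)}$, producing a splitting $F_i = \sigma_i + u_i + v_i$ with $\norm{\sigma_i}_{\Sigma_i} \leq C$, $\norm{u_i}_{\Sigma_i}^{*} \leq \eta(C)$, and $\norm{v_i}_{L^2} \leq \delta'$, where $\eta$ and $\delta'$ are to be tuned below.

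Expanding $\Lambda_I(F_0, \dotsc, F_\dm)$ multilinearly produces $3^{\dm+1}$ terms, which I would group into three archetypes. In the structured archetype, where every $F_i$ comes from the $\sigma_i$ piece, multilinearity reduces us to the case that each $F_i$ is a single atom; absorbing these atoms into neighbouring slots shaves off one variable of integration, and the summed tree form factors via Fubini through a $(\dm-1)$-linear simplex \CZ{} form on the remaining variables, to which the inductive hypothesis Theorem~\ref{thm:tiny-gain} for $\dm - 1$ applies (with the base case $\dm - 1 = 1$ handled by standard truncated \CZ{} theory). This yields a bound $o_{\dm-1}(S) \prod_i \norm{F_i}_{p_i} \lesssim o_{\dm-1}(S)$, which is $\leq \delta S / 3$ once $S$ is large enough. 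For any term containing a uniform factor $u_i$, I would rewrite the truncated tree form in that slot as an inner product $\innerp{u_i}{\Phi_i}$; the kernel $\Phi_i$, which comes from integrating $\psi_\scale$ against the other factors, admits---via a tensor wavelet expansion of $\psi_\scale$ at each scale---a representation as a combination of $\Sigma_i$-atoms of total atomic mass $O(S)$, giving $\abs{\innerp{u_i}{\Phi_i}} \leq \eta(C) \cdot O(S) \leq \delta S / 3$ for a suitable choice of $\eta$. Terms containing a $v_i$ factor are controlled by a single-scale Cauchy--Schwarz estimate, yielding $\lesssim \delta' S \leq \delta S / 3$.

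The main obstacle will be the kernel atomic-norm bound $\norm{\Phi_i}_{\Sigma_i} \lesssim S$ on each scale, which essentially requires a tensor decomposition of $\psi_\scale$ compatible both with the simplex geometry and with the dyadic cutoffs $1_{I_i}$. A secondary technical issue is the need to cascade the regularity lemma through all $\dm + 1$ slots in top-down fashion, choosing a tower of complexities $C_0 \ll C_1 \ll \dotsb \ll C_\dm$ and correspondingly rapidly decaying $\eta$-functions so that each dual-norm estimate still beats the factor $S$ after being applied to a kernel built from already-decomposed neighbours. With these ingredients in place, the scheme follows the Gowers--Koopman--von Neumann dichotomy that underlies, for example, the Walsh-regularity proof of the multilinear mean ergodic theorem alluded to earlier in the chapter.
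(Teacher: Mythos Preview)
Your overall strategy---regularity lemma, reduce the structured term to the $(\dm-1)$-linear form via the inductive hypothesis, and control the $L^2$ error trivially---matches the paper's. Two points deserve comment.

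First, the paper applies Theorem~\ref{thm:structure} to \emph{one} function only, namely $F_\dm$, with atoms being products $\prod_{A\subsetneq\{0,\dotsc,\dm-1\}} f_A(x|_A)$ of bounded functions of proper subsets of coordinates. A single structured atom $F_\dm=\prod_{i<\dm} f_i$ already collapses the form to a $(\dm-1)$-simplex form (absorb $f_i$ into $F_i$), so there is no need to decompose the remaining $F_i$, and the cascading tower of complexities you describe is unnecessary.

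Second, and more importantly, your claimed bound $\norm{\Phi_i}_{\Sigma_i}=O(S)$ for the uniform piece is false; the cost is genuinely exponential in $S$. The only way to write $\psi_\scale(\sum_j x_j)$ as a superposition of products of lower-dimensional bounded functions is via Fourier: $\psi_\scale(t)=\int \hat\psi_\scale(\xi) e(\xi t)\,d\xi$, after which $e(\xi\sum_j x_j)=\prod_j e(\xi x_j)$ splits into one-variable characters absorbable into the other $F_j$'s. But $\norm{\hat\psi_\scale}_1\lesssim 2^{-\scale\ds}$, so summing over $\scale\in\{-S+1,\dotsc,0\}$ gives atomic mass $\lesssim 2^{S\ds}$, not $S$. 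A wavelet or bump decomposition of $\psi_\scale$ does not help: any such piece is still a function of $\sum_j x_j$ and is not itself a tensor product. The paper accepts this exponential loss and compensates by choosing $\eta(a):=\delta\,\tilde S(a)\,2^{-\tilde S(a)\ds}$, where $\tilde S(a)$ is chosen so that $a\,c_{\dm-1}(\tilde S(a))\leq\delta$; then the three contributions sum to
\[
\delta\,\abs{\Scales'} \;+\; 2^{\abs{\Scales'}\ds}\eta(C) \;+\; C\,c_{\dm-1}(\abs{\Scales'})\,\abs{\Scales'}
\;\lesssim\; \delta\,\abs{\Scales'}
\]
with $\abs{\Scales'}=\tilde S(C)$. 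Your scheme survives once you make this correction, but the quantitative claim as written does not.
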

Note that $\Scales'$ depends both on the (bounded) functions $F_{i}$ and the dyadic square $J$, but $S_{\delta,\dm}$ does not.

\begin{proof}[Proof of Proposition~\ref{prop:single-tree}]
By scaling we may assume $s(J)=0$.
Note that
\begin{equation}
\label{eq:tree-single-scale}
\sum_{I\in\DI_{\scale} : I'\subset J'}\Lambda_{I}(F_{0},\dotsc,F_{\dm})
=
\int_{\V^{\dm+1}} \prod_{i=0}^{\dm} F_{i}(x_{(i)}) \psi_{\scale}(\sum x) \prod_{i=0}^{\dm-1} 1_{J_{i}}(x_{i}) \dif x
\end{equation}
for every $\scale$ and the integrand is supported on $10J$, say.

A \emph{dual function} is a function from $X := \prod_{i=0}^{\dm-1} 10J_{i}$ to $\C$ of the form
\[
x\mapsto\prod_{A\subsetneq\{0,\dotsc,\dm-1\}}f_{A}(x\abs{_{A}),
\]
where $f_{A} : \prod_{i\in A} 10J_{i}\to\C$ are functions bounded by $1$.
Denote the set of dual functions by $\Sigma$ and apply Theorem~\ref{thm:structure} with $H=L^{2}(X)$, $f=F_{\dm}}_{X}$, the extended seminorm given by Lemma~\ref{lem:Sigma-ext-seminorm} and a function $\eta$ to be chosen later.

To dispose of the $L^{2}$ error term note that at each scale $\scale\leq 0$ the right-hand side of \eqref{eq:tree-single-scale} is bounded by
\[
\int_{10 J} \abs{F_{\dm}(x_{(\dm)})} \abs{\psi_{\scale}(\sum x)} \dif\vec x\\
\lesssim
\norm{F_{\dm}}_{L^{1}(10 J_{(\dm)})} \norm{\psi_{\scale}}_{1}
\lesssim
\norm{F_{\dm}}_{L^{2}(10 J_{(\dm)})}.
\]
The contribution of the uniform term (bounded in $\norm{\cdot}_{\Sigma}^{*}$) is estimated by
\[
\sum_{\scale\in \Scales'} \norm{\hat\psi_{\scale}}_{1} \abs[\Big]{ \int_{10 J} \prod_{i}F_{i}(x_{(i)}) e(\xi_{\scale}\sum x) \prod_{i=0}^{\dm-1} 1_{J_{i}}(x_{i}) \dif x }
\]
for some choice of frequencies $\xi_{\scale}\in\hat\V$.
Note that the derivative bounds on $K$ imply $\norm{\hat\psi_{\scale}}_{1} \lesssim 2^{-\scale \ds}$.
Inside the absolute value, the character splits into a product of functions depending on one variable each.
Since $\dm>1$, each function that depends on only one coordinate $x_{i}$ can be absorbed into one of the functions $F_{i}$, $i<\dm$.
Thus for each fixed $x_{\dm}$ the integral above is a pairing of $F_{\dm}$ with a dual function, and we obtain the estimate
\[
\sum_{\scale\in \Scales'} 2^{-\scale \ds} \norm{ F_{\dm} }_{\Sigma}^{*}
\lesssim
2^{\abs{\Scales'} \ds} \norm{F_{\dm}}_{\Sigma}^{*}.
\]

It remains to treat the structured term.
Suppose $F_{\dm}\in\Sigma$, so that
\[
F_{\dm} = \prod_{i=0}^{\dm-1} f_{i},
\]
where each function $f_{i}$ is bounded by $1$ and does not depend on the $i$-th coordinate.
Substituting this into \eqref{eq:tree-single-scale} we obtain
\[
\int_{10 J} \prod_{i=0}^{\dm-1} (F_{i}f_{i})(x_{(i)}) \psi_{\Scales'}(\sum x) \prod_{i=0}^{\dm-1}1_{J_{i}}(x_{i}) \dif x.
\]
This can be written as
\[
\int_{10 J_{\dm}}\int_{\V^{\dm}} \prod_{i=0}^{\dm-1} (1_{10 J}F_{i}1_{A_{i}} \prod_{j\neq i,\dm} 1_{J_{j}})(x'_{(i)},x_{\dm}) \psi_{\Scales'}(\sum x' + x_{\dm}) \dif x' \dif x_{\dm}.
\]
Changing variable in the inner integral and applying the inductive hypothesis (Theorem~\ref{thm:tiny-gain} with $\dm-1$ in place of $\dm$ and $p_{0}=\dotsb=p_{\dm-1}=\dm$) we can bound this by
\[
c_{\dm-1}(\abs{\Scales'}) \abs{\Scales'}
\]
with a monotonically decreasing function $c_{\dm-1}$ such that $\lim_{\abs{\Scales'}\to\infty}c_{\dm-1}(\abs{\Scales'}) = 0$.
Summing the contributions of the three terms given by Theorem~\ref{thm:structure} we obtain
\[
\delta \abs{\Scales'} + 2^{\abs{\Scales'} \ds} \eta(C) + C c_{\dm-1}(\abs{\Scales'}) \abs{\Scales'},
\]
where $C=O_{\delta,\eta}(1)$.
Choose a monotonically increasing function $\tilde S_{\delta,\dm} : \V_{+}\to\N$ such that $a c_{\dm-1}(\tilde S_{\delta,\dm}(a)) \leq \delta$ for all $a$.
Let $\eta(a) := \delta \tilde S_{\delta,\dm}(a)2^{-\tilde S_{\delta,\dm}(a) \ds}$.
Then the claim follows with $\abs{\Scales'}=\tilde S_{\delta,\dm}(C)$.
\end{proof}

\begin{corollary}
\label{cor:single-tree}
Let $\delta>0$ and $S_{\delta,\dm}$ be the number from Proposition~\ref{prop:single-tree}.
Then for every $J\in\DI$ and every interval $\Scales'\subset\Z$ with $\max \Scales'=s(J)$, we have
\[
\abs[\big]{ \sum_{\scale\in \Scales'} \sum_{I\in\DI_{\scale} : I'\subset J'} \Lambda_{I}(F_{0},\dotsc,F_{\dm}) }
\lesssim_{\dm}
2^{\dm s(J) \ds} (\min(\abs{\Scales'}, S_{\delta,\dm}) + \delta \max(\abs{\Scales'}-S_{\delta,\dm},0))
\]
for any functions $F_{0},\dotsc,F_{\dm}$ bounded by $1$.
\end{corollary}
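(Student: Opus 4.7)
The plan is to induct on $\abs{\Scales'}$ with Proposition~\ref{prop:single-tree} as the engine, using as base ingredient the trivial per-scale estimate
\[
\abs[\Big]{\sum_{I \in \DI_{\scale} : I' \subset J'} \Lambda_{I}(F_{0},\dotsc,F_{\dm})} \lesssim_{\dm} 2^{\dm s(J)\ds},
\quad \scale \leq s(J),
\]
which follows from $\norm{\psi_{\scale}}_{1} = O(1)$, the fact that the integrand at scale $\scale$ is supported in $10J$, and the pointwise bounds $\abs{F_{i}} \leq 1$. When $\abs{\Scales'} \leq S_{\delta,\dm}$, summing this over $\scale \in \Scales'$ already yields the target bound $\min(\abs{\Scales'},S_{\delta,\dm})\,2^{\dm s(J)\ds}$.

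For the inductive step I assume $\abs{\Scales'} > S_{\delta,\dm}$ and apply Proposition~\ref{prop:single-tree} at $J$ to produce an interval $\Scales_{0}$ with $\max \Scales_{0} = s(J)$, $\abs{\Scales_{0}} \leq S_{\delta,\dm}$, and
\[
\abs[\Big]{\sum_{\scale\in \Scales_{0}} \sum_{I\in\DI_{\scale}: I'\subset J'} \Lambda_{I}(F_{0},\dotsc,F_{\dm}) } \lesssim \delta\, 2^{\dm s(J)\ds} \abs{\Scales_{0}}.
\]
Since $\abs{\Scales_{0}} \leq S_{\delta,\dm} < \abs{\Scales'}$ and both intervals share the same maximum $s(J)$, we have $\Scales_{0} \subsetneq \Scales'$, so I split $\Scales' = \Scales_{0} \sqcup \Scales''$ with $\Scales'' = [\min \Scales', \min \Scales_{0} - 1]$ and $\max \Scales'' = s^{*} := s(J) - \abs{\Scales_{0}}$. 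For each $\scale \in \Scales''$ I group the cubes $I \in \DI_{\scale}$ with $I' \subset J'$ according to the unique dyadic cube $J^{*} \in \DI_{s^{*}}$ with $J^{*\prime} \subset J'$ whose projection contains $I'$; there are $2^{\dm\ds\abs{\Scales_{0}}}$ such ancestors $J^{*}$, each with $s(J^{*}) = \max \Scales''$, so the inductive hypothesis (valid because $\abs{\Scales''} < \abs{\Scales'}$) gives
\[
\sum_{J^{*}} \abs[\Big]{\sum_{\scale \in \Scales''} \sum_{I\in\DI_{\scale}: I'\subset J^{*\prime}} \Lambda_{I}}
\lesssim_{\dm}
2^{\dm s(J)\ds}\bigl(\min(\abs{\Scales''},S_{\delta,\dm}) + \delta \max(\abs{\Scales''} - S_{\delta,\dm}, 0)\bigr),
\]
where the identity $s^{*} + \abs{\Scales_{0}} = s(J)$ has been used to merge the sub-cube count $2^{\dm\ds\abs{\Scales_{0}}}$ with the per-subtree prefactor $2^{\dm s^{*}\ds}$.

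Adding the two contributions and using $\abs{\Scales'} = \abs{\Scales_{0}} + \abs{\Scales''}$, the induction closes: when $\abs{\Scales''} > S_{\delta,\dm}$ the right-hand side collapses by the affine identity
\[
\delta\abs{\Scales_{0}} + S_{\delta,\dm} + \delta(\abs{\Scales''} - S_{\delta,\dm}) = S_{\delta,\dm} + \delta(\abs{\Scales'} - S_{\delta,\dm}),
\]
and when $\abs{\Scales''} \leq S_{\delta,\dm}$ one gets the extra slack $(S_{\delta,\dm} - \abs{\Scales''})(1-\delta) \geq 0$ on the right. The only genuine technical point is this final linear bookkeeping — the analytic work is entirely absorbed into Proposition~\ref{prop:single-tree}, so the main obstacle is merely ensuring that the constants match in both sub-cases, which they do by direct calculation.
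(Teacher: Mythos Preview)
Your argument is correct and is exactly the paper's approach: induction on $\abs{\Scales'}$, the trivial estimate for $\abs{\Scales'}\leq S_{\delta,\dm}$, and for larger $\abs{\Scales'}$ one uses Proposition~\ref{prop:single-tree} to peel off a top interval and then applies the inductive hypothesis to the sub-trees rooted at scale $s(J)-\abs{\Scales_{0}}$. Your write-up just makes explicit the cube-counting and the constant bookkeeping that the paper leaves to the reader.
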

\begin{proof}
By induction on $\abs{\Scales'}$.
For $\abs{\Scales'}\leq S_{\delta,\dm}$ the estimate follows from $\abs{F_{i}}\leq 1$ and $\norm{\psi_{\scale}}_{1}\lesssim 1$.

If $\abs{\Scales'}> S_{\delta,\dm}$, then by Proposition~\ref{prop:single-tree} we can find a final interval $\Scales'' \subset \Scales'$ such that the sum over $\Scales''$ can be estimated by $2^{\dm s(J) \ds}\delta \abs{\Scales''}$.
The remaining part of the sum splits into sums over subintervals of scale $s(J)-\abs{\Scales''}$, and to those we apply the Corollary with $\Scales'\setminus \Scales''$ in place of $\Scales'$.
\end{proof}

\section{Tree selection}
\label{sec:selection}
For cubes $I\in\DI_{\scale}$ write
\[
a_{I} := 2^{-\scale \dm \ds}\Lambda_{I}(F_{0},\dotsc,F_{\dm})
\]
The integrand in the definition of $\Lambda_{I}$ vanishes outside $10 I$, say, and by the Loomis--Whitney inequality we can estimate
\[
\abs{a_{I}} \lesssim \prod_{i=0}^{\dm} \min_{\pi_{\Delta} I} M_{\dm}F_{i},
\]
where $\pi_{\Delta} I$ is the subset of the diagonal $\Delta = \{ x\in\V^{\dm+1} : \sum x = 0\}$ consisting of the points whose first $\dm$ coordinates lie in $I'$ and $M_{\dm}$ is the maximal function $M_{\dm}F(x) = \sup_{Q\ni x} (\abs{Q}^{-1}\int_{Q} \abs{F}^{\dm})^{1/\dm}$.
Raising this to a power $\alpha$ and summing over the squares $I$ of a given size we obtain
\begin{equation}
\label{eq:small-tiles:1}
\sum_{I\in\DI_{\scale}} \abs{a_{I}}^{\alpha} 2^{\scale \dm \ds}
\lesssim
\int_{\Delta} \prod_{i=0}^{\dm} M_{\dm}F_{i}(x_{(i)})^{\alpha}.
\end{equation}
By Hölder's inequality this is bounded by
\begin{equation}
\label{eq:small-tiles:2}
\prod_{i=0}^{\dm} \norm{ (M_{\dm}F_{i})^{\alpha} }_{p_{i}}
=
\prod_{i=0}^{\dm} \norm{ M_{\dm}F_{i} }_{\alpha p_{i}}^{\alpha}
\lesssim
\prod_{i} \abs{E_{i}}^{1/p_{i}}
\end{equation}
provided $\alpha p_{i}>\dm$ for all $i$.
It follows from $p_{i}>\dm$ that 
\begin{equation}
\label{eq:small-tiles}
\sum_{\scale\in \Scales} \sum_{I : \abs{a_{I}} < \delta, s(I)=\scale} \abs{a_{I}} 2^{\dm\scale\ds}
\leq
\sum_{\scale\in \Scales} \sum_{I : s(I)=\scale} \abs{a_{I}}^{\alpha} \delta^{1-\alpha} 2^{\dm\scale \ds}
\lesssim_{\alpha}
\delta^{1-\alpha} \abs{\Scales} \prod_{i} \abs{E_{i}}^{1/p_{i}}
\end{equation}
for every $\max_{i}(n/p_{i}) < \alpha \leq 1$ and every $\delta>0$.

Let now $\mathcal{J}\subset\DI_{\Scales}$ be the collection of maximal cubes $J$ with $\abs{a_{J}}>\delta$.
The union of these cubes cannot be too large.
Indeed, we have
\[
\pi_{\Delta} \bigcup\{J : \abs{a_{J}}>\delta\}
\subset
\Delta \cap \{ \prod_{i=0}^{\dm} M_{\dm}1_{E_{i}} \gtrsim \delta \}.
\]
The measure of the latter set is bounded by
\[
\delta^{-1} \norm{\prod_{i=0}^{\dm} M_{\dm}1_{E_{i}}}_{L^{1}(\Delta)}
\leq
\delta^{-1} \prod_{i=0}^{\dm} \norm{ M_{\dm}1_{E_{i}}}_{L^{p_{i}}(\Delta)}
\lesssim
\delta^{-1} \prod_{i=0}^{\dm} \abs{ E_{i} }^{1/p_{i}},
\]
where we have again used $p_{i}>\dm$.
Let $S = S_{\delta^{2},\dm}$ be the number given by Proposition~\ref{prop:single-tree} with $\delta^{2}$ in place of $\delta$.
For those $J\in\mathcal{J}$ with $s(J)> \min \Scales + \delta^{-2} S$ by Corollary~\ref{cor:single-tree} we have
\[
\abs[\big]{ \sum_{I\in\DI_{\Scales} : I'\subset J'} \Lambda_{I}(F_{0},\dotsc,F_{\dm}) }
\lesssim_{\dm}
2^{\dm s(J) \ds} \delta^{2} \abs{\Scales}
\sim
\abs{J} \delta^{2} \abs{\Scales}.
\]
In particular,
\begin{equation}
\label{eq:fat-tiles:1}
\begin{split}
\sum_{J\in\mathcal{J} : s(J)> \min \Scales + \delta^{-2} S}
\abs[\big]{ \sum_{I\in\DI_{\Scales} : I'\subset J'} \Lambda_{I}(F_{0},\dotsc,F_{\dm}) }
&\lesssim_{\dm}
\delta^{2} \abs{\Scales} \abs[\Big]{ \pi_{\Delta} \bigcup \Set{J \given \abs{a_{J}}>\delta} }\\
&\lesssim
\delta \abs{\Scales} \prod_{i} \abs{E_{i}}^{1/p_{i}}.
\end{split}
\end{equation}
On the other hand, by \eqref{eq:small-tiles:1} and \eqref{eq:small-tiles:2} with $\alpha=1$ we have
\begin{equation}
\label{eq:fat-tiles:2}
\sum_{J\in\mathcal{J} : s(J) \leq \min \Scales + \delta^{-2} S} \sum_{I\in\DI_{\Scales} : I'\subset J'} \abs{\Lambda_{I}(F_{0},\dotsc,F_{\dm})}
\lesssim
\delta^{-2} S \prod_{i} \abs{E_{i}}^{1/p_{i}}.
\end{equation}
Summing the contributions of \eqref{eq:small-tiles}, \eqref{eq:fat-tiles:1}, and \eqref{eq:fat-tiles:2} we obtain the claim of Theorem~\ref{thm:tiny-gain} (in the case of characteristic functions).

%%% Local Variables: 
%%% mode: latex
%%% TeX-master: "habil"
%%% ispell-local-dictionary: "american"
%%% End: 

\chapter{Dyadic triangular Hilbert transform (special case)}
\label{chap:two-general}

In this chapter we prove Theorem~\ref{thm:2general}.
Let us recall its statement.
\begin{theorem*}
Let $F_{0},F_{1},F_{2}$ be functions supported on $A_{0}^{2}=[0,1]^{2}$.
Suppose that either
\begin{equation}
\tag{\ref{eq:F0-diagonal}}
F_{0}(x_{1},x_{2})=f\big(x_{2}\wplus (a\wtimes x_{1})\big)
\text{ for all }x_{1},x_{2}\in A_{0}
\end{equation}
holds with some $a\in \W\setminus A_{0}$ and some measurable $f:\W\to\R$ or
\begin{equation}
\tag{\ref{eq:F0-fiberwise-character}}
F_{0}(x_{1},x_{2})=f(x_{2}) e(N_{x_{2}} \wtimes x_{1})
\text{ for all }x_{1},x_{2}\in A_{0}
\end{equation}
holds with some measurable $N : \W\to\W$ and $f:\W\to\R$.
Then
\begin{equation}
\tag{\ref{eq:lp-estimate}}
\abs{\Lambda^{\epsilon}(F_0,F_1,F_2)}
\lesssim
\norm{F_{0}}_{p_{0}} \norm{F_{1}}_{p_{1}} \norm{F_{2}}_{p_{2}}
\end{equation}
for any $1<p_{2}<\infty$ and $2<p_{0},p_{1}<\infty$ with \eqref{eq:Lp-range}.
The implicit constant does not depend on $a$, $N$, or the scalars $\abs{\epsilon_{\vec I}}\leq 1$ with $\epsilon_{\vec I}=0$ whenever some $I_{i} \not\subseteq A_{0}$.
In case \eqref{eq:F0-fiberwise-character} we can relax the restriction on $p_{0}$ to $1<p_{0}<\infty$.
In case \eqref{eq:F0-diagonal}, $a\in A_{1}\setminus A_{0}$, we can relax the restrictions on both $p_{0}$ and $p_{1}$ to $1<p_{0},p_{1}<\infty$.
\end{theorem*}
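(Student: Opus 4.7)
The plan is to run a time–frequency analysis in the Walsh model, following the Lacey--Thiele paradigm adapted to the entangled setting. In both special cases (\ref{eq:F0-diagonal}) and (\ref{eq:F0-fiberwise-character}) the function $F_{0}$ collapses effectively to a one–variable function paired with a modulation, which exposes the symmetry that makes the form amenable to a tile decomposition. I would expand $F_{1}$ and $F_{2}$ in the Walsh wave packet basis adapted to time–frequency tiles $p=I_{p}\times\omega_{p}$ of area one inside $[0,1)\times\W$, and substitute into the integral representation \eqref{eq:THT-def2}. After integrating over the $F_{0}$ factor, each matched triple of tiles generates a wave packet coefficient of the one–variable function $f$ at a tile of the same spatial scale but whose frequency interval is translated by $a$ (in case \eqref{eq:F0-diagonal}) or by the linearizing value $N_{x_{2}}$ (in case \eqref{eq:F0-fiberwise-character}). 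This is the Walsh analogue of the identity that turns a Carleson operator, respectively a bilinear Hilbert transform, into a sum over tiles.

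Next I would organize the resulting trilinear sum into trees, where a tree $T$ with top tile $T^{\ast}=I_{T^{\ast}}\times\omega_{T^{\ast}}$ collects matched triples whose spatial tile sits below $I_{T^{\ast}}$ and whose shifted frequency lies in $\omega_{T^{\ast}}$. I would attach three sizes $\size_{i}(T)$, $i=0,1,2$, quantifying the local $L^{2}$ concentration of $f$, $F_{1}$, $F_{2}$ on wave packets belonging to $T$, and establish the corresponding size lemmas in their standard Walsh (John--Nirenberg type) form, controlling the measure of the union of selected tree tops by the $L^{p_{i}}$ norms of the input functions raised to suitable powers.

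The analytic heart of the argument is the single tree estimate: for any tree $T$,
\[
\Big|\sum_{\vec I\in T}\epsilon_{\vec I}\,\abs{I_{0}}^{-1}\tr(\h_{I_{1}}F_{0}\h_{I_{2}}F_{1}\h_{I_{0}}F_{2})\Big|
\lesssim \abs{I_{T^{\ast}}}\,\size_{0}(T)\size_{1}(T)\size_{2}(T),
\]
which I would prove by Haar orthogonality after the modulation has been absorbed into the frequency label of the wave packets of $f$. A standard iterative tree selection, peeling off at each step the maximal tree with largest $\size_{i}$ for the extremal index $i$, then decomposes the form into geometrically decaying contributions whose sum is dominated by a restricted–type estimate at a vertex of the desired Hölder range. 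Multilinear interpolation with the trivial estimate recovers the full range $2<p_{0},p_{1}<\infty$, $1<p_{2}<\infty$.

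The main obstacle is the relaxation of the exponent range on $p_{0}$ (and, in case \eqref{eq:F0-diagonal} with $a\in A_{1}\setminus A_{0}$, also on $p_{1}$) down to $1$. A generic two–variable function $F_{0}$ would force a Loomis--Whitney style restriction $p_{0}>2$, exactly as in the model treated in \cite{MR3334208}. Lifting this restriction requires exploiting that, in both cases, $F_{0}$ is genuinely one–dimensional: the size $\size_{0}(T)$ must be bounded by a one–dimensional maximal average of $f$ rather than by a two–dimensional one, and the frequency shift induced by $a$ or by $N_{\cdot}$ must be shown to respect the tree structure. In the subcase $a\in A_{1}\setminus A_{0}$ of \eqref{eq:F0-diagonal} an additional symmetry also makes $F_{1}$ effectively one-dimensional along the relevant fibers, which is what permits the symmetric extension of the range to $1<p_{1}<\infty$.
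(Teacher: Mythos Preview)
Your overall architecture (tile decomposition, trees, sizes, tree selection, single tree estimate, then interpolation) matches the paper, but two load-bearing steps are understated or missing.

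\textbf{The single tree estimate.} You propose to prove it ``by Haar orthogonality after the modulation has been absorbed.'' That works for a one-dimensional Carleson-type form, but here the tree operator still pairs three genuinely two-variable functions. The paper does not prove the tree estimate directly; after modulating to $\xi_{T,1}=0$ and a change of variables $x_{1}=x_{2}+y_{0}$, $y_{1}=x_{0}+y_{2}$, the tree operator is identified with the single-tree operator of the \emph{twisted paraproduct} and the estimate is imported from Kova\v{c}'s work (the paper's reference \cite{MR2990138}). That input is nontrivial (telescoping plus Cauchy--Schwarz in the square variable), and your sketch does not supply an alternative.

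\textbf{Extension beyond the local $L^{2}$ range.} Tree selection with the $L^{2}$-based sizes, as you describe it, yields restricted type only in the open triangle $c$ (all $p_{i}>2$). Your sentence ``multilinear interpolation with the trivial estimate recovers the full range $2<p_{0},p_{1}<\infty$, $1<p_{2}<\infty$'' is the gap: there is no trivial $L^{p}$ estimate for $\Lambda^{\epsilon}$ near $p_{2}=1$ to interpolate with. The paper obtains $1<p_{2}\le 2$ via a \emph{fiberwise multi-frequency Calder\'on--Zygmund decomposition} (Section~\ref{sec:mfcz}): one first runs tree selection on $\tilde F_{0}$ to get forests $\mathbf{T}_{k}$ with counting control $\|N_{k}\|_{p_{0}/2}\lesssim 2^{2k}$, then on each exceptional fiber interval $J$ replaces $\tilde F_{2}$ by its projection onto the finitely many tree-top frequencies $\Omega_{J}$, obtaining a good function $G$ with $\|G\|_{2}^{2}\lesssim 2^{2k(1-2/p_{2}')}$ via Hausdorff--Young. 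A second tree selection on $G$ then produces geometric decay in $k$. This multi-frequency projection step is the mechanism that trades $L^{p_{2}}$ control with $p_{2}<2$ for $L^{2}$ control of $G$; your proposal does not contain it.

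For the further relaxation to $1<p_{0}<\infty$, your intuition that one-dimensionality of $F_{0}$ is the key is correct, but the implementation is again not a size estimate. In the fiberwise-character case the argument is almost trivial (the exceptional set forces the form to vanish), while in the diagonal case with $a\in A_{1}\setminus A_{0}$ the paper reruns the multi-frequency CZ decomposition with the roles of indices $0$ and $2$ swapped; the special condition on $a$ is exactly what makes the directional maximal function coincide with the two-dimensional one, so that the projections $\Pi^{(0)}$ remain adapted to the good function $G$.
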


Since the conditions \eqref{eq:F0-diagonal} and \eqref{eq:F0-fiberwise-character} (with $a$ and $N$ fixed) describe subspaces of $L^{p_{0}}(\W^{2})$ that are themselves $L^{p_{0}}$ spaces, Theorem~\ref{thm:2general} follows by real interpolation from (generalized) restricted weak type estimates.
Such estimates also hold for certain negative values of $p_{i}$, the precise range of which we summarize with the aid of Figure~\ref{fig:exponents}.
Theorem~\ref{thm:2general} is the restriction of our results to the Banach triangle $c\cup b_{0}\cup b_{1}\cup b_{2}$ in Figure~\ref{fig:exponents}.

The local $L^{2}$ case of Theorem~\ref{thm:2general} (triangle $c$ in Figure~\ref{fig:exponents}) is covered by Proposition~\ref{prop:restricted-type}.
In this case the localization $\vec I \in \DI_{k}$, $k\leq 0$, in definition \eqref{eq:THT-def} can be removed using the Loomis--Whitney inequality
\[
\abs[\Big]{ \iiint_{\R^{3}} F_{0}(x,y) F_{1}(y,z) F_{2}(z,x) \dif(x,y,z)}
\leq
\norm{F_{0}}_{2} \norm{F_{1}}_{2} \norm{F_{2}}_{2}
\]
to estimate contributions of scales $k>0$.

Triangle $d_{12}$ is covered by Theorem~\ref{thm:2general-restricted-type}; this gives the lower half of the solid hexagon in Figure~\ref{fig:exponents}.
Triangle $d_{10}$ in cases \eqref{eq:F0-fiberwise-character} and \eqref{eq:F0-diagonal}, $a\in A_{1}\setminus A_{0}$, is covered by Theorem~\ref{thm:2general-restricted-type:F0}; together with the previous result this gives the full solid hexagon in Figure~\ref{fig:exponents}.
Finally, the case \eqref{eq:F0-diagonal}, $a\in A_{1}\setminus A_{0}$, is symmetric in indices $0,2$; in this case we obtain estimates in the dashed extension of the solid hexagon in Figure~\ref{fig:exponents}.

\section{Tile decomposition}
In this section we describe a time-frequency decomposition for the form \eqref{eq:THT-def} that is well adapted both to diagonal functions \eqref{eq:F0-diagonal} and to fiberwise characters \eqref{eq:F0-fiberwise-character}.
While the decomposition of the \emph{form} is the same in both cases, the time-frequency projections of (one of) the \emph{functions} differ.
However, in both cases the time-frequency projections satisfy the same localization and scale compatibility properties, summarized in Definition~\ref{tile-proj}.
The proof of the local $L^{2}$ bounds uses only these properties and a single tree estimate.
We will have to come back to the definition of time-frequency projections in the multi-frequency Calder\'on--Zygmund decomposition in Section~\ref{sec:mfcz}.

\subsection{Wave packets}
\label{sec:wave-packets}
The characters on the Walsh field $\W$ are the \emph{Walsh functions}
\[
w_{N}(x) := e(N \wtimes x),
\]
where $N\in\W$ and $e\colon\W\to\R$ is simply the periodization of $\h_{[0,1)}$.
Their particular cases are the \emph{Rademacher functions} $r_k:=w_{2^{-k}}$, $k\in\Z$.
The \emph{Walsh wave packet} associated with a dyadic rectangle $I\times\omega$ of area $1$ is
\[
w_{I\times\omega}(x) := \abs{I}^{-1/2} 1_{I}(x) e(l(\omega) \wtimes x),
\]
where $l(\omega)$ is the left endpoint of $\omega$.
Note that replacing $l(\omega)$ by any other member of $\omega$ only multiplies $w_{I\times\omega}$ by a constant factor $\pm 1$.
This definition satisfies the usual recursive relations
\[
w_{P_{\mathrm{up}}} = (w_{P_{\mathrm{left}}} - w_{P_{\mathrm{right}}})/\sqrt{2},
\quad
w_{P_{\mathrm{down}}} = (w_{P_{\mathrm{left}}} + w_{P_{\mathrm{right}}})/\sqrt{2}
\]
on every dyadic rectangle $P$ of area $2$ and therefore coincides with the usual definition; see \cite[\S 1]{MR2692998}.

\subsection{Tile decomposition}
Our time-frequency analysis is $1\frac12$-dimensional in the sense of \cite{MR2597511}.
We define \emph{tiles} as dyadic boxes
\[
\p = I_{\p,0}\times I_{\p,2} \times \omega_{\p,1},
\quad\text{where}\quad
\abs{I_{\p,0}} = \abs{I_{\p,2}} = \abs{\omega_{\p,1}}^{-1}.
\]
A \emph{bitile} is then any dyadic box of the form
\[
P=I_{P,0}\times I_{P,2} \times \omega_{P,1},
\quad\text{where}\quad
\abs{I_{P,0}} = \abs{I_{P,2}} = 2\abs{\omega_{P,1}}^{-1}.
\]
We will omit the subscripts $\p,P$ if no confusion seems possible.
For notational convenience we will throughout write $I_{1}=I_{0}\wplus I_{2}$.

Dyadic boxes are partially ordered by
\[
P \leq P' :\iff I_{i} \subseteq I_{i}',\ \omega_{i} \supseteq \omega_{i}'.
\]
Writing one of the Haar functions in \eqref{eq:THT-def} as a difference of two characteristic functions we arrive at
\[
\Lambda^{\epsilon}(F_{0},F_{1},F_{2})
=
\sum_{\vec I} \epsilon_{\vec I} \sum_{j \in \{\pm 1\}} j \abs{I_{1}}^{-1}
\tr(1_{I_{1}^{j}}1_{I_{1}^{j}}F_{0} \h_{I_{2}} F_{1} \h_{I_{0}} F_{2}),
\]
where $1_I$ denotes, along with the characteristic function of the interval $I$, also the projection operator
\[
(1_I \varphi)(x)=1_I(x)\varphi(x).
\]
Inserting identity operators (expanded in the Walsh basis) between characteristic functions we obtain
\[
\sum_{\vec I} \epsilon_{\vec I} \sum_{j \in \{\pm 1\}} j
\sum_{\omega_{1} : \abs{\omega_{1}} = 2\abs{I_{1}}^{-1}} 2\abs{I_{1}}^{-2}\\
\tr\big(
w_{I_{1}^{j} \times \omega_{1}}\otimes w_{I_{1}^{j} \times \omega_{1}} F_{0}
\h_{I_{2}} F_{1}
\h_{I_{0}} F_{2}\big).
\]
Changing the order of summation we obtain
\[
\Lambda^{\epsilon}(F_{0},F_{1},F_{2})
=
\sum_{P \text{ bitile}} \epsilon_{\vec I_{P}}
\Lambda_{P}(F_{0},F_{1},F_{2}),
\]
where
\[
\Lambda_{\vec I\times \vec\omega}(F_{0},F_{1},F_{2})
:=
\sum_{j \in \{\pm 1\}} j 2\abs{I_{1}}^{-2}
\tr(w_{I_{1}^{j} \times \omega_{1}}\otimes w_{I_{1}^{j} \times \omega_{1}}F_{0} \h_{I_{2}} F_{1} \h_{I_{0}} F_{2}).
\]

\subsection{Time-frequency projections}
We begin by collecting desirable properties of time-frequency projections.
\begin{definition}
\label{tile-proj}
We call orthogonal projections $\Pi^{(i)}_{\p}$, acting on $L^{2}(x_{i-1},x_{i+1})$ and indexed by tiles $\p$, \emph{time-frequency projections} if they satisfy the following conditions.
\begin{enumerate}
\item\label{tile-proj:orth} (Orthogonality) The projections $\Pi^{(i)}_{\p}$ corresponding to disjoint tiles are orthogonal.
\item\label{tile-proj:compatible} (Scale compatibility) Bitile projections $\Pi^{(i)}_{P}$ are well-defined (there are two ways to write a bitile as a disjoint union of tiles, and the corresponding sums of tile projections are equal).
\item\label{tile-proj:support} (Support) $\supp \Pi_{\p}^{(i)}F_{i} \subset I_{i-1}\times I_{i+1}$.
\end{enumerate}
\end{definition}
A collection of bitiles $\mathbf{P}$ is called \emph{convex} if $P,P''\in\mathbf{P}$, $P\leq P'\leq P''$ implies $P'\in\mathbf{P}$.
The union of any finite convex collection of bitiles $\mathbf{P}$ can be written as the union of a collection of disjoint tiles $\mathbf{p}$ (this is proved by induction on the number of bitiles, cf.\ \cite[Lemma 1.7]{MR2692998}).
Given time-frequency projections, this allows us to consider the projections
\[
\Pi_{\mathbf{P}}^{(i)}F_{i} := \sum_{\p\in\mathbf{p}} \Pi_{\p}^{(i)}F_{i}.
\]
The property \eqref{tile-proj:compatible} implies that these projections do not depend on the choice of $\mathbf{p}$, cf.\ \cite[Corollary 1.9]{MR2692998}.

\begin{definition}
We call time-frequency projections \emph{adapted} to $F_{0}$ if for every choice of $F_{1},F_{2}$, every bitile $P$, and any convex collection of bitiles $\mathbf{P}\ni P$ we have
\begin{equation}
\label{eq:Lambda-tile-proj}
\Lambda_{P}(F_{0},F_{1},F_{2}) = \Lambda_{P}(\Pi_{\mathbf{P}}^{(0)}F_{0},\Pi_{\mathbf{P}}^{(1)}F_{1},\Pi_{\mathbf{P}}^{(2)}F_{2}).
\end{equation}
\end{definition}

The existence of adapted time-frequency projections suffices to establish restricted type bounds on the dyadic triangular Hilbert transform in the local $L^{2}$ range.
\begin{proposition}
\label{prop:restricted-type}
Let $E_{i} \subset A_{0}^{2}$, $i\in\{0,1,2\}$, be measurable sets and $\abs{F_{i}} \leq 1_{E_{i}}$ be functions for which there exist time-frequency projections adapted to $F_{0}$.
Then
\[
\abs{\Lambda^{\epsilon}(F_{0},F_{1},F_{2})}
\lesssim
a_{1}^{1/2} a_{2}^{1/2} (1+\log\frac{a_{0}}{a_{1}}),
\]
where $a_{i}=\abs{E_{\sigma(i)}}$ is a decreasing rearrangement, that is, $\sigma$ is a permutation of $\{0,1,2\}$ and $a_{0}\geq a_{1}\geq a_{2}$.
The implicit constant is independent of the choices of the scalars $\abs{\epsilon_{\vec I}}\leq 1$.
\end{proposition}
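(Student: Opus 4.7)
The plan is to follow the now-standard Lacey--Thiele time-frequency selection-and-summation argument, adapted to the 1.5-dimensional tile geometry and to the three adapted projections $\Pi^{(i)}$. The central objects are sizes attached to trees (collections of bitiles nested under a common top bitile with frequency intervals compatibly ordered). For a tree $T$ with top $P_T$ and each $i\in\{0,1,2\}$ I would define
\[
\mathrm{size}_i(T) := |I_{P_T}|^{-1/2}\,\bigl\|\Pi^{(i)}_T F_i\bigr\|_2,
\]
and exploit the orthogonality of the $\Pi^{(i)}_{\p}$ on disjoint tiles (property \ref{tile-proj:orth}) together with Bessel's inequality to obtain, for any family of trees with the usual ``strongly disjoint tops'' property,
\[
\sum_T |I_{P_T}|\,\mathrm{size}_i(T)^2 \;\lesssim\; \|F_i\|_2^2 \;\le\; |E_i|.
\]
Since $|F_i|\le 1_{E_i}\le 1$, one also has the trivial bound $\mathrm{size}_i(T)\le 1$.

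The key analytic input is a single tree estimate: for every tree $T$,
\[
\Bigl|\sum_{P\in T}\epsilon_{\vec I_P}\,\Lambda_P(F_0,F_1,F_2)\Bigr|
\;\lesssim\;
|I_{P_T}|\prod_{i=0}^{2}\mathrm{size}_i(T).
\]
Here I would use the identity \eqref{eq:Lambda-tile-proj} to replace each $F_i$ by $\Pi^{(i)}_{T}F_i$ (convexity of the tree allows this), split the tree into its top bitile and its complement, expand $\Lambda_P$ in Walsh wave packets, and apply Cauchy--Schwarz combined with the wave packet orthogonality inside the tree. The support property \ref{tile-proj:support} confines everything to $I_{P_T,0}\times I_{P_T,2}$, providing the $|I_{P_T}|$ factor.

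With the tree estimate in hand, I would run the standard selection algorithm separately in each of the three coordinates: iteratively extract maximal families $\mathcal{T}^{(i)}_{k_i}$ of trees whose $i$-th size lies in $[2^{-k_i-1},2^{-k_i}]$, for $k_i\ge 0$. The Bessel-type bound above yields $\sum_{T\in\mathcal{T}^{(i)}_{k_i}}|I_{P_T}|\lesssim 2^{2k_i}|E_i|$. Every remaining tree is assigned a triple $(k_0,k_1,k_2)$, and the single tree estimate combined with the top-measure bound gives
\[
|\Lambda^\epsilon(F_0,F_1,F_2)|
\;\lesssim\;
\sum_{k_0,k_1,k_2\ge 0} 2^{-k_0-k_1-k_2}\,\min_{i}\bigl(2^{2k_i}|E_i|\bigr).
\]
Splitting this triple sum according to which index realizes the minimum and integrating geometric series in the two non-minimizing variables leaves a logarithmic integral in the minimizing variable, which after rearranging the $|E_i|$'s produces exactly $a_1^{1/2}a_2^{1/2}(1+\log(a_0/a_1))$.

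The main obstacle is the single tree estimate, which is where the entangled trilinear structure of $\Lambda^\epsilon$ really bites: unlike the bilinear Hilbert transform, the three functions $F_0,F_1,F_2$ live on different pairs of coordinates and are coupled by a trace, so the tree-level cancellation has to be extracted by a careful use of the trace identity and of the coincidence of frequency intervals inside a tree. Once this estimate is established in the appropriate geometry, the rest (Bessel for sizes, selection, summation) is routine and yields the claimed local $L^2$ restricted-type bound with the logarithmic blow-up near the endpoints of the triangle $c$.
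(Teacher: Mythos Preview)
Your overall strategy—size-based tree selection, single-tree estimate, summation—is exactly the paper's, and you correctly flag the single-tree estimate as the crux. In the paper that estimate is not obtained by a bare Cauchy--Schwarz: after modulating the tree to frequency $0$ and a change of variables, the tree operator is rewritten as a single tree operator of twisted-paraproduct type, and one invokes Kova\v{c}'s tree estimate \cite[Proposition~4]{MR2990138}. Your sketch there would need substantially more detail, but you acknowledge this.

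The genuine gap is in the selection and summation. Running the selection ``separately in each of the three coordinates'' does not yield a decomposition indexed by triples $(k_0,k_1,k_2)$: the three selections produce three \emph{different} partitions of the bitiles into trees, and their intersections need not be trees (nor even convex), so there is no way to ``assign a triple'' to a common family of trees. The triple sum you write therefore has no combinatorial basis, and the implicit bound $\sum_T|I_{P_T}|\lesssim \min_i 2^{2k_i}|E_i|$ for trees with a prescribed triple is unjustified—the Bessel inequality in Proposition~\ref{prop:tree-selection} only controls the top measure through the index that actually triggered the extraction.

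The paper instead runs a \emph{single}-parameter selection. After normalizing $\tilde F_i=|E_i|^{-1/2}F_i$ (so that $\|\tilde F_i\|_2\le 1$ and $\size^{(i)}\le\|\tilde F_i\|_\infty\approx 2^{n_i}$ with $2^{n_i}\approx a_i^{-1/2}$, $n_0\le n_1\le n_2$), one iterates $n$ downward from $n_2$: at each level one applies Proposition~\ref{prop:tree-selection} for every $i$ in turn to the current convex collection. The trees $\mathbf T_n$ removed at level $n$ then inherit $\size^{(i)}(T,\tilde F_i)\le\min(2^n,2^{n_i})$ for \emph{all} $i$, and $\sum_{T\in\mathbf T_n}|\vec I_T|\lesssim 2^{-2n}$ since each $\|\tilde F_i\|_2\le 1$. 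The single-tree estimate \eqref{eq:single-tree-estimate:symmetric} gives the \emph{single} sum
\[
\sum_{n\le n_2} 2^{-2n}\prod_{i=0}^2\min(2^n,2^{n_i}),
\]
which is geometric for $n<n_0$ and for $n>n_1$ and constant $\approx 2^{n_0}$ on $n_0\le n\le n_1$, yielding $2^{n_0}(1+n_1-n_0)\sim a_0^{-1/2}(1+\log(a_0/a_1))$ and hence the claim after unnormalizing.
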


We finish this section with the construction of time-frequency projections adapted to \eqref{eq:F0-diagonal} and \eqref{eq:F0-fiberwise-character}.
For indices $0$ and $2$ we use the projections
\begin{equation}
\label{eq:Pi2}
\Pi^{(2)}_{\p} F_{2}(x_{0},x_{1}) := 1_{I_{0}}(x_{0}) \< F_{2}(x_{0},\cdot), w_{I_{1}\times \omega_{1}} \> w_{I_{1}\times \omega_{1}}(x_{1})
\end{equation}
and
\begin{equation}
\label{eq:Pi0}
\Pi^{(0)}_{\p} F_{0}(x_{2},x_{1}) := 1_{I_{2}}(x_{2}) \< F_{0}(x_{2},\cdot), w_{I_{1}\times \omega_{1}} \> w_{I_{1}\times \omega_{1}}(x_{1}).
\end{equation}
The structural information given by \eqref{eq:F0-diagonal} and \eqref{eq:F0-fiberwise-character} is encoded in the projections $\Pi^{(1)}$.

\subsubsection{One-dimensional functions}
\label{sec:1dimfct}
Suppose \eqref{eq:F0-diagonal}.
Then we have
\[
\Pi^{(0)}_{\p}F_{0}(x_{1},x_{2}) = 1_{I_{1}}(x_{1}) (\Pi_{I_{2}\times a\wtimes\omega_{1}}F_{0}(\cdot,x_{1}))(x_{2}),
\]
where the projection on the right-hand side is a one-dimensional time-frequency projection (as defined e.g.\ in \cite{MR2997005}) with a possibly multidimensional range.
In this case we define
\[
\Pi^{(1)}_{P}F_{1}(x_{2},x_{0}) := 1_{I_{0}}(x_{0}) (\Pi_{I_{2}\times a\wtimes\omega_{1}}F_{1}(\cdot,x_{0}))(x_{2}).
\]

\subsubsection{Fiberwise characters}
\label{sec:fiberwisechar}
Suppose \eqref{eq:F0-fiberwise-character}.
Then we have
\[
\Pi^{(0)}_{\p}F_{0}(x_{1},x_{2}) = 1_{I_{1}}(x_{1}) 1_{I_{2}}(x_{2}) 1_{\omega_{1}}(N_{x_{2}}) F_{0}(x_{1},x_{2}).
\]
In this case we define
\[
\Pi^{(1)}_{\p}F_{1}(x_{2},x_{0}) := 1_{I_{0}}(x_{0}) 1_{I_{2}}(x_{2}) 1_{\omega_{1}}(N_{x_{2}}) F_{1}(x_{2},x_{0}).
\]
The projections $\Pi^{(1)}$ constructed above satisfy \eqref{eq:Lambda-tile-proj} only for bitiles with $I_{i}\subseteq A_{0}$, which explains the truncation in Theorem~\ref{thm:2general}.

\section{Single tree estimate}
\label{sec:single-tree}
A \emph{tree} $T$ is a convex set of bitiles that contains a maximal element
\[
P_{T} = \vec I_{T} \times \vec \omega_{T} = I_{T,0} \times I_{T,2} \times \omega_{T,1}.
\]
Equivalently, a tree can be described by a top frequency $\xi_{T,1}$ and a convex collection of space boxes $\DI_{T}$.
The corresponding tree $T$ then consists of all bitiles $P = \vec I\times\vec\omega$ with $\vec I\in \DI_{T}$ and $\xi_{T,1}\in\omega_{1}$.

For a convex collection $\mathbf{P}$ of bitiles define
\begin{equation}
\label{eq:size}
\size^{(i)}(\mathbf{P},F_{i})
:=
\sup_{T\subset \mathbf{P} \text{ tree}} \abs{\vec I_{T}}^{-1/2} \norm{\Pi_{T}^{(i)}F_{i}}_{2}.
\end{equation}
For a collection $\mathbf{P}$ of bitiles write
\[
\Lambda^{\epsilon}_{\mathbf{P}}(F_{0},F_{1},F_{2})
:=
\sum_{P\in\mathbf{P}} \epsilon_{\vec I_{P}}
\Lambda_{P}(F_{0},F_{1},F_{2}).
\]
The objective of this section is to show that Definition~\ref{tile-proj} implies
\begin{equation}
\label{eq:single-tree-estimate:symmetric}
\abs{\Lambda^{\epsilon}_{T}(F_{0},F_{1},F_{2})}
\lesssim
\abs{\vec I_{T}} \prod_{i=0}^{2} \size^{(i)}(T,F_{i}),
\end{equation}
where $T$ is a tree and the implied constant is absolute.
It follows from Definition~\ref{tile-proj} that
\[
\abs{\vec I_{P}}^{-1/2} \norm{\Pi^{(i)}_{P} F_{i}}_{L^{2}(I_{i-1,P}\times I_{i+1,P})} \lesssim \size^{(i)}(T,F_{i})
\quad
\text{ for all } P\in T.
\]
Thus in view of \eqref{eq:Lambda-tile-proj} it suffices to show
\begin{equation}
\label{eq:single-tree-estimate:symmetric2}
\abs{\Lambda^{\epsilon}_{T}(F_{0},F_{1},F_{2})}
\lesssim
\abs{\vec I_{T}} \prod_{i=0}^{2} \sup_{\vec I\in \DI_{T} \cup \Leaves_{T}} \abs{\vec I}^{-1/2} \norm{F_{i}}_{L^{2}(I_{i-1}\times I_{i+1})},
\end{equation}
where $\Leaves_{T}$ denotes the collection of leaves of a tree, that is, maximal elements of $\DI$ contained in a member of $T$ that are not themselves members of $\DI_{T}$.
By modulation we may assume $\xi_{T,1}=0$.
The tree operator can be written as
\[
\sum_{\vec I\in \DI_{T}} \epsilon_{\vec I} \abs{I_{1}}^{-2}
(\tr((1\otimes 1)1_{I_{1}} F_{0} \h_{I_{2}} F_{1} \h_{I_{0}} F_{2} \h_{I_{1}})
+
\tr((1\otimes 1)\h_{I_{1}} F_{0} \h_{I_{2}} F_{1} \h_{I_{0}} F_{2} 1_{I_{1}})).
\]
The two summands are symmetric (under permuting the indices $0$ and $2$) and we consider only the first of them.
With the convention that the domain of integration is $x_{i},y_{i}\in I_{i}$ and the dyadic intervals have size $\abs{I_{i}}=2^{k}$ we have
\begin{multline*}
\tr((1\otimes 1)1_{I_{1}} F_{0} \h_{I_{2}} F_{1} \h_{I_{0}} F_{2} \h_{I_{1}})\\
=
\int F_{0}(x_{1},x_{2}) r_{k}(x_{2}) F_{1}(x_{2},x_{0}) r_{k}(x_{0}) F_{2}(x_{0},y_{1}) r_{k}(y_{1})
\dif x_{1} \dif x_{2} \dif x_{0} \dif y_{1}.
\end{multline*}
The change of variables $x_{1}=x_{2}+y_{0}$, $y_{1}=x_{0}+y_{2}$ gives
\begin{align*}
&\int F_{0}(x_{2}+y_{0},x_{2}) r_{k}(x_{2}) F_{1}(x_{2},x_{0}) r_{k}(x_{0}) F_{2}(x_{0},x_{0}+y_{2}) r_{k}(x_{0}+y_{2})
\dif y_{0} \dif x_{2} \dif x_{0} \dif y_{2}\\
&=\int \tilde F_{0}(y_{0},x_{2}) r_{k}(x_{2}) F_{1}(x_{2},x_{0}) \tilde F_{2}(x_{0},y_{2}) r_{k}(y_{2})
\dif y_{0} \dif x_{2} \dif x_{0} \dif y_{2},
\end{align*}
where $\tilde F_{0}(y_{0},x_{2}):=F_{0}(x_{2}+y_{0},x_{2})$ and $\tilde F_{2}(x_{0},y_{2}) := F_{2}(x_{0},x_{0}+y_{2})$.

Thus the first half of the tree operator can be written as a single tree operator from \cite[\textsection 3]{MR2990138} with square-dependent coefficients.
The first step in the proof of \cite[Proposition 4]{MR2990138} is an application of the Cauchy--Schwarz inequality in the sum over squares, so it still works in our situation.
This, together with \cite[(2.2)]{MR2990138}, gives the required estimate.

\section{Tree selection and local $L^{2}$ bounds}
\subsection{The tree selection algorithm}
We organize bitiles into trees closely following the argument in \cite[Lemma 2.2]{MR2997005}.
Here and later we use coordinate projections $\pi_{(i)}: \W^{3}\to\W^{2}, (x_{i-1},x_{i},x_{i+1}) \mapsto (x_{i-1},x_{i+1})$.
\begin{proposition}
\label{prop:tree-selection}
Let $n\in\Z$, $i\in\{0,1,2\}$, a function $F_{i}$, and a system of (not necessarily adapted) time-frequency projections $\Pi^{(i)}$ be given.
Then every finite convex collection of bitiles $\mathbf{P}$ can be partitioned into a convex collection of bitiles $\mathbf{P}'$ with
\[
\size_{i}(\mathbf{P}',F_{i}) \leq 2^{-n}
\]
and a further convex collection of bitiles that is the disjoint union of a collection of convex trees $\mathbf{T}$ with
\begin{equation}
\label{eq:tree-selection:tree-counting-bound}
\sum_{T\in\mathbf{T}, \vec I_{T} \subset \vec J} \abs{\vec I_{T}}
\leq
9 \cdot 2^{2n} \norm{1_{\pi_{(i)}\vec J} F_{i}}_{2}^{2},
\quad \vec J\in\DI.
\end{equation}
\end{proposition}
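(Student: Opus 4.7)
My plan is to adapt the standard greedy tree selection argument from time-frequency analysis, in the spirit of \cite{MR2692998,MR2997005}. I would iteratively extract trees of large density from $\mathbf{P}$ until the remainder has small size, and then bundle the extracted trees into a bounded number of sub-collections whose projections satisfy a Bessel-type orthogonality inequality.

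First I would set $\mathbf{P}^{(0)} := \mathbf{P}$ and $\mathbf{T} := \emptyset$, and iterate: at stage $k$, if there exists a tree $T \subset \mathbf{P}^{(k)}$ with $|\vec I_T|^{-1/2}\|\Pi_T^{(i)} F_i\|_2 > 2^{-n}$, pick such a $T_k$ whose top bitile $P_{T_k}$ is maximal under a fixed total order on bitiles (ordering first by $|\vec I_{P}|$, then lexicographically on the dyadic indices of $\vec I_P \times \omega_{P,1}$), append $T_k$ to $\mathbf{T}$, remove its bitiles from $\mathbf{P}^{(k)}$, and further remove any additional bitiles required so that $\mathbf{P}^{(k+1)}$ stays convex. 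I would stop when no such tree remains and set $\mathbf{P}' := \mathbf{P}^{(\infty)}$. By the stopping rule $\size_i(\mathbf{P}', F_i) \leq 2^{-n}$, and convexity is preserved throughout the iteration.

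The main analytic step would then be a Bessel-type inequality
$$ \sum_{T \in \mathbf{T}} \|\Pi_T^{(i)} F_i\|_2^2 \leq 9\, \|F_i\|_2^2. $$
Once I have this, \eqref{eq:tree-selection:tree-counting-bound} follows by combining it with the selection threshold $\|\Pi_{T_k}^{(i)} F_i\|_2^2 > 2^{-2n}|\vec I_{T_k}|$ and invoking the support property from Definition~\ref{tile-proj}, which localizes each $\Pi_{T}^{(i)} F_i$ to $\pi_{(i)}\vec I_{T}$: whenever all selected $\vec I_T$ are contained in $\vec J$, I may replace $F_i$ by $1_{\pi_{(i)}\vec J} F_i$ in the Bessel bound. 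For the Bessel bound itself I would partition $\mathbf{T}$ into at most $9$ subclasses within which the projections $\Pi_T^{(i)}F_i$ are pairwise orthogonal; by the orthogonality axiom of Definition~\ref{tile-proj} this reduces to the geometric claim that the underlying tile decompositions of the trees in each class are pairwise disjoint, so that the sub-sum within each class is bounded by $\|F_i\|_2^2$.

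The hard part will be the $9$-fold partition. Concretely, I have to argue that the greedy selection rule produces selected tops for which the only possible non-orthogonality is geometrically tame---two tops may share the same frequency interval, overlap spatially in a dyadic-nested way, or be related by a small translation at the same scale---and that a finite family of $9$ classes suffices to separate all such configurations, analogously to the factor of $2$ that arises in the classical 1D Walsh setting, promoted to $3\times 3$ by the bitile (space-space-frequency) geometry. Maintaining convexity of $\mathbf{P}^{(k)}$ at every iteration is precisely what lets me control the combinatorics, and once the classification is in place the Bessel bound within each class is automatic from the orthogonality axiom.
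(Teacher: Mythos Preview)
Your proposal has a genuine gap at the ``hard part'': the Bessel inequality $\sum_{T\in\mathbf{T}} \|\Pi_T^{(i)}F_i\|_2^2 \le 9\|F_i\|_2^2$ cannot be obtained by partitioning $\mathbf{T}$ into subclasses in which the \emph{full} tree projections $\Pi_T^{(i)}F_i$ are pairwise orthogonal. Distinct selected trees will typically share tiles at their common top scales, so no selection rule based only on a total order on tops (and certainly not your spatial-size-then-lexicographic order) will make the full projections disjoint. The explanation ``$3\times 3$ from bitile geometry'' does not correspond to any actual mechanism here.

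What the paper does instead is decompose each tree projection orthogonally as
\[
\Pi_T^{(i)}F_i = \Pi_{P_T}^{(i)}F_i + \sum_{P\in T_{+1}}\Pi_{P^{-1}}^{(i)}F_i + \sum_{P\in T_{-1}}\Pi_{P^{+1}}^{(i)}F_i,
\]
and run \emph{three} separate removal rounds, one for each piece, each with threshold $3^{-1}|\vec I_T|$. In the first round one removes maximal bitiles with large top projection; these are automatically disjoint. In the round for $j=\pm 1$ one selects trees whose $j$-lacunary part is large, ordered not by spatial size but by the \emph{frequency} endpoint of $\omega_{T,1}$ (minimal left endpoint for $j=-1$, maximal right endpoint for $j=+1$); this frequency ordering is precisely what forces the tiles $P^{-j}$, $P\in (T_m)_j$, from different selected trees $T_m$ to be pairwise disjoint. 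Each round then gives $\sum_k |\vec I_{T_k}| < 3\|1_{\pi_{(i)}\vec J}F_i\|_2^2$, and the constant $9$ is simply $3\times 3$ (three rounds, each with constant $3$). Your spatial ordering does not deliver this disjointness, and without it the Bessel step fails.
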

The latter bound includes both an $L^{1}$ estimate (taking $\vec J$ large enough to contain all time intervals in $\mathbf{P}$) and a $\mathrm{BMO}$ estimate (noting $\norm{1_{\pi_{(i)}\vec J} F_{i}}_{2}^{2} \leq \abs{\vec J} \norm{F_{i}}_{\infty}^{2}$) for the counting function $\sum_{T\in\mathbf{T}} 1_{\vec I_{T}}$.
\begin{proof}
We will remove three collections of trees, each of which satisfies \eqref{eq:tree-selection:tree-counting-bound} with a smaller constant.
At each step we remove a tree that is also a down-set, thus ensuring that both the remaining collection $\mathbf{P}'$ and the collection of all removed tiles are convex.

Replacing $F_{i}$ by $2^{n}F_{i}$ we may assume $n=0$.
We write every bitile $P$ as $P^{+1}\cup P^{-1}$, where the tiles $P^{j}$, $j=\pm 1$, are given by $\vec I_{P} \times \omega_{P,1}^{j}$.

For a tree $T$ write
\[
T_{j} := \{ P\in T : P^{j} \leq P_{T}\},
\quad
j=\pm 1.
\]
Then
\[
\Pi_{T}^{(i)} F_{i}
=
\Pi_{P_{T}}^{(i)} F_{i}
+
\sum_{j=\pm 1}\sum_{P\in T_{j}} \Pi_{P^{-j}}^{(i)} F_{i},
\]
and this sum is orthogonal by Definition~\ref{tile-proj}~\ref{tile-proj:orth}.

Let $\{P_{1},\dots,P_{n}\}$ be the collection of maximal bitiles in $\mathbf{P}$ that satisfy
\[
\norm{ \Pi_{P_{k}}^{(i)} F_{i} }_{2}^{2} > 3^{-1} \abs{ \vec I_{P_{k}} }.
\]
These bitiles are necessarily pairwise disjoint, so we have
\[
\sum_{k : \vec I_{P_{k}} \subset \vec J} \abs{ \vec I_{P_{k}} }
<
3 \sum_{k : \vec I_{P_{k}} \subset \vec J} \norm{ \Pi_{P_{k}}^{(i)} F_{i} }_{2}^{2}
\leq
3 \norm{ 1_{\pi_{(i)}\vec J} F_{i} }_{2}^{2}
\]
for every $\vec J\in\DI$, where the last inequality follows from parts \ref{tile-proj:orth} and \ref{tile-proj:support} of Definition~\ref{tile-proj}.
Thus, removing the bitiles $P\leq P_{k}$ from $\mathbf{P}$, we may assume
\[
\norm{ \Pi_{P}^{(i)} F_{i} }_{2}^{2} \leq 3^{-1} \abs{ \vec I_{P} },
\quad
P\in\mathbf{P}.
\]
The next step will be done twice, for $j=\pm 1$.
In each case we remove a collection of trees $\mathbf{T}_{j}$ such that for every remaining tree $T$ we have
\begin{equation}
\label{eq:tree-selection:up}
\sum_{P\in T_{j}} \norm{ \Pi_{P^{-j}}^{(i)} F_{i} }^{2} \leq 3^{-1} \abs{\vec I_{T}}^{2}.
\end{equation}
The collection $\mathbf{T}_{j}=\{T_{1},T_{2},\dots\}$ is selected iteratively.
Suppose that $T_{1},\dots,T_{k}$ have been selected and suppose that \eqref{eq:tree-selection:up} is violated for some remaining tree $T \subset \mathbf{P}\setminus T_{1}\cup\dots\cup T_{k}$.
Choose one such tree for which either the left endpoint of $\omega_{T,1}$ is minimal (if $j=-1$) or the right endpoint is maximal (for $j=+1$) and let $T_{k+1}\subset\mathbf{P}$ be the down-set spanned by the chosen tree.

We claim that the tiles of the form $P_{m}^{-j}$, $P_{m}\in (T_{m})_{j}$, are pairwise disjoint.
This is clear within each tree, so assume for contradiction $P_{k}^{-j} < P_{l}^{-j}$, $k\neq l$.
In particular, we have $P_{k} < P_{l}$, and this implies $k<l$, since otherwise $P_{k}$ should have been included in $T_{l}$.
On the other hand, $\omega_{P_{k},1}^{-j} \supsetneq \omega_{P_{l},1}^{-j}$ implies $\omega_{P_{k},1}^{-j} \supseteq \omega_{P_{l},1} \supsetneq \omega_{P_{l},1}^{j} \supseteq \omega_{T_{l},1}$, whereas $\omega_{T_{k},1} \subseteq \omega_{P_{k},1}^{j}$.
Thus $\omega_{T_{k},1}$ is either to the right (if $j=-1$) or to the left (if $j=+1$) of $\omega_{T_{l},1}$, in both cases contradicting the choice of $T_{k}$.

Violation of \eqref{eq:tree-selection:up} for $T_{k}\in\mathbf{T}_{j}$ and parts \ref{tile-proj:orth} and \ref{tile-proj:support} of Definition~\ref{tile-proj} give
\[
\sum_{k : \vec I_{T_{k}} \subset \vec J} \abs{\vec I_{T_{k}}}
<
\sum_{k : \vec I_{T_{k}} \subset \vec J} 3 \sum_{P\in (T_{k})_{j}} \norm{ \Pi_{P^{-j}}^{(i)} F_{i} }_{2}^{2}
\leq
3 \norm{ 1_{\pi_{(i)}\vec J} F_{i} }_{2}^{2},
\]
as required.
For each remaining tree we will have
\[
\norm{ \Pi_{P_{T}}^{(i)} F_{i} }^{2}
+
\sum_{j=\pm 1}\sum_{P\in T_{j}} \norm{ \Pi_{P^{-j}}^{(i)} F_{i} }^{2}
\leq
(3^{-1}+3^{-1}+3^{-1}) \abs{\vec I_{T}},
\]
and this gives the required estimate for $\size_{i}(T,F_{i})$.
\end{proof}

\subsection{Local $L^{2}$ bounds (triangle $c$)}
\begin{proof}[Proof of Proposition~\ref{prop:restricted-type}]
Normalizing $\tilde F_{i} = F_{i}/\abs{E_{i}}^{1/2}$ we have to show
\begin{equation}
\label{eq:restricted-type-rescaled}
\abs{\Lambda_{\mathbf{P}}^{\epsilon}(\tilde F_{0},\tilde F_{1},\tilde F_{2})}
\lesssim
a_{0}^{-1/2} (1+\log\frac{a_{0}}{a_{1}})
\end{equation}
with a constant independent of the (finite) convex collection of bitiles $\mathbf{P}$.
We have $\size^{(i)}(\tilde F_{i}) \leq \norm{\tilde F_{i}}_{\infty} \leq \abs{E_{i}}^{-1/2} = a_{\sigma^{-1}(i)}^{-1/2}$ and $\norm{\tilde F_{i}}_{2}\leq 1$.
Fix integers $n_{i}$ such that $2^{n_{i}-1} < a_{i}^{-1/2} \leq 2^{n_{i}}$; note that in particular $n_{0}\leq n_{1}\leq n_{2}$.
Running the tree selection algorithm (Proposition~\ref{prop:tree-selection}) iteratively at each scale $n\leq n_{2}$ for each $i\in\{0,1,2\}$ we obtain collections of trees $\mathbf{T}_{n}$ with
\[
\sum_{T\in\mathbf{T}_{n}} \abs{I_{T,i}}^{2} \lesssim 2^{-2n}
\]
and
\[
\size^{(i)}(T,\tilde F_{i}) \leq \min (2^{n}, 2^{n_{\sigma^{-1}(i)}}),
\quad T\in \mathbf{T}_{n}.
\]
Summing the single tree estimate \eqref{eq:single-tree-estimate:symmetric} over all trees we obtain
\[
\abs{\Lambda^{\epsilon}(\tilde F_{0},\tilde F_{1},\tilde F_{2})}
\lesssim
\sum_{n\leq n_{2}} 2^{-2n}
\prod_{i=0}^{2} \min (2^{n}, 2^{n_{i}}).
\]
The sum over $n$ is an increasing geometric series for $n<n_{0}$ and a decreasing geometric series for $n>n_{1}$.
In particular, the sum is dominated by the terms $n_{0}\leq n\leq n_{1}$, that is, we have the estimate
\[
2^{n_{0}}(1+n_{1}-n_{0})
\lesssim
a_{0}^{-1/2} (1+\log\frac{a_{0}}{a_{1}})
\]
as required.
\end{proof}

\section{Fiberwise multi-frequency Calder\'on--Zygmund decomposition}
\label{sec:mfcz}
In order to extend the range of exponents in our main result we perform a fiberwise multi-frequency Calder\'on--Zygmund decomposition.
Here, in contrast to the local $L^{2}$ range, we have to use the special form of the time-frequency projections $\Pi^{(0)}$ and $\Pi^{(2)}$.

Our decomposition unites the main features of the one-dimensional multi-frequency Calder\'on--Zygmund decomposition in \cite{MR2997005} and the fiberwise single-frequency Calder\'on--Zygmund decomposition in \cite{MR3161332,MR2990138}.
A useful simplification with respect to \cite{MR2997005} is that we do not attempt to control the size of the good function, this corresponds to the observation that the argument on page 1709 of \cite{MR2997005} works directly for $a$ in place of $a_{m}$.

\subsection{Triangles $b_{2}$ and $d_{12}$}
\begin{theorem}
\label{thm:2general-restricted-type}
Let $0< \alpha_{0} \leq 1/2 \leq \alpha_{2} < 1$ and $-1/2<\alpha_{1}<1/2$ satisfy \eqref{eq:Lp-range}.
Then for any measurable sets $E_{i} \subset A_{0}^{2}$, $i\in\{0,1,2\}$ there exists a major subset $E_{1}'\subset E_{1}$ (which can be taken equal to $E_{1}$ if $\alpha_{1}>0$)
such that for any dyadic test functions $\abs{F_{i}} \leq 1_{E_{i}}$, $\abs{F_{1}} \leq 1_{E_{1}'}$ with \eqref{eq:F0-diagonal} or \eqref{eq:F0-fiberwise-character} we have
\[
\abs{\Lambda^{\epsilon}(F_0,F_1,F_2)}
\lesssim_{\alpha_{0},\alpha_{1},\alpha_{2}}
\prod_{i=0}^{2} \abs{E_{i}}^{\alpha_{i}},
\]
where the implied constant is independent of the choices of the scalars $\abs{\epsilon_{\vec I}}\leq 1$ with $\epsilon_{\vec I}=0$ whenever $I_{i} \not\subset A_{0}$.
\end{theorem}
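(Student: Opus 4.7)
The plan is to reduce to the local $L^{2}$ estimate (Proposition~\ref{prop:restricted-type}) by means of a fiberwise multi-frequency Calder\'on--Zygmund decomposition of $F_{1}$ adapted to the time-frequency projections $\Pi^{(1)}$ constructed in Sections~\ref{sec:1dimfct} and~\ref{sec:fiberwisechar}. By multilinear interpolation between the local $L^{2}$ conclusion of Proposition~\ref{prop:restricted-type} and the generalized restricted weak type bounds to be proved, it suffices to establish such bounds at a finite collection of extremal tuples $(\alpha_{0},\alpha_{1},\alpha_{2})$ approaching the edges of triangles $b_{2}$ (where $\alpha_{1}\geq 0$) and $d_{12}$ (where $\alpha_{1}\leq 0$).

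When $\alpha_{1}\leq 0$ I would construct a major subset $E_{1}'\subset E_{1}$ by removing an exceptional set $\Omega^{*}$ built from dyadic maximal operators that measure, fiberwise, the density of $1_{E_{0}},1_{E_{2}}$ together with the distribution of the linearizing function $N_{\cdot}$ (in case \eqref{eq:F0-fiberwise-character}) or the diagonal variable $x_{2}\wplus(a\wtimes x_{1})$ (in case \eqref{eq:F0-diagonal}). Choosing the threshold in the definition of $\Omega^{*}$ sufficiently large, weak type $(1,1)$ bounds for these maximal operators yield $\abs{\Omega^{*}}\leq \abs{E_{1}}/2$, so $E_{1}'$ is indeed major. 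In the range $\alpha_{1}>0$ (triangle $b_{2}$) one takes $E_{1}'=E_{1}$ and the major-subset step is skipped.

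Next I would apply the tree-selection algorithm (Proposition~\ref{prop:tree-selection}) to each index $i=0,1,2$ at dyadic size levels $2^{-n}$, producing collections $\mathbf{T}^{(i)}_{n}$ of trees on which $\size^{(i)}\leq 2^{-n}$ with areas controlled by the counting bound \eqref{eq:tree-selection:tree-counting-bound}. Summing the single-tree estimate \eqref{eq:single-tree-estimate:symmetric} against these counts reduces the theorem to a geometric series of the shape $\sum_{n} 2^{-3n}\prod_{i}\min(2^{n},\sigma_{i})$, where $\sigma_{i}$ is the best available global size bound for $F_{i}$. The trivial bound $\sigma_{1}\leq 1$ is inadequate when $\alpha_{1}\leq 0$; the essential new input is a fiberwise multi-frequency Calder\'on--Zygmund decomposition of $F_{1}$, performed at the level of individual horizontal or diagonal fibers, producing a good function whose contribution to $\size^{(1)}$ is controlled in $L^{2}$ with at most a $\sqrt{\log}$ loss in the spirit of \cite{MR2997005} (in case \eqref{eq:F0-fiberwise-character}) and of \cite{MR3161332,MR2990138} (in case \eqref{eq:F0-diagonal}), together with a bad function supported on a small set that, by construction of $E_{1}'$, is disjoint from the projections of the tiles involved.

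The main obstacle will be engineering the fiberwise decomposition so that it is simultaneously compatible with the tile data $(I_{T,0},I_{T,2},\omega_{T,1})$ of each tree and with the frequency restriction $N_{x_{2}}\in\omega_{T,1}$ (or its analogue in the diagonal case); the number of distinct relevant frequencies per fiber is what drives the $\sqrt{\log}$ loss and must be balanced against the gain from the counting bound \eqref{eq:tree-selection:tree-counting-bound}. The exceptional set $\Omega^{*}$ has to be engineered exactly so that the bad part's contribution to every tree operator vanishes after restriction to $E_{1}'$ without loss of more than constants. Once these ingredients are combined, substituting the improved size bound into the geometric series over $n$ and optimizing the break-point produces the factor $\prod_{i}\abs{E_{i}}^{\alpha_{i}}$, and the theorem follows by interpolation.
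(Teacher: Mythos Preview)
Your outline has the right high-level architecture (exceptional set, tree selection, multi-frequency Calder\'on--Zygmund decomposition, summation of single-tree estimates), but the decomposition is applied to the wrong function, and this is not a cosmetic issue.

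The difficulty in this range of exponents is that $\alpha_{2}\geq 1/2$, i.e.\ $p_{2}\leq 2$, so $\tilde F_{2}=\abs{E_{2}}^{-1/p_{2}}F_{2}$ has no useful $L^{\infty}$ bound and $\size^{(2)}$ cannot be controlled directly. The paper's proof therefore performs the multi-frequency decomposition on $F_{2}$, not on $F_{1}$. Concretely: after normalising $\abs{E_{1}}\approx 1$ and removing an exceptional set built \emph{only} from the maximal functions of $1_{E_{0}}$ and $1_{E_{2}}$ (no reference to $N_{\cdot}$ or the diagonal structure is needed), one has $\size^{(0)}(\tilde F_{0})\lesssim 1$ and $\size^{(1)}(\tilde F_{1})\lesssim 1$ automatically. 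Tree selection is then run \emph{only} on $\tilde F_{0}$, producing collections $\mathbf{P}_{k}$ with $\size^{(0)}\lesssim 2^{-k}$ and counting function $N_{k}$ controlled in $L^{p_{0}/2}$. For each fixed $k$ one replaces $\tilde F_{2}$ by a ``good'' function $G$ obtained by projecting, on each maximal exceptional fiber $J$, onto the finitely many frequencies $\Omega_{J}$ coming from the tops of trees in $\mathbf{T}_{k}$. The role of the major subset $E_{1}'$ is precisely to guarantee that $\Lambda_{P}(\tilde F_{0},\tilde F_{1},\tilde F_{2})=\Lambda_{P}(\tilde F_{0},\tilde F_{1},G)$ for every $P\in\mathbf{P}_{k}$ --- not to kill a bad part of $F_{1}$. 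The $L^{2}$ norm of $G$ is then bounded via Hausdorff--Young by $\abs{\Omega_{J}}^{1/2-1/p_{2}'}$ (a power of $N_{k}$, not a $\sqrt{\log}$ loss), and a \emph{second} tree selection on the normalised $\tilde G$ and on $\tilde F_{1}$ finishes the estimate.

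Your proposed decomposition of $F_{1}$ would not address the actual obstruction ($p_{2}<2$), and your mechanism for the major subset (making the bad part of $F_{1}$ vanish) does not match how $E_{1}'$ is actually used: it ensures that certain bitiles with $I_{1}\subseteq J_{1}$ do not contribute, which is what makes the replacement $\tilde F_{2}\mapsto G$ lossless. Running tree selection simultaneously on all three indices at the outset, as you propose, also bypasses the essential two-stage structure in which the number of relevant frequencies for $G$ is determined by the first selection.
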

\begin{proof}
The required estimate is invariant under rescaling by powers of $2$, so we may normalize $\abs{E_{1}} \approx 1$.
The localization changes to $E_{i} \subset A_{k}^{2}$ for some $k\in\Z$, but all previous results still apply by scale invariance.
In the case $\abs{E_{2}} \gtrsim \abs{E_{1}}$ the estimate with $E_{1}'=E_{1}$ follows from the local $L^{2}$ case $0<\alpha_{0},\alpha_{1},\alpha_{2}\leq 1/2$, which is given by Proposition~\ref{prop:restricted-type}.
Thus we may assume $\abs{E_{2}} < 2^{-20}$.

Define the exceptional sets
\[
\eset_{0} := \{M_{p_{0}}(\abs{E_{0}}^{-1/p_{0}} 1_{E_{0}}) > 2^{10}\}
\]
and
\[
\eset_{2} := \{\tilde M_{p_{2}}(\abs{E_{2}}^{-1/p_{2}} 1_{E_{2}}) > 2^{10}\},
\]
where $\tilde M_{p_{2}}$ is the directional maximal function (in the direction $x_{1}$).
The set
\[
\eset_{1}:=\pi_{(1)}((\pi_{(0)}^{-1}\eset_{0}\cup \pi_{(2)}^{-1}\eset_{2})\cap\Delta),
\quad
\Delta := \{x_{0}\wplus x_{1}\wplus x_{2} = 0\} \subset \W^{3},
\]
has measure $<1/2$ by the Hardy--Littlewood maximal inequality.
Consider the major subset $E_{1}':=E_{1} \setminus \eset_{1}$.

Define normalized functions
\[
\tilde F_{i} := \abs{E_{i}}^{-1/p_{i}} F_{i}.
\]
By construction of the major subset only the bitiles $P$ with
\[
\pi_{(1)}\vec I_{P}\not\subset \eset_{1}
\]
contribute to the trilinear form $\Lambda$, so consider a finite convex collection $\mathbf{P}$ of such bitiles.
Since the $M_{p_{0}}$ maximal function dominates the $M_{2}$ maximal function pointwise and by Definition~\ref{tile-proj}~\ref{tile-proj:support} we have
\[
\size^{(0)}(\mathbf{P},\tilde F_{0}) \lesssim 1,
\quad
\size^{(1)}(\mathbf{P},\tilde F_{1}) \lesssim 1.
\]
By the tree selection algorithm in Proposition~\ref{prop:tree-selection} we partition $\mathbf{P}$ into a sequence of pairwise disjoint convex unions of pairwise disjoint trees $\mathbf{P}_{k} = \cup_{T\in\mathbf{T}_{k}} T$ and a remainder set with zero contribution to $\Lambda$ in such a way that
\[
\size^{(0)}(\mathbf{P}_{k},\tilde F_{0}) \lesssim 2^{-k}
\]
and
\[
\norm{ N_{k} }_{p} \lesssim_{p} 2^{2k} \norm{\tilde F_{0}}_{2}^{2/p} \norm{\tilde F_{0}}_{\infty}^{2-2/p},
\quad N_{k} := \sum_{T\in T_{k}} 1_{\vec I_{T}},
\quad 1\leq p<\infty.
\]
Choosing $p=p_{0}/2$ we obtain the bound
\[
\norm{ N_{k} }_{p} \lesssim_{p} 2^{2k}.
\]
For a fixed $k$ we will show
\[
\abs{\Lambda^{\epsilon}_{\mathbf{P}_{k}}(\tilde F_{0},\tilde F_{1},\tilde F_{2})} \lesssim 2^{-\delta k}
\]
for some $\delta>0$, depending only on the $p_{i}$'s, to be determined later.

Let $\DI_{\eset}$ denote the collection of the maximal one-dimensional dyadic intervals of the form $\{x_{0}\}\times J_{1} \subset \eset_{2}$.
For each one-dimensional interval $J=\{x_{0}\}\times J_{1}\in\DI_{\eset}$ let
\[
\Omega_{J} := \{ \omega : \abs{\omega}\abs{J}=1, \exists T\in\mathbf{T}_{k} : \vec I_{T}\supseteq J, \omega\supseteq \omega_{T}\}.
\]
Let
\[
G:= \sum_{J\in\DI_{\eset}} G_{J},
\quad
G_{J}(x_{0},x_{1}) := 1_{J}(x_{0},x_{1}) \sum_{\omega\in\Omega_{J}} (\Pi_{J_{1}\times\omega} \tilde F_{2}(x_{0},\cdot))(x_{1}).
\]
The sum defining the function $G$ is pointwise finite, and $G$ is measurable since $\tilde F_{2}$ is a dyadic test function.

We claim that for every $P = \vec I \times \omega_{1} \in\mathbf{P}_{k}$ we have
\[
\Lambda_{P}(\tilde F_{0},\tilde F_{1},\tilde F_{2})
=
\Lambda_{P}(\tilde F_{0},\tilde F_{1},G).
\]
Since $E_{2}\subset \eset_{2}$ by construction and the collection $\DI_{\eset}$ covers $\eset_{2}$, it suffices to show
\begin{multline*}
\int_{J_{1}} \tilde F_{1}(x_{0},x_{2}) \h_{I_{0}}(x_{0}) \tilde F_{2}(x_{0},x_{1}) w_{I_{1}^{j}\times \omega_{1}}(x_{1}) \dif x_{1}\\
=
\int_{J_{1}} \tilde F_{1}(x_{0},x_{2}) \h_{I_{0}}(x_{0}) G_{J}(x_{0},x_{1}) w_{I_{1}^{j}\times \omega_{1}}(x_{1}) \dif x_{1}
\end{multline*}
for every $J=\{x_{0}\}\times J_{1}\in\DI_{\eset}$, every $x_{2}\in I_{2}$, and every $j\in\{\pm 1\}$.
If $I_{0}\times I_{1}\cap J=\emptyset$, then both sides vanish identically.
Otherwise we must have $x_{0}\in I_{0}$.
If now $I_{1}\subseteq J_{1}$, then by construction $\tilde F_{1}$ vanishes on $\{x_{0}\} \times I_{2}$, so both sides again vanish identically.
On the other hand, if $J_{1}\subsetneq I_{1}$, then by construction $\Omega_{J}$ contains an ancestor of $\omega_{1}$, so the integrals coincide again.
This finishes the proof of the claim.

Now we estimate $\norm{G}_{2}$.
By H\"older and Hausdorff--Young inequalities we get
\begin{align*}
\norm{G_{J}}_{L^{2}(J)}^{2}
&=
\sum_{\omega\in\Omega_{J}} \abs{\<\tilde F_{2}(x_{0},\cdot), w_{J_{1}\times\omega}\>}^{2}\\
&\leq
\abs{\Omega_{J}}^{1-2/p_{2}'}(\sum_{\omega\in\Omega_{J}} \abs{\<\tilde F_{2}(x_{0},\cdot), w_{J_{1}\times\omega}\>}^{p_{2}'})^{2/p_{2}'}\\
&\leq
\abs{\Omega_{J}}^{1-2/p_{2}'} \norm{ \tilde F_{2} }_{L^{p_{2}}(J)}^{2} \abs{J_{1}}^{1-2/p_{2}}.
\end{align*}
Maximality of $J\subset \eset_{2}$ gives an upper bound on the above $L^{p_{2}}(J)$ norm, and we obtain
\[
\norm{G_{J}}_{L^{2}(J)}^{2}
\lesssim
\abs{\Omega_{J}}^{1-2/p_{2}'} \abs{J_{1}}
\leq
\int_{J} N_{k}^{1-2/p_{2}'}.
\]
Integrating these bounds and using monotonicity of $L^{p}$ norms (recall $\abs{\eset_{2}}\lesssim 1$) we get
\begin{align*}
\norm{G}_{2}^{2}
&\lesssim
\int_{\eset_{2}} N_{k}^{1-2/p_{2}'}
\lesssim
(\int_{\eset_{2}} N_{k}^{p} )^{(1-2/p_{2}')/p}\\
&\leq
\norm{ N_{k} }_{p}^{1-2/p_{2}'}
\lesssim
2^{2k(1-2/p_{2}')}.
\end{align*}
Normalize
\[
\tilde G:=2^{-k(1-2/p_{2}')} G,
\]
so that $\norm{\tilde G}_{2}\lesssim 1$.
We claim
\[
\abs{\Lambda^{\epsilon}_{\mathbf{P}_{k}}(\tilde F_{0},\tilde F_{1},\tilde G)} \lesssim 2^{-k}(1+pk),
\]
which would finish the proof.
By the tree selection algorithm in Proposition~\ref{prop:tree-selection} (beginning at some scale $l_{0}\leq 0$ with $\size^{(2)}(\mathbf{P}_{k},\tilde G) \leq 2^{-l_{0}}$) we partition
\[
\mathbf{P}_{k} = \cup_{l=l_{0}}^{\lceil pk \rceil} \cup_{T\in \mathbf{T}_{k,l}} T \cup \mathbf{P}_{k}',
\]
where
\[
\size^{(2)}(T,\tilde G) \lesssim 2^{-l},
\quad
\size^{(1)}(T,\tilde F_{1}) \lesssim \min(1,2^{-l})
\]
for $T\in \mathbf{T}_{k,l}$,
\[
\sum_{T\in \mathbf{T}_{k,l}} \abs{\vec I_{T}} \lesssim 2^{2l},
\]
and
\[
\size^{(2)}(\mathbf{P}_{k}',\tilde G), \size^{(1)}(\mathbf{P}_{k}',\tilde F_{1}) \lesssim 2^{-pk}.
\]
By the single tree estimate \eqref{eq:single-tree-estimate:symmetric} we obtain
\[
\abs{\sum_{l}\sum_{T\in\mathbf{T}_{k,l}} \Lambda^{\epsilon}_{T}(\tilde F_{0},\tilde F_{1}, \tilde G)}
\lesssim
\sum_{l=-\infty}^{\lceil pk \rceil} 2^{2l} 2^{-k} \min(1,2^{-l}) 2^{-l}
\lesssim
2^{-k}(1+pk).
\]
The remaining term can be written as
\[
\abs{\Lambda^{\epsilon}_{\mathbf{P}_{k}'}(\tilde F_{0},\tilde F_{1},\tilde G)}
=
\abs{\sum_{T\in\mathbf{T}_{k}}\Lambda^{\epsilon}_{T\cap \mathbf{P}_{k}'}(\tilde F_{0},\tilde F_{1},\tilde G)}.
\]
Each $T\cap \mathbf{P}_{k}'$ is the disjoint union of a set of trees the union of whose top squares has measure bounded by $\abs{\vec I_{T}}$.
We have
\[
\sum_{T\in\mathbf{T}_{k}} \abs{\vec I_{T}}
\leq
\norm{ \sum_{T\in\mathbf{T}_{k}} 1_{\vec I_{T}} }_{p}^{p}
\lesssim
2^{2pk},
\]
so, again by the single tree estimate \eqref{eq:single-tree-estimate:symmetric},
\[
\abs{\Lambda^{\epsilon}_{\mathbf{P}_{k}'}(\tilde F_{0},\tilde F_{1},\tilde G)}
\lesssim
2^{2pk} 2^{-k} 2^{-pk} 2^{-pk}
=
2^{-k},
\]
finishing the proof of the claim.
\end{proof}

\subsection{Triangles $b_{0}$ and $d_{10}$}
\begin{theorem}
\label{thm:2general-restricted-type:F0}
Let $0< \alpha_{2} \leq 1/2 \leq \alpha_{0} < 1$ and $-1/2<\alpha_{1}<1/2$ satisfy \eqref{eq:Lp-range}.
Then for any measurable sets $E_{i} \subset A_{0}^{2}$, $i\in\{0,1,2\}$ there exists a major subset $E_{1}'\subset E_{1}$ (which can be taken equal to $E_{1}$ if $\alpha_{1}>0$)
such that for any dyadic test functions $\abs{F_{i}} \leq 1_{E_{i}}$ satisfying $\abs{F_{1}} \leq 1_{E_{1}'}$ and either \eqref{eq:F0-diagonal} with $a\in A_{1}\setminus A_{0}$ or \eqref{eq:F0-fiberwise-character} we have
\[
\abs{\Lambda^{\epsilon}(F_0,F_1,F_2)}
\lesssim_{\alpha_{0},\alpha_{1},\alpha_{2}}
\prod_{i=0}^{2} \abs{E_{i}}^{\alpha_{i}},
\]
where the implied constant is independent of the choices of the scalars $\abs{\epsilon_{\vec I}}\leq 1$ with $\epsilon_{\vec I}=0$ whenever $I_{i} \not\subset A_{0}$.
\end{theorem}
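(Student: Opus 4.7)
The plan is to mirror the proof of Theorem~\ref{thm:2general-restricted-type} with the indices $0$ and $2$ swapped, exploiting the fact that the time-frequency projections $\Pi^{(0)}_{\p}$ defined in Sections~\ref{sec:1dimfct} and \ref{sec:fiberwisechar} are fibered in the variable $x_{2}$ in exactly the same way that $\Pi^{(2)}_{\p}$ is fibered in $x_{0}$. First I would normalize so that $\abs{E_{1}}\approx 1$, dispose of the case $\abs{E_{0}}\gtrsim\abs{E_{1}}$ via the local $L^{2}$ bound of Proposition~\ref{prop:restricted-type}, and reduce to $\abs{E_{0}}<2^{-20}$. The major subset $E_{1}'$ is then constructed by removing the $\pi_{(1)}$-projection of the diagonal intersected with the preimages of the exceptional sets $\eset_{2}:=\{M_{p_{2}}(\abs{E_{2}}^{-1/p_{2}}\one_{E_{2}})>2^{10}\}$ and $\eset_{0}:=\{\tilde M_{p_{0}}(\abs{E_{0}}^{-1/p_{0}}\one_{E_{0}})>2^{10}\}$, where $\tilde M_{p_{0}}$ is the directional maximal function in the $x_{1}$ direction.

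Next I would apply the tree selection of Proposition~\ref{prop:tree-selection} at index $i=2$ to decompose $\mathbf{P}$ into families $\mathbf{P}_{k}=\bigcup_{T\in\mathbf{T}_{k}}T$ with $\size^{(2)}(\mathbf{P}_{k},\tilde F_{2})\lesssim 2^{-k}$ and counting-function bound $\norm{N_{k}}_{p}\lesssim 2^{2k}$ for $p=p_{0}/2$. For each fixed $k$ I would build a fiberwise multi-frequency Calder\'on--Zygmund replacement $G$ for $\tilde F_{0}$: cover $\eset_{0}$ by the maximal one-dimensional dyadic intervals $J=J_{1}\times\{x_{2}\}$ and set $G=\sum_{J}G_{J}$ with
\[
G_{J}(x_{1},x_{2}):=\one_{J}(x_{1},x_{2})\sum_{\omega\in\Omega_{J}}(\Pi_{J_{1}\times\omega}\tilde F_{0}(\cdot,x_{2}))(x_{1}),
\]
where $\Omega_{J}$ gathers those frequencies $\omega$ of size $1/\abs{J_{1}}$ such that some tree-top in $\mathbf{T}_{k}$ has space-box containing $J$ and frequency-box contained in $\omega$.

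The crucial verification is that $\Lambda_{P}(\tilde F_{0},\tilde F_{1},\tilde F_{2})=\Lambda_{P}(G,\tilde F_{1},\tilde F_{2})$ for every $P\in\mathbf{P}_{k}$, and this is where the two structural hypotheses on $F_{0}$ enter. In case \eqref{eq:F0-fiberwise-character} the identity is immediate because $\Pi^{(0)}_{\p}F_{0}$ only restricts the modulating frequency $N_{x_{2}}$ to $\omega_{1}$, so $G$ and $\tilde F_{0}$ already agree on all frequencies relevant to the trees in $\mathbf{P}_{k}$. In case \eqref{eq:F0-diagonal} with $a\in A_{1}\setminus A_{0}$ the fiberwise formula in Section~\ref{sec:1dimfct} identifies $\Pi^{(0)}_{\p}F_{0}$ with a one-dimensional Walsh time-frequency projection of the single function $f$ onto the tile $I_{2}\times(a\wtimes\omega_{1})$; the restriction $a\in A_{1}\setminus A_{0}$ is precisely what makes $a\wtimes\omega_{1}$ a standard dyadic interval whose length matches that of $\omega_{1}$, so the scale structure driving the multi-frequency selection is consistent with the tree-top scales. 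This scale-matching is the main technical obstacle and is the reason the diagonal case requires $a\in A_{1}\setminus A_{0}$ rather than an arbitrary $a\in\W\setminus A_{0}$.

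Once the replacement identity is in hand, a Hausdorff--Young estimate yields $\norm{G_{J}}_{L^{2}(J)}^{2}\lesssim\abs{\Omega_{J}}^{1-2/p_{0}'}\abs{J_{1}}$, which after integration over $\eset_{0}$ and insertion of the counting-function bound gives $\norm{G}_{2}\lesssim 2^{k(1-2/p_{0}')}$. A further tree selection at index $i=0$ applied to the normalized $G$ and at index $i=1$ applied to $\tilde F_{1}$, combined with the single tree estimate \eqref{eq:single-tree-estimate:symmetric} and a geometric summation in the size levels, then closes the argument exactly as in the proof of Theorem~\ref{thm:2general-restricted-type}.
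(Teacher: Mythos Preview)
Your strategy of mirroring the proof of Theorem~\ref{thm:2general-restricted-type} with indices $0$ and $2$ swapped is the right outline for case \eqref{eq:F0-diagonal}, but you have misidentified the real obstacle and the role of the restriction $a\in A_{1}\setminus A_{0}$.

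The replacement identity $\Lambda_{P}(\tilde F_{0},\tilde F_{1},\tilde F_{2})=\Lambda_{P}(G,\tilde F_{1},\tilde F_{2})$ that you call ``crucial'' is not the difficulty; it follows from the same argument as before (the case $I_{1}\subseteq J_{1}$ is handled by vanishing of $\tilde F_{1}$ on the major subset) and does not use $a\in A_{1}\setminus A_{0}$. The obstruction is downstream: the single tree estimate \eqref{eq:single-tree-estimate:symmetric} is derived via the adaptedness relation \eqref{eq:Lambda-tile-proj}, and the projections $\Pi^{(i)}$ are adapted only to $\tilde F_{0}$, not to $G$. In the unswapped proof this was harmless because the substitution happened at index $2$; here it happens at index $0$, so the tree estimate for $(\tilde G,\tilde F_{1},\tilde F_{2})$ is simply unavailable, while applying it to $(\tilde F_{0},\tilde F_{1},\tilde F_{2})$ produces $\size^{(0)}(T,\tilde F_{0})$, which may be as large as $\abs{E_{0}}^{-1/p_{0}}$. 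The paper's fix is to prove the stronger statement $\Pi^{(0)}_{\mathbf{P}_{k}}\tilde F_{0}=\Pi^{(0)}_{\mathbf{P}_{k}}G$, so the two sizes coincide and one never leaves the adapted situation. This is where $a\in A_{1}\setminus A_{0}$ enters: it makes the diagonal structure of $1_{E_{0}}$ compatible with dyadic squares, whence $\tilde M_{p_{0}}1_{E_{0}}=M_{p_{0}}1_{E_{0}}$, which forces the geometric dichotomy $J\cap(I_{1}\times I_{2})=\emptyset$ or $J_{1}\subsetneq I_{1}$ for every bitile in $\mathbf{P}$ \emph{independently of $\tilde F_{1}$}; that is precisely what makes the index-$0$ projections of $\tilde F_{0}$ and $G$ agree. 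Your ``scale-matching'' explanation concerns $a\wtimes\omega_{1}$, which appears only in $\Pi^{(1)}$ and is irrelevant to $\Pi^{(0)}$ and to the construction of $G$.

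Two further remarks. In case \eqref{eq:F0-fiberwise-character} the paper bypasses the multi-frequency machinery entirely: one may assume $E_{0}=A_{0}\times\tilde E_{0}$, set $E_{1}'=E_{1}\setminus(\tilde E_{0}\times A_{0})$, and the form vanishes identically. And a minor slip: since here $p_{0}\leq 2\leq p_{2}$, the counting-function exponent should be $p=p_{2}/2$, not $p_{0}/2$.
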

\begin{proof}
We can assume $\abs{E_{0}}\leq 2^{-20} \abs{E_{1}}$, since otherwise the conclusion follows from the local $L^{2}$ case with $E_{1}'=E_{1}$.

In case \eqref{eq:F0-fiberwise-character} we can also without loss of generality assume $E_{0} = A_{0} \times \tilde E_{0}$.
Setting $E_{1}' = E_{1} \setminus \tilde E_{0} \times A_{0}$ we get that the left-hand side of the conclusion vanishes identically.

In case \eqref{eq:F0-diagonal} we argue as in the proof of Theorem~\ref{thm:2general-restricted-type} with the roles of indices $0$ and $2$ interchanged.
The main difference from the previous case is that the time-frequency projections in general need not be adapted to the good function $G$.
However, under the additional condition $a\in A_{1}\setminus A_{0}$ we may assume
\[
1_{B_{0}}(x_{1},x_{2}) = 1_{\tilde B_{0}}(x_{2} \wplus (a\wtimes x_{1})),
\]
and then the directional maximal function $\tilde M_{p_{0}}1_{B_{0}}$ coincides with the two-dimensional maximal function $M_{p_{0}}1_{B_{0}}$.
It follows that for every $J\in\DI_{\eset}$ and every bitile $P=\vec I\times\omega_{1}\in\mathbf{P}$ we have either $J\cap I_{1}\times I_{2} = \emptyset$ or $J_{1}\subsetneq I_{1}$, which in turn implies
\[
\Pi^{(0)}_{\mathbf{P}_{k}} \tilde F_{0} = \Pi^{(0)}_{\mathbf{P}_{k}} G.
\]
Thus we may replace $\tilde F_{0}$ by $G$ in the single tree estimates.
\end{proof}

\section{Previously known special cases}
\label{appendix:dyadic}

Let us discuss briefly how our main result specializes to some cases that have already appeared in the literature in a very similar form.

\subsection{Maximally modulated Haar multiplier}
\label{appendix:dyadic:max-mod-haar}
Since the ordinary Haar multipliers
\[
(H^{\epsilon}f)(x) := \sum_{I} \epsilon_I \abs{I}^{-1} \<f,\h_I\> \h_I(x),
\]
where $\abs{\epsilon_I}\leq 1$ for each dyadic interval $I$, constitute a good dyadic model for the Hilbert transform, the maximally modulated Haar multipliers
\begin{equation}
\label{eq:maximal-def}
(H^{\epsilon}_{\star}f)(x) := \sup_{N} \abs{(H^{\epsilon}M_N f)(x)}
\end{equation}
provide a reasonable algebraic model for the Carleson operator, albeit different from the model of truncated Walsh--Fourier series considered e.g.\ in \cite{MR0217510}.
Here $M_N$ simply represents the Walsh modulation operator,
\[
(M_N f)(x) := w_{N}(x) f(x).
\]

Let $\epsilon_{\vec I}=\epsilon_{(I_0,I_1,I_2)}$ depend only on the interval $I_0$ and take two functions $f$ and $g$ on $A_{0}$.
Suppose that $N\colon A_{0}\to\{0,1,2,\ldots\}$ is a choice function that linearizes the supremum in \eqref{eq:maximal-def}.
%and fix a branch of the complex square root.
%No branch of the complex square root is well defined if for instance g assumes all points of the unit circle.
If we substitute
\begin{align*}
F_0(x_{1},x_{2}) &:= f(x_{1}\wplus x_{2}),\\
F_1(x_{2},x_{0}) &:= \mathop{\mathrm{sgn}}g(x_{0})\sqrt{\abs{g(x_{0})}} \,w_{N(x_{0})}(x_{2}),\\
F_2(x_{0},x_{1}) &:= \sqrt{\abs{g(x_{0})}} \,w_{N(x_{0})}(x_{1}\wplus x_{0})
\end{align*}
into \eqref{eq:THT-def2}, we will obtain for $\Lambda^{\epsilon}(F_0,F_1,F_2)$ the equal expression
\[
\sum_{\vec I\in\DI} \frac{\epsilon_{I_0}}{\abs{I_0}} \iiint f(x_{1}\wplus x_{2}) g(x_{0}) w_{N(x_{0})}(x_{1}\wplus x_{0}) w_{N(x_{0})}(x_{2}) \h_{I_1}(x_{1}) \h_{I_2}(x_{2}) \h_{I_0}(x_{0}) \dif \vec x.
\]
Here and later in this appendix we use the convention $x_{i},y_{i}\in I_{i}$ for integration domains, unless specified otherwise.
By the character property of the Walsh functions and the fact that the Haar functions are simply restrictions of the Rademacher functions to the corresponding intervals this equals
\[
\sum_{\vec I\in\DI} \frac{\epsilon_{I_0}}{\abs{I_0}} \iiint f(x_{1}\wplus x_{2}) g(x_{0}) w_{N(x_{0})}(x_{1}\wplus x_{2}\wplus x_{0}) r_{k}(x_{1}\wplus x_{2}\wplus x_{0}) \dif \vec x.
\]
By changing the variables $y_{0}=x_{1}\wplus x_{2}$ (for fixed $x_{1}$) and observing $y_{0}\in I_1\wplus I_2=I_0$, the above equals
\[
\sum_{\vec I\in\DI} \frac{\epsilon_{I_0}}{\abs{I_0}} \iiint f(y_{0}) g(x_{0}) w_{N(x_{0})}(y_{0}\wplus x_{0}) r_{k}(y_{0}\wplus x_{0}) \dif y_{0} \dif x_{1} \dif x_{0}.
\]
Observe that at each scale $k$ the integral $\sum_{I\in\mathbf{I}_{k}}\int_{x_{1}\in I_{1}}$ can be disregarded as it simply integrates over the union of intervals $I_1$, which is $A_{0}$.
Using the character property once again we obtain
\begin{align*}
&\sum_{I_{0}} \frac{\epsilon_{I_0}}{\abs{I_0}} \iint f(y_{0}) g(x_{0}) w_{N(x_{0})}(y_{0}) w_{N(x_{0})}(x_{0}) \h_{I_0}(y_{0}) \h_{I_0}(x_{0}) \dif y_{0} \dif x_{0}\\
&=
\int_{\W} \sum_{I_0} \frac{\epsilon_{I_0}}{\abs{I_0}} \<w_{N(x_{0})}f,\h_{I_0}\> \h_{I_0}(x_{0}) w_{N(x_{0})}(x_{0}) g(x_{0}) \dif x_{0}\\
&=
\int (M_{N(x_{0})} H^{\epsilon} M_{N(x_{0})}f)(x_{0}) g(x_{0}) \dif x_{0}.
\end{align*}
From the established bound for $\Lambda^{\epsilon}$ in Theorem~\ref{thm:2general} using duality we deduce
\[
\norm{H^{\epsilon}_{\star}f}_{p}\lesssim\norm{f}_p
\quad\text{for any}\quad
1<p<\infty.
\]

\subsection{Walsh model of uniform bilinear Hilbert transform}
\label{sec:walsh-ubht}
Theorem~\ref{thm:2general} implies a bound for the trilinear form
\[
\Lambda^{\epsilon,L}_{\mathrm{BHT}}(f,g,h)
:=
\int \sum_{k} \sum_{\substack{I\in\mathbf{I}_{k}\\ \omega:\abs{\omega}=2^{-k}}}\! \epsilon_{I}
\big(\Pi_{I\times (\omega\wplus 2^{-k})}f\big) \big(\Pi_{I\times (2^{L}\omega)}g\big)
\big(\Pi_{I\times (2^{L}\omega \wplus \omega \wplus 2^{-k})}h\big),
\]
where $\epsilon=(\epsilon_I)_I$ is a sequence of coefficients indexed by dyadic intervals and satisfying $\abs{\epsilon_I}\leq 1$, while $L$ is an arbitrary positive integer.
This observation is interesting because a single estimate for the triangular Hilbert transform implies bounds for a sequence of one-dimensional trilinear forms $\Lambda^{\epsilon,L}_{\mathrm{BHT}}$ with constants independent of $\epsilon$ and $L$.

This form is similar to, but different from the trilinear form studied in \cite{MR2997005}.
As in Section~\ref{appendix:dyadic:max-mod-haar}, the discrepancy is due to the fact that our model is based on the algebraic structure of the Walsh field rather than on the order structure.

In order to apply Theorem~\ref{thm:2general} substitute
\begin{align*}
F_0(x_{1},x_{2}) &:= f(x_{1} \wplus x_{2} \wplus 2^{-L}x_{2}),\\
F_1(x_{2},x_{0}) &:= h(2^{-L}x_{2} \wplus x_{0}),\\
F_2(x_{0},x_{1}) &:= g(x_{0}\wplus 2^{-L}x_{0} \wplus 2^{-L}x_{1})
\end{align*}
into \eqref{eq:THT-def2} to obtain
\begin{align*}
\Lambda^{\epsilon}(F_0,F_1,F_2)
=\sum_{k} 2^{-k} \sum_{\vec I\in\DI_{k}} \epsilon_{\vec I}
\iiint f(x_{1}\wplus x_{2}\wplus 2^{-L}x_{2}) g(x_{0}\wplus 2^{-L}x_{0}\wplus 2^{-L}x_{1}) & \\[-2mm]
h(2^{-L}x_{2}\wplus x_{0}) r_k(x_{1}\wplus x_{2}\wplus x_{0}) \dif x_{1} \dif x_{2} \dif x_{0} & .
\end{align*}
Observe that $x_{i}\in I_i$, $i=0,1,2$, implies
\begin{align*}
& x_{1} \wplus x_{2} \wplus 2^{-L}x_{2}\in I_1\wplus I_2\wplus 2^{-L}I_2 = I_0\wplus 2^{-L}I_2,\\
& x_{0}\wplus 2^{-L}x_{0} \wplus 2^{-L}x_{1}\in I_0\wplus 2^{-L}(I_0\wplus I_1) = I_0\wplus 2^{-L}I_2,\\
& 2^{-L}x_{2} \wplus x_{0}\in I_0\wplus 2^{-L}I_2,
\end{align*}
so we should expand $f,g,h$ into the Walsh-Fourier series on the dyadic interval $I=I_0\wplus 2^{-L}I_2$ of length $2^k$, i.e.\ into the wave packets with fixed eccentricity:
\begin{align*}
f(x_{1}\wplus x_{2}\wplus 2^{-L}x_{2}) &= 2^{-k} \sum_{m_{0}=0}^{\infty} \<f,1_I w_{m_{0} 2^{-k}}\> w_{m_{0} 2^{-k}}(x_{1}\wplus x_{2}\wplus 2^{-L}x_{2}),\\
g(x_{0}\wplus 2^{-L}x_{0}\wplus 2^{-L}x_{1}) &= 2^{-k} \sum_{m_{2}=0}^{\infty} \<g,1_I w_{m_{2} 2^{-k}}\> w_{m_{2} 2^{-k}}(x_{0}\wplus 2^{-L}x_{0}\wplus 2^{-L}x_{1}),\\
h(2^{-L}x_{2}\wplus x_{0}) &= 2^{-k} \sum_{m_{1}=0}^{\infty} \<h,1_I w_{m_{1} 2^{-k}}\> w_{m_{1} 2^{-k}}(2^{-L}x_{2}\wplus x_{0}).
\end{align*}
Inserting these into the previous expression for $\Lambda^{\epsilon}(F_0,F_1,F_2)$ we obtain
\begin{align*}
\sum_{k} 2^{-4k} \sum_{\vec I\in\DI_{k}} \epsilon_{\vec I} \sum_{m_{0},m_{2},m_{1}}
& \<f,1_I w_{m_{0} 2^{-k}}\> \<g,1_I w_{m_{2} 2^{-k}}\> \<h,1_I w_{m_{1} 2^{-k}}\> \\[-2mm]
& \Big(\int_{I_1} e\big((m_{0}\wplus m_{2} 2^{-L}\wplus 1)2^{-k}\wtimes x_{1}\big) dx_{1}\Big)\\
& \Big(\int_{I_2} e\big((m_{0}\wplus m_{0} 2^{-L}\wplus m_{1} 2^{-L}\wplus 1)2^{-k}\wtimes x_{2}\big) dx_{2}\Big)\\
& \Big(\int_{I_0} e\big((m_{2}\wplus m_{2} 2^{-L}\wplus m_{1}\wplus 1)2^{-k}\wtimes x_{0}\big) dx_{0}\Big).
\end{align*}
Since we are integrating over intervals of length $2^k$, the above summands vanish unless
\[
m_{0}\wplus m_{2} 2^{-L}\wplus 1,\ m_{0}\wplus m_{0} 2^{-L}\wplus m_{1} 2^{-L}\wplus 1,\ \text{and}\ m_{2}\wplus m_{2} 2^{-L}\wplus m_{1}\wplus 1
\]
all belong to $A_{0}$, which is easily seen to be equivalent to the conditions
\[
m_{0}\wplus m_{2}\wplus m_{1}=0\quad \text{and}\quad m_{0}\wplus m_{2} 2^{-L}\wplus 1\in A_{0}.
\]
Moreover, in that case the three functions under the integrals over $I_1,I_2,I_0$ are precisely the constants
\[
2^{k} e\big((m_{0}\wplus m_{2} 2^{-L}\wplus 1)2^{-k}\wtimes l(I_i)\big),\ \ i=1,2,0,
\]
where $l(I_i)$ is the left endpoint of $I_i$.
Because of $0\in I_0\wplus I_1\wplus I_2$ they multiply to $2^{3k}$.
Allow the coefficients $\epsilon_{\vec I}$ to depend on $I=I_0\wplus 2^{-L}I_2$ only and observe that each interval $I\in\mathbf{I}_{k}$ appears for exactly $2^{-k}$ choices of $\vec I$ as they range over $\DI_{k}$.
(Indeed, $I_2$ is arbitrary and $I_0,I_1$ are then uniquely determined.)
We end up with
\[
\sum_{k} 2^{-2k} \sum_{I\in\mathbf{I}_{k}} \epsilon_{I} \sum_{\substack{m_{0},m_{2},m_{1}\\ m_{0}\wplus m_{2}\wplus m_{1}=0\\ m_{0}\wplus m_{2} 2^{-L}\wplus 1\in A_{0}}}
\<f,1_I w_{m_{0} 2^{-k}}\> \<g,1_I w_{m_{2} 2^{-k}}\> \<h,1_I w_{m_{1} 2^{-k}}\>,
\]
i.e., by substituting $m=m_{0}\wplus 1$ and $n=m_{2}\wplus(m_{0}\wplus 1)2^L$,
\begin{align}
\label{eq:BHTcase-expanded}
\sum_{k} 2^{-2k} \sum_{I\in\mathbf{I}_{k}} \epsilon_{I} \sum_{\substack{m,n\\ 0\leq n<2^L}}
\<f,1_I w_{(m\wplus 1)2^{-k}}\> \<g,1_I w_{(m 2^L\wplus n)2^{-k}}\> & \\[-3mm]
\<h,1_I w_{(m 2^L\wplus n\wplus m\wplus 1)2^{-k}}\> & . \nonumber
\end{align}

On the other hand, we can start from $\Lambda^{\epsilon,L}_{\mathrm{BHT}}$ and write the dyadic interval $\omega$ explicitly as $\omega=[m 2^{-k},(m+1)2^{-k})$.
The three time-frequency projections appearing in the definition can be expanded using vertical decompositions into tiles as:
\begin{align*}
\Pi_{I\times (\omega\wplus 2^{-k})}f &= 2^{-k}
\<f,1_I w_{(m\wplus 1)2^{-k}}\> 1_I w_{(m\wplus 1)2^{-k}},\\
\Pi_{I\times (2^{L}\omega)}g &= 2^{-k} \sum_{n=0}^{2^L-1}
\<g,1_I w_{(m 2^L\wplus n)2^{-k}}\> 1_I w_{(m 2^L\wplus n)2^{-k}},\\
\Pi_{I\times (2^{L}\omega \wplus \omega \wplus 2^{-k})}h &= 2^{-k} \sum_{n'=0}^{2^L-1}
\<h,1_I w_{(m 2^L\wplus n'\wplus m\wplus 1)2^{-k}}\> 1_I w_{(m 2^L\wplus n'\wplus m\wplus 1)2^{-k}}.
\end{align*}
Observe that the integral
\[
\int \big(\Pi_{I\times (\omega\wplus 2^{-k})}f\big) \big(\Pi_{I\times (2^{L}\omega)}g\big)
\big(\Pi_{I\times (2^{L}\omega \wplus \omega \wplus 2^{-k})}h\big)
\]
is equal to
\[
2^{-2k} \sum_{n=0}^{2^L-1} \<f,1_I w_{(m\wplus 1)2^{-k}}\> \<g,1_I w_{(m 2^L\wplus n)2^{-k}}\>
\<h,1_I w_{(m 2^L\wplus n\wplus m\wplus 1)2^{-k}}\>,
\]
since the terms with $n\neq n'$ disappear.
That way we arrive at \eqref{eq:BHTcase-expanded} once again, completing the proof of
$\Lambda^{\epsilon}(F_0,F_1,F_2)=\Lambda^{\epsilon,L}_{\mathrm{BHT}}(f,g,h)$.

\subsection{Endpoint counterexample}
The observation from the previous section is also useful to explain the failure of some estimates at the boundary of the Banach triangle.
By formally taking $L\to\infty$ we are motivated to substitute
\[
F_0(x_{1},x_{2}) := f(x_{1} \wplus x_{2}),\ 
F_1(x_{2},x_{0}) := h(x_{0}),\ 
F_2(x_{0},x_{1}) := g(x_{0}),
\]
in which case \eqref{eq:THT-def2} becomes
\begin{align*}
& \sum_{\vec I\in\DI} \epsilon_{I_0} \abs{I_0}^{-1} \Big(\iint f(x_{1}\wplus x_{2}) \h_{I_1}(x_{1})\h_{I_2}(x_{2}) \dif x_{1} \dif x_{2}\Big)
\Big(\int g(x_{0})h(x_{0}) \h_{I_0}(x_{0}) \dif x_{0}\Big)\\
& = \sum_{I_0} \epsilon_{I_0} \abs{I_0}^{-1} \<f,\h_{I_0}\> \<gh,\h_{I_0}\>
= \int f(x) H^{\epsilon}(gh)(x) \dif x.
\end{align*}
Since Haar multipliers are generally not bounded on $L^1$, we see that Estimate \eqref{eq:lp-estimate} cannot hold when $p_0=\infty$.

The positive results in this limiting case do not reveal the true structural complexity of $\Lambda^{\epsilon}$.
Indeed, when one of the functions depends on a single variable alone (such as $F_0(x,y)=x$), then the triangle ``breaks'' immediately.
No techniques from time-frequency analysis are required to bound such degenerate cases, even though they correspond both to the limiting case $a\to\infty$ and to the special case $N\equiv 0$ in Theorem~\ref{thm:2general}.

%%% Local Variables:
%%% mode: latex
%%% TeX-master: "habil"
%%% ispell-local-dictionary: "american"
%%% End:

\chapter{Maximal polynomial modulations of singular integrals}
\label{chap:poly-car}
In this chapter we prove Theorem~\ref{thm:poly-car}.
Hence we assume throughout that $\CZK$ is a $\tau$-H\"older continuous Calder\'on--Zygmund kernel on $\R^{\ds}$ whose truncations define $L^{2}$ bounded operators.

By Theorem~\ref{thm:BT-VV}, Theorem~\ref{thm:poly-car} is a consequence of the following localized $L^{2}$ estimates.
\begin{theorem}
\label{thm:loc}
Let $0 \leq \alpha < 1/2$ and $0 < \nu,\kappa \leq 1$.
Let $F,G \subset\R^{\ds}$ be measurable subsets and $\tilde{F} := \Set{ M\one_{F} > \kappa }$, $\tilde{G} := \Set{ M\one_{G} > \nu }$.
Then
\begin{align}
\norm{T}_{2 \to 2} &\lesssim 1, \label{eq:loc:full}\\
\norm{\one_{G} T \one_{\R^{\ds}\setminus \tilde{G}}}_{2 \to 2} &\lesssim_{\alpha} \nu^{\alpha}, \label{eq:loc:G}\\
\norm{\one_{\R^{\ds} \setminus \tilde{F}} T \one_{F} }_{2 \to 2} &\lesssim_{\alpha} \kappa^{\alpha}. \label{eq:loc:F}
\end{align}
\end{theorem}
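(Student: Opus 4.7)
The plan is to establish all three estimates via a unified time-frequency analysis in the tradition of Fefferman and Lie, adapted to the multidimensional, H\"older-regular, and non-translation-invariant setting. First I would decompose $\CZK$ by scale, $\CZK = \sum_{s\in\Z}\CZK_s$ with $\CZK_s$ concentrated on $\{|x-y|\sim 2^s\}$, linearize the suprema over $\calQ_d$ and truncations by measurable choice functions $x\mapsto (Q_x,\Rmin(x),\Rmax(x))$, and discretize the polynomial parameter space. Two polynomials in $\calQ_d$ are declared equivalent at a dyadic cube $I$ of side $2^s$ when their difference, regarded as a polynomial on $I$ recentered at the midpoint, has all coefficients below a $2^s$-dependent threshold; this reproduces Lie's one-dimensional discretization and generalizes it to arbitrary $\ds$ and to non-translation-invariant kernels. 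The resulting objects are \emph{tiles} $t = (I_t,Q_t)$ carrying localized wave-packet-like operators $T_t$.

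The technical core is then a single-tree estimate followed by a tree-selection algorithm. A \emph{tree} $\mathbf{T}$ is a collection of tiles whose polynomial labels are consistent with a common top polynomial $Q_\top$ on a top cube $I_\top$, and the restricted form $\sum_{t\in\mathbf{T}}\langle T_t f,g\rangle$ is bounded by $|I_\top|\,\size(\mathbf{T},f)\,\size(\mathbf{T},g)$, where $\size(\mathbf{T},h)$ is an $L^2$-type density of $h$ on $I_\top$ measured after cancelling the modulation by $e(Q_\top)$. The kernel regularity \eqref{eq:K-reg} provides the off-diagonal decay that controls errors from the non-translation-invariance at the single-tree level. A BMO-type energy argument then organizes all tiles into pairwise-disjoint trees of controlled size, with $\sum_\mathbf{T} |I_\top(\mathbf{T})|\lesssim 2^{2k}\norm{h}_2^2$ for the family of size-$2^{-k}$ trees, and summing the single-tree estimates over the dyadic levels $k$ yields \eqref{eq:loc:full}.

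The localized estimates \eqref{eq:loc:G} and \eqref{eq:loc:F} follow from the same machinery by tracking how the density hypotheses constrain the size quantities. For \eqref{eq:loc:F}, any contributing tree $\mathbf{T}$ must have $|I_\top\cap F|\lesssim\kappa|I_\top|$, since otherwise a dyadic ancestor of $I_\top$ would lie inside $\tilde F$ and the output test function would vanish on the tree top up to a boundary layer that is controlled by kernel tails; Cauchy--Schwarz then gives $\size(\mathbf{T},\one_F h)\lesssim\kappa^{1/2}\size(\mathbf{T},h)$. Summing with the energy bound yields the left-hand side of \eqref{eq:loc:F} bounded by $\kappa^{1/2}$, and interpolation with the trivial bound from \eqref{eq:loc:full} produces $\kappa^\alpha$ for any $0\leq\alpha<1/2$. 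Estimate \eqref{eq:loc:G} is handled symmetrically after passing to the adjoint, whose kernel $\CZK^*(x,y)=\overline{\CZK(y,x)}$ is also $\tau$-H\"older \CZ{}.

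The principal obstacle will be engineering the discretization of $\calQ_d$ so that the resulting tiles admit a clean tree structure with enough orthogonality: Lie's one-dimensional, translation-invariant construction does not transfer directly, and the extension to higher dimensions and to H\"older kernels requires a substantial redesign of the phase-space combinatorics, in particular to separate polynomial-of-degree-$d$ modulations from lower-degree ones in a way compatible with the dyadic scale structure.
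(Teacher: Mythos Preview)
Your outline names Fefferman and Lie but then describes a Lacey--Thiele size/energy scheme, and this is where it breaks. The step ``a BMO-type energy argument then organizes all tiles into pairwise-disjoint trees of controlled size, with $\sum_{\mathbf{T}} |I_{\top}(\mathbf{T})|\lesssim 2^{2k}\norm{h}_2^2$'' requires a Bessel inequality across trees. For the linear Carleson operator this comes from the fact that tiles in $2^{-k}$-separated trees carry distinct one-dimensional frequencies, so their wave packets are almost orthogonal. For polynomial phases of degree $d\geq 2$ there is no single frequency attached to a tile; the uncertainty region $\calQ(\Tp)$ is a ball in a $(\dim\calQ)$-dimensional space, and there is no known wave-packet orthogonality that would yield an $L^2$ energy bound of this form. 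Without it your tree-selection loop does not close, and summing single-tree estimates does not give \eqref{eq:loc:full}.

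The paper avoids this entirely by working on the Fefferman--Lie side: the only selection parameter is the \emph{density} $\mdens_k(\Tp)$, a mass quantity built from the sets $E(\Tp)=\{x\in I_\Tp: Q_x\in\calQ(\Tp)\}$ of the linearizing function, not from $f$ or $g$. Heavy tiles at level $n$ are packed into $2^{Cn}$-separated Fefferman forests plus $O(n^2)$ antichains; the counting replacing your energy bound is the $L^\infty$ estimate $\norm{\sum_j \one_{I_{\TT_j}}}_\infty\lesssim 2^n\log(n+1)$, obtained via John--Nirenberg and a spatial stopping time. Orthogonality between separated trees is then produced not by wave packets but by a $TT^*$ argument together with a van der Corput estimate for polynomial phases (Lemma~\ref{lem:osc-int}), and the forests across stopping generations are glued by Cotlar--Stein. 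Two further points: your proposed reduction of \eqref{eq:loc:G} to \eqref{eq:loc:F} ``by passing to the adjoint'' does not work, because after linearization $T^*$ has the polynomial $Q_x$ depending on the \emph{input} variable and is not a maximally modulated operator of the same type; the paper instead removes tiles with $I_\Tp^*\subset\tilde G$, which forces $|E(\Tp)\cap G|\lesssim\nu|I_\Tp|$ and hence restricts the density levels that appear. And your direct $\kappa^{1/2}$ is too optimistic; the localization in the paper goes through $M_q$ with $q<2$, which is why one only gets $\kappa^\alpha$ for $\alpha<1/2$.
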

The estimate \eqref{eq:loc:full} is a special case of both \eqref{eq:loc:G} and \eqref{eq:loc:F}, but we formulate and prove it separately because it is the easiest case.

\section{Discretization}
Modifying the notation used in the introduction, we denote by $\calQ$ the vector space of all real polynomials in $\ds$ variables of degree at most $d$ modulo $+\R$.
That is, we identify two polynomials if and only if their difference is constant.
This identification is justified by the fact that the absolute value of the integral in \eqref{eq:Car-op-Kd} does not depend on the constant term of $Q$.
Notice that $Q(x)-Q(x')\in\R$ is well-defined for $Q\in\calQ$ and $x,x'\in\R^{\ds}$.

Let $D=D(d,\ds)$ be a large integer to be chosen later.
Let $\psi$ be a smooth function supported on the interval $[1/(4D),1/2]$ such that $\sum_{s\in\Z} \psi(D^{-s}\cdot) \equiv 1$ on $(0,\infty)$.
Then the kernel can be decomposed as
\[
\CZK(x,y) = \sum_{s\in\Z} \CZK_s(x,y)
\text{ with }
\CZK_{s}(x,y) := \CZK(x,y) \psi(D^{-s} \abs{x-y}).
\]
The functions $\CZK_{s}$ are supported on the sets $\Set{(x,y)\in\R^{\ds}\times\R^{\ds} \given D^{s-1}/4<\abs{x-y}<D^{s}/2}$ and satisfy
\begin{equation}
\label{eq:Ks-size}
\abs{\CZK_{s}(x,y)} \lesssim D^{-\ds s}
\text{ for all } x, y \in \R^{\ds},
\end{equation}
\begin{equation}
\label{eq:Ks-reg}
\abs{\CZK_{s}(x,y)-\CZK_{s}(x',y)}+\abs{\CZK_{s}(y,x)-\CZK_{s}(y,x')} \lesssim \frac{\abs{x-x'}^{\tau}}{D^{(\ds+\tau)s}}
\text{ for all } x, x', y \in \R^{\ds}.
\end{equation}
We can replace the maximal operator \eqref{eq:Car-op-Kd} by the smoothly truncated operator
\begin{equation}
\label{eq:T-smooth-trunc}
Tf(x):=\sup_{Q\in\calQ_{d}} \sup_{\smin \leq \smax \in \Z}
\abs[\Big]{ \sum_{s=\smin(x)}^{\smax(x)} \int \CZK_{s}(x,y) e(Q(y)) f(y) \dif y},
\end{equation}
where $e(t)=e^{2\pi i t}$ denotes the standard character on $\R$,
at the cost of an error term that is controlled by the Hardy--Littlewood maximal operator $M$ (see Appendix~\ref{sec:HL-loc-est} for the required localized estimates for $M$).

Since the absolute value of the integral in \eqref{eq:T-smooth-trunc} is a continuous function of $Q$, we may restrict $\smin,\smax,Q$ to a finite set as long as we prove estimates that do not depend on this finite set.
After these preliminary reductions we can linearize the supremum in \eqref{eq:T-smooth-trunc} and replace that operator by
\begin{equation}
\label{eq:T-linearized}
Tf(x):=\sum_{s=\smin(x)}^{\smax(x)} \int \CZK_{s}(x,y) e(Q_{x}(x)-Q_{x}(y)) f(y) \dif y,
\end{equation}
where $\smin,\smax : \R^{\ds}\to\Z$, $Q_{\cdot}:\R^{\ds}\to\calQ$ are measurable functions with finite range.
Let $\sumin := \min_{x\in\R^{\ds}} \smin(x) > -\infty$ and $\sumax := \max_{x\in\R^{\ds}} \smax(x) < +\infty$.
All stopping time constructions will start at the largest scale $\sumax$ and terminate after finitely many steps at the smallest scale $\sumin$.

\subsection{Tiles}
\label{sec:tiles}
The grid of $D$-adic cubes in $\R^{\ds}$ will be denoted by
\[
\calD := \bigcup_{s \in \Z} \calD_{s},
\quad
\calD_{s} := \Set[\big]{ \prod_{i=1}^{\ds}[D^{s}a_{i},D^{s}(a_{i}+1)) \given a_{1},\dotsc,a_{\ds}\in\Z}.
\]
We denote elements of $\calD$ by the letters $I$, $J$ and call them \emph{grid cubes}.
The unique integer $s=\scale(I)$ such that $I\in\calD_{s}$ will be called the \emph{scale} of a grid cube.
The \emph{parent} of a grid cube $I$ is the unique grid cube $\hat{I} \supset I$ with $\scale(\hat{I}) = \scale(I)+1$.
The side length of a cube $I$ is denoted by $\ell(I)$.
If $I$ is a cube and $a>0$, then $aI$ denotes the concentric cube with side length $a\ell(I)$.

For every bounded subset $I\subset\R^{\ds}$ we define a norm on $\calQ$ by
\begin{equation}
\label{eq:normI}
\norm{Q}_{I} := \sup_{x,x'\in I} \abs{Q(x)-Q(x')},
\quad
Q\in\calQ.
\end{equation}

\begin{lemma}
\label{lem:normQ}
If $Q\in\calQ$ and $B(x,r) \subset B(x,R) \subset \R^{\ds}$, then
\begin{align}
\norm{Q}_{B(x,R)}
&\lesssim_{d} \label{eq:normQ:up}
(R/r)^{d} \norm{Q}_{B(x,r)},\\
\norm{Q}_{B(x,r)}
&\lesssim_{d} \label{eq:normQ:low}
(r/R) \norm{Q}_{B(x,R)}.
\end{align}
\end{lemma}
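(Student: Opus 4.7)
The plan is to translate to the origin and then exploit the finite dimensionality of $\calQ$ together with a scaling argument that distinguishes the lowest- and highest-degree monomials. First, since $\norm{Q}_{I}$ depends only on differences $Q(y)-Q(y')$, the substitution $y\mapsto y-x$ reduces both inequalities to the case $x=0$. Fixing the representative of $Q\in\calQ$ with $Q(0)=0$, write $Q(y)=\sum_{1\leq\abs{\alpha}\leq d}c_{\alpha}y^{\alpha}$, and observe that under the dilation $y\mapsto ry$ the polynomial $Q_{r}(y):=Q(ry)$ has coefficients $r^{\abs{\alpha}}c_{\alpha}$, so that $\norm{Q}_{B(0,r)}=\norm{Q_{r}}_{B(0,1)}$.

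Next, I would exploit that $\calQ$ is a finite-dimensional vector space of dimension $\binom{\ds+d}{\ds}-1$, on which $\norm{\cdot}_{B(0,1)}$ and the coefficient functional $P\mapsto\sum_{\abs{\alpha}\geq 1}\abs{c_{\alpha}(P)}$ are both norms; by equivalence of norms on a finite-dimensional space, they agree up to constants depending only on $d$ and $\ds$. Applying this comparison to $Q_{r}$ and $Q_{R}$ gives
\[
\norm{Q}_{B(0,r)}\sim_{d,\ds}\sum_{1\leq\abs{\alpha}\leq d} r^{\abs{\alpha}}\abs{c_{\alpha}},\qquad \norm{Q}_{B(0,R)}\sim_{d,\ds}\sum_{1\leq\abs{\alpha}\leq d} R^{\abs{\alpha}}\abs{c_{\alpha}}.
\]

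Both inequalities will then follow from a monomial-by-monomial comparison of these sums. For \eqref{eq:normQ:up}, the hypothesis $R\geq r$ yields $(R/r)^{\abs{\alpha}}\leq(R/r)^{d}$ for every $\abs{\alpha}\leq d$, hence $\sum R^{\abs{\alpha}}\abs{c_{\alpha}}\leq(R/r)^{d}\sum r^{\abs{\alpha}}\abs{c_{\alpha}}$. For \eqref{eq:normQ:low}, the quotient by $\R$ is crucial: it forces $\abs{\alpha}\geq 1$ in every sum, so that $(r/R)^{\abs{\alpha}}\leq r/R$ (since $r\leq R$), giving $\sum r^{\abs{\alpha}}\abs{c_{\alpha}}\leq(r/R)\sum R^{\abs{\alpha}}\abs{c_{\alpha}}$. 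No real obstacle is expected; the only conceptual point is this asymmetry, which is the entire content of the lemma: the upper bound is governed by the highest-degree monomial (exponent $d$) and the lower bound by the lowest (exponent $1$).
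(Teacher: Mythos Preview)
Your proof is correct and follows essentially the same approach as the paper: translate to the origin, pick the representative with $Q(0)=0$, and use that on the finite-dimensional space $\calQ$ the norm $\norm{\cdot}_{B(0,1)}$ is equivalent to a coefficient norm, after which scaling does the rest. The only cosmetic difference is that the paper first reduces to $\ds=1$ by restricting $Q$ to lines through the origin and then invokes Lagrange interpolation as the concrete mechanism for the norm equivalence, whereas you stay in $\R^{\ds}$ and cite the abstract equivalence of norms; your monomial-by-monomial comparison makes the exponents $d$ and $1$ appear more transparently.
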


\begin{proof}
By translation we may assume $x=0$, and we choose a representative for the congruence class modulo $+\R$ with $Q(0)=0$.
Fixing $y\in\R^{\ds}$ with $\norm{y}=1$ and considering the one-variable polynomial $Q(\cdot y)$ we may also assume $\ds=1$.

To show \eqref{eq:normQ:up} suppose by scaling that $r=1$ and $\norm{Q}_{B(x,r)}=1$.
The coefficients of $Q$ can now be recovered from its values on the unit ball using the Lagrange interpolation formula.
In particular these coefficients are bounded by a ($d$-dependent) constant, and the conclusion follows.

Similarly, to show \eqref{eq:normQ:low} suppose by scaling that $R=1$ and $\norm{Q}_{B(x,R)}=1$.
Then the coefficients of $Q$ are $O(1)$ and the conclusion follows.
\end{proof}

\begin{corollary}
\label{cor:normQ}
If $D$ is sufficiently large, then for every $I \in \calD$ and $Q \in \calQ$ we have
\begin{equation}
\label{eq:normQ:parent}
\norm{Q}_{\hat{I}} \geq 10^{4} \norm{Q}_{I}.
\end{equation}
\end{corollary}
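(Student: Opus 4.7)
The plan is to deduce the corollary from Lemma~\ref{lem:normQ} by a two-step comparison of $\norm{\cdot}_{J}$ on concentric balls. The trivial inclusion $I\subset\hat I$ gives $\norm{Q}_I\leq\norm{Q}_{\hat I}$ with no gain, so the $D$-factor must come from the ratio of radii in \eqref{eq:normQ:low}. The only real obstacle is that $I$ and $\hat I$ are not concentric (their centers can differ by as much as $\sim\ell(\hat I)/2$), so a ball of radius $\ell(\hat I)/2$ centered at the center of $I$ need not lie inside $\hat I$. I would circumvent this by passing through the slightly enlarged cube $2\hat I$.

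In the first step, letting $x_I$ denote the center of $I$, I would observe that $I\subset B_1:=B(x_I,\sqrt{\ds}\,\ell(I)/2)$ and that $B_2:=B(x_I,\ell(\hat I)/2)\subset 2\hat I$ (the latter because $x_I\in\hat I$ and $2\hat I$ is $\hat I$ expanded by $\ell(\hat I)/2$ in every coordinate direction). Applying \eqref{eq:normQ:low} to the concentric pair $B_1\subset B_2$, whose radii are in ratio $\sqrt{\ds}/D$, produces the crucial $D$-gain,
\[
\norm{Q}_I\leq\norm{Q}_{B_1}\lesssim_{d,\ds}D^{-1}\,\norm{Q}_{B_2}\leq D^{-1}\,\norm{Q}_{2\hat I}.
\]

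In the second step, I would absorb the dilation $2\hat I\to\hat I$ using concentric balls around $x_{\hat I}$: the inclusions $2\hat I\subset B(x_{\hat I},\sqrt{\ds}\,\ell(\hat I))$ and $B(x_{\hat I},\ell(\hat I)/2)\subset\hat I$ together with \eqref{eq:normQ:up} (applied to a bounded ratio of radii $2\sqrt{\ds}$) yield $\norm{Q}_{2\hat I}\lesssim_{d,\ds}\norm{Q}_{\hat I}$. Chaining the two bounds gives $\norm{Q}_I\leq C_{d,\ds}D^{-1}\,\norm{Q}_{\hat I}$, and the corollary follows by choosing $D\geq 10^{4}C_{d,\ds}$.
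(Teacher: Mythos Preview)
Your argument is correct and is essentially the natural way to spell out what the paper leaves implicit (the paper states the corollary without proof, as an immediate consequence of Lemma~\ref{lem:normQ}). The only detail worth noting is that the application of \eqref{eq:normQ:low} to $B_1\subset B_2$ requires $\sqrt{\ds}\,\ell(I)/2\le D\ell(I)/2$, i.e.\ $D\ge\sqrt{\ds}$, which is already absorbed into the final choice $D\ge 10^4 C_{d,\ds}$.
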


We choose $D$ so large that \eqref{eq:normQ:parent} holds.

\begin{definition}
\label{def:pair}
A \emph{pair} $\Tp$ consists of a \emph{spatial cube} $I_{\Tp}\in\calD$ and a Borel measurable subset $\calQ(\Tp) \subset \calQ$ that will be called the associated \emph{uncertainty region}.
Abusing the notation we will say that $Q\in \Tp$ if and only if $Q\in \calQ(\Tp)$.
Also, $\scale(\Tp):=\scale(I_{\Tp})$.

\begin{lemma}
\label{lem:tile}
There exist collections of pairs $\TP_{I}$ indexed by the grid cubes $I\in\calD$ with $\sumin \leq \scale(I) \leq \sumax$ such that
\begin{enumerate}
\item\label{def:tile:ball} To each $\Tp \in \TP_{I}$ is associated a \emph{central polynomial} $Q_{\Tp}\in\calQ$ such that
\begin{equation}
\label{eq:tile:ball}
B_{I}(Q_{\Tp},0.2) \subset \calQ(\Tp) \subset B_{I}(Q_{\Tp},1),
\end{equation}
where $B_{I}(Q,r)$ denotes the ball with center $Q$ and radius $r$ with respect to the norm \eqref{eq:normI},
\item\label{def:tile:cover} for each grid cube $I \in \calD$ the uncertainty regions $\Set{\calQ(\Tp) \given \Tp\in\TP_{I}}$ form a disjoint cover of $\calQ$, and
\item\label{def:tile:nested} if $I \subseteq I'$, $\Tp \in \TP_{I}$, $\Tp' \in \TP_{I'}$, then either $\calQ(\Tp) \cap \calQ(\Tp') = \emptyset$ or $\calQ(\Tp) \supseteq \calQ(\Tp')$.
\end{enumerate}
\end{lemma}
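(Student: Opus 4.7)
The plan is to construct the collections $\TP_I$ by downward induction on the spatial scale, from $s=\sumax$ down to $s=\sumin$. This is essentially a Christ-cube type construction carried out on the polynomial parameter space $\calQ$ rather than in physical space; the crucial geometric input is Corollary~\ref{cor:normQ}, which guarantees $\norm{Q}_I\leq 10^{-4}\norm{Q}_{\hat I}$. Consequently every unit $\norm{\cdot}_{\hat I}$-ball is microscopic in the $\norm{\cdot}_I$-metric, and this order-of-magnitude gap is exactly what will allow each child partition to be a genuine coarsening of the parent partition while still fitting the prescribed radius window $[0.2,1]$.

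For the base case at the top scale, for each $I\in\calD_{\sumax}$ I would select a maximal $\tfrac12$-separated subset $\{Q_\Tp\}_{\Tp\in\TP_I}\subset\calQ$ for $\norm{\cdot}_I$ and take the Voronoi cells (with a fixed measurable tie-break) as the regions $\calQ(\Tp)$; standard arguments give $B_I(Q_\Tp,\tfrac14)\subset\calQ(\Tp)\subset B_I(Q_\Tp,\tfrac12)$, which is in fact stronger than \eqref{eq:tile:ball}. For the inductive step, having already built $\TP_{\hat I}$, I would again pick a maximal $\tfrac12$-separated system $\{Q_\Tp\}$ in $\norm{\cdot}_I$ with Voronoi cells $V_\Tp$, but now define each new piece as the union of all parent pieces whose centers lie in $V_\Tp$:
\[
\calQ(\Tp) := \bigcup\{\calQ(\Tp') : \Tp'\in\TP_{\hat I},\ Q_{\Tp'}\in V_\Tp\}.
\]
Property \ref{def:tile:nested} for the parent--child case is immediate from this definition and extends to any ancestor $I\subseteq I'$ by chaining through the intermediate scales; property \ref{def:tile:cover} is immediate since each $Q_{\Tp'}$ lies in exactly one $V_\Tp$. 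The inclusions in \eqref{eq:tile:ball} reduce to a short triangle-inequality chase: the $10^{-4}$ upper bound on the $\norm{\cdot}_I$-diameter of each parent piece is absorbed into the Voronoi radii $\tfrac14$ and $\tfrac12$ with huge room to spare, yielding $B_I(Q_\Tp,\tfrac14-2\cdot 10^{-4})\subset\calQ(\Tp)\subset B_I(Q_\Tp,\tfrac12+2\cdot 10^{-4})$.

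The main subtlety is really just bookkeeping of the numerical constants at every induction step, and the $10^4$-factor slack from Corollary~\ref{cor:normQ} makes this essentially mechanical. Borel measurability is preserved throughout because Voronoi cells for countably many centers are Borel and the pieces are countable unions of Borel sets (countability coming from local finiteness of $\tfrac12$-separated sets in the finite-dimensional space $\calQ$). I therefore expect no serious obstacle, and the construction to produce the desired $\TP_I$ directly.
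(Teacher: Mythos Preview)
Your proposal is correct and follows essentially the same approach as the paper: downward induction on scale, maximal separated nets in $(\calQ,\norm{\cdot}_I)$ at each level, and defining each child cell as the union of those parent cells whose centers fall into the corresponding cell of the net. The paper uses a $0.7$-separated net (yielding radii in $[0.3,0.7]$) where you use a $\tfrac12$-separated Voronoi partition (yielding radii near $[\tfrac14,\tfrac12]$), and the paper phrases the assignment of parent centers to child centers abstractly rather than via Voronoi cells, but these are cosmetic differences; the mechanism and the use of Corollary~\ref{cor:normQ} to absorb the $O(10^{-4})$ perturbation are identical.
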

This is similar to the construction of Christ grid cubes but easier because we can start at a smallest scale and we do not need a small boundary property.

The requirement \eqref{eq:tile:ball} on the uncertainty regions $\calQ(\Tp)$ is dictated by Lemma~\ref{lem:osc-int}.
The uncertainty regions used in \cite{MR2545246,arXiv:1105.4504} in the case $\ds=1$ also satisfy \eqref{eq:tile:ball} up to multiplicative constants.
However, it seems to be convenient not to prescribe the exact shape of the uncertainty regions in order to obtain the nestedness property \eqref{def:tile:nested}.
\begin{proof}
For each $I\in\calD$ choose a maximal $0.7$-separated subset $\calQ_{I} \subset \calQ$ with respect to the $I$-norm.

We start with the cubes $I \in \calD_{\sumax}$.
Fix $I\in \calD_{\sumax}$.
Then the balls $B_{I}(Q,0.3)$, $Q\in \calQ_{I}$, are disjoint, and the balls $B_{I}(Q,0.7)$, $Q\in \calQ_{I}$, cover $\calQ$.
Hence there exists a disjoint cover $\calQ = \cup_{Q\in \calQ_{I}} \calQ(I,Q)$ such that $B_{I}(Q,0.3) \subset \calQ(I,Q) \subset B_{I}(Q,0.7)$.
We use the cells of this partition as uncertainty regions of the pairs that we set out to construct.

Suppose now that $\TP_{I'}$ has been constructed for some $I'\in\calD$ and let $I\in\calD$ be a grid cube contained in $I'$ with $\scale(I) = \scale(I') - 1$.
Using \eqref{eq:normQ:parent} we construct a partition $\calQ_{I'} = \cup_{Q\in \calQ_{I}} \ch(I,Q)$ such that for each $Q\in \calQ_{I}$ and $Q' \in \calQ_{I'}$ we have
\[
Q' \in B_{I}(Q,0.3)
\implies
Q' \in \ch(I,Q)
\implies
Q' \in B_{I}(Q,0.7).
\]
Then the cells $\calQ(I,Q) := \cup_{Q' \in \ch(I,Q)} \calQ(I',Q')$ partition $\calQ$ and we use these cells as uncertainty regions of the pairs in $\TP_{I}$.
\end{proof}

\begin{definition}
\label{def:tile}
We write
\[
\TP := \bigcup_{s=\sumin}^{\sumax} \bigcup_{I\in\calD_{s}} \TP_{I}
\]
and call members of $\TP$ \emph{tiles}.
\end{definition}

For a pair $\Tp$ let
\[
E(\Tp) := \Set{x\in I_{\Tp} \given Q_x\in \calQ(\Tp) \land \smin(x) \leq \scale(\Tp) \leq \smax(x)}.
\]
For every tile $\Tp\in\TP$ we define the corresponding operator
\begin{equation}
\label{eq:Ttile}
T_{\Tp}f(x) := \one_{E(\Tp)}(x) \int e(Q_{x}(x)-Q_{x}(y)) \CZK_{\scale(\Tp)}(x,y) f(y) \dif y.
\end{equation}
\end{definition}
The tile operators and their adjoints
\begin{equation}
\label{eq:Ttile*}
T_{\Tp}^{*}g(y) = \int e(-Q_{x}(x)+Q_{x}(y)) \overline{\CZK_{\scale(\Tp)}(x,y)} (\one_{E(\Tp)}g)(x) \dif x.
\end{equation}
have the support properties
\begin{equation}
\label{eq:Ttile-supp}
\supp T_{\Tp}f \subseteq I_{\Tp},
\qquad
\supp T_{\Tp}^{*}g \subseteq I_{\Tp}^{*} := 2 I_{\Tp}
\end{equation}
for any $f,g \in L^{2}(\R^{\ds})$.
For a collection of tiles $\TC \subset \TP$ we write $T_{\TC} := \sum_{\Tp\in\TC} T_{\Tp}$.
Then the linearized operator \eqref{eq:T-linearized} can be written as $T_{\TP}$.

\subsection{General notation}
The characteristic function of a set $I$, as well as the corresponding multiplication operator, is denoted by $\one_{I}$.
The \emph{Hardy--Littlewood maximal operator} is given by
\[
Mf(x) := \sup_{x\in I}\frac{1}{\abs{I}}\int_{I}\abs{f},
\]
the latter supremum being taken over all (not necessarily grid) cubes containing $x$.
For $1<q<\infty$ the \emph{$q$-maximal operator} is given by
\begin{equation}
\label{eq:HL-q-max-op}
M_{q}f := (M \abs{f}^{q})^{1/q}.
\end{equation}

Parameters $\epsilon,\eta$ (standing for small numbers) and $C$ (standing for large numbers) are allowed to change from line to line, but may only depend on $d,\ds,\tau$ and the implicit constants related to $\CZK$ unless an additional dependence is indicated by a subscript.

For $A,B>0$ we write $A\lesssim B$ (resp. $A\gtrsim B$) in place of $A<CB$ (resp. $A>CB$).
If the constant $C=C_{\delta}$ depends on some quantity $\delta$, then we may write $A\lesssim_{\delta}B$.

The operator norm on $L^{2}(\R^{\ds})$ is denoted by $\norm{T}_{2\to 2} := \sup_{\norm{f}_{2} \leq 1} \norm{Tf}_{2}$.

\section{Tree selection algorithm}
\label{sec:dens-selection}

\subsection{Spatial decomposition}
\label{sec:spatial-decomposition}
We begin with a simplified version of Lie's stopping time construction from \cite{arXiv:1105.4504}.

\begin{definition}\label{def:ord}
Let $\Tp,\Tp'$ be pairs.
We say that
\begin{align*}
\Tp< \Tp' &:\iff I_{\Tp}\subsetneq I_{\Tp'} \text{ and } \calQ(\Tp') \subseteq \calQ(\Tp),\\
\Tp\leq \Tp' &:\iff I_{\Tp}\subseteq I_{\Tp'} \text{ and } \calQ(\Tp') \subseteq \calQ(\Tp).
\end{align*}
\end{definition}
The relations $<$ and $\leq$ are transitive, similarly to \cite{MR0340926} and differently from \cite{arXiv:1105.4504}.

\begin{definition}
A \emph{stopping collection} is a subset $\calF \subset\calD$ of the form $\calF=\cup_{k\geq 0}\calF_{k}$, where each $\calF_{k}$ is a collection of pairwise disjoint cubes such that for each $F\in\calF_{k+1}$ there exists $F'\in\calF_{k}$ with $F'\supsetneq F$ ($F'$ is called the \emph{stopping parent of $F$}).
The collection of \emph{stopping children} of $F\in\calF_{k}$ is $\ch_{\calF}(F) := \Set{ F'\in\calF_{k+1} \given F'\subset F}$.
More generally, the collection of \emph{stopping children} of $I\in\calD$ is $\ch_{\calF}(I) := \Set{ F\in\calF \text{ maximal} \given F\subsetneq I }$.
We denote by $\ch^{m}$ the set of children of $m$-th generation, that is, $\ch^{0}(I) := \Set{I}$, $\ch^{m+1}(I) := \cup_{I'\in\ch^{m}(I)} \ch(I')$.
\end{definition}
\begin{lemma}
\label{lem:spatial-decomposition}
There exists a stopping collection $\calF$ with the following properties.
\begin{enumerate}
\item $\calF_{0} = \calD_{\sumax}$.
\item For each $F\in\calF$ we have
\begin{equation}
\label{eq:Lie-support-decay}
\sum_{F'\in\ch(F)} \abs{F'} \leq D^{-10 \ds} \abs{F}.
\end{equation}
\item\label{it:stopping-neighbors} For each $k$ and $F\in\calF_{k}$ and $F'\in\calD$ such that $\scale(F') < \scale(F)$ and $F\cap 5F' \neq\emptyset$ there exists $F''\in\calF_{k}$ such that $\scale(F'') \geq \scale(F)-1$ and $F'\subseteq F''$.
\item For $k\geq 0$ consider the set of grid cubes
\begin{equation}
\label{eq:Ck}
\calC_{k} := \tilde\calC_{k} \setminus \tilde\calC_{k+1},
\quad
\tilde\calC_{k} := \Set{ I\in\calD \given \exists F\in\calF_{k} : I \subseteq F}
\end{equation}
and the corresponding set of tiles
\begin{equation}
\label{eq:Pnk}
\TP_{k} := \Set{ \Tp\in\TP \given I_{\Tp} \in \calC_{k}}.
\end{equation}
Then for every $n\geq 1$ the set of tiles
\begin{equation}
\label{eq:Mnk}
\TM_{n,k} := \Set{ \Tp\in\TP_{k} \text{ maximal w.r.t.\ ``$<$'' } \given \abs{E(\Tp)}/\abs{I_{\Tp}} \geq 2^{-n} }
\end{equation}
satisfies
\begin{equation}
\label{eq:Mnk-counting}
\norm[\big]{\sum_{\Tp\in\TM_{n,k}} \one_{I_{\Tp}}}_{\infty} \lesssim 2^{n} \log (n+1).
\end{equation}
\end{enumerate}
\end{lemma}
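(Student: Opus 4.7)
The plan is to construct $\calF$ inductively in $k$, starting with $\calF_{0}:=\calD_{\sumax}$, by a stopping-time selection on a pointwise multiplicity functional. Assuming $\calF_{k}$ has been produced, for each $F\in\calF_{k}$ and each $n\ge 1$ let $N_{n}^{F}(x)$ be the number of pairs $\Tp\in\TP$ with $I_{\Tp}\subseteq F$, $x\in I_{\Tp}$, $\abs{E(\Tp)}\ge 2^{-n}\abs{I_{\Tp}}$, that are maximal with respect to ``$<$'' among such pairs, and set
\[
B_{n}^{F}:=\Set{x\in F \given N_{n}^{F}(x)>A\cdot 2^{n}\log(n+1)}
\]
with $A=A(d,\ds)$ a large constant to be chosen. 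We take $\ch_{\calF}(F)$ to consist of the maximal $D$-adic subcubes $F'\subsetneq F$ that either lie inside $\bigcup_{n\ge 1}B_{n}^{F}$, or are needed to enforce (iii); the latter are obtained by a standard enlargement, adjoining to the family all $D$-adic cubes of scale at most one smaller than the relevant $F''\in\ch_{\calF}(F)$ that meet a $5$-fold dilate, iterating until closure.

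Property (iii) holds by this enlargement. Property (iv) is essentially built in: on $\bigcup\calC_{k}\cap F\subseteq F\setminus\bigcup\ch_{\calF}(F)$ we have $N_{n}^{F}\le A\cdot 2^{n}\log(n+1)$ for every $n\ge 1$, and a short comparison, exploiting that the only way a tile in $\TM_{n,k}$ can fail to be ``$<$''-maximal in the strict sense used by $N_{n}^{F}$ is to be dominated by a tile whose spatial cube has been discarded into $\bigcup\calF_{k+1}$, shows that $\sum_{\Tp\in\TM_{n,k}}\one_{I_{\Tp}}\le N_{n}^{F}+O(1)$ pointwise on the good set.

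The principal obstacle is the support-decay (ii), equivalent to $\sum_{n\ge 1}\abs{B_{n}^{F}}\le D^{-10\ds}\abs{F}$. The cornerstone is the observation that two ``$<$''-maximal tiles with strictly nested spatial cubes must have \emph{disjoint} uncertainty regions: the nestedness property in Lemma~\ref{lem:tile}(\ref{def:tile:nested}) forbids the intermediate case of nested uncertainty regions, which would contradict maximality. Consequently, for any $G\in\calD$ with $G\subseteq F$ the sets $E(\Tp)$ corresponding to such maximal tiles with $I_{\Tp}\subseteq G$ are pairwise disjoint subsets of $G$, yielding the Carleson-type bound
\[
\sum_{\substack{\Tp\text{ maximal},\\ I_{\Tp}\subseteq G,\ \abs{E(\Tp)}\ge 2^{-n}\abs{I_{\Tp}}}}\abs{I_{\Tp}}\le 2^{n}\sum_{\Tp}\abs{E(\Tp)}\le 2^{n}\abs{G}.
\]
Iterating this estimate in the spirit of the John--Nirenberg inequality for Carleson sequences, applied level by level in $n$, produces the exponential distributional bound $\abs{B_{n}^{F}}\lesssim(n+1)^{-cA}\abs{F}$ with an absolute $c>0$; choosing $A$ so large that $\sum_{n\ge 1}(n+1)^{-cA}\le D^{-10\ds}$ finishes the proof. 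The delicate point is that the conclusion of (iv) must be pointwise rather than merely $L^{1}$: this forces the logarithmic loss $\log(n+1)$ over the naive Chebyshev bound, and it is precisely the mechanism driving the finely tuned threshold in the stopping rule above.
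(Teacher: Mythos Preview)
Your approach is essentially the same as the paper's. Both proofs construct $\calF_{k+1}$ by identifying, for each $F\in\calF_k$ and each $n\ge 1$, the superlevel set where the counting function of $<$-maximal dense tiles exceeds $C\,2^{n}\log(n+1)$; both derive the Carleson packing condition $\sum_{I_{\Tp}\subseteq G}\abs{I_{\Tp}}\le 2^{n}\abs{G}$ from the pairwise disjointness of the sets $E(\Tp)$ (your nestedness argument is exactly the mechanism), invoke John--Nirenberg to obtain $\abs{B_{n}^{F}}\lesssim (n+1)^{-cC}\abs{F}$, sum over $n$, enlarge the stopping cubes to enforce the neighbor condition~(iii), and leave the verification of~(iv) as routine. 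The only cosmetic difference is that the paper phrases the counting function globally via $\tilde\TM_{n,k}$ while you localize it to each $F$; these coincide because distinct cubes in $\calF_k$ are disjoint.
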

The stopping property \eqref{it:stopping-neighbors} can be informally stated by saying that each stopping cube is completely surrounded by stopping cubes of the same generation $k$ and similar (up to $\pm 1$) scale.
This is very useful for handling tail estimates.
\begin{proof}
We start with $\calF_{0} := \calD_{\sumax}$ being the set of all cubes of the maximal spatial scale.
Let now $k\geq 0$ and suppose that $\calF_{k}$ has been constructed already.
Let $\tilde\TM_{n,k}$ be the collection of the $<$-maximal tiles $\Tp\in\TP$ with $\frac{\abs{E(\Tp)}}{\abs{I_\Tp}}\geq 2^{-n}$ and $I_{\Tp}\in \tilde\calC_{k}$.
Since the sets $E(\Tp)$ corresponding to $\Tp\in\tilde\TM_{n,k}$ are pairwise disjoint, we have the Carleson packing condition
\[
\sum_{\Tp\in\tilde\TM_{n,k} : I_{\Tp}\subseteq J} \abs{I_{\Tp}}
\leq
2^{n}
\sum_{\Tp\in\tilde\TM_{n,k} : I_{\Tp}\subseteq J} \abs{E(\Tp)}
\leq
2^{n} \abs{J}
\text{ for every }
J\in\calD.
\]
Let $C$ be a large constant to be chosen later and for $F\in\calF_{k}$ let
\[
B(F) :=
\bigcup_{n\geq 1}\Set[\Big]{\sum_{\Tp\in\tilde\TM_{n,k} : I_{\Tp}\subseteq F} \one_{I_{\Tp}} \geq C2^{n}\log(n+1) }.
\]
By the John--Nirenberg inequality we obtain
\[
\abs{B(F)}
\lesssim
\sum_{n\geq 1} e^{-c \frac{C2^{n}\log(n+1)}{2^{n}}} \abs{F}
\lesssim
\Big(\sum_{n\geq 1} (n+1)^{-cC}\Big) \abs{F}.
\]
The numerical constant on the right-hand side can be made arbitrarily small by taking $C$ sufficiently large.
Let $\calJ(F) \subset \calD$ be the set of grid cubes contained in $B(F)$ and $\calJ' := \cup_{F\in\calF_{k}} \calJ(F)$.
Let $\calJ'' \subset \calD$ be the minimal collection such that $\calJ'\subseteq\calJ''$ and $\calJ''$ satisfies part \ref{it:stopping-neighbors} of the conclusion of this lemma.
Let $\calF_{k+1}$ consist of the maximal cubes in $\calJ''$.
The claimed properties can now be routinely verified.
\end{proof}

\subsection{Fefferman forest selection}
\label{sec:tree-selection}

A set of tiles $\TA\subset\TP$ is called an \emph{antichain} if no two tiles in $\TA$ are related by ``$<$'' (this is the standard order theoretic term for a concept already used in \cite{MR0340926} under a different name).
A set of tiles $\TC\subset\TP$ is called \emph{convex} if
\[
\Tp_1,\Tp_2\in\TC, \Tp\in\TP, \Tp_1< \Tp < \Tp_2 \implies \Tp\in\TC.
\]
We call a subset $\TD\subset\TC$ of a convex set $\TC\subset\TP$ a \emph{down subset} if $\Tp < \Tp'$ with $\Tp\in\TC$ and $\Tp'\in\TD$ implies $\Tp\in\TD$.
Unions of down subsets are again down subsets.
Both down subsets and their relative complements are convex.

For $a\geq 1$ and a tile $\Tp$ we will write $a\Tp$ for the pair $(I_{\Tp},B_{I_{\Tp}}(Q_{\Tp},a))$.
Counterintuitively, for $a' \geq a \geq 1$ and a tile $\Tp$ we have $a'\Tp \leq a\Tp$; this notational inconsistency cannot be avoided without breaking the convention used in all time-frequency analysis literature starting with \cite{MR0340926}.

\begin{definition}\label{def:tree}
A \emph{tree} (of generation $k$) is a convex collection of tiles $\TT\subset\TP_{k}$ together with a \emph{top tile} $\Tp_0 = \top\TT \in \TP_{k}$ such that for all $\Tp\in\TT$ we have $4\Tp<\Tp_0$.
To each tree we associate the \emph{central polynomial} $Q_{\TT} = Q_{\top\TT}$ and the spatial cube $I_{\TT} = I_{\top \TT}$.
\end{definition}

\begin{definition}
\label{def:dist}
For $\Tp\in\TP$ and $Q\in\calQ$ we write
\[
\Delta(\Tp,Q) := \norm{Q_{\Tp}-Q}_{I_{\Tp}} + 1.
\]
\end{definition}

\begin{definition}\label{def:sep}
Two trees $\TT_1$ and $\TT_2$ are called \emph{$\Delta$-separated} if
\begin{enumerate}
\item $\Tp\in\TT_1\ \land\ I_{\Tp}\subseteq I_{\TT_{2}} \implies \Delta(\Tp,Q_{\TT_2})>\Delta$ and
\item $\Tp\in\TT_2\ \land\ I_{\Tp}\subseteq I_{\TT_{1}} \implies \Delta(\Tp,Q_{\TT_1})>\Delta$.
\end{enumerate}
\end{definition}

\begin{remark}
If $I_{\TT_{1}}\cap I_{\TT_{2}}=\emptyset$, then $\TT_{1}$ and $\TT_{2}$ are $\Delta$-separated for any $\Delta$.
\end{remark}

\begin{definition}\label{def:F-forest}
Let $n,k\in\N$.
A \emph{Fefferman forest of level $n$ and generation $k$} is a disjoint union $\TF=\cup_{j}\TT_j$ of $2^{C n}$-separated trees $\TT_{j} \subset \TP_{k}$ (with a large constant $C$ to be chosen later) such that
\begin{equation}\label{eq:F-forest-counting}
\norm[\big]{\sum_{j}\one_{I_{\TT_{j}}}}_{\infty} \leq C2^n \log(n+1)
\end{equation}
with the absolute constant $C$ in \eqref{eq:F-forest-counting} is the same as in \eqref{eq:Mnk-counting}.
\end{definition}

\begin{definition}\label{def:mass}
We define the \emph{maximal density} of a tile $\Tp\in\TP$ by
\begin{equation}\label{eq:densk}
\mdens_{k}(\Tp):=\sup_{\lambda\geq 2} \lambda^{-\dim\calQ} \sup_{\Tp'\in\TP_{k} : \lambda\Tp \leq \lambda\Tp'} \frac{\abs{E(\lambda \Tp')}}{\abs{I_{\Tp'}}}.
\end{equation}
We also write $\mdens_{k}(\TS) = \sup_{\Tp\in\TS} \mdens_{k}(\Tp)$ for sets of tiles $\TS\subset\TP_{k}$.
The subset of ``heavy'' tiles is defined by
\begin{equation}
\label{eq:heavy}
\TH_{n,k} := \Set{ \Tp\in\TP_{k} \given \mdens_{k}(\Tp) > C_{0} 2^{-n}},
\end{equation}
where $C_{0} = C_{0}(d,\ds) > 1$ is a sufficiently large constant to be chosen later.
\end{definition}
The maximal density is monotonic in the sense that if $\Tp_{1}\leq\Tp_{2}$ are in $\TP_{k}$, then $\mdens_{k}(\Tp_{1}) \geq \mdens_{k}(\Tp_{2})$.
Indeed, in this case by \eqref{eq:normQ:parent} we have $\lambda\Tp_{1}\leq\lambda\Tp_{2}$ for every $\lambda\geq 2$, and the claim follows by transitivity of $\leq$.
It follows that each set $\TH_{n,k} \subset \TP_{k}$ is a down subset, and in particular convex.

\begin{proposition}
\label{prop:fef-forest}
For every $n \geq 1$ and every $k\geq 0$ the set $\TH_{n,k}$ can be represented as the disjoint union of $O(n^{2})$ antichains and $O(n)$ Fefferman forests of level $n$ and generation $k$.
\end{proposition}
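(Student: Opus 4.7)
The plan is to select tree tops via a greedy density stopping time and then verify the Fefferman forest counting bound \eqref{eq:F-forest-counting} against the counting bound \eqref{eq:Mnk-counting} from Lemma~\ref{lem:spatial-decomposition}. For every $\Tp \in \TH_{n,k}$, Definition~\ref{def:mass} produces a dyadic $\lambda = 2^{j} \geq 2$ and a tile $\Tp' \in \TP_{k}$ with $\lambda\Tp \leq \lambda\Tp'$ and $\abs{E(\lambda\Tp')}/\abs{I_{\Tp'}} > C_{0}\lambda^{\dim\calQ} 2^{-n}$. Since the density on the left is at most $1$, one must have $j \lesssim n$, so I would first split $\TH_{n,k}$ into $O(n)$ strata $\TH_{n,k}^{(j)}$ indexed by the dyadic scale.

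Fix a stratum and set $\lambda = 2^{j}$. The enlarged uncertainty region $B_{I_{\Tp'}}(Q_{\Tp'},\lambda)$ is covered by $O(\lambda^{\dim\calQ})$ standard tile uncertainty regions via Lemma~\ref{lem:tile}\ref{def:tile:cover}, so pigeonholing inside $E(\lambda\Tp')$ produces a tile $\Tp^{*} \in \TP_{k}$ with $I_{\Tp^{*}} = I_{\Tp'}$, $\calQ(\Tp^{*}) \cap \calQ(\lambda\Tp') \neq \emptyset$, and $\abs{E(\Tp^{*})} \gtrsim 2^{-n}\abs{I_{\Tp^{*}}}$; combining the nonempty intersection with $\calQ(\lambda\Tp') \subseteq \calQ(\lambda\Tp)$ yields $\norm{Q_{\Tp^{*}}-Q_{\Tp}}_{I_{\Tp'}} \lesssim 2^{j}$. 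A greedy selection then extracts $<$-maximal such $\Tp^{*}$'s as tree tops; each either lies in $\TM_{n,k}$ or is dominated by a member of $\TM_{n,k}$, so the counting bound \eqref{eq:Mnk-counting} secures \eqref{eq:F-forest-counting}. Assigning a tile $\Tp$ to the tree of the selected $\Tp^{*}$ that dominates its witness, Lemma~\ref{lem:normQ}\eqref{eq:normQ:low} shrinks the above estimate to $\norm{Q_{\Tp^{*}}-Q_{\Tp}}_{I_{\Tp}} \lesssim (\ell(I_{\Tp})/\ell(I_{\Tp^{*}}))\cdot 2^{j}$, which falls below $4$ once $I_{\Tp}$ is at least $O(j) = O(n)$ $D$-adic generations below $I_{\Tp^{*}}$, yielding the tree relation $4\Tp < \Tp^{*}$ of Definition~\ref{def:tree}.

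The main obstacles are absorbed into antichain removals. The $O(n)$ boundary generations immediately below each top, where the polynomial closeness estimate has not yet compressed below $4$, are shaved off as $O(n)$ antichains per stratum (antichains because at any fixed spatial scale the tile uncertainty regions are disjoint by Lemma~\ref{lem:tile}\ref{def:tile:cover}), totaling $O(n^{2})$ antichains overall. The chief technical difficulty will be enforcing the $2^{Cn}$-separation of Definition~\ref{def:sep} inside each stratum: after picking a top $\Tp^{*}$, one removes from further selection all remaining $\Tp$ with $I_{\Tp} \subseteq I_{\Tp^{*}}$ and $\Delta(\Tp,Q_{\Tp^{*}}) \leq 2^{Cn}$, and the $<$-maximality of $\Tp^{*}$ among candidates of density $\gtrsim 2^{-n}$ prevents two such removed tiles from being $<$-related, so these collapse into one further antichain per stratum, again fitting into the $O(n^{2})$ budget; the delicate calibration will be choosing $C_0$ in \eqref{eq:heavy} and the constant in $2^{Cn}$ jointly so that the boundary-scale and separation removals remain genuine antichains simultaneously.
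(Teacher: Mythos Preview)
Your outline has the right shape (find witnesses in $\TM_{n,k}$, stratify, build trees, peel antichains), but the separation step contains a genuine gap and the stratification you chose does not support the repair.

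The claim that the tiles removed to enforce $2^{Cn}$-separation form a single antichain per stratum is not justified. You remove all $\Tp$ with $I_{\Tp}\subseteq I_{\Tp^{*}}$ and $\Delta(\Tp,Q_{\Tp^{*}})\leq 2^{Cn}$ and assert that $<$-maximality of $\Tp^{*}$ among density-$\gtrsim 2^{-n}$ tiles prevents two such $\Tp$ from being $<$-comparable. But the removed tiles need not themselves have density $\gtrsim 2^{-n}$; they are merely close to $Q_{\Tp^{*}}$ in the $I_{\Tp}$-norm, and a whole chain $\Tp_{1}<\dotsb<\Tp_{Cn}$ can sit in this set without any $\Tp_{i}$ being a density witness. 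So the removed set is not an antichain, and you cannot absorb separation into the $O(n^{2})$ budget this way. The paper achieves separation by a different mechanism: it stratifies not by the dyadic $\lambda$ in the density definition but by the cardinality $\abs{\TB(\Tp)}=\abs{\{\Tm\in\TM_{n,k}:100\Tp\leq\Tm\}}$, and the point (the ``Fefferman trick'') is that if a tile $\Tp$ in stratum $j$ is close to two distinct tree tops then $\TB(\Tp)$ contains the disjoint union of two sets of size $\geq 2^{j}$, forcing $\abs{\TB(\Tp)}\geq 2^{j+1}$ and contradicting the stratum. This yields a weak ($10$-)separation for free, after which removing $O(n)$ layers of \emph{minimal} tiles (each automatically an antichain) upgrades to $2^{Cn}$-separation. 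Your $\lambda$-stratification carries no such doubling obstruction, so nothing prevents a tile from being simultaneously close to many selected tops.

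A second issue is convexity. Trees in Definition~\ref{def:tree} must be convex, but your assignment ``$\Tp$ goes to the tree of the selected $\Tp^{*}$ dominating its witness'' depends on an arbitrary choice of witness $(\lambda,\Tp',\Tp^{*})$ for each $\Tp$, and nothing guarantees that if $\Tp_{1}<\Tp<\Tp_{2}$ land in the same tree then $\Tp$ does too. The paper builds trees as unions of down-sets $\TD(\Tu)=\{\Tp:2\Tp<\Tu\}$ inside the (provably convex) strata $\TC_{n,k,j}$, which makes convexity automatic. Similarly, your strata $\TH_{n,k}^{(j)}$ are not convex subsets of $\TP_{k}$, since the $\lambda$ almost-extremizing the density supremum need not be monotone along chains.
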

\begin{proof}
We would like to avoid the $\lambda$-dilates in Definition~\ref{def:mass}.
To this end we consider the down subset of $\TP_{k}$
\[
\TC_{n,k} := \Set{ \Tp\in\TP_{k} \given \exists \Tm\in\TM_{n,k} : 2\Tp < 100\Tm }.
\]
We claim that the remaining set of tiles $\TH_{n,k}\setminus\TC_{n,k}$ can be partitioned into at most $n$ antichains.
Indeed, otherwise there exists a chain $\Tp_{0}<\dotsb<\Tp_{n}$ inside $\TH_{n,k}\setminus\TC_{n,k}$.
By definition \eqref{eq:densk} there exists $\lambda\geq 2$ and a tile $\Tp'\in\TP_{k}$ such that $\lambda\Tp_{n} \leq \lambda\Tp'$ and
\begin{equation}
\label{eq:largeElambdaTp'}
\abs{E(\lambda \Tp')}/\abs{I_{\Tp'}} > C_{0} 2^{-n} \lambda^{\dim\calQ}.
\end{equation}
It follows e.g.\ from the existence of the John ellipsoid associated to the unit ball of the norm $\norm{\cdot}_{I_{\Tp'}}$ that the set $\calQ(\lambda \Tp')$ can be covered by $O(\lambda^{\dim\calQ})$ uncertainty regions of the form $\calQ(\Tp'')$, where $\Tp'' \in \TP_{k}$ are tiles with $I_{\Tp''}=I_{\Tp'}$ and $\norm{Q_{\Tp'}-Q_{\Tp''}}_{I_{\Tp'}} \leq \lambda+1$.
It follows that for at least one such tile we have $\abs{E(\Tp'')} \gtrsim C_{0} 2^{-n} \abs{I_{\Tp''}}$, so that $\abs{E(\Tp'')} > 2^{-n} \abs{I_{\Tp''}}$ provided that $C_{0}$ in \eqref{eq:heavy} is sufficiently large.
By definition \eqref{eq:Mnk} there exists $\Tm \in \TM_{n,k}$ with $\Tp'' \leq \Tm$.

From \eqref{eq:largeElambdaTp'} we obtain
\[
\lambda \leq \lambda^{\dim\calQ} < 2^{n} \abs{E(\lambda \Tp')}/\abs{I_{\Tp'}} \leq 2^{n},
\]
and it follows from \eqref{eq:normQ:parent} that for all $Q\in \calQ(100 \Tm)$ we have
\begin{align*}
\norm{Q_{\Tp_{0}}-Q}_{I_{\Tp_{0}}}
&\leq
\norm{Q_{\Tp_{0}}-Q_{\Tp_{n}}}_{I_{\Tp_{0}}}
+
\norm{Q_{\Tp_{n}}-Q_{\Tp'}}_{I_{\Tp_{0}}}
+
\norm{Q_{\Tp'}-Q_{\Tp''}}_{I_{\Tp_{0}}}\\
&\quad+
\norm{Q_{\Tp''}-Q_{\Tm}}_{I_{\Tp_{0}}}
+
\norm{Q_{\Tm}-Q}_{I_{\Tp_{0}}}\\
&\leq
1
+
10^{-4n}( \norm{Q_{\Tp_{n}}-Q_{\Tp'}}_{I_{\Tp_{n}}}
+
\norm{Q_{\Tp'}-Q_{\Tp''}}_{I_{\Tp'}}\\
&\quad+
\norm{Q_{\Tp''}-Q_{\Tm}}_{I_{\Tp''}}
+
\norm{Q_{\Tm}-Q}_{I_{\Tm}} )\\
&\leq
1 + 10^{-4n}( \lambda + (\lambda+1) + 1 + 100 )
\leq
2.
\end{align*}
Hence $2 \Tp_{0} \leq 100 \Tm$, contradicting the choice $\Tp_{0}\not\in\TC_{n,k}$.

We want to show that $\TC_{n,k}$ can be decomposed into $O(n)$ Fefferman forests and $O(n^{2})$ antichains; then since $\TH_{n,k}$ is convex the same will hold for $\TH_{n,k}\cap\TC_{n,k}$.
Let
\[
\TB(\Tp):=\Set{ \Tm \in \TM_{n,k} \given 100 \Tp\leq \Tm},
\qquad \Tp\in\TC_{n,k}.
\]
In view of \eqref{eq:Mnk-counting} we have $1\leq \abs{B(\Tp)} \lesssim 2^{n} \log(n+1)$ for every $\Tp\in\TC_{n,k}$.
Let
\[
\TC_{n,k,j}:=
\Set{ \Tp\in\TC_{n,k} \given 2^j\leq \abs{\TB(\Tp)} < 2^{j+1} }.
\]
For the remaining part of the proof fix $j\geq 0$ such that $2^{j} \lesssim 2^{n} \log(n+1)$.
It suffices to show that $\TC_{n,k,j}$ can be written as the union of a Fefferman forest and $O(n)$ antichains.

First we verify that the set $\TC_{n,k,j}$ is convex.
Indeed, if $\Tp_{1}<\Tp<\Tp_{2}$ with $\Tp_{1},\Tp_{2}\in\TC_{n,k,j}$ and $\Tp\in\TC_{n,k}$, then $100 \Tp_{1} < 100 \Tp < 100 \Tp_{2}$, so that $\TB(\Tp_{1}) \supseteq \TB(\Tp) \supseteq \TB(\Tp_{2})$, so that $\Tp\in\TC_{n,k,j}$.

Let $\TU\subseteq\TC_{n,k,j}$ be the set of tiles $\Tu$ such that there is no $\Tp\in\TC_{n,k,j}$ with $I_{\Tu} \subsetneq I_{\Tp}$ and $\calQ(100 \Tu) \cap \calQ(100 \Tp) \neq\emptyset$.
These are our candidates for being tree tops.

In order to verify the counting function estimate \eqref{eq:F-forest-counting} we will show that for every $x\in\R^{\ds}$ the set $\TU(x) := \Set{ \Tu\in\TU \given x\in I_{\Tu}}$ has cardinality $O(2^{-j}2^{n}\log (n+1))$.
The family $\TU(x)$ can be subdivided into $O(1)$ families, denoted by $\TU'(x)$, in each of which the sets $\calQ(100\Tu)$, $\Tu\in\TU'(x)$, are disjoint (just make this decomposition at each scale independently).
In particular, the sets $\TB(\Tu)$, $\Tu\in\TU'(x)$, are pairwise disjoint.
These sets have cardinality at least $2^{j}$, and their union has cardinality at most $2^{n} \log(n+1)$ by \eqref{eq:Mnk-counting}.
This implies $\abs{\TU'(x)} \lesssim 2^{-j} 2^{n} \log(n+1)$.

Let
\[
\TD(\Tu) := \Set{\Tp\in \TC_{n,k,j} \given 2 \Tp < \Tu},
\quad
\Tu\in\TU.
\]
We will show that
\[
\TA'_{j} := \TC_{n,k,j} \setminus \cup_{\Tu\in\TU} \TD(\Tu)
\]
is an antichain.
Suppose that, on the contrary, there exist $\Tp, \Tp_1\in \TA'_{j}$ with $\Tp< \Tp_1$.
We claim that in this case for every $l=1,2,\dotsc$ there exists a sequence of tiles $\Tp_{1},\dotsc,\Tp_{l}\in \TC_{n,k,j}$ with
\[
2\Tp < 200\Tp_{1} < \dotsb < 200\Tp_{l}.
\]
This will produce a contradiction because the spatial cubes of these tiles are in $\calC_{k}$ and therefore have bounded scale.
For $l=1$ the claim follows from \eqref{eq:normQ:parent}.
Suppose now that the claim is known for some $l\geq 1$.
If $\Tp_{l} \in \TU$, then $\Tp\in\TD(\Tp_{l})$, and this is a contradiction.
Otherwise by definition of $\TU$ there exists a tile $\Tp_{l+1}\in\TC_{n,k,j}$ such that $I_{\Tp_{l}} \subsetneq I_{\Tp_{l+1}}$ and $\calQ(100 \Tp_{l}) \cap \calQ(100 \Tp_{l+1}) \neq \emptyset$.
It follows from \eqref{eq:normQ:parent} that $\calQ(200 \Tp_{l}) \supseteq \calQ(200 \Tp_{l+1})$, hence $200 \Tp_{l} < 200 \Tp_{l+1}$.
This finishes the proof of the claim and of the fact that $\TA'_{j}$ is an antichain.

Let $\TU' := \Set{ \Tu\in\TU \given \TD(\Tu) \neq\emptyset}$ and introduce on this set the relation
\begin{equation}
\label{eq:propto:def}
\Tu \propto \Tu'
:\iff
\exists \Tp\in \TD(\Tu) \text{ with } 10 \Tp\leq \Tu'.
\end{equation}
We claim that
\begin{equation}
\label{eq:propto-Fef-trick}
\Tu\propto\Tu' \implies I_{\Tu} = I_{\Tu'} \text{ and } \calQ(100 \Tu)\cap \calQ(100 \Tu')\neq\emptyset.
\end{equation}
\begin{proof}[Proof of the claim \eqref{eq:propto-Fef-trick}]
Let $\Tu,\Tu'\in\TU'$ with $\Tu\propto \Tu'$.
By definition there exists $\Tp\in \TC_{n,k,j}$ with $2\Tp < \Tu$ and $10 \Tp\leq \Tu'$.

First we notice that it suffices to show that
\begin{equation}
\label{eq:propto-Fef-trick:100}
\calQ(100 \Tu)\cap \calQ(100 \Tu')\neq\emptyset.
\end{equation}
Indeed, the spatial cubes $I_{\Tu},I_{\Tu'}$ both contain $I_{\Tp}$, so unless they coincide they are strictly nested, contradicting $\Tu,\Tu'\in\TU$.

Now we make a case distinction.
If $I_{\Tp} = I_{\Tu'}$, then $100 \Tu' \leq 2\Tp < \Tu$, and \eqref{eq:propto-Fef-trick:100} follows.

In the case $I_{\Tp} \subsetneq I_{\Tu'}$ we deduce from \eqref{eq:normQ:parent} that $100 \Tp < 100 \Tu'$ and $100 \Tp < 100 \Tu$.
If \eqref{eq:propto-Fef-trick:100} does not hold, then the sets $\TB(\Tu)$ and $\TB(\Tu')$ are disjoint.
On the other hand, $\TB(\Tp) \supseteq \TB(\Tu) \cup \TB(\Tu')$, so that $\abs{\TB(\Tp)} \geq \abs{\TB(\Tu)} + \abs{\TB(\Tu')} \geq 2\cdot 2^{j}$, a contradiction to $\Tp\in\TC_{n,k,j}$ (this is the Fefferman trick \cite[p.\ 569]{MR0340926}).
This establishes \eqref{eq:propto-Fef-trick:100}.
\end{proof}

Next we verify that ``$\propto$'' is an equivalence relation.
Let $\Tu,\Tu',\Tu'' \in \TU'$ be such that $I_{\Tu}=I_{\Tu'}=I_{\Tu''}$, $\calQ(100\Tu) \cap \calQ(100\Tu') \neq \emptyset$, and $\calQ(100\Tu')\cap\calQ(100\Tu'') \neq \emptyset$.
For all, and since $\TD(\Tu)\neq\emptyset$ in particular for some, $\Tp\in \TD(\Tu)$ we have $2 \Tp < \Tu$.
By \eqref{eq:normQ:parent} this implies $4 \Tp < 1000 \Tu$, and it follows that
\begin{equation}
\label{eq:propto-equiv-rel}
4\Tp < \Tu''.
\end{equation}
Using \eqref{eq:propto-Fef-trick} and the fact that \eqref{eq:propto-equiv-rel} implies $\Tu \propto \Tu''$ we deduce transitivity, symmetry, and reflexivity of the relation ``$\propto$''.

Let $\TV\subseteq\TU'$ be a set of representatives for equivalence classes modulo $\propto$ and let
\[
\TT(\Tv) := \cup_{\Tu \propto \Tv} \TD(\Tu),
\quad
\Tv\in\TV.
\]
Each $\TT(\Tv)$ is a union of down subsets $\TD(\Tu) \subset \TC_{n,k,j}$ and therefore convex.
It follows from \eqref{eq:propto-equiv-rel} that each $\TT(\Tv)$ is a tree with top $\Tv$.
It follows from \eqref{eq:propto:def} that these trees satisfy the separation condition
\begin{equation}\label{eq:10sep}
\forall \Tv\neq \Tv' \quad \forall \Tp\in\TT(\Tv) \qquad 10\Tp\not\leq \Tv'.
\end{equation}

In order to upgrade the condition \eqref{eq:10sep} to $2^{C n}$-separateness it suffices to remove the bottom $O(n)$ layers of tiles.
More precisely, for $l=1,\dotsc,Cn$ let $\TA_{n,k,j,l}$ be the set of minimal tiles in $\cup_{\Tv\in\TV} \TT(\Tv) \setminus \cup_{l'<l} \TA_{n,k,j,l'}$.
Then each $\TA_{n,k,j,l}$ is an antichain and each $\TT'(\Tv) := \TT(\Tv) \setminus \cup_{l} \TA_{n,k,j,l}$ is still a convex set, hence a tree with top $\Tv$.
Moreover, it follows from \eqref{eq:10sep} that tiles in distinct trees $\TT(\Tv)$ are not comparable.
Therefore for every $\Tp\in\TT'(\Tv)$ there exist tiles $\Tp_{1}<\dotsb<\Tp_{Cn}<\Tp$ in $\TT(\Tv)$.
If $I_{\Tp} \subseteq I_{\Tv'}$ for some $\Tv'\neq\Tv$, then using \eqref{eq:normQ:parent} and \eqref{eq:10sep} for the tile $\Tp_{1}$ we obtain
\[
\norm{Q_{\Tp}-Q_{\Tv'}}_{I_{\Tp}}
\geq
(10^{4})^{Cn} \norm{Q_{\Tp}-Q_{\Tv'}}_{I_{\Tp_{1}}}
\geq
(10^{4})^{Cn} \cdot 9,
\]
and this implies $10^{4Cn}$-separateness.
\end{proof}

The trees supplied by Proposition~\ref{prop:fef-forest} at different levels $n$ need not be disjoint.
We will now make them disjoint.
Let $\TT_{n,k,j,l}'$ be the trees and $\TA_{n,k,j}'$ the antichains provided by Proposition~\ref{prop:fef-forest} at level $n\geq 1$ and generation $k$.
For $n=1$ define
\[
\TT_{n,k,j,l} := \TT_{n,k,j,l}',
\quad
\TA_{n,k,j} := \TA_{n,k,j}'.
\]
For $n>1$ define
\[
\TT_{n,k,j,l} := \TT_{n,k,j,l}' \setminus \TH_{n-1,k},
\quad
\TA_{n,k,j} := \TA_{n,k,j}' \setminus \TH_{n-1,k}.
\]
Since we remove down subsets, the sets $\TT_{n,k,j,l}$ are still (convex) trees.

These sets have the following properties.
\begin{enumerate}
\item The set of all tiles can be decomposed as the disjoint union
\begin{equation}\label{eq:tree-dec}
\TP = \bigcup_{n=1}^{\infty} \bigcup_{k\in\N} \big( \bigcup_{j\lesssim n} \bigcup_{l} \TT_{n,k,j,l} \cup \bigcup_{j\lesssim n^{2}} \TA_{n,k,j}\big).
\end{equation}
\item Each $\TA_{n,k,j}$ is an antichain.
\item\label{it:TT-convex} Each $\TT_{n,k,j,l}$ is a tree.
\item Each $\TF_{n,k,j} := \cup_{l} \TT_{n,k,j,l}$ is a Fefferman forest of level $n$ and generation $k$.
\item $\mdens_{k}(\TF_{n,k,j}) \lesssim 2^{-n}$.
\item $\mdens_{k}(\TA_{n,k,j}) \lesssim 2^{-n}$.
\end{enumerate}

\section{Estimates for error terms}
In this section we consider error terms coming from antichains and boundary parts of trees.
These terms are morally easier to handle than the main terms in the sense that they are controlled by positive operators (after a suitable $TT^{*}$ argument).

\subsection{The basic $TT^{*}$ argument}
\begin{lemma}\label{lem:sep-tiles}
Let $\Tp_1, \Tp_2\in\TP$ with $\meas{I_{\Tp_{1}}} \leq \meas{I_{\Tp_{2}}}$.
Then
\begin{equation}\label{eq:sep-tiles}
\abs[\Big]{\int T_{\Tp_1}^{*}g_{1} \overline{T_{\Tp_2}^{*}g_{2}}}
\lesssim
\frac{\Delta(\Tp_1,Q_{\Tp_2})^{-\frac{\tau}{d}}}{\abs{I_{\Tp_{2}}}}
\int_{E(\Tp_1)}\abs{g_{1}}\int_{E(\Tp_2)}\abs{g_{2}}.
\end{equation}
\end{lemma}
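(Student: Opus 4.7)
The plan is to carry out a direct $TT^{*}$ computation and reduce matters to a one-variable oscillatory integral estimate with a Hölder weight. Unwinding \eqref{eq:Ttile*}, one has
\begin{align*}
\int T_{\Tp_1}^{*}g_{1}\,\overline{T_{\Tp_2}^{*}g_{2}}
&= \iint \one_{E(\Tp_1)}(x_1)g_1(x_1)\,\overline{\one_{E(\Tp_2)}(x_2)g_2(x_2)}\,e\bigl(-Q_{x_1}(x_1)+Q_{x_2}(x_2)\bigr)\\
&\qquad\times \Bigl( \int e\bigl(Q_{x_1}(y)-Q_{x_2}(y)\bigr)\,\overline{\CZK_{s_1}(x_1,y)}\,\CZK_{s_2}(x_2,y)\,\dif y\Bigr)\,\dif x_1\,\dif x_2,
\end{align*}
where $s_i:=\scale(\Tp_i)$. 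The inner integral is an oscillatory integral in $y$ with polynomial phase $Q:=Q_{x_1}-Q_{x_2}$ of degree $\leq d$ and a weight that, by \eqref{eq:Ks-size}--\eqref{eq:Ks-reg}, is $\tau$-Hölder regular at scale $D^{s_1}$ with $L^{\infty}$ bound $\lesssim D^{-\ds s_1}D^{-\ds s_2}$ and is supported in a ball of radius $O(D^{s_1})$ about $x_1$.

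First I would dispose of the regime $\Delta(\Tp_1,Q_{\Tp_2})\lesssim 1$ by a size estimate: using $\norm{\CZK_{s_2}}_{\infty}\lesssim D^{-\ds s_2}=\abs{I_{\Tp_2}}^{-1}$ and $\norm{\CZK_{s_1}(x_1,\cdot)}_{1}\lesssim 1$, the inner $y$-integral is bounded by $\abs{I_{\Tp_2}}^{-1}$, which yields \eqref{eq:sep-tiles}. For $\Delta(\Tp_1,Q_{\Tp_2})\gg 1$ I would invoke the oscillatory integral estimate for polynomial phases with Hölder weights (Lemma~\ref{lem:osc-int}), whose gain is $\norm{Q}_{B}^{-\tau/d}$. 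Multiplying this against the ball volume $\sim D^{\ds s_1}$ and the weight size $D^{-\ds(s_1+s_2)}$ produces an inner bound $\lesssim \norm{Q_{x_1}-Q_{x_2}}_{I_{\Tp_1}}^{-\tau/d}\,\abs{I_{\Tp_2}}^{-1}$.

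The algebraic heart of the argument is the comparison $\norm{Q_{x_1}-Q_{x_2}}_{I_{\Tp_1}}\sim \Delta(\Tp_1,Q_{\Tp_2})$ when $\Delta$ is large. The triangle inequality, \eqref{eq:tile:ball} giving $\norm{Q_{x_i}-Q_{\Tp_i}}_{I_{\Tp_i}}\leq 1$, and Lemma~\ref{lem:normQ}\eqref{eq:normQ:low}---which transfers the bound $\norm{Q_{x_2}-Q_{\Tp_2}}_{I_{\Tp_2}}\leq 1$ down to $I_{\Tp_1}$ using $\abs{I_{\Tp_1}}\leq\abs{I_{\Tp_2}}$ (the supports of the kernels confine $I_{\Tp_1}$ to an $O(1)$-dilate of $I_{\Tp_2}$, else the integral vanishes)---furnish the matching upper and lower bounds. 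Once this comparison is in hand, integrating $\Delta^{-\tau/d}\abs{I_{\Tp_2}}^{-1}$ against the amplitudes in $x_1,x_2$ yields \eqref{eq:sep-tiles}. The only non-trivial input is the polynomial Van der Corput lemma with Hölder weights, which is presumably Lemma~\ref{lem:osc-int}; the two-scale polynomial comparison, while technical, is mostly bookkeeping, and retrospectively explains the normalization $B_{I}(Q_{\Tp},0.2)\subset\calQ(\Tp)\subset B_{I}(Q_{\Tp},1)$ of the uncertainty regions in \eqref{eq:tile:ball}.
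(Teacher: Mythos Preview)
Your proposal is correct and follows essentially the same route as the paper: expand the $TT^{*}$ pairing, apply the oscillatory integral Lemma~\ref{lem:osc-int} to the inner $y$-integral over $I_{\Tp_1}^{*}$, and then compare $\norm{Q_{x_1}-Q_{x_2}}_{I_{\Tp_1}^{*}}$ to $\Delta(\Tp_1,Q_{\Tp_2})$ via the triangle inequality and \eqref{eq:tile:ball}. One small slip: the norm transfer you need is $\norm{Q_{x_2}-Q_{\Tp_2}}_{I_{\Tp_1}^{*}}\leq \norm{Q_{x_2}-Q_{\Tp_2}}_{CI_{\Tp_2}}\lesssim_d \norm{Q_{x_2}-Q_{\Tp_2}}_{I_{\Tp_2}}\leq 1$, which uses monotonicity and \eqref{eq:normQ:up}, not \eqref{eq:normQ:low}; also, the paper avoids your case split by keeping the ``$+1$'' in $\Delta$ and in the output of Lemma~\ref{lem:osc-int}.
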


\begin{proof}
We may assume $I_{\Tp_{1}}^{*} \cap I_{\Tp_{2}}^{*} \neq \emptyset$, since otherwise the left-hand side of the conclusion vanishes.
Expanding the left-hand side of \eqref{eq:sep-tiles} we obtain
\begin{multline*}
\abs[\Big]{\int
\int e(-Q_{x_{1}}(x_{1})+Q_{x_{1}}(y)) \overline{\CZK_{\scale(\Tp_{1})}(x_{1},y)} (\one_{E(\Tp_{1})}g_{1})(x_{1}) \dif x_{1}\\
\cdot \overline{\int e(-Q_{x_{2}}(x_{2})+Q_{x_{2}}(y)) \overline{\CZK_{\scale(\Tp_{2})}(x_{2},y)} (\one_{E(\Tp_{2})}g_{2})(x_{2}) \dif x_{2}} \dif y}\\
\leq
\int_{E(\Tp_{1})} \int_{E(\Tp_{2})} \abs[\Big]{\int
  e((Q_{x_{1}}-Q_{x_{2}})(y)-Q_{x_{1}}(x_{1})+Q_{x_{2}}(x_{2}))\\
  \cdot \overline{\CZK_{\scale(\Tp_{1})}(x_{1},y)} \CZK_{\scale(\Tp_{2})}(x_{2},y) \dif y}
\abs{g_{1}(x_{1}) g_{2}(x_{2})} \dif x_{2} \dif x_{1}.
\end{multline*}
By Lemma~\ref{lem:osc-int} applied to the cube $I_{\Tp_{1}}^{*}$ the integral inside the absolute value is bounded by
\[
(\norm{Q_{x_{1}}-Q_{x_{2}}}_{I_{\Tp_{1}}^{*}}+1)^{-\tau/d}/\abs{I_{\Tp_{2}}},
\]
and the conclusion follows since
\begin{align*}
\norm{Q_{x_{1}}-Q_{x_{2}}}_{I_{\Tp_{1}}^{*}}
&\geq
\norm{Q_{\Tp_{1}}-Q_{\Tp_{2}}}_{I_{\Tp_{1}}^{*}}
-
\norm{Q_{\Tp_{1}}-Q_{x_{1}}}_{I_{\Tp_{1}}^{*}}
-
\norm{Q_{\Tp_{2}}-Q_{x_{2}}}_{I_{\Tp_{1}}^{*}}\\
&\geq
\norm{Q_{\Tp_{1}}-Q_{\Tp_{2}}}_{I_{\Tp_{1}}}
-
C\norm{Q_{\Tp_{1}}-Q_{x_{1}}}_{I_{\Tp_{1}}}
-
\norm{Q_{\Tp_{2}}-Q_{x_{2}}}_{C I_{\Tp_{2}}}\\
&\geq
\Delta(\Tp_{1},Q_{\Tp_{2}})-1-C
-
C \norm{Q_{\Tp_{2}}-Q_{x_{2}}}_{I_{\Tp_{2}}}\\
&\geq
\Delta(\Tp_{1},Q_{\Tp_{2}})-C.
\qedhere
\end{align*}
\end{proof}

\subsection{Antichains and boundary parts of trees}
\begin{lemma}
\label{lem:antichain-supp}
There exists $\epsilon=\epsilon(d,\ds)>0$ such that for every $0\leq\eta\leq 1$, every $1\leq\rho\leq\infty$, every antichain $\TA\subseteq \TP_{k}$, and every $Q\in\calQ$ we have
\begin{equation}\label{eq:antichain-supp}
\norm{ \sum_{\Tp\in \TA} \Delta(\Tp,Q)^{-\eta} \one_{E(\Tp)} }_{\rho}
\lesssim
\mdens_{k}(\TA)^{\epsilon \eta/\rho} \abs[\big]{\cup_{\Tp\in\TA} I_{\Tp}}^{1/\rho}.
\end{equation}
\end{lemma}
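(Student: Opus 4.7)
The plan is to first exploit the antichain structure to reduce the $L^\rho$ norm to a sum, then combine the maximal density bound with a dyadic decomposition in $\Delta(\Tp,Q)$.

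\smallskip
\textbf{Step 1 (Disjointness).} I would first observe that the sets $\{E(\Tp)\}_{\Tp\in\TA}$ are pairwise disjoint. If $\Tp_1\neq\Tp_2\in\TA$ and $x\in E(\Tp_1)\cap E(\Tp_2)$, then $Q_x\in\calQ(\Tp_1)\cap\calQ(\Tp_2)$. Since the $I_{\Tp_j}$ are grid cubes, they are either disjoint (whence so are the $E$'s) or nested, in which case Lemma~\ref{lem:tile}\ref{def:tile:nested} forces $\calQ(\Tp_1)\supseteq\calQ(\Tp_2)$ (or the reverse), i.e.\ $\Tp_1\leq\Tp_2$, contradicting that $\TA$ is an antichain.

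\smallskip
\textbf{Step 2 (Reduction).} By the disjointness from Step~1,
\[
\norm[\Big]{\sum_{\Tp\in\TA}\Delta(\Tp,Q)^{-\eta}\one_{E(\Tp)}}_\rho^\rho
=\sum_{\Tp\in\TA}\Delta(\Tp,Q)^{-\eta\rho}\abs{E(\Tp)}.
\]
Setting $\alpha=\eta\rho$, it then suffices to establish the single-scale bound
\[
\sum_{\Tp\in\TA}\Delta(\Tp,Q)^{-\alpha}\abs{E(\Tp)}
\lesssim\mdens_k(\TA)^{\epsilon\alpha}\abs[\big]{\cup I_\Tp}
\]
for $\alpha\in[0,1]$. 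The case $\alpha>1$ reduces to $\alpha=1$ since $\Delta\geq 1$, and $\rho=\infty$ is immediate from the pointwise bound $\sum\Delta^{-\eta}\one_{E(\Tp)}\leq 1$ (a consequence of the same disjointness).

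\smallskip
\textbf{Step 3 (Per-tile density estimate).} For each $\Tp\in\TA$, choose $\lambda=2(\Delta(\Tp,Q)+1)\geq 2$. Since $\calQ(\Tp)\subseteq B_{I_\Tp}(Q_\Tp,1)\subseteq B_{I_\Tp}(Q_\Tp,\lambda)=\calQ(\lambda\Tp)$, we have $E(\Tp)\subseteq E(\lambda\Tp)$. Applying the definition of $\mdens_k$ with $\Tp'=\Tp$ gives
\[
\abs{E(\Tp)}\leq\abs{E(\lambda\Tp)}\leq\lambda^{\dim\calQ}\mdens_k(\Tp)\abs{I_\Tp}
\lesssim\Delta(\Tp,Q)^{\dim\calQ}\mdens_k(\TA)\abs{I_\Tp}.
\]
Combined with the trivial $\abs{E(\Tp)}\leq\abs{I_\Tp}$, this gives $\abs{E(\Tp)}\lesssim\min(1,\Delta^{\dim\calQ}\mdens_k(\TA))\abs{I_\Tp}$.

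\smallskip
\textbf{Step 4 (Dyadic decomposition and summation).} Partition $\TA=\bigsqcup_{m\geq 0}\TA_m$ with $\TA_m=\{\Tp\in\TA:2^m\leq\Delta(\Tp,Q)<2^{m+1}\}$. By the disjointness of the $E(\Tp)$'s inside each $\TA_m$, the sum equals $\abs{\cup_{\Tp\in\TA_m}E(\Tp)}$, which is at most $\abs{\cup I_\Tp}$. On the other hand, for a fixed spatial cube $I\in\calI_m:=\{I_\Tp:\Tp\in\TA_m\}$, the at most $O(2^{m\dim\calQ})$ tiles with $I_\Tp=I$ in $\TA_m$ have uncertainty regions partitioning a subset of $B_I(Q,2^{m+1})$, and Step~3 applied uniformly inside $I$ yields
\[
\sum_{\Tp\in\TA_m,\,I_\Tp=I}\abs{E(\Tp)}\lesssim 2^{m\dim\calQ}\mdens_k(\TA)\abs{I}.
\]
Interpolating the two bounds at the level of the union $\cup_{\Tp\in\TA_m}E(\Tp)$ and summing
\[
\sum_{m\geq 0}2^{-m\alpha}\min(1,2^{m\dim\calQ}\mdens_k(\TA))\lesssim\mdens_k(\TA)^{\alpha/\dim\calQ}
\]
(split at the critical index $m_0\sim(\log\mdens_k(\TA)^{-1})/\dim\calQ$) then yields the claim with $\epsilon=1/\dim\calQ$.

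\smallskip
\textbf{Main obstacle.} The delicate point is passing from the per-tile density bound to a bound on $\abs{\cup_{\Tp\in\TA_m}E(\Tp)}$ that is uniform in $\abs{U}$: a naive sum over $\TA_m$ produces $\sum_{\Tp\in\TA_m}\abs{I_\Tp}$, which can be substantially larger than $\abs{\cup I_\Tp}$ because an antichain may carry tiles with strictly nested spatial cubes. Handling this requires simultaneously using (i) the disjointness of the $E(\Tp)$'s across scales, (ii) the per-scale count $O(2^{m\dim\calQ})$ of uncertainty regions near $Q$, and (iii) the fact that, within an antichain, nested $I_\Tp$'s carry pairwise disjoint uncertainty regions—so that the counting function of $\calI_m$ stays under control on the support of $\cup E(\Tp)$.
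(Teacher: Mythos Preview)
Your Steps 1--3 are correct, and the reduction in Step 2 is fine (in fact a touch stronger than needed). The gap is precisely where you flag it in the ``Main obstacle'' paragraph: Step 4 tacitly uses
\[
\Bigl|\bigcup_{\Tp\in\TA_m}E(\Tp)\Bigr|\ \lesssim\ 2^{m\dim\calQ}\,\mdens_k(\TA)\,\Bigl|\bigcup_{\Tp\in\TA}I_\Tp\Bigr|,
\]
and nothing you have written establishes it. Your per-cube bound $\sum_{I_\Tp=I}|E(\Tp)|\lesssim 2^{m\dim\calQ}\delta|I|$ is correct, but summing it over $I\in\calI_m$ produces $\sum_{I\in\calI_m}|I|$ on the right, and the antichain hypothesis gives no control on this sum in terms of $|\cup I_\Tp|$: a nested chain $I_1\supsetneq\cdots\supsetneq I_N$ of arbitrary length can each carry a tile in $\TA_m$, provided their uncertainty regions are pairwise disjoint, and this is perfectly compatible with all of them satisfying $\Delta\sim 2^m$. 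The ingredients (i)--(iii) you list are all true but do not, as stated, bound the overlap of $\calI_m$.

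The paper closes exactly this gap by a spatial stopping-time decomposition rather than a dyadic split in $\Delta$. It thresholds once, setting $\TA'=\{\Tp:\Delta(\Tp,Q)<\delta^{-\epsilon}\}$, and covers $\cup_{\Tp\in\TA'}I_\Tp$ by the \emph{maximal} grid cubes $L$ strictly contained in some $I_\Tp$ but containing no $I_\Tp$. The point of this choice is that the parent $\hat L$ hosts a tile $\Tp_L'$ with central polynomial near $Q$ such that $\lambda\Tp_L'\leq\Tp$ for \emph{every} $\Tp\in\TA'$ with $I_\Tp\supseteq L$ (with $\lambda\sim\delta^{-\epsilon}$); hence all the sets $E(\Tp)\cap L$ lie inside the single set $E(\lambda\Tp_L')$, and one application of the density definition gives $\sum_{\Tp\in\TA'}|E(\Tp)\cap L|\leq|E(\lambda\Tp_L')|\lesssim\delta^{1-\epsilon\dim\calQ}|L|$. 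Summing over the disjoint $L$'s and choosing $\epsilon=1/(\dim\calQ+1)$ finishes the proof. The same device, run for each $m$ with $\lambda\sim 2^m$, would also repair your Step 4 and deliver your stated $\epsilon=1/\dim\calQ$; but the stopping cubes $\calL$ are the missing idea.
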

\begin{proof}
Since the sets $E(\Tp)$, $\Tp\in\TA$, are disjoint, the claimed estimate clearly holds for $\rho=\infty$.
Hence by H\"older's inequality it suffices to consider $\rho=1$.
Let also $\delta=\mdens_{k}(\TA)$.
We have to show
\[
\sum_{\Tp\in\TA} \Delta(\Tp,Q)^{-\eta} \abs{E(\Tp)}
\lesssim
\delta^{\eta\epsilon} \abs{S},
\quad
S=\cup_{\Tp\in\TA} I_{\Tp}.
\]
Let $\epsilon>0$ be a small number to be chosen later and split the summation in two parts.
For those $\Tp\in\TA$ with $\Delta(\Tp,Q) \geq \delta^{-\epsilon}$ the estimate is clear because the sets $E(\Tp)$ are pairwise disjoint.

Let $\TA' = \Set{\Tp\in\TA \given \Delta(\Tp,Q) < \delta^{-\epsilon}}$ and consider the collection $\calL$ of the maximal grid cubes $L\in\calD$ such that $L\subsetneq I_{\Tp}$ for some $\Tp\in\TA'$ and $I_{\Tp} \not\subseteq L$ for all $\Tp\in\TA'$.
The collection $\calL$ is a disjoint cover of the set $\cup_{\Tp\in\TA'} I_{\Tp}$.
Fix $L\in\calL$; we will show that
\[
\sum_{\Tp\in\TA'} \abs{E(\Tp) \cap L}
\lesssim
\delta^{1-\epsilon \dim\calQ} \abs{L}.
\]
The conclusion will follow with $\epsilon = 1/(\dim\calQ+1)$.

By construction $\hat{L} \in \calC_{k}$ and there exists a tile $\Tp_{L}\in\TA'$ with $I_{\Tp_{L}}\subseteq\hat{L}$.
If $I_{\Tp_{L}}=\hat{L}$ let $\Tp_{L}':=\Tp_{L}$, otherwise let $\Tp_{L}'$ be the unique tile with $I_{\Tp_{L}'} = \hat{L}$ and $Q\in\calQ(\Tp_{L}')$.
In both cases with $\lambda = C \delta^{-\epsilon}$ for a sufficiently large constant $C$ the tile $\Tp_{L}'$ satisfies
\begin{enumerate}
\item $\lambda\Tp_{L} \leq \lambda\Tp_{L}'$ and
\item for every $\Tp\in\TA'$ with $L\cap I_{\Tp} \neq \emptyset$ we have $\lambda\Tp_{L}' \leq \Tp$.
\end{enumerate}
In view of disjointness of $E(\Tp)$'s this implies
\[
\sum_{\Tp\in\TA'} \abs{E(\Tp) \cap L}
\leq
\abs{E(\lambda\Tp_{L}')}
\leq
\lambda^{\dim\calQ} \abs{I_{\Tp_{L}'}} \mdens_{k}(\Tp_{L})
\lesssim
\delta^{1-\epsilon \dim\calQ} \abs{L}.
\qedhere
\]
\end{proof}

For a tree $\TT$ the \emph{boundary component} is defined by
\begin{equation}
\label{eq:T-boundary}
\bd(\TT) := \Set{ \Tp\in\TT \given I_{\Tp}^{*} \not\subseteq I_{\TT}}.
\end{equation}
Notice that $\bd(\TT)$ is an up-set: if $\Tp\in\bd(\TT)$, $\Tp'\in\TT$, $\Tp \leq \Tp'$, then $I_{\Tp'}^{*} \supseteq I_{\Tp}^{*}$, so that also $\Tp'\in\bd(\TT)$.
In particular, $\TT \setminus \bd(\TT)$ is still a (convex) tree.

\begin{proposition}\label{prop:sf}
Fix $n,j$ and let either $\TS = \cup_{k}\cup_{l} \bd(\TT_{n,k,j,l})$ or $\TS = \cup_{k}\TA_{n,k,j}$.
Then
\begin{equation}
\label{eq:sf}
\norm{T_{\TS}}_{2 \to 2}
\lesssim
2^{-\epsilon n}.
\end{equation}
\end{proposition}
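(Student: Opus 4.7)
The plan is a $TT^{*}$ argument based on the pairwise bound of Lemma~\ref{lem:sep-tiles}. Writing
\[
\norm{T_\TS^* g}_2^2 = \sum_{\Tp_1, \Tp_2 \in \TS} \<T_{\Tp_1}^* g, T_{\Tp_2}^* g\>
\]
and applying Lemma~\ref{lem:sep-tiles} to each inner product, it suffices by symmetry and separate treatment of the diagonal to control the sum restricted to pairs with $\abs{I_{\Tp_1}} \leq \abs{I_{\Tp_2}}$. The support property \eqref{eq:Ttile-supp} further restricts the sum to pairs with $I_{\Tp_1}^* \cap I_{\Tp_2}^* \neq \emptyset$, which forces $I_{\Tp_1}\subseteq C I_{\Tp_2}$; in particular, for $x \in E(\Tp_1)$ one has $\abs{I_{\Tp_2}}^{-1}\int_{E(\Tp_2)}\abs{g} \leq Mg(x)$. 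After this reduction, the double sum is dominated by
\[
\sum_{\Tp_1}\int_{E(\Tp_1)} \abs{g(x)}\,Mg(x) \Bigl(\sum_{\Tp_2} \Delta(\Tp_1,Q_{\Tp_2})^{-\tau/d} \one_{E(\Tp_2)}(x)\Bigr)\dif x,
\]
so the main task becomes estimating the inner sum in an appropriate $L^{\infty}$ (or $L^{\rho}$) sense.

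For the antichain case $\TS=\cup_k \TA_{n,k,j}$, the inner sum is handled at each fixed generation $k'$ by Lemma~\ref{lem:antichain-supp} applied with $\eta=\tau/d$ and $Q=Q_{\Tp_1}$, yielding a bound by $\mdens_{k'}(\TA_{n,k',j})^{\epsilon\tau/d}\lesssim 2^{-\epsilon n}$ uniformly in $\Tp_1$. Summing over generations $k'$ is controlled using that the sets $E(\Tp_2)$ within a single antichain are pairwise disjoint and that $E(\Tp_1)$ cannot meet too many spatial cubes of any fixed scale. The resulting bilinear form $\int \abs{g}\,Mg$ is then absorbed into $\norm{g}_2^2$ by Cauchy--Schwarz, and the extra gain from the density bound applied to $\Tp_1$ itself (not only $\Tp_2$) redistributes $2^{-\epsilon n}$ on both sides of the inner product to give the asserted square bound.

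For the boundary case $\TS=\cup_k\cup_l \bd(\TT_{n,k,j,l})$, the double sum splits into contributions of pairs lying in the same tree and pairs lying in different trees (with same or cross generation handled analogously). When $\Tp_1\in \TT$ and $\Tp_2\in\TT'$ belong to different trees of a Fefferman forest $\TF_{n,k,j}$, the $2^{Cn}$-separation of Definition~\ref{def:sep}, combined with the triangle inequality for $\norm{\cdot}_I$ and Lemma~\ref{lem:normQ}, forces $\Delta(\Tp_1,Q_{\Tp_2})\gtrsim 2^{Cn}$, producing a factor $2^{-Cn\tau/d}$ that dominates the Fefferman counting loss from \eqref{eq:F-forest-counting}. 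For same-tree pairs, the boundary condition $I_\Tp^*\not\subseteq I_\TT$ confines the spatial cubes of $\bd(\TT)$ to a thin collar near $\partial I_\TT$; their contribution to the double sum, weighted by the density bound $\abs{E(\Tp)}/\abs{I_\Tp}\lesssim 2^{-n}$, supplies the required $2^{-\epsilon n}$ gain after a geometric summation over scales within the tree.

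The main obstacle is the bookkeeping: ensuring that the gain $2^{-\epsilon n}$ is extracted once and not diluted by combinatorial losses, and that cross-generation terms, where the stratifications $\calC_k$ interact in a subtle way, do not spoil the clean estimate. In the boundary case, verifying that the frequency separation of trees propagates reliably to individual boundary tiles and the coordinating with the scale difference $\scale(\Tp)-\scale(\TT)$ requires attentive application of \eqref{eq:normQ:parent} to absorb the size change when comparing $\norm{\cdot}_{I_\Tp}$ with $\norm{\cdot}_{I_\TT}$.
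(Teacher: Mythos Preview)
Your reduction has the roles of $\Tp_1$ and $\Tp_2$ reversed relative to what Lemma~\ref{lem:antichain-supp} can handle, and the displayed formula is not a valid consequence of Lemma~\ref{lem:sep-tiles}. After that lemma the bound is
\[
\sum_{\Tp_1,\Tp_2}\Delta(\Tp_1,Q_{\Tp_2})^{-\tau/d}\,\frac{1}{\abs{I_{\Tp_2}}}\int_{E(\Tp_1)}\abs{g}\int_{E(\Tp_2)}\abs{g},
\]
and replacing $\abs{I_{\Tp_2}}^{-1}\int_{E(\Tp_2)}\abs{g}$ by $Mg(x)$ for $x\in E(\Tp_1)$ leaves \emph{no} factor $\one_{E(\Tp_2)}(x)$. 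With that factor inserted, your formula only sees pairs with $x\in E(\Tp_1)\cap E(\Tp_2)$, which in an antichain forces $\Tp_1=\Tp_2$ and discards almost every term. Without it, the inner sum becomes $\sum_{\Tp_2}\Delta(\Tp_1,Q_{\Tp_2})^{-\tau/d}$ over all larger $\Tp_2\in\TS$ with $I_{\Tp_1}\subset CI_{\Tp_2}$, and Lemma~\ref{lem:antichain-supp} does not apply: that lemma controls $\sum_{\Tp}\Delta(\Tp,Q)^{-\eta}\one_{E(\Tp)}$ where $\Delta(\Tp,Q)$ is measured in the \emph{varying} norm $\norm{\cdot}_{I_\Tp}$, while your inner sum has the fixed norm $\norm{\cdot}_{I_{\Tp_1}}$ and a varying polynomial $Q_{\Tp_2}$. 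Since $I_{\Tp_1}\subset I_{\Tp_2}$ gives $\Delta(\Tp_2,Q_{\Tp_1})\geq\Delta(\Tp_1,Q_{\Tp_2})$, the lemma bounds a strictly smaller quantity. In fact no uniform bound on your inner sum is available: at every scale above $\scale(\Tp_1)$ an antichain may contain a tile $\Tp_2$ with $I_{\Tp_1}\subset I_{\Tp_2}$ and $\calQ(\Tp_2)$ adjacent to $\calQ(\Tp_1)$, so $\Delta(\Tp_1,Q_{\Tp_2})\sim 1$ for arbitrarily many terms, and no density gain materializes.

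The paper proceeds in the opposite direction: fix the \emph{larger} tile $\Tp'$, sum over smaller $\Tp\in\TD(\Tp')$, and apply H\"older (not a pointwise maximal bound) to isolate $\abs{I_{\Tp'}}^{-1/q'}\norm{\sum_{\Tp\in\TD(\Tp')}\Delta(\Tp,Q_{\Tp'})^{-\tau/d}\one_{E(\Tp)}}_{q'}$. This \emph{is} amenable to Lemma~\ref{lem:antichain-supp}, but only after $\TD(\Tp')$ is split into antichains $\TA_{k,m}$ whose spatial supports decay geometrically in $k$ and $m$; that decay comes from the collar geometry of $\bd(\TT)$ together with the stopping estimate~\eqref{eq:Lie-support-decay} and Lemma~\ref{lem:spatial-decomposition}\eqref{it:stopping-neighbors}, \emph{not} from the $2^{Cn}$-separation of trees (which the paper reserves for the normal parts in Lemma~\ref{lem:sep-trees}, and which says nothing about same-tree or cross-generation boundary pairs). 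Finally, the outer sum $\sum_{\Tp'}\int_{E(\Tp')}\abs{g}\,(g)_{5I_{\Tp'},q}$ is not absorbed by Cauchy--Schwarz on $\int\abs{g}\,Mg$; it requires the Carleson packing condition~\eqref{eq:sparse-Lie-tree-Carl} and a sparse stopping-time argument on $(g)_{5I,q}$.
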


\begin{proof}
We start by creating additional scale separation by restricting $k$ to a fixed congruence class modulo $2$.

For $\Tp'\in\TS$ let $\TD(\Tp') := \Set{ \Tp\in\TS \given \scale(\Tp) \leq \scale(\Tp') \land I_{\Tp}^{*}\cap I_{\Tp'}^{*}\neq\emptyset}$.
Then $I_{\Tp} \subset 5 I_{\Tp'}$ for $\Tp\in\TD(\Tp')$.
By Lemma~\ref{lem:sep-tiles} we have
\begin{align*}
\int \abs[\big]{ T_{\TS}^{*}g }^2
&\leq
2 \sum_{\Tp'\in\TS} \sum_{\Tp\in\TD(\Tp')}
\abs[\Big]{\int T_{\Tp'}^{*}g \overline{T^{*}_{\Tp}g} }\\
&\lesssim
\sum_{\Tp'\in\TS}\int_{E(\Tp')}\abs{g} \sum_{\Tp\in \TD(\Tp')} \Delta(\Tp,Q_{\Tp'})^{-\tau/d} \frac{\int_{E(\Tp)}\abs{g}}{\abs{I_{\Tp'}}}.
\end{align*}
By H\"older's inequality with exponent $1<q<2$ this is
\begin{equation}
\label{eq:Vf}
\leq
\sum_{\Tp'\in\TS}\int_{E(\Tp')} \abs{g} \left(\frac{\int_{5I_{\Tp'}} \abs{g}^q}{\abs{I_{\Tp'}}}\right)^{\frac{1}{q}}
\frac{\norm{ \sum_{\Tp\in \TD(\Tp')} \Delta(\Tp,Q_{\Tp'})^{-\tau/d} \one_{E(\Tp)} }_{q'}}{\abs{I_{\Tp'}}^{\frac{1}{q'}}}.
\end{equation}

First we will show that the last fraction is $O(2^{-\epsilon n})$ uniformly in $\Tp'$.
Let $k'$ be the integer for which $\Tp' \in \TP_{k'}$.
Let $\Tp \in \TD(\Tp') \cap \TP_{k}$ and suppose that $k<k'-1$.
There is a unique grid cube $I$ with $I_{\Tp}\subseteq I\in\calD_{s(\Tp')}$ and a unique stopping cube $F'$ with $I_{\Tp'} \subsetneq F' \in \calF_{k'-1}$.
Then in particular $s(I)<s(F')$ and $3I\cap F' \neq \emptyset$.
Therefore by part \eqref{it:stopping-neighbors} of Lemma~\ref{lem:spatial-decomposition} the cube $I_{\Tp}\subseteq I$ is contained in a stopping cube of generation $k'-1$, a contradiction.
Since we have restricted $k$ to a fixed congruence class modulo $2$ it follows that $\TD(\Tp') \subset \cup_{k\geq k'} \TP_{k}$.

Now we estimate the spatial support of $\TD(\Tp') \cap \TP_{k}$.
If $F\in\calF_{k'+1}$ and $F\cap 5I_{\Tp'} \neq \emptyset$, then $s(F) \leq s(\Tp')$, since otherwise an ancestor of $I_{\Tp'}$ would have been included in $\calF_{k'+1}$ by part \eqref{it:stopping-neighbors} of Lemma~\ref{lem:spatial-decomposition}.
Therefore by \eqref{eq:Lie-support-decay} for $k>k'$ we have
\begin{multline}
\label{eq:Dp'-support}
\meas[\big]{\bigcup_{F\in\calF_{k}} F \cap 5 I_{\Tp'}}
\lesssim
\meas[\big]{\bigcup_{F\in\calF_{k} : F\cap 5 I_{\Tp'} \neq \emptyset} F}\\
\lesssim
e^{k'-k}
\meas[\big]{\bigcup_{F\in\calF_{k'+1} : F\cap 5 I_{\Tp'} \neq \emptyset} F}
\lesssim
e^{k'-k} \meas{I_{\Tp'}},
\end{multline}
and the same estimate also clearly holds for $k=k'$.

Next we decompose $\TD(\Tp')$ into antichains.
Consider first the case $\TS = \cup_{k,l} \bd(\TT_{n,k,j,l})$.
For $k\geq k'$ and $m\geq 0$ let
\[
\TA_{k,m} := \bigcup_{l} \Set{ \Tp \in \TD(\Tp') \cap \bd(\TT_{n,k,j,l}) \given \scale(\Tp) = s(k,l) - m },
\]
where
\[
s(k,l) :=
\begin{cases}
\min(s(\top \TT_{n,k',j,l}), s(\Tp')) & \text{if } k=k',\\
s(\top \TT_{n,k,j,l}) & \text{if } k>k'.
\end{cases}
\]
The sets $\TA_{k,m}$ are pairwise disjoint antichains and partition $\TD(\Tp') = \cup_{k\geq k', m\geq 0}\TA_{k,m}$.
We have
\begin{align*}
\abs[\Big]{\bigcup_{\Tp\in \TA_{k',m}} I_{\Tp}}
&\leq
\sum_{l} \meas[\Big]{5 I_{\Tp'} \cap \bigcup_{\substack{\Tp\in \bd(\TT_{n,k',j,l}) :\\ s(\Tp) = s(k',l) - m}} I_{\Tp}}\\
&\leq
\sum_{l} \meas[\Big]{5 I_{\Tp'} \cap \Set{x\in I_{\TT_{n,k',j,l}} \given \dist(x,\R^{\ds}\setminus I_{\TT_{n,k',j,l}}) < C D^{s(k',l) - m}} }\\
&\lesssim
D^{-m} \sum_{l} \meas[\Big]{5 I_{\Tp'} \cap I_{\TT_{n,k',j,l}}}\\
&\lesssim
D^{-m} 2^{n} \log(n+1) \meas{I_{\Tp'}},
\end{align*}
where we have used \eqref{eq:F-forest-counting} in the last step.
Analogously, using \eqref{eq:Dp'-support} for $k>k'$ we obtain
\begin{align*}
\abs[\Big]{\bigcup_{\Tp\in \TA_{k,m}} I_{\Tp}}
&\leq
\sum_{l : I_{\TT_{n,k,j,l}} \cap 5I_{\Tp'} \neq \emptyset} \meas[\Big]{\bigcup_{\substack{\Tp\in \bd(\TT_{n,k,j,l}) :\\ s(\Tp) = s(k,l) - m}} I_{\Tp}}\\
&\leq
\sum_{l : I_{\TT_{n,k,j,l}} \cap 5I_{\Tp'} \neq \emptyset}
\meas[\Big]{\Set{x\in I_{\TT_{n,k,j,l}} \given \dist(x,\R^{\ds}\setminus I_{\TT_{n,k,j,l}}) < C D^{s(k,l) - m}} }\\
&\lesssim
\sum_{l : I_{\TT_{n,k,j,l}} \cap 5I_{\Tp'} \neq \emptyset}
D^{-m} \meas[\Big]{I_{\TT_{n,k,j,l}}}\\
&\lesssim
D^{-m} \sum_{F\in\calF_{k} : F\cap 5 I_{\Tp'} \neq \emptyset} \meas{F} 2^{n} \log(n+1)\\
&\lesssim
e^{k'-k} D^{-m} 2^{n} \log(n+1) \meas{I_{\Tp'}}.
\end{align*}
Combining this with a trivial estimate coming from \eqref{eq:Dp'-support} we obtain
\begin{equation}
\label{eq:Akm-support}
\abs[\Big]{\bigcup_{\Tp\in \TA_{k,m}} I_{\Tp}} \lesssim e^{k'-k} \min(1,C2^{n}\log(n+1) D^{-m}) \abs{I_{\Tp'}}.
\end{equation}
In the case $\TS = \cup_{k} \TA_{n,k,j}$ we define $\TA_{k,0} := \TA_{n,k,j} \cap \TD(\Tp')$ and $\TA_{k,m} := \emptyset$ for $m>0$.
The estimate \eqref{eq:Akm-support} also holds in this case.

Using Lemma~\ref{lem:antichain-supp} with $\rho=q'$ and $0\leq\eta\leq 1$ and \eqref{eq:Akm-support} it follows that
\begin{multline}\label{eq:sparse-Lie-tree-Carl-Delta}
\frac{\norm{ \sum_{\Tp\in \TD(\Tp')} \Delta(\Tp,Q_{\Tp'})^{-\eta} \one_{E(\Tp)} }_{\rho}}{\abs{I_{\Tp'}}^{1/\rho}}
\leq
\sum_{k\geq k', m\geq 0} \frac{\norm{ \sum_{\Tp\in \TA_{k,m}} \Delta(\Tp,Q_{\Tp'})^{-\eta} \one_{E(\Tp)} }_{\rho}}{\abs{I_{\Tp'}}^{1/\rho}}\\
\lesssim
2^{-\epsilon\eta n/\rho} \sum_{k\geq k', m\geq 0} \frac{\abs{\cup_{\Tp\in \TA_{k,m}} I_{\Tp}}^{1/\rho}}{\abs{I_{\Tp'}}^{1/\rho}}\\
\lesssim
2^{-\epsilon\eta n/\rho} \sum_{k\geq k', m\geq 0} e^{(k'-k)/\rho} \min(1,C2^{n}\log(n+1) D^{-m})^{1/\rho}\\
\lesssim_{\rho}
2^{-\epsilon\eta n/\rho} n \sum_{k\geq k'} e^{(k'-k) / \rho}
\lesssim_{\rho}
2^{-\epsilon\eta n/\rho} n.
\end{multline}
Using the estimate \eqref{eq:sparse-Lie-tree-Carl-Delta} with $\eta=\tau/d$ in the last factor of \eqref{eq:Vf} we obtain the claimed exponential decay in $n$.

In order to conclude it now suffices to show
\[
\sum_{\Tp\in\TS} \int_{E(\Tp)} \abs{g} (g)_{5I_{\Tp},q}
\lesssim
n \norm{g}_2^{2},
\text{ where }
(g)_{5I,q} := (\abs{I}^{-1} \int_{5I} \abs{g}^{q} )^{1/q}.
\]
Similarly to the estimate \eqref{eq:sparse-Lie-tree-Carl-Delta} with $\eta=0$ we obtain the Carleson packing condition
\begin{equation}
\label{eq:sparse-Lie-tree-Carl}
\norm{ \sum_{\Tp\in\TS : I_{\Tp}\subseteq J}\one_{E(\Tp)} }_{\rho} \lesssim_{\rho} n \abs{J}^{1/\rho},
\quad 1\leq \rho<\infty.
\end{equation}
Let $\calS\subset\calD$ be the stopping time associated to the average $(g)_{5I,q}$, that is, $\ch_{\calS}(I)$ are the maximal cubes $J \subset I$ with $(g)_{5J,q} > C (g)_{5I,q}$ for some large constant $C$.
Since the $q$-maximal operator \eqref{eq:HL-q-max-op} has weak type $(q,q)$, the family $\calS$ is \emph{sparse} in the sense that there exist pairwise disjoint subsets $\calE(I) \subseteq I\in\calS$ with $\abs{\calE(I)} \gtrsim \abs{I}$ (one can take $\calE(I) = I \setminus \cup_{J\in\ch_{\calS}(I)} J$).
Then
\begin{align*}
\sum_{\Tp\in\TS} \int_{E(\Tp)} \abs{g}  (g)_{5I_{\Tp},q}
&\lesssim
\sum_{I\in\calS} (g)_{5I,q} \int \abs{g} \sum_{\Tp\in\TS, I_{\Tp}\subseteq I} \one_{E(\Tp)}\\
\text{by H\"older}&\leq
\sum_{I\in\calS} (g)_{5I,q} \abs{I} (g)_{I,q} \Big( \abs{I}^{-1} \int_{I} \big( \sum_{\Tp\in\TS, I_{\Tp}\subseteq I} \one_{E(\Tp)} \big)^{q'} \Big)^{1/q'}\\
\text{by \eqref{eq:sparse-Lie-tree-Carl} and sparseness}&\lesssim
n \sum_{I\in\calS} (g)_{5I,q} \abs{\calE(I)} (g)_{I,q}\\
\text{by disjointness}&\lesssim
n \int (M_{q}g)^{2}
\lesssim
n \norm{g}_{2}^{2},
\end{align*}
where we have used the strong type $(2,2)$ inequality for $M_{q}$, $q<2$, in the last step.
\end{proof}

\subsection{Localization}
In order to handle exponents $p\neq 2$ we localize the operator $T_{\TS}$.
\begin{proposition}
\label{prop:sf-loc}
Let $\TS$ be as in Proposition~\ref{prop:sf}.
Let $F,G \subseteq \R^{\ds}$ be such that
\begin{equation}
\label{eq:sf-loc:assume}
\meas{I_{\Tp} \cap G} \lesssim \nu \meas{I_{\Tp}}
\text{ and }
\meas{5I_{\Tp} \cap F} \lesssim \kappa \meas{I_{\Tp}}
\text{ for every }
\Tp\in\TS.
\end{equation}
Then for every $0\leq \alpha < 1/2$ we have
\begin{equation}
\label{eq:sf-loc}
\norm{\one_{G} T_{\TS} \one_{F}}_{2 \to 2}
\lesssim_{\alpha}
\nu^{\alpha} \kappa^{\alpha} 2^{-\epsilon n}.
\end{equation}
\end{proposition}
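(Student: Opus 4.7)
The plan is to establish \eqref{eq:sf-loc} by interpolation between the exponential-decay estimate of Proposition~\ref{prop:sf} and a complementary bound that exploits the density hypothesis \eqref{eq:sf-loc:assume} but tolerates only polynomial loss in $n$.

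\medskip\noindent\textbf{Step 1 (Global bound).} Proposition~\ref{prop:sf} immediately yields
\[
\|\one_{G} T_{\TS} \one_{F}\|_{2\to 2} \leq \|T_{\TS}\|_{2\to 2} \lesssim 2^{-\epsilon_{0} n}
\]
for some $\epsilon_{0} > 0$. This takes care of the case $\alpha = 0$.

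\medskip\noindent\textbf{Step 2 (Density bound with polynomial loss).} I would prove
\begin{equation}\label{eq:loc-step2-plan}
\|\one_{G} T_{\TS} \one_{F}\|_{2\to 2} \lesssim (\nu\kappa)^{1/2}\, n^{C_{0}}
\end{equation}
for an absolute $C_{0}$. The seed is the tilewise estimate
\[
\|\one_{G} T_{\Tp} \one_{F} f\|_{2} \lesssim (\nu\kappa)^{1/2} \|f\|_{L^{2}(2 I_{\Tp})},
\]
which follows from $|\CZK_{\scale(\Tp)}(x,y)| \lesssim |I_{\Tp}|^{-1}$ together with the density hypotheses $|I_{\Tp} \cap G| \lesssim \nu |I_{\Tp}|$ and $|2 I_{\Tp} \cap F| \lesssim \kappa |I_{\Tp}|$ via two applications of the Cauchy--Schwarz inequality. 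Using this and duality, for each ordered pair $\Tp, \Tp'$ one Cauchy--Schwarz in $L^{2}(F)$ gives
\[
\bigl|\langle \one_{F} T_{\Tp}^{*} \one_{G} g,\, T_{\Tp'}^{*} \one_{G} g\rangle \bigr| \lesssim (\nu\kappa)\, \|g\|_{L^{2}(I_{\Tp})} \|g\|_{L^{2}(I_{\Tp'})},
\]
while Lemma~\ref{lem:sep-tiles} independently produces the separation-decay bound $\Delta(\Tp,Q_{\Tp'})^{-\tau/d} |I_{\Tp'}|^{-1} \int_{E(\Tp)\cap G}|g| \int_{E(\Tp')\cap G}|g|$. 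Taking the geometric mean of these two upper bounds on the same quantity retains a $\Delta^{-\tau/(2d)}$-decay and, after using $\nu,\kappa \leq 1$ to simplify the emerging density exponents, produces a prefactor at most $(\nu\kappa)^{1/2}$. The remainder of the $TT^{*}$ argument from Proposition~\ref{prop:sf} now goes through \emph{with $\eta = \tau/(2d)$ in \eqref{eq:sparse-Lie-tree-Carl-Delta}}; the Carleson packing estimate \eqref{eq:sparse-Lie-tree-Carl} and the sparse-stopping argument on the $q$-maximal averages of $g$ close the estimate and contribute only polynomial powers of $n$, as the exponential decay is no longer being exploited. This gives \eqref{eq:loc-step2-plan}.

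\medskip\noindent\textbf{Step 3 (Interpolation).} For any two upper bounds $\|A\|_{2\to 2} \leq X$ and $\|A\|_{2\to 2} \leq Y$, one has $\|A\|_{2\to 2} \leq X^{1-\theta} Y^{\theta}$ for every $\theta \in [0,1]$. Applying this with $X \lesssim 2^{-\epsilon_{0} n}$ and $Y \lesssim (\nu\kappa)^{1/2} n^{C_{0}}$ yields
\[
\|\one_{G} T_{\TS} \one_{F}\|_{2\to 2} \lesssim 2^{-\epsilon_{0}(1-\theta) n}\, n^{C_{0} \theta}\, (\nu\kappa)^{\theta/2}.
\]
Given $0 \leq \alpha < 1/2$, setting $\theta = 2\alpha < 1$ and absorbing the polynomial $n^{C_{0}\theta}$ into the exponential factor at the price of slightly shrinking the positive constant in the exponent yields \eqref{eq:sf-loc}.

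\medskip\noindent\textbf{Main obstacle.} The hard part is Step~2: one must arrange the geometric-mean interpolation inside the $TT^{*}$ argument so that both the desired density factor $(\nu\kappa)^{1/2}$ and a strictly positive $\Delta$-decay survive simultaneously, and then verify that the Carleson packing machinery of Proposition~\ref{prop:sf} still closes with this weakened decay. The polynomiality in $n$ is essential: an exponential loss in Step~2 would not suffice to reach the full range $\alpha < 1/2$ after interpolation.
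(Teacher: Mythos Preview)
Your overall strategy --- prove a complementary density bound with only polynomial loss in $n$ and then take a geometric mean with Proposition~\ref{prop:sf} --- is exactly what the paper does. The problem is in the implementation of Step~2.

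You claim that Lemma~\ref{lem:sep-tiles} furnishes a second bound on the same quantity $\abs{\innerp{\one_F T_{\Tp}^* \one_G g}{T_{\Tp'}^* \one_G g}}$. It does not. The proof of Lemma~\ref{lem:sep-tiles} relies on the oscillatory integral in the $y$-variable (Lemma~\ref{lem:osc-int}), which requires H\"older regularity of the integrand; inserting the rough cutoff $\one_F(y)$ destroys that regularity. Hence Lemma~\ref{lem:sep-tiles} only controls the \emph{unrestricted} pairing $\abs{\innerp{T_{\Tp}^* \one_G g}{T_{\Tp'}^* \one_G g}}$, and your two upper bounds are bounds on two \emph{different} quantities, making the geometric-mean step invalid. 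Even granting this, the resulting mixed factor
\[
\bigl[\norm{g}_{L^2(I_\Tp)}\norm{g}_{L^2(I_{\Tp'})}\bigr]^{1/2}
\Bigl[\abs{I_{\Tp'}}^{-1}\!\int_{E(\Tp)\cap G}\!\abs{g}\int_{E(\Tp')\cap G}\!\abs{g}\Bigr]^{1/2}
\]
does not have the shape of \eqref{eq:Vf}, so the assertion that ``the remainder of the $TT^*$ argument now goes through'' is not justified either.

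The paper sidesteps this by \emph{abandoning $TT^*$ entirely} in the localized step. It uses only the trivial pointwise bound $\abs{T_{\Tp}f(x)}\lesssim \one_{E(\Tp)}(x)\,(\abs{f})_{5I_\Tp}$, then for $\alpha=1/q'<1/2$ writes $(\one_F f)_{5I_\Tp,1}\le (\one_F)_{5I_\Tp,q'}\,(f)_{5I_\Tp,q}$ and applies H\"older against the localized Carleson packing $\norm{\sum_{\Tp}\one_{E(\Tp)\cap G}}_{q'}\lesssim n\,\nu^{1/q'}\abs{J}^{1/q'}$, which is \eqref{eq:sparse-Lie-tree-Carl-loc}. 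The density hypotheses thus yield the factors $\kappa^{\alpha}$ and $\nu^{\alpha}$ directly, and the sparse stopping time together with the $L^2$-boundedness of $M_q$ (for $q<2$) closes the estimate with only a linear loss in $n$. This targets $n(\nu\kappa)^{\alpha}$ for each fixed $\alpha<1/2$ rather than the endpoint $(\nu\kappa)^{1/2}$ you aim for --- which is both simpler and sufficient after the geometric mean with \eqref{eq:sf}.
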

\begin{proof}
Taking a geometric average with \eqref{eq:sf} it suffices to show
\[
\norm{\one_{G} T_{\TS} \one_{F}}_{2 \to 2}
\lesssim
n \nu^{\alpha} \kappa^{\alpha}.
\]
To this end we replace \eqref{eq:antichain-supp} by the estimate
\[
\norm{ \sum_{\Tp\in \TA} \one_{E(\Tp) \cap G} }_{\rho}
\lesssim
\abs[\big]{\cup_{\Tp\in\TA} I_{\Tp} \cap G}^{1/\rho}
\lesssim
\nu^{1/\rho} \abs[\big]{\cup_{\Tp\in\TA} I_{\Tp}}^{1/\rho}
\]
for all antichains $\TA\subset\TS$.
Following the proof of the Carleson packing condition \eqref{eq:sparse-Lie-tree-Carl} we obtain
\begin{equation}
\label{eq:sparse-Lie-tree-Carl-loc}
\norm{ \sum_{\Tp\in\TS : I_{\Tp}\subseteq J}\one_{E(\Tp) \cap G} }_{\rho}
\lesssim_{\rho}
n \nu^{1/\rho} \abs{J}^{1/\rho},
\quad 1\leq \rho<\infty.
\end{equation}
Fix functions $f,g$ with $\supp f \subset F$ and $\supp g\subset G$.
Consider the stopping time $\calS \subset \Set{I_{\Tp} \given \Tp\in\TS }$ associated to the average $(f)_{5I,1}$ and let $\calE(I) \subset I\in\calS$ be pairwise disjoint subsets with $\abs{\calE(I)} \gtrsim \abs{I}$.
With $\alpha=1/q'$ we obtain
\begin{align*}
\int \abs{g T_{\TS} f}
&\lesssim
\sum_{\Tp\in\TS} (f)_{5 I_{\Tp}, 1} \int_{E(\Tp)} \abs{g}\\
&\lesssim
\sum_{I\in\calS} (f)_{5I,1} \int \sum_{\Tp\in\TS, I_{\Tp}\subset I} \one_{E(\Tp)} \abs{g}\\
&\lesssim
\sum_{I\in\calS} (f)_{5I,q} (\one_{F})_{5I,q'} \meas{I} (g)_{I,q} \Big( \meas{I}^{-1} \int \big(\sum_{\Tp\in\TS, I_{\Tp}\subset I} \one_{E(\Tp) \cap G} \big)^{q'} \Big)^{1/q'}\\
&\lesssim
n \kappa^{1/q'} \nu^{1/q'} \sum_{I\in\calS} (f)_{5I,q} \meas{\calE(I)} (g)_{I,q}\\
&\lesssim
n \kappa^{1/q'} \nu^{1/q'} \int (M_{q}f) (M_{q}g)\\
&\lesssim
n \kappa^{1/q'} \nu^{1/q'} \norm{M_{q}f}_{2} \norm{M_{q}g}_{2}\\
&\lesssim
n \kappa^{1/q'} \nu^{1/q'} \norm{f}_{2} \norm{g}_{2}.
\qedhere
\end{align*}
\end{proof}

\section{Estimates for trees and forests}
In this section we consider the bulk of tiles that are organized into trees.
The contribution of each tree will be estimated by a maximally truncated operator associated to the kernel $\CZK$.

\subsection{Cotlar's inequality}
We call a subset $\sigma\subset\Z$ \emph{convex} if it is order convex, that is, $s_{1}<s<s_{2}$ and $s_{1},s_{2}\in\sigma$ implies $s\in\sigma$.
For a measurable function $\sigma$ that maps $\R^{\ds}$ to the set of finite convex subsets of $\Z$ we consider the associated truncated singular integral operator
\begin{equation}
\label{eq:T-trunc}
T_{\sigma}f(x) := \sum_{s\in\sigma(x)} \int \CZK_{s}(x,y) f(y) \dif y.
\end{equation}
An inspection of the proof of Cotlar's inequality, see e.g.\ \cite[Section I.7.3]{MR1232192}, shows that the non-tangentially maximally truncated operator
\begin{equation}
\label{eq:nontang-max-trunc}
T_{\calN}f (x) := \sup_{\sigma}\sup_{\abs{x-x'} \leq C D^{\min\sigma(x)}} \abs{T_{\sigma}f(x')},
\end{equation}
is bounded on $L^{p}(\R^{\ds})$, $1<p<\infty$ (more precisely, the proof of Cotlar's inequality shows that this holds if the constant $C$ is sufficiently small, see also \cite[Lemma 3.2]{MR3484688}; one can subsequently pass to larger values of $C$, see e.g.\ \cite[Section II.2.5.1]{MR1232192}).
We refer to this fact as the non-tangential Cotlar inequality.

We will use truncated singular integral operators with sets of scales given by trees.
\begin{definition}
\label{def:T-scales}
For a tree $\TT$ we define
\begin{align*}
\sigma (\TT,x) &:=
\Set{\scale(\Tp) \given \Tp\in\TT, x\in E(\Tp) },\\
\smax (\TT, x) &:= \max \sigma(x),\\
\smin (\TT,x) &:= \min \sigma(x).
\end{align*}
\end{definition}
We will omit the argument $\TT$ if it is clear from the context.
By construction of the set of all tiles $\TP$ the set $\sigma (\TT,x)$ is convex in $\Z$ for every tree $\TT$ and every $x\in\R^{\ds}$.

\subsection{Tree estimate}
\begin{definition}
\label{def:leaves}
For a non-empty finite collection of tiles $\TS\subset\TP$
\begin{enumerate}
\item let $\calJ(\TS) \subset \calD$ be the collection of the maximal grid cubes $J$ such that $100 D J$ does not contain $I_{\Tp}$ for any $\Tp\in\TS$ and
\item let $\calL(\TS) \subset \calD$ be the collection of the maximal grid cubes $L$ such that $L\subsetneq I_{\Tp}$ for some $\Tp\in\TS$ and $I_{\Tp} \not\subseteq L$ for all $\Tp\in\TS$.
\end{enumerate}

For a collection of pairwise disjoint grid cubes $\calJ\subset\calD$ we define the projection operator
\begin{equation}
\label{eq:positive-tree-proj}
P_{\calJ}f := \sum_{J\in\calJ} \one_{J} \meas{J}^{-1} \int_{J} f.
\end{equation}
\end{definition}
For later use we note the scales of adjacent cubes in $\calJ(\TS)$ differ at most by $1$ in the sense that if $J,J'\in\calJ$ and $\dist(J,J')\leq 10\max(\ell(J),\ell(J'))$, then $\abs{s(J)-s(J')} \leq 1$.
Indeed, if $J,J'\in\calJ$, $s(J) \leq s(J')-2$, and $\dist(J,J') \leq 10 \ell(J')$, then $100D\hat{J} \subset 100DJ'$ does not contain any $I_{\Tp}$, $\Tp\in\TS$, contradicting maximality of $J$.
\begin{lemma}[Tree estimate]\label{lem:tree}
Let $\TT\subseteq\TP$ be a tree, $\calJ := \calJ(\TT)$, and $\calL := \calL(\TT)$.
Then for every $1 < p < \infty$, $f\in L^{p}(\R^{\ds})$, and $g\in L^{p'}(\R^{\ds})$ we have
\begin{equation}\label{eq:tree:proj}
\abs[\Big]{\int_{\R^{\ds}} g T_{\TT} f}
\lesssim
\norm{P_{\calJ} \abs{f}}_{p} \norm{P_{\calL} \abs{g}}_{p'}.
\end{equation}
\end{lemma}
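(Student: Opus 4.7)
The plan is to reduce the tree operator to a truncated Calderón--Zygmund operator (to which the non-tangential Cotlar inequality applies) by modulating $f$ and $g$ by $e(\pm Q_{\TT})$, and then to exploit the Hölder regularity of the kernel together with the mean-zero properties of $f - P_{\calJ}f$ and $g - P_{\calL} g$ to reach the stated bound.

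First I would set $\tilde f(y) := e(-Q_{\TT}(y)) f(y)$ and $\tilde g(x) := e(Q_{\TT}(x)) g(x)$ and factor
\[
e(Q_x(x)-Q_x(y)) = e(Q_{\TT}(x)-Q_{\TT}(y)) \, h_x(y)\, \overline{h_x(x)},
\quad h_x(y) := e(-(Q_x-Q_{\TT})(y)).
\]
For $\Tp\in\TT$ and $x\in E(\Tp)$, the definition of a tree ($4\Tp < \top\TT$) together with \eqref{eq:tile:ball} yields $\norm{Q_x-Q_{\TT}}_{I_\Tp} \lesssim 1$, and then Lemma~\ref{lem:normQ} provides $\norm{Q_x-Q_{\TT}}_{B(x,CD^{\scale(\Tp)})} \lesssim 1$ and a gradient bound $O(D^{-\scale(\Tp)})$ on the support of $\CZK_{\scale(\Tp)}(x,\cdot)$. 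Hence the modulated kernel $\widetilde\CZK_s^{x}(x,y) := \CZK_s(x,y)\, h_x(y)$ obeys the size and Hölder estimates \eqref{eq:Ks-size}--\eqref{eq:Ks-reg} (in the variable $y$) with the same constants as $\CZK_s$. Absorbing the bounded factor $\overline{h_x(x)}$ into $\tilde g$, the integral $\int g\, T_{\TT} f$ becomes $\int \tilde g(x) \sum_{s\in\sigma(\TT,x)}\int \widetilde\CZK_s^{x}(x,y)\, \tilde f(y)\, dy\, dx$.

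Next I would split $\tilde f = P_{\calJ}\tilde f + (\tilde f - P_{\calJ}\tilde f)$. For the oscillatory remainder, on each $J\in\calJ$ meeting the support of $\widetilde\CZK_s^{x}(x,\cdot)$ the definition of $\calJ$ forces $\ell(J) \ll D^s$, so the mean-zero property on $J$ combined with the Hölder regularity of $\widetilde\CZK_s^{x}$ gains a factor $(\ell(J)/D^s)^\tau$; summing over $s$ and $J$ produces an absolutely convergent sum dominated pointwise by $M(P_{\calJ}|\tilde f|)(x)$ times a $\tau$-geometric tail. For the principal part $\sum_{s\in\sigma(\TT,x)} \int \widetilde\CZK_s^{x}(x,y) P_{\calJ}\tilde f(y)\, dy$, the inner sum is (for each fixed $x$) a truncated Calderón--Zygmund operator with convex set of scales $\sigma(\TT,x)$ applied to $P_{\calJ}\tilde f$, so the non-tangential Cotlar inequality \eqref{eq:nontang-max-trunc} controls it pointwise by $T_{\calN}(P_{\calJ}|\tilde f|)(x)$, which is bounded on $L^p$. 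Dualizing and running the same decomposition $\tilde g = P_{\calL}\tilde g + (\tilde g - P_{\calL}\tilde g)$ on the other side handles $g$: for $x$ in a leaf $L\in\calL$ with $L\subsetneq I_\Tp$, all scales $s\in\sigma(\TT,x)$ are at least $\scale(L)+1$, again giving a Hölder gain $(\ell(L)/D^s)^\tau$ for the remainder. Combining these pointwise dominations with Hölder's inequality and the $L^p$-boundedness of $M$ and $T_{\calN}$, for $1<p<\infty$, delivers
\[
\Big|\int \tilde g\, \tilde T_{\TT} \tilde f\Big| \lesssim \norm{P_{\calJ}|\tilde f|}_p \norm{P_{\calL}|\tilde g|}_{p'} = \norm{P_{\calJ}|f|}_p \norm{P_{\calL}|g|}_{p'}.
\]

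The main obstacle I anticipate is bookkeeping the scale relations: one must check that for every tile $\Tp\in\TT$, every $x\in E(\Tp)$, every $y$ in the support of $\CZK_{\scale(\Tp)}(x,\cdot)$, and every $J\in\calJ$ containing such a $y$ (resp. leaf $L\in\calL$ above $x$), the ratio $\ell(J)/D^{\scale(\Tp)}$ (resp. $\ell(L)/D^{\scale(\Tp)}$) is at most $D^{-1}$, so that the $\tau$-geometric series actually converges. This comes out of the $100D$-factor in the definition of $\calJ$ and the fact that a leaf $L$ is strictly contained in some $I_\Tp\in\TT$, but it needs to be verified carefully. A secondary technical point is that $h_x$ depends on $x$, so the kernel Hölder regularity in $x$ requires a separate estimate when one symmetrizes the argument; here one uses the $x$-dependence of $Q_x$ only through the piecewise-constant stopping-time structure of $x\mapsto Q_x$, so no further regularity is needed.
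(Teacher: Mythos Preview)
There is a genuine gap in the treatment of the principal part. You form the modified kernel $\widetilde\CZK_s^{x}(x,y)=\CZK_s(x,y)\,h_x(y)$ with $h_x(y)=e\bigl(-(Q_x-Q_{\TT})(y)\bigr)$ and then invoke the non-tangential Cotlar inequality. But $T_{\calN}$ in \eqref{eq:nontang-max-trunc} is built from the \emph{original} kernel $\CZK$, whose truncations are assumed $L^2$-bounded; Cotlar's inequality uses this $L^2$-boundedness, not just size and $y$-regularity of the kernel. Your kernel carries the extra factor $h_x(y)$, and $x\mapsto Q_x$ is merely the measurable linearizing function of the supremum---it is not piecewise constant, contrary to what you suggest. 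Thus the operator $x\mapsto\sum_{s\in\sigma(x)}\int\widetilde\CZK_s^{x}(x,y)F(y)\,dy$ is not a truncated \CZ{} operator to which \eqref{eq:nontang-max-trunc} applies, and there is no reason its pointwise value is dominated by $T_{\calN}(P_{\calJ}\tilde f)(x)$ (let alone by $T_{\calN}(P_{\calJ}|\tilde f|)(x)$, since $T_{\calN}$ is not monotone).

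The paper's remedy is exactly the missing step: instead of absorbing $h_x$ into the kernel, write
\[
e\bigl(Q_x(x)-Q_x(y)+Q_{\TT}(y)-Q_{\TT}(x)\bigr)=1+\bigl(e(\cdots)-1\bigr),
\]
so that the main term $B(x)=T_{\sigma}P_{\calJ}f(x)$ uses the unmodified kernel $\CZK$ and is controlled by $T_{\calN}P_{\calJ}f$, while the error $A(x)$ is bounded using $\abs{e(\cdots)-1}\le\norm{Q_x-Q_{\TT}}_{B(x,CD^s)}\lesssim D^{\,s-\smax(x)}$ on the tree; this geometric gain sums. For the $g$ side the paper does not split $g$ at all: it proves the pointwise $\sup$--$\inf$ estimate \eqref{eq:tree-2loc} on each leaf $L\in\calL$, so that $\int_L\abs{g}\cdot\abs{T_{\TT}f}\le\bigl(\int_L\abs{g}\bigr)\cdot\inf_L(\cdots)$, which is precisely $\int_L P_{\calL}\abs{g}\cdot(\cdots)$. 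Your proposed symmetric splitting of $g$ would require H\"older regularity of the kernel in $x$, and again the factor $h_x(y)$ has none.
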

\begin{proof}
The conclusion \eqref{eq:tree:proj} will follow from the estimate
\begin{equation}
\label{eq:tree-2loc}
\sup_{x\in L} \abs{\overline{e(Q_{\TT})} T_{\TT} e(Q_{\TT}) f}(x)
\leq
C \inf_{x\in L} (M+S) P_{\calJ} \abs{f} (x)
+
\inf_{x\in L} \abs{T_{\calN} P_{\calJ} f (x)},
\end{equation}
where
\begin{enumerate}
\item $L\in\calL$ is arbitrary,
\item $Q_{\TT}$ denotes the central polynomial of $\TT$ (notice that the left-hand side is well-defined in the sense that it does not depend on the choice of the constant term of $Q_{\TT}$),
\item the operator $S$, while depending on $\TT$, is bounded on $L^{p}(\R^{\ds})$ for $1<p<\infty$ with constants independent of $\TT$, and
\item the non-tangentially maximally truncated singular integral $T_{\calN}$, defined in \eqref{eq:nontang-max-trunc}, is bounded on $L^{p}(\R^{\ds})$ by Cotlar's inequality.
\end{enumerate}

Let $\sigma=\sigma(\TT)$ be as in Definition~\ref{def:T-scales} and fix $x\in L\in\calL$.
By definition
\begin{multline*}
\abs{\overline{e(Q_{\TT})} T_{\TT} e(Q_{\TT}) f(x)}\\
=
\abs[\Big]{ \sum_{s \in \sigma(x)}\int e(-Q_{\TT}(x)+Q_x(x)-Q_{x}(y)+Q_{\TT}(y))\CZK_s(x,y) f (y) \dif y }\\
\leq
\sum_{s \in \sigma(x)} \int \abs{e(Q_{\TT}(y)-Q_{x}(y)-Q_{\TT}(x)+Q_{x}(x))-1} \abs{\CZK_s(x,y)} \abs{f(y)} \dif y\\
+\abs[\big]{ T_{\sigma} P_{\calJ} f(x) }
+\abs[\big]{ T_{\sigma} (1-P_{\calJ}) f(x) }
=: A(x)+B(x)+C(x).
\end{multline*}
The term $B(x)$ is a truncated singular integral and is dominated by $\inf_{L} T_{\calN} P_{\calJ} f$.

We turn to $A(x)$.
If $\CZK_{s}(x,y)\neq 0$, then $\abs{x-y}\lesssim D^{s}$, and in this case
\begin{multline*}
\abs{e(Q_{\TT}(y)-Q_{x}(y)-Q_{\TT}(x)+Q_{x}(x))-1}\\
\leq
\norm{Q_{x}-Q_{\TT}}_{B(x,CD^{s})}
\lesssim
D^{s-\smax(x)} \norm{Q_{x}-Q_{\TT}}_{B(x,CD^{\smax (x)})}
\lesssim
D^{s-\smax(x)},
\end{multline*}
where we have used Lemma~\ref{lem:normQ}.
For $x\in L\in\calL$ we have $\scale(L) \leq \smin(x)-1$, and it follows that
\[
A(x)
\lesssim
D^{-\smax(x)} \sum_{s\in\sigma(x)} D^{s(1-\ds)} \int_{B(x,0.5 D^{s})} \abs{f}(y) \dif y.
\]
Since the collection $\calJ$ is a partition of $\R^{\ds}$ this can be estimated by
\[
A(x)
\lesssim
D^{-\smax(x)} \sum_{s\in\sigma(x)} D^{s(1-\ds)} \sum_{J\in\calJ : J \cap B(x,0.5 D^{s}) \neq \emptyset} \int_{J} \abs{f}(y) \dif y.
\]
The expression on the right hand side does not change upon replacing $\abs{f}$ by $P_{\calJ}\abs{f}$.
Moreover
\begin{equation}
\label{eq:calJ-smallness}
I_{\Tp}^{*} \cap J \neq \emptyset \text{ with } \Tp\in\TT \text{ and } J\in\calJ \implies J \subset 3I_{\Tp}.
\end{equation}
Hence the sum over $J\in\calJ$ is in fact restricted to cubes contained in $B(x,CD^{s})$, so that
\begin{multline*}
A(x)
\lesssim
D^{-\smax(x)} \sum_{s\in\sigma(x)} D^{s(1-\ds)} \int_{B(x,CD^{s})} P_{\calJ} \abs{f}(y) \dif y\\
\lesssim
D^{-\smax(x)} \sum_{s\in\sigma(x)} D^{s} \inf_{L} M P_{\calJ} \abs{f}
\lesssim
\inf_{L} M P_{\calJ} \abs{f}.
\end{multline*}

It remains to treat $C(x)$.
Using \eqref{eq:calJ-smallness} we estimate
\begin{multline*}
\abs{T_{\sigma} (1-P_{\calJ}) f(x)}
=
\abs[\big]{\sum_{\Tp\in\TT} \one_{E(\Tp)}(x) \int \CZK_{s}(x,y) ((1-P_{\calJ}) f)(y) \dif y}\\
\leq
\sum_{\Tp\in\TT} \one_{E(\Tp)}(x) \sum_{J\in\calJ : J \subseteq 3 I_{\Tp}} \sup_{y,y'\in J} \abs{\CZK_{s}(x,y)-\CZK_{s}(x,y')} \int_{J} \abs{f}\\
\lesssim
\sum_{I\in\calH} \one_{I}(x) \sum_{J\in\calJ : J \subseteq 3 I}
D^{-(\ds+\tau) \scale(I)} \diam(J)^{\tau} \int_{J} P_{\calJ} \abs{f},
\end{multline*}
where $\calH = \Set{ I_{\Tp} \given \Tp\in\TT}$.
The right-hand side of this inequality is constant on each $L\in\calL$.
Hence we obtain \eqref{eq:tree-2loc} with
\[
S f(x) := \sum_{I\in\calD} \one_{I}(x) \sum_{J\in\calJ : J \subseteq 3 I}
D^{-(\ds+\tau) \scale(I)} \diam(J)^{\tau} \int_{J} f.
\]
It remains to obtain an $L^{p}$ estimate for the operator $S$.
We have
\begin{align*}
\abs[\big]{\int g S f}
&\lesssim
\sum_{I\in\calD,J\in\calJ : J \subseteq 3 I} (g)_{I} D^{\tau (\scale(J)-\scale(I))} \int_{J} \abs{f}\\
&\lesssim
\sum_{J\in\calJ} \int_{J} \abs{f} Mg \sum_{I\in\calD : J \subseteq 3 I} D^{\tau (\scale(J)-\scale(I))}\\
&\lesssim
\sum_{J\in\calJ} \int_{J} \abs{f} Mg\\
&\leq
\norm{f}_{p} \norm{Mg}_{p'}.
\end{align*}
By the Hardy--Littlewood maximal inequality and duality it follows that $\norm{S}_{p\to p} \lesssim 1$ for $1<p<\infty$.
\end{proof}

\begin{corollary}
\label{cor:tree}
Let $\TT\subseteq\TP_{k}$ be a tree.
Let also $F\subseteq\R^{\ds}$ and $\kappa>0$ be such that
\begin{equation}
\label{eq:tree-loc}
I_{\Tp} \not\subseteq \Set{ M\one_{F} > \kappa }
\text{ for all }
\Tp\in\TT.
\end{equation}
Then for every $1 < p < \infty$ and $f\in L^{p}(\R^{\ds})$ we have
\begin{equation}\label{eq:tree:local}
\norm{T_{\TT} \one_{F} f}_p
\lesssim
\kappa^{1/p'} \mdens_{k}(\TT)^{1/p} \norm{f}_{p}.
\end{equation}
\end{corollary}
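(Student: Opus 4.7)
The plan is to refine the proof of Lemma~\ref{lem:tree} by exploiting both the localization hypothesis \eqref{eq:tree-loc} and the support restriction $\operatorname{supp}T_\TT\one_F f \subseteq \bigcup_{\Tp\in\TT} E(\Tp)$. Write $\delta := \mdens_k(\TT)$. I would start from the pointwise tree estimate produced inside the proof of Lemma~\ref{lem:tree}: for each $L\in\calL(\TT)$,
\[
\sup_{x\in L}|T_\TT \one_F f(x)| \lesssim \inf_{x\in L}(M+S)P_\calJ(\one_F |f|)(x) + \inf_{x\in L}|T_\calN P_\calJ(e(-Q_\TT)\one_F f)(x)|,
\]
with $\calJ = \calJ(\TT)$, $\calL = \calL(\TT)$.

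Instead of integrating this over the full ambient space, I would restrict to the support of $T_\TT \one_F f$:
\[
\|T_\TT \one_F f\|_p^p = \sum_{L\in\calL}\int_{L\cap\bigcup E(\Tp)}|T_\TT \one_F f|^p \leq \Bigl(\sup_{L\in\calL}\tfrac{|L\cap\bigcup E(\Tp)|}{|L|}\Bigr)\sum_L |L|\bigl(\sup_{x\in L}|T_\TT \one_F f(x)|\bigr)^p.
\]
Combining with the pointwise bound and the monotonicity $\sum_L |L|\inf_{x\in L}h(x)^p \leq \|h\|_p^p$, and using the $L^p$-boundedness of $M$, of $S$ (furnished by the proof of Lemma~\ref{lem:tree}), and of $T_\calN$ (Cotlar's inequality), the corollary is reduced to the two auxiliary estimates
\[
\|P_\calJ(\one_F |f|)\|_p \lesssim \kappa^{1/p'}\|f\|_p
\qquad\text{and}\qquad
\sup_{L\in\calL}\tfrac{|L\cap\bigcup_{\Tp\in\TT} E(\Tp)|}{|L|}\lesssim \delta.
\]

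The first estimate is the $\kappa$-localization: for any $J\in\calJ$ meeting some $I_\Tp^*$, \eqref{eq:calJ-smallness} yields $J\subseteq 3I_\Tp$, while \eqref{eq:tree-loc} provides $x_0\in I_\Tp$ with $M\one_F(x_0)\leq\kappa$. The cube $5I_\Tp$ contains $J\cup\{x_0\}$, so $|5I_\Tp\cap F|\leq \kappa|5I_\Tp|$; the maximality clause in the definition of $\calJ$ gives $|5I_\Tp|\lesssim_D|J|$, whence $|J\cap F|/|J|\lesssim\kappa$. A H\"older application yields $\|P_\calJ(\one_F|f|)\|_p\lesssim \sup_J(|J\cap F|/|J|)^{1/p'}\|f\|_p$. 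The second estimate, carrying the density factor, would be proved by associating to each $x\in L\cap E(\Tp)$ with $m:=\scale(\Tp)-\scale(L)\geq 1$ the tile $\Tp_m^\ast\in\TP_k$ with spatial cube equal to the $m$th dyadic ancestor of $L$ and central polynomial closest to $Q_{\top\TT}$: using $4\Tp<\top\TT$ together with Lemma~\ref{lem:normQ} and \eqref{eq:normQ:parent}, one checks that $\lambda\Tp\leq\lambda\Tp_m^\ast$ for a bounded $\lambda$ and $x\in E(\lambda\Tp_m^\ast)$, so the bound $\mdens_k(\Tp)\leq\delta$ applies.

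The hard part will be the density-weighted support estimate. The subtlety is that the definition of $E(\Tp)$ includes the truncation condition $\smin(x)\leq\scale(\Tp)\leq\smax(x)$, so a single higher-scale reference tile cannot capture every $x$, and the bound must be assembled scale-by-scale. The main obstacle is organizing this multi-scale decomposition so that the density bound at each scale telescopes to $\delta|L|$ rather than accumulating a logarithmic loss; this requires exploiting the convexity of $\sigma(\TT,x)$ together with the nestedness property of the uncertainty regions from Lemma~\ref{lem:tile}\ref{def:tile:nested} to match each $x$ to a unique scale $m$ rather than summing the contributions of all $m$.
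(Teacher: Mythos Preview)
Your overall strategy and both of your auxiliary estimates are exactly the ones the paper uses. The paper dualises and invokes the conclusion \eqref{eq:tree:proj} of Lemma~\ref{lem:tree} as a black box, pairing $T_{\TT}(\one_F f)$ with $g'=\sum_{L\in\calL}\one_{E(L)}g$ and then applying H\"older on each $L$ and each $J$; your route of re-entering the pointwise estimate \eqref{eq:tree-2loc} accomplishes the same thing. Your proof of $|F\cap J|/|J|\lesssim\kappa$ via \eqref{eq:calJ-smallness} and the hypothesis \eqref{eq:tree-loc} is identical to the paper's.

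The only substantive difference is your plan for the density bound $|L\cap\bigcup_{\Tp\in\TT}E(\Tp)|\lesssim\mdens_k(\TT)\,|L|$, which you expect to be ``the hard part'' requiring a scale-by-scale construction with a telescoping argument to avoid a logarithmic loss. In the paper this step is a one-liner: a \emph{single} reference tile at the fixed scale $\scale(\hat L)$ suffices. By maximality of $L\in\calL(\TT)$ there is some $\Tp_L\in\TT$ with $I_{\Tp_L}\subseteq\hat L$; one then takes $\Tp_L'\in\TP_k$ with $I_{\Tp_L'}=\hat L$ and $Q_\TT\in\calQ(\Tp_L')$ (or $\Tp_L$ itself when $I_{\Tp_L}=\hat L$). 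Two triangle-inequality checks using $4\Tp<\top\TT$ give both $10\Tp_L\le 10\Tp_L'$ and, for every $\Tp\in\TT$ with $L\cap I_\Tp\neq\emptyset$, the relation $10\Tp_L'\le\Tp$. The second relation immediately yields
\[
L\cap\bigcup_{\Tp\in\TT}E(\Tp)=L\cap\bigcup_{\Tp\in\TT:\,I_\Tp\supset L}E(\Tp)\subseteq E(10\Tp_L'),
\]
and the first lets you read off $|E(10\Tp_L')|\le 10^{\dim\calQ}|\hat L|\,\mdens_k(\Tp_L)$ directly from the definition \eqref{eq:densk} with $\lambda=10$ and $\Tp'=\Tp_L'$. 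So the ``main obstacle'' you anticipate never arises: there is no summation over scales, no convexity-of-$\sigma(\TT,x)$ argument, and no telescoping. Your choice of reference tiles $\Tp_m^\ast$ at the \emph{same} scale as each contributing $\Tp$ is what creates the spurious logarithmic danger; placing the reference tile once and for all at the \emph{bottom} scale $\scale(\hat L)$ sidesteps the issue entirely.
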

Notice that the hypothesis \eqref{eq:tree-loc} holds with $\kappa=1$ and $F=\R^{\ds}$ for every tree $\TT$.
\begin{proof}
Fix $L\in\calL:=\calL(\TT)$.
By construction $\hat{L} \in \calC_{k}$ and there exists a tile $\Tp_{L}\in\TT$ with $I_{\Tp_{L}}\subseteq\hat{L}$.
If $I_{\Tp_{L}}=\hat{L}$ let $\Tp_{L}':=\Tp_{L}$, otherwise let $\Tp_{L}'\in\TP_{k}$ be the unique tile with $I_{\Tp_{L}'} = \hat{L}$ and $Q_{\TT}\in\calQ(\Tp_{L}')$.
In both cases the tile $\Tp_{L}'$ satisfies
\begin{enumerate}
\item $10\Tp_{L} \leq 10\Tp_{L}'$ and
\item for every $\Tp\in\TT$ with $L\cap I_{\Tp} \neq \emptyset$ we have $10\Tp_{L}' \leq \Tp$.
\end{enumerate}
It follows that the spatial support
\[
E(L) :=
L\cap \bigcup_{\Tp\in\TT} E(\Tp)
=
L\cap \bigcup_{\Tp\in\TT : I_{\Tp} \supset L} E(\Tp)
\]
satisfies
\begin{equation}
\label{eq:leaf-mass}
\abs{E(L)}
\leq
\abs{E(10\Tp_{L}')}
\leq
10^{\dim\calQ} \abs{I_{\Tp_{L}'}} \mdens_{k}(\Tp_{L})
\lesssim
\mdens_{k}(\TT) \abs{L}.
\end{equation}

It also follows from the hypothesis \eqref{eq:tree-loc} that
\begin{equation}
\label{eq:dens-kappa}
\meas{F \cap J} \lesssim \kappa \meas{J}
\end{equation}
for all $J\in\calJ:=\calJ(\TT)$.
Using Lemma~\ref{lem:tree}, H\"older's inequality, and the estimates \eqref{eq:leaf-mass} and \eqref{eq:dens-kappa} we obtain
\begin{align*}
\abs[\Big]{\int_{\R^{\ds}} g T_{\TT} \one_{F} f}
&=
\abs[\Big]{\int_{\R^{\ds}} \sum_{L\in\calL}\one_{E(L)} g T_{\TT} \one_{F} f}\\
&\lesssim
\norm{P_{\calL} \abs[\big]{\sum_{L\in\calL}\one_{E(L)} g} }_{p'} \norm{P_{\calJ} \abs{\one_{F} f} }_{p}\\
&=
\Bigl( \sum_{L\in\calL} \meas{L} \bigl( \meas{L}^{-1} \int_{L} \one_{E(L)} \abs{g} \bigr)^{p'} \Bigr)^{1/p'}
\Bigl( \sum_{J\in\calJ} \meas{J} \bigl( \meas{J}^{-1} \int_{J} \one_{F} \abs{f} \bigr)^{p} \Bigr)^{1/p}\\
&\leq
\Bigl( \sum_{L\in\calL} \meas{L} \bigl( \meas{L}^{-1} \int_{L} \abs{g}^{p'} \bigr)
\bigl( \meas{L}^{-1} \int_{L} \one_{E(L)}^{p} \bigr)^{p'/p} \Bigr)^{1/p'}\\
&\quad \cdot
\Bigl( \sum_{J\in\calJ} \meas{J} \bigl( \meas{J}^{-1} \int_{J} \abs{f}^{p} \bigr)
\bigl( \meas{J}^{-1} \int_{J} \one_{F}^{p'} \bigr)^{p/p'} \Bigr)^{1/p}\\
&\lesssim
\mdens_{k}(\TT)^{1/p} \kappa^{1/p'}
\Bigl( \sum_{L\in\calL} \int_{L} \abs{g}^{p'} \Bigr)^{1/p'}
\Bigl( \sum_{J\in\calJ} \int_{J} \abs{f}^{p} \Bigr)^{1/p}\\
&\leq
\mdens_{k}(\TT)^{1/p} \kappa^{1/p'}
\norm{g}_{p'} \norm{f}_{p}.
\qedhere
\end{align*}
\end{proof}

\subsection{Separated trees}
\begin{definition}
\label{def:normal}
A tree $\TT$ is called \emph{normal} if for every $\Tp\in\TT$ we have $I_{\Tp}^{*} \subset I_{\TT}$.
\end{definition}
For a normal tree $\TT$ we have $\supp T_{\TT}^{*}g \subseteq I_{\TT}$ for every function $g$.
\begin{lemma}\label{lem:sep-trees}
There exists $\epsilon=\epsilon(d,\tau,\ds)>0$ such that for any two $\Delta$-separated normal trees $\TT_1, \TT_{2}$ we have
\begin{equation}
\abs[\Big]{\int_{\R^{\ds}} T_{\TT_{1}}^* g_{1} \overline{ T_{\TT_{2}}^* g_{2}} }
\lesssim
\Delta^{-\epsilon} \prod_{j=1,2} \norm{\abs{T_{\TT_{j}}^{*} g_{j}} + Mg_{j}}_{L^{2}(I_{\TT_{1}} \cap I_{\TT_{2}})}.
\end{equation}
\end{lemma}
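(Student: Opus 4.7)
My plan is to estimate the bilinear pairing by combining pairwise tile-to-tile bounds from Lemma~\ref{lem:sep-tiles} (which encode the cancellation between frequency-separated tiles) with a geometric interpolation against a trivial Cauchy--Schwarz bound that accounts for the $T_{\TT_j}^{*}g_j$ contribution on the right-hand side. By normality of both trees, $\supp T_{\TT_j}^{*} g_{j} \subseteq I_{\TT_{j}}$ for $j=1,2$, so the integrand vanishes outside $I_{\TT_1}\cap I_{\TT_2}$; by $D$-adic nesting I may assume $I_{\TT_2}\subseteq I_{\TT_1}$ (otherwise the intersection is empty and the estimate is trivial).

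First I would expand
\[
\int T_{\TT_1}^{*}g_1 \overline{T_{\TT_2}^{*}g_2}
=
\sum_{\Tp_1\in\TT_1,\,\Tp_2\in\TT_2} \int T_{\Tp_1}^{*}g_1 \overline{T_{\Tp_2}^{*}g_2}
\]
and apply Lemma~\ref{lem:sep-tiles} to each pair, placing the smaller tile in the first slot. To use the lemma effectively I need to show that the $\Delta$-separation of the trees propagates to each individual pair, that is, $\Delta(\Tp_{\mathrm{small}},Q_{\Tp_{\mathrm{large}}})\gtrsim\Delta$. Two cases arise depending on whether $I_{\Tp_1}\subseteq I_{\TT_2}$ or $I_{\Tp_1}\supseteq I_{\TT_2}$ (the only possibilities given dyadic nesting and $I_{\Tp_1}^*\cap I_{\Tp_2}^*\neq\emptyset$); in each case I would combine the top-tile property $\norm{Q_{\Tp_j}-Q_{\TT_j}}_{I_{\Tp_j}}\lesssim 1$, the monotonicity $\norm{Q}_{I}\leq\norm{Q}_{I'}$ for $I\subseteq I'$, and Lemma~\ref{lem:normQ} with a triangle inequality to pass from the separation condition on $Q_{\TT_j}$ to the required bound on $Q_{\Tp_j}$, losing only constants independent of $\Delta$.

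Next I would carry out the Schur-type resummation. Fixing $\Tp_2$ and summing over $\Tp_1\in\TT_1$ with $I_{\Tp_1}^{*}\cap I_{\Tp_2}^{*}\neq\emptyset$, exploiting the disjointness of the sets $E(\Tp_1)$ at each fixed scale and absorbing a logarithmic scale count by a geometric series coming from the decay $\abs{\CZK_s(x,y)}\lesssim D^{-\ds s}$, and doing the dual sum over $\Tp_2$, I expect a bound of the form
\[
\abs[\Big]{\int T_{\TT_1}^{*}g_1 \overline{T_{\TT_2}^{*}g_2}}
\lesssim
\Delta^{-\tau/d} \norm{Mg_1}_{L^{2}(I_{\TT_1}\cap I_{\TT_2})} \norm{Mg_2}_{L^{2}(I_{\TT_1}\cap I_{\TT_2})}.
\]
Combining this via the geometric mean with the raw Cauchy--Schwarz estimate
\[
\abs[\Big]{\int T_{\TT_1}^{*}g_1 \overline{T_{\TT_2}^{*}g_2}}
\leq
\norm{T_{\TT_1}^{*}g_1}_{L^{2}(I_{\TT_1}\cap I_{\TT_2})} \norm{T_{\TT_2}^{*}g_2}_{L^{2}(I_{\TT_1}\cap I_{\TT_2})}
\]
yields the desired bound with $\epsilon = \tau/(2d)$ and right-hand side involving $\abs{T_{\TT_j}^{*}g_j}+Mg_j$.

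The main obstacle will be the resummation step: the two trees can live at drastically different spatial scales and the tiles of $\TT_1$ outside $I_{\TT_2}$ interact with essentially every tile of $\TT_2$, so the $\Delta^{-\tau/d}$ gain can easily be dissipated by the counting of interacting tile pairs. The cure will be careful bookkeeping using the tile-disjointness of $E(\Tp)$ at each fixed scale (to turn $L^1$ averages into pointwise maximal function bounds), the geometric decay of $\CZK_s$ across scales (to sum a geometric series in the scale difference), and a slight compromise on the exponent $\epsilon<\tau/d$ to absorb any subpolynomial losses in $\Delta$.
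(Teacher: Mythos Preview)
Your plan has a genuine gap at exactly the place you flagged: the Schur resummation of the tile-by-tile bounds from Lemma~\ref{lem:sep-tiles} does not converge. After applying that lemma you are left with
\[
\Delta^{-\tau/d}\sum_{\Tp_1\in\TT_1}\sum_{\Tp_2\in\TT_2}\frac{1}{\abs{I_{\mathrm{large}}}}\int_{E(\Tp_1)}\abs{g_1}\int_{E(\Tp_2)}\abs{g_2}.
\]
Fix $\Tp_2$ and sum over the smaller tiles $\Tp_1\in\TT_1$ with $I_{\Tp_1}^*\cap I_{\Tp_2}^*\neq\emptyset$. At each fixed scale $s\leq \scale(\Tp_2)$ the sets $E(\Tp_1)$ are disjoint and contained in $3I_{\Tp_2}$, so the scale-$s$ contribution is $\lesssim \int_{3I_{\Tp_2}}\abs{g_1}$. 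But there is \emph{no decay across scales}: the kernel bound $\abs{\CZK_s}\lesssim D^{-\ds s}$ is already fully consumed in producing the factor $\abs{I_{\mathrm{large}}}^{-1}$ in Lemma~\ref{lem:sep-tiles}, and the separation factor $\Delta(\Tp_1,Q_{\Tp_2})$ stays of order $\Delta$ (not larger) as $\scale(\Tp_1)$ decreases, since the relevant norm $\norm{Q_{\TT_1}-Q_{\TT_2}}_{I_{\Tp_1}}$ only shrinks with the cube. Hence the sum over scales contributes a factor equal to the number of scales in the tree, which is unbounded. The dual sum over $\Tp_2$ has the same problem. No small sacrifice of $\epsilon$ can absorb this.

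The paper avoids the lossy double sum entirely. It treats $T_{\TT_j}^*g_j$ as a single function which, after demodulation by $e(Q_{\TT_j})$, is H\"older continuous at the scale of the local stopping cubes $J\in\calJ$ with modulus controlled by $\inf_J(\abs{T_{\TT_j}^*g_j}+Mg_j)$; this is where the tree structure is exploited, through a telescoping sum over scales rather than an absolute sum. The product $T_{\TT_1}^*g_1\cdot\overline{T_{\TT_2}^*g_2}$, localized by a smooth partition on $\calJ$, is then paired against $e(Q_{\TT_1}-Q_{\TT_2})$, and the decay comes from a single application of the van der Corput estimate Lemma~\ref{lem:osc-int} on each $J$, using that $\norm{Q_{\TT_1}-Q_{\TT_2}}_J\gtrsim\Delta^{1-\eta d}$. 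There is an additional layer of care (the splitting $\TT_2=\TT_2'\cup(\TT_2\setminus\TT_2')$) to handle tiles of $\TT_2$ whose spatial cubes lie outside $I_{\TT_1}$, where the separation hypothesis gives no direct information.
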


\begin{proof}
The estimate clearly holds without decay in $\Delta$, so it suffices to consider $\Delta \gg 1$.
Without loss of generality assume $I_{0} := I_{\TT_{1}} \subseteq I_{\TT_{2}}$ and $\TT_{1} \neq \emptyset$.
Neither the left-hand side nor the right-hand side of the conclusion changes upon replacing $\TT_{2}$ by the convex set
\[
\Set{\Tp \in \TT_{2} \given I_{\Tp}^{*} \cap I_{\TT_{1}} \neq \emptyset},
\]
which we do.
Let $\TS := \Set{\Tp \in \TT_{1}\cup\TT_{2} \given I_{\Tp}\subseteq I_{0}}$.
Let $\eta>0$ be chosen later and
\[
\TT_{2}' := \Set{\Tp \in \TT_{2} \given I_{\Tp} \cap I_{0} = \emptyset, \not\exists \Tp'\in\TS \text{ with }  I_{\Tp'} \subset \Delta^{\eta} I_{\Tp}}.
\]

Let $\calJ := \Set{ J \in \calJ(\TT_{1}\cup(\TT_{2}\setminus\TT_{2}')) \given J \subseteq I_{0}}$.
This is a partition of $I_{0}$.
Since the scales of adjacent cubes in this partition differ at most by $1$, there exists an adapted partition of unity $\one = \sum_{J\in\calJ} \chi_{J}$ (on $I_{0}$), where each $\chi_{J}$ is a smooth function supported on $(1+1/D) J$ with $\abs{\nabla\chi_{J}} \lesssim \ell(J)^{-1}$.

Recall that $Q_{\TT}$ denotes the central polynomial of a tree $\TT$ and let $Q := Q_{\TT_{1}}-Q_{\TT_{2}}$.
We claim that
\begin{equation}
\label{eq:leaf-separation}
\Delta_{J} := \norm{Q}_{J} \gtrsim \Delta^{1-\eta d}
\text{ for all } J\in\calJ.
\end{equation}
\begin{proof}[Proof of Claim \eqref{eq:leaf-separation}]
By definition there exists $\Tp \in \TT_{1}\cup (\TT_{2}\setminus\TT_{2}')$ with $100D\hat{J} \supseteq I_{\Tp}$.
We distinguish the following cases.
\begin{enumerate}
\item If $I_{\Tp}\subseteq I_{0}$, then $\Tp\in\TS$, and by definition of $\Delta$-separation we obtain
\[
\norm{Q_{\TT_{1}}-Q_{\TT_{2}}}_{I_{\Tp}}
\geq
\norm{Q_{\Tp}-Q_{\TT_{j}}}_{I_{\Tp}} - 4
\geq
\Delta - 5,
\]
where $j\in\Set{1,2}$ is such that $\Tp\not\in\TT_{j}$.
The claim follows using Lemma~\ref{lem:normQ}.
\item If $I_{\Tp} \supset I_{0}$, then for an arbitrary $\Tp'\in\TT_{1}$ we have $I_{\Tp'} \subseteq I_{0} \subset I_{\Tp} \subseteq 100D \hat{J}$, reducing to the previously handled case.
\item If $I_{\Tp} \cap I_{0} = \emptyset$, then $\Tp\in\TT_{2}\setminus\TT_{2}'$, and by definition there exists $\Tp'\in\TS$ with $I_{\Tp'} \subset \Delta^{\eta} I_{\Tp}$.
Since $I_{\Tp'} \subseteq I_{0}$ and by definition of $\Delta$-separation we obtain, similarly as before,
\[
\norm{Q_{\TT_{1}} - Q_{\TT_{2}}}_{I_{\Tp'}}
\geq
\norm{Q_{\Tp'} - Q_{\TT_{2}}}_{I_{\Tp'}} - 4
\geq
\Delta-5.
\]
This time we conclude by a more subtle application of Lemma~\ref{lem:normQ}:
\[
\norm{Q}_{J}
\gtrsim
\norm{Q}_{100D\hat{J}}
\geq
\norm{Q}_{I_{\Tp}}
\gtrsim
\Delta^{-\eta d} \norm{Q}_{\Delta^{\eta} I_{\Tp}}
\geq
\Delta^{-\eta d} \norm{Q}_{I_{\Tp'}}
\gtrsim
\Delta^{1-\eta d}.
\]
\end{enumerate}
This finishes the proof of Claim \eqref{eq:leaf-separation}.
\end{proof}

In order to prepare the application of Lemma~\ref{lem:osc-int} we need to estimate local moduli of continuity of $T_{\TT}^{*}g$ for a tree $\TT$.
For every $\Tp\in\TT$ and $y,y'\in I_{\Tp}^{*}$ using \eqref{eq:Ks-size}, \eqref{eq:Ks-reg}, and Lemma~\ref{lem:normQ} we obtain
\begin{align*}
\MoveEqLeft
\abs[\big]{e(Q_{\TT}(0)-Q_{\TT}(y))T_{\Tp}^{*}g(y) - e(Q_{\TT}(0)-Q_{\TT}(y'))T_{\Tp}^{*}g(y')}\\
&=
\abs[\Big]{\int \bigl(e(-Q_{x}(x)+Q_{x}(y)-Q_{\TT}(y)+Q_{\TT}(0)) \overline{\CZK_{\scale(\Tp)}(x,y)}\\
  & \quad
  - e(-Q_{x}(x)+Q_{x}(y')-Q_{\TT}(y')+Q_{\TT}(0)) \overline{\CZK_{\scale(\Tp)}(x,y')}\bigr) (\one_{E(\Tp)}g)(x) \dif x}\\
&\leq
\int_{E(\Tp)} \abs{g}(x) \abs[\Big]{e(-Q_{x}(y')+Q_{x}(y)-Q_{\TT}(y)+Q_{\TT}(y')) \overline{\CZK_{\scale(\Tp)}(x,y)} - \overline{\CZK_{\scale(\Tp)}(x,y')}}  \dif x\\
&\leq
\int_{E(\Tp)} \abs{g}(x) \Bigr( \abs{e(-Q_{x}(y')+Q_{x}(y)-Q_{\TT}(y)+Q_{\TT}(y'))-1} \abs{\CZK_{\scale(\Tp)}(x,y)}\\
&\quad
+ \abs{\overline{\CZK_{\scale(\Tp)}(x,y)} - \overline{\CZK_{\scale(\Tp)}(x,y')}} \Bigr)  \dif x\\
&\lesssim
\int_{E(\Tp)} \abs{g}(x) \Bigr( \norm{Q_{x}-Q_{\TT}}_{I_{\Tp}^{*}} \frac{\abs{y-y'}}{D^{s(\Tp)}} D^{-s(\Tp)\ds} + D^{-s(\Tp)\ds} \bigl(\frac{\abs{y-y'}}{D^{s(\Tp)}}\bigr)^{\tau} \Bigr)  \dif x\\
&\lesssim
\bigl(\frac{\abs{y-y'}}{D^{s(\Tp)}}\bigr)^{\tau} D^{-s(\Tp)\ds}\int_{E(\Tp)} \abs{g}(x) \dif x.
\end{align*}
Let $J\in\calD$ be such that for every $\Tp\in\TT$ we have $I_{\Tp}^{*} \cap (1+1/D) J \neq \emptyset \implies s(\Tp) \geq s(J)$.
Then for every $y,y' \in (1+1/D) J$ we obtain
\begin{align*}
\MoveEqLeft
\abs[\big]{e(Q_{\TT}(0)-Q_{\TT}(y))T_{\TT}^{*}g(y) - e(Q_{\TT}(0)-Q_{\TT}(y'))T_{\TT}^{*}g(y')}\\
&\leq
\sum_{\Tp\in\TT : I_{\Tp}^{*} \cap (1+1/D) J \neq \emptyset}
\abs[\big]{e(Q_{\TT}(0)-Q_{\TT}(y))T_{\Tp}^{*}g(y) - e(Q_{\TT}(0)-Q_{\TT}(y'))T_{\Tp}^{*}g(y')}\\
&\lesssim
\sum_{s\geq s(J)} \sum_{\Tp\in\TT : I_{\Tp}^{*} \cap (1+1/D) J \neq \emptyset, s(\Tp)=s}
\bigl(\frac{\abs{y-y'}}{D^{s}}\bigr)^{\tau} D^{-s \ds}\int_{E(\Tp)} \abs{g}(x) \dif x\\
&\lesssim
\sum_{s\geq s(J)} \sum_{\Tp\in\TT : I_{\Tp}^{*} \cap (1+1/D) J \neq \emptyset, s(\Tp)=s}
\bigl(\frac{\abs{y-y'}}{D^{s}}\bigr)^{\tau} \inf_{J} Mg\\
&\lesssim
\bigl(\frac{\abs{y-y'}}{D^{s(J)}}\bigr)^{\tau} \inf_{J} Mg.
\end{align*}
This implies in particular
\[
\sup_{y\in (1+1/D) J} \abs{e(Q_{\TT}(0)-Q_{\TT}(y)) T_{\TT}^{*} g(y)}
\leq
\inf_{y\in \frac12 J} \abs{T_{\TT}^{*}g(y)}
+
C \inf_{y\in J} Mg(y).
\]
We claim that for an absolute constant $s_{0}$ we have
\begin{equation}
\label{eq:separated-smalltiles}
\Tp\in\TT_{2}', J\in\calJ, I_{\Tp}^{*} \cap J \neq \emptyset
\implies
s(\Tp) \leq s(J) + s_{0}.
\end{equation}
\begin{proof}[Proof of Claim \eqref{eq:separated-smalltiles}]
Let $s_{0}$ be chosen later and suppose $s(\Tp)>s(J)+s_{0}$.
By definition there exists $\Tp'\in\TT_{1}\cup(\TT_{2}\setminus\TT_{2}')$ with $I_{\Tp'}\subseteq 100D\hat{J}$.
If $I_{\Tp'}\cap I_{0} \neq \emptyset$, then replacing $\Tp'$ by an element of $\TT_{1}$ we may without loss of generality assume $I_{\Tp'} \subseteq I_{0}$, so that $\Tp'\in\TS$.
If $\Delta$ is sufficiently large, then it follows that $\Delta^{\eta} I_{\Tp} \supset I_{\Tp'}$, contradicting $\Tp\in\TT_{2}'$.

If on the other hand $I_{\Tp'}\cap I_{0} = \emptyset$, then since $\Tp'\not\in\TT_{2}'$ there exists $\Tp''\in\TS$ with $I_{\Tp''} \subset \Delta^{\eta}I_{\Tp'}$.
It follows that
\[
CD^{-s_{0}}\Delta^{\eta} I_{\Tp} \supset \Delta^{\eta}I_{\Tp'} \supset I_{\Tp''}.
\]
If $s_{0}$ is sufficiently large, then $CD^{-s_{0}}\leq 1$, again contradicting $\Tp\in\TT_{2}'$.
\end{proof}
In particular for every $J\in\calJ$ we obtain
\[
\sup_{y\in \frac12 J} \abs{T_{\TT_{2}'}^{*}g_{2}(y)}
\leq
\sup_{y\in \frac12 J} \sum_{s=s(J)}^{s(J)+s_{0}} \sum_{\Tp\in\TP : s(\Tp)=s} \abs{T_{\Tp}^{*}g_{2}(y)}
\lesssim
(s_{0}+1) \inf_{J} M g_{2}.
\]
Using these facts we obtain
\begin{align*}
\sup_{y\in (1+1/D) J} \abs{T_{\TT_{2}\setminus\TT_{2}'}^{*} g_{2}(y)}
&\leq
\inf_{y\in \frac12 J} \abs{T_{\TT_{2}\setminus\TT_{2}'}^{*}g_{2}(y)}
+
C \inf_{y\in J} Mg_{2}(y)\\
&\leq
\inf_{y\in \frac12 J} \abs{T_{\TT_{2}}^{*}g_{2}(y)}
+
\sup_{y\in \frac12 J} \abs{T_{\TT_{2}'}^{*}g_{2}(y)}
+
C \inf_{y\in J} Mg_{2}(y)\\
&\leq
\inf_{y\in \frac12 J} \abs{T_{\TT_{2}}^{*}g_{2}(y)}
+
C \inf_{y\in J} Mg_{2}(y)
\end{align*}
for $J\in\calJ$, and it follows that
\[
\abs{h_{J}(y)-h_{J}(y')}
\lesssim
\bigl(\frac{\abs{y-y'}}{\ell(J)}\bigr)^{\tau}
\prod_{j=1,2} \Bigl( \inf_{\frac12 J} \abs{T_{\TT_{j}}^{*} g_{j}} + \inf_{J} Mg_{j} \Bigr)
\]
for the functions
\[
h_{J}(y) :=
\chi_{J}(y)
\bigl( e(Q_{\TT_{1}}(0)-Q_{\TT_{1}}(y)) T_{\TT_{1}}^{*} g_{1}(y) \bigr)\\
\cdot
\overline{ \bigl( e(Q_{\TT_{2}}(0)-Q_{\TT_{2}}(y)) T_{\TT_{2}\setminus\TT_{2}'}^{*} g_{2}(y) \bigr) }.
\]
Using \eqref{eq:leaf-separation} and Lemma~\ref{lem:osc-int} this allows us to estimate
\begin{align*}
\abs[\Big]{\int_{\R^{\ds}} T_{\TT_{1}}^* g_{1} \overline{ T_{\TT_{2}\setminus\TT_{2}'}^* g_{2}} }
&\leq
\sum_{J} \abs[\Big]{\int e(Q(y)-Q(0)) h_{J}(y) \dif y}\\
&\lesssim
\sum_{J} \Delta_{J}^{-\tau/d} \meas{J} \prod_{j=1,2} \inf_{\frac12 J} \Bigl( \abs{T_{\TT_{j}}^{*} g_{j}} + Mg_{j} \Bigr)\\
&\lesssim
\Delta^{-(1-\eta d)\tau/d}\int_{I_{0}} \prod_{j=1,2} \Bigl( \abs{T_{\TT_{j}}^{*} g_{j}} + Mg_{j} \Bigr)\\
&\leq
\Delta^{-(1-\eta d)\tau/d}\prod_{j=1,2} \norm{ \abs{T_{\TT_{j}}^{*} g_{j}} + Mg_{j} }_{L^{2}(I_{0})}.
\end{align*}
It remains to consider the contribution of $\TT_{2}'$.
Let $\calJ' := \Set{J\in\calJ(\TT_{1}) \given J\subset I_{0}}$.
Then
\[
\Tp\in\TT_{2}', J\in\calJ', I_{\Tp}^{*} \cap J \neq \emptyset
\implies
s(\Tp) \leq s(J) - s_{\Delta}, \text{ where } D^{s_{\Delta}} \sim \Delta^{\eta},
\]
since otherwise $\Delta^{\eta}I_{\Tp} \supset 100D\hat{J} \supset I_{\Tp'}$ for some $\Tp'\in\TT_{1} \subseteq \TS$, contradicting $\Tp\in\TT_{2}'$.
Using Lemma~\ref{lem:tree} we obtain
\begin{align*}
\abs[\Big]{\int_{\R^{\ds}} T_{\TT_{1}}^* g_{1} \overline{ T_{\TT_{2}'}^* g_{2}} }
&\lesssim
\norm{g_{1}}_{2} \norm{P_{\calJ'} \abs{T_{\TT_{2}'}^* g_{2}} }_{2}\\
&\leq
\norm{g_{1}}_{2} \sum_{s\geq s_{\Delta}} \Bigl( \sum_{J\in\calJ'} \meas{J}^{-1} \abs[\Big]{\int_{J} \sum_{\Tp\in\TT_{2}' : s(\Tp) = s(J)-s, I_{\Tp}^{*} \cap J \neq \emptyset} T_{\Tp}^{*}g_{2} }^{2} \Bigr)^{1/2}\\
&\lesssim
\norm{g_{1}}_{2} \sum_{s\geq s_{\Delta}} \Bigl( \sum_{J\in\calJ'} \Bigl(\int_{J} (M g_{2})^{2} \Bigr) \frac{\int_{J} \bigl( \sum_{\Tp\in\TT_{2}' : s(\Tp) = s(J)-s, I_{\Tp}^{*} \cap J \neq \emptyset} \one_{I_{\Tp}^{*}}\bigr)^{2}}{\meas{J}} \Bigr)^{1/2}\\
&\lesssim
\norm{g_{1}}_{2} \sum_{s\geq s_{\Delta}} \Bigl( \sum_{J\in\calJ'} \Bigl(\int_{J} (M g_{2})^{2} \Bigr) \frac{D^{s(J)-s+s(J)(\ds-1)}}{D^{s(J) \ds}} \Bigr)^{1/2}\\
&\leq
\norm{g_{1}}_{2} \sum_{s\geq s_{\Delta}} D^{-s/2} \norm{M g_{2}}_{L^{2}(I_{0})}\\
&\lesssim
\Delta^{-\eta/2} \norm{g_{1}}_{2} \norm{M g_{2}}_{L^{2}(I_{0})}.
\end{align*}
Choosing $\eta$ sufficiently small and observing that $\norm{g_{1}}_{2} \leq \norm{Mg_{1}}_{L^{2}(I_{0})}$ we obtain the claim.
\end{proof}

\subsection{Rows}
\begin{definition}
\label{def:row}
A \emph{row} is a union of normal trees with tops that have pairwise disjoint spatial cubes.
\end{definition}

\begin{lemma}[Row estimate]\label{lem:row}
Let $\TR_{1}$, $\TR_{2}$ be rows such that the trees in $\TR_{1}$ are $\Delta$-separated from the trees in $\TR_{2}$.
Then for any $g_{1},g_{2}\in L^{2}(\R^{\ds})$ we have
\[
\abs[\Big]{ \int T_{\TR_{1}}^* g_{1} \overline{T_{\TR_{2}}^* g_{2}} }
\lesssim
\Delta^{-\epsilon} \norm{g_{1}}_{2} \norm{g_{2}}_{2}.
\]
\end{lemma}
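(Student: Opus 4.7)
The plan is to reduce to the separated trees estimate of Lemma~\ref{lem:sep-trees} tree by tree and then sum over the pairs in $\TR_{1}\times\TR_{2}$ exploiting that the normal trees in each row have pairwise disjoint top cubes.

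First, I would record the crude $L^{2}$ bound $\norm{T_{\TR_{j}}^{*}g_{j}}_{2}\lesssim\norm{g_{j}}_{2}$. Since each tree $\TT\in\TR_{j}$ is normal, $T_{\TT}^{*}g_{j}$ is supported in $I_{\TT}$ and depends only on $g_{j}|_{I_{\TT}}$. Duality applied to Corollary~\ref{cor:tree} with $p=2$, $F=\R^{\ds}$, $\kappa=1$ and the trivial bound $\mdens\lesssim 1$ yields $\norm{T_{\TT}^{*}g_{j}}_{2}\lesssim\norm{\one_{I_{\TT}}g_{j}}_{2}$. Summing over $\TT\in\TR_{j}$ and using that the tops $\{I_{\TT}\}_{\TT\in\TR_{j}}$ are pairwise disjoint (so that the supports of $T_{\TT}^{*}g_{j}$ are pairwise disjoint) gives $\norm{T_{\TR_{j}}^{*}g_{j}}_{2}^{2}=\sum_{\TT}\norm{T_{\TT}^{*}g_{j}}_{2}^{2}\lesssim\norm{g_{j}}_{2}^{2}$.

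Second, I would expand
\[
\int T_{\TR_{1}}^{*}g_{1}\,\overline{T_{\TR_{2}}^{*}g_{2}}
=
\sum_{(\TT_{1},\TT_{2})\in\TR_{1}\times\TR_{2}}
\int T_{\TT_{1}}^{*}g_{1}\,\overline{T_{\TT_{2}}^{*}g_{2}}.
\]
Each pair $(\TT_{1},\TT_{2})$ is $\Delta$-separated by hypothesis, so Lemma~\ref{lem:sep-trees} applies to give the bound
\[
\abs[\Big]{\int T_{\TT_{1}}^{*}g_{1}\,\overline{T_{\TT_{2}}^{*}g_{2}}}
\lesssim
\Delta^{-\epsilon}\prod_{j=1,2}\norm{\abs{T_{\TT_{j}}^{*}g_{j}}+Mg_{j}}_{L^{2}(I_{\TT_{1}}\cap I_{\TT_{2}})}.
\]
The key observation is that, because the tops within each row are disjoint, on $I_{\TT_{1}}\cap I_{\TT_{2}}\subseteq I_{\TT_{j}}$ the identity $T_{\TT_{j}}^{*}g_{j}=T_{\TR_{j}}^{*}g_{j}$ holds pointwise. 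Hence, writing $F_{j}:=\abs{T_{\TR_{j}}^{*}g_{j}}+Mg_{j}$, the single-pair bound becomes
\[
\abs[\Big]{\int T_{\TT_{1}}^{*}g_{1}\,\overline{T_{\TT_{2}}^{*}g_{2}}}
\lesssim
\Delta^{-\epsilon}\prod_{j=1,2}\norm{F_{j}}_{L^{2}(I_{\TT_{1}}\cap I_{\TT_{2}})}.
\]

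Third, I would sum over $(\TT_{1},\TT_{2})$ by Cauchy--Schwarz:
\[
\sum_{\TT_{1},\TT_{2}}\norm{F_{1}}_{L^{2}(I_{\TT_{1}}\cap I_{\TT_{2}})}\norm{F_{2}}_{L^{2}(I_{\TT_{1}}\cap I_{\TT_{2}})}
\leq
\prod_{j=1,2}\Bigl(\sum_{\TT_{1},\TT_{2}}\int_{I_{\TT_{1}}\cap I_{\TT_{2}}}F_{j}^{2}\Bigr)^{1/2}.
\]
Because the tops within each row are pairwise disjoint, the sets $I_{\TT_{1}}\cap I_{\TT_{2}}$ are pairwise disjoint as $(\TT_{1},\TT_{2})$ ranges over $\TR_{1}\times\TR_{2}$, so the inner sum is at most $\norm{F_{j}}_{2}^{2}\lesssim\norm{g_{j}}_{2}^{2}$ by Step~1 and the Hardy--Littlewood maximal inequality. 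This yields the desired estimate. The only delicate point in the argument is the identification $T_{\TT_{j}}^{*}g_{j}\one_{I_{\TT_{1}}\cap I_{\TT_{2}}}=T_{\TR_{j}}^{*}g_{j}\one_{I_{\TT_{1}}\cap I_{\TT_{2}}}$, which is what allows the tree-indexed error terms on the right of Lemma~\ref{lem:sep-trees} to be replaced by a single row-level function whose $L^{2}$ norm is controlled globally; everything else is bookkeeping.
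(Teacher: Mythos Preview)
Your proof is correct and follows essentially the same approach as the paper: expand into tree pairs, apply Lemma~\ref{lem:sep-trees}, then use Cauchy--Schwarz and the disjointness of tops within each row to sum. The only cosmetic difference is that the paper localizes the inputs (replacing $g_{j}$ by $\one_{I_{\TT_{j}}}g_{j}$) and keeps the tree-indexed quantities $S_{\TT_{j}}(\one_{I_{\TT_{j}}}g_{j})$ through the Cauchy--Schwarz step, whereas you use the equivalent observation that $T_{\TT_{j}}^{*}g_{j}=T_{\TR_{j}}^{*}g_{j}$ on $I_{\TT_{j}}$ to pass to a single row-level function $F_{j}$; your bookkeeping via the pairwise disjointness of $\{I_{\TT_{1}}\cap I_{\TT_{2}}\}$ is in fact slightly cleaner.
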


\begin{proof}
The operators $S_{\TT}g := \abs{T_{\TT}^{*}g}+Mg$ are bounded on $L^{2}(\R^{\ds})$ uniformly in $\TT$ by Lemma~\ref{lem:tree} and the Hardy--Littlewood maximal inequality.
Using Lemma~\ref{lem:sep-trees} we estimate
\begin{align*}
\abs[\Big]{ \int T_{\TR_{1}}^* g_{1} \overline{T_{\TR_{2}}^* g_{2}} }
&\leq
\sum_{\TT_{1}\in\TR_{1}, \TT_{2}\in\TR_{2}} \abs[\Big]{ \int T_{\TT_{1}}^* g_{1} \overline{T_{\TT_{2}}^* g_{2}} }\\
&=
\sum_{\TT_{1}\in\TR_{1}, \TT_{2}\in\TR_{2}} \abs[\Big]{ \int T_{\TT_{1}}^* (\one_{I_{\TT_{1}}} g_{1}) \overline{T_{\TT_{2}}^* (\one_{I_{\TT_{2}}} g_{2})} }\\
&\lesssim
\Delta^{-\epsilon} \sum_{\TT_{1}\in\TR_{1}, \TT_{2}\in\TR_{2}} \prod_{j=1,2} \norm{S_{\TT_{j}} \one_{I_{\TT_{j}}} g_{j}}_{L^{2}(I_{\TT_{1}} \cap I_{\TT_{2}})}\\
&\leq
\Delta^{-\epsilon}
\prod_{j=1,2} \Big(\sum_{\TT_{1}\in\TR_{1}, \TT_{2}\in\TR_{2}} \norm{S_{\TT_{j}} \one_{I_{\TT_{j}}} g_{j}}_{L^{2}(I_{\TT_{1}} \cap I_{\TT_{2}})}^{2} \Big)^{1/2}\\
&\leq
\Delta^{-\epsilon}
\prod_{j=1,2} \Big(\sum_{\TT_{j}\in\TR_{j}} \norm{S_{\TT_{j}} \one_{I_{\TT_{j}}} g_{j}}_{L^{2}(I_{\TT_{j}})}^{2} \Big)^{1/2}\\
&\lesssim
\Delta^{-\epsilon}
\prod_{j=1,2} \Big(\sum_{\TT_{j}\in\TR_{j}} \norm{\one_{I_{\TT_{j}}} g_{j}}_{L^{2}(\R^{\ds})}^{2} \Big)^{1/2}\\
&\leq
\Delta^{-\epsilon}
\norm{g_{1}}_{2}
\norm{g_{2}}_{2}.
\qedhere
\end{align*}
\end{proof}

\subsection{Forest estimate}
Recall our decomposition \eqref{eq:tree-dec} of the set of all tiles.
In view of Proposition~\ref{prop:sf} it remains to estimate the contribution of the normal trees
\[
\TN_{n,k,j,l} := \TT_{n,k,j,l} \setminus \bd(\TT_{n,k,j,l})
\]
These sets are indeed (convex) trees since $\bd(\TT)$ are up-sets (recall the definition \eqref{eq:T-boundary}).
\begin{proposition}
\label{prop:Fef-forest}
Let $\TF_{n,k,j}' := \cup_{l} \TN_{n,k,j,l}$.
Then
\begin{equation}
\label{eq:Fef-forest-global}
\norm{T_{\TF_{n,k,j}'}}_{2\to 2} \lesssim 2^{-n/2}.
\end{equation}
Assuming in addition \eqref{eq:tree-loc} for all $\Tp\in\TF_{n,k,j}'$ we obtain
\begin{equation}
\label{eq:Fef-forest-local}
\norm{T_{\TF_{n,k,j}'} \one_{F}}_{2\to 2} \lesssim \kappa^{\alpha} 2^{-n \epsilon}
\end{equation}
for any $0\leq \alpha < 1/2$.
\end{proposition}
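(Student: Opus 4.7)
The strategy is to organize the trees in $\TF_{n,k,j}'$ into rows and apply a Cotlar--Stein almost orthogonality argument, exploiting the $2^{Cn}$-separation of trees in a Fefferman forest.

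First, by the counting function bound \eqref{eq:F-forest-counting} on top cubes, a greedy scheme partitions $\TF_{n,k,j}' = \bigsqcup_l \TN_{n,k,j,l}$ into $M \lesssim 2^n \log(n+1)$ rows $\TR_1, \dots, \TR_M$ in the sense of Definition~\ref{def:row}, each consisting of normal trees whose spatial tops are pairwise disjoint. By Corollary~\ref{cor:tree} applied at $p = 2$ (with $\kappa=1$ and $F=\R^{\ds}$), each constituent tree satisfies $\|T_\TT\|_{2\to 2} \lesssim \mdens_k(\TT)^{1/2} \lesssim 2^{-n/2}$, and under the localization hypothesis \eqref{eq:tree-loc}, also $\|T_\TT \one_F\|_{2\to 2} \lesssim \kappa^{1/2} 2^{-n/2}$. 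Normality of each tree ensures $\supp T_\TT f \subseteq I_\TT$ and $\supp T_\TT^* g \subseteq I_\TT$; together with disjointness of tops within a row this gives $\|T_{\TR_i}\|_{2\to 2} \lesssim 2^{-n/2}$ and $\|T_{\TR_i}\one_F\|_{2\to 2} \lesssim \kappa^{1/2} 2^{-n/2}$.

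For the global estimate \eqref{eq:Fef-forest-global}, I would apply Cotlar--Stein to $T_{\TF_{n,k,j}'} = \sum_i T_{\TR_i}$. The diagonal $\|T_{\TR_i}^* T_{\TR_i}\|_{2\to 2} \leq 2^{-n}$ is immediate. For $i \neq j$, any two trees in distinct rows are $2^{Cn}$-separated (Definition~\ref{def:F-forest}), so Lemma~\ref{lem:row} yields $\|T_{\TR_i} T_{\TR_j}^*\|_{2\to 2} \lesssim 2^{-Cn\delta}$, where $\delta = \delta(d,\tau,\ds) > 0$ is the decay exponent of Lemma~\ref{lem:sep-trees}; the companion bound $\|T_{\TR_i}^* T_{\TR_j}\|_{2\to 2} \lesssim 2^{-Cn\delta}$ comes from the symmetric version of the proof of that lemma. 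Choosing the Fefferman forest constant $C$ of Definition~\ref{def:F-forest} large enough that $C\delta > 2$, the Cotlar--Stein sum of square roots is dominated by the diagonal contribution:
\[
\|T_{\TF_{n,k,j}'}\|_{2\to 2} \lesssim 2^{-n/2} + M\cdot 2^{-Cn\delta/2} \lesssim 2^{-n/2}.
\]

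For the local estimate \eqref{eq:Fef-forest-local} I would run the same argument with $U_i := T_{\TR_i}\one_F$: the diagonal now picks up a $\kappa$, $\|U_i^* U_i\|_{2\to 2} \leq \kappa 2^{-n}$, while the off-diagonal with $i \neq j$ admits both the trivial bound $\|U_i^* U_j\|_{2\to 2} \leq \kappa 2^{-n}$ (from $\|U_i\|\|U_j\|$) and the separation bound $\|U_i^* U_j\|_{2\to 2} \leq \|T_{\TR_i}^* T_{\TR_j}\|_{2\to 2} \lesssim 2^{-Cn\delta}$. Interpolating the two via $\min(A,B) \leq A^{1-s}B^s$ with $s := 1-2\alpha$ and carrying through the Cotlar--Stein estimate produces
\[
\|T_{\TF_{n,k,j}'}\one_F\|_{2\to 2} \lesssim \kappa^{1/2} 2^{-n/2} + M\,\kappa^{(1-s)/2} 2^{-n(1-s)/2 - Cn\delta s/2} \lesssim \kappa^{\alpha} 2^{-n\epsilon}
\]
for some $\epsilon = \epsilon(\alpha,C,\delta) > 0$, provided $C\delta > 1 + 1/(1-2\alpha)$. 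The principal obstacle is precisely this last inequality: as $\alpha \uparrow 1/2$ the interpolation gain $s$ shrinks to $0$, so to reach any $\alpha < 1/2$ the Fefferman forest separation constant must be enlarged accordingly, which is harmless because $C$ in Definition~\ref{def:F-forest} can be taken arbitrarily large without impacting the preceding constructions. A secondary technical point is that Lemma~\ref{lem:row} must be applied in both the $TT^*$ and $T^*T$ directions, but this is routine because the oscillatory integral argument in Lemma~\ref{lem:sep-trees} is symmetric in the two ways of splitting the bilinear form.
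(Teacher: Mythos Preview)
Your overall architecture (row decomposition, tree estimates via Corollary~\ref{cor:tree}, and an almost-orthogonality argument) matches the paper, but you miss one elementary fact and this creates a genuine gap in the local estimate.

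The fact you miss is that $T_{\TR_i}^{*}T_{\TR_j}=0$ \emph{exactly} for $i\neq j$. Indeed, $T_{\Tp}f$ is supported in $E(\Tp)$, and for tiles in $2^{Cn}$-separated trees the sets $E(\Tp)$ are disjoint (their uncertainty regions are). Hence the ranges of $T_{\TR_i}$ and $T_{\TR_j}$ are orthogonal. Your claim that the $T^{*}T$ bound follows from a ``symmetric version'' of Lemma~\ref{lem:sep-trees} is not justified: that lemma relies on the H\"older regularity of $y\mapsto e(-Q_{\TT}(y))T_{\TT}^{*}g(y)$, whereas $T_{\TT}f$ carries the rough cutoffs $\one_{E(\Tp)}$ and has no such regularity. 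Your conclusion for the global estimate survives because the true bound is zero, but the reasoning is wrong.

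For \eqref{eq:Fef-forest-local} the gap is real. Cotlar--Stein needs control of $\|U_iU_j^{*}\|=\|T_{\TR_i}\one_{F}T_{\TR_j}^{*}\|$, and you cannot dominate this by $\|T_{\TR_i}T_{\TR_j}^{*}\|$ because the multiplier $\one_{F}$ sits \emph{between} the two operators; the inequality $\|A\one_{F}B\|\leq\|AB\|$ is false in general, and the oscillatory integral in Lemma~\ref{lem:sep-trees} does not tolerate an extra rough factor $\one_{F}$ in the integrand. The paper avoids this entirely: using $T_{\TR_i}^{*}T_{\TR_j}=0$ one has
\[
\norm{T_{\TF_{n,k,j}'}\one_{F}f}_{2}^{2}=\sum_{m}\norm{T_{\TR_{n,k,m}}\one_{F}f}_{2}^{2}\lesssim M\,\kappa\,2^{-n}\norm{f}_{2}^{2},
\]
so $\norm{T_{\TF_{n,k,j}'}\one_{F}}_{2\to 2}\lesssim \kappa^{1/2}(\log(n+1))^{1/2}$. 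Taking the geometric mean of this bound with \eqref{eq:Fef-forest-global} (weight $2\alpha$ on the first, $1-2\alpha$ on the second) yields $\kappa^{\alpha}(\log(n+1))^{\alpha}2^{-n(1-2\alpha)/2}\lesssim_{\alpha}\kappa^{\alpha}2^{-n\epsilon}$ for any $\alpha<1/2$. This two-step argument also removes your need to let the forest separation constant $C$ depend on $\alpha$.
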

\begin{proof}
We subdivide $\TF_{n,k,j}'$ into rows by the following procedure: for each $m\geq 0$ let inductively $\TR_{n,k,m} = \cup_{l\in L(k,m)}\TN_{n,k,j,l}$ be the union of a maximal set of trees whose spatial cubes are disjoint and maximal among those that have not been selected yet.
This procedure terminates after $O(2^{n} \log(n+1))$ steps because the tree top cubes have overlap bounded by $O(2^{n} \log(n+1))$.
Applying Corollary~\ref{cor:tree} with the set $F$ and with the set $F$ replaced by $\R^{\ds}$ to each tree we obtain
\[
\norm{T_{\TN_{n,k,j,l}} \one_{F}}_{2\to 2} \lesssim \kappa^{1/2} 2^{-n/2},
\quad
\norm{T_{\TN_{n,k,j,l}}}_{2\to 2} \lesssim 2^{-n/2}.
\]
Using normality of the trees and disjointness of their top cubes we obtain
\begin{equation}
\label{eq:row-est}
\norm{T_{\TR_{n,k,m}} \one_{F}}_{2\to 2} \lesssim \kappa^{1/2} 2^{-n/2},
\quad
\norm{T_{\TR_{n,k,m}}}_{2\to 2} \lesssim 2^{-n/2}.
\end{equation}
Using the fact that
\begin{equation}
\label{eq:rows-disjoint}
T_{\TR_{n,k,m}}^{*} T_{\TR_{n,k,m'}} = 0
\text{ for } m \neq m'
\end{equation}
due to disjointness of $E(\Tp)$ for tiles that belong to separated trees as well as Lemma~\ref{lem:row} and an orthogonality argument we obtain \eqref{eq:Fef-forest-global}.

Using \eqref{eq:rows-disjoint} and \eqref{eq:row-est} gives
\begin{align*}
\norm{T_{\TF'_{n,k,j}} \one_{F} f}_{2}
&=
\big( \sum_{m \lesssim 2^{n} \log(n+1)} \norm{T_{\TR_{n,k,m}} \one_{F} f}_{2}^{2} \big)^{1/2}\\
&\lesssim
\big( \sum_{m \lesssim 2^{n} \log(n+1)} (\kappa^{1/2} 2^{-n/2} \norm{f}_{2})^{2} \big)^{1/2}\\
&\lesssim
\kappa^{1/2} 2^{-n/2} \norm{f}_{2} (2^{n} \log(n+1))^{1/2}\\
&\lesssim
\kappa^{1/2} (\log(n+1))^{1/2} \norm{f}_{2}.
\end{align*}
Taking a geometric average with \eqref{eq:Fef-forest-global} we obtain \eqref{eq:Fef-forest-local}.
\end{proof}

\subsection{Orthogonality between stopping generations}

\begin{lemma}\label{lem:T*-stopping-cut}
Let $\TT \subset \TP_{k}$ be a tree and $k'>k$.
Then
\[
\norm{T_{\TT} \one_{F_{k'}}}_{2\to 2}
\lesssim
e^{-(k'-k)},
\]
where $F_{k'} = \cup_{F\in\calF_{k'}} F$.
\end{lemma}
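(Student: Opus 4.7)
My plan is to reduce Lemma~\ref{lem:T*-stopping-cut} to a single application of Corollary~\ref{cor:tree} at the exponent $p=2$, with a parameter $\kappa$ that decays exponentially in $k'-k$. The geometric ingredient is a density estimate: for every $\Tp\in\TT\subseteq\TP_{k}$,
\[
\meas{F_{k'}\cap C I_{\Tp}}\lesssim e^{-2(k'-k)}\meas{I_{\Tp}},
\]
where $C$ is an absolute constant. This is a variant of the argument in the proof of Proposition~\ref{prop:sf}: the stopping property~\ref{it:stopping-neighbors} of Lemma~\ref{lem:spatial-decomposition} forbids $F\in\calF_{k+1}$ intersecting $5I_{\Tp}$ from having scale larger than $s(\Tp)$ (otherwise $I_{\Tp}$ would be absorbed into a cube of $\calF_{k+1}$, contradicting $I_{\Tp}\in\calC_{k}$), so such $F$ lie in $C I_{\Tp}$; iterating \eqref{eq:Lie-support-decay} from generation $k+1$ down to $k'$ and using that $D$ is chosen large enough so that $D^{-10\ds}\leq e^{-2}$ finishes the estimate. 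Crucially the identical argument extends to any dyadic ancestor $Q$ of $I_{\Tp}$, since $Q$ inherits from $I_{\Tp}$ the property of not being contained in a cube of $\calF_{k+1}$ and hence lies in $\calC_{k''}$ for some $k''\leq k$, giving $\meas{F_{k'}\cap C Q}\lesssim e^{-2(k'-k)}\meas{Q}$ as well.

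Next I would verify the hypothesis of Corollary~\ref{cor:tree} with $\kappa:=C_{0}\,e^{-2(k'-k)}$ for a sufficiently large absolute constant $C_{0}$, namely that $I_{\Tp}\not\subseteq\Set{M\one_{F_{k'}}>\kappa}$ for every $\Tp\in\TT$. Splitting $\one_{F_{k'}}=\one_{F_{k'}\cap C I_{\Tp}}+\one_{F_{k'}\setminus C I_{\Tp}}$, the weak-$(1,1)$ inequality applied to the first piece together with the density bound above yields a positive-measure subset of $I_{\Tp}$ of points $x_{0}$ with $M\one_{F_{k'}\cap C I_{\Tp}}(x_{0})\leq\kappa/2$. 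For such an $x_{0}$ any cube $Q\ni x_{0}$ contributing to $M\one_{F_{k'}\setminus C I_{\Tp}}(x_{0})$ must satisfy $\ell(Q)\gtrsim\ell(I_{\Tp})$ and can be dominated by a dyadic ancestor of $I_{\Tp}$, to which the ancestor density estimate from the first paragraph applies, yielding $\meas{Q}^{-1}\int_{Q}\one_{F_{k'}}\lesssim e^{-2(k'-k)}\leq\kappa/2$ after an appropriate choice of $C_{0}$.

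Finally Corollary~\ref{cor:tree} with $p=2$, $F=F_{k'}$ and this choice of $\kappa$, together with the trivial bound $\mdens_{k}(\TT)\lesssim 1$, gives
\[
\norm{T_{\TT}\one_{F_{k'}}f}_{2}\lesssim\kappa^{1/2}\mdens_{k}(\TT)^{1/2}\norm{f}_{2}\lesssim e^{-(k'-k)}\norm{f}_{2}
\]
as claimed. The main obstacle I anticipate is the maximal function verification in the middle step, in particular the need to control $M\one_{F_{k'}}$ at scales $\ell(Q)\gtrsim\ell(I_{\Tp})$ uniformly; it is here that the stability of the stopping structure encoded by~\ref{it:stopping-neighbors}, and its clean propagation from $I_{\Tp}$ to its dyadic ancestors, becomes essential.
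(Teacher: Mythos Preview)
Your argument is correct. The geometric density estimate you derive for $I_{\Tp}$ and its ancestors is exactly right, and the maximal-function verification goes through once you choose $C_0$ large enough. One minor remark: the clause ``hence lies in $\calC_{k''}$ for some $k''\leq k$'' is unnecessary and fails for ancestors of scale exceeding $\sumax$; all you actually use is that the ancestor is not contained in any cube of $\calF_{k+1}$, which already forces (via property~\ref{it:stopping-neighbors}) that any $F'\in\calF_{k+1}$ meeting $5Q$ has $\scale(F')\leq\scale(Q)$.

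The paper's proof is shorter because it bypasses Corollary~\ref{cor:tree} and the maximal function entirely. It works directly with the partition $\calJ:=\calJ(\TT)$ from Definition~\ref{def:leaves}: for each $J\in\calJ$ one has $100D\hat{J}\supseteq I_{\Tp}$ for some $\Tp\in\TT$, and the same stopping argument you use on $I_{\Tp}$ shows that every $F'\in\calF_{k+1}$ meeting $J$ has $\scale(F')\leq\scale(J)+3$, hence $\abs{J\cap F_{k'}}\lesssim e^{-2(k'-k)}\abs{J}$. This gives $\norm{P_{\calJ}\abs{\one_{F_{k'}}f}}_{2}\lesssim e^{-(k'-k)}\norm{f}_{2}$ directly, and Lemma~\ref{lem:tree} finishes. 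Your route through Corollary~\ref{cor:tree} is essentially rederiving this same $\calJ$-density estimate \eqref{eq:dens-kappa} via the maximal function hypothesis \eqref{eq:tree-loc}, so the detour through weak-$(1,1)$ and the ancestor argument is extra work that the paper avoids. On the other hand, your approach has the virtue of treating Corollary~\ref{cor:tree} as a black box, which is arguably cleaner from a modular standpoint.
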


\begin{proof}
Let $\calJ := \calJ(\TT)$ and $J\in\calJ$, so that $100D \hat{J} \supseteq I_{\Tp}$ for some $\Tp\in\TT$.

Let $F'\in \calF_{k+1}$ be such that $J\cap F' \neq\emptyset$.
Suppose that $\scale(F') \geq \scale(J) + 4$.
Then $(1+\frac1D) F' \supset 100D \hat{J} \supseteq I_{\Tp}$ and $\scale(F') > \scale(\Tp)$.
By part \ref{it:stopping-neighbors} of Lemma~\ref{lem:spatial-decomposition} this implies $I\in\calF_{k+1}$ for some $I\supseteq I_{\Tp}$, contradicting $I_{\Tp}\in \calC_{k}$.

Therefore we must have $\scale(F') \leq \scale(J) + 3$, and it follows that
\[
\sum_{F'\in\calF_{k+1} : J\cap F' \neq\emptyset} \abs{F'} \lesssim \abs{J}.
\]
Hence
\begin{multline*}
\abs{J \cap F_{k'}}
\leq
\sum_{F'\in\calF_{k+1} : J\cap F' \neq\emptyset} \abs{F' \cap F_{k'}}\\
\lesssim
\sum_{F'\in\calF_{k+1} : J\cap F' \neq\emptyset} e^{-2(k'-k-1)} \abs{F'}
\lesssim
e^{-2(k'-k)} \abs{J}.
\end{multline*}
This implies $\norm{P_{\calJ} \one_{F_{k'}}}_{2\to 2} \lesssim e^{-(k'-k)}$, and the claim follows from Lemma~\ref{lem:tree}.
\end{proof}

\begin{proposition}
\label{prop:Fef-forest:small-support}
For any measurable subset $F'\subset \R^{\ds}$ we have
\begin{align}
\norm{T_{\TF_{n,k,j}'}^{*} T_{\TF_{n,k',j'}'}}_{2 \to 2} &\lesssim 10^{n} e^{-\abs{k-k'}}, \label{eq:gen-T*T}\\
\norm{T_{\TF_{n,k,j}'} \one_{F'} T_{\TF_{n,k',j'}'}^{*}}_{2 \to 2} &\lesssim 10^{n} e^{-\abs{k-k'}}. \label{eq:gen-TT*}
\end{align}
\end{proposition}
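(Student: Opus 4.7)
The strategy exploits two structural features. First, by duality $\|T_{\TF_{n,k,j}'}^{*}T_{\TF_{n,k',j'}'}\|_{2\to 2} = \|T_{\TF_{n,k',j'}'}^{*}T_{\TF_{n,k,j}'}\|_{2\to 2}$, so I can assume without loss of generality $k\leq k'$. Second, both operators are effectively spatially localized: the input and output of $T_{\TF_{n,k',j'}'}$ (and of its adjoint) live inside $F_{k'}$, while $T_{\TF_{n,k,j}'}$ is localized inside $F_k \supseteq F_{k'}$. Finally, because the trees in a Fefferman forest are $2^{Cn}$-separated, for any two distinct trees $\TT\neq\TT'$ in $\TF_{n,k,j}'$ the sets $E(\Tp)\cap E(\Tp')=\emptyset$ whenever $\Tp\in\TT, \Tp'\in\TT'$ (otherwise $\Tp\leq\Tp'$ would force $\Delta(\Tp,Q_{\TT'}) = O(1)$ in violation of the separation condition). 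Consequently the spatial supports of $T_{\TT}$, $\TT\in\TF_{n,k,j}'$, are pairwise disjoint, and the same holds for $\TT'\in\TF_{n,k',j'}'$.

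For \eqref{eq:gen-TT*} the plan is direct. Using that $T_{\TF_{n,k',j'}'}^{*} = \one_{F_{k'}} T_{\TF_{n,k',j'}'}^{*}$, I rewrite
\[
T_{\TF_{n,k,j}'}\one_{F'}T_{\TF_{n,k',j'}'}^{*} = T_{\TF_{n,k,j}'}\one_{F'\cap F_{k'}}T_{\TF_{n,k',j'}'}^{*}
\]
and factor the norm by $\|T_{\TF_{n,k,j}'}\one_{F_{k'}}\|_{2\to 2}\cdot\|T_{\TF_{n,k',j'}'}^{*}\|_{2\to 2}$. The second factor is $\lesssim 2^{-n/2}$ by Proposition~\ref{prop:Fef-forest}. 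For the first factor, disjointness of tree outputs and normality give $T_\TT f = T_\TT(\one_{I_\TT}f)$, so
\[
\|T_{\TF_{n,k,j}'}\one_{F_{k'}}f\|_2^{2}
= \sum_{\TT}\|T_{\TT}\one_{F_{k'}\cap I_\TT}f\|_2^{2}
\lesssim e^{-2(k'-k)}\sum_{\TT}\|\one_{I_\TT}f\|_2^{2}
\lesssim 2^{n}\log(n+1)\cdot e^{-2(k'-k)}\|f\|_2^{2},
\]
where the first inequality is Lemma~\ref{lem:T*-stopping-cut} tree-by-tree and the last uses the counting bound \eqref{eq:F-forest-counting}. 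Combining the two factors yields $\|T_{\TF_{n,k,j}'}\one_{F'}T_{\TF_{n,k',j'}'}^{*}\|_{2\to 2}\lesssim \sqrt{\log(n+1)}\,e^{-(k'-k)}$, which is comfortably absorbed by $10^{n}e^{-|k-k'|}$.

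For \eqref{eq:gen-T*T} I would use the $TT^{*}$ trick $\|T_{\TF_{n,k,j}'}^{*}T_{\TF_{n,k',j'}'}\|^{2} = \|T_{\TF_{n,k',j'}'}^{*}(T_{\TF_{n,k,j}'}T_{\TF_{n,k,j}'}^{*})T_{\TF_{n,k',j'}'}\|$, insert the split $T_{\TF_{n,k,j}'}T_{\TF_{n,k,j}'}^{*} = T_{\TF_{n,k,j}'}\one_{F_{k'}}T_{\TF_{n,k,j}'}^{*} + T_{\TF_{n,k,j}'}\one_{F_k\setminus F_{k'}}T_{\TF_{n,k,j}'}^{*}$, bound the first summand by $\|T_{\TF_{n,k,j}'}\one_{F_{k'}}\|\cdot\|T_{\TF_{n,k,j}'}^{*}\|\lesssim\sqrt{\log(n+1)}\,e^{-(k'-k)}$ exactly as in the previous step, and combine with $\|T_{\TF_{n,k',j'}'}\|\lesssim 2^{-n/2}$ on both sides. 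The main obstacle is that the natural quantity appearing on the other side, the output-restriction $\|\one_{F_{k'}}T_{\TF_{n,k,j}'}\|_{2\to 2}$, is not directly controlled by Lemma~\ref{lem:T*-stopping-cut}: the proof of that lemma rests on the estimate $|J\cap F_{k'}|\lesssim e^{-2(k'-k)}|J|$ for $J\in\calJ(\TT)$, which does \emph{not} have a pointwise analogue for leaves $L\in\calL(\TT)$ (a leaf may sit entirely inside a stopping cube of higher generation). The $TT^{*}$ route circumvents this by only ever inserting $\one_{F_{k'}}$ on the input side of a copy of $T_{\TF_{n,k,j}'}$, where Lemma~\ref{lem:T*-stopping-cut} applies, and the loose constant $10^{n}$ in the target bound is what absorbs the $\sqrt{\log(n+1)}$ losses accumulated along the way.
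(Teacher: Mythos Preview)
Your treatment of \eqref{eq:gen-TT*} is correct and is in substance the paper's argument, organized at the forest level (using disjointness of the sets $\cup_{\Tp\in\TT}E(\Tp)$ for separated trees) rather than at the row level. Both routes work; yours even gives a slightly better constant.

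The problem is your handling of \eqref{eq:gen-T*T}. You assert that the output-restriction $\norm{\one_{F_{k'}} T_{\TT}}_{2\to 2}$ cannot be bounded because the per-leaf estimate $\abs{L\cap F_{k'}}\lesssim e^{-2(k'-k)}\abs{L}$ fails for $L\in\calL(\TT)$. This is incorrect. As observed in the proof of Corollary~\ref{cor:tree}, every leaf $L\in\calL(\TT)$ satisfies $\hat L\in\calC_k$. Hence if $F'\in\calF_{k+1}$ meets $L$, then $F'\cap\hat L\neq\emptyset$ while $\hat L\not\subseteq F'$ (since $\hat L\notin\tilde\calC_{k+1}$), so $F'\subsetneq\hat L$ and therefore $F'\subseteq L$. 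Iterating \eqref{eq:Lie-support-decay} from generation $k+1$ gives
\[
\abs{L\cap F_{k'}}=\sum_{\substack{F'\in\calF_{k+1}\\ F'\subseteq L}}\abs{F'\cap F_{k'}}\le D^{-10\ds(k'-k-1)}\sum_{\substack{F'\in\calF_{k+1}\\ F'\subseteq L}}\abs{F'}\le D^{-10\ds(k'-k-1)}\abs{L},
\]
which is $\lesssim e^{-2(k'-k)}$. This is exactly what the paper uses: $\norm{P_{\calL(\TN_{n,k,j,l})}\one_{F_{k'}}}_{2\to 2}\lesssim e^{-\abs{k-k'}}$, together with Lemma~\ref{lem:tree}, yields $\norm{\one_{F_{k'}}T_{\TR_{n,k,m}}}_{2\to 2}\lesssim e^{-\abs{k-k'}}$, and then $T_{\TR_{n,k',m'}}=\one_{F_{k'}}T_{\TR_{n,k',m'}}$ finishes \eqref{eq:gen-T*T} directly.

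Your proposed $TT^{*}$ workaround does not close as written: after the split $T_{\TF_{n,k,j}'}T_{\TF_{n,k,j}'}^{*}=T_{\TF_{n,k,j}'}\one_{F_{k'}}T_{\TF_{n,k,j}'}^{*}+T_{\TF_{n,k,j}'}\one_{F_k\setminus F_{k'}}T_{\TF_{n,k,j}'}^{*}$ you bound only the first summand. For the second, sandwiching by $T_{\TF_{n,k',j'}'}^{*}$ on the left forces you to insert $\one_{F_{k'}}$ between $T_{\TF_{n,k',j'}'}^{*}$ and $T_{\TF_{n,k,j}'}$ (since $T_{\Tp}^{*}$ only sees input on $E(\Tp)\subset F_{k'}$), and you are back to needing $\norm{\one_{F_{k'}}T_{\TF_{n,k,j}'}}_{2\to 2}$ --- the very quantity you were trying to avoid. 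The detour is unnecessary because, as shown above, that quantity is in fact directly controllable.
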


\begin{proof}
Let $\TR_{n,k,m}$ be the rows defined in the proof of Proposition~\ref{prop:Fef-forest}.
It suffices to show
\begin{align}
\norm{T_{\TR_{n,k,m}}^{*} T_{\TR_{n,k',m'}}}_{2 \to 2} &\lesssim e^{-\abs{k-k'}}, \label{eq:gen-T*T:row}\\
\norm{T_{\TR_{n,k,m}} \one_{F'} T_{\TR_{n,k',m'}}^{*}}_{2 \to 2} &\lesssim e^{-\abs{k-k'}}. \label{eq:gen-TT*:row}
\end{align}

Without loss of generality we may assume $k'\geq k$.
We will use the fact that
\[
T_{\TR_{n,k',m'}}
=
\one_{F_{k'}} T_{\TR_{n,k',m'}}
=
T_{\TR_{n,k',m'}} \one_{F_{k'}}
\]
with $F_{k'} = \cup_{F\in\calF_{k'}}F$ (the last equality uses normality of the trees).

Using \eqref{eq:row-est} we estimate
\begin{align*}
LHS\eqref{eq:gen-T*T:row}
&=
\norm{ T_{\TR_{n,k,m}}^{*} \one_{F_{k'}} T_{\TR_{n,k',m'}} }_{2\to 2}\\
&\leq
\norm{ T_{\TR_{n,k,m}}^{*} \one_{F_{k'}} }_{2\to 2} \norm{ T_{\TR_{n,k',m'}} }_{2\to 2}\\
&\lesssim
\norm{ \one_{F_{k'}} T_{\TR_{n,k,m}} }_{2\to 2}.
\end{align*}
As a consequence of \eqref{eq:Lie-support-decay} we have
\[
\norm{P_{\calL(\TN_{n,k,j,l})} \one_{F_{k'}}}_{2\to 2} \lesssim e^{-\abs{k-k'}},
\]
and \eqref{eq:gen-T*T:row} follows from Lemma~\ref{lem:tree}.
Similarly,
\begin{align*}
LHS\eqref{eq:gen-TT*:row}
&=
\norm{ T_{\TR_{n,k,m}} \one_{F'} \one_{F_{k'}} T_{\TR_{n,k',m'}}^{*} }_{2\to 2}\\
&\leq
\norm{ T_{\TR_{n,k,m}} \one_{F'\cap F_{k'}} }_{2\to 2} \norm{ T_{\TR_{n,k',m'}}^{*} }_{2\to 2}\\
&\lesssim
\norm{ T_{\TR_{n,k,m}} \one_{F_{k'}} }_{2\to 2}\\
&\lesssim
e^{-\abs{k-k'}}
\end{align*}
by Lemma~\ref{lem:T*-stopping-cut}.
\end{proof}

\section{Proof of Theorem~\ref{thm:loc}}
\label{sec:proof-loc}
\begin{proof}[Proof of \eqref{eq:loc:full}]
Using the decomposition \eqref{eq:tree-dec} we split
\begin{multline*}
\norm{T_{\TP}}_{2\to 2}
\leq
\sum_{n=1}^{\infty} \sum_{j=1}^{C n^{2}} \Big( \norm[\big]{ \sum_{k\in\N} T_{\TF_{n,k,j}'}}_{2\to 2}
+ \norm[\big]{ \sum_{k\in\N} T_{\TA_{n,k,j}}}_{2\to 2}\\
+ \norm[\big]{ \sum_{k\in\N} \sum_{l} T_{\bd(\TT_{n,k,j,l})}}_{2\to 2} \Big).
\end{multline*}
The contribution of the last two summands is estimated by Proposition~\ref{prop:sf}.
In the first summand we split the summation over $k$ in congruence classes modulo $C n$ and use Propositions \ref{prop:Fef-forest}, \ref{prop:Fef-forest:small-support}, and the Cotlar--Stein Lemma (see e.g.\ \cite[Section VII.2]{MR1232192}).
\end{proof}

\begin{proof}[Proof of \eqref{eq:loc:G}]
Let $\TP_{\tilde G} := \Set{ \Tp\in\TP \given I_{\Tp}^{*} \subseteq \tilde{G}}$, then $T_{\Tp}\one_{\R^{\ds}\setminus \tilde{G}}=0$ if $\Tp\in\TP_{\tilde G}$.
Hence
\[
\norm{ \one_{G} T_{\TP} \one_{\R^{\ds} \setminus \tilde{G}}}_{2 \to 2}
=
\norm{ \one_{G} T_{\TP \setminus \TP_{\tilde G}} \one_{\R^{\ds} \setminus \tilde{G}}}_{2 \to 2}
\leq
\norm{ \one_{G} T_{\TP \setminus \TP_{\tilde G}}}_{2 \to 2}.
\]
In order to estimate the latter quantity we run the proof of \eqref{eq:loc:full} with $\TP$ replaced by $\TP\setminus\TP_{\tilde{G}}$.
The main changes are that all tiles now have density $2^{n} \lesssim \nu$.
This yields the required improvement in the estimate for the main term.
In the error terms we use Proposition~\ref{prop:sf-loc} with $F=\R^{\ds}$.
The hypothesis \eqref{eq:sf-loc:assume} is satisfied because we have removed all tiles whose spatial cubes are contained in $\tilde{G}$.
\end{proof}

\begin{proof}[Proof of \eqref{eq:loc:F}]
Let $\TP_{\tilde F} := \Set{ \Tp\in\TP \given I_{\Tp} \subseteq \tilde{F}}$, then $\one_{\R^{\ds}\setminus \tilde{F}} T_{\Tp}=0$ if $\Tp\in\TP_{\tilde F}$.
Hence
\[
\norm{ \one_{\R^{\ds} \setminus \tilde{F}} T_{\TP} \one_{F}}_{2 \to 2}
=
\norm{ \one_{\R^{\ds} \setminus \tilde{F}} T_{\TP \setminus \TP_{\tilde F}} \one_{F}}_{2 \to 2}
\leq
\norm{ T_{\TP \setminus \TP_{\tilde F}} \one_{F}}_{2 \to 2}.
\]
In order to estimate the latter term we again run the proof of \eqref{eq:loc:full} with $\TP$ replaced by $\TP \setminus \TP_{\tilde F}$.
In particular, we split
\begin{multline*}
\norm{T_{\TP} \one_{F}}_{2\to 2}
\leq
\sum_{n=1}^{\infty} \sum_{j=1}^{C n^{2}} \Big( \norm[\big]{ \sum_{k\in\N} T_{\TF_{n,k,j}'} \one_{F}}_{2\to 2}
+ \norm[\big]{ \sum_{k\in\N} T_{\TA_{n,k,j}} \one_{F}}_{2\to 2}\\
+ \norm[\big]{ \sum_{k\in\N} \sum_{l} T_{\bd(\TT_{n,k,j,l})} \one_{F}}_{2\to 2} \Big).
\end{multline*}
The contribution of the last two terms is taken care of by Proposition~\ref{prop:sf-loc} with $G=\R^{\ds}$.
In the estimate for the main term we use \eqref{eq:Fef-forest-local} in place of \eqref{eq:Fef-forest-global} and split the summation over $k$ in congruence classes modulo $\lceil C n (\abs{\log \kappa} + 1) \rceil$.
\end{proof}

\section{A van der Corput type oscillatory integral estimate}
\label{sec:vdC}
We use the following van der Corput type estimate for oscillatory integrals in $\R^{\ds}$ that refines \cite[Proposition 2.1]{MR1879821}.

\begin{lemma}
\label{lem:osc-int}
Let $\psi : \R^{\ds}\to\C$ be a measurable function with $\supp\psi\subset J$ for a cube $J$.
Then for every $Q\in\calQ_{d}$ we have
\[
\abs[\big]{\int_{\R^{\ds}} e(Q(x)) \psi(x) \dif x}
\lesssim
\sup_{\abs{y}<\Delta^{-1/d} \ell(J)} \int_{\R^{\ds}} \abs{\psi(x)-\psi(x-y)} \dif x,
\quad
\Delta = \norm{Q + \R}_{J}+1.
\]
\end{lemma}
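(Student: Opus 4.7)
The plan is to prove Lemma~\ref{lem:osc-int} by induction on the degree $d\geq 1$, after rescaling so that $J$ is the unit cube; then the desired estimate reads $|\int e(Q)\psi\,dx| \lesssim \sup_{|y|<\Delta^{-1/d}}\|\psi-\tau_y\psi\|_{1}$, where $\tau_y\psi(x):=\psi(x-y)$ and $\Delta=\|Q\|_{J}+1$.

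For the base case $d=1$ I write $Q(x)=\xi\cdot x$ modulo constants, so $\Delta\sim|\xi|+1$. I pick $y$ along the sign vector of $\xi$ with $|y|$ comparable to $\Delta^{-1}$ and chosen so that $e(\xi\cdot y)=e^{i\pi/k}$ for a constant $k$ depending only on $\ds$; the identity
\[
(1-e(\xi\cdot y))\int e(Q)\psi\,dx = \int e(Q)\bigl(\psi-\tau_y\psi\bigr)\,dx
\]
yields the bound directly. In the regime where $\Delta$ is of order $1$, I instead take $|y|$ close to the maximum allowed value, so that $\tau_y\psi$ and $\psi$ have nearly disjoint supports and $\|\psi-\tau_y\psi\|_{1}\gtrsim\|\psi\|_{1}\geq|\int e(Q)\psi\,dx|$.

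For the inductive step $d\geq 2$, I apply the $TT^{*}$ identity
\[
\Big|\int e(Q)\psi\,dx\Big|^{2} = \int dy\int\psi(x)\overline{\psi(x-y)}\,e\bigl(R_y(x)\bigr)\,dx,\qquad R_y(x):=Q(x)-Q(x-y),
\]
where $R_y$ is a polynomial in $x$ of degree $\leq d-1$, and only $|y|\lesssim 1$ contributes due to the support of $\psi$. For each such $y$, the inner integral falls under the inductive hypothesis applied to the amplitude $\psi_y:=\psi\cdot\overline{\tau_y\psi}$ on $J$ and phase $R_y$, producing a bound in terms of $\sup_{|z|<\Delta_y^{-1/(d-1)}}\|\psi_y-\tau_z\psi_y\|_{1}$, with $\Delta_y:=\|R_y\|_{J}+1$. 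A triangle inequality converts the modulus of continuity of $\psi_y$ to that of $\psi$ at the cost of one factor of $\|\psi\|_{\infty}$, which one absorbs by interpolation against the trivial bound $|\int e(Q)\psi|\leq\|\psi\|_{1}$.

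The key scale computation is that for typical $y$ of size $s$ one has $\|R_y\|_{J}\sim s\,\|Q\|_{J}$, so $\Delta_y^{-1/(d-1)}\sim\Delta^{-1/d}$ precisely when $s\sim\Delta^{-1/d}$, which is also the main region of the $y$-integration by a monotonicity comparison using \eqref{eq:normQ:up}--\eqref{eq:normQ:low}. The decisive difficulty will be establishing the lower bound $\|R_y\|_{J}\gtrsim|y|\,\|Q\|_{J}$ for $y$ in a set of positive measure, since the top-degree coefficients of $R_y$ in $x$ are linear in $y$ and vanish on a proper algebraic subvariety determined by the top-degree homogeneous part of $Q$. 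A standard Markov-type inequality on polynomial coefficients, combined with a pigeonhole argument selecting a favorable cone of $y$'s, gives the required quantitative statement with a constant depending only on $d$ and $\ds$, and carrying this bookkeeping through the $y$-integration closes the induction.
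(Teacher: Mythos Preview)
Your inductive $TT^{*}$/Weyl--differencing scheme has a genuine gap at the step where you ``absorb $\norm{\psi}_{\infty}$ by interpolation.'' Two problems:

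\begin{enumerate}
\item The hypothesis is only that $\psi$ is measurable with support in $J$; finiteness of the right-hand side forces $\psi\in L^{1}$ but says nothing about $\norm{\psi}_{\infty}$, which may well be infinite. Your bound $\norm{\psi_{y}-\tau_{z}\psi_{y}}_{1}\lesssim\norm{\psi}_{\infty}\norm{\psi-\tau_{z}\psi}_{1}$ is then vacuous.
\item Even granting $\norm{\psi}_{\infty}<\infty$, squaring produces at best
\[
\Bigl|\int e(Q)\psi\Bigr|^{2}\lesssim \norm{\psi}_{\infty}\,\beta + \text{(small-$y$ terms)},
\]
hence $|\int e(Q)\psi|\lesssim(\norm{\psi}_{\infty}\beta)^{1/2}$. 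There is no interpolation that turns this into $\lesssim\beta$: both $|\int e(Q)\psi|$ and $\beta$ are homogeneous of degree~$1$ in $\psi$, so the dimensionless ratio $\norm{\psi}_{\infty}\|\psi\|_{1}/\beta^{2}$ can be arbitrarily large (take $\psi$ very smooth). Iterating the squaring only compounds the loss, which is the familiar Weyl exponent $2^{-O(d)}$ rather than the van~der~Corput exponent $1/d$.
\end{enumerate}

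The paper proceeds quite differently and avoids both issues. It mollifies $\psi$ at spatial scale $\Delta^{-1/d}$, writing $\tilde\psi=\phi*\psi$ with $\phi$ a bump at that scale. The replacement error $\int|\psi-\tilde\psi|$ is bounded by $\beta$ directly from the definition of $\beta$, and the mollified function satisfies $\int|\partial_{i}\tilde\psi|\lesssim\Delta^{1/d}\beta$ (again using only the $L^{1}$ modulus of continuity, no $L^{\infty}$ bound). One then invokes the Stein--Wainger oscillatory integral estimate (their Proposition~2.1), whose proof runs one-dimensional van~der~Corput along a well-chosen direction and uses only $\int|\nabla\tilde\psi|$; this gives $|\int e(Q)\tilde\psi|\lesssim\Delta^{-1/d}\cdot\Delta^{1/d}\beta=\beta$. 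The key point is that mollification converts the $L^{1}$ modulus of continuity into a gradient bound \emph{without} passing through $\norm{\psi}_{\infty}$, and van~der~Corput (unlike Weyl differencing) does not lose a square root per degree.

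Your base case $d=1$ is fine, and your identification of the scale $s\sim\Delta^{-1/d}$ where $\Delta_{y}^{-1/(d-1)}\sim\Delta^{-1/d}$ is correct; but the difficulty you flag (the lower bound $\norm{R_{y}}_{J}\gtrsim|y|\,\norm{Q}_{J}$) is not the decisive one.
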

\begin{proof}
By scaling and translation we may assume $\ell(J)\sim 1$ and $J\subset B(0,1/2)$.
Let $\beta$ denote the right-hand side of the conclusion.
If $\Delta \lesssim 1$, then $\norm{\psi}_{1} \lesssim \beta$, so the result is only non-trivial if $\Delta \gg 1$.
In this case we replace $\psi$ on the left-hand side by $\tilde\psi:=\phi*\psi$, where $\phi = \Delta^{\ds/d} \phi_{0}(\Delta^{1/d} \cdot)$ and $\phi_{0}$ is a smooth positive bump function with integral $1$ supported on the unit ball.
The error term is controlled by
\begin{align*}
\int \abs{\psi-\tilde\psi}(x) \dif x
&=
\int \abs[\big]{\int (\psi(x)-\psi(x-y))\phi(y) \dif y} \dif x\\
&\leq
\int \phi(y) \int \abs{\psi(x)-\psi(x-y)} \dif x \dif y\\
&\leq
\beta.
\end{align*}
Moreover, $\supp \tilde\psi \subseteq B(0,1)$ and
\begin{align*}
\int \abs{\partial_{i} \tilde\psi(x)} \dif x
&=
\int \abs{\int \psi(x-y) \partial_{i} \phi(y) \dif y} \dif x\\
&=
\int \abs{\int (\psi(x)-\psi(x-y)) \partial_{i} \phi(y) \dif y} \dif x\\
&\leq
\int \int \abs{\psi(x)-\psi(x-y)} \abs{\partial_{i} \phi(y)} \dif y \dif x\\
&\lesssim
\Delta^{\ds/d+1/d}\int \int_{B(0,\Delta^{-1/d})} \abs{\psi(x)-\psi(x-y)} \dif y \dif x\\
&\lesssim
\Delta^{1/d} \beta
\end{align*}
for every $i=1,\dotsc,\ds$.
The result now follows from the proof of \cite[Proposition 2.1]{MR1879821} applied to $\tilde\psi$.
Notice that the one-dimensional van der Corput estimate (Corollary on p.\ 334 of \cite{MR1232192}) used in that proof only uses an estimate on $\nabla \tilde\psi$.
\end{proof}

\section{Localized estimates for the Hardy--Littlewood maximal operator}
\label{sec:HL-loc-est}
In Section~\ref{sec:proof-loc} we have proved Theorem~\ref{thm:loc} for the linearized smoothly truncated operators \eqref{eq:T-linearized}.
By Bateman's extrapolation we could now deduce $L^{p}$ estimates for these operators and obtain Theorem~\ref{thm:poly-car} using $L^{p}$ estimates for the Hardy--Littlewood maximal operator.
In order to show Theorem~\ref{thm:loc} for the sharply truncated operator \eqref{eq:Car-op-Kd} we need a corresponding localized estimate for the Hardy--Littlewood maximal operator.
\begin{lemma}
Let $0\leq \alpha < 1/2$ and $0 < \nu \leq 1$.
Let $G\subset\R^{\ds}$ be a measurable subset and $\tilde G := \Set{ M \one_{G} > \nu }$.
Then
\[
\norm{\one_{G} M \one_{\R^{\ds} \setminus \tilde{G}}}_{2\to 2} \lesssim_{\alpha} \nu^{\alpha},
\quad
\norm{\one_{\R^{\ds} \setminus \tilde{G}} M \one_{G}}_{2\to 2} \lesssim_{\alpha} \nu^{\alpha}.
\]
\end{lemma}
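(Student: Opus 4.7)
The two bounds share a common geometric ingredient: if a cube $Q\subset\R^{\ds}$ is not contained in $\tilde G$, then there exists $y\in Q\setminus \tilde G$, and since the cube $Q$ is admissible in the maximal average at $y$, one obtains $\abs{Q\cap G}/\abs{Q}\leq M\one_G(y)\leq \nu$. Both proofs exploit this single fact.

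For the second estimate, I would establish the pointwise inequality
\[
M(\one_G f)(x)\lesssim \nu^{1/p}\,M_{q}f(x)
\qquad\text{for all } x\in\R^{\ds}\setminus\tilde G,
\]
valid for any $1<q<\infty$ with $1/p+1/q=1$. Indeed, for any cube $Q\ni x$ one has $Q\not\subset\tilde G$, hence $\abs{Q\cap G}\leq\nu\abs{Q}$, and H\"older's inequality gives
\[
\frac{1}{\abs{Q}}\int_Q \one_G\abs{f}
\leq \Bigl(\frac{\abs{Q\cap G}}{\abs{Q}}\Bigr)^{1/p}
\Bigl(\frac{1}{\abs{Q}}\int_Q\abs{f}^q\Bigr)^{1/q}
\leq \nu^{1/p}\,M_q f(x).
\]
Given $\alpha<1/2$, I would then choose $q\in(1,2)$ with $1/p=1-1/q>\alpha$; for such $q$, $M_q$ is bounded on $L^2(\R^{\ds})$ because $M$ is bounded on $L^{2/q}(\R^{\ds})$, and the desired estimate follows using $\nu\leq 1$.

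For the first estimate I would combine a stopping-time argument with Marcinkiewicz interpolation. Fix $f\geq 0$ supported on $\R^{\ds}\setminus\tilde G$, and write the dyadic level set $\Set{M_d f>\lambda}$ as a disjoint union of the maximal dyadic cubes $Q_j$ with $\abs{Q_j}^{-1}\int_{Q_j} f>\lambda$. Each such $Q_j$ has a strictly positive average of $f$, so it cannot be contained in $\tilde G$; by the opening observation, $\abs{Q_j\cap G}\leq\nu\abs{Q_j}$. Summing yields the restricted weak-type bound
\[
\abs{\Set{x\in G \given M_d f(x)>\lambda}}
\leq \sum_j\abs{Q_j\cap G}
\leq \nu\sum_j\abs{Q_j}
\leq \nu\,\norm{f}_1/\lambda.
\]
Marcinkiewicz interpolation of this restricted weak $(1,1)$ bound with the trivial strong $(\infty,\infty)$ bound with constant $1$ yields an $L^2\to L^2$ bound with constant $\lesssim_{\alpha}\nu^{\alpha}$ for every $\alpha<1/2$. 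The passage from the dyadic maximal operator $M_d$ to the continuous $M$ loses only an absolute constant via the standard reduction to a bounded family of shifted dyadic grids.

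No step is really an obstacle; the only point that requires attention is to stop strictly short of the critical exponent---taking $q<2$ in the H\"older argument and interpolating at $p<2$ in the Marcinkiewicz step---which is exactly why the hypothesis of the lemma excludes $\alpha=1/2$.
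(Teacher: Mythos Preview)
Your proof is correct and follows essentially the same route as the paper. For the second estimate both arguments use H\"older at the level of averages together with the $L^2$-boundedness of $M_q$ for $q<2$; for the first, the paper obtains the same weak-type $(1,1)$ bound with constant $\nu$ by invoking the Fefferman--Stein weighted inequality $\norm{Mf}_{L^{1,\infty}(w)}\lesssim\norm{f}_{L^1(Mw)}$ with $w=\one_G$, whereas you reprove that special case directly via a dyadic stopping time---and both then interpolate with the trivial $L^\infty$ bound.
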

We have not attempted to optimize the conclusion of this lemma.
\begin{proof}
By the Fefferman--Stein maximal inequality \cite{MR0284802} we have
\[
\norm{\one_{G} M \one_{\R^{\ds} \setminus \tilde{G}} f}_{1,\infty}
=
\norm{M \one_{\R^{\ds} \setminus \tilde{G}} f}_{L^{1,\infty}(\one_{G})}
\lesssim
\norm{\one_{\R^{\ds} \setminus \tilde{G}} f}_{L^{1}(M \one_{G})}
\leq
\nu \norm{f}_{1}.
\]
Interpolating with the trivial $L^{\infty}$ estimate we obtain the first claim.
Let now $q=1/\alpha$.
Then by H\"older's inequality
\[
M \one_{G} f
\leq
(M_{q} \one_{G}) (M_{q'} f)
=
(M \one_{G})^{\alpha} (M_{q'} f).
\]
Hence
\[
\norm{\one_{\R^{\ds} \setminus \tilde{G}} M \one_{\tilde{G}} f}_{2}
\leq
\norm{\one_{\R^{\ds} \setminus \tilde{G}} (M\one_{G})^{\alpha} M_{q'} f}_{2}
\leq
\nu_{\alpha} \norm{M_{q'} f}_{2}
\lesssim
\nu_{\alpha} \norm{f}_{2},
\]
where we have used the fact that $M_{q'}$ is bounded on $L^{2}$ provided that $q'<2$.
\end{proof}

%%% Local Variables:
%%% mode: latex
%%% TeX-master: "habil"
%%% ispell-local-dictionary: "american"
%%% End:

\chapter{Lipschitz vector fields}
\label{chap:bi-Lipschitz}
The main objective of this chapter is Theorem~\ref{thm:Lip-LP-diag} that we now recall.
\begin{theorem*}
Let $A : \R\to\R$ be a Lipschitz function with $\norm{A}_{\Lip} \leq 1/100$ and consider the change of variable $T_{A}f(x):=f(x+A(x))$.

Let $\psi$ be a Schwartz function on $\R$ such that $\widehat{\psi}$ identically equals $1$ on $\pm [99/100,103/100]$ and vanishes outside $\pm [98/100,104/100]$.
Let $\Psi$ be another Schwartz function on $\R$ such that $\widehat{\Psi}$ is supported on $\pm[1,101/100]$.
Let $P_{t}f:=\psi_{t}*f$ be the Littlewood--Paley operators associated to $\psi$, where $\psi_t(x)=t^{-1}\psi(t^{-1}x)$.
Then
\[
\norm[\Big]{ \sum_{t \in 2^{\Z}} \abs{(1-P_{t})T_{A} (\Psi_{t} * f)} }_{p}
\lesssim_{p,\psi,\Psi}
\norm{A}_{\Lip} \norm{f}_{p},
\quad
1<p<\infty.
\]
\end{theorem*}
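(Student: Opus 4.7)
The starting point is the observation that $(1-P_t)T_L(\Psi_t*f)$ vanishes identically whenever $L$ is an affine function with slope in $[-1/100,1/100]$. Indeed, $T_L$ is then a dilation by $1+L'\in[99/100,101/100]$ composed with a translation, so the Fourier support of $T_L(\Psi_t*f)$ lies in $(1+L')\cdot\pm[1/t,\,101/(100t)]\subset\pm[99/(100t),\,103/(100t)]$, precisely the band on which $\widehat{\psi}(t\,\cdot)\equiv 1$. The numerical thresholds in the hypothesis are calibrated so that this inclusion holds with room to spare. The plan is therefore to replace $A$ locally by an affine approximation at every scale and pay only for the resulting local deviation.

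To quantify this I would introduce Jones $\beta$-numbers: for each dyadic interval $I$ with $\abs{I}=t\in 2^{\Z}$ let $L_I$ be an affine function minimizing $\norm{A-L_I}_{L^{\infty}(CI)}$ on a fixed enlargement $CI$, and set $\beta_I:=\abs{I}^{-1}\norm{A-L_I}_{L^{\infty}(CI)}$. Since $\norm{A}_{\Lip}\le 1/100$, each $L_I$ has slope in the admissible range, so on $I$ one can write
\[
(1-P_t)T_A(\Psi_t*f)=(1-P_t)(T_A-T_{L_I})(\Psi_t*f),
\]
where $L_I$ is extended affinely to all of $\R$. A mean-value expansion
\[
(T_A g - T_{L_I} g)(y) = (A(y)-L_I(y))\int_0^1 g'\bigl(y+L_I(y)+s(A(y)-L_I(y))\bigr)\dif s,
\]
combined with the frequency localization $\abs{(\Psi_t*f)'}\lesssim t^{-1}\abs{\widetilde\Psi_t*f}$ for a Schwartz $\widetilde\Psi$ with $\widehat{\widetilde\Psi}(\xi)=i\xi\,\widehat\Psi(\xi)$, and with the Schwartz decay of the kernel of $1-P_t$ summed over dyadic spatial shells (to absorb the affine growth of $A-L_I$ outside $I$), should yield a pointwise estimate of the form
\[
\abs{(1-P_t)T_A(\Psi_t*f)}(x)\lesssim\beta^{*}_{I(x,t)}\,M(\widetilde\Psi_t*f)(x),
\]
where $I(x,t)$ is the dyadic interval of length $t$ containing $x$, $M$ is the Hardy--Littlewood maximal operator, and $\beta^{*}_{I}:=\sum_{j\ge 0}2^{-jN}\beta_{2^{j}I}$ is an enlarged $\beta$-coefficient satisfying the same Carleson bound as $\beta_I$.

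Summing over $t\in 2^{\Z}$ and reindexing by dyadic intervals then reduces the theorem, by duality, to the paraproduct-type bilinear estimate
\[
\sum_{I\in\calD}\beta^{*}_I\int_I g(x)\,M(\widetilde\Psi_{\abs{I}}*f)(x)\,\dif x\lesssim\norm{A}_{\Lip}\norm{f}_p\norm{g}_{p'}.
\]
I would establish this by combining the $L^p$ version of Dorronsoro's theorem---the Carleson condition $\sum_{I\subset Q}\beta_I^2\abs{I}\lesssim\norm{A}_{\Lip}^2\abs{Q}$ together with its $L^p$ refinements---with the Fefferman--Stein vector-valued maximal inequality and the Littlewood--Paley square function inequality applied to $(\widetilde\Psi_t*f)_{t\in 2^{\Z}}$. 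The factor $\norm{A}_{\Lip}$ enters at precisely this step.

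The principal obstacle will be this final summation. The naive pointwise Cauchy--Schwarz split
\[
\sum_t\beta^{*}_{I(x,t)}M(\widetilde\Psi_t*f)(x)\le\Bigl(\sum_t(\beta^{*}_{I(x,t)})^2\Bigr)^{\!1/2}\Bigl(\sum_t M(\widetilde\Psi_t*f)(x)^2\Bigr)^{\!1/2}
\]
is too lossy, since $(\beta^{*}_{I(x,t)})_t$ is not pointwise $\ell^2$-summable---for example $A(x)=\abs{x}$ at $x=0$ has $\beta^{*}$ constant in $t$. Only the weaker Carleson condition is available, and converting it into an $L^p$ bound on the $\ell^1_t$-sum (rather than the easier $\ell^2_t$-sum, which would follow directly from Littlewood--Paley and Fefferman--Stein) is precisely where the paraproduct/tent-space machinery becomes essential; this is also why the estimate would fail for merely H\"older-continuous $A$.
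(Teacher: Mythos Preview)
Your proposal is correct and follows essentially the same strategy as the paper: linearize $A$ locally, observe that the affine part is annihilated by $1-P_t$, bound the remainder pointwise by Jones $\beta$-numbers, and then close with a trilinear paraproduct/Carleson-embedding estimate. You have also correctly identified the crux, namely that only the Carleson $S^2$ control on $\beta$ is available, not pointwise $\ell^2$, so one must invoke tent-space/outer-$L^p$ machinery rather than a naive Cauchy--Schwarz.

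The one technical difference worth noting is the choice of linearization point. You linearize $A$ on the dyadic interval $I(x,t)$ containing the \emph{evaluation} point $x$ and then apply $1-P_t$ to the difference $(T_A-T_{L_I})(\Psi_t*f)$; this forces you to control $A-L_I$ on distant shells where it grows linearly, which you handle via the enlarged $\beta^*_I=\sum_j 2^{-jN}\beta_{2^jI}$. The paper instead writes $T_A(\Psi_t*f)(x)=\int P_tf(z)\,\Psi_t(x+A(x)-z)\,\dif z$ and linearizes the argument of each wave packet $\Psi_t(\cdot)$ at its center $b(z)$ (the inverse image of $z$ under $x\mapsto x+A(x)$), replacing $x+A(x)-z$ by $(1+\alpha(b(z),t))(x-b(z))$ with $\alpha$ an average slope. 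Because the linearization now varies with $z$, the comparison $|A(x)-A(b(z))-\alpha(b(z),t)(x-b(z))|$ is always made at the scale $|x-b(z)|$ that matches the decay of $\Psi_t'$, so no separate tail analysis is needed and one lands directly on $\int |P_tf(z)|\,\beta(b(z),t)\,t^{-1}(1+|x-b(z)|/t)^{-5}\dif z$. The final trilinear form is then bounded by the outer H\"older inequality $\norm{\Dem f}_{L^p(S^2)}\norm{\beta}_{L^\infty(S^2)}\norm{\Aem g}_{L^{p'}(S^\infty)}$, which is the continuous analogue of the paraproduct estimate you describe. Both routes are valid; the paper's is marginally cleaner on the tails, yours stays closer to classical dyadic paraproduct language.
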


\section{Carleson embeddings with compactly supported test functions}
\label{sec:Carlseon-embeddings-compact-support}
We refer to \cite[Section 2 and 3]{MR3312633} for the general theory of outer measure spaces.
In this section we use the outer measure space $X=\R^{d}\times (0,\infty)$ with the collection of distinguished sets $\bfE$ consisting of the \emph{tents}
\[
T(x,s) = \Set{(y,t) : \norm{x-y}+t \leq s}
\]
and an outer measure $\mu$ generated by $\sigma(T(x,s))=s^{d}$.

Let $\omega$ be a \emph{Dini modulus of continuity}, that is, $\omega : [0,\infty) \to [0,\infty)$ is a function that is subadditive in the sense
\[
u\leq s+t \implies \omega(u) \leq \omega(s) + \omega(t)
\]
and has finite Dini norm $\norm{\omega}_{\Dini} = \int_{0}^{1} \omega(t) \frac{\dif t}{t}$.
Let $\calC$ be the class of testing functions $\phi : \R^{d}\to\C$ that satisfy
\begin{align}
\label{C:cancel}
\textstyle{\int} \phi(z) \dif z &= 0,\\
\label{C:decay}
\supp \phi
&\subset
B(0,1)\\
\label{C:cont}
\abs{\phi(z)-\phi(z')}
&\leq
\omega(\norm{z-z'})
\quad\text{for all } z,z'\in\R^{d}.
\end{align}

For locally integrable functions $f$ we define the embeddings
\begin{align*}
\Aem_{c} f(x,t) &:= t^{-d} \int_{B(x,t)} \abs{f},\\
\Dem_{c} f(x,t) &:= \sup_{\phi\in\calC} \abs[\big]{t^{-d} \int f(z) \phi(t^{-1}(y-z)) dz}.
\end{align*}

\begin{theorem}[{cf.~\cite[Theorem 4.1]{MR3312633}}]
\label{thm:Carlseon-embeddings-compact-support}
For every $1<p\leq\infty$ we have
\begin{align*}
\norm{\Aem_{c} f}_{L^{p}(S^{\infty})} &\lesssim \norm{f}_{L^{p}(\R^{d})},\\
\norm{\Dem_{c} f}_{L^{p}(S^{2})} &\lesssim \norm{f}_{L^{p}(\R^{d})}.
\end{align*}
Moreover, we have the endpoint estimates
\begin{align*}
\norm{\Aem_{c} f}_{L^{1,\infty}(S^{\infty})} &\lesssim \norm{f}_{L^{1}(\R^{d})},\\
\norm{\Dem_{c} f}_{L^{1,\infty}(S^{2})} &\lesssim \norm{f}_{L^{1}(\R^{d})}.
\end{align*}
\end{theorem}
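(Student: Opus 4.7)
The plan is to follow the outer measure strategy of Do--Thiele for Carleson embeddings, adapted to the case where the testing functions are compactly supported but only Dini continuous. The compact support of members of $\calC$ will play the role of Schwartz decay, while the Dini modulus of continuity encodes the required amount of smoothness. Throughout, the supremum over $\phi \in \calC$ in the definition of $\Dem_{c}$ can be reduced to a countable dense subcollection before making any measurability argument.

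The bound for $\Aem_{c}$ is routine: at any $(x,t)\in X$ and any $y\in B(x,t)$ one has $\Aem_{c}f(x,t) \lesssim Mf(y)$, where $M$ denotes the Hardy--Littlewood maximal operator. Hence for every tent $T(x,s)$ the $S^{\infty}$-size of $\Aem_{c}f$ over $T(x,s)$ is controlled by $Mf$ at some point of $B(x,s)$, and the classical strong type $(p,p)$ for $1<p\leq\infty$ together with the weak type $(1,1)$ for $M$ transfer via a Vitali covering on tents to the outer $L^{p}(S^{\infty})$ and $L^{1,\infty}(S^{\infty})$ bounds claimed for $\Aem_{c}$.

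For $\Dem_{c}$, by the outer interpolation theorem of \cite{MR3312633} it suffices to establish two endpoint estimates: the $L^{\infty}\to L^{\infty}(S^{2})$ Carleson-measure bound and the $L^{1}\to L^{1,\infty}(S^{2})$ weak-type bound. For the Carleson endpoint, fix $\|f\|_{\infty}\leq 1$ and a tent $T=T(x_{0},s_{0})$, and split $f=f_{1}+f_{2}$ with $f_{1}=f\one_{B(x_{0},2s_{0})}$. For $(y,t)\in T$, the support condition \eqref{C:decay} forces the integrand in the definition of $\Dem_{c}$ to be supported in $B(y,t)\subset B(x_{0},s_{0})$, so $\Dem_{c}f_{2}$ vanishes identically on $T$. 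To handle $f_{1}$, one decomposes any $\phi\in\calC$ dyadically as $\phi = \phi^{(0)} + \sum_{k\geq 1}(\rho_{2^{-k}}*\phi - \rho_{2^{-k+1}}*\phi)$, where $\rho$ is a fixed smooth bump; by \eqref{C:cont} each difference is a smooth function with $L^{1}$ norm $\lesssim \omega(2^{-k})$ essentially supported in a slightly enlarged ball and retaining mean zero by \eqref{C:cancel}. Applying Cauchy--Schwarz against the summable weights $\omega(2^{-k})$ (whose sum is controlled by $\|\omega\|_{\Dini}$) reduces the estimate to a sum of classical single-band square function estimates of Plancherel type for $f_{1}$, combined with the bound $\|f_{1}\|_{2}^{2}\lesssim s_{0}^{d}$.

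The weak endpoint $L^{1}\to L^{1,\infty}(S^{2})$ is obtained by a Calder\'on--Zygmund decomposition of $f$ at height $\lambda$, writing $f=g+b$ with $\|g\|_{\infty}\lesssim\lambda$, $\|g\|_{1}\lesssim\|f\|_{1}$, and $b=\sum_{Q}b_{Q}$ supported in pairwise disjoint cubes $Q$ with $\int b_{Q}=0$ and $\sum_{Q}|Q|\lesssim\lambda^{-1}\|f\|_{1}$. The good part is handled by the Carleson endpoint established above combined with Chebyshev's inequality, since $\|g\|_{2}^{2}\lesssim\lambda\|f\|_{1}$. For the bad part one removes an exceptional outer set comprising the Whitney enlargements $T(x_{Q},C\ell(Q))$ over the bad cubes, whose outer measure is $\lesssim\lambda^{-1}\|f\|_{1}$; on the complement the cancellation of each $b_{Q}$ together with the Dini regularity \eqref{C:cont} of $\phi$ and its compact support yield a telescoping pointwise bound that integrates against $dy\,dt/t$ to give the weak-type bound. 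The main obstacle throughout is uniformly controlling the supremum over $\phi\in\calC$ under only a Dini modulus of continuity; the key observation is that $\|\omega\|_{\Dini}<\infty$ is exactly the quantity needed for the dyadic decomposition of $\phi$ to sum via Cauchy--Schwarz, so no regularity is effectively lost in the final estimate.
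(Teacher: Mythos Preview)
Your overall architecture---Carleson endpoint, weak $L^1$ endpoint via Calder\'on--Zygmund, and outer interpolation---is correct and matches the paper's strategy.  However, there are two places where what you have written does not yet constitute a proof, and the paper's route at the Carleson endpoint is genuinely different from yours.

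\textbf{The Carleson endpoint.}  After your mollification decomposition $\phi=\sum_{k}\phi^{(k)}$ and Cauchy--Schwarz, you still have a kernel $\phi^{(k)}=\phi^{(k)}_{y,t}$ that \emph{depends on $(y,t)$}, because the supremum defining $\Dem_{c}$ is linearized pointwise.  So the resulting quadratic expression is \emph{not} a classical convolution square function, and Plancherel does not apply directly.  What rescues the argument is that the Fourier support of $\phi^{(k)}$ comes only from the fixed multiplier $\widehat{\rho}(2^{-k}\cdot)-\widehat{\rho}(2^{-k+1}\cdot)$: inserting a fixed Littlewood--Paley projection $\tilde P_{2^{k}/t}$ one has $(\phi^{(k)}_{y,t})_{t}*f_{1}=(\phi^{(k)}_{y,t})_{t}*\tilde P_{2^{k}/t}f_{1}$, and then the compact support together with $\norm{\phi^{(k)}}_{\infty}\lesssim\omega(2^{-k})$ gives the pointwise bound $\abs{(\phi^{(k)}_{y,t})_{t}*f_{1}(y)}\lesssim \omega(2^{-k})\,M(\tilde P_{2^{k}/t}f_{1})(y)$, after which the genuine Plancherel square function for $\tilde P$ applies.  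You need that extra $\omega(2^{-k})$ factor; otherwise your Cauchy--Schwarz sum $\sum_{k}\omega(2^{-k})^{-1}$ diverges.  The paper avoids this entire detour: it proves the $L^{2}$ square function estimate (Lemma~\ref{lem:L2-square-fct}) by a Schur/$TT^{*}$ argument, using the single correlation bound $\abs{\innerp{\phi_{y,t}}{\phi_{y',t'}}}\lesssim (t')^{-d}\omega(t/t')$ for $t\leq t'$ (Lemma~\ref{lem:wave-packet-correlation-decay}), which directly accommodates the varying $\phi$'s without any frequency decomposition.

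\textbf{The bad part.}  Saying that the pointwise bound ``integrates against $dy\,dt/t$'' only gives you control of the $S^{1}$ size of $\Dem_{c}b$ on each tent; this is \emph{not} the $S^{2}$ control required by the outer $L^{1,\infty}(S^{2})$ norm.  The missing step, carried out explicitly in the paper, is to combine the $S^{1}$ bound with the $S^{\infty}$ bound (which follows from $\Dem_{c}b\leq\Dem_{c}f+\Dem_{c}g\lesssim \Aem_{c}f+\lambda\lesssim\lambda$ outside the Whitney tents) via logarithmic convexity of sizes, yielding $S^{2}\lesssim(S^{1}S^{\infty})^{1/2}\lesssim\lambda$.
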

The main difference from \cite[Theorem 4.1]{MR3312633} is the supremum over $\phi\in\calC$ in the definition of $\Dem_{c}$, whereas \cite[Theorem 4.1]{MR3312633} uses a fixed $\phi$.
This supremum does not affect the proof strongly, but is important for our application.
The precise choice of the class of test functions $\calC$ is not important for this application (e.g.\ Lipschitz functions would work equally well), but the Dini regularity condition appears naturally in the proof.

We linearize the supremum in the definition of $\Dem_{c} f$ by choosing for each pair $(y,t)$ a function $\phi\in\mathcal{C}$ for which the supremum is almost attained.
Denote then $\phi_{y,t}(z) = t^{-d}\phi(t^{-1}(y-z))$.
This is an $L^{1}$ normalized wave packet at scale $t$.
The almost orthogonality of these wave packets is captured by the following estimate.

\begin{lemma}
If $t\leq t'$ then
\label{lem:wave-packet-correlation-decay}
\[
\abs{\innerp{ \phi_{y,t}}{\phi_{y',t'}}}
\lesssim
(t')^{-d} \omega(t/t')
\]
\end{lemma}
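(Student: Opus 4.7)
The plan is to exploit the mean-zero property of the smaller-scale wave packet $\phi_{y,t}$, together with the regularity of the larger-scale wave packet $\phi_{y',t'}$, in order to gain the factor $\omega(t/t')$. This is a standard wave packet orthogonality argument; the only real content is tracking how the modulus of continuity $\omega$ enters the estimate.

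First, I would record two elementary facts about functions in $\calC$. From \eqref{C:cancel} one gets $\int \phi_{y,t} = 0$ by change of variables; this is what allows the cancellation. From \eqref{C:decay} and \eqref{C:cont}, together with the observation (from the Dini condition and the strong subadditivity in the paper's definition) that $\omega(0)=0$ and $\omega$ is monotone nondecreasing, one derives $\|\phi\|_{\infty}\lesssim_\omega 1$: indeed, $\phi$ vanishes outside $B(0,1)$, hence (by continuity from \eqref{C:cont}) on $\partial B(0,1)$, so for any $z\in B(0,1)$ one has $|\phi(z)|\le \omega(2)\lesssim \omega(1)$ by subadditivity. Consequently $\|\phi_{y,t}\|_{1}=\|\phi\|_{1}\lesssim_\omega 1$.

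With these in hand, the main computation is short. Using $\int \phi_{y,t}=0$ we subtract the constant $\phi_{y',t'}(y)$:
\[
\innerp{\phi_{y,t}}{\phi_{y',t'}}
=
\int \phi_{y,t}(z)\bigl[\phi_{y',t'}(z)-\phi_{y',t'}(y)\bigr]\dif z.
\]
For $z$ in the support of $\phi_{y,t}$ we have $|z-y|\le t$, and the rescaling identity plus \eqref{C:cont} gives
\[
\abs{\phi_{y',t'}(z)-\phi_{y',t'}(y)}
=
(t')^{-d}\,\abs{\phi(t'^{-1}(y'-z))-\phi(t'^{-1}(y'-y))}
\le
(t')^{-d}\,\omega(|z-y|/t')
\le
(t')^{-d}\,\omega(t/t'),
\]
using monotonicity of $\omega$ in the last step. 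Pulling this bound out of the integral and using $\|\phi_{y,t}\|_{1}\lesssim 1$ yields the claimed inequality.

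There is essentially no obstacle; the only minor subtlety is verifying that the hypothesis ``$\omega$ is subadditive in the strong sense of the paper, and has finite Dini norm'' actually forces $\omega(0)=0$ and monotonicity, so that the continuity estimate can be applied pointwise on the support of $\phi_{y,t}$. Once this is observed, the argument is a one-line cancellation computation with no further considerations.
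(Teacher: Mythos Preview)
Your proof is correct and follows exactly the same approach as the paper: subtract the constant $\phi_{y',t'}(y)$ using the cancellation of $\phi_{y,t}$, then apply the modulus-of-continuity bound on the larger-scale packet. The paper's version is terser and omits the verification that $\|\phi_{y,t}\|_{1}\lesssim 1$ and that $\omega$ is effectively monotone, which you correctly supply.
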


\begin{proof}
Using the cancellation condition \eqref{C:cancel} and the support condition we write
\begin{multline*}
\abs[\big]{\int_{\R^{d}} \phi_{y,t} (z) \phi_{y',t'}(z) \dif z}
=
\abs[\big]{\int_{B(y,t)} \phi_{y,t} (z) (\phi_{y',t'}(z) - \phi_{y',t'}(y)) \dif z}\\
\leq
\int_{B(y,t)} \abs{\phi_{y,t} (z)} (t')^{-d} \omega(t/t') \dif z
\lesssim
(t')^{-d} \omega(t/t').
\qedhere
\end{multline*}
\end{proof}

We use the almost orthogonality statement in Lemma~\ref{lem:wave-packet-correlation-decay} to deduce a square function estimate for $p=2$.
\begin{lemma}
\label{lem:L2-square-fct}
\begin{equation}
\label{0114e2.7}
\int_{\R^d \times \R_{>0} } \abs{\Dem_{c} f(y,t)}^2 \dif y \frac{\dif t}{t}
\lesssim
\norm{f}_{2}^{2}.
\end{equation}
\end{lemma}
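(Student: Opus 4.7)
The plan is to linearize the supremum and then run a $TT^*$/Schur-test argument on the upper half-space, exploiting the almost-orthogonality estimate of Lemma~\ref{lem:wave-packet-correlation-decay}.

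First, I would measurably select for each $(y,t)\in\R^{d}\times(0,\infty)$ a test function $\phi^{(y,t)}\in\calC$ that realizes the supremum in the definition of $\Dem_{c}f(y,t)$ up to a factor of $2$. Writing $\phi_{y,t}(z):=t^{-d}\phi^{(y,t)}(t^{-1}(y-z))$ as in the text, the desired estimate \eqref{0114e2.7} reduces to
\[
\int_{\R^{d}\times(0,\infty)} \abs{\innerp{f}{\phi_{y,t}}}^{2}\,\dif y\,\frac{\dif t}{t}\lesssim\norm{f}_{2}^{2},
\]
which, by the standard $TT^{*}$ argument, is equivalent to the boundedness on $L^{2}(\R^{d}\times(0,\infty),\dif y\,\dif t/t)$ of the integral operator with kernel
\[
K\bigl((y,t),(y',t')\bigr):=\innerp{\phi_{y,t}}{\phi_{y',t'}}.
\]
I will prove this boundedness by Schur's test; since the kernel is Hermitian, it suffices to verify a single Schur bound, namely
\[
\sup_{(y,t)}\int_{\R^{d}}\int_{0}^{\infty}\abs{K\bigl((y,t),(y',t')\bigr)}\,\frac{\dif t'}{t'}\,\dif y'\lesssim 1.
\]

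For fixed $(y,t)$ I would split the $(y',t')$-integration according to whether $t'\geq t$ or $t'\leq t$. Both wave packets have compact support in balls of radius $t$ and $t'$ respectively (condition \eqref{C:decay}), so unless $\abs{y-y'}\leq t+t'$ the inner product vanishes; this restricts $y'$ to a ball of volume $\lesssim\max(t,t')^{d}$. When $t\leq t'$, Lemma~\ref{lem:wave-packet-correlation-decay} gives $\abs{K}\lesssim(t')^{-d}\omega(t/t')$, and performing the $y'$-integral first produces a factor $(t')^{d}$, leaving
\[
\int_{t}^{\infty}\omega(t/t')\,\frac{\dif t'}{t'}=\int_{0}^{1}\omega(s)\,\frac{\dif s}{s}=\norm{\omega}_{\Dini}<\infty.
\]
The case $t'\leq t$ is symmetric: after applying Lemma~\ref{lem:wave-packet-correlation-decay} with the roles of $t$ and $t'$ exchanged and integrating out $y'$ one again obtains the Dini integral $\int_{0}^{1}\omega(s)\,\dif s/s$. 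Adding the two contributions yields the Schur bound and hence \eqref{0114e2.7}.

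I do not expect any genuine obstacle: the cancellation hypothesis \eqref{C:cancel} enters only through Lemma~\ref{lem:wave-packet-correlation-decay}, and Dini regularity of $\omega$ is exactly what makes the scale integral converge. The only mild technical point is the measurable selection of the linearizing family $\phi^{(y,t)}$, which is standard because $\calC$ is separable in the uniform topology (e.g.\ a countable dense subfamily suffices since we only need the supremum up to a constant).
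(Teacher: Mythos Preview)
Your proposal is correct and is essentially the paper's own proof: both linearize the supremum, reduce via a $TT^{*}$/Schur-type argument to the uniform bound $\sup_{(y,t)}\int\abs{\innerp{\phi_{y,t}}{\phi_{y',t'}}}\,\dif y'\,\tfrac{\dif t'}{t'}<\infty$, and verify this bound by splitting $t'\gtrless t$, using the support restriction and Lemma~\ref{lem:wave-packet-correlation-decay}, and recognizing the Dini integral. The paper merely writes out the $TT^{*}$/Schur step explicitly (Cauchy--Schwarz, then $2\abs{ab}\le\abs a^{2}+\abs b^{2}$) rather than citing it by name.
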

\begin{proof}
We begin with a measurable selection of functions $\phi_{y,t}$ that almost extremize $\Dem_{c} f(y,t)$.
Expand the square of the left hand side of \eqref{0114e2.7}
\begin{align*}
\left( \int \abs{\innerp{f}{\phi_{y,t}}}^{2} \dif y \frac{\dif t}{t} \right)^2
&=
\left( \int_{\R^d} \left( \int_{\R^d \times \R_{>0}} \innerp{f}{\phi_{y,t}} \phi_{y,t}(z) \dif y \frac{\dif t}{t} \right) \overline{f(z)} \dif z\right)^2 \\
&\leq
\norm[\big]{\int_{\R^d \times \R_{>0}} \innerp{f}{\phi_{y,t}} \phi_{y,t}(z) \dif y \frac{\dif t}{t} }_{2}^{2} \norm{f}_2^2 \\
\intertext{We further expand the square from the former term }
&=
\iint \innerp{f}{\phi_{y,t}} \innerp{\phi_{y,t}}{\phi_{y',t'}} \innerp{\phi_{y',t'}}{f} \dif y \frac{\dif t}{t} \dif y' \frac{\dif t'}{t'} \norm{f}_2^2 \\
&\leq
\int \abs{\innerp{f}{\phi_{y,t}}}^2 \int \abs{\innerp{\phi_{y,t}}{\phi_{y',t'}}} \dif y' \frac{\dif t'}{t'} \dif y \frac{\dif t}{t} \norm{f}_2^2,
\end{align*}
using the estimate
\[
2\abs{\innerp{f}{\phi_{y,t}} \innerp{\phi_{y',t'}}{f}} \leq
\abs{\innerp{f}{\phi_{y,t}}}^2 + \abs{\innerp{f}{\phi_{y',t'}}}^2
\]
in the last inequality.
It suffices to verify
\[
\sup_{y,t} \int \abs{\innerp{\phi_{y,t}}{\phi_{y',t'}}} \dif y' \frac{\dif t'}{t'} < \infty.
\]
By Lemma~\ref{lem:wave-packet-correlation-decay} and using bounded support of the $\phi_{y,t}$'s we have
\begin{align*}
\int \abs{\innerp{\phi_{y,t}}{\phi_{y',t'}}} \dif y' \frac{\dif t'}{t'}
&\lesssim
\int_{t\leq t'} \int_{\norm{y-y'}\leq t+t'} (t')^{-d} \omega(t/t') \dif y' \frac{\dif t'}{t'}\\
\qquad + \int_{t>t'} \int_{\norm{y-y'}\leq t+t'} t^{-d} \omega(t'/t) \dif y' \frac{\dif t'}{t'} \\
&\lesssim
\int_{t\leq t'} \omega(t/t') \frac{\dif t'}{t'}
+ \int_{t>t'} \omega(t'/t) \frac{\dif t'}{t'}
\lesssim
\norm{\omega}_{\Dini}.
\end{align*}
This finishes the proof of Lemma~\ref{lem:L2-square-fct}.
\end{proof}

\begin{proof}[Proof of Theorem~\ref{thm:Carlseon-embeddings-compact-support}]
We may assume that the superlevel sets $\Set{Mf>\lambda}$, where $M$ is the uncentered Hardy--Littlewood maximal function, have finite measure for all $\lambda>0$, since otherwise the right-hand side of the conclusion is infinite.

Let $\Set{Q_{i}}_{i}$ be a Whitney decomposition of the superlevel set $\Set{Mf>\lambda}$.
Let $x_{i}$ denote the center and $r_{i}$ the diameter of $Q_{i}$.
Let
\begin{equation}
E:=\bigcup_{i} T(x_{i},3\sqrt{d}r_{i})
\end{equation}
and note that
\[
\mu(E) \lesssim \meas{\Set{Mf>\lambda}}.
\]
The claim of the theorem will therefore follow from the more precise results
\begin{align}
\label{eq:Aem-outside-whitney-tents}
\norm{\Aem_{c} f \one_{E^{c}}}_{L^{\infty}(S^{\infty})} &\lesssim \lambda,\\
\label{eq:Dem-outside-whitney-tents}
\norm{\Dem_{c} f \one_{E^{c}}}_{L^{\infty}(S^{2})} &\lesssim \lambda.
\end{align}
Let $(x,t)\in E^{c}$.
Then no ball $B(y,t/\sqrt{d})$ with $\norm{x-y}\leq t$ is contained in a Whitney cube.
It follows that, for some constant $C$ that depends only on the dimension, the ball $B(x,Ct)$ is not contained in $\Set{Mf>\lambda}$.
Hence
\[
\Aem_{c} f(x,t)
\leq
t^{-d} \int_{B(x,Ct)} \abs{f}
\lesssim
\lambda.
\]
This completes the proof of \eqref{eq:Aem-outside-whitney-tents}.
Now we show \eqref{eq:Dem-outside-whitney-tents}.
The Calder\'on--Zygmund decomposition $f=g+b$, $b=\sum_{i}b_{i}$ associated to the Whitney decomposition $\Set{Q_{i}}_{i}$ has the properties
\begin{enumerate}
\item $\norm{g}_{\infty} \lesssim \lambda$,
\item $\supp b_{i} \subset Q_{i}$,
\item $\int b_{i} = 0$,
\item $\meas{Q_{i}}^{-1} \int \abs{b_{i}} \lesssim \lambda$.
\end{enumerate}
Using the bounded support condition on the wave packets and Lemma~\ref{lem:L2-square-fct} we obtain
\begin{align*}
S^2(\Dem_{c}g) (T(x,s))
&=
\left( \frac{1}{s^d} \int_{T(x,s)} \abs{\Dem_{c} g(y,t)}^2 \dif y \frac{\dif t}{t}\right)^{1/2}\\
&=
\left( \frac{1}{s^d} \int_{T(x,s)} \abs{\Dem_{c} (g\one_{B(x,2s)})(y,t)}^2 \dif y \frac{\dif t}{t}\right)^{1/2}\\
&\lesssim
s^{-d/2} \norm{g\one_{B(x,2s)}}_{2}
\lesssim
\norm{g}_{\infty}.
\end{align*}
Hence \eqref{eq:Dem-outside-whitney-tents} holds with $f$ replaced by $g$.
By sublinearity of the embedding map $\Dem$ and subadditivity of the outer $L^{\infty}(S^{2})$ norm it remains to show \eqref{eq:Dem-outside-whitney-tents} holds with $f$ replaced by $b$.
More explicitly, for every tent $T=T(x,r)$ we want to show
\[
S^{2}(\Dem_{c} b \one_{E^{c}})(T)
\lesssim
\lambda.
\]
We know
\[
S^{\infty}(\Dem_{c} b \one_{E^{c}})(T)
\lesssim
S^{\infty}(\Dem_{c} f \one_{E^{c}})(T)
+
S^{\infty}(\Dem_{c} g \one_{E^{c}})(T)
\lesssim
S^{\infty}(\Aem_{c} f \one_{E^{c}})(T)
+
\lambda
\lesssim
\lambda.
\]
By logarithmic convexity of $S^{p}$ sizes it therefore suffices to show
\begin{equation}
\label{eq:bad-fct-L1-embed}
S^{1}(\Dem_{c} b \one_{E^{c}})(T)
\lesssim
\lambda.
\end{equation}

\begin{claim}
\label{claim:bad-atom-L1-embed}
$\int_{t>r_{i}} \Dem_{c} b_{i}(x,t) \dif x \frac{\dif t}{t} \lesssim \lambda r_{i}^{d}$.
\end{claim}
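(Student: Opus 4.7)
The plan is to exploit the mean zero property of $b_{i}$ together with the Dini regularity condition \eqref{C:cont} on the testing functions to obtain a pointwise decay estimate for $\Dem_{c} b_{i}(y,t)$ in the regime $t>r_{i}$, and then to integrate in $(y,t)$ using the compact support \eqref{C:decay}.

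For fixed $(y,t)$ with $t>r_{i}$, I would linearize by selecting $\phi\in\calC$ almost attaining the supremum in the definition of $\Dem_{c} b_{i}(y,t)$. Since $\int b_{i}=0$, subtracting the constant $\phi(t^{-1}(y-x_{i}))$ is free:
\[
t^{-d}\int b_{i}(z)\phi(t^{-1}(y-z))\dif z
=
t^{-d}\int b_{i}(z)\bigl[\phi(t^{-1}(y-z))-\phi(t^{-1}(y-x_{i}))\bigr]\dif z.
\]
For $z\in\supp b_{i}\subset Q_{i}$ one has $\norm{z-x_{i}}\lesssim r_{i}$, so \eqref{C:cont} together with subadditivity of $\omega$ gives
\[
\abs{\phi(t^{-1}(y-z))-\phi(t^{-1}(y-x_{i}))}\lesssim \omega(r_{i}/t).
\]
Combined with $\norm{b_{i}}_{1}\lesssim \lambda r_{i}^{d}$ this yields the pointwise bound
\[
\Dem_{c} b_{i}(y,t)\lesssim \lambda\, (r_{i}/t)^{d}\,\omega(r_{i}/t).
\]

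Next I would use \eqref{C:decay}: the integrand above vanishes unless there exists $z\in Q_{i}$ with $\norm{y-z}\leq t$, which in the regime $t>r_{i}$ forces $y\in B(x_{i},2t)$. Therefore the $y$-integration at fixed $t$ contributes a factor $\lesssim t^{d}$, which cancels the $t^{-d}$ and leaves $\lambda r_{i}^{d}\omega(r_{i}/t)$. Finally, after the substitution $s=r_{i}/t$, the $t$-integral over $t>r_{i}$ becomes precisely the Dini integral:
\[
\int_{t>r_{i}}\int_{\R^{d}}\Dem_{c} b_{i}(y,t)\dif y \frac{\dif t}{t}
\lesssim
\lambda r_{i}^{d}\int_{0}^{1}\omega(s)\frac{\dif s}{s}
=
\lambda r_{i}^{d}\norm{\omega}_{\Dini}.
\]
There is no serious obstacle: the argument is the usual Calder\'on--Zygmund moment cancellation, and the only real requirement on $\omega$ is finiteness of its Dini norm, which is exactly what is assumed. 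The compact support condition \eqref{C:decay} (in place of mere Schwartz decay) makes the $y$-integration elementary and avoids any additional tail analysis.
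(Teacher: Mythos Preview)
Your proof is correct and follows essentially the same approach as the paper: linearize the supremum, use the mean zero of $b_{i}$ to subtract $\phi_{x,t}(x_{i})$, apply the Dini regularity \eqref{C:cont} for the pointwise bound $\Dem_{c} b_{i}(y,t)\lesssim t^{-d}\omega(r_{i}/t)\lambda r_{i}^{d}$, then use the support condition \eqref{C:decay} to localize the $y$-integral and conclude with the finiteness of $\norm{\omega}_{\Dini}$. The paper's argument differs only in bookkeeping (it uses $r_{i}/2$ rather than $r_{i}$ and writes the support constraint as $\norm{x-x_{i}}\leq r_{i}/2+t$).
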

\begin{proof}[Proof of Claim~\ref{claim:bad-atom-L1-embed}.]
Notice that, due to support constraints, $\Dem_{c} b_{i}(x,t)$ can only be non-zero if $\norm{x-x_{i}}\leq r_{i}/2+t$.
Moreover, under this condition and choosing $\phi_{x,t}$ that almost extremizes $\Dem_{c} b_{i}(x,t)$ we obtain
\begin{align*}
\abs[\big]{\int b_{i}(z) \phi_{x,t}(z) \dif z}
&=
\abs[\big]{\int_{\norm{z-x_{i}}\leq r_{i}/2} b_{i}(z) (\phi_{x,t}(z) - \phi_{x,t}(x_{i})) \dif z}\\
&\leq
t^{-d} \omega(r_{i}/(2t)) \int_{\norm{z-x_{i}}\leq r_{i}/2} \abs{b_{i}(z)} \dif z\\
&\lesssim
t^{-d} \omega(r_{i}/(2t)) \lambda r_{i}^{d}.
\end{align*}
Hence
\begin{multline*}
\int_{t>r_{i}} \Dem_{c} b_{i}(x,t) \dif x \frac{\dif t}{t}
\leq
\int_{t>r_{i}, \norm{x-x_{i}}\leq t+r_{i}/2} \Dem_{c} b_{i}(x,t) \dif x \frac{\dif t}{t}\\
\lesssim
\int_{t>r_{i}} \omega(r_{i}/(2t)) \lambda r_{i}^{d} \frac{\dif t}{t}
\lesssim
\lambda r_{i}^{d} \norm{\omega}_{\Dini}.
\end{multline*}
This finishes the proof of Claim~\ref{claim:bad-atom-L1-embed}.
\end{proof}

In order to show \eqref{eq:bad-fct-L1-embed} notice that only the Whitney cubes $Q_{i} \subset B(x,10 r)$ contribute to $\Dem_{c} b \one_{T\setminus E}$.
\begin{align*}
S^{1}(\Dem_{c} b \one_{E^{c}})(T)
&=
r^{-d} \int_{T\setminus E} \Dem_{c} b(z,t) \dif z \frac{\dif t}{t}\\
&\leq
r^{-d} \sum_{i : Q_{i} \subset B(x,10 r)} \int_{T\setminus E} \Dem_{c} b_{i}(z,t) \dif z \frac{\dif t}{t}\\
&\leq
r^{-d} \sum_{i : Q_{i} \subset B(x,10 r)} \int_{t>r_{i}} \Dem_{c} b_{i}(z,t) \dif z \frac{\dif t}{t}\\
\intertext{using Claim~\ref{claim:bad-atom-L1-embed}}
&\lesssim
r^{-d} \lambda \sum_{i : Q_{i} \subset B(x,10 r)} \meas{Q_{i}}\\
\intertext{by disjointness of Whitney cubes}
&\lesssim
r^{-d} \lambda \meas{B(x,10 r)}
\lesssim
\lambda.
\end{align*}
This finishes the proof of Theorem~\ref{thm:Carlseon-embeddings-compact-support}.
\end{proof}

\section{Carleson embeddings with tails}
It is possible to adapt the proofs in Section~\ref{sec:Carlseon-embeddings-compact-support} to embeddings defined using test functions with tails.
Since we do not need testing functions with sharp decay rates for tails, we will instead estimate such embeddings by averaging the results in Section~\ref{sec:Carlseon-embeddings-compact-support}.

In this section we work in dimension $d=1$ and consider the following embedding maps:
\begin{align}
\Aem f(x,t) &:= \int t^{-1} (1+\abs{x-y}/t)^{-5} \abs{f(y)} \dif y,\\
\Dem f(x,t) &:= \sup_{\phi\in\Phi} \abs[\big]{\int t^{-1} \phi((x-y)/t) f(y) \dif y},
\end{align}
where
\[
\Phi = \Set{ \phi:\R\to\C ,\ \int\phi=0,\ \abs{\phi(x)}\leq (1+\abs{x})^{-10},\ \abs{\phi'(x)}\leq (1+\abs{x})^{-10}}.
\]
The smoothness and decay conditions in these embeddings are not optimal, but they suffice for our purposes.
Decomposing the testing functions $(1+\abs{x})^{-5}$ and $\phi\in\Phi$ into series of compactly supported bump functions as in \cite[Lemma 3.1]{MR2320408}, see also Lemma~\ref{lem:split-wave-packet} in this article, we can deduce the embeddings
\begin{align}
\label{eq:A-embedding}
\norm{\Aem f}_{L^{p}(S^{\infty})} &\lesssim \norm{f}_{p},\\
\label{eq:D-embedding}
\norm{\Dem f}_{L^{p}(S^{2})} &\lesssim \norm{f}_{p}
\end{align}
for $1<p\leq\infty$ from Theorem~\ref{thm:Carlseon-embeddings-compact-support}.

\section{Jones beta numbers}
Let $A : \R\to\C$ be a Lipschitz function and let $a$ be its distributional derivative, so that $\norm{a}_{\infty} = \norm{A}_{\Lip}$.
Let $\psi$ be a compactly supported bump function with
\begin{equation}\label{0111e1.6}
\int\psi(x)\dif x = \int x\psi(x) \dif x = 0
\end{equation}
and
\[
\int_{0}^{\infty} \hat\psi(\xi\xi_{0}) \frac{\dif\xi}{\xi} = 1
\quad\text{for}\quad
\xi_{0}\neq 0.
\]
Let $\psi_{t} = t^{-1} \psi(t^{-1}\cdot)$ be an $L^{1}$ normalized mean zero bump function at scale $t$.
Let
\begin{equation}
\label{eq:av-slope}
\alpha(x,t) := \int_{t}^{\infty} a * \psi_{s}(x) \frac{\dif s}{s}
\end{equation}
be the average slope of $A$ near $x$ at scale $t$ and let
\begin{equation}
\label{eq:beta-n-number}
\beta_{n}(x,t) := \sup_{x_{0},x_{1},x_{2} \in B(x,2^{n} \cdot 3t), 2^{-n}t \leq \tilde t \leq 2^{n} t} t^{-1} \abs{A(x_{2})-A(x_{1})-\alpha(x_{0},\tilde t)(x_{2}-x_{1})}.
\end{equation}
This definition includes the supremum over the range of uncertainty around $(x,t)$, which seems convenient.
\begin{lemma}\label{0111lemma1}
With the notation \eqref{eq:beta-n-number} we have
\label{lem:beta-square}
\[
\beta_{n}(x,t)
\lesssim
t^{-1} \int_{0}^{2^{n} t} \Big( \int_{\abs{y-x}\lesssim 2^{n} t} \Dem a(y,s)^{2} s^{-1} \dif y \Big)^{1/2} \dif s
+
2^{2n}\int_{2^{n}t}^{\infty} \Dem a(x,s) \frac{t \dif s}{s^{2}}\\
\]
\end{lemma}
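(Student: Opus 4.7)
My plan is to start from the Calder\'on reproducing formula implicit in the hypotheses on $\psi$, namely $a(y) = \int_0^\infty a*\psi_s(y)\,\frac{ds}{s}$ in the distributional sense. This gives
\[
A(x_2)-A(x_1)=\int_{x_1}^{x_2}a(y)\,\dif y=\int_0^\infty \int_{x_1}^{x_2}a*\psi_s(y)\,\dif y\,\frac{\dif s}{s},
\]
and by definition $\alpha(x_0,\tilde t)(x_2-x_1)=(x_2-x_1)\int_{\tilde t}^\infty a*\psi_s(x_0)\,\frac{\dif s}{s}$. Subtracting and writing $h_s:=\int_{x_1}^{x_2}a*\psi_s(y)\,\dif y$ for $s<\tilde t$ and $h_s:=\int_{x_1}^{x_2}(a*\psi_s(y)-a*\psi_s(x_0))\,\dif y$ for $s\geq\tilde t$, the quantity to be bounded becomes $t^{-1}\int_0^\infty h_s\,\frac{\dif s}{s}$. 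I would then split the $s$-integral at $s=2^nt$ and treat the two regimes with entirely different tools.

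The key auxiliary observations are as follows. \emph{(a) Primitive trick:} because $\int\psi=\int x\psi=0$, the primitive $\tilde\Phi(y):=\int_{-\infty}^y\psi$ is compactly supported, mean zero, and $\tilde\Phi_s(y):=\tilde\Phi(y/s)$ satisfies $\tilde\Phi_s'=\psi_s$, so $\int_{x_1}^{x_2}a*\psi_s(y)\,\dif y=a*\tilde\Phi_s(x_2)-a*\tilde\Phi_s(x_1)$, and since $\tilde\Phi/\|\tilde\Phi\|_\infty\in\Phi$ we get $|a*\tilde\Phi_s(y)|\lesssim s\,\Dem a(y,s)$ pointwise. \emph{(b) Quasi-translation invariance of $\Phi$:} for any $\phi\in\Phi$ and any shift $\tau$, the translate $u\mapsto\phi(u+\tau)$ satisfies our decay bounds up to a factor $(1+|\tau|)^{10}$, so $\Dem a(y',s)\lesssim(1+|y'-y|/s)^{10}\Dem a(y,s)$. \emph{(c) $L^2$-averaging:} from (b) and a trivial lower bound, $\Dem a(y,s)\lesssim(s^{-1}\int_{|z-y|\leq s}\Dem a(z,s)^2\,\dif z)^{1/2}$.

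For the small-scale regime $s\leq 2^nt$, I bound
\[
|h_s|\leq |a*\tilde\Phi_s(x_2)-a*\tilde\Phi_s(x_1)|+\one_{s\geq\tilde t}\,|x_2-x_1|\,|a*\psi_s(x_0)|,
\]
use (a) on the first piece and $|a*\psi_s(x_0)|\lesssim\Dem a(x_0,s)$ on the second. Integrating $\int_0^{2^nt}|h_s|/s\,\dif s$ and applying (c) (with the observation that $\{|z-x_j|\leq s\}\subset\{|z-x|\lesssim 2^nt\}$ for the relevant $j$ and $s\leq 2^nt$) produces exactly $\int_0^{2^nt}(\int_{|y-x|\lesssim 2^nt}\Dem a(y,s)^2 s^{-1}\,\dif y)^{1/2}\dif s$ after dividing by $t$, which is the first target term. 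For the large-scale regime $s\geq 2^nt$, I use the fundamental theorem of calculus and $(a*\psi_s)'=s^{-1}a*(\psi')_s$ with $\psi'\in\Phi$ (up to constants) to get $|a*\psi_s(y)-a*\psi_s(x_0)|\lesssim|y-x_0|s^{-1}\sup_{|w-x|\lesssim 2^nt}\Dem a(w,s)$; since $s\geq 2^nt$, (b) reduces the sup to a constant times $\Dem a(x,s)$. This gives $|h_s|\lesssim(2^nt)^2s^{-1}\Dem a(x,s)$, and $\int_{2^nt}^\infty|h_s|/s\,\dif s/t$ yields the second target term $2^{2n}\int_{2^nt}^\infty\Dem a(x,s)\,t\,\dif s/s^2$.

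The main obstacle will be the medium range $\tilde t\leq s\leq 2^nt$: there the $(x_2-x_1)\,a*\psi_s(x_0)$ piece integrated against $\dif s/s$ only matches the first target term after paying a factor polynomial in $2^n$ (which I expect is admissible here, as the statement carries explicit $2^{2n}$ powers in the second term and the implicit constant in $\lesssim$ is allowed to depend on the ambient scale parameter). The supporting technical work consists of checking the $(1+|\tau|)^{10}$ translation invariance of $\Phi$ cleanly so that (b) and (c) produce the desired exchange between pointwise values of $\Dem a$ and local $L^2$ averages without losing more than a constant depending on $n$.
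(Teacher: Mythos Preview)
Your proposal is essentially the paper's proof. The paper writes $A(x_2)-A(x_1)-\alpha(x_0,\tilde t)(x_2-x_1)$ via the reproducing formula and splits it as $I+II$ with
\[
I=t^{-1}\int_{x_1}^{x_2}\int_0^{\tilde t}a*\psi_s(y)\,\frac{\dif s}{s}\,\dif y,\qquad
II=t^{-1}\int_{x_1}^{x_2}\int_{\tilde t}^{\infty}\bigl(a*\psi_s(y)-a*\psi_s(x_0)\bigr)\,\frac{\dif s}{s}\,\dif y,
\]
exactly your $h_s$ organization. For $I$ the paper uses your primitive trick (a), phrased as $\psi_s=s(\tilde\psi_s)'$ with $\tilde\psi$ a compactly supported mean-zero bump (this is where the vanishing first moment $\int x\psi=0$ enters); the gained factor $s$ converts $\frac{\dif s}{s}$ to $\dif s$. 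The passage from $\sup_{|y-x|\lesssim 2^nt}\Dem a(y,s)$ to the local $L^2$ average is stated in the paper as ``$\Dem a(\cdot,s)$ is almost constant at scale $s$'', which is precisely your (b)/(c). For the large-scale piece $s\geq 2^nt$ the paper observes directly that $\psi_s(\cdot-y)-\psi_s(\cdot-x_0)$, recentered at $x$, is a mean-zero $L^1$ bump at scale $s$ with constant $\lesssim 2^nt/s$; your route via $(a*\psi_s)'=s^{-1}a*(\psi')_s$ together with (b) yields the identical bound $|h_s|\lesssim(2^nt)^2s^{-1}\Dem a(x,s)$.

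The medium-range piece you single out as ``the main obstacle'' is the paper's term $II_a$ (the part of $II$ with $2^{-n}t\leq s\leq 2^nt$). The paper bounds it by $t^{-1}\int_{2^{-n}t}^{2^nt}\sup_{|y-x|\lesssim 2^nt}|a*\psi_s(y)|\,\frac{\dif s}{s}$ and declares that this ``can be absorbed into the estimate for $I$''. Your suspicion that this step may cost a polynomial factor in $2^n$ is well founded: comparing $2^n\,\frac{\dif s}{s}$ with $t^{-1}\,\dif s$ over $[2^{-n}t,2^nt]$ is not uniform in $n$, and the paper does not spell out how this is avoided. As you anticipate, however, such a loss is harmless for the application, since the next lemma and the ensuing corollary only require $\|\beta_n\|_{L^\infty(S^2)}$ to grow polynomially in $2^n$.
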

\begin{proof}
Let $x_{0},x_{1},x_{2}\in B(x,2^{n}\cdot 3t)$, $2^{-n}t\leq \tilde t \leq 2^{n}t$.
By the fundamental theorem of calculus and Calder\'on's reproducing formula for $a$ we can write
\begin{align*}
\MoveEqLeft
t^{-1} (A(x_{2})-A(x_{1})-\alpha(x_{0},\tilde t)(x_{2}-x_{1}))
=
t^{-1} \int_{x_{1}}^{x_{2}} a(y) \dif y - t^{-1} \alpha(x_{0},\tilde t)(x_{2}-x_{1})\\
&=
t^{-1} \int_{x_{1}}^{x_{2}} \int_{0}^{\infty} a * \psi_{s}(y) \frac{\dif s}{s} \dif y
-
t^{-1} \int_{x_{1}}^{x_{2}} \int_{\tilde t}^{\infty} a * \psi_{s}(x_{0}) \frac{\dif s}{s} \dif y.
\intertext{Splitting the integral in $s$ in the former term at $\tilde t$ we further obtain }
&=
t^{-1} \int_{x_{1}}^{x_{2}} \int_{0}^{\tilde t} a * \psi_{s}(y) \frac{\dif s}{s} \dif y
+
t^{-1} \int_{x_{1}}^{x_{2}} \int_{\tilde t}^{\infty} (a * \psi_{s}(y) - a * \psi_{s}(x_{0}) ) \frac{\dif s}{s} \dif y\\
&=:
I + II.
\end{align*}
We estimate the two terms on the right-hand side separately.
In the first term we note $\psi_{s} = s (\tilde\psi_{s})'$, where $\tilde\psi_{s}$ is also an $L^{1}$ normalized mean zero bump function at scale $s$, by assumption \eqref{0111e1.6}.
Therefore
\begin{align*}
I
&\leq
t^{-1} \int_{0}^{2^{n} t} \abs[\Big]{ \int_{x_{1}}^{x_{2}}  a * \psi_{s}(y)\ \dif y} \frac{\dif s}{s}\\
&=
t^{-1} \int_{0}^{2^{n} t} \abs{ a * \tilde\psi_{s}(x_{2}) - a * \tilde\psi_{s}(x_{1}) } \dif s\\
&\lesssim
t^{-1} \int_{0}^{2^{n} t} \sup_{\abs{y-x}\lesssim 2^{n}t} \Dem a(y,s) \dif s.
\end{align*}
Since $\Dem a(\cdot,s)$ is almost constant at scale $s$, this can be further estimated by
\[
I
\lesssim
t^{-1} \int_{0}^{2^{n} t} \Big( \int_{\abs{y-x}\lesssim 2^{n}t} \Dem a(y,s)^{2} s^{-1} \dif y \Big)^{1/2} \dif s.
\]
We split the second term $II\leq II_{a}+II_{b}$ via
\begin{equation}\label{0111e1.10}
\int_{\tilde t}^{\infty} \leq \int_{2^{-n}t}^{2^{n}t} + \int_{2^{n}t}^{\infty}.
\end{equation}
Then
\[
II_{a}
\leq
t^{-1} \int_{2^{-n}t}^{2^{n}t} \sup_{\abs{y-x}\lesssim 2^{n} t}\abs{a * \psi_{s}(y)} \frac{\dif s}{s},
\]
and this can be absorbed into the estimate for $I$.
The latter term from \eqref{0111e1.10} is bounded by
\[
II_{b} \lesssim
t^{-1} \int_{x_{1}}^{x_{2}} \int_{2^{n}t}^{\infty} \abs[\big]{a * [\psi_{s}(\cdot-x+y) - \psi_{s}(\cdot-x+x_{0})](x)} \frac{\dif s}{s} \dif y.
\]
Since $\abs{x-y},\abs{x-x_{0}}\lesssim 2^{n} t\lesssim s$, the function in the square brackets is a mean zero $L^{1}$ normalized bump function at scale $s$ with constant $\lesssim 2^{n}t/s$ by the fundamental theorem of calculus, so
\[
II_{b} \lesssim
t^{-1} \int_{x_{1}}^{x_{2}} \int_{2^{n}t}^{\infty} \Dem a(x,s) \frac{2^{n} t \dif s}{s^{2}} \dif y
\lesssim
2^{2n} \int_{2^{n}t}^{\infty} \Dem a(x,s) \frac{t \dif s}{s^{2}}.
\]
This finishes the proof of Lemma~\ref{0111lemma1}.
\end{proof}
\begin{lemma}[{cf.\ \cite[Lemma 3]{MR1013815}}]
$\norm{\beta_{n}}_{L^{\infty}(S^{2})} \lesssim 2^{3n/2} \norm{a}_{\infty}$.
\end{lemma}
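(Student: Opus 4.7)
The plan is to apply Lemma~\ref{lem:beta-square} and estimate the $L^{\infty}(S^{2})$ norm of each of the two resulting terms separately. Call them $I(x,t)$ and $II(x,t)$. Throughout, I will freely use the pointwise bound $\Dem a(y,s) \lesssim \norm{a}_{\infty}$, which follows from $|\phi(u)| \leq (1+|u|)^{-10} \in L^{1}$, together with the embedding $\norm{\Dem a}_{L^{\infty}(S^{2})} \lesssim \norm{a}_{\infty}$ given by \eqref{eq:D-embedding} with $p=\infty$.

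For the first term $I(x,t) = t^{-1}\int_{0}^{2^{n}t} \bigl(\int_{|y-x|\lesssim 2^{n}t} \Dem a(y,s)^{2} s^{-1} \dif y\bigr)^{1/2} \dif s$, I would Cauchy--Schwarz in $s$ against the Lebesgue measure $\dif s$ on $(0,2^{n}t)$, obtaining
\[
I(x,t)^{2} \lesssim \frac{2^{n}}{t} \int_{0}^{2^{n}t}\int_{|y-x|\lesssim 2^{n}t} \Dem a(y,s)^{2}\,\dif y\, \frac{\dif s}{s}.
\]
The region of integration is contained in a tent of base $\asymp 2^{n}t$ around $x$, so the embedding bound applied to this tent yields the pointwise estimate $I(x,t) \lesssim 2^{n}\norm{a}_{\infty}$, which is stronger than needed.

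For $II(x,t) = 2^{2n}\int_{2^{n}t}^{\infty}\Dem a(x,s)\,\frac{t\,\dif s}{s^{2}}$, Cauchy--Schwarz against the measure $\dif s/s^{2}$ gives
\[
II(x,t)^{2} \lesssim 2^{3n}\,t\int_{2^{n}t}^{\infty}\Dem a(x,s)^{2}\,\frac{\dif s}{s^{2}}.
\]
To bound its $S^{2}$ size on a tent $T(x,r)$ I would plug this into $r^{-1}\int_{T(x,r)} II^{2}\,\dif y\,\dif t/t$, swap the $s$ and $t$ integrations, and split the resulting $s$-integral at $s=2^{n}r$. In the range $s\leq 2^{n}r$ the inner integration lives in the tent $T(x,2^{n}r)$ and the embedding bound produces a factor $2^{n}r\norm{a}_{\infty}^{2}$, giving a total contribution $\lesssim 2^{3n}\norm{a}_{\infty}^{2}$. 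In the range $s\geq 2^{n}r$ I would use the pointwise bound $\Dem a\lesssim \norm{a}_{\infty}$ and compute the elementary integrals directly, which produces a contribution $\lesssim 2^{2n}\norm{a}_{\infty}^{2}$. Summing, $\norm{II}_{L^{\infty}(S^{2})}\lesssim 2^{3n/2}\norm{a}_{\infty}$.

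The main obstacle will be bookkeeping in the second term: the $s$-integral in the definition of $II$ ranges over scales that are much larger than $t$, so the natural tent controlling $\Dem a$ has a base on the order of $2^{n}r$ (not $r$), which is exactly the source of the $2^{3n/2}$ loss. Everything else is Cauchy--Schwarz, Fubini, and a case split, with no regularity subtleties beyond what is already built into the $\Dem$-embedding. The first term actually admits the stronger pointwise bound $I\lesssim 2^{n}\norm{a}_{\infty}$, so the $2^{3n/2}\norm{a}_{\infty}$ ceiling is dictated entirely by the long-range piece $II$.
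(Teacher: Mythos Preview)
Your argument is correct, and in fact your treatment of the first term is slightly sharper than the paper's. The paper also splits via Lemma~\ref{lem:beta-square}, but for the first term it squares, integrates over a fixed tent $T(x_{0},t_{0})$, and applies H\"older in $s$ against $s^{-1/2}\dif s$ before Fubini, ending with the bound $2^{3n}\norm{\Dem a}_{L^{\infty}(S^{2})}^{2}$ for the square of the size. Your pointwise Cauchy--Schwarz against $\dif s$ followed by the observation that the $(y,s)$-region sits in a tent of base $\asymp 2^{n}t$ gives the stronger pointwise estimate $I(x,t)\lesssim 2^{n}\norm{a}_{\infty}$, which is cleaner and shows that the $2^{3n/2}$ ceiling really comes only from $II$. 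For the second term the paper takes a different route: it substitutes $s=t\tau$ and applies Minkowski's integral inequality in $\tau$, so that each $\tau$-slice becomes a rescaled tent integral of $\Dem a$; this avoids your case split at $s=2^{n}r$ and the explicit use of the pointwise bound $\Dem a\lesssim\norm{a}_{\infty}$ on the tail. Both routes are elementary and of comparable length; yours is perhaps more hands-on, the paper's slightly more streamlined on $II$.
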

\begin{proof}
We have to show
\[
\frac{1}{t_{0}} \int_{t<t_{0}} \int_{\abs{x-x_{0}}<t_{0}} \beta_{n}(x,t)^{2} \dif x \frac{\dif t}{t}
\lesssim
2^{3n} \norm{a}_{\infty}^{2}
\]
with the implicit constant independent of $(x_{0},t_{0})\in\R\times\R_{+}$.

We estimate the $S^{2}$ size on the tent centered at $x_{0}$ with height $t_{0}$ separately for the two terms in the conclusion of Lemma~\ref{lem:beta-square}.
For the first term we consider the square of the $S^{2}$ size:
\begin{align*}
\MoveEqLeft
\frac{1}{t_{0}} \int_{t<t_{0}} \int_{\abs{x-x_{0}}<t_{0}} \Big( t^{-1} \int_{0}^{2^{n} t} \Big( \int_{\abs{y-x}\lesssim 2^{n} t} \Dem a(y,s)^{2} s^{-1} \dif y \Big)^{1/2} \dif s \Big)^{2} \dif x \frac{\dif t}{t}\\
\intertext{Apply H\"older's inequality in the $s$-variable}
&\leq
\frac{1}{t_{0}} \int_{t<t_{0}} \int_{\abs{x-x_{0}}<t_{0}} \int_{0}^{2^{n} t} \Big( \int_{\abs{y-x}\lesssim 2^{n}t} \Dem a(y,s)^{2} \dif y \Big) \frac{\dif s}{s^{1/2}} \cdot \int_{0}^{2^{n} t} \frac{\dif s}{s^{1/2}} \dif x \frac{\dif t}{t^{3}}\\
\intertext{Change the order of integration}
&\lesssim
\frac{2^{n/2}}{t_{0}} \int_{s\leq 2^{n} t_{0}} \int_{2^{-n}s<t<t_{0}} \int_{\abs{y-x_{0}}\lesssim 2^{n}t_{0}} \int_{\abs{x-y}\lesssim 2^{n}t} \dif x \Dem a(y,s)^{2} \dif y \frac{\dif t}{t^{5/2}} \frac{\dif s}{s^{1/2}}\\
&\lesssim
\frac{2^{2n}}{t_{0}} \int_{s\leq 2^{n} t_{0}} \int_{\abs{y-x_{0}}\lesssim 2^{n}t_{0}} \Dem a(y,s)^{2} \dif y \frac{\dif s}{s} \lesssim
2^{3n} \norm{\Dem a}_{L^{\infty}(S^{2})}^{2}.
\end{align*}
For the second term we consider the $S^{2}$ size
\begin{align*}
\MoveEqLeft
2^{2n} \Big(\frac{1}{t_{0}} \int_{t<t_{0}} \int_{\abs{x-x_{0}}<t_{0}} \Big( \int_{2^{n} t}^{\infty} \Dem a(x,s) \frac{t \dif s}{s^{2}} \Big)^{2} \dif x \frac{\dif t}{t} \Big)^{1/2}\\
\intertext{By applying a change of variable $s\to t\tau$ and Minkowski's integral inequality:}
&\leq
2^{2n}\int_{2^{n}}^{\infty} \Big(\frac{1}{t_{0}} \int_{t<t_{0}} \int_{\abs{x-x_{0}}<t_{0}} \Dem a(x,t\tau)^{2} \dif x \frac{\dif t}{t}\Big)^{1/2} \frac{\dif\tau}{\tau^{2}}\\
&=
2^{2n}\int_{2^{n}}^{\infty} \Big(\frac{1}{\tau t_{0}} \int_{s<\tau t_{0}} \int_{\abs{x-x_{0}}<t_{0}} \Dem a(x,s)^{2} \dif x \frac{\dif s}{s}\Big)^{1/2} \frac{\dif\tau}{\tau^{3/2}}\\
&\lesssim
2^{2n}\int_{2^{n}}^{\infty} \norm{\Dem a}_{L^{\infty}(S^{2})} \frac{\dif\tau}{\tau^{3/2}}\lesssim
2^{3n/2} \norm{\Dem a}_{L^{\infty}(S^{2})}.
\end{align*}
The conclusion follows from \eqref{eq:D-embedding}.
\end{proof}

\begin{corollary}[{cf.~\cite[Lemma 4]{MR1013815}}]
\label{cor:jones-beta}
Let $\epsilon>0$ and
\[
\beta(x,t)
=
\sup_{x_{0},x_{1},x_{2} \in \R, \tilde t>0} \big(1 + \frac{\max_{i}(\abs{x_{i}-x})}{t} + \frac{\tilde t}{t} + \frac{t}{\tilde t}\big)^{-3/2-\epsilon} \frac{\abs{A(x_{2})-A(x_{1})-\alpha(x_{0},\tilde t)(x_{2}-x_{1})}}{t}.
\]
Then
\[
\norm{\beta}_{L^{\infty}(S^{2})}
\lesssim
\norm{a}_{\infty}.
\]
\end{corollary}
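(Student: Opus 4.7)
The strategy is to dominate $\beta(x,t)$ pointwise by a rapidly convergent series in the already‐controlled quantities $\beta_n(x,t)$, and then invoke subadditivity of the outer $L^{\infty}(S^{2})$ norm together with the previous lemma.

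First, I would note the key elementary observation: given any admissible configuration $(x_{0},x_{1},x_{2},\tilde t)$ appearing in the definition of $\beta(x,t)$, let $n=n(x_{0},x_{1},x_{2},\tilde t)$ be the smallest non‐negative integer such that $x_{0},x_{1},x_{2}\in B(x,2^{n}\cdot 3t)$ and $2^{-n}t\le \tilde t\le 2^{n}t$. Then
\[
2^{n}\;\sim\; 1+\frac{\max_i\abs{x_i-x}}{t}+\frac{\tilde t}{t}+\frac{t}{\tilde t},
\]
so the weight in the definition of $\beta$ is comparable to $2^{-n(3/2+\epsilon)}$, while the unweighted quantity is by construction bounded by $\beta_n(x,t)$. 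Taking the supremum over all configurations therefore yields the pointwise estimate
\[
\beta(x,t)\;\lesssim\;\sup_{n\ge 0} 2^{-n(3/2+\epsilon)}\beta_n(x,t)\;\le\;\sum_{n\ge 0} 2^{-n(3/2+\epsilon)}\beta_n(x,t).
\]

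Next, I would use the fact that the $S^{2}$ size is subadditive (by Minkowski's inequality applied inside the defining integral on each tent), hence so is the outer $L^{\infty}(S^{2})$ norm. Combined with the preceding lemma $\norm{\beta_n}_{L^{\infty}(S^{2})}\lesssim 2^{3n/2}\norm{a}_{\infty}$, this gives
\[
\norm{\beta}_{L^{\infty}(S^{2})}
\;\le\;\sum_{n\ge 0} 2^{-n(3/2+\epsilon)}\norm{\beta_n}_{L^{\infty}(S^{2})}
\;\lesssim\;\sum_{n\ge 0} 2^{-n\epsilon}\norm{a}_{\infty}
\;\lesssim_{\epsilon}\;\norm{a}_{\infty},
\]
which is exactly the claim.

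There is no real obstacle; the only subtle point is the initial reduction that for each configuration one can read off a ``best'' $n$ for which that configuration is already captured by $\beta_n$, so that the weight $(1+\dots)^{-3/2-\epsilon}$ beats the polynomial-in-$2^n$ growth rate $2^{3n/2}$ of the single scale bound. The exponent $3/2$ in the weight is chosen for precisely this reason, and the $\epsilon$-margin is what makes the geometric series summable.
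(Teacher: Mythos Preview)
Your proposal is correct and is precisely the intended derivation: the paper states this result as a corollary of the preceding lemma $\norm{\beta_n}_{L^{\infty}(S^{2})}\lesssim 2^{3n/2}\norm{a}_{\infty}$ without writing out a proof, and your argument (choosing for each configuration the minimal $n$ that captures it, then summing the resulting geometric series in $n$) is exactly how one passes from that lemma to the corollary.
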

The difference from the original formulation of Jones's beta number estimate is that we take a supremum over an uncertainty region in all available parameters.

\section{Littlewood--Paley diagonalization of Lipschitz change of variables}
\begin{proof}[Proof of Theorem~\ref{thm:Lip-LP-diag}]
Since the Lipschitz norm of $A$ is strictly smaller than $1$, the change of variable $x\mapsto x+A(x)$ is invertible and bi-Lipschitz.
Denote its inverse function by $b$, so that $z = b(z) + A(b(z))$.

Write
\[
T_{A}(\Psi_{t} * f)(x)
=
T_{A}(\Psi_{t} * P_{t}f)(x)
=
\int_{-\infty}^{\infty} P_{t}f(z) \Psi_{t}(x+A(x)-z) \dif z.
\]
This integral is a linear combination of the functions $x\mapsto \Psi_{t}(x+A(x)-z)$ that we view as non-linear deformations of wave packets centered at $b(z)$.
The main idea is to replace the non-linear change of variable $x\mapsto x+A(x)-z$ in the argument of $\Psi_{t}$ by the linear change of variable $x\mapsto (1+\alpha(b(z),t))(x-b(z))$, where $\alpha$ is the average slope of the function $A$ in the sense of \eqref{eq:av-slope}.
Since $\abs{\alpha} \leq \norm{A}_{\Lip}$, the function
\[
x\mapsto \int_{-\infty}^{\infty} P_{t}f(z) \Psi_{t}((1+\alpha(b(z),t))(x-b(z))) \dif z
\]
has Fourier support inside $t^{-1}[99/100,103/100]$, so it is annihilated by $I-P_{t}$.

It remains to estimate the error that has been made in approximating the non-linear change of coordinates in the argument of $\Psi_{t}$ by a linear one.
To this end we compute the difference of the arguments:
\begin{equation}
\label{eq:lin-lip-arg}
\abs{x+A(x)-z - (1+\alpha(b(z),t))(x-b(z))}
=
\abs{(A(x) - A(b(z)) - \alpha(b(z),t)(x-b(z)))}
\end{equation}
By the Lipschitz property of $A$ and since $\abs{\alpha}\leq\norm{A}_{\Lip}$ we have
\[
\eqref{eq:lin-lip-arg}
\leq
\frac12 \abs{x-b(z)},
\]
and it follows that both $x+A(x)-z$ and $(1+\alpha(b(z),t))(x-b(z))$ have (signed) distance of the order $\approx x-b(z)$ from zero.
Therefore
\begin{align*}
\MoveEqLeft
\abs{\Psi_{t}(x+A(x)-z) - \Psi_{t}((1+\alpha(b(z),t))(x-b(z)))}\\
&\lesssim
t^{-2} (1+\abs{x-b(z)}/t)^{-20} \cdot \eqref{eq:lin-lip-arg}
&&\text{by decay of $\Psi_{t}'$}\\
&\lesssim
t^{-1} \beta(b(z),t) (1+ \abs{x-b(z)}/t)^{-10}
&&\text{by definition of $\beta$ numbers}.
\end{align*}
It follows that
\begin{align*}
\MoveEqLeft
\sum_{t \in 2^{\Z}} \abs{(1-P_{t})T_{A} (\Psi_{t} * f)}\\
&=
\sum_{t \in 2^{\Z}} \abs[\big]{(1-P_{t}) \int P_{t}f(z) (\Psi_{t}(x+A(x)-z) - \Psi_{t}((1+\alpha(b(z),t))(x-b(z)))) \dif z}\\
&\lesssim
\sum_{t \in 2^{\Z}} (\delta_{0} + t^{-1}(1+\abs{\cdot}/t)^{-10}) * \int \abs{P_{t}f(z)} t^{-1} \beta(b(z),t) (1+ \abs{x-b(z)}/t)^{-10} \dif z\\
&\lesssim
\sum_{t \in 2^{\Z}} \int \abs{P_{t}f(z)} t^{-1} \beta(b(z),t) (1+ \abs{x-b(z)}/t)^{-5}.
\end{align*}
Multiplying this with a function $g\in L^{p'}(\R)$ and integrating in $x$ we obtain the estimate
\[
\sum_{t\in 2^{\Z}} \int \Dem f(z,t) \beta(b(z),t) \Aem g(b(z),t) \dif z.
\]
The sum over $t$ can be dominated by $\int_{0}^{\infty} \frac{\dif t}{t}$ since all functions $\Dem,\beta,\Aem$ are almost (up to a multiplicative factor) constant on Carleson boxes $B(x,t) \times [t,2t]$.
By \cite[Proposition 3.6]{MR3312633} and outer H\"older inequality \cite[Proposition 3.4]{MR3312633} this is bounded by
\[
\norm{\Dem f}_{L^{p}(S^{2})} \norm{\beta(b(\cdot),\cdot)}_{L^{\infty}(S^{2})} \norm{\Aem g(b(\cdot),\cdot)}_{L^{p'}(S^{\infty})}.
\]
Since the function $b$ is bi-Lipschitz, it does not affect outer norms up to a multiplicative constant.
To see this note that
\[
\norm{F\one_{(\cup_{i} T(x_{i},s_{i}))^{c}}}_{L^{\infty}(S^{q})} \leq \lambda
\implies
\norm{F(b(\cdot),\cdot)\one_{(\cup_{i} T(b^{-1}(x_{i}),2s_{i}))^{c}}}_{L^{\infty}(S^{q})} \leq C\lambda
\]
for a sufficiently large constant $C$.

Thus we obtain the estimate
\[
\norm{\Dem f}_{L^{p}(S^{2})} \norm{\beta}_{L^{\infty}(S^{2})} \norm{\Aem g}_{L^{p'}(S^{\infty})}.
\]
Estimating the first term using \eqref{eq:D-embedding}, the middle term using Corollary~\ref{cor:jones-beta}, and the last term using \eqref{eq:A-embedding} we obtain the claim.
\end{proof}

\section{Application to truncated directional Hilbert transforms}
\label{guo-subsection2.5}
In this section we prove Corollary~\ref{cor:LL-single-band}.
As an initial reduction observe that it suffices to estimate the restriction of $H_{u}$ to a vertical strip; more precisely we need an estimate of the form
\[
\norm{H_{u}f}_{L^{p}([N-1,N+2]\times \R)} \lesssim \norm{H_{u}f}_{L^{p}([N,N+1]\times \R)}
\]
for functions $f$ supported in the vertical strip $[N,N+1]\times\R$.
This reduction will be important in the case $p_{0}<2$.
Also, it is easy to see that we may replace $H_{u}$ by the smoothly truncated operator
\begin{equation}
\label{tildeHu}
\tilde H_u f(x,y):= \operatorname{p.v.}\int_{\R} f(x+r,y+u(x,y)r) \phi(r) \frac{\dif r}r,
\end{equation}
where $\phi$ is a smooth even function with $\phi(0)=1$, $\int\phi(x)\dif x = \int x\phi(x) \dif x = \dotsb = \int x^{N}\phi(x) = 0$ for some large $N$ and $\supp\phi\subset [-1,1]$.
This is possible because the maps $(x,y)\mapsto (x+r,y+u(x,y)r)$ are uniformly bi-Lipschitz for $r\in [-1,1]$, so $f\mapsto f(x+r,y+u(x,y)r)$ is a bounded operator on $L^{p}$.

We note that the operators $f\mapsto H_{u}(\Psi_{t}*f)$ (as well as the analogous ones obtained with $\tilde H_u$ from \eqref{tildeHu} in place of $H_u$) are also trivially bounded in $L^p$ uniformly in $t\geq t_{0}$.
To see this split
\[
H_{u}f(x,y)
=
\int_{-1}^{1} f(x+r,y) \frac{\dif r}{r}
+
\int_{-1}^{1} (f(x+r,y+u(x,y)r)-f(x+r,y)) \frac{\dif r}{r}.
\]
The first term is a one-dimensional truncated Hilbert transform on each horizontal line, and therefore bounded on any $L^{p}$, $1<p<\infty$.
The second term can be written as
\[
\int_{-1}^{1} \int_{s=0}^{u(x,y)r} \partial_{2} f(x+r,y+s) \dif s \frac{\dif r}{r}
\]
This is in turn bounded by
\[
\int_{-1}^{1} M_{2} \partial_{2} f(x+r,y) \dif r\leq
M_{1} M_{2} \partial_{2} f(x,y),
\]
where $M_{i}$ denotes the Hardy--Littlewood maximal function in the $i$-th variable.
The differential operator $\partial_{2}$ is $L^{p}$ bounded on the subspace of functions with $\hat f(\xi,\eta)=0$ for $\abs{\eta}>2/t_{0}$ and therefore we obtain $L^{p}$ estimates for this term.

\begin{remark}
The same argument can be used to estimate $H_{u}$ on functions with small horizontal frequencies, thus simplifying an argument in \cite[Section 3]{MR3607232}.
\end{remark}

Below, we work with $\tilde H_u$ from \eqref{tildeHu} in place of $H_u$, and omit the tilde for simplicity of notation.
By the argument leading to \eqref{eq:LP-diag-square-fct} and Littlewodd--Paley theory it suffices to show
\[
\norm{\big(\sum_{t\in 2^{\Z}} \abs{H_{u} (\Psi_t *_2 f)}^{2}\big)^{1/2}}_{p}
\lesssim
\norm{\big(\sum_{t\in 2^{\Z}} \abs{P_t *_2 f}^{2}\big)^{1/2}}_{p},
\]
or, more generally,
\begin{equation}
\label{eq:Hu-sq-fct-est}
\norm{\big(\sum_{t\in 2^{\Z}} \abs{H_{u} (\Psi_t *_2 f_{t})}^{2}\big)^{1/2}}_{L^{p}([N-1,N+2]\times\R)}
\lesssim
\norm{\big(\sum_{t\in 2^{\Z}} \abs{f_{t}}^{2}\big)^{1/2}}_{p}
\end{equation}
for arbitrary functions $f_{t}$ supported in the strip $[N,N+1]\times\R$.
In the case $p=p_{0}=2$ this follows immediately from the single band hypothesis \eqref{passumption} and Fubini's theorem.

In order to obtain the larger range of $p$'s in the case $1<p_{0}<2$ we use the technique for proving vector-valued estimates introduced in \cite{MR3148061} (see also \cite{MR3352435} for more applications of this technique).
\begin{theorem}
\label{thm:BT-VV}
Let $1<p,q<\infty$ and let $T_{k} : L^{p,1}(\Omega) \to L^{p,\infty}(\Omega')$ be a sequence of subadditive operators.
Let $0\leq c<1$ and suppose that for every pair of (non-null, finite measure) measurable sets $H\subset\Omega$, $G\subset\Omega'$ with $0<\abs{H},\abs{G}<\infty$ there exist subsets $H'\subset H$, $G'\subset G$ with
\[
\Big(\frac{\abs{G\setminus G'}}{\abs{G}}\Big)^{1-1/p}
+
\Big(\frac{\abs{H\setminus H'}}{\abs{H}}\Big)^{1/p}
\leq
c
\]
for every $k$ and every function $f$ supported on $H'$ we have
\begin{equation}
\label{eq:BT-VV-Lq}
\norm{T_{k}f}_{L^{q}(G')}
\lesssim
(\abs{G}/\abs{H})^{1/q-1/p} \norm{f}_{L^{q}(H')}.
\end{equation}
Then for any functions $f_{k}\in L^{p,1}(\Omega)$ we have
\[
\norm{ (\sum_{k} \abs{T_{k}f_{k}}^{q})^{1/q} }_{L^{p,\infty}(\Omega')}
\lesssim
\norm{ (\sum_{k} \abs{f_{k}}^{q})^{1/q} }_{L^{p,1}(\Omega)}.
\]
\end{theorem}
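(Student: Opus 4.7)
The plan is to follow the iterative upgrade strategy of Bateman--Thiele \cite{MR3148061}. First, I would promote the scalar restricted $L^{q}$ bound \eqref{eq:BT-VV-Lq} to its vector-valued analogue. Since the major subsets $H'\subset H$, $G'\subset G$ produced by the hypothesis depend only on $H$ and $G$ (not on the function), applying the scalar estimate to each coordinate $f_{k}$ with the same choice of $H', G'$ and taking the $\ell^{q}$-norm in $k$ yields
\[
\norm[\big]{ (\textstyle\sum_{k} \abs{T_{k} f_{k}}^{q})^{1/q} }_{L^{q}(G')}
\lesssim (\abs{G}/\abs{H})^{1/q-1/p} \norm[\big]{ (\textstyle\sum_{k} \abs{f_{k}}^{q})^{1/q} }_{L^{q}(H')}
\]
for every $\vec f=(f_{k})$ with each $f_{k}$ supported on $H'$.

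Next, I would establish a vector-valued restricted weak type bound: for every $H\subset\Omega$ and every $\vec f$ with $(\sum_{k}\abs{f_{k}}^{q})^{1/q}\leq\one_{H}$,
\[
\norm[\big]{ (\textstyle\sum_{k} \abs{T_{k} f_{k}}^{q})^{1/q} }_{L^{p,\infty}(\Omega')} \leq C \abs{H}^{1/p}.
\]
Fixing $\lambda>0$ and $G_{\lambda} = \{(\sum_{k}\abs{T_{k}f_{k}}^{q})^{1/q}>\lambda\}$ (finite after a truncation argument using the a priori individual bounds $T_{k}\colon L^{p,1}\to L^{p,\infty}$), apply the vector-valued hypothesis to $(G_{\lambda},H)$ to obtain major subsets $G'\subset G_\lambda$ and $H'\subset H$. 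Split $\vec f = \vec f\,\one_{H'} + \vec f\,\one_{H\setminus H'}$; subadditivity of $T_k$ together with Minkowski in $\ell^{q}$ gives the pointwise bound $|T\vec f|\leq |T(\vec f\,\one_{H'})| + |T(\vec f\,\one_{H\setminus H'})|$. Control of $G' \cap \{|T(\vec f\,\one_{H'})|>\lambda/2\}$ follows from Chebyshev and the vector-valued restricted $L^{q}$ bound, $G_{\lambda}\setminus G'$ is absorbed via $\abs{G_{\lambda}\setminus G'}\leq c^{p'}\abs{G_{\lambda}}$, and the contribution of $\vec f\,\one_{H\setminus H'}$ (supported on a set of measure $\leq c^{p}\abs{H}$) is estimated by recursion. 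This produces an inequality $\gamma(M)\leq A + B\gamma(c^{p}M)$ for the function
\[
\gamma(M) := \sup\{\lambda \abs{G_{\lambda}}^{1/p}/\abs{H}^{1/p} : \abs{H}\leq M,\ (\textstyle\sum_{k}\abs{f_{k}}^{q})^{1/q}\leq\one_{H}\};
\]
balancing the splitting of $\lambda$ and using Young's inequality to absorb a $\gamma^{1-q/p}$-term on the left-hand side makes $B<1$, after which iteration of the recursion yields a uniform bound on $\gamma$.

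Finally, I would deduce the full vector-valued Lorentz bound by the $L^{p,1}$ atomic decomposition. Writing $\vec f = \sum_{j\in\Z} \vec f_{j}$ with $|\vec f_{j}| \leq 2^{j+1} \one_{A_{j}}$ for $A_{j} = \{2^{j}<(\sum_{k}\abs{f_{k}}^{q})^{1/q}\leq 2^{j+1}\}$, one has $\sum_{j} 2^{j}\abs{A_{j}}^{1/p} \sim \norm{\vec f}_{L^{p,1}(\ell^{q})}$. Subadditivity yields $|T\vec f|\leq \sum_{j}|T\vec f_{j}|$ pointwise, and since $L^{p,\infty}$ is equivalent to a Banach norm for $p>1$, summing the restricted weak type bounds from the previous step gives the desired estimate $\norm{T\vec f}_{L^{p,\infty}(\ell^{q})}\lesssim \norm{\vec f}_{L^{p,1}(\ell^{q})}$.

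The main obstacle is the recursive step: one must ensure the recursion constants satisfy $B<1$, which requires carefully apportioning the splitting of $\lambda$ between $|T(\vec f\,\one_{H'})|$ and $|T(\vec f\,\one_{H\setminus H'})|$ and balancing the decay rates $c^{p'}$ and $c^{p}$ of $\abs{G_{\lambda}\setminus G'}$ and $\abs{H\setminus H'}$ against the constants coming from Chebyshev and from the Young inequality. Handling both $q<p$ and $q>p$ within the same argument needs special care with the factor $(\abs{G_\lambda}/\abs{H})^{1/q-1/p}$ during the iteration, since this factor contracts in one regime but expands in the other and must not destroy the geometric decay needed to close the recursion.
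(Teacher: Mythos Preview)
Your plan is sound and follows the original Bateman--Thiele iteration from \cite{MR3148061}; the recursion does close for every $c<1$ once one observes that $a^{p'}+b^{p}\leq a+b\leq c$ (with $a=(\abs{G\setminus G'}/\abs{G})^{1/p'}$, $b=(\abs{H\setminus H'}/\abs{H})^{1/p}$, both at most $1$), so that $b^{p}/(1-a^{p'})\leq c<1$ uniformly, and one can absorb the extra factors from the $\lambda$-splitting by taking $\theta$ small. The final atomic decomposition step is also correct.

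The paper takes a genuinely different and much shorter route that bypasses all the difficulties you flag. Instead of working with level sets, it uses the Lorentz duality
\[
\norm{T\vec f}_{L^{p,\infty}}\lesssim \norm{\vec f}_{L^{p,1}(\ell^{q})}
\quad\Longleftrightarrow\quad
\int_{G}\abs{T\vec f}\leq B\,\abs{H}^{1/p}\abs{G}^{1-1/p}
\text{ whenever }\abs{\vec f}\leq\one_{H},
\]
and bootstraps the best constant $B$ (finite a priori after reducing to finite sequences). For any $H,G$ one applies the vector-valued $L^{q}$ hypothesis on $G'$ with $\vec f\one_{H'}$; H\"older gives $\int_{G'}\abs{T(\vec f\one_{H'})}\lesssim\abs{H}^{1/p}\abs{G}^{1-1/p}$ regardless of the sign of $1/q-1/p$, since the powers $\abs{H}^{1/q}$ from $\norm{\vec f}_{L^{q}(H')}$ and $\abs{G}^{1/q'}$ from $\norm{\one_{G}}_{L^{q'}}$ balance exactly. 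The remaining pieces $\int_{G\setminus G'}\abs{T\vec f}$ and $\int_{G'}\abs{T(\vec f\one_{H\setminus H'})}$ are estimated by $B\abs{H}^{1/p}\abs{G\setminus G'}^{1-1/p}$ and $B\abs{H\setminus H'}^{1/p}\abs{G}^{1-1/p}$ via the definition of $B$ itself, and these sum to $cB\abs{H}^{1/p}\abs{G}^{1-1/p}$ precisely because of the form of the major-subset constraint. Thus $B\leq C+cB$ in one line. This bilinear reformulation eliminates the Chebyshev step, the Young inequality, the $q\lessgtr p$ case distinction, and the separate passage from restricted weak type to $L^{p,1}\to L^{p,\infty}$; what your approach buys in return is that it stays closer to the weak-type intuition and does not require knowing the duality characterization of $L^{p,1}\to L^{p,\infty}$.
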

\begin{proof}
By the monotone convergence theorem it suffices to consider a finite sequence of operators as long as we obtain estimates that do not depend on its length.
The hypothesis \eqref{eq:BT-VV-Lq} continues to hold for the operator $T(\vec f) := (\sum_{k} \abs{T_{k}f_{k}}^{q} )^{1/q}$ defined on $\ell^{q}$-valued functions, and we know
\[
\norm{ Tf }_{L^{p,\infty}(\Omega')}
\lesssim
\norm{ f }_{L^{p,1}(\Omega,\ell^{q})}
\]
with some constant given by the qualitative boundedness assumption on $T_{k}$'s and depending on the length of the sequence of operators.
By duality of Lorentz spaces this is equivalent to
\[
\int_{G} \abs{Tf}
\leq
B \abs{H}^{1/p} \abs{G}^{1-1/p}
\]
for all finite measure sets $H,G$ and all functions $f:\Omega\to\ell^{q}$ with $\abs{f}\leq\one_{H}$.
We have to find a universal upper bound for $B$.

Let $G,H$ be measurable sets with finite measure and $G',H'$ be the major subsets given by the hypothesis.
Then for any function $f : \Omega\to \ell^{q}$ with $\abs{f} \leq \one_{H'}$ we have
\begin{align*}
\int_{G'} \abs{Tf}
&\leq
\norm{ Tf }_{L^{q}(G')} \norm{ \one_{G} }_{L^{q'}}\\
&\lesssim
(\abs{G}/\abs{H})^{1/q-1/p} \norm{f}_{L^{q}(H',\ell^{q})} \abs{G}^{1/q'}\\
&\lesssim
\abs{H}^{1/p} \abs{G}^{1-1/p}
\end{align*}
by H\"older's inequality and the hypothesis.
It follows that for any function $f : \Omega\to\ell^{q}$ with $\abs{f} \leq \one_{H}$ we have
\begin{align*}
\int_{G} \abs{Tf}
&\leq
C \abs{H}^{1/p} \abs{G}^{1-1/p}
+
\int_{G\setminus G'} \abs{Tf}
+
\int_{G'} \abs{T(f\one_{H\setminus H'})}\\
&\leq
C \abs{H}^{1/p} \abs{G}^{1-1/p}
+
B \abs{H}^{1/p} \abs{G\setminus G'}^{1-1/p}
+
B \abs{H\setminus H'}^{1/p} \abs{G}^{1-1/p}\\
&\leq
(C+cB) \abs{H}^{1/p} \abs{G}^{1-1/p}.
\end{align*}
Taking a supremum over $H,G$ we obtain $B\leq C/(1-c)$.
\end{proof}

Corollary~\ref{cor:LL-single-band} will be obtained via an application of Theorem~\ref{thm:BT-VV}  to the operators $T_{k}f = H_{u}(\Psi_{2^{k}}*_{2}f)$, with the  choice  $q=2$.
The corresponding assumption \eqref{eq:BT-VV-Lq} in Theorem~\ref{thm:BT-VV} will follow by interpolation of the estimates
\begin{equation}
\label{eq:local-restricted}
\int (T_{k} (\one_{H'} \one_{F})) \one_{G'} \one_{E}
\lesssim
\abs{E}^{1/2}\abs{F}^{1/2} (\abs{G}/\abs{H})^{\alpha} (\abs{E}/\abs{F})^{\beta},
\end{equation}
where $H\subset [N,N+1]\times\R$, $G\subset [N-1,N+1]\times\R$, $H'\subset H$ and $G'\subset G$ are as in Theorem~\ref{thm:BT-VV}, $F,E\subset\R^{2}$ are arbitrary measurable subsets, $\alpha=1/2-1/p$, and $\beta$ is in a neighborhood of $0$.

\begin{figure}
\begin{tabular}{lp{0.5\textwidth}}
\raisebox{-\totalheight}{\begin{tikzpicture}[
circ/.style={shape=circle, inner sep=1pt, draw},
CFs/.style={shape=rectangle, inner sep=1pt, draw},
LKs/.style={shape=rectangle, inner sep=1pt, draw},
L1s/.style={shape=rectangle, inner sep=1pt, draw},
]
\begin{axis}
[
width=5cm,
height=5cm,
xmin=-0.6,
xmax=0.6,
xtick={-0.4, 0, 0.25, 0.5},
xticklabels={$\frac12-\frac1{p_{0}}$, $0$, $\frac14$, $\frac12$},
ymin=-0.6,
ymax=0.6,
ytick={-0.25, 0, 0.5},
yticklabels={$-\frac14$, $0$, $\frac12$},
,xlabel={$\beta$}
,ylabel={$\alpha$}
]
\draw node (L2) at (axis cs: 0,0) [circ] {}
node (Linf) at (axis cs: 0.5,0) [circ] {}
node (CF) at (axis cs: 0,0.5) [CFs] {}
node (LK) at (axis cs: 0.25,-0.25) [LKs] {}
node (L1) at (axis cs: -0.4,0) [L1s] {};
\draw (L2) to (CF) to (Linf) to (LK) to (L2);
\draw[dashed] (CF) to (L1) to (LK) to (CF);
\end{axis}
\end{tikzpicture}}
&
The estimate \eqref{eq:local-restricted} is known unconditionally in the interior of the solid polygon: the line $\alpha=0$ corresponds to the non-localized estimates in \cite{MR3090145} and the other two endpoints are the localized estimates in \cite{MR3148061}.

In the proof of Corollary~\ref{cor:LL-single-band} we use estimates in the interior of the dashed triangle, whose leftmost vertex is the hypothesis \eqref{passumption}.
\end{tabular}
\caption{Localized estimates for the single band directional Hilbert transform}
\label{fig:loc}
\end{figure}

The set of pairs $(\alpha,\beta)$ for which the estimate \eqref{eq:local-restricted} holds is clearly convex.
Hence it suffices to establish \eqref{eq:local-restricted} near the vertices of the dashed triangle in Figure~\ref{fig:loc}.
The intersection of the line $\beta=0$ with this triangle corresponds to the range of $p$'s claimed in \eqref{pconclusion}.

We will use Estimates 16, 17, 21, and 22 from \cite{MR3148061}, which do not rely on the single parameter assumption on the vector field made in \cite{MR3090145,MR3148061}.
One twist is in the proof of Estimate 21, where we have to use a version of \cite[Theorem 8]{MR3148061} for Lipschitz vector fields.
This result goes back to \cite{MR2219012}; a slightly simplified version of the proof of the required covering lemma in \cite{MR3148061} is presented in Section~\ref{sec:LL}.
The covering lemma for Lipschitz vector fields only holds for parallelograms of bounded length.
This is the reason for restricting the operator $H_{u}$ to a vertical strip: we can apply the covering lemma to the intersection of parallelograms with this vertical strip.
The other difficulty is that we are dealing with a (smooth) truncation of the Hilbert kernel, so the results of \cite{MR3090145} do not directly apply.
The easiest way to work around this seems to be running the argument in \cite{MR3090145} with more general wave packets which can be used to assemble also the truncated Hilbert kernel $\phi(r)/r$.

\subsection{Using the single band estimate below $L^{2}$}
The hypothesis \eqref{passumption} shows in particular that \eqref{eq:local-restricted} holds with $(\alpha,\beta)=(0,1/2-1/p_{0})$.

\subsection{Using the C\'ordoba--Fefferman covering argument}
By Estimates 16, 17, and 22 in \cite{MR3148061} we can estimate the left-hand side of \eqref{eq:local-restricted} by
\[
\sum_{\delta} \sum_{\sigma \lesssim \delta^{-n} (\abs{G}/\abs{H})^{n-1}} \min(\abs{F}\delta\sigma^{-1},\abs{E}\sigma)
\]
for any integer $n\geq 2$, where both sums are over positive dyadic numbers.

The (geometric) sum over $\sigma$ has two critical points: $\sigma \sim \delta^{-n} (\abs{G}/\abs{H})^{n-1}$ and $\sigma \sim (\delta \abs{F}/\abs{E})^{1/2}$.
This  gives the estimate
\[
\sum_{\delta} \min((\delta \abs{F} \abs{E})^{1/2},\abs{E}\delta^{-n} (\abs{G}/\abs{H})^{n-1}).
\]
The sum over $\delta$ has a critical point with $\delta_{0}^{2n+1} \sim (\abs{G}/\abs{H})^{2n-2} (\abs{E}/\abs{F})$, and we obtain the estimate
\[
(\delta_{0} \abs{F} \abs{E})^{1/2}
\sim
(\abs{F} \abs{E})^{1/2} (\abs{G}/\abs{H})^{(n-1)/(2n+1)} (\abs{E}/\abs{F})^{1/(4n+2)}.
\]
This proves the claim with $\alpha=(n-1)/(2n+1)$, $\beta=1/(4n+2)$.
We can make $(\alpha,\beta)$ approach $(1/2,0)$ by choosing $n$ suitably large.

\subsection{Using the Lacey--Li covering argument}
By Estimates 16, 17, and 21 from \cite{MR3148061} we can estimate the left-hand side of \eqref{eq:local-restricted} by
\[
\sum_{\delta} \sum_{\sigma} \min(\abs{F}\delta\sigma^{-1},\abs{E}\sigma,\abs{E} (\abs{H}/\abs{G})^{1/2}\sigma^{-\epsilon} \delta^{-1/2-\epsilon})
\]
The sum over $\sigma$ now has two critical points with $\sigma\sim (\delta \abs{F}/\abs{E})^{1/2}$ and with $\sigma^{1+\epsilon}\sim (\abs{H}/\abs{G})^{1/2} \delta^{-1/2-\epsilon}$ and is dominated by the minimum of the two corresponding terms, so we have the estimate
\[
\sum_{\delta\leq 1} \min(\abs{E} ((\abs{H}/\abs{G})^{1/2} \delta^{-1/2-\epsilon})^{1/(1+\epsilon)}, (\delta \abs{F} \abs{E})^{1/2})
\]
The sum over $\delta$ has a critical point at $\delta_{0}^{2+3\epsilon} \sim (\abs{E}/\abs{F})^{1+\epsilon} (\abs{H}/\abs{G})$.
This gives the estimate
\[
(\delta_{0} \abs{F} \abs{E})^{1/2}
\sim
(\abs{F} \abs{E})^{1/2} ((\abs{E}/\abs{F})^{1+\epsilon} (\abs{H}/\abs{G}))^{1/(4+6\epsilon)}.
\]
Making $\epsilon$ small we can make $(\alpha,\beta)$ approach $(-1/4,1/4)$.
This completes the proof of Corollary~\ref{cor:LL-single-band}.

\begin{remark}
The upper part of the solid polygon in Figure~\ref{fig:loc} yields the hypothesis of Theorem~\ref{thm:BT-VV} for any $2<q<p<\infty$.
This implies that the operator $H_{u}$ maps $L^{p}(\R^{2})$ into a directional Triebel--Lizorkin space of type $F^{0}_{p,q}$ (provided that $u$ is Lipschitz in the vertical direction).
More precisely,
\[
\norm{\big(\sum_{t\in 2^{\Z}} \abs{P_{t} H_{u} f}^{q} \big)^{1/q} }_{L^{p}(\R^{2})}
\lesssim
\norm{f}_{L^{p}(\R^{2})},
\quad
2<p,q<\infty.
\]
Indeed, the left-hand side is monotonically decreasing in $q$, so it suffices to consider $2<q<p<\infty$.
With a suitable choice of $\Psi$ we may write $f=\sum_{t\in 2^{\Z/100}}\Psi_{t} *_{2} f$.
For notational simplicity we consider only the contribution of $t\in 2^{\Z}$.
By the Fefferman--Stein maximal inequality we may replace $P_{t}$ by larger Littlewood--Paley projections such that $\sum_{t\in 2^{\Z}} P_{t} = \operatorname{id}$.

In the diagonal term we use the Fefferman--Stein maximal inequality, the vector-valued estimate provided by Theorem~\ref{thm:BT-VV} with $p>2$, monotonicity of $\ell^{q}$ norms, and Littlewood--Paley theory to estimate
\begin{align*}
\norm{\big(\sum_{t\in 2^{\Z}} \abs{P_{t} H_{u} (\Psi_{t} *_{2} f)}^{q} \big)^{1/q} }_{L^{p}(\R^{2})}
&\lesssim
\norm{\big(\sum_{t\in 2^{\Z}} \abs{H_{u} (\Psi_{t} *_{2} f)}^{q} \big)^{1/q}}_{L^{p}(\R^{2})}\\
&\lesssim
\norm{\big(\sum_{t\in 2^{\Z}} \abs{\Psi_{t} *_{2} f}^{q} \big)^{1/q}}_{L^{p}(\R^{2})}\\
&\leq
\norm{\big(\sum_{t\in 2^{\Z}} \abs{\Psi_{t} *_{2} f}^{2} \big)^{1/2}}_{L^{p}(\R^{2})}\\
&\lesssim
\norm{f}_{L^{p}(\R^{2})}.
\end{align*}
In the off-diagonal term we use monotonicity of $\ell^{q}$ norms, Littlewood--Paley theory, and Corollary~\ref{cor:LP-diag} to estimate
\begin{align*}
\norm{\big(\sum_{t\in 2^{\Z}} \abs{P_{t} H_{u} (\sum_{t'\neq t}\Psi_{t'} *_{2} f)}^{q} \big)^{1/q} }_{L^{p}(\R^{2})}
&\leq
\norm{\big(\sum_{t\in 2^{\Z}} \abs{P_{t} H_{u} (\sum_{t'\neq t}\Psi_{t'} *_{2} f)}^{2} \big)^{1/2} }_{L^{p}(\R^{2})}\\
&\lesssim
\norm{\sum_{t'\in 2^{\Z}} (\sum_{t\neq t'}P_{t}) H_{u} (\Psi_{t'} *_{2} f) }_{L^{p}(\R^{2})}\\
&=
\norm{\sum_{t'\in 2^{\Z}} (1-P_{t'}) H_{u} (\Psi_{t'} *_{2} f) }_{L^{p}(\R^{2})}\\
&\lesssim
\norm{f }_{L^{p}(\R^{2})}.
\end{align*}
\end{remark}

\section{Application to Hilbert transforms along Lipschitz variable parabolas}
For the curved directional Hilbert transform \eqref{eq:Stein-directional-op:curved} we argue similarly as in the case $\alpha=1$.
However, the single band and vector-valued estimates in this case are essentially contained in \cite{MR3669936}, so that we obtain an unconditional result.
\begin{corollary}
\label{cor:Hilbert-curved}
For every $0<\alpha<\infty$, $\alpha\neq 1$, and every $1<p<\infty$, there exits $\epsilon_0>0$ such that for every Lipschitz function $u$ with $\norm{u}_{\Lip}\le \epsilon_0$, we have
\begin{equation}
\norm{H^{(\alpha)}_u f}_p
\lesssim
\norm{f}_p.
\end{equation}
\end{corollary}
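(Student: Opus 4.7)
The plan is to mirror the argument given for Corollary~\ref{cor:LL-single-band}, replacing the straight-line Littlewood--Paley diagonalization step and the single-band bound with their curved analogues. As in the straight-line case, it suffices to treat a smoothly truncated version of $H^{(\alpha)}_u$ (the non-smoothly truncated version differs by an error that is controlled by iterated maximal operators, using that for each fixed $r\in[-1,1]$ the map $(x,y)\mapsto (x+r,y+u(x,y)r^\alpha)$ is bi-Lipschitz when $\|u\|_{\Lip}$ is small).

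First I would carry out the Littlewood--Paley diagonalization. For each fixed $x\in\R$ and each $r\in[-1,1]$ the map $y\mapsto y+r^\alpha u(x,y)$ is a one-dimensional Lipschitz change of variable with Lipschitz constant bounded by $|r|^\alpha\|u\|_{\Lip}\leq \epsilon_0$. Choosing $\epsilon_0\leq 1/100$ I may apply Theorem~\ref{thm:Lip-LP-diag} fibrewise and then use Minkowski's integral inequality exactly as in the proof of Corollary~\ref{cor:LP-diag}, obtaining
\[
\norm[\Big]{\sum_{t\in 2^{\Z}}\abs{(1-P_t)H^{(\alpha)}_u(\Psi_t*_2 f)}}_{p}
\lesssim
\int_{-1}^{1} |r|^{\alpha}\,\|u\|_{\Lip}\,\|f\|_{p}\,\frac{\dif r}{|r|}
\lesssim_{\alpha}
\|u\|_{\Lip}\,\|f\|_{p},
\]
where the integral converges precisely because $\alpha>0$; here $P_t$ again acts in the vertical variable. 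Writing $f=\sum_{t\in 2^{\Z}}\Psi_t*_2 f$ with a suitable $\Psi$ and applying the Littlewood--Paley inequality, this reduces matters to the square function estimate
\[
\norm[\Big]{\bigl(\sum_{t\in 2^{\Z}}\abs{H^{(\alpha)}_u(\Psi_t*_2 f)}^{2}\bigr)^{1/2}}_{p}
\lesssim
\|f\|_{p},
\quad 1<p<\infty.
\]

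Second, I would establish this square function bound by invoking the single-band results of \cite{MR3669936}. This is the crucial point at which the hypothesis $\alpha\neq 1$ enters: in the straight-line case $\alpha=1$ a single-band estimate below $L^{2}$ is equivalent to Carleson-type maximal operator bounds that are only conditionally known, which is why Corollary~\ref{cor:LL-single-band} must be stated conditionally and with restricted exponents. In the curved case $\alpha\neq 1$, however, each band operator $f\mapsto H^{(\alpha)}_u(\Psi_t*_2 f)$ is no longer modulation invariant in the obstruction direction; the $r^\alpha$ in the shear produces genuine curvature that allows one to gain via a $TT^*$ plus stationary phase argument combined with Littlewood--Paley square function techniques, and these are essentially carried out in \cite{MR3669936} for the relevant square function (with the additional details indicated in \cite{MR3841536}).

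The main obstacle is the second step: verifying that the square function estimates available in \cite{MR3669936} apply in the precise form needed (namely, uniformly in the Lipschitz vector field $u$, with the smoothly truncated wave packets $\Psi_t*_2$ inside, and for the full range $1<p<\infty$). Once this is in place the first step is a direct fibrewise application of Theorem~\ref{thm:Lip-LP-diag} and presents no additional difficulty beyond confirming that the $|r|^\alpha/|r|$ integral converges, which is automatic for $\alpha>0$.
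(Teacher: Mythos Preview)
Your proposal is correct and follows essentially the same route as the paper: reduce via the fibrewise Littlewood--Paley diagonalization (the ``trivial analogue of Corollary~\ref{cor:LP-diag}'') to the square function \eqref{main-square}, then appeal to \cite{MR3669936} for the latter. The paper supplies the ``additional details'' you allude to for the square-function step --- splitting the $r$-integral at $|r|^{\alpha}u/t\sim 1$, handling the small-$r$ part via the fundamental theorem of calculus and Fefferman--Stein, and reducing the large-$r$ part to the specific estimate \eqref{guo-e2.40} from \cite{MR3669936} --- but your outline and identification of the key inputs are accurate.
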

\begin{proof}[Proof of Corollary~\ref{cor:Hilbert-curved}]
In the following, we will assume for notational convenience that $0<u\le 1$ almost everywhere.
The region that $-1\le u< 0$ can be handled similarly, while the region $u=0$ is trivial by Fubini as the operator acts only in the first variable.
By the trivial analogue of Corollary~\ref{cor:LP-diag}, it suffices to show
\begin{equation}
\label{main-square}
\norm[\big]{\big(\sum_{t\in2^\Z} \abs{H_{u}^{(\alpha)} (\Psi_t *_2  f)}^2 \big)^{1/2} }_p
\lesssim
\norm{ f }_p.
\end{equation}
We use $P_t(\Psi_t*_2 f)=\Psi_t*_2 f$ where $P_t$ is as defined before acting in the second variable.
We note  that for 
\[\abs{r}^\alpha u(x,y)/t \le 1\]
we have by an application of the fundamental theorem of calculus
\[\abs{P_{t}(\Psi_t *_2 f)(x+r, y+u(x, y)r^\alpha)-P_{t}(\Psi_t *_2 f)(x+r, y)}\le 
u(x,y)\abs{r}^\alpha t^{-1} M_2(\Psi_t *_2 f)(x+r, y).\]
Hence we have for the integral over small values of $r$ 
\[
\norm[\big]{\big(\sum_{t\in2^\Z} \abs{\int_{\abs{r}^\alpha u(x,y)/t\leq 1} 
P_{t}(\Psi_t *_2 f)(x+r, y+u(x, y)r^{\alpha})\frac{\dif r}{r}}^2 \big)^{1/2} }_{L^{p}(x,y)}
\]
\begin{equation}\label{thiele11}
\lesssim \norm[\big]{\big(
\sum_{t\in2^\Z} \abs{\int_{\abs{r}^\alpha u(x,y)/t \leq 1} 
P_t(\Psi_t *_2 f)
(x+r, y) \frac{\dif r}{r}}^2 
\big)^{1/2} }_{L^{p}(x,y)}
\end{equation}
\begin{equation}\label{thiele12}
+
\norm[\big]{\big(
\sum_{t\in2^\Z} \abs{\int_{\abs{r}^\alpha u(x,y)/t \leq 1} 
u(x,y)\abs{r}^\alpha t^{-1} M_2(\Psi_t *_2 f
)(x+r, y)\frac{\dif r}{\abs{r}}}^2 
\big)^{1/2} }_{L^{p}(x,y)}.
\end{equation}
The former term \eqref{thiele11} can be estimated using the vector-valued estimate for the maximally truncated Hilbert transform.
Using integrability of $\abs{r}^{\alpha-1}$ near zero we estimate the latter term \eqref{thiele12} by
\[
\norm[\big]{\big(\sum_{t\in2^\Z} \abs{M_{1} M_{2} 
(\Psi_t *_2 f) 
(x, y)}^2 \big)^{1/2} }_{L^{p}(x,y)}\lesssim
\norm[\big]{\big(\sum_{t\in2^\Z} \abs{ \Psi_t *_2 f(x, y)}^2 \big)^{1/2} }_{L^{p}(x,y)} \lesssim 
\norm{ f }_p.
\]
Here we have used the Fefferman--Stein maximal inequality and Littlewood-Paley theory.

We turn to the remaining part of the kernel with $\abs{r}^\alpha  u(x,y)/t \ge 1$ and $\abs{r}\le 1$.
Note we may restrict the summation over $t$ to $t\le 1$, as for $t>1$ the domain of
integration is empty.
We will break up the integral into lacunary pieces parametrized
by $s\in 2^{\alpha \N}$ and estimate the pieces separately, with suitable power decay 
in $s$ allowing to geometrically sum the estimates.

We introduce Littlewood-Paley projections in the first variable  and write $P_t^{(1)}$
and $P_t^{(2)}$ to distinguish projections in first and second variable.
Consider the averaging operator
\[
E_s^{(1)}=\int_{s}^\infty P_t^{(1)} \frac {dt}t.
\]
We note similarly to above for the averaged part of the integral pieces:
\[
\norm[\big]{\big(\sum_{t\in2^{-\N}} \abs{\int_{s\le \abs{r}^\alpha u(x,y)/t \leq 2^\alpha  s} 
E_{s(\frac{st}{u(x, y)})^{1/\alpha}}^{(1)} P_{t}^{(2)}(\Psi_t *_2 f)(x+r, y+u(x, y)r^{\alpha})\frac{\dif r}{r} }^2 \big)^{1/2} }_{L^{p}(x,y)}
\]
\begin{equation}\label{thiele21}
\lesssim
\norm[\big]{\big(\sum_{t\in2^{-\N}} \abs{\int_{s\le \abs{r}^\alpha u(x,y)/t \leq 2^\alpha s} 
E_{s(\frac{st}{u(x, y)})^{1/\alpha}}^{(1)} P_{t}^{(2)}(\Psi_t *_2 f)(x, y+u(x, y)r^{\alpha})\frac{\dif r}{r} }^2 \big)^{1/2} }_{L^{p}(x,y)}
\end{equation}
\begin{equation}\label{thiele22}
+
\norm[\big]{\big(\sum_{t\in2^{-\N}} (\int_{s\le \abs {r}^\alpha u(x,y)/t\leq 2^{\alpha}s} 
s^{-1} M_1 P_{t}^{(2)}(\Psi_t *_2 f)(x, y+u(x, y)r^{\alpha})\frac{\dif r}{\abs{r}} )^2 \big)^{1/2} }_{L^{p}(x,y)}
\end{equation}
The factor $(st/u)^{1/\alpha}$ in the index of the averaging operator is chosen because it is
roughly $\abs{r}$ in the domain of integration.
In the former term \eqref{thiele21} we change variables, replacing $u(x,y)r^\alpha$ by $r$ on the positive and similarly on the negative axis and do a partial integration in $r$, noting that by the mean zero property the primitive of the kernel of $P_t^{(t)}$ is a bump function again, to estimate this term  by 
\[\lesssim
\norm[\big]{\big(\sum_{t\in2^{-\N}} (\int_{s\leq \abs{r}/t\leq 2^\alpha s} 
t M_1  M_2 (\Psi_t *_2 f)(x, y+r)\frac{\dif r}{\abs{r}^2} )^2 \big)^{1/2} }_{L^{p}(x,y)}
\]
\[\lesssim
s^{-1} \norm[\big]{\big(\sum_{t\in 2^{-\N}}   
(M_2 M_1  M_2 (\Psi_t *_2 f)(x, y) )^2 \big)^{1/2} }_{L^{p}(x,y)}
\]
plus two similar boundary terms, which are all estimated by the Fefferman-Stein maximal inequality
with power decay in $s$.
The latter term \eqref{thiele22} above is estimated by the same change of variables by
\[s^{-1}
\norm[\big]{\big(\sum_{t\in2^{-\N}} (\int_{s\le \abs {r}/t\leq 2^\alpha s} 
 M_1 P_{t}^{(2)}(\Psi_t *_2 f)(x, y+r)\frac{\dif r}{\abs{r}} )^2 \big)^{1/2} }_{L^{p}(x,y)}
\]
\[\lesssim s^{-1}
\norm[\big]{\big(\sum_{t\in2^{-\N}} (
 M_2 M_1 P_{t}^{(2)}(\Psi_t *_2 f)(x, y) )^2 \big)^{1/2} }_{L^{p}(x,y)}
\]
which is again estimated by the Fefferman-Stein maximal inequality with decay in $s$.

A similar estimate can be obtained if instead of the sharp cut-off 
$s\le \abs {r}^\alpha u(x,y)/t\leq 2^{\alpha} s$ we use a smooth cut-off.
More precisely, we will
choose cut-off functions as defined in the following operator:

\begin{equation}
A_s f(x, y)
=
\int_{\R} f(x+r, y+u(x,y) r^\alpha) 
\chi(s^{-1} r^\alpha v(x,y)t^{-1} (u(x,y)v^{-1}(x,y))^{\alpha/(\alpha-1)}) \frac{dr}{r},
\end{equation}
where $\chi$ is smooth and supported on $\pm [2^{-\alpha} ,2^\alpha]$
and $\sum_{s\in 2^{\alpha \N}} \chi(s^{-1}x)=1$ for $x\neq 0$, and where
$v(x,y)$ is the largest integer power of $2$ less than $u(x,y)$.
Note the
auxiliary factor $u/v$ is bounded above and below respectively by $2$ and $1$.

Then, with the above arguments, it suffices to estimate the rough part of each piece
with some $\gamma>0$ that may depend on $p$ as follows:
\begin{equation}\label{guo-e2.40}
\norm[\big]{\big(\sum_{t\in2^{-\N}} \abs{
A_s (1-E_{s(\frac{st}{u(x, y)})^{1/\alpha}}^{(1)}) P_{t}^{(2)}(\Psi_t *_2 f) }^2 \big)^{1/2} }_{L^{p}}\lesssim s^{-\gamma} \norm{f}_p.
\end{equation}

Here we point out that this estimate has essentially been established in \cite{MR3669936}.
First of all, we recognize that the left hand side of \eqref{guo-e2.40} is essentially the term (5.13) in \cite{MR3669936}, there one has a large power of $s$ in the index of $E$ but this makes their bound
only stronger.
By the local smoothing estimates and a certain interpolation argument, the $L^p$ bounds of \eqref{guo-e2.40} for all $1<p\le 2$ have been established in Subsection 5.3 in \cite{MR3669936}.
To prove $L^p$ bounds for all $p>2$, we cite the pointwise estimate (3.19) in \cite{MR3669936}, which implies for these $p$ that 

\[
\norm[\big]{\big(\sum_{t\in2^{-\N}} \abs{
A_s (1-E_{s(\frac{st}{u(x, y)})^{1/\alpha}}^{(1)}) P_{t}^{(2)}(\Psi_t *_2 f) }^2 \big)^{1/2} }_{L^{p}}\lesssim \log(1+s)^4 \norm{f}_p.
\]

A further interpolation gives the desired estimate \eqref{guo-e2.40} for all $1<p<\infty$
for slightly smaller $\gamma$.
This finishes the proof of the square function estimate \eqref{main-square}.
\end{proof}

\chapter{Single scale operator}
\label{chap:single-scale}

Our last result concerns the single scale directional operator
\begin{equation}
\label{eq:single-scale-op}
A_{u,\phi}f(x,y) := \int_{-\infty}^{+\infty} \phi(r) f(x+r,y+u(x,y)r) \dif r
\end{equation}
associated to a Schwartz function $\phi$.
\begin{theorem}
\label{thm:single-scale}
Let $u:\R^{2}\to [-1,1]$ be a measurable function.
Then
\begin{equation}
\label{eq:single-scale-square-fct}
\norm[\big]{\big(\sum_{t \in 2^{\Z}} \abs{ A_{u,\phi} P_{t} f }^{2} \Big)^{1/2}}_{p}
\lesssim_{p,\phi}
\norm{f}_{p}
,
\quad
2<p<\infty.
\end{equation}
\end{theorem}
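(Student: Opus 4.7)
The plan is to reduce the square function estimate to a single-scale directional-averaging bound for vertically band-limited inputs, and then to aggregate across dyadic scales via a $TT^{*}$-type almost-orthogonality argument. The restriction $p>2$ will emerge from the use of planar Kakeya-type maximal estimates, which are only bounded in the range $p>2$.

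First I would establish a single-scale estimate. For $g = P_{t} f$, whose vertical Fourier transform is supported in $|\eta|\sim 1/t$, I would show
\[
\|A_{u,\phi} g\|_{p} \lesssim (1+\log(1/t))^{C}\, \|g\|_{p}, \quad p>2,
\]
uniformly in $t>0$ and in the measurable field $u \colon \R^{2} \to [-1,1]$. The idea is that $A_{u,\phi}$ is pointwise controlled by the directional maximal operator along $(1,u(x,y))$ at unit scale,
\[
|A_{u,\phi} g(x,y)| \lesssim \sup_{R>0} R^{-1}\int_{|r|\leq R} |g(x+r,\,y+u(x,y)r)|\,dr,
\]
and since $g$ is smooth at vertical scale $t$, the direction $v = u(x,y)$ may be discretized in $[-1,1]$ at scale $\sim t$, up to an error controlled by the Fefferman--Stein vector-valued maximal inequality. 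This reduces matters to a planar maximal operator over $N=O(1/t)$ fixed directions, which Katz's planar Kakeya estimate handles for $p>2$.

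Second I would aggregate across scales via an off-diagonal estimate. Using the Fourier representation
\[
A_{u,\phi} P_{t} f(x,y)=\int \hat\psi(t\eta)\,\hat\phi(-\xi-\eta u(x,y))\,\tilde f(\xi,\eta)\, e^{2\pi i(\xi x+\eta y)}\,d\xi\,d\eta,
\]
I would exploit that the factor $\hat\psi(t\eta)$ restricts the effective vertical frequency to $|\eta|\sim 1/t$, while the $(x,y)$-dependence through $u$ introduces only a controlled frequency spread governed by the Schwartz decay of $\hat\phi$ and the bounded range of $u$. This should yield an almost-orthogonality relation of the form
\[
\|(A_{u,\phi} P_{t})^{*}(A_{u,\phi} P_{t'})\|_{p\to p}\lesssim \min(t/t',t'/t)^{\delta}\,(1+\log(1/\min(t,t')))^{C},
\]
for some $\delta>0$, which when combined with the single-scale bound from the first step and a Cotlar--Stein argument valid on $L^{p}(\ell^{2})$ for $p>2$ produces the desired square function estimate.

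The main obstacle is ensuring that the logarithmic factors $(1+\log(1/t))^{C}$ arising from the planar Kakeya step are absorbed by the off-diagonal decay across scales, rather than accumulating to infinity after $\ell^{2}$-aggregation. The sharpness of the restriction $p>2$ reflects this: at $p=2$ the planar directional maximal operator over $N$ arbitrary directions has an essentially sharp $\sqrt{\log N}$ loss that cannot be absorbed by further orthogonality, which would destroy the aggregation across the dyadic scales $t\in 2^{\Z}$. It is precisely the fact that $A_{u,\phi}$ itself need not be $L^{p}$-bounded that forces us to work one frequency slab at a time, rather than simply applying vertical Littlewood--Paley theory to pull the square function out of $A_{u,\phi}$.
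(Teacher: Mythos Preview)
Your plan has a genuine gap at the almost-orthogonality step. The claimed off-diagonal decay
\[
\|(A_{u,\phi} P_{t})^{*}(A_{u,\phi} P_{t'})\|_{p\to p}\lesssim \min(t/t',t'/t)^{\delta}\,(1+\log(1/\min(t,t')))^{C}
\]
relies on the assertion that ``the $(x,y)$-dependence through $u$ introduces only a controlled frequency spread governed by the Schwartz decay of $\hat\phi$ and the bounded range of $u$.'' This is false: the frequency spread of the pseudodifferential symbol $\hat\phi(-\xi-\eta u(x,y))$ in the $(x,y)$-variables is governed by the \emph{regularity} of $u$, not by its range. For $u$ merely measurable, $A_{u,\phi}$ can transfer energy between arbitrarily separated vertical frequency bands, so $P_t^{*}A_{u,\phi}^{*}A_{u,\phi}P_{t'}$ has no reason to be small. (Concretely, let $u$ alternate between two values on a set of the form $\{y\in S\}$ with $S$ highly irregular at scale $t$; then $A_{u,\phi}g$ acquires vertical frequency content near $1/t$ even when $g$ is band-limited near $1/t'$.) There is also no standard Cotlar--Stein mechanism that converts $TT^{*}$ bounds into $L^{p}(\ell^{2})$ square function bounds for $p\neq 2$.

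The paper takes an entirely different route. It decomposes $A_{u,\phi}P_t f(x,y)$ into pairings with $L^{1}$-normalized wave packets adapted to parallelogram tiles $R$ (with horizontal scale $1$, vertical scale $t$, and slope in the uncertainty interval containing $u(x,y)$), obtaining the pointwise bound $\sum_R F(R)^{2}\one_{E_R}$ where $F(R)=\sup_{\phi_R}\abs{\innerp{f}{\phi_R^{(1)}}}$ and $E_R=\{(x,y)\in R:u(x,y)\in U(R)\}$. After dualizing with $g\in L^{(p/2)'}$ the sum factors as $\sum_R \abs{R}F(R)^2 G(R)$ with $G(R)=\abs{R}^{-1}\int_{E_R}\abs{g}$. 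An outer H\"older inequality on a suitable outer measure space then separates an \emph{energy embedding} $\norm{F}_{L^{p}(S^{2})}\lesssim\norm{f}_p$ (proved via a Bessel inequality coming from almost orthogonality of wave packets --- not of the operators $A_{u,\phi}P_t$) and a \emph{mass embedding} $\norm{G}_{L^{(p/2)'}(S^{\infty})}\lesssim\norm{g}_{(p/2)'}$ (proved via a C\'ordoba--Fefferman type covering lemma for parallelograms). The Kakeya-type geometry enters only through the dual function $g$, and the restriction $p>2$ arises because the mass embedding requires $(p/2)'>1$. In fact the paper runs your proposed implication in the opposite direction: the $N$-direction bound (Corollary~\ref{cor:single-scale-N-directions}) is \emph{deduced from} the square function estimate via the Chang--Wilson--Wolff inequality, not used to prove it.
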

The operator $A_{u,\phi}$ is in general not bounded on $L^p$ unless $p=\infty$.
Even if we assume $u$ to be Lipschitz in the vertical direction, we cannnot apply Theorem~\ref{thm:Lip-LP-diag} unless $\phi$ has suitable compact support.

Theorem~\ref{thm:single-scale} is intended as a step towards understanding the square function~\eqref{eq:LP-diag-square-fct} in which $\phi$ is replaced by a singular kernel.
As an application of Theorem~\ref{thm:single-scale} we elaborate on a remark made by Demeter in \cite{MR2680067}.
\begin{corollary}\label{cor:single-scale-N-directions}
Assume the measurable function $u:\R^2\to [-1,1]$ takes at most $N$ different values.
Then
\begin{equation} \label{est:single-scale-N-directions}
\norm{A_{u,\phi}f}_p \lesssim_{p,\phi}  \log(N+2)^{1/2} \norm{f}_p,
\quad
2<p<\infty.
\end{equation}
\end{corollary}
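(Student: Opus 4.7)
The plan is to combine the square function estimate of Theorem~\ref{thm:single-scale} with a Cauchy--Schwarz inequality in the scale variable, restricting effectively to $O(\log N)$ scales. Set $Sf := (\sum_{t \in 2^{\Z}} \abs{A_{u,\phi} P_t f}^2)^{1/2}$, so that Theorem~\ref{thm:single-scale} gives $\norm{Sf}_p \lesssim \norm{f}_p$ for $2 < p < \infty$. Expanding $A_{u,\phi} f = \sum_t A_{u,\phi} P_t f$ via the vertical Littlewood--Paley resolution and applying pointwise Cauchy--Schwarz over a set $\Sigma$ of scales yields
\[
\Bigl|\sum_{t \in \Sigma} A_{u,\phi} P_t f(x,y)\Bigr| \leq \abs{\Sigma}^{1/2}\,Sf(x,y).
\]
The task therefore reduces to restricting the effective sum to $\abs{\Sigma} \lesssim \log(N+2)$ scales.

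This reduction will exploit that $\abs{u} \leq 1$ takes at most $N$ values $v_1,\dots,v_N \in [-1,1]$. The scale axis splits naturally at the thresholds $t \sim 1$ and $t \sim 1/N$. For $t \gg 1$, the displacement $\abs{u(x,y)r} \leq 1$ is much smaller than the vertical wavelength of $P_t f$, so a first-order Taylor expansion in the $y$-variable gives $A_{u,\phi} P_t f = \phi *_1 P_t f + E_t$, where the error $E_t$ has $L^p$ norm $\lesssim t^{-1}\norm{f}_p$ and is therefore summable; the leading term reassembles into a horizontal convolution with $\phi$, uniformly bounded on $L^p$. For $t \ll 1/N$, the horizontal frequency windows $\xi \in -v_j/t + \supp\hat\phi$ attached to the $N$ distinct directions become pairwise disjoint, and the superposition $\sum_j \one_{E_j} A_{v_j,\phi} P_t f$ is dominated pointwise by $(\sum_j \abs{A_{v_j,\phi} P_t f}^2)^{1/2}$, which by Rubio de Francia's Littlewood--Paley inequality for arbitrary disjoint frequency intervals (available for $p > 2$) is controlled in $L^p$ by $\norm{f}_p$. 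Only the intermediate range $1/N \lesssim t \lesssim 1$ survives, with $O(\log(N+2))$ dyadic scales, and the Cauchy--Schwarz bound above combined with Theorem~\ref{thm:single-scale} produces the claimed $(\log(N+2))^{1/2}$ factor.

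The main obstacle will be the high-frequency tail: although the horizontal frequency windows are formally disjoint at scales $t \ll 1/N$, the cut-off $\one_{E_j}$ is not frequency-localized and must be absorbed carefully into the frequency decomposition. One cannot simply invoke orthogonality for the sum over $j$, as this would produce a $\sqrt{N}$ loss rather than $\sqrt{\log N}$; the Rubio de Francia theorem in its full generality, valid only for $p > 2$, is what makes the dependence on $N$ logarithmic, and this is precisely why the range $p > 2$ enters the conclusion.
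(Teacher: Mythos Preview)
Your high-frequency tail argument has a genuine gap. You claim that for $t\ll 1/N$ the horizontal frequency windows $-v_j/t+\supp\hat\phi$, $j=1,\dotsc,N$, become pairwise disjoint. Disjointness requires $\abs{v_j-v_{j'}}/t\gtrsim 1$ for every $j\neq j'$, hence $t\lesssim\min_{j\neq j'}\abs{v_j-v_{j'}}$. But the hypothesis places no separation condition on the $N$ values of $u$; they are arbitrary points of $[-1,1]$. If for instance $v_j=2^{-j}$ then $\min_{j\neq j'}\abs{v_j-v_{j'}}=2^{-N}$, so your threshold is $2^{-N}$ rather than $1/N$, the intermediate range carries $\sim N$ dyadic scales, and the Cauchy--Schwarz step yields a factor $N^{1/2}$ instead of $(\log N)^{1/2}$. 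The Rubio de Francia reduction you describe is essentially the mechanism behind the structured-direction result of \cite{MR3145928} that the paper mentions; it does not cover arbitrary $N$-valued $u$.

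The paper's proof proceeds quite differently. It linearizes $A_{u,\phi}$ as $\sup_i\abs{A_{u_i}f}$ and applies the Chang--Wilson--Wolff good-$\lambda$ inequality \eqref{eq:cww} to each constant-direction operator $A_{u_i}$. A crude union bound over the $N$ directions is paid for by the exponential factor $e^{-c_1/\epsilon^2}$ in Chang--Wilson--Wolff, choosing $\epsilon\sim(\log N)^{-1/2}$. The price is a factor $\epsilon^{-1}\sim(\log N)^{1/2}$ in the level-set estimate for the dyadic square function $\Delta A_{u_i}f$, and it is precisely this term that is controlled by Theorem~\ref{thm:single-scale} after a standard comparison of $\Delta$ with the vertical Littlewood--Paley square function. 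No separation of the directions is used anywhere. Your low-frequency treatment (Taylor expansion in the vertical variable) is fine and appears elsewhere in the paper, but it is not how the corollary is proved.
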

Indeed, Demeter proves the sharper endpoint version of this estimate for $p=2$, reproducing an earlier result by Katz \cite{MR1711029}.
Demeter proposes an alternative proof of this result using an inequality by Chang, Wilson, and Wolff \cite{MR800004}, in the same vein as in his proof of \cite[Theorem 2]{MR2680067}.
Theorem~\ref{thm:single-scale} allows to follow through with this proposal, albeit only for $p>2$.
For the operator obtained by replacing $\phi$ in \eqref{eq:single-scale-op} with a one-dimensional singular integral kernel, the same quantitative estimate as \eqref{est:single-scale-N-directions}, up to $\varepsilon$-losses in the power of $\log N$ when $p>2$ is sufficiently close to $2$, holds when the finite range of $u$ is assumed to have additional structure \cite{MR3145928}.
For instance, one may take $u(\R^2)=\{2k/N: k=-N/2,\ldots,N/2\}$.
Thus, it is of interest whether the methods behind Corollary~\ref{cor:single-scale-N-directions} may be applied to the singular integral case, with the aim of lifting the structure restrictions appearing in \cite{MR3145928}.

In this section we prove Theorem~\ref{thm:single-scale}.
The strategy is to use duality and outer H\"older inequality to reduce the estimate to two estimates of Carleson embedding flavor, the ``energy embedding'' in Section~\ref{sec:single-scale:energy-embed} and the ``mass embedding'' in Section~\ref{sec:single-scale:mass-embed}.

\section{Tiles and the outer measure space}
We subdivide the parameter space into \emph{tiles}.
Each tile can be represented in three equivalent ways:
\begin{enumerate}
\item by a \emph{shearing matrix}
\[
A =
\begin{pmatrix}
2^{k_{1}} & 0\\
l2^{k_{1}} & 2^{k_{2}}
\end{pmatrix},
\quad
k_{1},k_{2},l\in\Z
\]
and the spatial location $(2^{-k_{1}}n_{1},2^{-k_{2}}n_{2})$, $n_{1},n_{2}\in\Z$.
\item by the corresponding \emph{spatial parallelogram}
\[
P = A^{-1}([0,1]\times [0,1]) + (2^{-k_{1}}n_{1},2^{-k_{2}}n_{2}),
\]
\item or by the corresponding \emph{frequency parallelogram} $A^{*}([0,1]\times [1,2])$ and the spatial location
\[
(2^{-k_{1}}n_{1},2^{-k_{2}}n_{2}).
\]
\end{enumerate}
Figure~\ref{fig:supp-phiA} shows the spatial and the frequency parallelograms of a tile (with $n_{1}=n_{2}=0$).
The frequency picture also includes the symmetric parallelograms $A^{*}([0,1]\times [-2,-1])$ (in a lighter shade of gray), because the Fourier transforms of the wave packets associated to tiles will concentrate on both these parallelograms.
However, for combinatorial purposes it suffices to consider only the upper parallelogram.
The \emph{slope} of a tile is the number $-l2^{-k_{2}+k_{1}}$.
It is the slope of the lower and the upper side of the corresponding spatial parallelogram.
The spatial parallelogram seems to be the most concise description of a tile, so we denote tiles by the letter $P$ (for ``parallelogram'').

The fact that we are dealing with a single scale operator is reflected in that we define an outer measure on a finite set $X$ of tiles with $k_{1}=0$, that is, tiles with the fixed horizontal scale $1$.
(The restriction to finite sets of tiles avoids technicalities associated with infinite sums.
All estimates will be independent of the specific finite set, so we can pass to the set of all tiles at the end of the argument.)
The outer measure is generated by a function $\sigma$ whose domain $\mathbf{E}$ is the collection of all non-empty subsets of $X$.
We denote by $CP$ the parallelogram with the same slope and center as $P$ but side lengths multiplied by $C$.
For $\calR\in\mathbf{E}$ set
\begin{equation}
\label{eq:def-sigma}
\sigma(\calR)
:=
\sup_{L\geq 1} L^{-C} \meas[\big]{\cup_{R\in\calR} LR},
\end{equation}
where $C$ is a large number to be chosen later.
The three sizes that we need are
\begin{align*}
S^{1}(F)(\calR)
&:=
\sigma(\calR)^{-1} \sum_{R\in\calR} \meas{R} \abs{F(R)},\\
S^{2}(F)(\calR)
&:=
\big( \sigma(\calR)^{-1} \sum_{R\in\calR} \meas{R} \abs{F(R)}^{2} \big)^{1/2}
=
S^{1}(F^{2})(\calR)^{1/2},\\
S^{\infty}(G)(\calR)
&:=
\sup_{R\in\calR} \abs{G(R)}.
\end{align*}

\begin{figure}
\begin{center}
\begin{tabular}{ccc}
\begin{tikzpicture}
\filldraw[gray] (0,0) -- (0,1) -- (1,1) -- (1,0) -- cycle;
\draw[->] (0,0) -- (1.5,0) node[below] {$x_{1}$};
\draw[->] (0,0) -- (0,1.5) node[left] {$x_{2}$};
\draw (0,0) node[below] {$0$};
\draw (1,0) node[below] {$1$};
\draw (0,1) node[left] {$1$};
\end{tikzpicture}
&
{$\displaystyle \xrightarrow{A^{-1} = \begin{pmatrix}
2^{-k_{1}} & 0\\
-l2^{-k_{2}} & 2^{-k_{2}}
\end{pmatrix}}$}
&
\begin{tikzpicture}[yscale=0.5]
\filldraw[gray] (0,0) -- (0,1) -- (1,3) -- (1,2) -- cycle;
\draw[->] (0,0) -- (1.5,0) node[below] {$x_{1}$};
\draw[->] (0,0) -- (0,3.5) node[left] {$x_{2}$};
\draw (0,0) node[below] {$0$};
\draw (1,0) node[below] {$2^{-k_{1}}$};
\draw (0,1) node[left] {$2^{-k_{2}}$};
\draw[dotted] (1,2) -- (0,2) node[left] {$-l2^{-k_{2}}$};
\draw[dotted] (1,3) -- (0,3) node[left] {$(-l+1)2^{-k_{2}}$};
\end{tikzpicture}\\
\begin{tikzpicture}
\filldraw[gray] (0,1) -- (0,2) -- (1,2) -- (1,1) -- cycle;
\filldraw[lightgray] (0,-1) -- (0,-2) -- (1,-2) -- (1,-1) -- cycle;
\draw[->] (0,0) -- (1.5,0) node[below] {$\xi_{1}$};
\draw[->] (0,-2.1) -- (0,2.5) node[left] {$\xi_{2}$};
\draw (0,0) node[left] {$0$};
\draw (1,0) node[below] {$1$};
\draw (0,1) node[left] {$1$};
\draw (0,2) node[left] {$2$};
\end{tikzpicture}
&
{$\displaystyle \xrightarrow{A^{*} = \begin{pmatrix}
2^{k_{1}} & l2^{k_{1}}\\
0 & 2^{k_{2}}
\end{pmatrix}}$}
&
\begin{tikzpicture}[xscale=0.6]
\filldraw[gray] (-2,1) -- (-4,2) -- (-3,2) -- (-1,1) -- cycle;
\filldraw[lightgray] (2,-1) -- (4,-2) -- (5,-2) -- (3,-1) -- cycle;
\draw[->] (-4.5,0) -- (5.5,0) node[below] {$\xi_{1}$};
\draw[->] (0,0) -- (0,2.5) node[left] {$\xi_{2}$};
\draw (0,0) node[below] {$0$};
\draw[dotted] (-2,1) -- (-2,0) node[below] {\scriptsize{$l2^{k_{1}}$}};
\draw[dotted] (-1,1) -- (-1,0) node[below] {\scriptsize{$(l+1)2^{k_{1}}$}};
\draw (0,1) node[left] {$2^{k_{2}}$};
\draw (0,2) node[left] {$2^{k_{2}+1}$};
\end{tikzpicture}
\end{tabular}
\end{center}
\caption{Spatial and frequency parallelograms of a tile}
\label{fig:supp-phiA}
\end{figure}
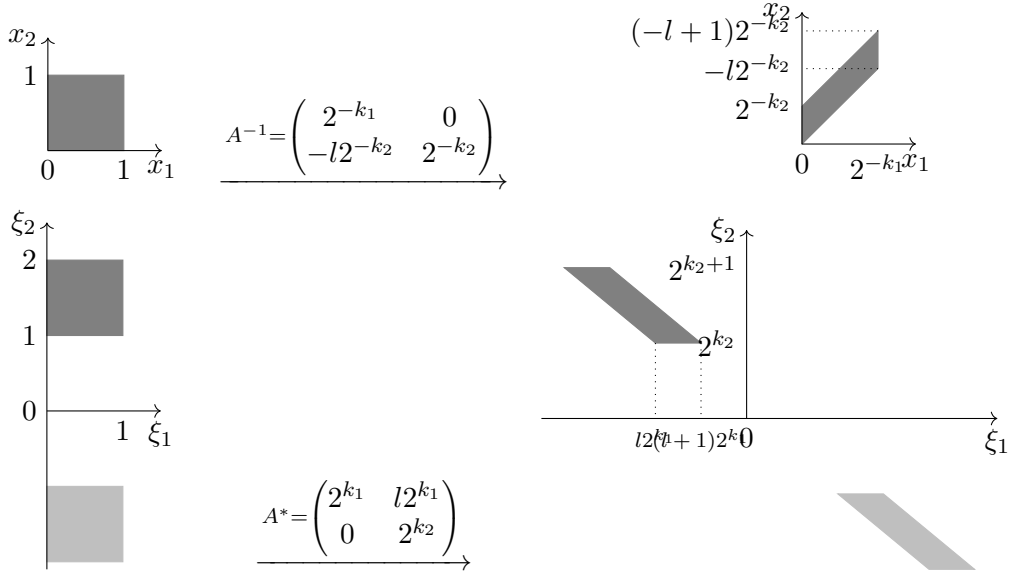

\section{Wave packets and the energy embedding}
\label{sec:single-scale:energy-embed}
Let $\Phi=\Phi_{C}$ be the set of functions on $\R^{2}$ that satisfy
\[
\abs{\partial^{\alpha}\phi(x)} \leq (1+\abs{x})^{-C},
\quad
\norm{\alpha}_{\ell^{1}} \leq C,
\]
for some sufficiently large $C$ that will be chosen later and
\[
\int_{\R} x_{2}^{n} \phi(x_{1},x_{2}) \dif x_{2} = 0,
\quad
x_{1}\in\R,
\quad
n=0,\dotsc,C-2.
\]
We think of $\phi$ as morally supported on $[0,1]^{2}$ and of $\hat\phi$ as morally supported on $[0,1]\times [1,2]$ for $\phi\in\Phi$.

The \emph{$L^{\infty}$ normalized wave packets} associated to a tile $P=(A,n_{1},n_{2})$ are the functions of the form
\[
\phi_{P}^{(\infty)}(x) = \phi(A(x_{1}-2^{-k_{1}}n_{1},x_{2}-2^{-k_{2}}n_{2})),
\quad
\phi\in\Phi.
\]
The \emph{$L^{p}$ normalized wave packets}, $1\leq p<\infty$, are the functions $\phi_{P}^{(p)} = \det(A)^{1/p} \phi_{P}^{(\infty)}$.
Note that $\widehat{\phi(A\cdot)}(\xi) = (\det A)^{-1} \hat\phi(A^{-*}\xi)$.
The spatial and the frequency parallelograms of a tile correspond to the moral space/frequency support of the wave packets associated to this tile.

\subsection{Almost orthogonality}
The fundamental property of the wave packets is their almost orthogonality for tiles with different scales or slopes.
\begin{lemma}
\label{lem:correlation-decay}
\[
\abs{\innerp{\phi_{P}^{(2)}}{\phi_{P'}^{(2)}}}
\lesssim
\min(1,(2^{\max(k_{2},k_{2}')}\abs{2^{-k_{2}}l-2^{-k_{2}'}l'})^{-C},2^{-C\abs{k_{2}-k_{2}'}}),
\]
where $C$ can be made arbitrarily large provided that the order of decay in the definition of $\Phi$ is sufficiently large.
\end{lemma}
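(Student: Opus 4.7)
The bound by $1$ is immediate from Cauchy--Schwarz together with $\norm{\phi_P^{(2)}}_2 \lesssim 1$ for $\phi \in \Phi$, so all the content lies in the two decay bounds. My plan is to move everything to the frequency side via Plancherel. Writing $\widehat{\phi_P^{(2)}}(\xi) = (\det A)^{-1/2} \hat\phi(A^{-*}\xi) e^{-2\pi i x_P \cdot \xi}$ and assuming without loss of generality $k_2 \le k_2'$ (the other case is symmetric), the change of variables $\eta = A^{-*}\xi$ rewrites the pairing as
\[
\innerp{\phi_P^{(2)}}{\psi_{P'}^{(2)}}
= 2^{-d/2}\int \hat\phi(\eta)\,\overline{\hat\psi(B\eta)}\,e^{i\Phi(\eta)}\,d\eta,
\qquad
B := A'^{-*}A^* = \begin{pmatrix} 1 & m \\ 0 & 2^{-d}\end{pmatrix},
\]
where $d := k_2' - k_2 \ge 0$, $m := l - 2^{-d}l'$, and $\Phi$ is a harmless linear phase. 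A direct computation identifies the lemma's slope parameter as $D := 2^{\max(k_2,k_2')}\abs{2^{-k_2}l-2^{-k_2'}l'} = \abs{m}\,2^{d}$.

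Both sources of decay then spring from the structural properties of the class $\Phi$: smoothness and spatial decay of $\psi$ yield rapid decay of $\hat\psi$, while the vanishing-moment condition $\int x_2^n \psi(x_1,x_2)\,dx_2 = 0$ translates to high-order vanishing of $\hat\psi$ at $\{\zeta_2 = 0\}$, so that
\[
\abs{\hat\psi(\zeta)} \lesssim \abs{\zeta_2}^{N}(1+\abs{\zeta})^{-M}
\]
for orders $N,M$ that may be taken as large as desired by choosing the smoothness/moment parameter defining $\Phi$ large enough. Applying this with $\zeta = B\eta$, the second component $2^{-d}\eta_2$ yields a factor $2^{-dN}$ on the effective support of $\hat\phi$, while the shift $m\eta_2$ in the first component of $B\eta$ contributes an additional $(1+\abs{m})^{-M}$ after pairing with $\hat\phi(\eta)$ and integrating in $\eta_1$. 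Combining with the prefactor $2^{-d/2}$ (absorbed into a slight reduction of $N$),
\[
\abs{\innerp{\phi_P^{(2)}}{\psi_{P'}^{(2)}}}
\lesssim 2^{-dN}(1+\abs{m})^{-M}.
\]

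To match the $\min$ in the conclusion, I split on whether $\abs{m}\ge 1$ or $\abs{m} < 1$. If $\abs{m}\ge 1$, then $D \sim \abs{m}\,2^d$, so taking $M \ge C$ and $N \ge C$ beats both $2^{-Cd}$ and $D^{-C} \sim \abs{m}^{-C}2^{-Cd}$. If $\abs{m} < 1$, then $D \le 2^d$, hence $D^{-C} \ge 2^{-Cd}$, and the single factor $2^{-dN}$ already dominates both bounds as soon as $N \ge C$. The only real subtlety is this bookkeeping: since $2^{-d}l'$ is fractional when $d \ge 1$, $\abs{m}$ can be genuinely less than $1$, and the slope-distance and scale-distance decays must be disentangled carefully rather than being extracted from disjoint parameter regions. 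No single step is a serious conceptual obstacle; the proof is essentially a routine frequency-side computation modulo this case analysis.
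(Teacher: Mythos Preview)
Your proof is correct, and both you and the paper work entirely on the Fourier side using the same two mechanisms: the vanishing of $\hat\psi$ at $\{\zeta_2=0\}$ from the moment condition and rapid decay at infinity from smoothness. The execution, however, differs. The paper keeps the original frequency variable $\xi$ and treats the two decay regimes by a geometric splitting: for the shearing bound it decomposes $\R^2$ into the complements of sheared strips $A^*S_N$, $(A')^*S_N$ and their intersection, uses $L^1$--$L^\infty$ H\"older on each piece, and then optimizes over the width $N$; for the scale bound it splits at height $\abs{\xi_2}=N$ and again optimizes. Your change of variables $\eta=A^{-*}\xi$ normalizes one wave packet and packages the relative geometry into the single matrix $B=\begin{pmatrix}1 & m\\ 0 & 2^{-d}\end{pmatrix}$, after which the pointwise bound $\abs{\hat\psi(\zeta)}\lesssim \abs{\zeta_2}^N(1+\abs{\zeta})^{-M}$ yields both decay factors in one stroke, without any optimization parameter.

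Your route is arguably more direct; the paper's buys a more explicit picture of where in frequency space the cancellation occurs, and sidesteps having to justify the uniform bound $\abs{\hat\psi(\zeta)}\lesssim \abs{\zeta_2}^N(1+\abs{\zeta})^{-M}$ with both exponents simultaneously large (this does hold with $N+M\lesssim C_\Phi$, but requires a short argument you glossed over). One spot where your write-up is elliptical: the phrase ``on the effective support of $\hat\phi$'' hides that $\hat\phi$ is not compactly supported, so the region $\abs{\eta_2}\ll 1$ (where the shift $m\eta_2$ is small) and the region $\abs{\eta_2}\gg 1$ (where $\abs{\eta_2}^N$ is large) both need the decay of $\hat\phi$ in $\eta_2$ to close; this works out once spelled out, but is worth a sentence.
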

\begin{proof}
Without loss of generality suppose $k_{2}\geq k_{2}'$.
We will estimate
\[
\int_{\R^{2}} \abs{\widehat{\phi(A\cdot)}} \abs{\widehat{\phi'(A'\cdot)}}
\]
for $\phi,\phi'\in\Phi$.
This is sufficient because the spatial location of the tiles only affects the phase of the Fourier transforms of the associated wave packets, but not their magnitude.\\

\paragraph{Correlation decay due to shearing}
Let $0<\epsilon\ll 1$ and $S_{N}=\Set{-N,N}\times\R$ be a vertical strip of width $N \geq 1$.
The critical intersection $A^{*}S_{N} \cap (A')^{*}S_{N}$ is a parallelogram centered at zero of width $\sim N$ and height $\sim N/\abs{2^{-k_{2}}l-2^{-k_{2}'}l'}$.
By the vanishing moments assumption we have
\[
\abs{\widehat{\phi(A\cdot)}} \lesssim 2^{-k_{2}} (2^{-k_{2}}N/\abs{2^{-k_{2}}l-2^{-k_{2}'}l'})^{C}
\]
on the critical intersection.
Using the fact that the Fourier transforms $\widehat{\phi(A\cdot)}$ and $\widehat{\phi'(A'\cdot)}$ are $L^{1}$ normalized functions and the decay of these Fourier transforms at infinity we obtain
\begin{align*}
\int_{\R^{2}} \abs{\widehat{\phi(A\cdot)}} \abs{\widehat{\phi'(A'\cdot)}}
&\leq
\int_{\R^{2} \setminus A^{*}S_{N}} + \int_{\R^{2} \setminus (A')^{*}S_{N}} + \int_{A^{*}S_{N} \cap (A')^{*}S_{N}}\\
&\leq
\sup_{\R^{2} \setminus A^{*}S_{N}} \abs{\widehat{\phi(A\cdot)}}
+
\sup_{\R^{2} \setminus (A')^{*}S_{N}} \abs{\widehat{\phi'(A'\cdot)}}
+
\sup_{A^{*}S_{N} \cap (A')^{*}S_{N}} \abs{\widehat{\phi(A\cdot)}}\\
&\lesssim
2^{-k_{2}}N^{-C(1/\epsilon-1)} + 2^{-k_{2}'}N^{-C(1/\epsilon-1)} + 2^{-k_{2}} (2^{-k_{2}}N/\abs{2^{-k_{2}}l-2^{-k_{2}'}l'})^{C}.
\end{align*}
Choosing $N = 2^{\epsilon (k_{2}-k_{2}')/C} (2^{k_{2}}\abs{2^{-k_{2}}l-2^{-k_{2}'}l'})^{\epsilon}$ as we may provided that $\abs{l-2^{k_{2}-k_{2}'}l'} \geq 1$, we obtain
\[
\int_{\R^{2}} \abs{\widehat{\phi(A\cdot)}} \abs{\widehat{\phi'(A'\cdot)}}
\lesssim
2^{-k_{2}+\epsilon (k_{2}-k_{2}')} (2^{k_{2}}\abs{2^{-k_{2}}l-2^{-k_{2}'}l'})^{-(1-\epsilon)C},
\]
and this gives the second estimate in the conclusion of the lemma.\\

\paragraph{Correlation decay for separated scales}
Let $2^{k_{2}'} \ll N \ll 2^{k_{2}}$.
Using again the fact that the Fourier transforms $\widehat{\phi(A\cdot)}$ and $\widehat{\phi'(A'\cdot)}$ are $L^{1}$ normalized functions and the decay of Fourier transforms near $\xi_{2}=0$ and at infinity we obtain
\begin{align*}
\int_{\R^{2}} \abs{\widehat{\phi(A\cdot)}} \abs{\widehat{\phi'(A'\cdot)}}
&\leq
\int_{\abs{\xi_{2}} \leq N} + \int_{\abs{\xi_{2}} \geq N}\\
&\leq
\sup_{\abs{\xi_{2}} \leq N} \abs{\widehat{\phi(A\cdot)}}
+
\sup_{\abs{\xi_{2}} \geq N} \abs{\widehat{\phi'(A'\cdot)}}\\
&\lesssim
2^{-k_{2}} (N/2^{k_{2}})^{C} + 2^{-k_{2}'}(N/2^{k_{2}'})^{-(C+1)/\epsilon}.
\end{align*}
Choosing $N \sim 2^{k_{2}' + \epsilon(k_{2}-k_{2}')}$ we obtain
\[
\int_{\R^{2}} \abs{\widehat{\phi(A\cdot)}} \abs{\widehat{\phi'(A'\cdot)}}
\lesssim
2^{-k_{2}-C(1-\epsilon)(k_{2}-k_{2}')}
=
2^{-k_{2}/2-k_{2}'/2-(C+1/2-\epsilon')(k_{2}-k_{2}')},
\]
and this gives the third estimate in the conclusion of the lemma.
\end{proof}

\subsection{Bessel inequality}
\begin{lemma}
\label{lem:bessel}
For each tile $P$ fix an $L^{2}$ normalized wave packet $\phi_{P}$ adapted to $P$.
Then
\[
\sum_{P} \abs{\innerp{f}{\phi_{P}}}^{2}
\lesssim
\norm{f}_{2}^{2}.
\]
\end{lemma}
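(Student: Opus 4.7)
The plan is to apply the Cotlar--Stein almost orthogonality lemma to positive self-adjoint operators obtained by grouping tiles according to their scale--shear parameters $(k_{2}, l)$.

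For fixed $(k_{2}, l)$, let $X_{k_{2}, l} \subset X$ denote the subfamily of tiles with these parameters, and define $T_{k_{2}, l} : L^{2}(\R^{2}) \to \ell^{2}(X_{k_{2}, l})$ by $T_{k_{2}, l} f := (\langle f, \phi_{P}\rangle)_{P \in X_{k_{2}, l}}$. Writing each wave packet as $\phi_{P}(x) = |\det A|^{1/2}\, \varphi_{P}(A(x - y_{P}))$ with $\varphi_{P} \in \Phi$ and spatial center $y_{P} = (n_{1}, 2^{-k_{2}}n_{2}) \in \Lambda := \mathbb{Z} \oplus 2^{-k_{2}}\mathbb{Z}$, the change of variables $u = A(x - y_{P})$ gives
\[
\langle \phi_{P}, \phi_{P'}\rangle = \int_{\R^{2}} \varphi_{P}(u)\, \overline{\varphi_{P'}\bigl(u - A(y_{P'} - y_{P})\bigr)}\, du
\]
for $P, P' \in X_{k_{2}, l}$. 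Since $A\Lambda = \mathbb{Z}^{2}$ and both profiles are Schwartz uniformly in $\varphi \in \Phi$, Schur's test applied to this inner product matrix yields the fixed-scale Bessel bound $\|T_{k_{2}, l}\|^{2} \lesssim 1$, with constant independent of $(k_{2}, l)$ and the profile choices.

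To sum over $(k_{2}, l)$, set $S_{k_{2}, l} := T_{k_{2}, l}^{*} T_{k_{2}, l}$. These are positive self-adjoint operators, and $\sum_{P \in X} |\langle f, \phi_{P}\rangle|^{2} = \bigl\langle \bigl(\sum_{(k_{2}, l)} S_{k_{2}, l}\bigr) f, f\bigr\rangle$, so it suffices to show $\bigl\|\sum_{(k_{2}, l)} S_{k_{2}, l}\bigr\| \lesssim 1$. By the Cotlar--Stein lemma for self-adjoint operators and the factorization
\[
\|S_{k_{2}, l} S_{k_{2}', l'}\| \leq \|T_{k_{2}, l}\|\, \|T_{k_{2}, l} T_{k_{2}', l'}^{*}\|\, \|T_{k_{2}', l'}\| \lesssim \|T_{k_{2}, l} T_{k_{2}', l'}^{*}\|,
\]
it is enough to verify $\sup_{(k_{2}, l)} \sum_{(k_{2}', l')} \|T_{k_{2}, l} T_{k_{2}', l'}^{*}\|^{1/2} \lesssim 1$. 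The operator $T_{k_{2}, l} T_{k_{2}', l'}^{*}$ has matrix entries $\langle \phi_{P'}, \phi_{P}\rangle$ with $P \in X_{k_{2}, l}$, $P' \in X_{k_{2}', l'}$; Schur's test combined with Lemma~\ref{lem:correlation-decay} (augmented by the spatial Schwartz decay implicit in its proof) yields geometric decay in $|k_{2} - k_{2}'|$ and in $2^{\max(k_{2}, k_{2}')}|2^{-k_{2}}l - 2^{-k_{2}'}l'|$, which remains summable after taking the square root.

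The main obstacle is organizing the multiple layers of decay---spatial, scale, and shear---so that Schur's test applies summably at every step. In particular, Lemma~\ref{lem:correlation-decay} does not explicitly record spatial separation between tiles, so one needs to revisit its proof to extract the spatial tail bounds; these follow from the same Fourier-side estimates once the spatial dependence of the wave packets is tracked, and are essential for the innermost sum in Schur's test.
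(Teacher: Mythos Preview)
Your approach is correct, but it takes a longer route than the paper's. The paper applies Schur's test to the full Gram matrix in one step: from
\[
\sum_{P}\abs{\innerp{f}{\phi_{P}}}^{2}
\leq \norm{f}_{2}\Bigl(\sum_{P}\abs{\innerp{f}{\phi_{P}}}^{2}\sum_{P'}\abs{\innerp{\phi_{P}}{\phi_{P'}}}\Bigr)^{1/2}
\]
it reduces directly to $\sup_{P}\sum_{P'}\abs{\innerp{\phi_{P}}{\phi_{P'}}}<\infty$, and then evaluates this single sum by splitting $P'$ according to $(k_{2}',l')$ and, within each block, according to spatial separation via the sets $\calR_{L}$. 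The three decay mechanisms (spatial $L^{-C}$, scale $2^{-C\abs{k_{2}-k_{2}'}}$, and shear from Lemma~\ref{lem:correlation-decay}) are combined as a \emph{minimum}, not a product, and summed.

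Your Cotlar--Stein layer is not wrong, but it is a detour: the off-diagonal bound $\norm{T_{k_{2},l}T_{k_{2}',l'}^{*}}$ you estimate via Schur is exactly the inner sum $\sum_{P'\in X_{k_{2}',l'}}\abs{\innerp{\phi_{P}}{\phi_{P'}}}$ that the paper computes anyway, so you end up doing the same calculation twice-nested rather than once. Worse, Cotlar--Stein forces you to sum the \emph{square roots} $\norm{T T'^{*}}^{1/2}$ over $(k_{2}',l')$, which is strictly harder than summing the quantities themselves; it still works because the decay exponent $C$ can be taken arbitrarily large, but you pay for machinery that buys nothing. A single global Schur test is both shorter and sharper.

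One small correction: the spatial decay you need is not ``implicit in'' the proof of Lemma~\ref{lem:correlation-decay} nor does it ``follow from the same Fourier-side estimates''. That lemma is argued entirely on the frequency side and is blind to spatial location. The spatial bound $\abs{\innerp{\phi_{P}}{\phi_{P'}}}\lesssim L^{-C}$ for separated tiles comes directly from the physical-space tails $\abs{\phi(x)}\leq (1+\abs{x})^{-C}$ in the definition of $\Phi$; it is then combined with the Fourier bounds by taking a minimum.
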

\begin{proof}
Schur's test
\begin{align*}
\sum_{P} \abs{\innerp{f}{\phi_{P}}}^{2}
&=
\innerp[\big]{f}{\sum_{P} \phi_{P}\innerp{\phi_{P}}{f}}\\
&\leq
\norm{f}_{2} \norm[\big]{\sum_{P} \phi_{P}\innerp{\phi_{P}}{f}}_{2}\\
&=
\norm{f}_{2} \big(\sum_{P,P'} \innerp{f}{\phi_{P}}\innerp{\phi_{P}}{\phi_{P'}}\innerp{\phi_{P'}}{f} \big)^{1/2}\\
&\leq
\norm{f}_{2} \big(\sum_{P} \abs{\innerp{f}{\phi_{P}}}^{2} \sum_{P'} \abs{\innerp{\phi_{P}}{\phi_{P'}}} \big)^{1/2}
\end{align*}
shows that it suffices to prove
\[
\sup_{P} \sum_{P'} \abs{\innerp{\phi_{P}}{\phi_{P'}}} < \infty.
\]
For a fixed tile $P$ we split the above sum according to the shearing matrix $A'$ of the tile $P'$.
For a given shearing matrix $A'$ we distinguish the cases $k_{2}\leq k_{2}'$ and $k_{2}>k_{2}'$.

In the case $k_{2}\leq k_{2}'$ the tile $P$ has larger scale than $P'$, so the tail of the associated wave packet is more important.
For $L\in 2^{\N}$ let
\[
\tilde\calR_{L} := \Set{ P' \text{ with shearing matrix }A' \text{ such that } LP\cap P'\neq \emptyset }
\]
and let $\calR_{1}:=\tilde\calR_{1}$, $\calR_{L}:=\tilde\calR_{L} \setminus \tilde\calR_{L/2}$ for $L\geq 2$.
Then
\[
\abs{\tilde\calR_{L}}
\lesssim
L(L2^{-k_{2}}+\abs{2^{-k_{2}}l-2^{-k_{2}'}l'})/2^{-k_{2}'},
\]
and
\[
\sum_{L\in 2^{\N}} \sum_{P' \in \calR_{L}} \abs{\innerp{\phi_{P}}{\phi_{P'}}}
\lesssim
\sum_{L\in 2^{\N}} \abs{\tilde\calR_{L}} \min( L^{-C}, 2^{-C(k_{2}'-k_{2})}, (2^{k_{2}'}\abs{2^{-k_{2}}l-2^{-k_{2}'}l'})^{-C})),
\]
where the first estimate inside the minimum is due to spatial separation and the other two estimates come from Lemma~\ref{lem:correlation-decay}.
Summing this over $k_{2}'\geq k_{2}$ and $l'$ we obtain
\begin{multline*}
\sum_{L\in 2^{\N}, k_{2}'\geq k_{2}, l'\in\Z} L(L2^{-k_{2}}+\abs{2^{-k_{2}}l-2^{-k_{2}'}l'})/2^{-k_{2}'} \min( L^{-C}, 2^{-C(k_{2}'-k_{2})}, (2^{k_{2}'}\abs{2^{-k_{2}}l-2^{-k_{2}'}l'})^{-C})\\
\lesssim
\sum_{L\in 2^{\N}, k\geq 0, l'\in\Z} L(L2^{k}+\abs{2^{k}l-l'}) \min( L^{-C}, 2^{-Ck}, \abs{2^{k}l-l'}^{-C})\\
\lesssim
\sum_{L\in 2^{\N}, k\geq 0, l'\in\Z} L(L2^{k}+\abs{2^{k}l-l'}) ( L + 2^{k} + \abs{2^{k}l-l'} )^{-C}
\leq
C.
\end{multline*}

In the region $k_{2}\geq k_{2}'$ we make a similar decomposition with
\[
\tilde\calR_{L} := \Set{ P' \text{ with shearing matrix }A' \text{ such that } P\cap LP'\neq \emptyset }.
\]
The resulting estimate is similar to the above with the roles of $k_{2}$ and $k_{2}'$ reversed.
\end{proof}

\subsection{Splitting into compactly supported wave packets}
In order to obtain a localized Bessel inequality we decompose wave packets into compactly supported parts as in \cite[Lemma 3.1]{MR2320408}.
\begin{lemma}
\label{lem:split-wave-packet}
For every $C$ there exists $C'$ such that if $C'\phi \in \Phi_{C'}$, then there exists a decomposition
\[
\phi = \sum_{k\geq 0} 2^{-Ck} \phi_{k},
\quad
\phi_{k}\in\Phi_{C},
\supp\phi_{k} \subset B(0,2^{k}).
\]
\end{lemma}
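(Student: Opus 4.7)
The plan is to exploit the vanishing $x_2$-moment structure of $\Phi_C$ via an antiderivative-then-differentiate device. A naive dyadic partition $\phi = \sum_{k} \phi \chi_{k}$ destroys the vanishing of the moments $\int x_{2}^{n} \phi(x_{1},\cdot)$, and no finite-order correction term can fully restore the whole list of them. Instead, I would first write $\phi$ as a pure $(C-1)$-fold $x_{2}$-derivative of a function $\Phi$ of comparable decay, then localize $\Phi$ in dyadic annuli, and finally differentiate back: the pieces then inherit the vanishing $x_{2}$-moments of orders $0,\dots,C-2$ automatically, by integration by parts.

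Concretely, fix $C' \gg C$ (to be determined by the bookkeeping) and, for $\phi \in \Phi_{C'}$, define
\[
\Phi(x_{1},x_{2}) := \frac{1}{(C-2)!}\int_{x_{2}}^{\infty}(s-x_{2})^{C-2}\phi(x_{1},s)\,\dif s,
\]
so that $(-1)^{C-1}\partial_{x_{2}}^{C-1}\Phi=\phi$. Expanding $(s-x_{2})^{C-2}$ by the binomial theorem and using the vanishing of $\int s^{n}\phi(x_{1},s)\,\dif s$ for $n\leq C'-2$, the polynomial growth in $x_{2}$ cancels, and one obtains $|\partial^{\alpha}\Phi(x)|\lesssim (1+|x|)^{-C''}$ for all $|\alpha|\leq 2C$, with a constant $C''$ that can be made as large as desired by taking $C'$ large enough.

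Next, fix a smooth partition of unity $1=\sum_{k\geq 0}\chi_{k}$ on $\R^{2}$ with $\supp\chi_{0}\subset B(0,2)$, $\supp\chi_{k}\subset B(0,2^{k+1})\setminus B(0,2^{k-1})$ for $k\geq 1$, and $|\partial^{\alpha}\chi_{k}|\lesssim 2^{-k|\alpha|}$. Set $\tilde{\phi}_{k}:=(-1)^{C-1}\partial_{x_{2}}^{C-1}(\chi_{k}\Phi)$. Since $\sum_{k}\chi_{k}\equiv 1$, we get $\sum_{k}\tilde{\phi}_{k}=\phi$, and by construction $\supp\tilde{\phi}_{k}\subset B(0,2^{k+1})$. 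Because $\chi_{k}\Phi$ is compactly supported, $C-1$ integrations by parts give
\[
\int x_{2}^{n}\,\tilde{\phi}_{k}(x_{1},x_{2})\,\dif x_{2}
=\int \partial_{x_{2}}^{C-1}(x_{2}^{n})\,\chi_{k}(x)\Phi(x)\,\dif x_{2}=0,\qquad n\leq C-2,
\]
so the vanishing moment condition comes for free. The Leibniz rule combined with the decay of $\Phi$ and the derivative bounds on $\chi_{k}$ yields $|\partial^{\alpha}\tilde{\phi}_{k}(x)|\lesssim 2^{-kC''}$ uniformly for $|\alpha|\leq C$.

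Setting $\phi_{k}:=2^{Ck}\tilde{\phi}_{k}$ (with a harmless relabeling $k\mapsto k-1$ to absorb the factor of $2$ in the support radius) produces the decomposition $\phi=\sum_{k}2^{-Ck}\phi_{k}$ with $\phi_{k}\in\Phi_{C}$, as soon as $C''-C$ exceeds the absolute constants generated by the Leibniz rule; this is guaranteed by choosing $C'$ sufficiently large in terms of $C$. The main obstacle throughout is preserving the vanishing moments, and the antiderivative maneuver is the key device resolving it; once past that, the rest of the argument is routine bookkeeping on derivative and decay exponents.
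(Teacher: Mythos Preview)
Your proof is correct, but it takes a genuinely different route from the paper's, and your remark that ``no finite-order correction term can fully restore the whole list of [moments]'' is mistaken --- that is precisely what the paper does.

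The paper's argument localizes $\phi$ directly by a nested family of bump functions $\psi_{k}(x)=\psi(2^{-k}x)$ and then explicitly subtracts correction terms to repair the moments: it sets
\[
\phi_{k} := \phi(\psi_{k}-\psi_{k-1}) - \sum_{n=0}^{C-2}\bigl(m_{k}^{(n)}-m_{k-1}^{(n)}\bigr) \otimes \eta^{(n)},
\]
where $m_{k}^{(n)}(x_{1})=\int x_{2}^{n}\phi(x_{1},x_{2})\psi_{k}(x_{1},x_{2})\,\dif x_{2}$ and the $\eta^{(n)}$ are fixed one-variable bumps with $\int x^{m}\eta^{(n)}=\one_{m=n}$. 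The vanishing moments of $\phi$ are exactly what makes the corrections $m_{k}^{(n)}$ small (of order $2^{-Ck}$), so the subtracted terms do not spoil the decay. Your antiderivative trick trades this explicit correction for structural bookkeeping: writing $\phi=(-1)^{C-1}\partial_{x_{2}}^{C-1}\Phi$ and localizing $\Phi$ instead, so that the moment conditions become automatic upon differentiating back. Both arguments ultimately exploit the same mechanism (the vanishing moments of $\phi$ give extra decay to an integrated object), but yours packages it more cleanly at the cost of needing control on roughly twice as many derivatives of $\phi$; the paper's approach is more hands-on but needs only $C'$ derivatives rather than $\sim 2C$ to produce $\Phi_{C}$ pieces.
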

\begin{proof}[Sketch of proof]
Let $\psi$ be a smooth function supported on $B(0,1/2)$ and identically equal to $1$ on $B(0,1/4)$.
Write $\psi_{k}(x)=\psi(2^{-k}x)$ for its $L^{\infty}$ dilates.
Let also $\eta^{(0)},\dotsc,\eta^{(C-2)}$ be smooth functions supported on $[-1/2,1/2]$ with
\[
\int x^{n} \eta^{(m)}(x) \dif x = \one_{n=m}.
\]
For $k\in\N$ and $x_{1}\in\R$ let
\[
m_{k}^{(n)}(x_{1}) := \int_{\R} x_{2}^{n} \phi(x_{1},x_{2})\psi_{k}(x_{1},x_{2}) \dif x_{2},
\]
then for $\abs{\alpha}\leq C$ and $n<C$ we have
\[
\abs{\partial^{\alpha} m_{k}^{(n)}(x_{1})}
=
\abs[\big]{\int_{\R} x_{2}^{n} \partial_{1}^{\alpha}\phi(x_{1},x_{2})(\psi_{k}(x_{1},x_{2})-1) \dif x_{2}}
\lesssim
2^{-Ck}(1+\abs{x_{1}})^{-C}
\]
provided that $C'$ is sufficiently large.
The claimed splitting is given by
\[
\phi_{k} :=
\begin{cases}
\phi(\psi_{k}-\psi_{k-1}) - \sum_{n=0}^{C-2}(m_{k}^{(n)}-m_{k-1}^{(n)}) \otimes \eta^{(n)}, & k>0,\\
\phi\psi_{0} - \sum_{n=0}^{C-2} m_{0}^{(n)} \otimes \eta^{(n)}, & k=0.
\end{cases}
\qedhere
\]
\end{proof}

\subsection{Energy embedding}
The energy embedding is defined by
\[
F(R) := \sup_{\phi_{R}^{(1)}} \abs{\innerp{f}{\phi_{R}^{(1)}}},
\quad
R\in X,
\]
where the supremum is taken over all $L^{1}$ normalized wave packets adapted to $R$ with a sufficiently large order of decay $C'$.

\begin{lemma}
\label{lem:energy-embed-L2}
$\norm{F}_{L^{2,\infty}(S^{2})} \lesssim \norm{f}_{2}$.
\end{lemma}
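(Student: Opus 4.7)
The plan is to run a greedy stopping-time construction on the set of tiles and combine it with Bessel's inequality (Lemma~\ref{lem:bessel}). First I would linearize: by measurable selection (using finiteness of $X$), for each tile $R$ pick an $L^{1}$-normalized wave packet $\phi_{R}^{(1)}\in\Phi$ with $F(R)=\abs{\innerp{f}{\phi_{R}^{(1)}}}$, and set $\tilde\phi_{R}:=\abs{R}^{1/2}\phi_{R}^{(1)}$. Since $\phi_{R}^{(1)}=(\det A_{R})\phi_{R}^{(\infty)}=\abs{R}^{-1}\phi_{R}^{(\infty)}$ and $\phi_{R}^{(2)}=\abs{R}^{-1/2}\phi_{R}^{(\infty)}$, the function $\tilde\phi_{R}$ is an $L^{2}$-normalized wave packet adapted to $R$, and
\[
\abs{R}F(R)^{2}=\abs{\innerp{f}{\tilde\phi_{R}}}^{2}.
\]
Then Lemma~\ref{lem:bessel} applied to the fixed family $\{\tilde\phi_{R}\}_{R\in X}$ yields the key global bound
\begin{equation}
\label{eq:plan:bessel}
\sum_{R\in X}\abs{R}F(R)^{2}
=\sum_{R\in X}\abs{\innerp{f}{\tilde\phi_{R}}}^{2}
\lesssim\norm{f}_{2}^{2}.
\end{equation}

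Next I would construct the exceptional set. Fix $\lambda>0$. Iteratively: while there exists some $\calR\subset X\setminus E$ (where $E$ is the union of previously selected collections, initially empty) with $S^{2}(F)(\calR)>\lambda$, pick any such $\calR_{j}$ (for instance, a maximal one) and append it to $E$. Since $X$ is finite, this terminates in finitely many steps, producing disjoint $\calR_{1},\dotsc,\calR_{N}$ whose union $E_{\lambda}$ has the property that every $\calR\subset X\setminus E_{\lambda}$ satisfies $S^{2}(F)(\calR)\le\lambda$; by monotonicity of $\sigma$ this gives $S^{2}(F\one_{X\setminus E_{\lambda}})(\calR)\leq\lambda$ for every $\calR\in\mathbf{E}$.

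For each $j$ the selection criterion $S^{2}(F)(\calR_{j})>\lambda$ reads $\sum_{R\in\calR_{j}}\abs{R}F(R)^{2}>\lambda^{2}\sigma(\calR_{j})$, so that $\sigma(\calR_{j})<\lambda^{-2}\sum_{R\in\calR_{j}}\abs{\innerp{f}{\tilde\phi_{R}}}^{2}$. Since the $\calR_{j}$ are pairwise disjoint subsets of $X$, subadditivity of $\mu$ together with \eqref{eq:plan:bessel} gives
\[
\mu(E_{\lambda})\leq\sum_{j=1}^{N}\sigma(\calR_{j})
\leq\lambda^{-2}\sum_{j=1}^{N}\sum_{R\in\calR_{j}}\abs{\innerp{f}{\tilde\phi_{R}}}^{2}
\leq\lambda^{-2}\sum_{R\in X}\abs{\innerp{f}{\tilde\phi_{R}}}^{2}
\lesssim\lambda^{-2}\norm{f}_{2}^{2}.
\]
Taking the supremum in $\lambda$ yields the desired weak-type outer bound $\norm{F}_{L^{2,\infty}(S^{2})}\lesssim\norm{f}_{2}$.

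The only real ingredient beyond bookkeeping is the Bessel inequality of Lemma~\ref{lem:bessel}, which is already available; no analytic obstacle remains. The minor technical point is to ensure the measurable selection of extremizing $\phi_{R}^{(1)}$ (trivial here since $X$ is finite, hence collections can be replaced by increasing finite unions and the bounds are uniform) and to verify that $S^{2}$-restriction to $X\setminus E_{\lambda}$ is controlled by the uniform bound on subcollections of $X\setminus E_{\lambda}$, which follows from monotonicity of $\sigma$ as noted above.
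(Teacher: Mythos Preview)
Your proof is correct and follows essentially the same approach as the paper: linearize $F$ via $L^{2}$-normalized wave packets, invoke the Bessel inequality (Lemma~\ref{lem:bessel}), and control the outer measure of the exceptional set by $\lambda^{-2}\norm{f}_{2}^{2}$. The only cosmetic difference is that the paper, instead of iterating, directly picks a single maximal $\calR\subset X$ with $S^{2}(F)(\calR)\geq\lambda$ and observes (via subadditivity of $\sigma$) that no $\calR'\subset X\setminus\calR$ can also have size $\geq\lambda$; this collapses your disjoint sequence $\calR_{1},\dotsc,\calR_{N}$ into one step.
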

\begin{proof}
Let $\calR$ be a maximal collection of tiles with $S^{2}(F)(\calR) \geq \lambda$.
If $\calR'\subset X\setminus\calR$ also has size $\geq\lambda$, then using subadditivity of $\sigma$ it is easy to see that $\calR\cup\calR'$ also has size $\geq\lambda$, contradicting maximality.
Hence by maximality we have $\operatorname{outsup}_{X\setminus\calR} S^{2}(F)\leq\lambda$.
On the other hand,
\[
\sigma(\calR)
\leq
\lambda^{-2} \sum_{R\in\calR} \meas{R} \abs{F(R)}^{2}
\lesssim
\lambda^{-2} \norm{f}_{2}^{2}
\]
by Lemma~\ref{lem:bessel}.
\end{proof}

\begin{lemma}
\label{lem:energy-embed-Linfty}
$\norm{F}_{L^{\infty}(S^{2})} \lesssim \norm{f}_{\infty}$.
\end{lemma}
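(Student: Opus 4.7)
By definition of the outer $L^\infty(S^2)$ norm, the bound reduces to showing that
\[
\sum_{R\in\calR}\meas{R}\,F(R)^2 \lesssim \norm{f}_\infty^2\,\sigma(\calR)
\]
for every nonempty finite $\calR\subseteq X$. My plan is to exploit the decay of wave packets to reduce scale-by-scale to compactly supported ones, apply the Bessel inequality (Lemma~\ref{lem:bessel}) at each scale, and absorb the resulting tails of enlarged tiles into $\sigma(\calR)$ via the $L^{-C}$ factor built into its definition \eqref{eq:def-sigma}.

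For each $R\in\calR$ I would first fix a near-extremizing $\phi_R\in\Phi$ so that $\meas{R}\,F(R)^2 \leq 4\,\abs{\innerp{f}{(\phi_R)_R^{(2)}}}^2$, where the superscript $(2)$ indicates the associated $L^2$-normalized wave packet. Taking the decay/smoothness order in the definition of $\Phi$ sufficiently large, Lemma~\ref{lem:split-wave-packet} produces a decomposition $\phi_R=\sum_{k\geq 0}2^{-Bk}\phi_{R,k}$ with each $\phi_{R,k}\in\Phi_B$ supported in $B(0,2^k)$, for any prescribed $B$. The corresponding $L^2$-normalized wave packets $(\phi_{R,k})_R^{(2)}$ are then supported in a constant dilate $c\cdot 2^k R$, hence in $E_L := \bigcup_{R'\in\calR}LR'$ with $L = c\cdot 2^k$. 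Cauchy--Schwarz applied to the geometric sum $\phi_R^{(2)}=\sum_{k\geq 0}2^{-Bk}(\phi_{R,k})_R^{(2)}$ gives
\[
\meas{R}\,F(R)^2 \lesssim \sum_{k\geq 0} 2^{-Bk}\,\abs[\big]{\innerp{f\one_{E_{c2^k}}}{(\phi_{R,k})_R^{(2)}}}^2.
\]
Summing over $R\in\calR$ and applying Lemma~\ref{lem:bessel} at each fixed $k$ to the family $\{(\phi_{R,k})_R^{(2)}\}_{R\in\calR}$ yields
\[
\sum_{R\in\calR}\meas{R}\,F(R)^2 \lesssim \sum_{k\geq 0} 2^{-Bk}\,\norm{f\one_{E_{c2^k}}}_2^2 \leq \norm{f}_\infty^2\sum_{k\geq 0}2^{-Bk}\,\meas{E_{c2^k}}.
\]
The bound $\meas{E_L}\leq L^C\sigma(\calR)$, immediate from the definition \eqref{eq:def-sigma}, then reduces the right-hand side to $\norm{f}_\infty^2\sigma(\calR)\sum_k 2^{(C-B)k}$, a geometric sum that converges as soon as $B>C$.

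The only subtle point is the balance of the two ``sufficiently large'' constants in the construction: the decay order of $\Phi$ must be large enough for Lemma~\ref{lem:split-wave-packet} to yield a splitting with rate $B$ strictly greater than the exponent $C$ built into the outer measure $\sigma$ in \eqref{eq:def-sigma}, while $B$ itself must be large enough for Lemma~\ref{lem:bessel} (which relies on Lemma~\ref{lem:correlation-decay}) to hold for the compactly supported wave packets $(\phi_{R,k})_R^{(2)}$ coming from $\Phi_B$. Both constraints are accommodated by first fixing $C$, then choosing $B>C$ large, and finally taking the decay order of $\Phi$ large enough to invoke Lemma~\ref{lem:split-wave-packet} at this rate $B$.
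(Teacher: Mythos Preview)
Your proof is correct and follows essentially the same route as the paper: fix near-extremizing wave packets, split them via Lemma~\ref{lem:split-wave-packet} into compactly supported pieces, apply the Bessel inequality (Lemma~\ref{lem:bessel}) at each scale to $f$ restricted to $\bigcup_{R\in\calR}2^kR$, and sum the resulting geometric series using the $L^{-C}$ factor in the definition of $\sigma$. Your careful discussion of the hierarchy of constants (decay order of $\Phi$ versus the splitting rate $B$ versus the exponent $C$ in $\sigma$) is in fact more explicit than the paper's own treatment.
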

\begin{proof}
Let $\calR\in\bfE$ and let $\phi_{R}$, $R\in\calR$, be wave packets that almost extremize $F(R)$.
Splitting the corresponding members of $\Phi_{C'}$ using Lemma~\ref{lem:split-wave-packet} we obtain decompositions $\phi_{R} = \sum_{k\geq 0}2^{-Ck}\phi_{R,k}$, where each $\phi_{R,k}$ is an $L^{1}$ normalized wave packet adapted to $R$ (with a lower order of decay $C$) and supported on $2^{k}R$.

By Lemma~\ref{lem:bessel} and the support condition we have
\begin{align*}
\sum_{R\in\calR} \meas{R} \abs{\innerp{f}{\phi_{R,k}}}^{2}
&\lesssim
2^{-2Ck} \norm[\big]{f \one_{\cup\Set{2^{k}R : R\in\calR}}}_{2}^{2}\\
&\leq
2^{-2Ck} \norm{f}_{\infty}^{2} \meas[\big]{ \cup_{R\in\calR} 2^{k}R}\\
&\leq
2^{(C_{\ref{eq:def-sigma}}-2C)k} \norm{f}_{\infty}^{2} \sigma(R),
\end{align*}
and summing in $k$ we obtain
\[
\sum_{R\in\calR} \meas{R} \abs{\innerp{f}{\phi_{R}}}^{2}
\lesssim
\norm{f}_{\infty}^{2} \sigma(R),
\]
so that $S^{2}(F)(\calR) \lesssim \norm{f}_{\infty}$ as required.
\end{proof}

\section{Covering lemma for parallelograms and the mass embedding}
\label{sec:single-scale:mass-embed}
For completeness we include a slightly streamlined proof of a covering lemma from \cite{MR3148061}.
Covering  lemmas of this type go back to \cite{MR0379785}.
We consider parallelograms with two vertical edges as shown below:
\begin{center}
\begin{tikzpicture}[yscale=0.5]
\draw (0.5,1) node {$R$};
\draw (0,0) node[left] {$A$} -- (0,1) node[left] {$B$} -- (1,2) node[right] {$C$} -- (1,1) node[right] {$D$} -- cycle;
\draw[dotted] (0,0) -- (0,-0.5);
\draw[dotted] (1,1) -- (1,-0.5);
\draw[|-|] (0,-0.5) -- (1,-0.5) node[midway,below] {$I$};
\end{tikzpicture}
\end{center}
The \emph{height} $H(R)$ is the common length of $AB$ and $CD$.
The \emph{shadow} $I(R)$ is the projection of $R$ onto the horizontal axis.
The \emph{slope} $s(R)$ is the common slope of the edges $BC$ and $AD$.
The \emph{uncertainty interval} $U(R) \subset\R$ is the interval between the slopes of $BD$ and $AC$.
It is the interval of length $2H(R)/\meas{I(R)}$ centered at $s(R)$.

\begin{lemma}[{cf.\ \cite[Lemma 7]{MR3148061}}]
\label{lem:bt-cf-covering}
Let $\calR$ a finite collection of parallelograms with vertical edges and dyadic shadow.
Then there exists $\calG\subset\calR$ such that
\begin{equation}
\label{bad}
\meas{ \bigcup _{R \in \calR} R }
\lesssim
\sum_{R\in\calG } \meas{R}
\end{equation}
and for every $n\in\N$ we have
\begin{equation}
\label{eq:good-U}
\sum _{R_{1},\dotsc,R_{n}\in\calG : U(R_{1})\cap\dotsb\cap U(R_{n}) \neq\emptyset } \meas{R_{1}\cap\dotsb\cap R_{n}}
\lesssim_{n}
\sum_{R\in\calG } \meas{R}.
\end{equation}
In particular, for every measurable function $u:\R^{2}\to \R$ the sets
\[
E(R):=\Set{(x,y)\in R: u(x,y)\in U(R)}.
\]
satisfy
\begin{equation}
\label{good}
\int ( \sum _{R\in\calG } \one _{E(R)} )^{q}
\lesssim_{q}
\sum_{R\in\calG } \meas{R},
\quad 0<q<\infty.
\end{equation}
\end{lemma}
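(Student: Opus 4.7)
The plan is to construct $\calG$ by a Vitali-type greedy selection ordered by decreasing height, and then to deduce the three conclusions in the given order, with (\ref{good}) being essentially formal from (\ref{eq:good-U}). List $\calR = \{R_1, R_2, \dotsc\}$ so that $H(R_1) \geq H(R_2) \geq \dotsb$, and add $R_k$ to $\calG$ provided that no $R_j \in \calG$ with $j<k$ satisfies both $I(R_k) \subseteq I(R_j)$ and $U(R_k) \cap U(R_j) \neq \emptyset$. Since the shadows are dyadic, any two shadows are either strictly nested or disjoint, so this selection is well-defined and each rejection $R_k \in \calR \setminus \calG$ comes with an explicit ``parent'' $R_j \in \calG$ witnessing the failure of the criterion.

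Property (\ref{bad}) I would derive from the covering observation that any rejected $R_k$ is contained in a bounded vertical enlargement $R_j^{*}$ of its parent $R_j \in \calG$. Indeed, $H(R_j) \geq H(R_k)$, $I(R_k) \subseteq I(R_j)$, and the slope proximity $\abs{s(R_j)-s(R_k)} \lesssim H(R_j)/\meas{I(R_j)}$ (which follows from $U(R_j) \cap U(R_k) \neq \emptyset$) together force $R_k \subset R_j^{*}$ with $\meas{R_j^{*}} \lesssim \meas{R_j}$. Consequently $\bigcup \calR \subset \bigcup_{R \in \calG} R^{*}$, giving $\meas{\bigcup \calR} \lesssim \sum_{R \in \calG} \meas{R}$.

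Property (\ref{eq:good-U}) is the heart of the lemma and where I expect the main technical difficulty to lie. For $R_1,\dotsc,R_n \in \calG$ with $\bigcap_i U(R_i) \neq \emptyset$, sort by $H(R_1) \geq \dotsb \geq H(R_n)$. The selection rule combined with $U(R_j)\cap U(R_i) \neq \emptyset$ forbids $I(R_i) \subseteq I(R_j)$ for every $j<i$, so by dyadicity the only way for the intersection $R_1 \cap \dotsb \cap R_n$ to be non-empty is to have a strictly ascending chain of shadows $I(R_1) \subsetneq I(R_2) \subsetneq \dotsb \subsetneq I(R_n)$. Within this chain, slope proximity together with the vertical-edge geometry yields the key estimate
\[
\meas{R_1 \cap \dotsb \cap R_n} \lesssim \meas{I(R_1)} \cdot H(R_n) = \meas{R_n} \cdot \meas{I(R_1)}/\meas{I(R_n)}.
\]
The remaining task is to sum this over all admissible chains and show it is bounded by $C_n \sum_{R\in\calG} \meas{R}$. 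My plan is to organize the sum along the dyadic tree rooted at $I(R_n)$: for each fixed $R_n \in \calG$, the inner tuples $(R_1,\dotsc,R_{n-1})$ range over $(n-1)$-fold chains of selected parallelograms whose shadows are dyadic descendants of $I(R_n)$, and a careful induction on $n$ combined with the dyadic Carleson packing structure inherited from the selection rule should absorb the $\meas{I(R_1)}/\meas{I(R_n)}$ factor at an acceptable cost. Controlling the growth of the constant in $n$ here is the most delicate point.

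Finally, (\ref{good}) for integer $q\geq 1$ follows by expanding
\[
\int \Big(\sum_{R\in\calG}\one_{E(R)}\Big)^q = \sum_{R_1,\dotsc,R_q\in\calG} \meas{E(R_1)\cap\dotsb\cap E(R_q)},
\]
noting that $(x,y)\in\bigcap_iE(R_i)$ forces $u(x,y)\in\bigcap_i U(R_i)$ so that only tuples with $\bigcap_i U(R_i)\neq\emptyset$ contribute, and invoking (\ref{eq:good-U}). For $0<q<1$ the subadditivity $(\sum a_i)^q \leq \sum a_i^q$ reduces matters to the trivial bound $\int \sum_{R}\one_{E(R)} \leq \sum_{R}\meas{R}$; for non-integer $q>1$, one uses the pointwise bound $(\sum\one_{E(R)})^q \leq (\sum\one_{E(R)})^{\lceil q \rceil}$ on $\bigcup E(R)$ and the integer case with $\lceil q\rceil$.
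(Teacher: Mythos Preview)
Your selection rule is not spatial enough, and this breaks (\ref{bad}) outright. Take $\calR$ to consist of $N$ unit squares, all with the same shadow $[0,1)$ and slope $0$, placed at heights $0,100,200,\dotsc$. They all have the same height and the same uncertainty interval, so after the first one is selected every subsequent square is rejected (its shadow is contained in, indeed equal to, that of the first and the uncertainty intervals meet). Thus $\calG$ is a single square while $\meas{\bigcup\calR}=N$. The flaw in your argument is the line ``together force $R_k\subset R_j^{*}$'': shadow containment and slope proximity say nothing about the vertical \emph{location} of $R_k$, so there is no bounded enlargement of $R_j$ that must contain it. (Your slope proximity claim $\abs{s(R_j)-s(R_k)}\lesssim H(R_j)/\meas{I(R_j)}$ is also false in general, since $H(R_k)/\meas{I(R_k)}$ can dominate when $\meas{I(R_k)}\ll\meas{I(R_j)}$; but even when it holds, as in the counterexample, the containment fails.)

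The paper's selection repairs exactly this: it orders by maximal shadow length (not height) and rejects $R$ only when $R\subset\{M_V(\sum_{R'\in\calG}\one_{7R'})\geq 10^{-4}\}$, a genuinely spatial criterion. Then (\ref{bad}) is immediate from the weak $(1,1)$ inequality for the vertical maximal function. For (\ref{eq:good-U}) the paper does not attempt your chain-of-shadows bookkeeping; instead it proves, via Lemma~\ref{7rlemma} and the selection rule, the height packing inequality $\sum_{R\in\calG(R')}H(R)\leq H(R')$ (where $\calG(R')$ collects earlier-selected $R$ with $R\cap R'\neq\emptyset$ and $U(R)\cap U(100R')\neq\emptyset$), and then telescopes the $n$-fold sum down to $\sum_{R_0}\meas{R_0}$. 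Your reduction of (\ref{good}) to (\ref{eq:good-U}) at integer exponents is fine and matches the paper.
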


In \cite{MR3148061} the conclusion \eqref{good} is stated for one-variable vector fields, but this structural assumption is not used in the proof.

In the proof of Lemma~\ref{lem:bt-cf-covering} we denote by $CR$ the parallelogram with the same center, slope, and shadow as $R$ but height $CH(R)$ (this definition of $CR$ is used only here).
We need the following geometric observation:
\begin{lemma}\label{7rlemma}
Let $R,R'$ be two parallelograms with $I(R)= I(R')$, $U(R)\cap U(R')\neq\emptyset$, and $R\cap R'\neq \emptyset$.
If  $7H(R)\le H(R')$, then $7R\subseteq 7R'$.
\end{lemma}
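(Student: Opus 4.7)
The plan is to translate the three hypotheses into quantitative bounds on the parameters of the two parallelograms, and then verify by direct computation that any point of $7R$ lies in $7R'$.

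First I would set up coordinates by letting $\ell = |I(R)| = |I(R')|$ and placing the shadow $I = [-\ell/2, \ell/2]$ on the horizontal axis. Since the edges are vertical, each parallelogram is determined by its slope $s$, height $h$, and the vertical coordinate $y_0$ of its center: it is the set of $(x,y)$ with $x \in I$ and $|y - y_0 - sx| \leq h/2$. I would use parameters $(y_0, s, h)$ for $R$ and $(y_0', s', h')$ for $R'$, so that $U(R) = [s - h/\ell, s + h/\ell]$ and similarly for $R'$.

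Next I would convert the three hypotheses into two scalar inequalities. The condition $U(R) \cap U(R') \neq \emptyset$ gives $|s - s'| \leq (h+h')/\ell$. The condition $R \cap R' \neq \emptyset$ requires that for some $x \in I$ one has $|(y_0 - y_0') + (s-s')x| \leq (h+h')/2$; since the left side varies over an interval of radius $|s-s'|\ell/2$ centered at $y_0 - y_0'$, this forces
\[
|y_0 - y_0'| \leq |s-s'|\ell/2 + (h+h')/2 \leq h + h',
\]
after inserting the previous slope bound.

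Then I would take an arbitrary $(x,y) \in 7R$, so $x \in I$ and $|y - y_0 - sx| \leq 7h/2$, and simply estimate the vertical distance to the central line of $7R'$ by the triangle inequality:
\[
|y - y_0' - s'x| \leq |y - y_0 - sx| + |y_0 - y_0'| + |s-s'|\cdot|x| \leq \tfrac{7h}{2} + (h+h') + \tfrac{h+h'}{2}.
\]
This simplifies to $5h + 3h'/2$, so membership in $7R'$ (which requires this quantity to be at most $7h'/2$) reduces to the numerical inequality $h' \geq 5h/2$, which is implied by the hypothesis $h' \geq 7h$ with room to spare.

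There is no real obstacle here beyond bookkeeping: the statement is purely affine-geometric and the constants close comfortably, which also explains why the specific constant $7$ is convenient but not sharp. The only point requiring a moment of care is combining the slope and intersection hypotheses to bound $|y_0 - y_0'|$, since the intersection condition is naturally stated with a free parameter $x \in I$ and must be reduced to a center-to-center bound.
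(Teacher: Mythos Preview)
Your proof is correct. The paper states this lemma as an elementary geometric observation and does not include a proof, so there is nothing to compare against; your direct coordinate computation with the triangle inequality is exactly the intended verification, and your bookkeeping of the bounds $|s-s'|\leq (h+h')/\ell$ and $|y_0-y_0'|\leq h+h'$ is clean and accurate.
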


Let $M_V$ denote the Hardy--Littlewood maximal operator in the vertical direction:
\begin{equation}
\label{eq:MV}
M_Vf(x,y)
=
\sup_{y\in J} \meas{J}^{-1} \int_J \abs{f(x,z)} \dif z,
\end{equation}
where the supremum is taken over all intervals $J$ containing $y$.

\begin{proof}[Proof of Lemma~\ref{lem:bt-cf-covering}]
We select $\calG$ using the following iterative procedure.
Initialize
\begin{align*}
STOCK &:= \calR \\
\calG &:= \emptyset.
\end{align*}
While $STOCK \neq \emptyset$, choose an $R\in STOCK$ with maximal $\meas{I(R)}$.
Update
\begin{align*}
\calG &:= \calG \cup \Set{R},\\
STOCK &:= STOCK \setminus \Set{R\in STOCK : R\subset \Set{M_{V} ( \sum_{R'\in\calG} \one _{7R'}) \geq 10^{-4}}}.
\end{align*}
This procedure terminates after finitely many steps since at each step at least the selected parallelogram $R$ is removed from $STOCK$.

By construction
\begin{align}
\bigcup _{R\in\calR} R
\subset
\Set{ x\colon M_{V} ( \sum_{R\in\calG} \one_{7R}) (x) \geq 10^{-4}},
\end{align}
and \eqref{bad} follows by the weak $(1,1)$ inequality for $M_{V}$.

We prove \eqref{eq:good-U} by induction on $n$.
For $n=1$ the statement clearly holds.
Suppose that \eqref{eq:good-U} holds for a given $n$, we will show that it also holds with $n$ replaced by $n+1$.
For each $R'\in\calG$ let
\[
\calG(R') := \Set{R\in\calG \text{ chosen prior to } R' \text{ with } R\cap R'\neq\emptyset, U(R)\cap U(100 R')\neq \emptyset}.
\]
All terms in \eqref{eq:good-U} in which some $R_{i}$ occurs at least twice are estimated by the inductive hypothesis.
In the remaining terms we may arrange the $R_{i}$'s in the order reverse to the selection order (losing a factor $(n+1)!$), and omitting some vanishing terms we obtain the estimate
\begin{equation}
\label{eq:adm-7r}
\sum_{R_{0}\in\calG, R_{1}\in\calG(R_{0}), \dotsc, R_{n} \in\calG(R_{n-1})}
\meas{R_{0} \cap \dotsb \cap R_{n}}
\leq
\sum_{R_{0}\in\calG, R_{1}\in\calG(R_{0}), \dotsc, R_{n} \in\calG(R_{n-1})}
\meas{I(R_{0})} \cdot \meas{H(R_{n})}.
\end{equation}
We claim that for every $R' \in \calG$ we have
\begin{equation}
\label{eq:adm2}
\sum_{R\in \calG(R')} H(R) \leq H(R').
\end{equation}
To see this let $R\in\calG(R')$, so that in particular $I(R')\subset I(R)$ and $U(R)\cap U(10R') \neq\emptyset$.
If $H(R')\leq H(R)$, then $7H(10R')\leq H(70R)$, and Lemma~\ref{7rlemma} shows that $70R'\subset 490 R$, so that $M_{V}(\one_{R})\geq 490^{-1}$ on $R'$, contradicting $R'\in\calG$.
Therefore $H(R') > H(R)$, so $7H(R) \leq H(10R')$, and Lemma~\ref{7rlemma} shows that
\[
7R \cap (I(R') \times\R) \subset 70 R'.
\]
The  inequality \eqref{eq:adm2} follows, since otherwise $M_{V}(\sum_{R\in\calG(R')}\one_{R}) \geq 70^{-1}$ on $R'$, contradicting $R'\in\calG$.
Hence
\begin{align*}
\eqref{eq:adm-7r}
&\leq
\sum_{R_{0}\in\calG, R_{1}\in\calG(R_{0}), \dotsc, R_{n-1} \in\calG(R_{n-2})}
\meas{I(R_{0})} \cdot \meas{H(R_{n-1})}\\
&\leq \dotsb \leq
\sum_{R_{0}\in\calG}
\meas{I(R_{0})} \cdot \meas{H(R_{0})}
=
\sum_{R_{0}\in\calG} \meas{R_{0}}.
\end{align*}
This completes the proof of \eqref{eq:good-U}.
In order to see \eqref{good} observe that its left-hand side is monotonically increasing in $q$, so it suffices to consider integer values $q=n$, and in this case the left-hand side of \eqref{good} is dominated by the left-hand side of \eqref{eq:good-U}.
\end{proof}

\subsubsection{Mass embedding}
The mass embedding is given by
\[
G(R) := \meas{R}^{-1} \int_{E_{R}} \abs{g},
\quad
R\in X.
\]
\begin{lemma}
\label{lem:mass-embedding}
Let $1<q<\infty$.
If the constant $C$ in the definition of $\sigma$ is sufficiently large depending on $q$, then $\norm{G}_{L^{q,\infty}(S^{\infty})} \lesssim \norm{g}_{q}$.
\end{lemma}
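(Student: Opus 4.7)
The plan is to verify the outer $L^{q,\infty}$ bound by selecting the super-level set
\[
\calR_{\lambda} := \Set{ R \in X \given G(R) > \lambda }
\]
and proving $\sigma(\calR_{\lambda}) \lesssim \lambda^{-q} \norm{g}_{q}^{q}$; since $S^{\infty}(G)$ on any single tile $R$ is just $\abs{G(R)}$, this gives both required outer properties. By the definition of $\sigma$, it suffices to show for every $L \geq 1$
\[
\meas[\big]{ \textstyle\bigcup_{R \in \calR_{\lambda}} LR }
\lesssim
L^{2q} \lambda^{-q} \norm{g}_{q}^{q},
\]
and then take $C$ in \eqref{eq:def-sigma} strictly larger than $2q$.

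For a fixed $L \geq 1$ I would apply Lemma~\ref{lem:bt-cf-covering} to the collection $\Set{LR : R \in \calR_{\lambda}}$, which is a finite collection of parallelograms with vertical edges (all tiles in $X$ have $k_{1}=0$, so all dilated shadows have the common length $L$). This yields a subcollection $\calG_{L}$ with
\[
\meas[\big]{\textstyle\bigcup_{R\in\calR_{\lambda}} LR}
\lesssim
\sum_{R\in\calG_{L}} \meas{LR}
\qquad\text{and}\qquad
\int \Big( \sum_{R\in\calG_{L}} \one_{E(LR)} \Big)^{q'}
\lesssim
\sum_{R\in\calG_{L}} \meas{LR},
\]
where $q'$ is the Hölder conjugate of $q$. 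The key geometric observation is that the uncertainty interval is invariant under concentric scaling, $U(LR)=U(R)$, so that $E(R) \subseteq E(LR)$. Using $R \in \calR_{\lambda}$, which gives $\int_{E(R)} \abs{g} > \lambda \meas{R} = \lambda L^{-2} \meas{LR}$, summing over $R\in\calG_{L}$, and applying Hölder together with the overlap bound above,
\[
\lambda L^{-2} \sum_{R\in\calG_{L}} \meas{LR}
<
\int \abs{g} \sum_{R\in\calG_{L}} \one_{E(LR)}
\leq
\norm{g}_{q} \Big( \sum_{R\in\calG_{L}} \meas{LR} \Big)^{1/q'}.
\]
Solving for $\sum_{R\in\calG_{L}} \meas{LR}$ and combining with the first display yields the claimed $L^{2q}\lambda^{-q}\norm{g}_{q}^{q}$ bound on $\meas{\bigcup LR}$, and choosing $C > 2q$ gives a geometric decay in $L$ after multiplying by $L^{-C}$; taking the supremum in $L$ finishes the proof.

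The main obstacle is purely technical: the dilated parallelograms $LR$ have shadows of common length $L$ but not lying on the unit dyadic grid, so Lemma~\ref{lem:bt-cf-covering} does not apply verbatim. Since all shadows have the same length, a pigeonhole on the horizontal offset of shadow centers modulo $L$ splits the collection into $O(1)$ sub-collections in each of which shadows either coincide or are disjoint; the selection algorithm and the Lemma~\ref{7rlemma} step in the proof of Lemma~\ref{lem:bt-cf-covering} go through in each sub-collection with identical arguments, at the cost of an $L$-independent constant. No other step in the outline presents real difficulty, since the almost-orthogonality and wave-packet machinery developed for the energy embedding plays no role here.
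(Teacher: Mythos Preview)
Your argument is essentially the paper's proof. Both reduce to the covering Lemma~\ref{lem:bt-cf-covering} applied to the dilated parallelograms $LR$, use $E(R)\subseteq E(LR)$ (equivalently $G(LR)\geq\lambda/L^{2}$), and close with the same H\"older step against the $L^{q'}$ overlap bound \eqref{good}, leading to the requirement $C>2q$.

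One small glitch: your pigeonhole on the horizontal offset modulo $L$ does \emph{not} give $O(1)$ sub-collections. The shadows of the $LR$ have length $L$ and centers at $n_{1}+\tfrac12$, $n_{1}\in\Z$; forcing ``equal or disjoint'' within a class requires $\lceil L\rceil$ residue classes, i.e.\ $O(L)$ sub-collections. This is harmless for the final statement (you would simply need $C>2q+1$), but if you want the sharp $C>2q$ you should instead do what the paper does: enlarge each $LR$ to a parallelogram with the same uncertainty interval whose shadow lies in one of $O(1)$ shifted dyadic grids, then apply Lemma~\ref{lem:bt-cf-covering} in each grid. After that cosmetic fix, your proof and the paper's coincide.
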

Recall that $CP$ now again denotes the parallelogram $P$ expanded by the factor $C$ both in the horizontal and in the vertical direction.
\begin{proof}
Let $\delta>0$, $g\in L^{q}(\R^{2})$, and let $\calR$ be a collection of tiles such that $G(R)\geq\delta$ for $R\in\calR$ .
We have to show
\begin{equation}
\label{eq:bt-ct:est}
\sup_{L\geq 1} L^{-C}\meas[\big]{\cup_{R\in\calR} LR}
\lesssim_{q}
\delta^{-q} \norm{g}_{q}^{q}.
\end{equation}
Note that the definition of $G(R)$ makes sense for arbitrary parallelograms (not only the dyadic ones that we call tiles).
For the enlarged parallelograms $LR$ we still have $G(LR) \geq \delta/L^{2}$, so it suffices to show \eqref{eq:bt-ct:est} with $L=1$ and a collection of arbitrary parallelograms $\calR$, provided that the constant $C_{\ref{eq:def-sigma}}$ in the definition of $\sigma$ is at least $2q$.

Enlarging the parallelograms in such a way that their shadows become intervals in adjacent dyadic grids and the uncertainty intervals stay the same we preserve the hypothesis $G(R)\gtrsim\delta$ up to a multiplicative constant.
Hence we may assume that the parallelograms have dyadic shadows.

In view of \eqref{bad} it suffices to consider the parallelograms in the subset $\calG\subset\calR$ provided by Lemma~\ref{lem:bt-cf-covering}.
By the density assumption and H\"older's inequality we have
\begin{align*}
\sum_{R\in\calG} \meas{R}
&\leq
\sum_{R\in\calG} \frac{1}{\delta} \int_{E(R)} \abs{g}\\
&=
\frac{1}{\delta}
\norm[\big]{ \sum_{R\in\calG} \one_{E(R)} \abs{g} }_1\\
&\leq
\frac{1}{\delta} \norm{\sum_{R\in\calG} \one_{E(R)}}_{q'} \norm{g}_{q}\\
&\lesssim
\frac{1}{\delta} \left(\sum_{R\in\calG} \meas{R} \right)^{1/q'} \norm{g}_{q},
\end{align*}
where in the last passage we have used the estimate \eqref{good}.
After division by the middle factor of the right hand side we obtain the claim.
\end{proof}

\section{Estimate for the square function}
We finally prove Theorem~\ref{thm:single-scale}.
Note that $A_{u,\phi} P_{2,t} f(x)$ is the integral of $f$ against an $L^{1}$ normalized wave packet associated to a tile that contains $x$ and whose uncertainty interval contains $u(x)$.
Hence the left-hand side of \eqref{eq:single-scale-square-fct} is bounded by
\[
\norm{ \big( \sum_{R\in X} F(R)^{2} \one_{E_{R}} \big)^{1/2} }_{p}
=
\norm{ \sum_{R\in X} F(R)^{2} \one_{E_{R}} }_{p/2}^{1/2}.
\]
Dualizing with a function $g\in L^{(p/2)'}$ we obtain
\[
\int \sum_{R\in X} F(R)^{2} \one_{E_{R}} g
=
\sum_{R\in X} \meas{R} F(R)^{2} G(R).
\]
For every $\calR\in\bfE$ we have $\sum_{R\in\calR} \meas{R} F(R) = \sigma(\calR) S^{1}(F)(\calR)$.
Therefore by \cite[Proposition 3.6]{MR3312633} and outer H\"older inequality \cite[Proposition 3.4]{MR3312633} the above is bounded by
\[
\norm{ F^{2} G }_{L^{1}(S^{1})}
\lesssim
\norm{ F^{2} }_{L^{p/2}(S^{1})} \norm{ G }_{L^{(p/2)'}(S^{\infty})}
=
\norm{ F }_{L^{p}(S^{2})}^{2} \norm{ G }_{L^{(p/2)'}(S^{\infty})}.
\]
The first term is bounded by $\norm{f}_{p}^{2}$ by Lemmas \ref{lem:energy-embed-L2} and \ref{lem:energy-embed-Linfty} and interpolation \cite[Proposition 3.5]{MR3312633}.
The second term is bounded by $\norm{g}_{(p/2)'}$ by Lemma~\ref{lem:mass-embedding} and interpolation \cite[Proposition 3.5]{MR3312633}.

\subsection{Application to a maximal operator with a restricted set of directions}
\label{sec:single-scale-N-directions}

In this section we prove Corollary~\ref{cor:single-scale-N-directions}.

Although the operator \eqref{eq:single-scale-op} is unbounded for general direction fields $u$, it is clearly bounded (on any $L^{p}$, $1\leq p\leq\infty$) with norm $O(N)$ as long as $u$ is allowed to take at most $N$ values.
This trivial estimate has been improved to $O(\sqrt{\log N})$ on $L^{2}$ by Katz \cite{MR1711029}.
Note that we also have the trivial estimate $O(1)$ on $L^{\infty}$, and by interpolation one obtains logarithmic dependence on $N$ of the operator norm of \eqref{eq:single-scale-op} on $L^{p}$ also for all $2<p<\infty$.
Demeter \cite{MR2680067} gives an alternative proof of Katz's result, and furthermore hints at yet another different proof   via reduction to the square function bound Theorem~\ref{thm:single-scale} by means of the good-$\lambda$ inequality with sharp constant due to Chang, Wilson, and Wolff \cite{MR800004}.
The first appearance of a similar reduction to   square function   in the context of maximal multipliers   goes back to Grafakos, Honz\'ik, and Seeger \cite{MR2249617}, and analogous approaches have been since used in Demeter \cite{MR2680067} and Demeter with the first author \cite{MR3145928}.
We have not been able to reproduce the endpoint $p=2$ using this technique.
However, notice that our   square function approach,  after interpolation, recovers the result for $p>2$ up to an arbitrarily small loss in the exponent of the logarithm.

\begin{proof}[Proof of Corollary \ref{cor:single-scale-N-directions}]
For $j\in \Z$, define the dyadic martingale averaging operator
\begin{equation}
E_j f :=\sum 2^{2j}  \innerp{f}{\one_{Q}} \one_{Q} ,
\end{equation}
where the summation runs over all standard dyadic squares $Q$ in $\R^2$ with side length $2^{-j}$.
Further define
\[
\Delta_j =E_{j+1}-E_j ,
\]
\[
\Delta f:=(\sum_{j\in \Z}\abs{\Delta_j f}^2)^{1/2}.
\]
Let $M$ denote the non-dyadic Hardy--Littlewood maximal operator.
Chang, Wilson, and Wolff \cite[Corollary 3.1]{MR800004} prove that there are universal constants $c_1$ and $c_2$ such that for all $\lambda>0$ and $0<\epsilon<1$
\begin{equation}
\label{eq:cww}
\meas{ \Set{z: \abs{f(z)-E_0 f(z)}>2\lambda,\  \Delta f(z)\le \epsilon \lambda} }
\le
c_2 e^{-\frac{c_1}{\epsilon^2}}  \meas{ \Set{z: Mf(z)\ge \lambda} }.
\end{equation}

Denote the finitely many values of $u$ by  $u_i$, $1\le i\le N$, and write $A_{u_i}$ for the operator with the constant direction field $u_i$.
Corollary~\ref{cor:single-scale-N-directions} follows by Marcinkiewicz interpolation from the weak type inequality
\[
\meas{\Set{z:\sup_i \abs{A_{u_i}f(z)}>4 \lambda}}
\le
C \log(N+2)^{p/2} \lambda^{-p}\norm{f}_p^p
\]
for $2<p<\infty$.
Gearing up for Chang, Wilson, and Wolff we estimate
\begin{align}
\nonumber\MoveEqLeft
\meas{\Set{z:\sup_i \abs{A_{u_i}f(z)}>4 \lambda}}\\
&=
\nonumber
\meas{\bigcup_i \Set{z:\abs{A_{u_i}f(z)}>4 \lambda}}\\
\label{firstcww}
&\le
\meas{\bigcup_i \Set{z:\abs{A_{u_i}f(z)-E_0A_{u_i}f(z)}>2 \lambda, \Delta A_{u_i}f(z)\le \epsilon \lambda }}\\
\label{secondcww}
&\quad+
\meas{\bigcup_i \Set{z:\abs{E_0A_{u_i}f(z)}>2 \lambda}}\\
\label{thirdcww}
&\quad+
\meas{\bigcup_i \Set{z:\Delta A_{u_i} f(z)> \epsilon\lambda  }}
\end{align}
Using \eqref{eq:cww} we estimate
\begin{align*}
\eqref{firstcww}
&\le
\sum_i \meas{\Set{z:\abs{A_{u_i}f(z)-E_0A_{u_i}f(z)}>2 \lambda, \Delta A_{u_i}f(z)\le \epsilon \lambda }}\\
&\le
C \sum_{i} e^{-\frac{c_1}{\epsilon^2}}\meas{\Set{z: M(A_{u_i}f)(z)>2 \lambda}}\\
&\le C \sum_{i} e^{-\frac{c_1}{\epsilon^2}} \lambda^{-p}\norm{A_{u_i}f}_p^p\\
&\le C N e^{-\frac{c_1}{\epsilon^2}} \lambda^{-p}\norm{f}_p^p\\
&\le C\lambda^{-p}\norm{f}_p^p
\end{align*}
provided $\epsilon \leq c_{1}^{1/2}\log (N+2)^{1/2}$.

The function $E_0 A_{u_i}f$ in \eqref{secondcww} is pointwise dominated by the standard Hardy--Littlewood maximal operator, because $E_0$ and $A_ {u_i}$ compose to some averaging operator at scale $0$.
Therefore
\[
\eqref{secondcww}
\leq
\meas{\Set{z: Mf(z)>C\lambda}}
\lesssim
\lambda^{-p}\norm{f}_p^p.
\]

To control \eqref{thirdcww} we introduce a suitable Littlewood--Paley decomposition in the second variable, note that $P_{2^k}$ commutes with $A_{u_i}$, and estimate pointwise
\begin{align*}
\sup_i \Delta A_{u_i}f
&=
\sup_i
\Delta (\sum_{k\in \Z} P_{2^k}A_{u_i}P_{2^k} f) \\
&=
\sup_i
(\sum_j \abs{\sum_{k}  \Delta_j P_{2^k}A_{u_i}P_{2^k} f}^2)^{1/2} \\
&\lesssim
\sup_i
(\sum_j (\sum_{k}  2^{-\abs{j-k}/q'} M M_{q,V} A_{u_i}P_{2^k} f)^2)^{1/2} \\
&\lesssim
\sup_i
(\sum_j \sum_{k}  2^{-\abs{j-k}/q'}( M M_{q,V} A_{u_i}P_{2^k} f)^2)^{1/2} \\
&\lesssim
\sup_i
(\sum_{t} (M M_{q,V} A_{u_i}P_{2^k} f)^2)^{1/2} \\
&\le
(\sum_{k} (M M_{q,V} \sup_i \abs{A_{u_i}P_{2^k} f})^2)^{1/2},
\end{align*}
where $M_{q,V}$ is the $q$-maximal operator in the vertical direction $M_{q,V}f=(M_{V}(f^{q}))^{1/q}$ for any fixed $1<q<2$ with $M_{V}$ as in \eqref{eq:MV}, $M$ is the usual two-dimensional Hardy--Littlewood maximal operator, and the pointwise estimate $\abs{\Delta_j P_{2^{k}}f} \lesssim 2^{\abs{j-k}/q'} M M_{q,V} f$ follows from \cite[Sublemma 4.2]{MR2249617} applied in the vertical direction.
The Fefferman--Stein maximal inequalities and Theorem~\ref{thm:single-scale} give
\[
\norm{ (\sum_{t\in 2^{\Z}} (M M_{q,V} \sup_i A_{u_i}P_t f)^2)^{1/2} }_p
\le
C \norm{ (\sum_{t\in 2^{\Z}} (\sup_i A_{u_i}P_t f)^2)^{1/2} }_p
\le
C \norm{ f }_p.
\]
With Tchebysheff we obtain
\[
\eqref{thirdcww}
=
\meas{\Set{ \sup_i \Delta A_{u_i} f(z)> \epsilon\lambda  }}
\le
C(\epsilon\lambda)^{-p}\norm{f}_p^p
\le
C \log(N+2)^{p/2} \lambda^{-p}\norm{f}_p^p,
\]
and this concludes the proof of Corollary \ref{cor:single-scale-N-directions}.
\end{proof}

\section{Lacey--Li covering argument}
\label{sec:LL}
Lacey and Li \cite{MR2654385} have introduced a certain family of  maximal operators associated to a vector field $u$, which they called the ``Lipschitz--Kakeya'' maximal operator:
\[
f\mapsto \sup_{R\in \mathcal R_\delta} \langle f,\mathbf{1}_R\rangle \frac{\mathbf{1}_R}{\abs{R}}
\]
where, using the notation from Section~\ref{sec:single-scale:mass-embed},  $\mathcal R_\delta$ is the collection of those parallelograms  $R$ with $\abs{E(R)}\geq \delta \abs{R}$; that is, the vector field $u$ points within the uncertainty interval of $R$ on (at least a) $\delta$-portion of $R$.
These authors proved that such maximal operators have weak type $(2,2)$ operator norm $O(\delta^{-1/2})$ if the vector field is Lipschitz.
In the same paper, they have further showed that an $L^{p}$ bound for this operator for any $p<2$ implies the $L^{2}$ estimate for the single band version of the directional Hilbert transform.
Bateman and Thiele \cite{MR3148061} gave a streamlined proof of the weak type $(2,2)$ estimate for this maximal operator in the case of a one-variable vector field and used it to obtain square function estimates of the type \eqref{eq:Hu-sq-fct-est} for the directional Hilbert transform.

In this section we further simplify the proof of the weak type $(2,2)$ estimate for this maximal operator, also taking care of Lipschitz vector fields.
We use the notation from Section~\ref{sec:single-scale:mass-embed} and write $L(R)=\meas{I(R)}$.
The main part of the proof is the following covering argument.
\begin{theorem}\label{laceylict}
Let $0< \delta \le 1$ and let $\calR$  be a finite collection of parallelograms with vertical edges and dyadic shadow such that for each $R\in \calR$ we have
\[
\abs{E(R)}\ge \delta \abs{R}
\]
and $L(R)\norm{v}_{\Lip}\leq 1/30$.
Then there is a subset $\calG\subset \calR$ such that
\begin{align} \label{coverct}
\abs{\bigcup_{R\in \calR} R} & \lesssim \sum_{R\in \calG} \abs{R}\ ,
\\
\label{twoonect}
\int (\sum_{R\in \calG}\one_R)^2 & \lesssim \delta^{-1} \sum_{R\in \calG} \abs{R}\ .
\end{align}
\end{theorem}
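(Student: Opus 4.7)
The plan is to construct $\calG$ via a greedy selection analogous to Lemma~\ref{lem:bt-cf-covering}, processing the parallelograms in decreasing order of shadow length $L(R)$, with the novel ingredient being the Lipschitz hypothesis $L(R)\norm{v}_{\Lip}\leq 1/30$ used to control how the direction field $v$ can oscillate across a single parallelogram. Initialize $\calG:=\emptyset$, $STOCK := \calR$, and iterate: at each step choose $R\in STOCK$ with maximal $L(R)$, add it to $\calG$, and remove from $STOCK$ every $R'$ such that
\[
\abs[\big]{R'\cap\{M_V(\textstyle\sum_{R''\in\calG}\one_{CR''})\geq c_{0}\}}\geq (1-\delta/2)\abs{R'},
\]
where $M_V$ is the vertical Hardy--Littlewood maximal function, and $c_{0}>0$, $C\geq 1$ are absolute constants to be fixed.

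For the covering estimate \eqref{coverct}, any $R\in\calR\setminus\calG$ was pruned because the indicated portion of $R$ lies in a level set of $M_V$ of the $\calG$-indicators; since $\abs{E(R)}\geq\delta\abs{R}$, in fact at least $\delta\abs{R}/2$ of $R$ lies in that level set, and the weak $(1,1)$ inequality for $M_V$ together with the pointwise-a.e.\ disjointness of the $E(R)$'s encoded in the selection yields
\[
\abs[\big]{\bigcup_{R\in\calR}R}\lesssim \sum_{R\in\calG}\abs{CR}\lesssim \sum_{R\in\calG}\abs{R}.
\]

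For the $L^{2}$ bound \eqref{twoonect} I would expand
\[
\int\big(\textstyle\sum_{R\in\calG}\one_{R}\big)^{2}
=\sum_{R_{1},R_{2}\in\calG}\abs{R_{1}\cap R_{2}},
\]
control the diagonal by $\sum_{R\in\calG}\abs{R}$, and for the off-diagonal part consider pairs with $L(R_{2})\leq L(R_{1})$. Since $R_{2}$ was still in $STOCK$ when $R_{1}$ was added, its overlap with $CR_{1}$ is small relative to $\abs{R_{2}}$. Here the Lipschitz hypothesis enters: on $E(R_{1})\cap E(R_{2})$ the vector $v$ lies simultaneously in $U(R_{1})$ and $U(R_{2})$ up to an additive error of $\leq L(R_{1})\norm{v}_{\Lip}\leq 1/30$, which is dominated by $2H(R_{1})/L(R_{1})$ and so forces the slopes of $R_{1}$ and $R_{2}$ to be mutually compatible. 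This gives a Lipschitz analogue of Lemma~\ref{7rlemma} of the form $7R_{2}\subset C'R_{1}$ whenever the two meet, whereupon the selection criterion transforms the off-diagonal sum into an expression of the form $\sum_{R\in\calG}\abs{E(R)}\cdot\delta^{-1}\lesssim \delta^{-1}\sum_{R\in\calG}\abs{R}$, the factor $\delta^{-1}$ coming from $\abs{E(R)}\geq\delta\abs{R}$ in the reverse direction.

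The main obstacle is to make step three precise in the Lipschitz regime. In the constant-direction case of Lemma~\ref{lem:bt-cf-covering} the uncertainty interval $U(R)$ is rigidly attached to $R$ and the containment Lemma~\ref{7rlemma} is an exact geometric statement; under the Lipschitz hypothesis the notion of ``compatible slopes'' becomes only approximate, with slack exactly $L(R)\norm{v}_{\Lip}$. The constant $1/30$ in the hypothesis is what guarantees this slack is dominated by the width of the uncertainty intervals of the smaller parallelogram, and the whole argument hinges on tracking these slopes carefully, choosing $C$ large enough relative to the slack and $c_{0}$ small enough that the pruning threshold absorbs the error terms, while remaining uniform in $\delta$.
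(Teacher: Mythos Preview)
Your proposal has a genuine gap in the $L^2$ estimate \eqref{twoonect}: you misidentify the role of the Lipschitz hypothesis.

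First a minor point. The paper uses \emph{exactly} the selection from Lemma~\ref{lem:bt-cf-covering} (remove $R'$ once it is \emph{entirely} inside the $M_V$-superlevel set), which makes \eqref{coverct} immediate. Your $(1-\delta/2)$-portion threshold is an unnecessary complication and your justification of \eqref{coverct} from it is incomplete: a pruned $R$ may have a $\delta/2$-fraction outside the level set, and you cannot simply sum those leftover pieces over $\calR\setminus\calG$.

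The real problem is the off-diagonal part of \eqref{twoonect}. You claim that for intersecting $R_1,R_2\in\calG$ the Lipschitz bound forces $U(R_1)$ and $U(R_2)$ to be close, hence a containment $7R_2\subset C'R_1$. This is false: nothing prevents two selected parallelograms from having uncertainty intervals that are very far apart. In fact the paper's Lemma~\ref{lem:lk:lac} shows (without Lipschitz) that once the trivial ``close-$U$'' pairs are removed via \eqref{eq:adm2}, the remaining pairs $R'\in\calP(R)$ satisfy $\max(\abs{U(R)},\abs{U(R')})\leq\tfrac14\dist(U(R),U(R'))$, i.e.\ the slopes are genuinely \emph{incompatible}. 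For such pairs $\abs{R\cap R'}$ is small by the stripe estimate \eqref{seconduict}, but there can be many of them, and no containment of the type you assert holds.

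The paper's mechanism is quite different. One decomposes $\calP(R)=\bigcup_k\calP_k(R)$ dyadically in $\dist(U(R),U(R'))/\abs{U(R)}$. For each $k$ a stopping-time argument (Lemma~\ref{nosmallct}, Corollary~\ref{cor:lk:1scale}) gives $\sum_{R'\in\calP_k(R)}\abs{R\cap R'}\lesssim H(R)\,\abs{\bigcup_{R'}\Pi(R')}$. The density hypothesis then converts $\abs{\Pi(R')}\leq\delta^{-1}\abs{\Pi E(R')}$; this is where the factor $\delta^{-1}$ actually enters. Finally, the Lipschitz hypothesis is used only in Lemma~\ref{lem:lk:near}: if $R',R''\in\calP(R)$ have $\Pi E(R')\cap\Pi E(R'')\neq\emptyset$, then $\dist(U(R'),U(R''))\leq\tfrac18\dist(U(R),U(R'))$. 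This forces the projections $\Pi E(R')$ (over representatives of maximal shadows, summed over all $k$) to have bounded overlap inside $I(R)$, yielding $\sum_k\sum_{R'}\abs{\Pi E(R')}\lesssim L(R)$ and hence $\sum_{R'\in\calP(R)}\abs{R\cap R'}\lesssim\delta^{-1}\abs{R}$. None of this structure --- the far-$U$/close-$U$ split, the dyadic layers $\calP_k$, the stopping time, or the bounded-overlap role of Lipschitz --- appears in your sketch.
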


The set $\calG$ is constructed as in Lemma~\ref{lem:bt-cf-covering}, so that \eqref{coverct} holds by construction.
In the remaining part of this section we will show \eqref{twoonect}.
Expanding the square on the left-hand side of \eqref{twoonect} and using symmetry we obtain the estimate
\[
\sum_{R\in\calG} \meas{R}
+ 2 \sum_{(R,R')\in\calP} \meas{R\cap R'},
\]
where $\calP$ is the set of pairs $(R,R')\in\calG^{2}$ such that $R\cap R'\neq\emptyset$ and $R$ has been chosen before $R'$.
The former term is clearly bounded by the right-hand side of \eqref{twoonect}.
In the latter term we notice first that by \eqref{eq:adm2} we have
\[
\sum_{R'\in\calG} \sum_{R\in\calG(R')} \meas{R\cap R'}
\leq
\sum_{R'\in\calG} \sum_{R\in\calG(R')} L(R') H(R)
\leq
\sum_{R'\in\calG} L(R') H(R'),
\]
and this is also bounded by the right-hand side of \eqref{twoonect}.
Hence it suffices to estimate
\begin{equation}
\label{eq:ll:PR}
\sum_{R\in \calG}\sum_{R'\in\calP(R)} \meas{R\cap R'},
\end{equation}
where
\[
\calP(R) := \Set{R' : (R,R')\in \calP, U(R)\cap 10 U(R') = \emptyset}.
\]

First we clarify the position of $U(R')$ relative to $U(R)$ when $R'\in\calP(R)$.

\begin{lemma}
\label{lem:lk:lac}
Suppose $R'\in\calP(R)$.
Then
\[
\max (\abs{U(R)},\abs{U(R')}) \leq \frac14 \dist(U(R'),U(R)).
\]
\end{lemma}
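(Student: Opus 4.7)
The claim decomposes into two separate bounds, $|U(R')|\le \tfrac14 \dist(U(R),U(R'))$ and $|U(R)|\le \tfrac14\dist(U(R),U(R'))$, and I would prove them by different arguments.

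The bound on $|U(R')|$ is essentially tautological. Unpacking the defining condition $U(R)\cap 10 U(R')=\emptyset$: since $U(R)$ and $10U(R')$ are intervals of lengths $|U(R)|$ and $10|U(R')|$ centered at $s(R)$ and $s(R')$, disjointness forces $|s(R)-s(R')|\ge \tfrac12|U(R)|+5|U(R')|$, so that $\dist(U(R),U(R'))=|s(R)-s(R')|-\tfrac12|U(R)|-\tfrac12|U(R')|\ge \tfrac92|U(R')|$, giving $|U(R')|\le\tfrac29\dist < \tfrac14\dist$.

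For the bound on $|U(R)|$, I would split on the ratio $H(R')/H(R)$. In the case $H(R')\ge C_0 H(R)$ for a sufficiently large absolute constant $C_0$, the inequality $L(R')\le L(R)$ (from the selection order, since $R$ was chosen first with maximal shadow length) yields $|U(R')|=2H(R')/L(R')\ge C_0|U(R)|$, and the bound on $|U(R)|$ follows from the already-established bound on $|U(R')|$ with slack. The substantive case is $H(R')< C_0 H(R)$, and here the essential input is that $R$ was chosen before $R'$ and that $R'$ survived the subsequent elimination step, i.e., $R'\not\subseteq\{M_V\one_{7R}\ge 10^{-4}\}$. Since $R\cap R'\neq\emptyset$, $L(R')\le L(R)$, and the shadows are dyadic, $I(R')\subseteq I(R)\subseteq 7I(R)$, so every $(x,y)\in R'$ has $x\in 7I(R)$ and a direct computation shows $M_V\one_{7R}(x,y) = 7H(R)/(d+7H(R))$, where $d$ is the vertical distance from $y$ to $V_x:=\{z:(x,z)\in 7R\}$. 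Survival thus produces a point $(x,y)\in R'$ with $d\gtrsim 10^4 H(R)$. Computing this vertical offset via the linear profiles of $R$ and $R'$ starting from an intersection point $p=(x_0,y_0)\in R\cap R'$ gives $d\le |s(R)-s(R')|\,L(R')+O(H(R)+H(R'))\le |s(R)-s(R')|\,L(R')+O(C_0 H(R))$ in this case, so that $|s(R)-s(R')|\gtrsim 10^4 H(R)/L(R')\ge \tfrac{10^4}{2}|U(R)|$. Hence $\dist(U(R),U(R'))\gg |U(R)|$, with much more than the required factor of $4$.

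The main technical obstacle in this plan will be the geometric step in the non-trivial case: turning the qualitative survival condition $R'\not\subseteq\{M_V\one_{7R}\ge 10^{-4}\}$ into a quantitative lower bound on the slope gap $|s(R)-s(R')|$, and tracking the lower-order terms $O(H(R)+H(R'))$ carefully enough that the large factor coming from the threshold $10^{-4}$ genuinely dominates.
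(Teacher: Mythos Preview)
Your proposal is correct and follows essentially the same route as the paper. The paper splits on whether $\abs{U(R)}\le \abs{U(R')}$ or $\abs{U(R)}>\abs{U(R')}$; since $L(R')\le L(R)$, the latter is exactly your case $H(R')<H(R)$, so your case split with $C_0=1$ coincides with theirs (your phrase ``sufficiently large $C_0$'' is unnecessary---any $C_0\ge 1$ works for Case~A, and Case~B only needs $C_0\ll 10^4$). In the nontrivial case the paper argues by contradiction via the prepackaged geometric Lemma~\ref{7rlemma} (close uncertainty intervals force $7R'\subset 70R$, hence $M_V\one_{7R}\ge 10^{-1}$ on $R'$), while you run the contrapositive by hand, extracting a point of $R'$ far from $7R$ and reading off the slope gap; the content is identical.
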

\begin{proof}
We distinguish two cases: 
\begin{enumerate}
\item $\abs{U(R)}\le \abs{U(R')}$.
In this case we use the definition of $\calP(R)$.
\item $\abs{U(R)}>\abs{U(R')}$.
In this case we have
\[
H(R') = \abs{U(R')} L(R') < \abs{U(R)} L(R) = H(R),
\]
and in particular $7H(R') \leq H(10R)$.
If the conclusion was false, then $10U(R) \cap U(R')\neq\emptyset$, and by Lemma~\ref{7rlemma} we obtain $7R'\subset 70R$.
This contradicts the hypothesis that $R'$ was added to $\calG$ after $R$.
\end{enumerate}
\end{proof}

The next lemma gives a condition for two parallelograms to have comparable slopes.
This is the only place where the Lipschitz hypothesis is used.
Denote the projection onto the first coordinate by $\Pi$.
\begin{lemma}
\label{lem:lk:near}
Assume $L(R) \norm{v}_{\Lip} \leq 1/30$.
Suppose $R',R''\in\calP(R)$ and $\Pi E(R') \cap \Pi E(R'') \neq \emptyset$.
Then
\[
\dist(U(R'),U(R'')) \leq \frac{1}{8} \dist(U(R),U(R')).
\]
\end{lemma}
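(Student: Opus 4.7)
The plan is to exploit the Lipschitz hypothesis at a common vertical fiber. First I would fix a point $(x, y') \in E(R')$ and $(x, y'') \in E(R'')$ that share the horizontal coordinate $x$; such a pair exists by the hypothesis $\Pi E(R') \cap \Pi E(R'') \neq \emptyset$. By definition of the sets $E(R')$ and $E(R'')$, we have $v(x,y') \in U(R')$ and $v(x,y'') \in U(R'')$, so the Lipschitz hypothesis immediately yields
\[
\dist(U(R'), U(R'')) \leq \abs{v(x,y') - v(x,y'')} \leq \norm{v}_{\Lip} \abs{y' - y''}.
\]
The whole game is now to bound $\abs{y' - y''}$ by a suitable multiple of $L(R) \dist(U(R), U(R'))$.

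Next I would bound the vertical distance from $y'$ to the center line of $R$ at horizontal coordinate $x$. Since $R' \cap R \neq \emptyset$, pick $(x_0, y_0) \in R' \cap R$. Because $R$ was chosen before $R'$ we have $I(R') \subseteq I(R)$, hence $\abs{x-x_0} \leq L(R') \leq L(R)$. Writing both center lines in slope--intercept form and using that $(x, y'), (x_0, y_0) \in R'$ and $(x_0, y_0) \in R$, a straightforward calculation gives
\[
\bigl\lvert y' - y_R - s(R)(x - x_R) \bigr\rvert \leq \abs{s(R') - s(R)}\, L(R') + H(R) + H(R'),
\]
and analogously for $y''$ with $R''$ in place of $R'$.

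The third step is to convert these bounds into multiples of $\dist(U(R), U(R'))$ and $\dist(U(R), U(R''))$. Since both $R'$ and $R''$ lie in $\calP(R)$, Lemma~\ref{lem:lk:lac} gives $\max(\abs{U(R)},\abs{U(R')}) \leq \dist(U(R), U(R'))/4$ and similarly for $R''$. Combined with $H(R) = \abs{U(R)} L(R)/2$ and $\abs{s(R')-s(R)} \leq \dist(U(R), U(R')) + (\abs{U(R)}+\abs{U(R')})/2$, each error term fits inside $(3/2) L(R) \dist(U(R), U(R'))$, so that
\[
\abs{y' - y''} \leq \tfrac{3}{2} L(R) \bigl[\dist(U(R),U(R')) + \dist(U(R),U(R''))\bigr].
\]

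Finally, plugging this into the Lipschitz estimate and using $L(R) \norm{v}_{\Lip} \leq 1/30$ gives
\[
\dist(U(R'), U(R'')) \leq \tfrac{1}{20}\bigl[\dist(U(R),U(R')) + \dist(U(R),U(R''))\bigr],
\]
and bounding $\dist(U(R), U(R'')) \leq \dist(U(R), U(R')) + \dist(U(R'), U(R''))$ by the triangle inequality rearranges into $(19/20) \dist(U(R'), U(R'')) \leq (1/10) \dist(U(R), U(R'))$, i.e.\ $\dist(U(R'), U(R'')) \leq (2/19) \dist(U(R), U(R')) < (1/8) \dist(U(R), U(R'))$, which is the desired estimate. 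The main delicacy will be bookkeeping the numerical constants through the geometric step so that the $1/30$ Lipschitz budget indeed translates into the asserted factor $1/8$; this is what forces the particular choice of constants in Lemma~\ref{lem:lk:lac} and in the hypothesis $L(R)\norm{v}_{\Lip}\leq 1/30$.
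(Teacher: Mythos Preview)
Your approach is essentially the same as the paper's: fix a common horizontal coordinate, bound $\abs{y'-y''}$ geometrically through $R$ (the paper writes the same bound $H(R)+H(R')+H(R'')+L(R')\dist(U(R),U(R'))+L(R'')\dist(U(R),U(R''))$), apply the Lipschitz hypothesis together with Lemma~\ref{lem:lk:lac}, and rearrange.

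One slip to fix: the ``triangle inequality'' $\dist(U(R),U(R'')) \leq \dist(U(R),U(R')) + \dist(U(R'),U(R''))$ is false for distances between intervals (e.g.\ $U(R)=[0,1]$, $U(R')=[3,4]$, $U(R'')=[10,11]$ gives $9>2+6$). The correct estimate is $\dist(U(R),U(R'')) \leq \dist(U(R),U(R')) + \abs{U(R')} + \dist(U(R'),U(R''))$; absorbing the extra $\abs{U(R')}\leq\tfrac14\dist(U(R),U(R'))$ via Lemma~\ref{lem:lk:lac} yields a final constant $9/76<1/8$ instead of your $2/19$, so the conclusion still holds. The paper makes exactly this correction.
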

\begin{proof}
Let $x\in \Pi E(R') \cap \Pi E(R'')$.
The distance of the points $y',y''$ such that $(x,y')\in R'$ and $(x,y'')\in R''$ is bounded above by
\[
H(R) + H(R') + H(R'') + L(R') \dist(U(R),U(R')) + L(R'') \dist(U(R),U(R'')).
\]
Choosing $(x,y')\in E(R')$ and $(x,y'')\in E(R'')$ and using the Lipschitz hypothesis and Lemma~\ref{lem:lk:lac} we obtain
\begin{multline*}
\dist(U(R'),U(R''))
\leq
\norm{v}_{\Lip}\Big(H(R) + H(R') + H(R'')\\
+ L(R') \dist(U(R),U(R')) + L(R'') \dist(U(R),U(R''))\Big)\\
\leq
\frac{1}{30} \Big(\abs{U(R)} + \abs{U(R')} + \abs{U(R'')}+ \dist(U(R),U(R')) + \dist(U(R),U(R''))\Big)\\
\leq
\frac{1}{30} \Big(\frac64 \dist(U(R),U(R')) + \frac54 \dist(U(R),U(R''))\Big)\\
\leq
\frac{1}{30} \Big(\frac{11}{4} \dist(U(R),U(R')) + \frac54 \abs{U(R')} + \frac54 \dist(U(R'),U(R''))\Big)\\
\leq
\frac{1}{30} \Big(\frac{13}{4} \dist(U(R),U(R')) + \frac54 \dist(U(R'),U(R''))\Big).
\end{multline*}
The conclusion follows.
\end{proof}

The basic estimate for the size of the intersection of two parallelograms is the size of the intersection of infinite stripes containing them:
\begin{lemma}
Let $R,R' \in \calR$.
Then
\begin{equation}\label{seconduict}
\abs{R\cap R'} \leq \dist(U(R),U(R'))^{-1} H(R) H(R').
\end{equation}
\end{lemma}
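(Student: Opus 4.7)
The plan is to reduce the intersection bound to the elementary area of the intersection of two infinite strips, using that each parallelogram is contained in such a strip, and then relate the difference of slopes to the distance between the uncertainty intervals.

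First I would observe that a parallelogram $R$ with vertical edges, slope $s(R)$, and height $H(R)$ is contained in the infinite strip
\[
S(R) := \Set{(x,y) \in \R^{2} \given a \leq y - s(R) x \leq a + H(R)}
\]
for a suitable $a\in\R$; this is just the region bounded by the two parallel lines of slope $s(R)$ through the non-vertical edges of $R$. Doing the same for $R'$, we get $\abs{R\cap R'} \leq \abs{S(R) \cap S(R')}$.

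Next, on $S(R)\cap S(R')$ I would perform the linear change of variables $u = y - s(R) x$, $v = y - s(R') x$, whose Jacobian determinant equals $\abs{s(R)-s(R')}$. In the $(u,v)$-coordinates the intersection becomes an axis-parallel rectangle of sides $H(R)$ and $H(R')$, so
\[
\abs{S(R)\cap S(R')} = \frac{H(R)\,H(R')}{\abs{s(R)-s(R')}}.
\]

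Finally I would note that by definition $s(R)\in U(R)$ and $s(R')\in U(R')$, so $\abs{s(R)-s(R')}\geq\dist(U(R),U(R'))$, giving the claimed estimate. There is no real obstacle here: the statement is a routine geometric computation, and the only thing to check carefully is the direction of the inequality relating the slope difference to $\dist(U(R),U(R'))$, which goes the right way because the slopes themselves are the centers of the uncertainty intervals.
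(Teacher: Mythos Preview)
Your proof is correct and matches the paper's approach: the paper also bounds $R\cap R'$ by the intersection of the two containing strips and then uses $\abs{s(R)-s(R')}\geq\dist(U(R),U(R'))$, the only cosmetic difference being that the paper first shears so that $s(R)=0$ and reads off the parallelogram area directly rather than via your change of variables.
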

\begin{proof}
By a shearing transformation we may assume that the central line segment of $R$ is horizontal.
Let $u_{0}$ be the central slope of $R'$.
Then $R\cap R'$ is contained in a parallelogram of height $H(R)$ and base $H(R')u_0^{-1}$.
On the other hand, $u_{0}\geq\dist(U(R),U(R'))$.
\end{proof}

We decompose the set $\calP(R)$ dyadically according to the distance between $U(R)$ and $U(R')$.
Specifically, for $k\in\N$ let
\[
\calP_{k}(R) := \Set{ R'\in\calP(R) : 2^{k-3} < \frac{\dist(U(R),U(R'))}{\abs{U(R)}} \leq 2^{k-2}}.
\]
For a fixed $k$ we will estimate the contribution of $\calP_{k}(R)$ to \eqref{eq:ll:PR} using a stopping time argument.
For a dyadic interval $I$ denote $R_{I}  := R \cap (I\times\R)$.
\begin{lemma}\label{nosmallct}
Let $I \subseteq I_{R}$ be a dyadic interval such that there exists $R''\in \calP_{k}(R)$ with $I_{R''}\subseteq I$.
Then
\[
\sum_{R'\in \calP_{k}(R): I\subseteq I_{R'}} \abs{R_I\cap R'} \leq 2 \abs{R_I}.
\]
\end{lemma}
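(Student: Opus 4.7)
The plan is to translate the claim into a bound on the counting function $N(x,y) := \sum_{R' \in \calP_{k}(R):\ I \subseteq I_{R'}} \one_{R'}(x,y)$. By Fubini,
\[
\sum_{R' \in \calP_{k}(R):\ I \subseteq I_{R'}}\meas{R_{I} \cap R'} = \int_{R_{I}} N,
\]
so the lemma is equivalent to the mean-value bound $\int_{R_{I}} N \leq 2 \meas{R_{I}}$.

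The first step is to extract a pointwise witness from the selection algorithm for $\calG$. Each $R'$ in the sum has $\meas{I_{R'}} \geq \meas{I} \geq \meas{I_{R''}}$, so with an innocuous tiebreaking rule for equal-shadow parallelograms we may assume $R' \in \calG_{<R''}$. Because $R''$ was not removed from $STOCK$ when it entered $\calG$, there exists a witness $(x_{0}, y_{0}) \in R''$ with
\[
M_V\Big(\sum_{R^{*} \in \calG_{<R''}} \one_{7R^{*}}\Big)(x_{0}, y_{0}) < 10^{-4},
\]
which for every vertical interval $J \ni y_{0}$ gives $\sum_{R'} \meas{J \cap J_{7R'}(x_{0})} < 10^{-4}\meas{J}$.

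The second step is to choose $J$ aligned with the vertical slice $J_{R}(x_{0})$. Using $x_{0} \in I_{R''} \subseteq I \subseteq I_{R}$, the intersection $R \cap R'' \neq \emptyset$, and the slope separation $\abs{s(R)-s(R'')} \sim 2^{k}\meas{U(R)}$, a direct slope-tracking computation yields $\dist(y_{0}, J_{R}(x_{0})) \lesssim 2^{k}H(R)$, so one may take $\meas{J} \lesssim 2^{k}H(R)$ with $J$ containing a fixed enlargement of $J_{R}(x_{0})$. The Lipschitz hypothesis enters via Lemma~\ref{lem:lk:near}: for $R'$ in the sum whose $E(R')$-projection meets $\Pi E(R'')$, the slope $s(R')$ is forced to cluster around $s(R'')$ on the scale $\meas{U(R'')}$, so only a narrow band of slopes contributes at each location.

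The main obstacle is that the bare maximal bound yields $\sum H(R') \lesssim 10^{-4}\cdot 2^{k} H(R)$, which contains a parasitic factor $2^{k}$. To absorb it, I would split the $R'$ in the sum into two regimes and bound $\meas{R_{I}\cap R'}$ differently in each. For $R'$ with $H(R') \leq H(R)$, the slope clustering from the previous step forces the contribution to come from an effective $J$ of length $O(H(R))$ rather than $O(2^{k}H(R))$, so the trivial bound $\meas{R_{I} \cap R'} \leq \meas{I}H(R')$ combined with the sharpened maximal estimate gives a geometric sum bounded by a small constant multiple of $\meas{R_{I}}$. For $R'$ with $H(R') > H(R)$, the slope-separation estimate \eqref{seconduict} provides $\meas{R_{I}\cap R'} \lesssim H(R)H(R')/(2^{k}\meas{U(R)})$, whose prefactor $2^{-k}$ compensates the parasitic $2^{k}$ and returns a bound of the same order. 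Summing the two regimes and using the numerical gain $10^{-4}$ from the selection criterion to absorb all geometric constants yields the sharp bound $\int_{R_{I}} N \leq 2\meas{R_{I}}$.
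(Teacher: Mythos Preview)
Your approach has a genuine gap. The invocation of Lemma~\ref{lem:lk:near} is unjustified: its hypothesis $\Pi E(R')\cap\Pi E(R'')\neq\emptyset$ is never verified for the parallelograms in your sum, and in fact the Lipschitz condition plays no role in this lemma --- the paper's proof uses only Lemma~\ref{lem:lk:lac}, Lemma~\ref{7rlemma}, and \eqref{seconduict}, all purely geometric. Consequently your ``slope clustering'' mechanism for absorbing the parasitic $2^{k}$ in the small-$H(R')$ regime is not available, and the claim that the contribution comes from an effective $J$ of length $O(H(R))$ does not follow: even with clustered slopes, the slices $J_{R'}(x_{0})$ can spread over a vertical interval of length $\sim 2^{k}H(R)\abs{I}/L(R)$, since different $R'$ may meet $R_{I}$ at different horizontal locations in $I$.

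The paper avoids the witness-point approach altogether. It argues by contradiction: choose $R''$ with $I_{R''}\subseteq I$ selected \emph{last}, set $\calQ$ to be the remaining $R'$ in the sum, and assume $\sum_{\calQ}\abs{R_{I}\cap R'}>10^{-1}\abs{R_{I}}$. Then one shows $R''\subset\tilde R:=50(1+\abs{U}\abs{I}/H(R))R_{I}$ with $U=2^{k}U(R)$, and that $M_{V}(\one_{R}+\sum_{\calQ}\one_{R'})\geq 10^{-3}$ \emph{everywhere} on $\tilde R$, contradicting $R''\in\calG$. The case split is on whether some $H(R')\geq 20\abs{U}\abs{I}$ (a single such $R'$ suffices via Lemma~\ref{7rlemma}) or all $H(R')\leq 20\abs{U}\abs{I}$ (then \eqref{seconduict} converts the assumed lower bound on $\sum\abs{R_{I}\cap R'}$ into a lower bound on $\sum H(R')$). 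The key device you are missing is the inclusion of $\one_{R}$ in the maximal function: the term $H(R)/H(\tilde R)$ it contributes is exactly what neutralises the $2^{k}$ blow-up of $H(\tilde R)$.
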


\begin{proof}
Let $R''\in \calP_{k}(R)$ be the parallelogram with $I_{R''}\subseteq I$ that has been chosen last.
Let
\[
\calQ := \Set{R'\in \calP_{k}(R): I\subseteq I_{R'}, R_{I}\cap R'\neq\emptyset} \setminus \Set{R''}.
\]
Since $\abs{R_{I}\cap R''} \leq \abs{R_{I}}$, it suffices to show
\begin{equation}
\label{eq:nosmallct:r}
\sum_{R'\in \calQ} \abs{R_I\cap R'} \leq 10^{-1} \abs{R_I}.
\end{equation}
Assume for contradiction that \eqref{eq:nosmallct:r} fails.
Let $U:= 2^{k} U(R)$.
By Lemma~\ref{lem:lk:lac} we have $U(R'')\subset U$ and thus
\[
H(R'')
\leq
\abs{U} \abs{I_{R''}}
\leq
\abs{U} \abs{I}.
\]
In particular
\[
R''  \subset 50  (1+\abs{U} \abs{I}/H(R)) R_{I} =: \tilde R.
\]
The parallelogram $R''$ has been selected for $\calG$ after the parallelogram $R$ and the parallelograms $R'\in \calQ$.
To obtain a contradiction with the construction of $\calG$ it suffices to show that  
\[
M_V(\one_R + \sum_{R'\in \calQ} \one_{R'})
\] where $M_V$ is the vertical directional maximal function,
is larger than $10^{-3}$ on the parallelogram $\tilde R$.

First assume there exists $R'\in \calQ$ with $H(R')\ge 20\abs{U}\abs{I}$.
Note that
\[
U(R')\subset U \subset U(\tilde{R}).
\]
Applying Lemma \ref{7rlemma} to the rectangles $R'_I$ and $\tilde{R}$ we obtain
\[
M_V (\one_{R'}+\one_R)
\geq
7^{-1}H(\tilde{R})^{-1} \big( \min(H(R'),H(\tilde{R}))+H(R) \big)
>
10^{-3}
\]
on $\tilde{R}$, which proves Lemma \ref{nosmallct} in the given case.

Hence we may assume 
\[
H(R')\le 20 \abs{U}\abs{I}
\]
for every $R'\in \calQ$.
We then have on $\tilde{R}$ that
\begin{align*}
\MoveEqLeft
M_V (\one_R   + \sum_{R'\in \calQ}\one_{R'})\\
&\ge
H(\tilde{R})^{-1} (H(R)+\sum_{R'\in\calQ} H(R'))\\
&\ge
H(\tilde{R})^{-1} (H(R)+\sum_{R'\in\calQ} \abs{R_I\cap R'} \abs{U} H(R)^{-1})
&\text{by \eqref{seconduict}}\\
&\ge
H(\tilde{R})^{-1} (H(R)+\abs{U} H(R)^{-1} 10^{-1} \abs{R_I})
&\text{since \eqref{eq:nosmallct:r} fails}\\
&\ge
500^{-1}.
\end{align*}
This completes the proof of Lemma \ref{nosmallct}.
\end{proof}

\begin{corollary}
\label{cor:lk:1scale}
\[
\sum_{R'\in \calP_{k}(R)} \abs{R\cap R'}
\leq
4 H(R) \cdot \meas[\big]{\bigcup_{R'\in \calP_{k}(R)} \Pi(R')}.
\]
\end{corollary}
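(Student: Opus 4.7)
The plan is to reduce Corollary~\ref{cor:lk:1scale} to Lemma~\ref{nosmallct} via a dyadic decomposition of $\calP_k(R)$ along the distinct shadows. Set $\calL := \Set{\Pi(R') : R' \in \calP_k(R)}$ and, for each $I \in \calL$, let $\calN(I) := \Set{R' \in \calP_k(R) : \Pi(R') = I}$, so that $\calP_k(R) = \bigsqcup_{I \in \calL} \calN(I)$. For each $I \in \calL$ the hypothesis of Lemma~\ref{nosmallct} is verified by taking $R'' \in \calN(I)$. Restricting its conclusion to the subsum with $\Pi(R') = I$, where $\abs{R_I \cap R'} = \abs{R \cap R'}$, yields
\[
\sum_{R' \in \calN(I)} \abs{R \cap R'} \;\leq\; \sum_{R'' \in \calP_k(R) : I \subseteq I_{R''}} \abs{R_I \cap R''} \;\leq\; 2 H(R) \abs{I}.
\]
Summing over $I \in \calL$ reduces the corollary to the Carleson-type packing estimate $\sum_{I \in \calL} \abs{I} \leq 2 \abs{S}$, where $S = \bigcup_{R' \in \calP_k(R)} \Pi(R')$.

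For the packing inequality I would take the pairwise disjoint collection $\calM^* \subseteq \calL$ of maximal dyadic intervals (which tiles $S$) and iterate within each $J \in \calM^*$. Inside $J$, the maximal elements of $\calL \setminus \{J\}$ are contained in the dyadic children $J_L, J_R$, so by induction the total length of shadows strictly inside $J$ is at most $\tfrac{1}{2}\abs{J} + \tfrac{1}{4}\abs{J} + \dots$; summing the geometric series produces $\sum_{I \in \calL, \, I \subseteq J} \abs{I} \leq 2\abs{J}$, and summing over $J \in \calM^*$ gives the desired $\sum_{I \in \calL}\abs{I}\leq 2\abs{S}$. Combined with the per-shadow bound above, this yields the overall constant $4 = 2 \cdot 2$.

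The main obstacle is that, in full generality, this Carleson-type packing fails for an arbitrary finite collection of dyadic intervals: for a full dyadic tree of depth $n$ the ratio $\sum_I \abs{I} / \abs{\bigcup I}$ can be as large as $n+1$. The shadows arising from $\calP_k(R)$, however, carry extra structure coming from (i) the narrow slope band of width $\sim 2^k \abs{U(R)}$ in which all $R' \in \calP_k(R)$ live, controlled by Lemmas~\ref{lem:lk:lac}--\ref{lem:lk:near}, and (ii) the greedy construction of $\calG$ via the vertical maximal function $M_V$. These constraints force the dyadic tree of shadows within a fixed slope band to be sparse in the sense that, at each recursion step, the maximal sub-shadows cannot fill more than a fixed fraction of the parent dyadic interval. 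Making this sparseness quantitative is the delicate part of the argument; once established, the geometric summation and the factor $2$ from Lemma~\ref{nosmallct} combine to yield the asserted constant $4$.
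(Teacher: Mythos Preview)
Your reduction has a genuine gap. The entire argument rests on the packing inequality $\sum_{I\in\calL}\abs{I}\le 2\abs{S}$, which you never prove. Your inductive sketch is incorrect as stated: the maximal elements of $\calL\setminus\{J\}$ inside $J$ can be both dyadic children $J_L,J_R$, whose total length is $\abs{J}$, not $\tfrac12\abs{J}$, so there is no geometric decay. You acknowledge that the packing fails for arbitrary dyadic collections and then appeal to ``extra structure'' from Lemmas~\ref{lem:lk:lac}--\ref{lem:lk:near} and the $M_V$-greedy selection, but none of that structure is actually invoked to produce a bound; the final paragraph is a plan, not a proof. In fact the slope constraints and the $M_V$ selection say nothing direct about the \emph{shadows} stacking, and I see no reason to expect $\calL$ to be Carleson-sparse in general.

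The paper's proof avoids this difficulty entirely by decomposing $S$ differently. Instead of summing over the set of shadows $\calL$, it lets $\calI$ be the maximal dyadic intervals contained in $S$ that do \emph{not} contain any $I_{R'}$. These intervals are pairwise disjoint and cover $S$, so $\sum_{I\in\calI}\abs{I}=\abs{S}$ with no packing estimate needed. For each $I\in\calI$, maximality forces the dyadic parent $\tilde I$ to contain some $I_{R''}$, so Lemma~\ref{nosmallct} applies to $\tilde I$; and since $I$ contains no shadow, every $R'\in\calP_k(R)$ with $R_I\cap R'\neq\emptyset$ satisfies $I\subsetneq I_{R'}$, hence $\tilde I\subseteq I_{R'}$. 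This gives $\sum_{R'}\abs{R_I\cap R'}\le 2\abs{R_{\tilde I}}=4\abs{R_I}$, and summing over the disjoint cover $\calI$ yields the corollary. The factor $4$ arises as $2$ (from the lemma) times $2$ (from passing to the parent), not from any Carleson packing of shadows.
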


\begin{proof}
Let $\calI$ be the set of maximal dyadic intervals contained in $\cup_{R'\in \calP_{k}(R)} \Pi(R')$ that do not contain $I_{R'}$ for any $R'\in\calP_{k}(R)$.
For each $I\in\calI$ let $\tilde I$ denote its dyadic parent.
Then by maximality of $I$ and Lemma \ref{nosmallct} we have
\begin{align*}
\sum_{R'\in \calP_{k}(R)} \abs{R_I\cap R'}
&=
\sum_{R'\in \calP_{k}(R) : I\subsetneq I_{R'}} \abs{R_I\cap R'}\\
&\leq
\sum_{R'\in \calP_{k}(R) : \tilde I\subseteq I_{R'}} \abs{R_{\tilde I}\cap R'}\\
&\leq
2 \abs{R_{\tilde I}}
\leq
4 \abs{R_{I}}.
\end{align*}
The set $\calI$ is a covering of $\cup_{R'\in \calP_{k}(R)} \Pi(R')$, so the conclusion of the lemma follows after summing over all intervals in $\calI$.
\end{proof}

We are now in position to complete the proof of Theorem~\ref{laceylict} by estimating \eqref{eq:ll:PR}:
\begin{align*}
\sum_{R'\in\calP(R)} \abs{R\cap R'}
&=
\sum_{k\in\N} \sum_{R'\in \calP_{k}(R)} \abs{R\cap R'}\\
&\lesssim
H(R) \sum_{k\in\N} \abs{\cup_{R'\in \calP_{k}(R)} \Pi(R')}
&\text{by Corollary~\ref{cor:lk:1scale}}\\
&=
H(R) \sum_{k\in\N} \sum_{R'\in \calP_{k}'(R)} \abs{\Pi(R')}\\
&\lesssim
\delta^{-1} H(R) \sum_{k\in\N} \sum_{R'\in \calP_{k}'(R)} \abs{\Pi E(R')}\\
&\lesssim
\delta^{-1} \abs{R},
\end{align*}
where $\calP_{k}'(R)\subset\calP_{k}(R)$ is a system of representatives for maximal intervals $I_{R'}$, in the penultimate step we have used the density hypothesis in the form $\abs{\Pi(R')} \leq \abs{\Pi E(R')}/\delta$, and in the last step we have used Lemma~\ref{lem:lk:near} to conclude that the projections there have bounded overlap.

%%% Local Variables:
%%% mode: latex
%%% TeX-master: "habil"
%%% ispell-local-dictionary: "american"
%%% End:

\backmatter
\printbibliography[heading=bibintoc]
\end{document}